\newcommand{\calX}{{\mathcal{X}}}
\newcommand{\calO}{{\mathcal{O}}}
\newcommand{\calU}{{\mathcal{U}}}
\newcommand{\calD}{{\mathcal{D}}}
\newcommand{\calF}{\mathcal{F}}
\newcommand{\calH}{\mathcal{H}}
\newcommand{\calC}{\mathcal{C}}
\newcommand{\calS}{\mathcal{S}}
\newcommand{\A}{\mathbf{A}}
\newcommand{\G}{\mathbf{G}}
\newcommand{\Z}{\mathbf{Z}}
\newcommand{\N}{\mathbf{N}}
\newcommand{\C}{\mathbf{C}}
\newcommand{\F}{\mathbf{F}}
\newcommand{\Q}{\mathbf{Q}}
\renewcommand{\P}{\mathbf{P}}
\newcommand{\Spec}{{\mathrm{Spec}}}
\newcommand{\Hens}{\mathrm{Hens}}
\newcommand{\Gal}{\mathrm{Gal}}
\newcommand{\Hom}{\mathrm{Hom}}
\newcommand{\Isom}{\mathrm{Isom}}
\newcommand{\Fun}{\mathrm{Fun}}
\newcommand{\Map}{\mathrm{Map}}
\newcommand{\Aut}{\mathrm{Aut}}
\newcommand{\Shv}{\mathrm{Shv}}
\newcommand{\Ind}{\mathrm{Ind}}
\newcommand{\Pro}{\mathrm{Pro}}
\newcommand{\GL}{\mathrm{GL}}
\newcommand{\PGL}{\mathrm{PGL}}
\newcommand{\Ext}{\mathrm{Ext}}
\newcommand{\Tor}{\mathrm{Tor}}
\newcommand{\Sch}{\mathrm{Sch}}
\newcommand{\Mod}{\mathrm{Mod}}
\newcommand{\Spc}{\mathrm{Spc}}
\newcommand{\Set}{\mathrm{Set}}
\newcommand{\Ab}{\mathrm{Ab}}
\newcommand{\ord}{\mathrm{ord}}
\newcommand{\R}{\mathrm{R}}
\newcommand{\Ch}{\mathrm{Ch}}
\newcommand{\et}{\mathrm{\acute{e}t}}
\newcommand{\proet}{ {\mathrm{pro}\et} }
\newcommand{\cons}{\mathrm{cons}}
\newcommand{\aff}{\mathrm{aff}}
\newcommand{\im}{\mathrm{im}}
\newcommand{\id}{\mathrm{id}}
\newcommand{\ev}{\mathrm{ev}}
\newcommand{\pr}{\mathrm{pr}}
\newcommand{\dR}{\mathrm{dR}}
\newcommand{\perf}{\mathrm{perf}}
\newcommand{\can}{\mathrm{can}}
\newcommand{\comp}{\mathrm{comp}}
\newcommand{\opp}{\mathrm{op}}
\renewcommand{\ker}{\mathrm{ker}}
\newcommand{\cok}{\mathrm{cok}}
\newcommand{\naive}{\mathrm{naive}}
\newcommand{\cart}{\mathrm{cart}}
\newcommand{\cont}{\mathrm{cont}}
\newcommand{\Loc}{\mathrm{Loc}}
\newcommand{\Cov}{\mathrm{Cov}}
\newcommand{\Rep}{\mathrm{Rep}}
\newcommand{\Cons}{\mathrm{Cons}}
\newcommand{\Stab}{\mathrm{Stab}}
\newcommand{\fram}{\mathfrak{m}}
\newcommand{\comment}[1]{}
\newcommand{\cosimp}[3]{\xymatrix@1{#1 \ar@<.4ex>[r] \ar@<-.4ex>[r] & {\ }#2 \ar@<0.8ex>[r] \ar[r] \ar@<-.8ex>[r] & {\ } #3 \ar@<1.2ex>[r] \ar@<.4ex>[r] \ar@<-.4ex>[r] \ar@<-1.2ex>[r] & \cdots }}
\DeclareMathOperator*{\colim}{colim}
\newcommand{\equalizer}[2]{\xymatrix@1{#1 \ar@<.4ex>[r] \ar@<-0.4ex>[r] & {\ } #2}}
\newcommand{\adjunction}[4]{\xymatrix@1{#1{\ } \ar@<-0.3ex>[r]_{ {\scriptstyle #2}} & {\ } #3 \ar@<-0.3ex>[l]_{ {\scriptstyle #4}}}}
\begin{document}
\bibliographystyle{alpha}
\newtheorem{theorem}{Theorem}[subsection]
\newtheorem*{theorem*}{Theorem}
\newtheorem*{condition*}{Condition}
\newtheorem*{definition*}{Definition}
\newtheorem*{corollary*}{Corollary}
\newtheorem{proposition}[theorem]{Proposition}
\newtheorem{lemma}[theorem]{Lemma}
\newtheorem{corollary}[theorem]{Corollary}
\newtheorem{claim}[theorem]{Claim}
\newtheorem{definition}[theorem]{Definition}

\theoremstyle{definition}
\newtheorem{question}[theorem]{Question}
\newtheorem{remark}[theorem]{Remark}
\newtheorem{guess}[theorem]{Guess}
\newtheorem{example}[theorem]{Example}
\newtheorem{condition}[theorem]{Condition}
\newtheorem{warning}[theorem]{Warning}
\newtheorem{notation}[theorem]{Notation}
\newtheorem{construction}[theorem]{Construction}

\title{The pro-\'etale topology for schemes}
\author{Bhargav Bhatt and Peter Scholze}

\begin{abstract}
We give a new definition of the derived category of constructible $\overline{\Q}_\ell$-sheaves on a scheme, which is as simple as the geometric intuition behind them. Moreover, we define a refined fundamental group of schemes, which is large enough to see all lisse $\overline{\Q}_\ell$-sheaves, even on non-normal schemes. To accomplish these tasks, we define and study the pro-\'etale topology, which is a Grothendieck topology on schemes that is closely related to the \'etale topology, and yet better suited for infinite constructions typically encountered in $\ell$-adic cohomology. An essential foundational result is that this site is locally contractible in a well-defined sense.
\end{abstract}
\dedicatory{To G\'erard Laumon, with respect and admiration}
\maketitle
\tableofcontents
\pagebreak

\section{Introduction}

\renewcommand{\thesubsection}{\arabic{section}}

Let $X$ be a variety over an algebraically closed field $k$. The \'etale cohomology groups $H^i(X_\et,\overline{\Q}_\ell)$, where $\ell$ is a prime different from the characteristic of $k$, are of fundamental importance in algebraic geometry. Unfortunately, the standard definition of these groups is somewhat indirect. Indeed, contrary to what the notation suggests, these groups are not obtained as the cohomology of a sheaf $\overline{\Q}_\ell$ on the \'etale site $X_\et$. The \'etale site gives the correct answer only with torsion coefficients, so the correct definition is
\[
H^i(X_\et,\overline{\Q}_\ell) := (\varprojlim_n H^i(X_\et,\Z/\ell^n\Z) )\otimes_{\Z_\ell} \overline{\Q}_\ell\ .
\]
In this simple situation, this technical point is often unproblematic\footnote{It becomes a problem as soon as one relaxes the assumptions on $k$, though. For example, even for $k = \Q$, this definition is not correct: there is no Hochschild-Serre spectral sequence linking these naively defined cohomology groups of $X$ with those of $X_{\overline{k}}$.  One must account for the higher derived functors of inverse limits to get a theory linked to the geometry of $X_{\overline{k}}$,  see \cite{JannsenContsCoh}.}. However, even here, it takes effort to construct a natural commutative differential graded $\overline{\Q}_\ell$-algebra giving rise to these cohomology groups. This so-called $\overline{\Q}_\ell$-homotopy type was constructed by Deligne in \cite{DeligneWeil2}, using certain subtle integral aspects of homotopy theory due independently to Miller \cite{MillerIntegraldeRham} and Grothendieck. 

For more sophisticated applications, however, it is important to work in a relative setup (i.e., study constructible sheaves), and keep track of the objects in the derived category, instead of merely the cohomology groups. In other words, one wants a well-behaved derived category $D^b_c(X,\overline{\Q}_\ell)$ of constructible $\overline{\Q}_\ell$-sheaves. Deligne, \cite{DeligneWeil2}, and in greater generality Ekedahl, \cite{Ekedahl}, showed that it is possible to define such a category along the lines of the definition of $H^i(X_\et,\overline{\Q}_\ell)$. Essentially, one replaces $H^i(X_\et,\Z/\ell^n\Z)$ with the derived category $D^b_c(X,\Z/\ell^n\Z)$ of constructible $\Z/\ell^n\Z$-sheaves, and then performs all operations on the level of categories:\footnote{In fact, Ekedahl only defines the derived category of constructible $\Z_\ell$-sheaves, not performing the final $\otimes_{\Z_\ell} \overline{\Q}_\ell$-step.}
\[
D^b_c(X,\overline{\Q}_\ell) := (\varprojlim_n D^b_c(X,\Z/\ell^n\Z))\otimes_{\Z_\ell} \overline{\Q}_\ell\ .
\]
Needless to say, this presentation is oversimplified, and veils substantial technical difficulties.

Nonetheless, in daily life, one pretends (without getting into much trouble) that $D^b_c(X,\overline{\Q}_\ell)$ is simply the full subcategory of some hypothetical derived category $D(X,\overline{\Q}_\ell)$ of all $\overline{\Q}_\ell$-sheaves spanned by those bounded complexes whose cohomology sheaves are locally constant along a stratification. Our goal in this paper to justify this intuition, by showing that the following definitions recover the classical notions. To state them, we need the pro-\'etale site $X_\proet$, which is introduced below. For any topological space $T$, one has a `constant' sheaf on $X_\proet$ associated with $T$; in particular, there is a sheaf of (abstract) rings $\overline{\Q}_\ell$ on $X_\proet$ associated with the topological ring $\overline{\Q}_\ell$.

\begin{definition} Let $X$ be a scheme whose underlying topological space is noetherian.
	\begin{enumerate}
	\item A sheaf $L$ of $\overline{\Q}_\ell$-modules on $X_\proet$ is {\em lisse} if it is locally free of finite rank.
	\item A sheaf $C$ of $\overline{\Q}_\ell$-modules on $X_\proet$ is {\em constructible} if there is a finite stratification $\{X_i\to X\}$ into locally closed subsets $X_i\subset X$ such that $C|_{X_i}$ is lisse.
	\item An object $K\in D(X_\proet,\overline{\Q}_\ell)$ is {\em constructible} if it is bounded, and all cohomology sheaves are constructible. Let $D^b_c(X,\overline{\Q}_\ell)\subset D(X_\proet,\overline{\Q}_\ell)$ be the corresponding full triangulated subcategory.
\end{enumerate}
\end{definition}

The formalism of the six functors is easily described in this setup. In particular, in the setup above, {\it with the naive interpretation of the right-hand side}, one has
\[
H^i(X_\et,\overline{\Q}_\ell) = H^i(X_\proet,\overline{\Q}_\ell)\ ;
\]
for general $X$, one recovers Jannsen's continuous \'etale cohomology, \cite{JannsenContsCoh}. Similarly, the complex $\R\Gamma(X_\proet,\overline{\Q}_\ell)$ is obtained by literally applying the derived functor $\R\Gamma(X_\proet,-)$ to a sheaf of $\Q$-algebras, and hence naturally has the structure of a commutative differential graded algebra by general nonsense (see \cite[\S 2]{OlssonnonabelianpHT} for example); this gives a direct construction of the $\overline{\Q}_\ell$-homotopy type in complete generality.

A version of the pro-\'etale site was defined in \cite{ScholzepHT} in the context of adic spaces. The definition given there was somewhat artificial, mostly because non-noetherian adic spaces are not in general well-behaved. This is not a concern in the world of schemes, so one can give a very simple and natural definition of $X_\proet$. Until further notice, $X$ is allowed to be an arbitrary scheme.

\begin{definition}\ 
	\begin{enumerate}
	\item A map $f: Y\to X$ of schemes is {\em weakly \'etale} if $f$ is flat and $\Delta_f: Y\to Y\times_X Y$ is flat.
	\item The {\em pro-\'etale site} $X_\proet$ is the site of weakly \'etale $X$-schemes, with covers given by fpqc covers.
\end{enumerate}
\end{definition}

Any map between weakly \'etale $X$-schemes is itself weakly \'etale, and the resulting topos has good categorical properties, like coherence (if $X$ is qcqs) and (hence) existence of enough points.  For this definition to be useful, however, we need to control the class of weakly \'etale morphisms. In this regard, we prove the following theorem.

\begin{theorem} Let $f: A\to B$ be a map of rings.
\begin{enumerate}
\item $f$ is \'etale if and only if $f$ is weakly \'etale and finitely presented.
\item If $f$ is ind-\'etale, i.e. $B$ is a filtered colimit of \'etale $A$-algebras, then $f$ is weakly \'etale.
\item If $f$ is weakly \'etale, then there exists a faithfully flat ind-\'etale $g: B\to C$ such that $g\circ f$ is ind-\'etale.
\end{enumerate}
\end{theorem}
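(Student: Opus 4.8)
For (1), I would observe that one direction is standard: an \'etale morphism is flat and finitely presented by definition, and, being unramified, it has open --- hence flat --- diagonal, so it is weakly \'etale. For the converse, let $f$ be weakly \'etale and finitely presented. A morphism of affine schemes is separated, so $\Delta_f\colon\Spec B\to\Spec(B\otimes_A B)$ is a closed immersion, and finite presentation of $f$ forces the diagonal ideal $I=\ker(B\otimes_A B\to B)$ to be finitely generated. Flatness of $\Delta_f$ gives $I=I^2$, and a finitely generated idempotent ideal is generated by a single idempotent; hence $\Delta_f$ is an open immersion, i.e.\ $f$ is unramified, and a flat, unramified, finitely presented morphism is \'etale. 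Finite presentation really is needed here: an infinite separable algebraic field extension is weakly \'etale but not \'etale.

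For (2), I would first invoke (1) to see that \'etale morphisms are weakly \'etale, so that it suffices to show weakly \'etale maps are stable under filtered colimits of $A$-algebras. Write $B=\colim_i B_i$ with each $A\to B_i$ \'etale. Then $A\to B$ is flat, being a filtered colimit of flat modules. Since filtered colimits are exact and the diagonal is cofinal, $B\otimes_A B=\colim_i(B_i\otimes_A B_i)$, so the diagonal ideal is $I=\colim_i I_i$ with $I_i=\ker(B_i\otimes_A B_i\to B_i)$. Each $A\to B_i$ being unramified, $I_i=(e_i)$ for an idempotent $e_i$; writing $\bar e_i$ for the image of $e_i$ in $B\otimes_A B$, every element of $I$ lies in some $(\bar e_j)$ and is therefore fixed by the idempotent $\bar e_j\in I$. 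Hence $I$ is a pure ideal, and a ring modulo a pure ideal is flat, so $B\otimes_A B\to B$ is flat and $f$ is weakly \'etale.

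Part (3) is the substance of the theorem, and I expect it to be the main obstacle. My plan is first to record the formal stability properties: weakly \'etale and ind-\'etale morphisms are each stable under composition and base change, and the conclusion for $A\to B$ is unchanged if $A$ is replaced by a faithfully flat ind-\'etale $A$-algebra $A'$ and $B$ by $B\otimes_A A'$ --- a solution $C'$ over $A'$ yields $C:=C'$, with $B\to B\otimes_A A'\to C'$ faithfully flat ind-\'etale and $A\to A'\to C'$ ind-\'etale. This reduces the problem, roughly, to analyzing weakly \'etale morphisms over strictly henselian local bases. The essential structural input, going back to Olivier's thesis, is that a weakly \'etale \emph{local} homomorphism of strictly henselian local rings is an isomorphism --- equivalently, any weakly \'etale morphism becomes an isomorphism after strict henselization at a point of the source. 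Granting this, over a strictly henselian local base a weakly \'etale algebra is, locally on its spectrum, a localization of the base; globalizing, I would construct the required $C$ as a filtered union of ind-\'etale $B$-algebras, at each stage adjoining enough to trivialize $A\to B$ over the strict henselizations of the not-yet-treated points of $\Spec B$, and use a Zorn's lemma / transfinite induction to see that the construction stabilizes.

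The two delicate points --- and the reason (3) is not soft like (1) and (2) --- are Olivier's theorem, which is genuinely nontrivial (it is the ``going up along absolutely flat maps'' input), and the bookkeeping needed to guarantee that the algebra assembled in the limit is ind-\'etale over $B$ and not merely weakly \'etale, which would render the argument circular.
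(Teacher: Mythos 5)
Parts (1) and (2) of your proposal are correct. For (1) you argue via the finitely generated idempotent diagonal ideal rather than, as the paper does, via vanishing of the cotangent complex ($\mathbb{L}_{B/A}=0$ for weakly \'etale maps, so $f$ is formally \'etale); your route is more elementary and equally valid. For (2) your purity argument for the diagonal ideal of a filtered colimit is a correct expansion of the paper's one-line reduction to the \'etale case.

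Part (3) is where the proposal has a genuine gap, and it is exactly the gap the authors flag in their Remark on why Olivier's theorem does not directly apply. You correctly identify Olivier's theorem (a weakly \'etale local map of strictly henselian local rings is an isomorphism) as the key input, and you correctly sense that the globalization is the delicate point --- but the globalization you propose is the naive one that fails. Adjoining strict henselizations of $B$ ``point by point'' and passing to the limit does not produce a faithfully flat ind-\'etale $B$-algebra: a finite product $\prod_{x\in S} B^{sh}_x$ is ind-\'etale but not faithfully flat over $B$ (its image in $\Spec B$ need not be all of $\Spec B$, nor even contain a quasi-compact open cover), while the infinite product over all points is faithfully flat but not flat as a $B$-module in the ind-\'etale sense, since infinite products destroy flatness; and there is no natural filtered system interpolating between these whose colimit is both. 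Absent finiteness hypotheses one cannot descend the Olivier sections $B\to A_{f^*\bar x}$ to finitely presented subalgebras, so no Zorn/transfinite bookkeeping of this shape closes the argument. The actual proof replaces the pointwise strict henselizations by a single cover built in two decoupled steps: first a faithfully flat ind-(Zariski localization) $A\to A^Z$ making $\Spec(A^Z)$ \emph{w-local} (closed points form a closed subset homeomorphic to $\Spec(A)$ with its constructible topology, each connected component having a unique closed point), then an ind-\'etale cover of the absolutely flat quotient $A^Z/I_{A^Z}$ followed by henselization along it, yielding a \emph{w-strictly local} faithfully flat ind-\'etale $A$-algebra. Doing this compatibly for $A$ and $B$ reduces (3) to showing that a w-local weakly \'etale map of w-strictly local rings is an ind-(Zariski localization), which is where Olivier's theorem is finally applied, one closed point at a time, with the profinite topology of the closed-point set absorbing all the ``infinitely many points'' bookkeeping. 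Without this (or an equivalent) packaging device, your outline cannot be completed as stated.
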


In other words, for a ring $A$, the sites defined by weakly \'etale $A$-algebras and by ind-\'etale $A$-algebras are equivalent, which justifies the name pro-\'etale site for the site $X_\proet$ defined above. We prefer using weakly \'etale morphisms to define $X_\proet$ as the property of being weakly \'etale is clearly \'etale local on the source and target, while that of being ind-\'etale is not even Zariski local on the target.

One might worry that the pro-\'etale site is huge in an uncontrolled way (e.g., covers might be too large, introducing set-theoretic problems). However, this does not happen. To see this, we need a definition:

\begin{definition} 
	An affine scheme $U$ is {\em w-contractible} if any faithfully flat weakly \'etale map $V \to U$ admits a section. 
\end{definition}

A w-contractible object $U \in X_\proet$ is somewhat analogous to a point in the topos theoretic sense: the functor $\Gamma(U,-)$ is exact and commutes with all limits, rather than colimits. In fact, a geometric point of $X$ defines a w-contractible object in $X_\proet$ via the strict henselisation. However, there are many more w-contractible objects, which is the key to the control alluded to above:

\begin{theorem} 
	\label{thm:IntroWcontractibleCover}
	Any scheme $X$ admits a cover in $X_\proet$ by w-contractible affine schemes.
\end{theorem}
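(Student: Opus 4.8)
The plan is to reduce to the affine case and then build, in three successive stages, a single faithfully flat weakly \'etale map $\Spec A_3 \to X$ with $\Spec A_3$ w-contractible. Since an affine open cover of $X$ is in particular a pro-\'etale cover and covers compose, it suffices to treat $X = \Spec A$ affine. Call an affine scheme \emph{w-local} if its set of closed points is closed and meets each connected component in exactly one point; for such a scheme the closed points, with the subspace topology, map homeomorphically onto the profinite set $\pi_0$, and every point is a generization of a unique closed point. Each stage below will be a faithfully flat \emph{ind-\'etale} map, hence weakly \'etale by part (2) of the structural theorem above.

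\emph{Stage 1 (w-localization).} Take $\Spec A_1$ to be the cofiltered limit of the affine schemes $\coprod_{j} U_j$ over all finite affine open covers $\{U_1,\dots,U_n\}$ of $\Spec A$, with transition maps induced by refinement. Each stage is a disjoint union of open immersions covering $\Spec A$, hence faithfully flat and \'etale over $\Spec A$, so $A \to A_1$ is faithfully flat and ind-\'etale, and one checks that $\Spec A_1$ is w-local. \emph{Stage 2 (strict henselization along the closed points).} Let $Z \cong \pi_0(\Spec A_1)$ be the profinite set of closed points of $\Spec A_1$. Let $\Spec A_2$ be the cofiltered limit of the affine \'etale $\Spec B \to \Spec A_1$ equipped with a compatible section of $\Spec B \times_{\Spec A_1} Z \to Z$ that realizes, at each closed point $x$, a strict henselization of $(A_1)_x$ (choose separable closures of the residue fields and take the corresponding residually trivial \'etale neighborhoods in a family over $Z$). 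Then $A_1 \to A_2$ is ind-\'etale; it is faithfully flat because it is flat and its image contains every closed point of $\Spec A_1$, hence (being stable under generization) all of it; $\Spec A_2$ remains w-local with $\pi_0(\Spec A_2) \cong \pi_0(\Spec A_1)$; and $(A_2)_x$ is now strictly henselian for every closed point $x$.

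\emph{Stage 3 (making $\pi_0$ extremally disconnected).} Choose a continuous surjection $\pi' \twoheadrightarrow \pi := \pi_0(\Spec A_2)$ of profinite sets with $\pi'$ extremally disconnected, for instance the Stone--\v{C}ech compactification of the underlying discrete set of $\pi$. Presenting $\pi'$ as a cofiltered limit of finite sets over finite quotients of $\pi$ --- equivalently, over clopen decompositions $\Spec A_2 = \coprod_{p} X_p$ --- and replacing each clopen piece by a finite disjoint union of copies of itself according to the fibers of $\pi' \to \pi$, we get $A_2 \to A_3$ faithfully flat and ind-\'etale with $\Spec A_3$ w-local, $(A_3)_x$ strictly henselian at each closed point, and $\pi_0(\Spec A_3) = \pi'$ extremally disconnected. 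So $A \to A_3$ is a faithfully flat weakly \'etale cover, and it remains to prove that such an $A_3$ is w-contractible.

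\emph{Verification that $\Spec A_3$ is w-contractible.} Write $A := A_3$ and let $Y \to \Spec A$ be faithfully flat and weakly \'etale; we must split it. First reduce to $Y$ affine: the images of the members of an affine open cover of $Y$ cover the quasi-compact set $Z$ of closed points, so a finite subcover gives an affine scheme $Y'$ (a disjoint union of affine opens of $Y$) whose image contains $Z$; being flat, $Y' \to \Spec A$ has image stable under generization, hence equal to $\Spec A$. Replacing $Y$ by $Y'$ and then invoking part (3) of the structural theorem (a retraction descends along the auxiliary faithfully flat ind-\'etale base change it provides), we may assume $Y = \Spec B$ with $B = \colim_i B_i$ faithfully flat ind-\'etale over $A$ and each $B_i$ \'etale over $A$. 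Now the three structural properties enter: since $\pi' = \pi_0(\Spec A)$ is extremally disconnected, the surjection of profinite sets $\pi_0(\Spec B) \twoheadrightarrow \pi'$ splits, cutting out a clopen union of connected components $Y'' \subseteq \Spec B$ with $\pi_0(Y'') \xrightarrow{\sim} \pi'$ and $Y'' \to \Spec A$ still faithfully flat; since each $A_x$ ($x$ a closed point) is strictly henselian, the faithfully flat ind-\'etale base change $Y'' \times_{\Spec A} \Spec(A_x) \to \Spec(A_x)$ admits a section; and one then glues these pointwise sections over the profinite set $Z$ of closed points into a global section $\Spec A \to Y'' \subseteq Y$. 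This last gluing is the main obstacle: it uses the w-local structure (the closed points of $Y''$ form a profinite set mapping homeomorphically onto $Z$, so one first gets a continuous section on closed points), again the extremal disconnectedness of $Z \cong \pi'$ (to make the choices over $Z$ coherently), the strict henselianity of the $A_x$ (to lift uniquely from a closed point to its pro-open neighborhood), and a compactness argument along $Z$ to patch everything. Granting it, $\Gamma(\Spec A, -)$ preserves surjections, so $\Spec A$ is w-contractible; combined with the initial reduction to the affine case, this proves the theorem.
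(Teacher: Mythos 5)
Your three-stage blueprint (w-localize, strictly henselize at the closed points, enlarge $\pi_0$ to an extremally disconnected profinite set) matches the paper's strategy, but the proof is incomplete precisely where the theorem's content lies: in the final verification you call the gluing of pointwise sections over the profinite set $Z$ of closed points ``the main obstacle'' and then proceed by ``granting it''. This step does not follow from compactness. A section over each strictly henselian component $\Spec((A_3)_x)$ spreads out only to a section of some finite-presentation stage $Y''_i$ over a clopen neighbourhood of $x$; compactness of $Z$ then yields, for each $i$ separately, a section of $Y''_i \to \Spec(A_3)$, but these involve independent choices and need not be compatible as $i$ varies, so they do not assemble into a section of the limit $Y''$. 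The paper circumvents this entirely: it reduces w-contractibility of the constructed ring to that of its absolutely flat quotient by the Jacobson radical (Lemma \ref{lem:cwcontractiblespread}, via the adjunction defining $\Hens$), where the local rings are separably closed fields; there any faithfully flat ind-\'etale cover factors canonically as an ind-(Zariski localization) given by a map of profinite sets followed by an isomorphism (Lemma \ref{lem:cwlocalringsfactorize}), and extremal disconnectedness splits the profinite part (Lemma \ref{lem:cwcontractiblecharacterize}). No pointwise patching occurs.

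The construction stages have gaps of the same nature. In Stage 1 the diagram of finite affine open covers with refinement maps is not cofiltered --- two parallel refinements are not coequalized, and the limit over the full diagram of all refinement maps already has no points over the closed point for a discrete valuation ring --- and ``one checks that $\Spec A_1$ is w-local'' is not checked; the paper instead takes the limit over constructible \emph{stratifications} of the disjoint union of the localizations of $X$ along the strata (Remark \ref{rmk:cwlocalizeschemes}, Lemma \ref{lem:cwlocalringsadjoint}), whose disjointness is what makes the closed points of the limit biject with $X$ in the constructible topology. In Stage 2, choosing strict henselizations ``in a family over $Z$'' again hides uncountably many independent choices over a profinite base; the paper's replacement is choice-free: over the absolutely flat ring $A^Z/I_{A^Z}$ one tensors together \emph{all} faithfully flat \'etale algebras (Lemma \ref{lem:absoluteflatcwstrictlylocal}) and then applies $\Hens_{A^Z}(-)$. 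To complete your argument you would need to rework Stages 1 and 2 along these lines and, above all, supply the gluing you currently grant.
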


Despite the analogy between w-contractible objects and points, Theorem \ref{thm:IntroWcontractibleCover} has stronger consequences than the mere existence of points. For example, the inverse limit functor on systems
\[
\ldots\to F_n\to F_{n-1}\to \ldots \to F_1\to F_0
\]
of sheaves on $X_\proet$ is well-behaved, the derived category of abelian sheaves on $X_\proet$ is left-complete and compactly generated, unbounded cohomological descent holds in the derived category, and Postnikov towers converge in the hypercomplete $\infty$-topos associated with $X_\proet$. This shows that the pro-\'etale site is useful even when working with torsion coefficients, as the derived category of $X_\et$ is left-complete (and unbounded cohomological descent holds) only under finiteness assumptions on the cohomological dimension of $X$, cf.  \cite{LaszloOlsson}.

We note that one can `cut off' $X_\proet$ by only allowing weakly \'etale $X$-schemes $Y$ of cardinality $<\kappa$ for some uncountable strong limit cardinal $\kappa > |X|$,  and all results above, especially the existence of w-contractible covers, remain true. In particular, the resulting truncated site $X_\proet$ forms a set, rather than a proper class, so we can avoid universes in this paper.

Let us explain the local structure of a scheme in the pro-\'etale site. 

\begin{definition}\ 
	\begin{enumerate}
	\item A ring $A$ is {\em w-local} if the subset $(\Spec A)^c\subset \Spec A$ of closed points is closed, and any connected component of $\Spec A$ has a unique closed point.
\item A map $f: A\to B$ of w-local rings is {\em w-local} if $\Spec f: \Spec B\to \Spec A$ maps closed points to closed points.
\end{enumerate}
\end{definition}

The next result shows that every scheme is covered by w-local affines in the pro-Zariski topology, and hence in the pro-\'etale topology. In particular, as noetherian schemes have finitely many connected components,  this shows that non-noetherian schemes are unavoidable when studying $X_\proet$, even for $X$ noetherian.

\begin{theorem} The inclusion of the category of w-local rings with w-local maps in the category of all rings admits a left adjoint $A\mapsto A^Z$. The unit $A\to A^Z$ of the adjunction is faithfully flat and an ind-(Zariski localisation), so $\Spec A^Z \to \Spec A$ is a cover in $\Spec(A)_\proet$. Moreover,  the subset $(\Spec A^Z)^c\subset \Spec A^Z$ of closed points maps homeomorphically to $\Spec A$, equipped with its constructible topology.
\end{theorem}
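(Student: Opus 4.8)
The plan is to build $A^Z$ by hand as an ind-(Zariski localisation) $A$-algebra whose spectrum realises the ``w-localisation'' of the spectral space $X := \Spec A$, and then to verify the universal property directly. Write $X^{\cons}$ for $X$ with its constructible topology, a profinite set. First I would set up a topological model: the subset
\[
X^Z \;:=\; \{(x,y)\in X\times X^{\cons}\ :\ \mathfrak p_x\subseteq\mathfrak p_y\}\;\subseteq\;X\times X^{\cons}
\]
(first coordinate in the usual topology, second in the constructible one) is cut out by a pro-constructible condition, hence is a spectral space, and the first projection $\pi\colon X^Z\to X$ is spectral. The locus in $X^Z$ with fixed second coordinate $y$ is the set of generisations of $y$, mapping to $X$ as the pro-open $\Spec\mathcal O_{X,y}$; letting $y$ range over $X^{\cons}$ exhibits $\pi$ as a cofiltered limit of maps $\bigsqcup_j D(g_j)\to X$. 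Moreover $X^Z$ is w-local: the connected component of a point $(y,y)$ is $\Spec\mathcal O_{X,y}\times\{y\}$, which is connected (a local ring has no nontrivial idempotents) with unique closed point $(y,y)$; and the closed points of $X^Z$ form exactly the diagonal copy of $X^{\cons}$, which is closed because its complement $\{(x,y):\mathfrak p_x\subsetneq\mathfrak p_y\}$ is open — given such a point, any $f\in\mathfrak p_y\setminus\mathfrak p_x$ makes $(D(f)\times V(f))\cap X^Z$ an open neighbourhood inside the complement, using that $V(f)$ is clopen in $X^{\cons}$.

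Next I would promote this to a ring. Each approximating map $\bigsqcup_j D(g_j)\to X$ corresponds to $A\to\prod_j A[1/g_j]$, a finite product of principal localisations (so a Zariski localisation) which is faithfully flat once the $D(g_j)$ cover $X$; choosing the basic opens $D(g_j)$ to contain the corresponding constructible partition pieces arranges this. The delicate point is to organise the cofiltered system of constructible partitions indexing the limit so that the finite levels are genuinely affine — finite products of \emph{basic} opens, not merely quasi-compact opens — with compatible (flat, hence localisation) transition maps. Granting this, $X^Z=\Spec A^Z$ for $A^Z := \colim_\alpha A_\alpha$ a filtered colimit of such products of localisations, with each $A_\alpha$ faithfully flat over $A$; so $A\to A^Z$ is faithfully flat and an ind-(Zariski localisation). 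In particular it is ind-\'etale, hence weakly \'etale by part (2) of the theorem on weakly \'etale maps, so $\Spec A^Z\to\Spec A$ is an fpqc cover by a weakly \'etale $X$-scheme, i.e.\ a cover in $\Spec(A)_{\proet}$. The w-locality of $A^Z$ and the claimed homeomorphism $(\Spec A^Z)^c\cong X^{\cons}$ are the topological facts of the previous paragraph.

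For the universal property, let $A\to B$ with $B$ w-local; I must produce a \emph{unique} w-local $A$-algebra map $A^Z\to B$. Being w-local, $B$ supplies a continuous retraction $r\colon\Spec B\to(\Spec B)^c$ onto its closed points (with $\mathfrak q$ specialising to $r(\mathfrak q)$), and $(\Spec B)^c$ is profinite (closed in $\Spec B$, all points closed). The composite $(\Spec B)^c\hookrightarrow\Spec B\to\Spec A$ factors through $X^{\cons}$ — a continuous map from a profinite set to $\Spec A$ has clopen preimages of basic opens, hence is continuous into $X^{\cons}$ — yielding $g\colon(\Spec B)^c\to X^{\cons}$, and $\varphi\colon\mathfrak q\mapsto\bigl(\text{image of }\mathfrak q\text{ in }X,\ g(r(\mathfrak q))\bigr)$ is a well-defined continuous map $\Spec B\to X^Z=\Spec A^Z$ over $\Spec A$. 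To see $\varphi$ is induced by a ring map, I would descend to the levels: composing $\varphi$ with $\Spec A^Z\to\Spec A_\alpha=\bigsqcup_j D(g_j)$ gives a clopen decomposition $\Spec B=\bigsqcup_j\Spec B_j$; since $\Spec B_j\to\Spec A$ has image in $D(g_j)$, the element $g_j$ is invertible in $B_j$, so $A\to B_j$ factors through $A[1/g_j]$, producing compatible maps $A_\alpha\to B$ and hence $A^Z\to B$. This map is w-local because $\varphi$ carries closed points into the diagonal; and it is the unique w-local $A$-algebra map, since any such induces $\varphi$ on spectra (it is forced on closed points by w-locality, hence everywhere by continuity and the description of specialisations in $X^Z$), and a ring map out of an ind-(Zariski localisation) $A$-algebra is determined by its map of spectra over $\Spec A$. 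This exhibits $A\mapsto A^Z$ as the desired left adjoint.

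The step I expect to be the main obstacle is the passage from the topological model to an honest ring: arranging the cofiltered indexing system so the finite levels are affine, checking faithful flatness of $A\to A_\alpha$, and — most of all — verifying that the resulting $\Spec A^Z$ is homeomorphic to $X^Z$, equivalently that $(\Spec A^Z)^c\to X^{\cons}$ is a homeomorphism and not just a continuous bijection. Once a ring realising the spectral-space picture is available, w-locality, the covering statement, and the adjunction all follow by unwinding the definitions above.
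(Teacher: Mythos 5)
Your overall strategy is the paper's: build the w-localisation first as a spectral space (your $X^Z\subseteq X\times X^{\cons}$ is exactly the space the paper constructs as $\lim_i\bigsqcup_{x\in X_i}(X_i)_x$ over finite $T_0$ quotients, equivalently $\lim\bigsqcup_i\widetilde{X_i}$ over constructible stratifications), lift it to an ind-(Zariski localisation) ring, and verify the universal property via the specialisation retraction $\Spec B\to(\Spec B)^c$ — your last paragraph is essentially the paper's proof of the adjunction, done on rings rather than spectral spaces. The topological analysis and the universal-property argument are fine. The genuine gap is the one you flag yourself and then grant: producing the affine finite levels. This is not a routine verification — it is the actual content of the statement that $A\to A^Z$ is an ind-(Zariski localisation) — and your specific proposal for the finite levels (a basic open $D(g_j)$ containing each constructible stratum) is not the right object. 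The generisation set $\widetilde{C}$ of a constructible piece $C$ is the intersection of all quasi-compact opens containing $C$, and this is in general \emph{not} an intersection of basic opens containing $C$: already for $C=U=D(x)\cup D(y)\subset\Spec k[x,y]$, the only $g$ with $D(g)\supseteq U$ are units, so the intersection of basic opens containing $U$ is all of $\Spec k[x,y]$, not $U$. So a system indexed by (stratification, one basic open per stratum) can overshoot unless you enlarge the index category to allow arbitrary quasi-compact opens containing each stratum and then prove the limit still computes $X^Z$; that argument is doable but is precisely what you have not supplied, and quasi-compact opens are not affine, so it still does not hand you affine levels.

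The way to close the gap (implicit in the paper's appeal to ``cofinality of affine stratifications'' in Remark \ref{rmk:cwlocalizeschemes}, and carried out explicitly in the standard construction) is to restrict to stratifications of the form $U_T=\bigcap_{f\in T}D(f)\cap\bigcap_{f\in E\setminus T}V(f)$ for finite subsets $E\subset A$ and $T\subseteq E$; these are cofinal among constructible stratifications. For such a stratum, the full generisation set $\widetilde{U_T}$ \emph{is} affine: it equals $\Spec$ of the localisation $A_T$ of $A$ at the multiplicative set generated by the elements of $T$ together with all elements congruent to $1$ modulo the ideal $(E\setminus T)$ — a filtered colimit of single localisations $A[1/h]$, hence an ind-(Zariski localisation), though not a basic localisation. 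The finite level is then the honest affine scheme $\Spec\prod_{T\subseteq E}A_T$, faithfully flat over $A$ because the $\widetilde{U_T}\supseteq U_T$ cover $\Spec A$, and $A^Z=\colim_E\prod_T A_T$. With that substitution, the identification of $\Spec A^Z$ with your topological model, the homeomorphism $(\Spec A^Z)^c\cong X^{\cons}$, and your adjunction argument all go through as you describe.
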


In other words, $\Spec A^Z$ is roughly the disjoint union of the local rings of $A$. However, the union is not exactly disjoint; rather, the set of connected components $\pi_0(\Spec A^Z)$ is naturally a profinite set, which is $\Spec A$ with its constructible topology. Thus, the study of w-local rings splits into the study of its local rings at closed points, and the study of profinite sets. It turns out in practice that these two aspects interact little. In particular, this leads to the following characterization of w-contractible schemes.

\begin{theorem} An affine scheme $X=\Spec A$ is w-contractible if and only if $A$ is w-local, all local rings at closed points are strictly henselian, and $\pi_0(X)$ is extremally disconnected.
\end{theorem}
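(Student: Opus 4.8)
The statement is an equivalence, and in both directions the proof naturally separates the two features isolated by the structure theory above: the profinite set $\pi_0$, controlled by extremal disconnectedness, and the local rings at closed points, controlled by strict Henselisation. I would establish necessity and sufficiency in turn, treating these two aspects essentially independently.

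\emph{Necessity.} Suppose $X=\Spec A$ is w-contractible. Applying w-contractibility to the faithfully flat weakly \'etale cover $\Spec A^Z\to\Spec A$ of the theorem on $A\mapsto A^Z$ produces a section; since an ind-(Zariski localisation) is injective on spectra, this map is then bijective, which forces each Zariski localisation $A\to B_i$ in a presentation $A^Z=\colim_i B_i$ to be surjective on spectra and hence an isomorphism, so $A=A^Z$ is w-local. For strict Henselianity at a closed point $z$: were it to fail, there would be an \'etale $A_z$-algebra $S$ admitting no $A_z$-section (in the non-Henselian case, chosen with $S\otimes_{A_z}\kappa(z)=\kappa(z)$, so that the missing residual section is the unique one); spread $S$ out to an \'etale algebra $S_1$ over $\mathcal{O}(U)$ for some open $U\ni z$. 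Since $\{z\}$ is closed in $\Spec A$, the set $\Spec A\setminus\{z\}$ is open, and $V:=\Spec S_1\sqcup(\Spec A\setminus\{z\})\to\Spec A$ is faithfully flat and \'etale; a section of it must send $z$ into the first summand, hence restricts to a section of $\Spec S_1\to\Spec A$ over a clopen neighbourhood of $z$, and base-changing along $\Spec A_z\to U$ (recall $\Spec A_z$ lies in every open neighbourhood of $z$) yields the forbidden $A_z$-section of $\Spec S$. Finally, for extremal disconnectedness of $T:=\pi_0(X)$: given a surjection $E\to T$ of profinite sets, write it as a cofiltered limit of surjections $E_i\to T_{j(i)}$ of finite sets, realise each $T_j$ by a clopen decomposition $X=\bigsqcup_{s\in T_j}X_s$, and form the finite \'etale covers $Y_i:=\bigsqcup_{e\in E_i}X_{\phi(e)}\to X$, which are faithfully flat once $E_i\to T_{j(i)}$ is surjective (true cofinally). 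Then $Y:=\varprojlim_i Y_i\to X$ is faithfully flat and pro-(finite \'etale), hence weakly \'etale, with $\pi_0(Y)=E$, and a section of $Y\to X$ induces a section of $E\to T$; thus $T$ is projective among profinite sets, i.e.\ extremally disconnected.

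\emph{Sufficiency.} Now assume $A$ is w-local with strictly Henselian local rings at closed points and with $T=\pi_0(X)$ extremally disconnected, and let $f\colon V\to X$ be faithfully flat weakly \'etale. Routine reductions (cover $V$ by finitely many affine opens; use that a weakly \'etale map becomes ind-\'etale after a faithfully flat ind-\'etale base change, and that a section of a refinement yields one of $f$; replace $B$ by $B^Z$) reduce us to the case $V=\Spec B$ with $B$ w-local and $f$ ind-\'etale. Identify $T$ with the profinite space of closed points of $\Spec A$; for $t\in T$ the component $X_t=\Spec A_t\subset X$ is a local scheme with closed point $z_t$ and $A_t$ strictly Henselian. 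The fibre of $f$ over $z_t$ is a nonempty ind-\'etale scheme over the separably closed field $\kappa(z_t)$, hence has a $\kappa(z_t)$-point $b_t$; its connected component $C_t\subset V$ is a connected ind-\'etale $X_t$-scheme meeting the fibre over $z_t$ in a point with trivial residue extension, and because the only open neighbourhood of $z_t$ in $X_t$ is $X_t$ itself one deduces $C_t\xrightarrow{\sim}X_t$. Hence the set $G\subset\pi_0(V)$ of ``good'' components — those carried isomorphically onto a component of $X$ by $f$ — surjects onto $T$; granting that $G$ is closed in the profinite space $\pi_0(V)$, extremal disconnectedness makes $T$ a projective object, so $G\to T$ has a continuous section $s$. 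Let $V^s\subset V$ be the union of the components lying in $s(T)$. Then $V^s\to X$ induces the homeomorphism $s(T)\cong T$ on $\pi_0$ and is an isomorphism over every component of $X$; it is therefore ind-\'etale, surjective and universally injective, i.e.\ a weakly \'etale universal homeomorphism, and hence an isomorphism, whose inverse followed by $V^s\hookrightarrow V$ is the desired section of $f$.

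\emph{Where the difficulty lies.} The sufficiency direction is the subtle one, and the crux is the globalisation in the last step. The obstruction is that $f$ is merely ind-\'etale, so the stalkwise isomorphism $\mathcal{O}_{V,b_t}\cong A_t$ does \emph{not} spread out to an \'etale-local isomorphism near $b_t$ once $z_t$ fails to be isolated in $(\Spec A)^c$; this is precisely why one is forced to pass through $\pi_0$ and exploit the projectivity (= extremal disconnectedness) of $\pi_0(X)$. The two inputs I expect to require genuine work, both living in the non-Noetherian commutative algebra of weakly \'etale maps developed earlier in the paper, are the closedness of the good-component locus $G\subset\pi_0(V)$ and the fact that a weakly \'etale universal homeomorphism is an isomorphism.
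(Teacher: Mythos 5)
Your overall architecture is the right one — the theorem is assembled in the paper from the observation that w-contractible rings are w-local, from Lemma \ref{lem:cwstrictlylocalcharacterize}, and from Lemma \ref{lem:cwcontractiblecharacterize} — but one step of your necessity argument is wrong as written. You claim that an ind-(Zariski localisation) is injective on spectra and deduce $A=A^Z$. Both assertions are false: a single Zariski localisation $A\to\prod_{i}A[1/f_i]$ already fails to be injective on $\Spec$ when the $D(f_i)$ overlap, and $\Spec A^Z=\bigsqcup_x (\Spec A)_x$ is essentially never injective over $\Spec A$ (for a strictly henselian DVR $A$, which \emph{is} w-contractible by the theorem, $\Spec A^Z$ has three points over the two points of $\Spec A$, so $A\neq A^Z$). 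The conclusion you want is still easy, but by a different mechanism: the section $s\colon \Spec A\to \Spec A^Z$ is a closed immersion because $\Spec A^Z\to\Spec A$ is separated, so $\Spec A$ is homeomorphic to a closed subspace of the w-local space $(\Spec A)^Z$ and is w-local by Lemma \ref{lem:closedincwlocal}. The other two necessity arguments (the cover $\Spec S_1\sqcup(X\setminus\{z\})$ for strict henselianity, and the pro-(finite \'etale) covers built from clopen decompositions for extremal disconnectedness) are fine and agree in substance with the paper.

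For sufficiency you take a more direct route than the paper, and the two points you defer are exactly where the content sits. The paper avoids both by first invoking Lemma \ref{lem:cwcontractiblespread} to replace $A$ by its absolutely flat quotient $A/I_A$; after that reduction every component of $\Spec B$ is ``good'' (source and target components are single points with the same separably closed residue field, by Theorem \ref{t:Olivier}), so your set $G$ is all of $\pi_0(V)$ and the closedness question evaporates, and Lemma \ref{lem:cwlocalringsfactorize} identifies $B$ with the ind-(Zariski localisation) of $A$ attached to $\pi_0(V)\to\pi_0(X)$, so the piece over $s(T)$ is literally $A$ and no ``universal homeomorphism is an isomorphism'' lemma is needed. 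If you insist on the direct route, both deferred points are true: $G$ is closed because it coincides with the preimage of the closed set $X^c$ under $V^c\to X$ (any component whose closed point lies over a closed point of $X$ is automatically good, since the residue extension is separable algebraic over a separably closed field and Theorem \ref{t:Olivier} then applies to the local map of local rings); and $V^s\to X$ is an isomorphism because it is faithfully flat, weakly \'etale, bijective on spectra with trivial residue extensions, so the multiplication $B^s\otimes_A B^s\to B^s$ is a flat ring surjection with nil kernel equal to its own square, hence injective, and faithfully flat descent then gives $A=B^s$. Finally, your justification of $C_t\simeq X_t$ (``the only open neighbourhood of $z_t$ is $X_t$'') applies verbatim only to the \'etale stages of a presentation of the ind-\'etale map; you must then pass to the limit over the resulting compatible copies of $X_t$, or simply quote Olivier's theorem.
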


Recall that a profinite set $S$ is extremally disconnected if the closure of any open subset $U\subset S$ is still open. By a theorem of Gleason, $S$ is extremally disconnected if and only if $S$ is projective in the category of compact Hausdorff spaces, i.e., any surjective map $T\to S$ from a compact Hausdorff space $T$ admits a section. In particular, the Stone-Cech compactification of any discrete set is extremally disconnected, which proves the existence of enough such spaces. Using this construction, if $A$ is w-local, it is relatively easy to construct a faithfully flat ind-\'etale $A$-algebra $B$ satisfying the conditions of the theorem, which proves the existence of enough w-contractible schemes.

As a final topic, we study the fundamental group. In SGA1,  a profinite group $\pi_1^\et(X,x)$ is defined for any connected scheme $X$ with a geometric point $x$. It has the property that the category of lisse $\Z_\ell$-sheaves on $X$ is equivalent to the category of continuous representations of $\pi_1^\et(X,x)$ on finite free $\Z_\ell$-modules. However, the analogue for lisse $\Q_\ell$-sheaves fails (unless $X$ is geometrically unibranch) as $\Q_\ell$-local systems admit $\Z_\ell$-lattices only \'etale locally. For example, if $X$ is $\P^1$  with $0$ and $\infty$ identified (over an algebraically closed field), then $X$ admits a cover $f: Y\to X$ where $Y$ is an infinite chain of $\P^1$'s. One can descend the trivial $\Q_\ell$-local system on $Y$ to $X$ by identifying the fibres at $0$ and $\infty$ using any unit in $\Q_\ell$, e.g. $\ell\in \Q_\ell^\times$. However, representations of $\pi_1^\et(X,x) = \hat{\Z}$ with values in $\GL_1(\Q_\ell)$ will have image in $\GL_1(\Z_\ell)$ by compactness. This suggests that the 'true' $\pi_1$ of $X$ should be $\Z\subset \hat{\Z} = \pi_1^\et(X,x)$. In fact, in SGA3 X6, a prodiscrete group $\pi_1^{\mathrm{SGA3}}(X,x)$ is defined, which gives the desired answer in this example. Its defining property is that $\Hom(\pi_1^{\mathrm{SGA3}}(X,x),\Gamma)$ is in bijection with $\Gamma$-torsors trivialized at $x$, for any discrete group $\Gamma$. However, in general, $\pi_1^{\mathrm{SGA3}}(X,x)$ is still too small to detect all $\Q_\ell$-local systems through its finite dimensional continuous $\Q_\ell$-representations: the failure is visible already for $X$ a high-genus curve with two points identified (this example is due to Deligne, and recalled in  Example \ref{ex:delignenodalcurve}). 

We circumvent the issues raised above by working with a larger category of ``coverings'' than the ones used in constructing $\pi_1^\et(X,x)$ and $\pi_1^{\mathrm{SGA3}}(X,x)$. To recover groups from such categories, we study some general infinite Galois theory. The formalism leads to the following kind of groups.

\begin{definition} 
	A topological group $G$ is called a {\em Noohi group} if $G$ is complete, and admits a basis of open neighborhoods of $1$ given by open subgroups.
\end{definition}

The word ``complete'' above refers to the two-sided uniform structure on $G$ determined by its open subgroups. For example, locally profinite groups, such as $\GL_n(\Q_\ell)$, are Noohi groups. Somewhat more surprisingly, $\GL_n(\overline{\Q}_\ell)$ is also a Noohi group. The main result is:

\begin{theorem} Let $X$ be a connected scheme whose underlying topological space is locally noetherian. The following categories are equivalent.
\begin{enumerate}
\item The category $\Loc_X$ of sheaves on $X_\proet$ which are locally constant.
\item The category $\Cov_X$ of \'etale $X$-schemes $Y$ which satisfy the valuative criterion of properness.
\end{enumerate}
For any geometric point $x$ of $X$, the infinite Galois theory formalism applies to $\Loc_X$ equipped with the fibre functor at $x$, giving rise to a Noohi group $\pi_1^\proet(X,x)$. The pro-finite completion of $\pi_1^\proet(X,x)$ is $\pi_1^\et(X,x)$, and the pro-discrete completion of $\pi_1^\proet(X,x)$ is $\pi_1^{\mathrm{SGA3}}(X,x)$. Moreover, $\Q_\ell$-local systems on $X$ are equivalent to continuous representations of $\pi_1^\proet(X,x)$ on finite-dimensional $\Q_\ell$-vector spaces, and similarly for $\Q_\ell$ replaced by $\overline{\Q}_\ell$.
\end{theorem}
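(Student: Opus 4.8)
The plan is to reduce the whole assertion to the pro-\'etale-local structure of $X$ (Theorem~\ref{thm:IntroWcontractibleCover}) and then to feed the category $\Loc_X$ into the general infinite Galois formalism. For the equivalence of (1) and (2): sending $Y\in\Cov_X$ to the sheaf $h_Y=\Hom_X(-,Y)$ it represents on $X_\proet$ (an \'etale, hence weakly \'etale, $X$-scheme lies in $X_\proet$) is fully faithful by Yoneda, so it suffices to identify its essential image with $\Loc_X$. Both $U\mapsto\Loc_U$ and $U\mapsto\Cov_U$ are stacks for the pro-\'etale topology --- the first tautologically, the second because ``weakly \'etale and satisfying the valuative criterion of properness'' is fpqc-local on the target and such morphisms satisfy effective descent --- so it is enough to check the equivalence over a w-contractible affine $U$. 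There, any faithfully flat weakly \'etale $V\to U$ has a section, so a locally constant sheaf on $U$, being constant on some such $V$, is already constant (pull back along the section); thus $\Gamma(U,-)\colon\Loc_U\xrightarrow{\sim}\Set$. Dually, an object $Y\in\Cov_U$ has clopen image in $U$ (open since \'etale, closed by the valuative criterion), admits a section over that image whose image is again clopen, and --- splitting this off and iterating via Zorn's lemma --- is a coproduct of copies of clopen subschemes of $U$, giving $\Cov_U\xrightarrow{\sim}\Set$ compatibly with $\Loc_U$. Descent along the \v{C}ech nerve of a w-contractible cover then yields $\Cov_X\xrightarrow{\sim}\Loc_X$; local noetherianness is used to control the valuative criterion and to ensure that connected components of the \'etale $X$-schemes occurring are open.

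Next, fix a geometric point $x\colon\Spec\Omega\to X$. Since $\Spec$ of an algebraically closed field is w-contractible, the fibre functor $x^\ast\colon\Loc_X\to\Set$ is simply global sections on $\Spec\Omega$, where $\Loc_{\Spec\Omega}\simeq\Set$. I would then verify that $(\Loc_X,x^\ast)$ satisfies the axioms of the infinite Galois formalism: $\Loc_X$ is cocomplete and $x^\ast$ is exact, hence preserves colimits; $x^\ast$ is conservative because $X$ is connected (the support and the isomorphism locus of maps of locally constant sheaves are clopen); and every object is a coproduct of connected ones, by the equivalence with $\Cov_X$ and the openness of connected components. The formalism then produces the Noohi group $\pi_1^\proet(X,x):=\Aut(x^\ast)$, topologised so that open subgroups are the stabilisers of points in the sets $x^\ast L$, together with an equivalence of $\Loc_X$ with the category of continuous $\pi_1^\proet(X,x)$-sets.

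To identify the completions, consider inside $\Loc_X$ the full subcategory of sheaves with finite stalks: under the equivalences it corresponds to finite \'etale $X$-schemes --- a member of $\Cov_X$ with finite fibres is separated, quasi-finite and satisfies the valuative criterion, hence is finite \'etale, and the converse is immediate --- and this is precisely the Galois category of SGA1 with fibre functor $x^\ast$, whose automorphism group is $\pi_1^\et(X,x)$; the inclusion exhibits $\pi_1^\et(X,x)$ as the profinite completion of $\pi_1^\proet(X,x)$. Similarly, the full subcategory corresponding to the coverings used in SGA3~X.6 has automorphism group $\pi_1^{\mathrm{SGA3}}(X,x)$, which one matches with the pro-discrete completion $\varprojlim_N\pi_1^\proet(X,x)/N$ over open normal subgroups $N$, comparing on both sides the universal property that maps into a discrete group $\Gamma$ classify $\Gamma$-torsors trivialised at $x$.

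Finally, a lisse $\Q_\ell$-sheaf on $X_\proet$ contains a $\Z_\ell$-lattice pro-\'etale-locally; over an open subgroup this lattice is a lisse $\Z_\ell$-sheaf, and lisse $\Z_\ell$-sheaves are equivalent to continuous representations $\pi_1^\proet(X,x)\to\GL_n(\Z_\ell)$ (argued as in the finite case but retaining the actual Noohi group in place of its profinite completion); inverting $\ell$ and invoking that $\GL_n(\Q_\ell)$ is a Noohi group --- so ``continuous'' is unambiguous --- gives the equivalence with continuous representations on finite-dimensional $\Q_\ell$-vector spaces. For $\overline{\Q}_\ell=\varinjlim_f\Q_{\ell^f}$, the noetherian hypothesis allows spreading out: after a single pro-\'etale cover a lisse $\overline{\Q}_\ell$-sheaf is already defined over some $\Q_{\ell^f}$, reducing to the previous case, after recording (as in the introduction) that $\GL_n(\overline{\Q}_\ell)$ is also a Noohi group. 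I expect the main obstacle to be the equivalence of (1) and (2) --- effectivity of pro-\'etale descent for $\Cov$ and the ``coproduct of copies of $U$'' description over w-contractible $U$ --- together with checking that the abstract infinite Galois machine genuinely applies (that $\Aut(x^\ast)$ is Noohi and recovers $\Loc_X$); the spreading-out step for $\overline{\Q}_\ell$ is the other place where local noetherianness is essential.
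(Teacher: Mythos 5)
There are two genuine gaps. The first is your local claim that $\Cov_U \simeq \Set$ over a w-contractible affine $U$, with every geometric covering splitting into clopen pieces by iterated section-splitting. As stated, this argument uses only the w-contractibility of $U$ and would therefore apply to a w-contractible cover of \emph{any} scheme --- but the closed-adic-disc example at the end of \S\ref{ss:LocConsSheaves} exhibits a connected (non-locally-noetherian) $X$ with an object of $\Cov_X$ that is not in $\Loc_X$, so the argument proves too much and must be wrong. Concretely: over a w-contractible $U$ a geometric covering is only the pullback of a classical sheaf on the profinite set $\pi_0(U)$ (Lemma \ref{lem:localgebraicspace}), i.e.\ a filtered colimit of finite locally constant sheaves; its fibre-cardinality function on $\pi_0(U)$ can jump at limit points, and your transfinite splitting never exhausts such an object (also, showing that the image of a section is \emph{closed} already presupposes separatedness of the covering, which in the paper is deduced from the $w\Loc$ description rather than assumed). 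The genuinely hard step, $w\Loc_X = \Loc_X$, is where local topological noetherianity enters: one restricts to the finitely many generic points $X_\eta$, notes that every connected component of $U$ is a strict henselisation and hence meets the fibre over $X_\eta$, so $\pi_0(\overline{Y_\eta}) \to \pi_0(U)$ is surjective, and then uses extremal disconnectedness of $\pi_0(U)$ together with the field case (Example \ref{ex:coveringsfield}). Relatedly, your appeal to effective pro-\'etale descent for $\Cov$ hides the representability question (why is the descended object a scheme and not merely a non-quasiseparated algebraic space?); the paper works with sheaves throughout and uses topological noetherianity to get quasiseparatedness.

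The second gap is tameness. You conclude from the axioms of an infinite Galois category (colimits, connected generators, conservative exact fibre functor) that $\Loc_X \simeq \pi_1^\proet(X,x)\textrm{-}\Set$, but Theorem \ref{thm:infGaloisthy}(3) requires in addition that the category be \emph{tame}, i.e.\ that $\Aut(\ev_x)$ act transitively on the fibre of every connected object; Example \ref{ex:higmanstone} gives an infinite Galois category satisfying all your axioms for which the fibre functor is nevertheless not an equivalence onto $\pi_1$-sets. Tameness is the non-formal input of the whole theorem: the paper proves it by explicitly constructing paths --- linking any two geometric points of a connected covering by a chain of specialisations and generalisations, and realising each link by a valuation ring with algebraically closed fraction field, which produces an automorphism of $\ev_x$ carrying one point to the other. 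Without this, $\pi_1^\proet(X,x)$ could a priori be too small to recover $\Loc_X$. The remaining parts of your proposal (identification of the two completions, $\Q_\ell$- and $\overline{\Q}_\ell$-local systems via integral lattices over the covering $X_U$ attached to an open subgroup) follow the paper's route and are fine modulo these two points.
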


Informally, the difference between $\pi_1^\proet(X,x)$ and the classical fundamental groups stems from the existence of pro-\'etale locally constant sheaves that are not \'etale locally constant. This difference manifests itself mathematically in the lack of enough Galois objects, i.e., $\pi_1^\proet(X,x)$ does not have enough open normal subgroups (and thus is not prodiscrete). It is important to note that the construction of $\pi_1^\proet(X,x)$ is not completely formal. Indeed, as with $\pi_1^{\mathrm{SGA3}}(X,x)$, it is not clear a priori that $\pi_1^\proet(X,x)$ contains even a single non-identity element: a cofiltered limit of discrete groups along surjective transition maps can be the trivial group. Thus, one must directly construct elements to show $\pi_1^\proet(X,x)$ is big enough. This is done by choosing actual paths on $X$, thus reuniting the classical point of view from topology with the abstract approach of SGA1.

Finally, let us give a short summary of the different sections. In Section \ref{sec:LocStruc}, we study w-local rings and the like. In Section \ref{sec:Replete}, we study a general topos-theoretic notion (namely, repleteness) which implies left-completeness of the derived category etc. . We also include some discussions on complete sheaves, which are again well-behaved under the assumption of repleteness. In Section \ref{sec:ProEtale}, we introduce the pro-\'etale site, and study its basic properties. The relation with the \'etale site is studied in detail in Section \ref{sec:Etale}. In Section \ref{sec:Constructible}, we introduce constructible sheaves (recalling first the theory for torsion coefficients on the \'etale site), showing that for schemes whose underlying topological space is noetherian, one gets the very simple definition stated above. Finally, in Section \ref{sec:Pi1}, we define the pro-\'etale fundamental group.

{\bf Acknowledgments.} The vague idea that such a formalism should exist was in the air since the paper \cite{ScholzepHT}, and the second-named author received constant encouragement from Michael Rapoport, Luc Illusie and many others to work this out. Martin Olsson's question on the direct construction of the $\Q_\ell$-homotopy type led to the birth of this collaboration, which soon led to much finer results than initially expected. Ofer Gabber suggested that weakly \'etale morphisms could be related to ind-\'etale morphisms. Johan de Jong lectured on some parts of this paper in Stockholm, and provided numerous useful and enlightening comments. Conversations with Brian Conrad also clarified some arguments.

H\'el\`ene Esnault urged us to think about fundamental groups of non-normal schemes from the perspective of the pro-\'etale topology, which led to \S \ref{sec:Pi1}. Moreover, Pierre Deligne generously shared his notes on fundamental groups, which had an important influence on the material in \S \ref{sec:Pi1}, especially in relation to Noohi groups and abstract infinite Galois theory. Deligne's results were slightly weaker: in the language introduced in \S \ref{ss:infgaloistheory}, he first proves that any countably generated (in a suitable sense) infinite Galois category is automatically tame, and then specializes this result to schemes to obtain, using purely abstract arguments, a pro-(Noohi group) from a certain category of ``coverings'' that turns out to be equivalent to $\Cov_X$;  here the pro-structure is dual to the ind-structure describing this category of coverings as a filtered colimit of countably generated infinite Galois categories. After we realized that this pro-group is realized by its limit by using geometric paths, Gabber explained to us his different perspective on fundamental groups, which we explain in Remark \ref{rem:GabberPi1} below.

This work was done while Bhargav Bhatt was supported by NSF grants DMS 1340424 and DMS 1128155, and Peter Scholze was a Clay Research Fellow.

\newpage

\section{Local structure}\label{sec:LocStruc}

\renewcommand{\thesubsection}{\arabic{section}.\arabic{subsection}}

The goal of this section is to study some algebra relevant to the pro-\'etale topology. Specifically, we show: (a) weakly \'etale and pro-\'etale maps define the same Grothendieck topology on rings in \S \ref{subsec:weaklyetaleproetale}, and (b) this Grothendieck topology has enough ``weakly contractible'' objects in \S \ref{subsec:localcontractibility}.

\subsection{Spectral spaces}
\label{subsec:spectralcwlocal}

Let $\calS$ be the category of spectral spaces with spectral maps, and let $\calS_f \subset \calS$ be the full subcategory of finite spectral spaces (= finite $T_0$ spaces), so $\calS = \Pro(\calS_f)$, cf. \cite{HochsterSpectral}. Our main goal is to show that each $X \in \calS$ admits a pro-(open cover) $X^Z \to X$ such that $X^Z$ admits no further non-split open covers. This goal is eventually realized in Lemma \ref{lem:cwlocaladjoint}. Before constructing $X^Z$, however, we introduce and study the subcategory of $\calS$ where spaces of the form $X^Z$ live:

\begin{definition}
	\label{def:cwlocal}
A spectral space $X$ is {\em w-local} if it satisfies:
\begin{enumerate}
	\item All open covers split, i.e., for every open cover $\{U_i \hookrightarrow X\}$, the map $\sqcup_i U_i \to X$ has a section.
	\item The subspace $X^c \subset X$ of closed points is closed.
\end{enumerate}
A map $f:X \to Y$ of w-local spaces is w-local if $f$ is spectral and $f(X^c) \subset Y^c$. Let $i:\calS^{wl} \hookrightarrow \calS$ be the subcategory of w-local spaces with w-local maps. 
\end{definition}

The first condition in Definition \ref{def:cwlocal} is obviously necessary for the promised application. The second condition turns out to be particularly convenient for applications.

\begin{example}
Any profinite set is a w-local space. Any local scheme has a w-local topological space. The collection of w-local spaces is closed under finite disjoint unions.
\end{example}

The property of w-locality passes to closed subspaces:

\begin{lemma}
	\label{lem:closedincwlocal}
	If $X \in \calS^{wl}$, and $Z \subset X$ is closed, then $Z \in \calS^{wl}$.
\end{lemma}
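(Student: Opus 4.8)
The plan is to check directly that $Z$ satisfies the two conditions of Definition~\ref{def:cwlocal}, after noting that $Z$ is automatically spectral. For the latter point I would invoke the standard fact that a closed subspace of a spectral space is spectral: $Z$ is quasi-compact (being closed in the quasi-compact $X$), and $T_0$ and sober as a subspace of a $T_0$ sober space, while the subsets $U\cap Z$ with $U\subseteq X$ quasi-compact open form a basis of quasi-compact opens of $Z$ stable under finite intersection (each $U\cap Z$ is closed in the quasi-compact open $U$, hence quasi-compact). So $Z\in\calS$, and it remains to verify (1) and (2).

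For condition (2) I would show $Z^c = X^c\cap Z$. Since $Z$ is closed in $X$, for any $z\in Z$ the closure of $\{z\}$ taken in $X$ is already contained in $Z$, hence coincides with the closure of $\{z\}$ taken in $Z$; therefore $z$ is closed in $Z$ if and only if it is closed in $X$. Thus $Z^c = X^c\cap Z$, which is closed in $X$ because $X^c$ is (by w-locality of $X$), and hence closed in $Z$.

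For condition (1), given an open cover $\{V_i\hookrightarrow Z\}$ I would choose opens $U_i\subseteq X$ with $V_i = U_i\cap Z$ and adjoin the open $U_\infty := X\setminus Z$ to obtain an open cover $\{U_i\}_i\cup\{U_\infty\}$ of $X$. The key reformulation is that a section of $\bigsqcup_j U_j\to X$ is the same datum as a decomposition $X=\bigsqcup_j W_j$ into pairwise disjoint open subsets with $W_j\subseteq U_j$; w-locality of $X$ supplies one. As $W_\infty\subseteq X\setminus Z$, intersecting with $Z$ gives $Z = \bigsqcup_i (W_i\cap Z)$ with $W_i\cap Z\subseteq U_i\cap Z = V_i$ open in $Z$, which is precisely a section of $\bigsqcup_i V_i\to Z$. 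This establishes (1), so $Z\in\calS^{wl}$.

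The argument is short and I do not anticipate a genuine obstacle; the only step needing a little care is the bookkeeping in (1) --- rephrasing ``the map $\bigsqcup_i U_i\to X$ admits a section'' as the existence of an open partition of $X$ refining the given cover, so that restriction to the closed subspace $Z$ is clean and the auxiliary chart $X\setminus Z$ contributes nothing over $Z$.
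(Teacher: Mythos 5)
Your proof is correct and follows the same route as the paper: extend an open cover of $Z$ to one of $X$ by adjoining $X\setminus Z$, split it using w-locality of $X$, and note $Z^c = X^c\cap Z$. You simply spell out the details (spectrality of $Z$, the reformulation of a section as an open partition) that the paper leaves implicit.
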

\begin{proof}
Open covers of $Z$ split as any open cover of $Z$ extends to one of $X$ (by extending opens and adding $X - Z$). Moreover, it is clear that $Z^c = X^c \cap Z$, so the claim follows.
\end{proof}

Recall that the inclusion $\Pro(\Set_f) \subset \Pro(\calS_f) = \calS$ has a left-adjoint $X \mapsto \pi_0(X)$, i.e.,  the counit $X \to \pi_0(X)$ is the universal spectral map from $X$ to a profinite set. Given a cofiltered presentation $X = \lim_i X_i$ with $X_i \in \calS_f$, we have $\pi_0(X) = \lim_i \pi_0(X_i)$. We use this to give an intrinsic description of w-local spaces:

\begin{lemma}
	\label{lem:cwlocalcharacterize}
	A spectral space $X$ is w-local if and only if $X^c \subset X$ is closed, and every connected component of $X$ has a unique closed point. For such $X$, the composition $X^c \to X \to \pi_0(X)$ is a homeomorphism.
\end{lemma}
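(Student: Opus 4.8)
The plan is to prove the two directions of the equivalence separately, and then deduce the homeomorphism statement. For the forward direction, suppose $X$ is w-local; I must show every connected component $C$ of $X$ has a unique closed point. Since $X^c$ is closed and $C$ is closed (connected components of spectral spaces are closed), $C \cap X^c$ is closed. The closed points of $C$ are exactly $C \cap X^c$: a point of $C$ is closed in $X$ iff it is closed in $C$, because $C$ is closed in $X$, and every closed-in-$C$ point is closed in $X$ since $C$ contains the closure. Now $C$ is a connected spectral space, hence has at least one closed point (spectral spaces have minimal and hence, dually, closed points exist in every closed subspace). For uniqueness, I would use the splitting condition: if $C$ had two distinct closed points $x, y$, I want to produce an open cover of $X$ that cannot split. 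The idea is that in a spectral space one can separate $x$ from $y$ by a quasi-compact open $U$ containing exactly one of them; then $\{U, X \setminus \overline{\{z\}}\}$-type covers, refined appropriately, would disconnect $C$ after pulling back along a section, contradicting connectedness of $C$. This is the step I expect to be the main obstacle — turning the splitting axiom into a statement about connected components requires care, since a section of $\sqcup U_i \to X$ lands each connected component in a single $U_i$ (as $C$ is connected and the $U_i$ are disjoint in the source), and one must leverage that to show the closed points of $C$ cannot be separated.

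For the converse, suppose $X^c$ is closed and every connected component has a unique closed point; I must show all open covers split. Given an open cover $\{U_i \hookrightarrow X\}$, for each connected component $C$ with closed point $x_C$, pick $i(C)$ with $x_C \in U_{i(C)}$. The key point is that $C \subset U_{i(C)}$: indeed $C$ is connected and $C \setminus U_{i(C)}$ is a closed (in $C$) subset not containing the unique closed point, but every nonempty closed subset of a spectral space contains a closed point of the ambient closed subspace — so $C \setminus U_{i(C)} = \emptyset$. Thus the cover restricted to each component is "constant", and one assembles the section componentwise; the content is that the resulting map $X \to \sqcup_i U_i$ is continuous (spectral), which follows since $\pi_0(X)$ is profinite and the decomposition of $X$ into the preimages of a clopen partition of $\pi_0(X)$ refining the assignment $C \mapsto i(C)$ is by quasi-compact opens. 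Here I would use the fact, recalled just before the lemma, that $\pi_0(X) = \lim \pi_0(X_i)$ is profinite, so that the function $C \mapsto i(C)$ (which takes finitely many values, one per $U_i$ used) is locally constant on $\pi_0(X)$ after passing to a suitable finite quotient.

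Finally, for the homeomorphism $X^c \to \pi_0(X)$: the map is the composition $X^c \hookrightarrow X \to \pi_0(X)$. It is a continuous map of spectral spaces (in fact $X^c$ is spectral, being closed in $X$). It is bijective by the characterization just proved — each connected component contains exactly one closed point, and the fibers of $X \to \pi_0(X)$ are precisely the connected components. A continuous bijection from a quasi-compact space to a Hausdorff space is a homeomorphism, so it suffices to note $X^c$ is quasi-compact (closed in the quasi-compact $X$) and $\pi_0(X)$ is Hausdorff (it is profinite). I would remark that this also shows $X^c$ is itself profinite.
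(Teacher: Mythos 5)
Your converse direction is correct and takes a genuinely different route from the paper's. You show each connected component $C$ satisfies $C \subset U_{i(C)}$ by noting that $C \setminus U_{i(C)}$, being closed in the spectral space $C$, would otherwise contain a second closed point of $C$; you then assemble the section over a finite clopen partition of the profinite $X^c \simeq \pi_0(X)$ refining $\{X^c \cap U_i\}$. The paper instead lifts $X^c$ into $Y = \sqcup_i U_i$, forms the pullback $Y' = Y \times_{\pi_0(Y)} Z$ along the image $Z$ of $X^c$ in $\pi_0(Y)$, and proves that $Y' \to X$ is a continuous pro-open bijection, hence a homeomorphism by a compactness argument in the constructible topology; the section is its inverse. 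Your argument is more elementary and avoids the pro-open-bijection lemma entirely; the paper's buys the structural statement that $X$ is homeomorphic to a localization of the cover. Do add the (easy) reduction to a finite cover by quasicompact opens at the start, so that your index function genuinely takes finitely many values and the clopen refinement exists; the paper makes the same reduction. Your treatment of the homeomorphism $X^c \to \pi_0(X)$ (continuous bijection from quasicompact to Hausdorff) is fine and slightly more explicit than the paper's one-line remark.

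The step you flag as ``the main obstacle'' in the forward direction is not one, and you already hold the key: a section of $\sqcup_i U_i \to X$ must send the connected component $C$ into a single summand $U_{i_0}$, and then $C = \pi(s(C)) \subset U_{i_0}$. So if $x \neq y$ are two closed points of $C$, simply take the two-element cover $(X - \{x\}) \sqcup (X - \{y\})$ --- these are open because $x$ and $y$ are closed in $X$ (closed in $C$, which is closed in $X$) --- and observe that $C$ is contained in neither piece. No separation by quasicompact opens and no refinement is needed. This is exactly the paper's argument, except that the paper first reduces to $X$ connected by invoking Lemma \ref{lem:closedincwlocal} (closed subspaces of w-local spaces are w-local) and then applies the cover to $X$ itself.
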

\begin{proof}
	The second part follows immediately from the first as $X^c$ is profinite when $X$ is w-local. For the first, assume that $X$ is w-local; it suffices to show that each connected component has a unique closed point. Then Lemma \ref{lem:closedincwlocal} shows that any connected component is also w-local, so we may assume $X$ is connected. If $X$ has two distinct closed points $x_1,x_2 \in X^c$, then the open cover $(X - \{x_1\}) \sqcup (X - \{x_2\}) \to X$ has no section, which contradicts w-locality. 
	
	Conversely, assume $X^c \subset X$ is closed, and that each connected component has a unique closed point. Then $X^c$ is profinite, and hence $X^c \to \pi_0(X)$ is a homeomorphism. Now fix a finite open cover $\{U_i \hookrightarrow X\}$ with $U_i$ quasicompact. We must show that $\pi:Y := \sqcup_i U_i \to X$ has a section. As $X^c$ is profinite, there is a map $s:X^c \to Y$ lifting the inclusion $X^c \hookrightarrow X$. Let $Z \subset \pi_0(Y)$ be the image of the composite $X^c \stackrel{s}{\to} Y \to \pi_0(Y)$. Then $Z$ is a closed subset of $\pi_0(Y)$, and the canonical maps $X^c \to Z \to \pi_0(X)$ are all homeomorphisms. In particular $Z \hookrightarrow \pi_0(Y)$ is a pro-(open immersion). Let $Y' := Y \times_{\pi_0(Y)} Z \hookrightarrow Y$ be the inverse image. Then $Y'$ is a spectral space with $\pi_0(Y') = Z$. The map $Y' \to Y$ is pro-(open immersion), so the map $\phi:Y' \to X$ is pro-open. One checks from the construction $\phi$ induces a homeomorphism $\pi_0(Y') \to \pi_0(X)$. Moreover, the fibres of $Y' \to \pi_0(Y')$ identify with the fibres of $Y \to \pi_0(Y)$. As the image of $\pi_0(Y') \to \pi_0(Y)$ only contains connected components of $Y$ that contain a point lifting a closed point of $X$, it follows that the fibres of $Y' \to \pi_0(Y')$ map homeomorphically onto the fibres of $X \to \pi_0(X)$. Thus $\phi$ is a continuous pro-open bijection of spectral spaces. Any such map is a homeomorphism by a compactness argument. Indeed, if $U \subset Y'$ is a quasicompact open, then $\phi(U)$ is pro-(quasi-compact open), so $\phi(U) = \cap_i V_i$, where the intersection is indexed by all quasi-compact opens containing $\phi(U)$. Pulling back to $Y'$ shows $U = \cap_i \phi^{-1}(V_i)$. As $Y' - U$ is compact in the constructible topology and each $\phi^{-1}(V_i)$ is constructible, it follows that $U = \phi^{-1}(V_i)$ for some $i$, and hence $\phi(U) = V_i$.
\end{proof}

\begin{remark}
	\label{rmk:specializationmap}
Lemma \ref{lem:cwlocalcharacterize} shows that each w-local space $X$ comes equipped with a canonical ``specialization'' map $s:X \to X^c$, defined as the composition $X \to \pi_0(X) \simeq X^c$. Concretely, any $x \in X$ admits a unique closed specialization $s(x) \in X^c \subset X$; in fact, the connected component spanned by $x$ has $s(x)$ as its unique closed point. Any map in $\calS^{wl}$ preserves specializations and closed points, and is thus compatible with the specialization maps.
\end{remark}

\begin{definition}
Given a closed subspace $Z \subset X$ of a spectral space $X$, we say $X$ is {\em local along $Z$} if $X^c \subset Z$, or equivalently, if every $x \in X$ specializes to a point of $Z$. The (pro-open) subspace of $X$ comprising all points that specialize to a point of $Z$ is called the {\em localization of $X$ along $Z$}.
\end{definition}

\begin{lemma}
	\label{lem:cwlocalspreadout}
A spectral space $X$ that is local along a w-local closed subspace $Z \subset X$ with $\pi_0(Z)\cong \pi_0(X)$ is also w-local.
\end{lemma}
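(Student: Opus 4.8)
The plan is to avoid checking the splitting axiom directly and instead verify the intrinsic criterion of Lemma \ref{lem:cwlocalcharacterize}: a spectral space is w-local as soon as (a) its subspace of closed points is closed, and (b) every connected component has a unique closed point. (Existence of a closed point in each component is automatic, since a connected component of a spectral space is again a nonempty spectral space.) Proving the splitting condition head-on would require reproducing the delicate $\pi_0$-gymnastics from the proof of Lemma \ref{lem:cwlocalcharacterize}, so it is much cleaner to quote that characterization.

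For (a), I would first observe that $X^c = Z^c$. Indeed, since $Z$ is closed in $X$, the $X$-closure of any point $z \in Z$ is contained in $Z$ and hence equals its $Z$-closure; so $z$ is closed in $X$ if and only if it is closed in $Z$. Combined with the hypothesis that $X$ is local along $Z$ (so $X^c \subset Z$), this gives $X^c = Z^c$. Now $Z$ is w-local, so $Z^c$ is closed in $Z$ by Lemma \ref{lem:cwlocalcharacterize}, and since $Z$ is closed in $X$ we conclude that $X^c = Z^c$ is closed in $X$.

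For (b), fix a connected component $C \subset X$; it is the fiber of $X \to \pi_0(X)$ over some $p \in \pi_0(X)$. Contemplating the commuting square built from $Z \hookrightarrow X$ and $\pi_0(Z) \to \pi_0(X)$, and using that by hypothesis $\pi_0(Z) \to \pi_0(X)$ is a homeomorphism, the intersection $C \cap Z$ is exactly the fiber of $Z \to \pi_0(Z)$ over the unique preimage $q$ of $p$; that is, $C \cap Z$ is a single connected component $C'$ of $Z$. Therefore
\[
C^c = C \cap X^c = C \cap Z^c = (C \cap Z) \cap Z^c = C' \cap Z^c,
\]
which is a single point because $Z$ is w-local (Lemma \ref{lem:cwlocalcharacterize} again, applied to the component $C'$). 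Hence every connected component of $X$ has a unique closed point, and with (a) this shows $X$ is w-local. The only step needing genuine (if routine) care is the identification $C \cap Z = C'$, which relies on the fact that for spectral spaces the connected components are precisely the fibers of the map to $\pi_0$; everything else is formal.
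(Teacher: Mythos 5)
Your proof is correct and takes essentially the same route as the paper: both arguments reduce to the two facts $X^c = Z^c$ (using that $X$ is local along the closed subspace $Z$) and $\pi_0(Z)\cong\pi_0(X)$, and then invoke the characterization of w-locality from Lemma \ref{lem:cwlocalcharacterize}. The only cosmetic difference is that the paper verifies the criterion in the form ``$X^c$ closed and $X^c\to\pi_0(X)$ a homeomorphism,'' whereas you check the equivalent component-by-component condition directly.
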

\begin{proof}
It suffices to show that $X^c \subset X$ is closed, and that the composition $X^c \to X \to \pi_0(X)$ is a homeomorphism. Since $X^c = Z^c$, the first claim is clear. The second follows from the w-locality of $Z$: one has $X^c = Z^c$ as before, and $\pi_0(X) = \pi_0(Z)$ by assumption. 
\end{proof}

We recall the structure of limits in $\calS$:

\begin{lemma}
$\calS$ admits all small limits, and the forgetful functor $\calS \to \Set$ preserves these limits.
\end{lemma}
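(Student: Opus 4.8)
The plan is to construct all small limits from small products and equalizers of pairs (a category having both has all small limits, and a functor preserving both preserves all small limits), arranging in each case that the underlying set of the limit is the set-theoretic limit; this makes the claim about the forgetful functor automatic. For a family $\{X_i\}_{i\in I}$ in $\calS$, I would check that the topological product $X=\prod_i X_i$ is spectral and is the categorical product. It is $T_0$ and, by Tychonoff, quasicompact; the sets $\prod_i U_i$ with each $U_i\subset X_i$ quasicompact open and $U_i=X_i$ for all but finitely many $i$ form a basis of quasicompact opens (each being a product of quasicompact spaces) stable under finite intersections; and it is sober, because sober spaces form a reflective subcategory of $\mathrm{Top}$ and hence are closed under limits computed there. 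The projections $X\to X_j$ are spectral, and given spectral maps $f_i\colon Z\to X_i$ the induced continuous map $f\colon Z\to X$ is spectral, since $f^{-1}(\prod_i U_i)=\bigcap_i f_i^{-1}(U_i)$ reduces to a finite intersection of quasicompact opens of $Z$.

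For the equalizer of $f,g\colon X\to Y$ in $\calS$, set $E=\{x\in X:f(x)=g(x)\}$. Here I would invoke the constructible topology: every spectral space $Z$ carries a compact Hausdorff topology $Z_\cons$ refining its own, spectral maps are continuous for the constructible topologies, and a subset of $Z$ closed in $Z_\cons$ (equivalently, pro-constructible) is spectral in the subspace topology, with the inclusion spectral and with quasicompact opens the traces of those of $Z$. Then $f,g\colon X_\cons\to Y_\cons$ are continuous maps into a Hausdorff space, so $E$ is closed in $X_\cons$, hence pro-constructible in $X$; thus $E\in\calS$ and $e\colon E\hookrightarrow X$ lies in $\calS$ with $fe=ge$. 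If $h\colon Z\to X$ in $\calS$ satisfies $fh=gh$, then $h$ factors set-theoretically as $h=e\circ h'$, and $h'\colon Z\to E$ is continuous for the subspace topology and spectral because $(h')^{-1}(V\cap E)=h^{-1}(V)$ for quasicompact open $V\subset X$. So $E$ is the equalizer, with the expected underlying set.

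Assembling products and equalizers gives all small limits in $\calS$, computed on underlying sets by the corresponding limits in $\Set$, which is the assertion. The only non-formal step is the equalizer, where one needs that $\{f(x)=g(x)\}$ is pro-constructible: this is exactly where the compact Hausdorff property of the constructible topology enters. The product case is routine granted Tychonoff and the reflectivity of sober spaces in $\mathrm{Top}$. (One could instead argue abstractly from $\calS=\Pro(\calS_f)$, using that $\calS_f$ has finite limits so $\Pro(\calS_f)$ has all small limits; but the explicit descriptions above are what we will actually use later.)
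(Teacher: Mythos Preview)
Your proof is correct, but it follows a different route from the paper's. The paper argues abstractly: since $\calS=\Pro(\calS_f)$, it suffices to check that $\calS_f$ has fibre products (computed as set-theoretic fibre products with the induced topology), and then pro-categories of finitely complete categories automatically have all small limits. For preservation by the forgetful functor, the paper factors $\calS\to\Set$ as $\calS\xrightarrow{a}\Pro(\Set_f)\xrightarrow{b}\Set$, where $a$ passes to the constructible topology and $b$ forgets; both $a$ and $b$ are right adjoints (to the inclusion $\Pro(\Set_f)\hookrightarrow\calS$ and to Stone--\v{C}ech compactification, respectively), hence preserve limits.

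Your approach, by contrast, builds products and equalizers explicitly in $\mathrm{Top}$ and checks spectrality directly. The equalizer argument via the constructible topology is the key non-formal step, and it is valid: closed-in-constructible (i.e., pro-constructible) subsets of spectral spaces are indeed spectral with spectral inclusion. What your approach buys is a concrete description of the limit as a subspace of a product, which can be useful in practice; what the paper's approach buys is brevity and a conceptual explanation of why the forgetful functor preserves limits (it is a right adjoint). You even note the pro-category argument parenthetically at the end, so you are aware of both. Either is fine here.
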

\begin{proof}
Since $\calS = \Pro(\calS_f)$, it suffices to show that $\calS_f$ admits fibre products. Given maps $X \to Z \gets Y$ in $\calS_f$, one simply checks that a fibre product $X \times_Z Y$ in $\calS_f$ is computed by the usual fibre product $X \times_Z Y$ in $\Set_f$ with the topology induced from the product topology on $X \times Y$ under the inclusion $X \times_Z Y \subset X \times Y$. The second claim is then clear. Alternatively, observe that there is a factorization $\calS \stackrel{a}{\to} \Pro(\Set_{f}) \stackrel{b}{\to} \Set$, where $a(X)$ is $X$ with the constructible topology, and $b(Y) = Y$. Both functors $a$ and $b$ admit left adjoints $\alpha$ and $\beta$ respectively: $\beta$ is the Stone-Cech compactification functor, while $\alpha$ is the natural inclusion $\Pro(\Set_f) \subset \Pro(\calS_f) = \calS$. In particular, the forgetful functor $\calS \to \Set$ preserves limits.
\end{proof}

The category of w-local spaces also admits small limits:

\begin{lemma}
	\label{lem:cwlocallimits}
$\calS^{wl}$ admits all small limits, and the inclusion $i:\calS^{wl} \to \calS$ preserves these limits.
\end{lemma}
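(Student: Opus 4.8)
The plan is to prove that for every small diagram $F\colon I\to\calS^{wl}$, the limit $X:=\lim_I(i\circ F)$ formed in $\calS$ (which exists by the preceding lemma) is itself w-local, has w-local projection maps $\pr_j\colon X\to F(j)$, and satisfies the universal property of the limit inside $\calS^{wl}$; establishing this proves both assertions of the lemma at once. To get hold of $X$ concretely, I would use that $\calS\to\Set$ preserves limits together with the fact that $X$ is the equalizer of the two tautological maps $P:=\prod_{j\in\ob I}F(j)\rightrightarrows\prod_{(u\colon j\to k)}F(k)$, and that equalizers in $\calS$ are pro-constructible subspaces (the diagonal of a spectral space is closed in the constructible topology, hence pro-constructible, and one pulls this back). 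Thus $X\subseteq P$ is simply the subspace of compatible tuples. The two elementary facts about the product topology on $P$ that I would rely on are: the closure of a point $(y_j)$ is $\prod_j\overline{\{y_j\}}$, and consequently $(y_j)$ is closed in $P$ exactly when each $y_j$ is closed in $F(j)$.

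The heart of the argument is an explicit description of the closed points of $X$ via specialization maps. Each $F(j)$, being w-local, carries a specialization map $s_j\colon F(j)\to F(j)^c$ (Remark \ref{rmk:specializationmap}), and the w-local transition maps $F(u)$ commute with these. For $x=(x_j)\in X$, I claim the tuple $\sigma(x):=(s_j(x_j))_j$ again satisfies the compatibility relations — so $\sigma(x)\in X$ — is a closed point of $P$ (hence of $X$), and that $x$ specializes to it: indeed $\sigma(x)\in\prod_j\overline{\{x_j\}}=\overline{\{x\}}$ in $P$, so $\sigma(x)$ lies in the closure of $\{x\}$ taken in the subspace $X$. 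Conversely, if $z\in X$ is closed then $\sigma(z)=z$, whence every coordinate $z_j$ is closed in $F(j)$; so
\[
X^c=X\cap\prod_j F(j)^c=X\cap\bigcap_j\pr_j^{-1}(F(j)^c),
\]
which is closed in $X$. It then remains to see that a connected component $C$ of $X$ has a unique closed point: existence is clear since $\sigma(x)\in\overline{\{x\}}\subseteq C$ for any $x\in C$; and if $z\neq z'$ were two closed points of $C$, they would differ in some coordinate $j$, where $\pr_j(C)$ is a connected subset of $F(j)$ containing the distinct closed points $z_j\neq z'_j$ — contradicting w-locality of $F(j)$. By the characterization in Lemma \ref{lem:cwlocalcharacterize}, $X$ is w-local.

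Finally I would transfer this back to $\calS^{wl}$. The projections $\pr_j$ are w-local by the displayed formula for $X^c$. And given any cone of w-local maps $h_j\colon W\to F(j)$ with $W\in\calS^{wl}$, the induced spectral map $h\colon W\to X$ sends $W^c$ into $X^c$, since each $h_j(W^c)\subseteq F(j)^c$ forces $h(W^c)\subseteq X\cap\prod_j F(j)^c=X^c$; so $h$ is w-local and is evidently the unique w-local factorization. Hence $X$, with the maps $\pr_j$, is the limit of $F$ in $\calS^{wl}$, and it agrees on the nose with the limit computed in $\calS$. I do not expect a genuine obstacle here: the only places that demand a little care are the identification of limits in $\calS$ with subspaces of products (which reduces to knowing equalizers are pro-constructible subspaces) and the two product-topology facts; everything else is bookkeeping with closed points and specialization maps.
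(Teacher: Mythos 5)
Your argument is correct, but it is genuinely different from the one in the paper. The paper first handles fibre products (showing $X\times_Z Y$ is local along the profinite closed set $X^c\times_{Z^c}Y^c$ and invoking Lemma \ref{lem:cwlocalspreadout}), and then handles cofiltered limits separately, where the key point $X^c=\lim_i X_i^c$ is proved by a compactness argument: one shows $\lim_i Y_i^c\neq\emptyset$ for $Y_i=\overline{\{x_i\}}$ using that a cofiltered limit of non-empty compact Hausdorff spaces is non-empty. You instead treat an arbitrary small diagram in one stroke, realizing the limit as a pro-constructible subspace of the topological product and verifying the criterion of Lemma \ref{lem:cwlocalcharacterize} directly via the coordinatewise specialization map $\sigma(x)=(s_j(x_j))_j$ (which lands in $X$ precisely because w-local maps commute with specialization maps, Remark \ref{rmk:specializationmap}). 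This buys a uniform argument, an explicit identification $X^c=X\cap\prod_j F(j)^c$, and it sidesteps the nonemptiness-of-inverse-limits step entirely; the price is that you must import the standard but nontrivial facts that small limits in $\calS$ agree with limits in $\mathrm{Top}$ — i.e., that arbitrary topological products of spectral spaces are spectral and compute the product in $\calS$, and that equalizers are pro-constructible subspaces — together with the closure formula $\overline{\{(y_j)\}}=\prod_j\overline{\{y_j\}}$ in a product. These inputs are all correct (the patch topology of a product is the product of patch topologies, hence diagonals are pro-constructible, and quasi-compact opens of a pro-constructible subspace are traces of quasi-compact opens, which gives the universal property), but they are not established in the paper, whose route through $\calS=\Pro(\calS_f)$ is designed to avoid them; if you use this argument you should at least cite or verify the claim that $\calS\to\mathrm{Top}$ preserves small limits, not merely $\calS\to\Set$ as in the preceding lemma. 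The remaining steps — closedness of $X^c$, uniqueness of the closed point in a component via the connected image $\pr_j(C)$, w-locality of the projections, and the universal property in $\calS^{wl}$ — are all sound.
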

\begin{proof}
	We first check $\calS^{wl}$ admits fibre products. Given maps $X \to Z \gets Y$ in $\calS^{wl}$, the fibre product $X \times_Z Y$ in $\calS$ is local along the (profinite) closed subset $X^c \times_{Z^c} Y^c \subset X \times_Z Y$: a point $(x,y) \in X \times_Z Y$ specializes to the point $(s(x),s(y)) \in X^c \times_{Z^c} Y^c$, where $s$ is the specialization map from Remark \ref{rmk:specializationmap}. Then $X \times_Z Y \in \calS^{wl}$ by Lemma \ref{lem:cwlocalspreadout}.  Moreover, this also shows $(X \times_Z Y)^c = X^c \times_{Z^c} Y^c$, and that the projection maps $X \gets X \times_Z Y \to Y$  preserve closed points, which proves that $X \times_Z Y$ is a fibre product on $\calS^{wl}$. For cofiltered limits, fix a cofiltered diagram $\{X_i\}$ in $\calS^{wl}$. Let $X := \lim_i X_i$ be the limit (computed in $\calS$). We claim that $X \in \calS^{wl}$, and the maps $X \to X_i$ are w-local. As any open cover of $X$ can be refined by one pulled back from some $X_i$, one checks that all open covers of $X$ split. For the rest, it suffices to show $X^c = \lim_i X_i^c$; note that $\{X_i^c\}$ is a well-defined diagram as all transition maps $X_i \to X_j$ are w-local.  It is clear that $\lim_i X_i^c \subset X^c$. Conversely, choose $x \in X^c \subset X$ with image $x_i \in X_i$. Let $Y_i = \overline{\{x_i\}} \subset X_i$. Then  $\{Y_i\}$ forms a cofiltered diagram in $\calS^{wl}$ with $\lim_i Y_i \subset X$ by Lemma \ref{lem:closedincwlocal}. Moreover, one has $\lim_i Y_i = \overline{\{x\}} = \{x\} \subset X$ by the compatibility of closures and cofiltered limits. Now consider the cofiltered diagram $\{Y_i^c\}$. As each $Y_i^c \subset Y_i$ is a subset, we get $\lim_i Y_i^c \subset \lim_i Y_i = \{x\}$. Then either $x \in \lim_i Y_i^c$ or $\lim_i Y_i^c = \emptyset$; the latter possibility does not occur as a cofiltered limit of non-empty compact Hausdorff spaces is non-empty, so $x \in \lim_i Y_i^c \subset \lim_i X_i^c$.
\end{proof}

The adjoint functor theorem and Lemma \ref{lem:cwlocallimits} show that $i:\calS^{wl} \to \calS$ admits a left adjoint; this adjoint is characterized as the unique functor that preserves cofiltered limits and finite disjoint unions, and carries a connected finite $T_0$ space $X$ to $X \sqcup \{\ast\}$, where $\ast$ is declared to be a specialization of all points of $X$. This adjoint is not used in the sequel since it does not lift to the world of schemes. However, it turns out that $i:\calS^{wl} \hookrightarrow \calS$ also has a right adjoint which can be described via open covers, passes to the world of schemes, and will be quite useful:

\begin{lemma}
	\label{lem:cwlocaladjoint}
	The inclusion $i:\calS^{wl} \to \calS$ admits a right adjoint $X \mapsto X^Z$. The counit $X^Z \to X$ is a pro-(open cover) for all $X$, and the composite $(X^Z)^c \to X$ is a homeomorphism for the constructible topology on $X$.
\end{lemma}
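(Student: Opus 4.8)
The plan is to write down $X^Z$ explicitly and then verify the universal property and the two extra assertions by hand. Let $X_\cons$ denote the underlying set of $X$ equipped with the constructible topology, a profinite (hence spectral) space, and set
\[
X^Z := \{\, (x,y) \in X_\cons \times X \;:\; x \in \overline{\{y\}} \,\} \;=\; \bigcap_{U}\Big( \big((X_\cons \setminus U)\times X\big) \cup \big(X_\cons \times U\big)\Big),
\]
the intersection ranging over quasi-compact opens $U \subseteq X$. Since a quasi-compact open of $X$ is clopen (hence quasi-compact open) in $X_\cons$, each term of this intersection is constructible in the spectral space $X_\cons \times X$, so $X^Z$ is pro-constructible, hence spectral, and both the inclusion $X^Z \hookrightarrow X_\cons \times X$ and the second projection $\varepsilon\colon X^Z \to X$ are spectral maps. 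Note $\varepsilon$ is surjective, since $(x,x)\in X^Z$ for every $x$, and the fibre of the first projection $X^Z \to X_\cons$ over a point $x$ is the set of generizations of $x$ in $X$ with its subspace topology, which is a local --- hence connected --- spectral space with unique closed point $x$.

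Next I would check that $X^Z$ is w-local by invoking Lemma \ref{lem:cwlocalspreadout}. Every $(x,y)\in X^Z$ specializes to the closed point $(x,x)$, so $X^Z$ is local along $(X^Z)^c = \{(x,x):x\in X\}$, and the first projection identifies this subspace with $X_\cons$, which is w-local with $\pi_0 = X_\cons$. To see $(X^Z)^c$ is closed in $X^Z$ one tests membership in its closure inside $X_\cons\times X$ against boxes $V\times W$ with $V\subseteq X_\cons$ clopen and $W\subseteq X$ quasi-compact open; this forces a point $(x_0,y_0)$ of the closure to satisfy $y_0\in\overline{\{x_0\}}$, and if moreover $(x_0,y_0)\in X^Z$ then also $x_0\in\overline{\{y_0\}}$, whence $x_0=y_0$. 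Since the fibres of $X^Z\to X_\cons$ are connected and $X_\cons$ is profinite, $\pi_0(X^Z) = X_\cons = \pi_0((X^Z)^c)$, so Lemma \ref{lem:cwlocalspreadout} applies. The composite $(X^Z)^c \to X$ is then the canonical continuous bijection $X_\cons \to X$, which is precisely the claim that $(X^Z)^c\to X$ is a homeomorphism for the constructible topology on $X$.

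For the adjunction I would show $\varepsilon$ is a counit. Given $W\in\calS^{wl}$ with specialization map $s\colon W\to W^c$ (Remark \ref{rmk:specializationmap}) and a spectral map $f\colon W\to X$, any w-local lift $\tilde f\colon W\to X^Z$ of $f$ must have first coordinate locally constant (its target $X_\cons$ being totally disconnected) and equal to $f$ on $W^c$ (w-locality forces $\tilde f(W^c)\subseteq (X^Z)^c$), hence is necessarily $w\mapsto\bigl(f(s(w)),f(w)\bigr)$. This formula does land in $X^Z$, since $f(s(w))\in\overline{\{f(w)\}}$ by continuity of $f$; it is visibly w-local; and it is spectral once one knows that $w\mapsto f(s(w))$ is continuous into $X_\cons$, which holds because $f|_{W^c}$ is a spectral map out of the profinite space $W^c\cong\pi_0(W)$ and quasi-compact opens of a profinite space are clopen. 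Thus the lift exists and is unique, which exhibits $X\mapsto X^Z$, with counit $\varepsilon$, as right adjoint to $i$; functoriality of $X\mapsto X^Z$ and naturality of $\varepsilon$ then follow formally.

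Finally, to see $\varepsilon$ is a pro-(open cover) I would realize $X^Z$ as a cofiltered limit of genuine open covers. For a finite set $F$ of quasi-compact opens of $X$, let the nonempty atoms $A$ of the Boolean algebra they generate partition $X$, and set $V_A := \bigcap\{U\in F : A\subseteq U\}$, a quasi-compact open containing $A$ (equal to $X$ for the atom lying in no $U\in F$), so that $\sqcup_A V_A \to X$ is an open cover; enlarging $F$ refines the partition and induces canonical transition maps given by the open immersions $V_{A'}\hookrightarrow V_{\rho(A')}$. Unwinding definitions, a point of $\varprojlim_F \sqcup_A V_A$ is a compatible system of atoms --- equivalently a point $x\in X_\cons$ --- together with a point of $\bigcap_F V_{A(x)} = \{y : x\in\overline{\{y\}}\}$, so this limit is canonically $X^Z$ over $X$. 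I expect the main nuisance throughout to be not any single deep step but the point-set bookkeeping needed to promote the continuous bijections of spectral spaces produced above (the identification $(X^Z)^c\cong X_\cons$, the closedness of $(X^Z)^c$, and the identification of $\varprojlim_F\sqcup_A V_A$ with $X^Z$) to honest homeomorphisms; as in the proof of Lemma \ref{lem:cwlocalcharacterize}, these all reduce to compactness arguments in the constructible topology.
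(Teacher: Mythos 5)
Your proof is correct, and the space you construct is literally the paper's $X^Z$: the subspace $\{(x,y):x\in\overline{\{y\}}\}\subseteq X_\cons\times X$ is the cofiltered limit appearing in Remark \ref{rmk:cwlocalizeschemes}, and for a finite $T_0$ space it is exactly $\sqcup_{x}X_x$. The difference lies in how the verifications are organized. The paper defines $X^Z$ only on finite $T_0$ spaces, where w-locality, the open-cover property, and the adjunction are elementary combinatorics, and then transports everything to $\calS=\Pro(\calS_f)$ by pro-continuity, with Lemma \ref{lem:cwlocallimits} carrying w-locality and the universal property through the limit. You work with the global object directly: w-locality comes from Lemma \ref{lem:cwlocalspreadout} applied to the diagonal copy of $X_\cons$, and the adjunction is established by the explicit formula $\tilde f(w)=(f(s(w)),f(w))$ — a formula the paper never writes down, and one that makes uniqueness of the lift transparent (the first coordinate must factor through $\pi_0(W)\simeq W^c$, the second is forced). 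What your route costs is the point-set bookkeeping you flag (pro-constructibility of $X^Z$, closedness of the diagonal, the homeomorphism $\varprojlim_F\sqcup_A V_A\cong X^Z$ rather than a mere spectral bijection — all dispatched by compactness in the constructible topology, as in Lemma \ref{lem:cwlocalcharacterize}); what it buys is a concrete counit and pro-(open cover) structure, and an adjunction argument that avoids the paper's slightly fiddly reduction via clopen fibres of $Y^c\to X$. One step worth spelling out: the deduction $\pi_0(X^Z)=X_\cons$ from connectedness of the fibres needs the remark that each connected component maps to a single point of the totally disconnected base, hence lies in a connected fibre and therefore equals it, after which the continuous bijection $\pi_0(X^Z)\to X_\cons$ of profinite sets is automatically a homeomorphism.
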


\begin{proof}
	We first construct the functor $X \mapsto X^Z$ and the counit map $X^Z \to X$.  As the notions of w-local spaces and w-local maps are well-behaved under cofiltered limits by Lemma \ref{lem:cwlocallimits}, it suffices to construct, for each $X \in \calS_f$, a functorial open cover $X^Z \to X$ with $X^Z$ w-local such that: (a) the functor $X \mapsto X^Z$ carries maps to w-local maps, (b)  $(X^Z)^c \to X$ is a bijection, and (c) $(X^Z)^c \subset X^Z$ is discrete.

Let $X$ be a finite $T_0$ space. We define
\[
X^Z = \bigsqcup_{x\in X} X_x\ ,
\]
where $X_x\subset X$ is the subset of generalizations of $x$, which is an open subset of $X$. Then $X^Z \in \calS_f$. Moreover, each $X_x$ is w-local as the only open of $X_x$ containing $x$ is $X_x$ itself. Stability of w-locality under finite disjoint unions shows that $X^Z$ is w-local. If $f: X\to Y$ is a map of finite $T_0$ spaces, one gets an induced map
\[
f^Z: X^Z = \bigsqcup_{x\in X} X_x\to Y^Z = \bigsqcup_{y\in Y} Y_y\ ,
\]
by mapping $X_x$ into $Y_{f(x)}$. In particular, this sends the closed point $x\in X_x$ to the closed point $f(x)\in Y_{f(x)}$, so that this map is w-local. Moreover, there is a natural map $X^Z\to X$ for any $X$, by embedding each $X_x$ into $X$. Clearly, this is an open cover of $X$. The definition also shows $(X^Z)^c = X$ with the discrete topology (which is the also the constructible topology for finite $T_0$ spaces).

To show this defines an adjoint,  we must check: given $X \in \calS$, $Y \in \calS^{wl}$, and a spectral map $h:Y \to X$, there exists a unique w-local map $h':Y \to X^Z$ factoring $h$. We may assume $X \in \calS_f$ as before. As $Y^c \to Y$ is closed, the composite  $g:Y^c \hookrightarrow Y \to X$ is a spectral map from a profinite set to a finite $T_0$ space. One then checks that $g^{-1}(x)$ is clopen in $Y^c$ for all $x \in X$ (the preimage of any open of $X$ is a quasicompact open, and thus clopen, in the Hausdorff space $Y^c$; one deduces the claim by induction on $\#X$ by excising one closed point at a time). Picking an $x \in X$ with $g^{-1}(x) \neq \emptyset$ and replacing $Y$ with the clopen subset $s^{-1}(g^{-1}(x))$ where $s:Y \to \pi_0(Y) \simeq Y^c$ is the specialization map from Remark \ref{rmk:specializationmap},  we may assume that $h(Y^c) = \{x\} \subset X$; here we use Lemma \ref{lem:closedincwlocal} to ensure $Y$ remains w-local.  As each point of $Y$ specialises to a point of $Y^c$, the map $h$ factors through $X_x \subset X$, which gives the desired w-local lift $h':Y \to X_x \subset X^Z$; the w-locality requirement forces uniqueness of $h'$.
\end{proof}

\begin{remark} 	
	\label{rmk:cwlocalizeschemes}
	The space $X^Z$ can be alternatively described as: 
\[ X^Z = \lim_{ \{X_i \hookrightarrow X \}} \sqcup_i \widetilde{X_i}, \]
where the limit is indexed by the cofiltered category of constructible stratifications $\{X_i \hookrightarrow X\}$, and $\widetilde{X_i}$ denotes the set of all points of $X$ specializing to a point of $X_i$. One then has a corresponding description of closed subspaces
\[ (X^Z)^c = \lim_{ \{X_i \hookrightarrow X \}} \sqcup_i X_i \subset X^Z, \]
so it is clear that $(X^Z)^c \to X$ is a homeomorphism for the constructible topology on the target. This description and the cofinality of affine stratifications inside all constructible stratifications show that if $X$ is an affine scheme, then the maps $(X^Z)^c \stackrel{a}{\hookrightarrow} X^Z \stackrel{b}{\to} X$ lift to maps of affine schemes, with $a$ a closed immersion, and $b$ a pro-(open cover).
\end{remark}

\begin{definition}
	A map $f:W \to V$ of spectral spaces is a {\em Zariski localization} if $W = \sqcup_i U_i$ with $U_i \to V$ a quasicompact open immersion. A {\em pro-(Zariski localization)} is a cofiltered limit of such maps. 
\end{definition}

Both these notions are stable under base change. A key example is: 

\begin{lemma}
	\label{lem:profinitesetszariski}
	Any map  $f:S \to T$ of profinite sets is a pro-(Zariski localization). In fact, we can write $S = \lim_i S_i$ as a cofiltered limit of maps $S_i \to T$, each of which is the base change to $T$ of a map from a profinite set to a finite set.
\end{lemma}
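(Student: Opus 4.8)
The plan is to build the required cofiltered presentation from finite quotients of $S$ and $T$ simultaneously. Fix cofiltered presentations $S = \lim_j S_j$ and $T = \lim_k T_k$ by finite sets, with structure maps $q_j : S \to S_j$ and $f_k : S \to T \to T_k$. For each pair $(j,k)$ let $R_{jk} \subseteq S_j \times T_k$ be the image of $(q_j,f_k) : S \to S_j \times T_k$, a finite set, and set
\[
S_{jk} := R_{jk} \times_{T_k} T,
\]
with its projection $S_{jk} \to T$. The pairs $(j,k)$ form a cofiltered category; whenever $(j',k')$ refines $(j,k)$, the map $R_{j'k'} \to R_{jk}$ induced by $S_{j'} \times T_{k'} \to S_j \times T_k$ (which carries image onto image) induces a transition map $S_{j'k'} \to S_{jk}$ over $T$; and the maps $S \to S_{jk}$ sending $s \mapsto \big((q_j(s),f_k(s)),\, f(s)\big)$ are compatible with these transitions. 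So it remains to prove: (i) each $S_{jk} \to T$ is a Zariski localization, and is the base change along $T \to T_k$ of the map of (pro)finite sets $R_{jk} \to T_k$; and (ii) the induced map $\phi : S \to \lim_{j,k} S_{jk}$ is a homeomorphism over $T$.

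Part (i) is routine. The base-change description is immediate, and a finite set is in particular profinite, so $R_{jk}$ qualifies as the source. For the Zariski localization claim, write $R_{jk} = \bigsqcup_{s \in S_j}\big(\{s\} \times B^{(s)}\big)$ with $B^{(s)} = \{b \in T_k : (s,b) \in R_{jk}\} \subseteq T_k$; then $S_{jk} = \bigsqcup_{s \in S_j}\bigsqcup_{b \in B^{(s)}} \mathrm{pr}_k^{-1}(b)$, a finite disjoint union of clopen subsets of $T$, each of which is a quasicompact open immersion into $T$.

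For part (ii), both sides are profinite, so it suffices to show $\phi$ is bijective. Injectivity is clear: the composite $S \to S_{jk} \to R_{jk} \subseteq S_j \times T_k$ records $q_j(s)$, and the $q_j$ jointly separate points of $S$. For surjectivity, take a compatible system $(\xi_{jk}) \in \lim_{j,k} S_{jk}$ and write $\xi_{jk} = \big((a_{jk},b_{jk}),\, t_{jk}\big)$. Inspecting the transition maps, $a_{jk}$ is independent of $k$, say $a_{jk} = a_j$, and the system $(a_j)_j$ lies in $\lim_j S_j = S$, defining a point $a$; similarly $t_{jk}$ is a single element $t \in T$ with $\mathrm{pr}_k(t) = b_{jk}$ for all $k$. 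Now $(a_j,b_{jk}) \in R_{jk}$ means $b_{jk} \in f_k\big(q_j^{-1}(a_j)\big)$, so, setting $C_j := q_j^{-1}(a_j) \subseteq S$, the element $t$ lies in $\bigcap_k \mathrm{pr}_k^{-1}\big(f_k(C_j)\big)$; since $C_j$ is compact this intersection equals $f(C_j)$, so $D_j := C_j \cap f^{-1}(t)$ is nonempty. The $D_j$ form a family of nonempty closed subsets of the compact space $S$, cofiltered under inclusion ($D_{j'} \subseteq D_j$ when $j'$ refines $j$, since then $C_{j'} \subseteq C_j$), hence $\bigcap_j D_j \ne \emptyset$; any point $a$ in this intersection satisfies $q_j(a) = a_j$, $f(a) = t$, and $f_k(a) = \mathrm{pr}_k(t) = b_{jk}$, so it maps under $S \to S_{jk}$ to $\xi_{jk}$ for every $(j,k)$. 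Thus $\phi$ is a continuous bijection of profinite sets, hence a homeomorphism, and visibly a map over $T$.

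The one genuinely nontrivial point is the surjectivity argument, and the difficulty is precisely that $R_{jk}$ is the \emph{image} of $(q_j,f_k)$: knowing $(a_j,b_{jk}) \in R_{jk}$ only guarantees that \emph{some} element of $S$ over $a_j$ hits $b_{jk}$, not that our chosen $a$ does. Bridging this gap needs two nested compactness arguments — first over the index $k$, using compactness of the fiber $q_j^{-1}(a_j)$ to locate $t$ inside $f(q_j^{-1}(a_j))$, and then over the index $j$, using the finite intersection property in $S$ to assemble an actual preimage. Everything else is bookkeeping with cofiltered limits of finite sets.
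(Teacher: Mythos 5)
Your argument is correct, but it is a substantially longer route than the one the paper takes. The paper's entire proof is: choose a presentation $T = \lim_i T_i$ by finite sets and set $S_i := S \times_{T_i} T$; then $S_i \to T$ is the base change of the map $S \to T_i$ (profinite source, finite target, exactly as the statement allows), and $\lim_i S_i = S \times_{\lim_i T_i} T = S \times_T T = S$ by the commutation of limits with fibre products --- no compactness argument is needed because $S$ itself, rather than an image of it in a finite set, is carried along in each term. Your construction instead approximates both $S$ and $T$ by finite quotients and replaces $S$ by the image $R_{jk} \subseteq S_j \times T_k$; the price is that $R_{jk} \times_{T_k} T$ only remembers which fibres of $q_j$ meet which fibres of $f_k$, so surjectivity of $S \to \lim_{j,k} S_{jk}$ genuinely requires the two nested compactness arguments you give (and your appeal to $\bigcap_k \mathrm{pr}_k^{-1}(f_k(C_j)) = f(C_j)$ is itself a third small compactness fact about closed subsets of profinite sets, which you should at least flag). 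What your version buys is a sharper conclusion: each $S_{jk} \to T$ is an honest Zariski localization --- a finite disjoint union of clopen immersions, i.e.\ the base change of a map of \emph{finite} sets --- whereas the paper's terms $S \times_{T_i} T \to T$ are themselves only pro-(Zariski localizations), so the first assertion of the lemma follows from the paper's presentation only after a further (routine) reindexing. Both proofs are valid; yours trades a one-line formal manipulation for extra work in exchange for a finer presentation.
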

\begin{proof}
	Choose a profinite presentation $T = \lim_i T_i$, and set $S_i = S \times_{T_i} T$.  Then $S_i \to T$ is the base change of $S \to T_i$, and $S \simeq \lim_i S_i$, which proves the claim.
\end{proof}

We use this notion to split a w-local map into a pro-(Zariski localization), and one that is entirely ``local:''

\begin{lemma}
	\label{lem:cwlocalfactorize}
	Any map $f:X \to Y$ in $\calS^{wl}$ admits a canonical factorization $X \to Z \to Y$ in $\calS^{wl}$ with $Z \to Y$ a pro-(Zariski localization) and $X \to Z$ inducing a homeomorphism $X^c \simeq Z^c$.
\end{lemma}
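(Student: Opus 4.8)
The plan is to build $Z$ by base-changing $Y$ along the map that $f$ induces on $\pi_0$, and then to read off each required property from the structure theory of w-local spaces established above.

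Since $f$ is w-local, it restricts to a map $f^c : X^c \to Y^c$ of profinite sets (both $X^c$ and $Y^c$ are profinite by Lemma \ref{lem:cwlocalcharacterize}). Using the canonical homeomorphism $Y^c \simeq \pi_0(Y)$, regard $f^c$ as a spectral map $X^c \to \pi_0(Y)$, and set
\[ Z := Y \times_{\pi_0(Y)} X^c, \]
the fibre product in $\calS$, where $Y \to \pi_0(Y)$ is the canonical map. Every point of a profinite set is closed, so $Y \to \pi_0(Y)$ and $X^c \to \pi_0(Y)$ are w-local maps between w-local spaces; hence Lemma \ref{lem:cwlocallimits} shows $Z \in \calS^{wl}$, that both projections out of $Z$ are w-local, and that $Z^c = Y^c \times_{\pi_0(Y)} X^c$. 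As $Y^c \to \pi_0(Y)$ is a homeomorphism, the projection $Z^c \to X^c$ is a homeomorphism.

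Next I would produce the map $X \to Z$ from the universal property of the fibre product in $\calS^{wl}$, applied to the pair $f : X \to Y$ and $s_X : X \to X^c$, where $s_X$ is the specialization map of Remark \ref{rmk:specializationmap} (itself w-local). The two resulting composites $X \to \pi_0(Y)$ agree: $x$ and $s_X(x)$ lie in the same connected component of $X$, so $f(x)$ and $f(s_X(x))$ lie in the same component of $Y$, and since $f$ is w-local the point $f(s_X(x))$ is closed, hence is the unique closed point of that component; so both composites compute the connected component of $f(x)$. This gives a canonical w-local map $h' : X \to Z$ with $(Z \to Y)\circ h' = f$, and on closed points $h'$ is the map $x \mapsto (f^c(x), x)$, i.e.\ the identity of $X^c$ under the identification $Z^c \simeq X^c$; in particular $h'$ induces a homeomorphism $X^c \simeq Z^c$, as required. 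Checking this commutativity is the conceptual crux, and it is exactly where w-locality of $f$ (and Remark \ref{rmk:specializationmap}) enters.

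It remains to see that $Z \to Y$ is a pro-(Zariski localization), and this is the step I expect to demand the most bookkeeping. By Lemma \ref{lem:profinitesetszariski}, choosing a presentation $\pi_0(Y) = \lim_i P_i$ with $P_i$ finite and setting $S_i = X^c \times_{P_i} \pi_0(Y)$, one has $X^c = \lim_i S_i$ over $\pi_0(Y)$. Base-changing along $Y \to \pi_0(Y)$ yields $Z = \lim_i\,(Y \times_{P_i} X^c)$; since $P_i$ is finite, the map $Y \to P_i$ gives a clopen decomposition of $Y$, so each $Y \times_{P_i} X^c$ is a finite disjoint union of products of a clopen piece of $Y$ with a profinite set, which is patently a pro-(Zariski localization) of $Y$. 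A cofiltered limit of pro-(Zariski localizations) is, after reindexing, again a pro-(Zariski localization), so $Z \to Y$ is one. Finally, $Z$, the map $h'$, and the map $Z \to Y$ were defined without any choices, so the factorization is canonical; only the presentation witnessing $Z \to Y$ as a pro-(Zariski localization) involves choices, which is harmless.
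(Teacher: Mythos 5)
Your proof is correct and follows essentially the same route as the paper: the paper sets $Z = Y \times_{\pi_0(Y)} \pi_0(X)$ and invokes Lemma \ref{lem:cwlocallimits} together with Lemma \ref{lem:profinitesetszariski} and base-change stability, which is identical to your construction once one uses the canonical homeomorphism $X^c \simeq \pi_0(X)$ from Lemma \ref{lem:cwlocalcharacterize}. The extra details you supply (the commutativity check via the specialization map and the explicit presentation of $Z \to Y$ as a cofiltered limit) are correct elaborations of steps the paper leaves implicit.
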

\begin{proof}
We have a diagram
\[ \xymatrix{ X^c \ar[r] \ar[d]^{f^c} & X \ar[r] \ar[d]^f & \pi_0(X) =: S \ar[d]^-{\pi_0(f)} \\
Y^c \ar[r] & Y \ar[r] & \pi_0(Y) =: T.} \]
Set $Z = Y \times_T S$. Then by Lemma \ref{lem:cwlocallimits}, $Z$ is w-local and $Z^c = Y^c \times_T S \simeq X^c$. Moreover, the map $S \to T$ is a pro-(Zariski localization), and hence so is $Z \to Y$.  The induced map $X \to Z$ sends $X^c$ to $Y^c \times_T S = Z^c$, and is thus w-local; as $X^c \to Z^c$ is a homeomorphism, this proves the claim.
\end{proof}

\subsection{Rings}
\label{subsec:cwlocalrings}

We now adapt the notions of \S \ref{subsec:spectralcwlocal}  to the world of rings via the Zariski topology, and also discuss variants for the \'etale topology:

\begin{definition} Fix a ring $A$.
	\begin{enumerate}
		\item $A$ is {\em w-local} if $\Spec(A)$ is w-local. 
		\item $A$ is {\em w-strictly local} if $A$ is w-local, and every faithfully flat \'etale map $A \to B$ has a section.
		\item A map $f:A \to B$ of w-local rings is w-local if $\Spec(f)$ is w-local.
		\item A map $f:A \to B$ is called a  {\em Zariski localization} if $B = \prod_{i=1}^n A[\frac{1}{f_i}]$ for some $f_1,\dots,f_n \in A$. An {\em ind-(Zariski localization)} is a filtered colimit of Zariski localizations.
		\item A map $f:A \to B$ is called {\em ind-\'etale} if it is a filtered colimit of \'etale $A$-algebras.
	\end{enumerate}
\end{definition}

\begin{example}
	For any ring $A$, there is an ind-(Zariski localization) $A \to A^Z$ such that $\Spec(A^Z) = \Spec(A)^Z$, see Lemma \ref{lem:cwlocalringsadjoint}. In particular, $A^Z$ is w-local. Any strictly henselian local ring $A$ is w-strictly local. Moreover, any cofiltered limit of w-strictly local rings along w-local maps is w-strictly local. 
\end{example}

Our goal in this section is to explain why every ring admits an ind-\'etale faithfully flat w-strictly local algebra. The construction of this extension, very roughly, mirrors the classical construction of the strict henselisations at a geometric point: first one Zariski localizes at the point, and then one passes up along all \'etale neighbourhoods of the point. The first step is accomplished using the functor $A \mapsto A^Z$; the next lemma describes the structure of the resulting ring.

\begin{lemma}
	\label{lem:universalabsolutelyflat}
	If $A$ is w-local, then the Jacobson radical $I_A$ cuts out $\Spec(A)^c \subset \Spec(A)$ with its reduced structure. The quotient $A/I_A$ is an absolutely flat ring.
\end{lemma}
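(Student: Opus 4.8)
The plan is to deduce all three assertions --- that $V(I_A)$ equals $\Spec(A)^c$ as a set, that $A/I_A$ is reduced, and that $A/I_A$ is absolutely flat --- formally from the description of the Jacobson radical $I_A = \bigcap_{\mathfrak m}\mathfrak m$ as the intersection over the maximal ideals of $A$ (equivalently, over the closed points of $\Spec A$), together with w-locality of $A$. No serious computation is involved.

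The steps, in order. First, $I_A$ is an intersection of prime ideals, hence a radical ideal, so $A/I_A$ is reduced. Second, identify the set $V(I_A)$: for any subset $S \subseteq \Spec(A)$ one has $\overline{S} = V\big(\bigcap_{\mathfrak p \in S}\mathfrak p\big)$, so taking $S = \Spec(A)^c$ gives $\overline{\Spec(A)^c} = V(I_A)$; since $A$ is w-local, $\Spec(A)^c$ is closed, whence $V(I_A) = \Spec(A)^c$. These two steps together say that $I_A$ cuts out $\Spec(A)^c$ with its reduced structure. Third, for absolute flatness of $R := A/I_A$: the primes of $R$ are precisely the primes of $A$ lying in $V(I_A) = \Spec(A)^c$, i.e. the maximal ideals of $A$, and since distinct maximal ideals are incomparable it follows that $\dim R = 0$. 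Fourth, conclude: a reduced ring of Krull dimension zero is absolutely flat, since for each prime $\mathfrak p$ the localization $R_{\mathfrak p}$ is a zero-dimensional reduced local ring, hence a field, and a ring all of whose localizations at primes are fields is von Neumann regular.

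I do not expect a genuine obstacle. The only input drawn from the hypothesis is the closedness of $\Spec(A)^c$ (part of w-locality), used in the second step; everything else is elementary commutative algebra. The two points worth stating with a little care are that reducedness of $A/I_A$ is automatic --- so that $I_A$ itself, rather than merely its radical, defines the reduced subscheme structure on $\Spec(A)^c$ --- and that the zero-dimensionality of $A/I_A$ is forced purely by incomparability of maximal ideals, which is exactly what licenses the passage ``reduced and zero-dimensional, hence absolutely flat''.
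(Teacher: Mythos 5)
Your argument is correct and follows essentially the same route as the paper's: the only input from w-locality is the closedness of $\Spec(A)^c$, which identifies $V(I_A)=\overline{\Spec(A)^c}=\Spec(A)^c$, and reducedness plus zero-dimensionality then give absolute flatness (the paper's stated definition of absolutely flat is precisely ``reduced of Krull dimension $0$'', so your final step via von Neumann regularity is a harmless extra). The paper's proof is phrased as a containment plus a contradiction using $I_A\subset\fram$ for every maximal ideal, but the content is identical.
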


Recall that a ring $B$ is called absolutely flat if $B$ is reduced with Krull dimension $0$ (or, equivalently, that $B$ is reduced with $\Spec(B)$ Hausdorff).

\begin{proof}
Let $J \subset A$ be the (radical) ideal cutting out $\Spec(A)^c \subset \Spec(A)$ with the reduced structure. Then $J \subset \fram$ for each $\fram \in \Spec(A)^c$, so $J \subset I_A$. Hence, $\Spec(A/I_A) \subset \Spec(A)^c$ is a closed subspace; we want the two spaces to coincide. If they are not equal, then there exists a maximal ideal $\fram$ such that $I_A \not\subset \fram$, which is impossible.
\end{proof}

The study of w-local spectral spaces has a direct bearing on w-local rings:

\begin{lemma}
	\label{lem:cwlocalringsadjoint}
The inclusion of the category w-local rings and maps inside all rings admits a left adjoint $A \mapsto A^Z$. The unit $A \to A^Z$ is a faithfully flat ind-(Zariski localization), and $\Spec(A)^Z = \Spec(A^Z)$ over  $\Spec(A)$.
\end{lemma}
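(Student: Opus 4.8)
The plan is to transfer the adjunction from spectral spaces (Lemma \ref{lem:cwlocaladjoint}) to rings by showing that the construction $\Spec(A)^Z \to \Spec(A)$ comes from an actual ring map $A \to A^Z$ with the asserted properties, and that this map has the correct universal property. First I would observe that the right adjoint $X \mapsto X^Z$ in $\calS^{wl}$, by the explicit description in Remark \ref{rmk:cwlocalizeschemes} (or directly from the proof of Lemma \ref{lem:cwlocaladjoint}), is a cofiltered limit, indexed by constructible stratifications $\{X_i \hookrightarrow X\}$, of the finite disjoint unions $\sqcup_i \widetilde{X_i}$, where each $\widetilde{X_i}$ is the set of points specializing to $X_i$. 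When $X = \Spec(A)$, one can choose the stratifications to be affine (they are cofinal), and then each piece $\widetilde{X_i}$ is literally $\Spec$ of a ring: if $X_i$ is cut out by finitely generated ideals, $\widetilde{X_i}$ is an open subscheme of a closed subscheme, hence affine, and a finite disjoint union of affines is affine. Thus the whole diagram lifts to a filtered diagram of rings, each transition map being a Zariski localization (on each piece it is a standard open immersion, or the inclusion of a clopen piece), so the colimit $A^Z := \colim$ of this diagram is an ind-(Zariski localization) of $A$ with $\Spec(A^Z) = \Spec(A)^Z$ over $\Spec(A)$.

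Next I would record the two easy properties. Faithful flatness: each transition map is an ind-(Zariski localization), hence flat, and $\Spec(A^Z) \to \Spec(A)$ is surjective because the counit $X^Z \to X$ is an open cover (Lemma \ref{lem:cwlocaladjoint}), so $A \to A^Z$ is faithfully flat. w-locality of $A^Z$: immediate, since $\Spec(A^Z) = \Spec(A)^Z \in \calS^{wl}$.

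For the adjunction itself, I would check the universal property directly: given a w-local ring $B$ and a ring map $h: A \to B$, I must produce a unique \emph{w-local} ring map $h': A^Z \to B$ with $h' \circ (\text{unit}) = h$. On spectra, Lemma \ref{lem:cwlocaladjoint} already gives a unique w-local \emph{spectral} map $\Spec(B) \to \Spec(A)^Z$ over $\Spec(A)$. The point is that this spectral map is induced by a (necessarily unique) ring map. Here I would use that $A \to A^Z$ is an ind-(Zariski localization): writing $A^Z = \colim_j A_j$ with each $A_j \to A_{j+1}$ a Zariski localization, it suffices to factor $h$ compatibly through each $A_j$, and a map out of a Zariski localization $A_j = \prod_k A_j[\tfrac1{f_k}]$ of $A_j$ into $B$ (or rather, the relevant statement: a ring map $\Spec(B) \to \Spec(A_j)$ lifting a given map to $\Spec(A_{j-1})$, landing in a prescribed union of basic opens and clopen pieces, is induced by a unique ring map) is just the statement that $\Spec(A_j[\tfrac1f]) \to \Spec(A_j)$ is a monomorphism of schemes with image the basic open $D(f)$, so any scheme map to $\Spec(A_j)$ factoring set-theoretically through $D(f)$ factors uniquely through it; taking products over the finitely many pieces gives the lift at stage $j$, and passing to the colimit gives $h': A^Z \to B$. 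Uniqueness is forced at each finite stage, hence in the colimit. Finally one checks $h'$ is w-local, but this is exactly the content of the spectral map $\Spec(B) \to \Spec(A)^Z$ being w-local, which was established in Lemma \ref{lem:cwlocaladjoint}.

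The main obstacle I anticipate is the bookkeeping in the first paragraph: verifying carefully that the scheme-theoretic pieces $\widetilde{X_i}$ are genuinely affine and that the transition maps in the lifted diagram are Zariski localizations in the strict sense of the definition (finite products of standard localizations), not merely pro-Zariski or ind-Zariski. This is essentially contained in the parenthetical remark at the end of Remark \ref{rmk:cwlocalizeschemes}, so I would lean on that; once the diagram of rings is in hand, everything else is a routine descent through the colimit. A secondary, purely formal point is to make sure the universal property is stated with the correct variance (left adjoint, so maps \emph{out} of $A^Z$), matching the fact that $X \mapsto X^Z$ is a \emph{right} adjoint on spectral spaces and $\Spec$ is contravariant.
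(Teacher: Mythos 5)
Your proposal is correct and follows essentially the same route as the paper, which simply invokes Remark \ref{rmk:cwlocalizeschemes} to present $(\Spec A)^Z$ as a cofiltered limit of affine schemes along Zariski localizations, so that $X^Z = \Spec(A^Z)$ with $A \to A^Z$ a faithfully flat ind-(Zariski localization). Your additional verification that the unique w-local spectral map $\Spec(B) \to \Spec(A)^Z$ is induced by a ring map (via the monomorphism property of basic opens at each finite stage) is a detail the paper leaves implicit, and it is carried out correctly.
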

\begin{proof}
	This follows from Remark \ref{rmk:cwlocalizeschemes}. In more details, let $X=\Spec A$, and define a ringed space $X^Z\to X$ by equipping $(\Spec A)^Z$ with the pullback of the structure sheaf from $X$. Then Remark \ref{rmk:cwlocalizeschemes} presents $X^Z$ as an inverse limit of affine schemes, so that $X^Z = \Spec(A^Z)$ is itself affine.
\end{proof}

\begin{example}
	For a ring $A$, the map $A \to A^Z/I_{A^Z}$ is the universal map from $A$ to an absolutely flat ring. Indeed, this follows by the universal property of $A^Z$, the w-locality of absolutely flat rings, and the observation that any w-local map $A^Z \to B$ with $B$ absolutely flat factors through a map $A^Z/I_{A^Z} \to B$.
\end{example}

\begin{lemma}
	\label{lem:cwlocalringsfactorize}
	Any w-local map $f:A \to B$ of w-local rings admits a canonical factorization $A \stackrel{a}{\to} C \stackrel{b}{\to} B$ with $C$ w-local, $a$ a w-local ind-(Zariski localization), and $b$ a w-local map inducing $\pi_0(\Spec(B)) \simeq \pi_0(\Spec(C))$.
\end{lemma}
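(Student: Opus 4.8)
The plan is to transport the spectral-space factorization of Lemma~\ref{lem:cwlocalfactorize} down to the level of rings. First I would apply Lemma~\ref{lem:cwlocalfactorize} to $\phi := \Spec(f)\colon \Spec B \to \Spec A$, obtaining a canonical factorization $\Spec B \xrightarrow{\phi'} Z \xrightarrow{\psi} \Spec A$ in $\calS^{wl}$ with $\psi$ a pro-(Zariski localization) and $\phi'$ inducing a homeomorphism $(\Spec B)^c \simeq Z^c$; by Lemma~\ref{lem:cwlocalcharacterize} this last map is the same as an isomorphism $\pi_0(\Spec B) \simeq \pi_0(Z)$. Recalling from the proof of Lemma~\ref{lem:cwlocalfactorize} that $Z = \Spec A \times_{\pi_0(\Spec A)} \pi_0(\Spec B)$, I would use Lemma~\ref{lem:profinitesetszariski} to present $Z = \lim_j Z_j$ with each $Z_j \to \Spec A$ of the form $\sqcup_k D(g_{jk}) = \Spec\big(\prod_k A[\tfrac{1}{g_{jk}}]\big)$, a Zariski localization. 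Then $Z$, equipped with the pullback of the structure sheaf of $\Spec A$, is presented as an inverse limit of affine schemes --- exactly as for $A \mapsto A^Z$ in Lemma~\ref{lem:cwlocalringsadjoint} and Remark~\ref{rmk:cwlocalizeschemes} --- so $Z = \Spec C$ with $C = \colim_j C_j$, $C_j := \prod_k A[\tfrac{1}{g_{jk}}]$. This makes the unit $a\colon A \to C$ an ind-(Zariski localization), w-local because $\Spec(a) = \psi$ is w-local, and makes $C$ itself w-local because the spectral space $\Spec C = Z$ is w-local by Lemma~\ref{lem:cwlocallimits}.

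The substantive point is to upgrade the spectral map $\phi'\colon \Spec B \to Z = \Spec C$ to a ring homomorphism $b\colon C \to B$ with $\Spec(b) = \phi'$. I would produce compatible $A$-algebra maps $\beta_j\colon C_j \to B$ inducing on spectra the composites $\psi_j\colon \Spec B \xrightarrow{\phi'} Z \to Z_j$. Fixing $j$: since $Z_j = \sqcup_k D(g_{jk})$ is a finite disjoint union, $\psi_j$ partitions $\Spec B$ into finitely many clopen pieces, hence $B = \prod_k e_{jk}B$ for orthogonal idempotents $e_{jk}$, with $\Spec(e_{jk}B)$ mapping into $D(g_{jk})$; because the scheme map $\Spec(e_{jk}B) \to \Spec A$ induced by $f$ has image in $D(g_{jk})$, the element $g_{jk}$ is a unit in $e_{jk}B$, yielding a unique $A$-algebra map $A[\tfrac{1}{g_{jk}}] \to e_{jk}B$, and collecting over $k$ gives $\beta_j\colon C_j \to B$. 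As $\beta_j$ is the unique $A$-algebra map inducing $\psi_j$ on spectra, the $\beta_j$ are automatically compatible with the transition maps $C_j \to C_{j'}$, so they assemble into $b := \colim_j \beta_j\colon C \to B$, and $\Spec(b) = \lim_j \psi_j = \phi'$ by the universal property of the limit.

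It then remains only to read off that $b \circ a = f$, that $b$ is w-local and induces $\pi_0(\Spec B) \simeq \pi_0(\Spec C)$ (both coming directly from Lemma~\ref{lem:cwlocalfactorize} applied to $\phi'$), and that the entire factorization is canonical, inheriting canonicity from Lemma~\ref{lem:cwlocalfactorize} together with the uniqueness of the lift $b$. The main obstacle is precisely this lifting: Lemma~\ref{lem:cwlocalfactorize} only produces a factorization of spectral spaces, and one must separately know that $Z$ is representable by an affine scheme --- handled by the pro-(Zariski localization) structure of $\psi$, as for $A^Z$ --- and that $\phi'$ arises from an honest ring map, which is what the idempotent-decomposition argument above supplies. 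Everything else is bookkeeping layered on top of Lemma~\ref{lem:cwlocalfactorize}.
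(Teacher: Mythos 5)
Your argument is correct and is essentially the paper's proof: the paper deduces the lemma from Lemma~\ref{lem:cwlocalfactorize} together with the fact that a map of profinite sets $\pi_0(\Spec B) \to \pi_0(\Spec A)$ is realised by an ind-(Zariski localization) $A \to C$ (Lemma~\ref{lem:findprofinitecovers}). Your explicit idempotent construction of the lift $b\colon C \to B$ is just an unpacking of the adjunction stated after Lemma~\ref{lem:findprofinitecovers} (the functor $S \mapsto S \times_{\pi_0(\Spec A)} \Spec A$ is a fully faithful right adjoint to $\pi_0$), so no new idea is involved.
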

\begin{proof}
	This follows from Lemma \ref{lem:cwlocalfactorize} and the observation that any map $S \to \pi_0(\Spec(A))$ of profinite sets is induced by an ind-(Zariski localization) $A \to C$ by applying $\pi_0(\Spec(-))$ thanks to Lemma \ref{lem:profinitesetszariski}.
\end{proof}

Due to the w-locality of $A^Z$ and Lemma \ref{lem:universalabsolutelyflat}, absolutely flat rings play an important role in this section. The next lemma explains the construction of w-strictly local ind-\'etale covers of absolutely flat rings.

\begin{lemma}
	\label{lem:absoluteflatcwstrictlylocal}
	For any absolutely flat ring $A$, there is an ind-\'etale faithfully flat map $A \to \overline{A}$ with $\overline{A}$ w-strictly local and absolutely flat. For a map $A \to B$ of absolutely flat rings, we can choose such maps $A \to \overline{A}$ and $B \to \overline{B}$ together with a map $\overline{A} \to \overline{B}$ of $A$-algebras.
\end{lemma}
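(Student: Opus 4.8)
I would produce $\overline A$ as a countable iteration of the operation ``tensor together all faithfully flat \'etale covers,'' and then deduce the relative statement by base change. Concretely, set $A_0 = A$, and inductively let $A_{n+1} := \bigotimes_D D$, where $D$ ranges over a set of representatives for the isomorphism classes of faithfully flat \'etale $A_n$-algebras (there is a set of these, as such $D$ are finitely presented) and $\bigotimes$ is taken over $A_n$. A finite tensor product (over $A_n$) of \'etale, resp.\ faithfully flat \'etale, $A_n$-algebras is again \'etale, resp.\ faithfully flat \'etale, so $A_{n+1}$ is a filtered colimit of faithfully flat \'etale $A_n$-algebras; thus $A_n \to A_{n+1}$ is faithfully flat ind-\'etale, $A_{n+1}$ is ind-\'etale over $A$, and hence (being ind-\'etale over an absolutely flat ring) $A_{n+1}$ is itself absolutely flat. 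Put $\overline A := \colim_n A_n$. Being a filtered colimit of absolutely flat rings it is absolutely flat, and it is faithfully flat ind-\'etale over $A$.

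\textbf{$\overline A$ is w-strictly local.} Since $\overline A$ is absolutely flat, $\Spec \overline A$ is profinite and hence w-local, so it remains to split an arbitrary faithfully flat \'etale $h:\overline A \to D'$. As $D'$ is finitely presented, flat, and unramified over $\overline A = \colim_n A_n$, the usual limit formalism for finitely presented algebras gives some $n$ and an \'etale $A_n$-algebra $D$ with $D' \simeq D\otimes_{A_n}\overline A$ compatibly with structure maps; and since the image of $\Spec D \to \Spec A_n$ is constructible (Chevalley) while its preimage in $\Spec\overline A = \lim_n \Spec A_n$ is everything, a compactness argument in the constructible topology lets us increase $n$ so that $D$ is moreover faithfully flat over $A_n$. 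Then $D$ is isomorphic to one of the representatives used to form $A_{n+1}$, hence there is an $A_n$-algebra map $D \to A_{n+1}$ (inclusion of a tensor factor); composing with $A_{n+1}\to\overline A$ yields an $A_n$-algebra map $D \to \overline A$, which is exactly an $\overline A$-algebra retraction of $\overline A \to D\otimes_{A_n}\overline A = D'$. Thus every faithfully flat \'etale cover of $\overline A$ splits, so $\overline A$ is w-strictly local.

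\textbf{Relative case.} Given $A \to B$ with $A,B$ absolutely flat, apply the absolute case to $A$ to get $A \to \overline A$, and set $C := \overline A\otimes_A B$. Base change makes $B \to C$ faithfully flat ind-\'etale, so $C$ is ind-\'etale over the absolutely flat ring $B$ and hence absolutely flat. Applying the construction to $C$ yields $C \to \overline C =: \overline B$, which is absolutely flat, w-strictly local, and faithfully flat ind-\'etale over $B$ (compose $B \to C \to \overline B$). The composite $\overline A \to \overline A\otimes_A B = C \to \overline B$ is then the desired map of $A$-algebras, and it is compatible with $B \to \overline B$ because $A \to \overline A \to C$ and $A \to B \to C$ are both the structure map $A \to C$.

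\textbf{Main difficulty.} The algebra here is routine: stability of ``absolutely flat'' under \'etale maps and filtered colimits, that a finite tensor product of (faithfully flat) \'etale maps is (faithfully flat) \'etale, and the identification of $A_n$-algebra maps $D \to \overline A$ with $\overline A$-algebra sections of $D\otimes_{A_n}\overline A$. The one point that genuinely needs care is the descent step in the second paragraph --- that a faithfully flat \'etale cover of $\overline A$ descends, \emph{together with its faithful flatness}, to a faithfully flat \'etale cover of some finite stage $A_n$; for flatness and unramifiedness this is the standard approximation of finitely presented algebras over a filtered colimit, whereas recovering surjectivity at a finite stage is where one uses that each $\Spec A_n$ is compact in its constructible topology.
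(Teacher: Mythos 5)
Your proof is correct and follows essentially the same strategy as the paper: tensor together (representatives of) all faithfully flat \'etale covers and check that the result is absolutely flat, profinite on $\Spec$, and splits all further covers, with the relative case obtained by base change along $A \to B$ followed by a second application of the absolute construction. The only difference is that the paper performs a single tensor product $\overline{A} = \otimes_{i \in I} A_i$ and leaves the splitting verification to the reader, whereas you iterate the construction countably; your iteration is precisely what makes that verification clean, since a faithfully flat \'etale cover of $\overline{A}$ descends \emph{with its $A_n$-algebra structure} to some finite stage and is then split by the inclusion of the corresponding tensor factor of $A_{n+1}$.
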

\begin{proof}
The following fact is used without further comment below: any ind-\'etale algebra over an absolutely flat ring is also absolutely flat. Choose a set $I$ of isomorphism classes of faithfully flat \'etale $A$-algebras, and set $\overline{A} = \otimes_I A_i$, where the tensor product takes place over $A_i \in I$, i.e., $\overline{A} = \colim_{J \subset I} \otimes_{j \in J} A_j$, where the (filtered) colimit is indexed by the poset of finite subsets of $I$. Then one checks that $\overline{A}$ is absolutely flat, and that any faithfully flat \'etale $\overline{A}$-algebra has a section, so $\overline{A}$ is w-strictly local as $\Spec(\overline{A})$ is profinite. For the second part, simply set $\overline{B}$ to be a w-strictly local faithfully flat ind-\'etale algebra over $\overline{A} \otimes_A B$.
\end{proof}

To decouple topological problems from algebraic ones, we consistently use:

\begin{lemma}
	\label{lem:findprofinitecovers}
For any ring $A$ and a map $T \to \pi_0(\Spec(A))$ of profinite sets, there is an ind-(Zariski localization) $A \to B$ such that $\Spec(B) \to \Spec(A)$ gives rise to the given map $T \to \pi_0(\Spec(A))$ on applying $\pi_0$. Moreover, the association $T \mapsto \Spec(B)$ is a limit-preserving functor.
\end{lemma}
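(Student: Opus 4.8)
The plan is to lift the space-level statement of Lemma~\ref{lem:profinitesetszariski} to rings: build the desired $A \to B$ out of a single ``constant'' building block, after reducing a general map $T \to \pi_0(\Spec A)$ to that block via Lemma~\ref{lem:profinitesetszariski} and a finite product decomposition of $A$.

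\emph{The building block.} Given a ring $R$ and a profinite set $S$, write $S = \lim_k S_k$ with each $S_k$ finite and put $R\langle S\rangle := \colim_k \prod_{s \in S_k} R$, with transition maps induced by those of $\{S_k\}$. Each $\prod_{S_k} R$ is a finite Zariski localization of $R$ (a finite product of copies of $R = R[\tfrac{1}{1}]$), so $R \to R\langle S\rangle$ is an ind-(Zariski localization); moreover $\Spec R\langle S\rangle = \lim_k \coprod_{S_k} \Spec R$, which as a spectral space is $S \times \Spec R$, so that $\pi_0(\Spec R\langle S\rangle) = \lim_k(S_k \times \pi_0(\Spec R)) = S \times \pi_0(\Spec R)$ with structure map the projection, using that $\pi_0$ commutes with cofiltered limits of spectral spaces. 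The formation of $R\langle S\rangle$ is functorial in $S$, and since a continuous map from a cofiltered limit of finite sets to a finite set factors through a finite stage, $R\langle-\rangle$ carries cofiltered limits of profinite sets to filtered colimits of rings.

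\emph{Assembly.} For a general $T \to \pi_0(\Spec A)$, choose a profinite presentation $\pi_0(\Spec A) = \lim_i P_i$ with $P_i$ finite and set $T_i := T \times_{P_i} \pi_0(\Spec A)$, so that $T = \lim_i T_i$ and, as in the proof of Lemma~\ref{lem:profinitesetszariski}, the map $T_i \to \pi_0(\Spec A)$ is the base change along $\pi_0(\Spec A) \to P_i$ of the map $T \to P_i$. Since $P_i$ is finite, $\pi_0(\Spec A) \to P_i$ corresponds to a product decomposition $A = \prod_{p \in P_i} A_p$ (a Zariski localization), and $T = \coprod_p T_p$ with the $T_p$ profinite. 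Put $B_i := \prod_{p \in P_i} A_p\langle T_p\rangle$; this is an ind-(Zariski localization) of $A$ (a finite product of ind-(Zariski localizations) over the factors of $A$), and one computes $\pi_0(\Spec B_i) = \coprod_p(T_p \times \pi_0(\Spec A_p)) = T_i$ over $\pi_0(\Spec A)$. A refinement $P_i \to P_j$ induces a canonical $A$-algebra map $B_j \to B_i$ (group the factors accordingly and project onto the diagonal term), compatible with $T_i \to T_j$; set $B := \colim_i B_i$. Then $A \to B$, being a filtered colimit of ind-(Zariski localizations), is an ind-(Zariski localization), and $\pi_0(\Spec B) = \pi_0(\lim_i \Spec B_i) = \lim_i T_i = T$ over $\pi_0(\Spec A)$, as required.

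\emph{Functoriality and limit-preservation.} Each step is functorial, so $T \mapsto \Spec B$ is a functor from profinite sets over $\pi_0(\Spec A)$ to affine $\Spec A$-schemes, sending the terminal object $\mathrm{id}_{\pi_0(\Spec A)}$ to $\Spec A$. It preserves cofiltered limits by the compatibility of $R\langle-\rangle$ noted above together with exactness of filtered colimits and of finite products of rings, and it sends a fibre product $T' \times_{\pi_0(\Spec A)} T''$ to $\Spec(B' \otimes_A B'')$: indeed $B' \otimes_A B''$ is again an ind-(Zariski localization) of $A$, and $\pi_0(\Spec(B' \otimes_A B'')) = \pi_0(\Spec B') \times_{\pi_0(\Spec A)} \pi_0(\Spec B'')$ for ind-(Zariski localizations), which follows by passage to filtered colimits from the elementary case of Zariski localizations. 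Hence the functor preserves all small limits. The genuinely delicate points---rather than the routine flatness bookkeeping---are (i) verifying that the transition maps $B_j \to B_i$ are the correct ones, so that $B$ is \emph{canonically} functorial in $T$ and independent of the chosen presentation of $\pi_0(\Spec A)$, and (ii) the identity $\pi_0(\Spec(B'\otimes_A B'')) = \pi_0(\Spec B')\times_{\pi_0(\Spec A)}\pi_0(\Spec B'')$ for ind-(Zariski localizations) underlying limit-preservation.
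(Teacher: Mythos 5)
Your construction is the same as the paper's: the paper also reduces via Lemma \ref{lem:profinitesetszariski} to maps $T_i \to \pi_0(\Spec(A))$ that are base changes of maps to finite sets, takes "the obvious ring $B_i$" (which is exactly your $\prod_{p} A_p\langle T_p\rangle$), and sets $B = \colim_i B_i$, using $\pi_0(\lim) = \lim \pi_0$. So the main construction is fine and matches.

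There is, however, one step whose stated justification fails. You assert that $\pi_0(\Spec(B'\otimes_A B'')) = \pi_0(\Spec B')\times_{\pi_0(\Spec A)}\pi_0(\Spec B'')$ holds "for ind-(Zariski localizations), which follows by passage to filtered colimits from the elementary case of Zariski localizations." That elementary case is false. Take $A = k[x,y]/(xy)$ and $f = x+y$, so $\Spec(A)$ is connected while $D(f)$ is the disjoint union of two punctured lines. With $B' = B'' = A[\tfrac{1}{f}]$ one has $B'\otimes_A B'' = A[\tfrac{1}{f}]$, so the left side has two elements while the right side has four. (More generally, $\pi_0(D(f)\cap D(g))\to \pi_0(D(f))\times_{\pi_0(X)}\pi_0(D(g))$ need be neither injective nor surjective.) The identity you need is true for the rings actually produced by your construction, but for a more special reason: each $B_i$ is a filtered colimit of finite products of \emph{direct factors} of $A$, i.e.\ $\Spec(B)\to\Spec(A)$ is a cofiltered limit of disjoint unions of clopen immersions, so all the relevant open sets are pulled back from $\pi_0(\Spec A)$, where the fibre-product formula is trivial. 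You should either restrict the claim to this class, or—better—observe (as the paper does in the remark following the lemma) that your $\Spec(B)$ is the fibre product $T\times_{\pi_0(\Spec A)}\Spec(A)$ of topological spaces, ringed by the pullback of $\calO_{\Spec A}$, and that $S\mapsto S\times_{\pi_0(\Spec A)}\Spec(A)$ is right adjoint to $Y\mapsto \pi_0(Y)$. This resolves in one stroke both of the points you flag as delicate: right adjoints preserve all limits, and the adjoint description shows your $B$ is canonically independent of the chosen presentations and functorial in $T$.
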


One may make the following more precise statement: for any affine scheme $X$, the functor $Y \mapsto \pi_0(Y)$ from affine $X$-schemes to profinite $\pi_0(X)$-sets has a fully faithful right adjoint $S \mapsto S \times_{\pi_0(X)} X$, the fibre product in the category of topological spaces ringed using the pullback of the structure sheaf on $X$. Moreover, the natural map $S \times_{\pi_0(X)} X \to X$ is a pro-(Zariski localisation) and pro-finite.

\begin{proof}
	Given $T$ as in the lemma, one may write $T = \lim T_i$ as a cofiltered limit of profinite $\pi_0(\Spec(A))$-sets $T_i$ with $T_i \to \pi_0(\Spec(A))$ being the base change of a map of finite sets, see Lemma \ref{lem:profinitesetszariski}. For each $T_i$, there is an obvious ring $B_i$ that satisfies the required properties. We then set $B := \colim B_i$, and observe that $\pi_0(\Spec(B)) = \lim \pi_0(\Spec(B_i)) = \lim T_i = T$ as a $\pi_0(\Spec(A))$-set.	
\end{proof}

One can characterize w-strictly local rings in terms of their topology and local algebra:

\begin{lemma}
	\label{lem:cwstrictlylocalcharacterize}
A w-local ring $A$ is w-strictly local if and only if all local rings of $A$ at closed points are strictly henselian.
\end{lemma}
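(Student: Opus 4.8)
The plan is to prove both implications by systematically separating the topological data (which is governed by w-locality) from the local-algebra data (governed by strict henselianness of the local rings at closed points), using the structural results already established.

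For the ``only if'' direction, suppose $A$ is w-strictly local, and fix a closed point $\fram \in \Spec(A)^c$. We want $A_\fram$ strictly henselian. We first note $A_\fram$ is a local ring with separably closed residue field: any finite separable extension of the residue field $k(\fram)$ can be realized as the fibre at $\fram$ of a faithfully flat \'etale $A$-algebra $B$ (choose a standard-\'etale chart over a Zariski neighbourhood, then spread out, and take a product with the complementary open to make it faithfully flat over all of $\Spec A$); since $A \to B$ splits by hypothesis, the extension must have been trivial. For henselianness proper, given an \'etale $A_\fram$-algebra with a section modulo $\fram$, one again spreads out to an \'etale $A$-algebra $B'$ over a Zariski-open $U \ni \fram$, arranges faithful flatness over $\Spec A$ by adjoining the complementary open (and possibly a further \'etale cover there, using the first paragraph-style argument that faithfully flat \'etale covers exist), obtains a section $A \to C$ by w-strict locality, and specializes this section back to $\fram$; w-locality of $A$ guarantees the splitting behaves well with respect to the specialization map $s: \Spec A \to (\Spec A)^c$, so the section at $\fram$ lifts the prescribed one. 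The point where care is needed is the spreading-out/gluing step: one must ensure that an \'etale algebra defined only near $\fram$ can be completed to a \emph{faithfully flat} \'etale $A$-algebra without destroying the relevant section near $\fram$, which is where the hypothesis that $(\Spec A)^c$ is closed and the structure of $A^Z$-type rings (local along $(\Spec A)^c$) gets used.

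For the ``if'' direction, suppose $A$ is w-local with all local rings at closed points strictly henselian; we must show every faithfully flat \'etale $A \to B$ splits. Let $s: \Spec A \to (\Spec A)^c \simeq \pi_0(\Spec A)$ be the specialization map from Remark \ref{rmk:specializationmap}. For each closed point $\fram$, the base change $A_\fram \to B \otimes_A A_\fram$ is faithfully flat \'etale over the strictly henselian ring $A_\fram$, hence splits; choosing a section amounts to choosing a point of $\Spec(B)$ over $\fram$, i.e. a section over the closed point $\fram$. The task is to glue these pointwise sections over $(\Spec A)^c$ into an actual section $\Spec B \to \Spec A$. Since $A \to B$ is finitely presented, the set of closed points of $\Spec A$ over which a section of $\Spec B \to \Spec A$ extends to a (quasicompact open) neighbourhood is itself, by a standard constructibility/spreading-out argument, open in $(\Spec A)^c$; as $(\Spec A)^c$ is profinite (Lemma \ref{lem:cwlocalcharacterize}), it is covered by finitely many such clopen pieces, and on each piece the section extends over the localization of $\Spec A$ along that piece. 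Using w-locality (all open covers of $\Spec A$ split), these partial sections over a cover of $\Spec A$ assemble to a global section, completing the argument.

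The main obstacle in both directions is the same: passing between a section defined only over (a neighbourhood of) the closed points and a section over all of $\Spec A$. In the ``if'' direction this is the clopen-decomposition-plus-gluing step, which relies essentially on condition (1) of w-locality (splitting of open covers) together with the profiniteness of $(\Spec A)^c$; in the ``only if'' direction it is the dual problem of extending a locally-defined faithfully flat \'etale cover to a globally faithfully flat one so that the hypothesis can be applied. Both are handled by the now-familiar device, used repeatedly in \S\ref{subsec:spectralcwlocal}, of working over $\pi_0$ and invoking that $\Spec A$ is local along its (profinite) subspace of closed points. The residue-field and henselianness verifications themselves are then routine deformation-theoretic arguments over a local ring.
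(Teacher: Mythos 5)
Your proof follows essentially the same route as the paper's: for the forward direction, spread out a faithfully flat \'etale $A_\fram$-algebra to a Zariski neighbourhood and complete it to a faithfully flat \'etale $A$-algebra by adjoining factors $A[f_i^{-1}]$ with $f_i$ vanishing at the closed point, then split globally and localize back; for the converse, split over each strictly henselian local ring at a closed point, spread the sections out to a Zariski cover (which covers all of $\Spec(A)$ because every point specializes into $(\Spec A)^c$), and invoke condition (1) of w-locality to split that cover. The one imprecision is in the forward direction: the reason the global section localizes to a section of $B_\fram$ is neither w-locality nor the closedness of $(\Spec A)^c$, but simply that the auxiliary factors $A[f_i^{-1}]$ have empty fibre over $\fram$, so the induced section of $C \otimes_A A_\fram = B_\fram \times A'$ must factor through the first projection.
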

\begin{proof}
	For the forward direction, fix a w-strictly local ring $A$ and choose a closed point $x \in \Spec(A)^c$. Any faithfully flat \'etale map $A_x \to B'$ is the localization at $x$ of a faithfully flat \'etale map $A[\frac{1}{f}] \to B$ for some $f$ invertible at $x$. As $x$ is a closed point, we may find $f_1,\ldots,f_n\in A$ vanishing at $x$ such that $C=B \times \prod_{i=1}^n A[f_i^{-1}]$ is a faithfully flat \'etale $A$-algebra. This implies that there is a section $C\to A$, and hence $C\otimes_A A_x\to A_x$. As $f_i$ vanishes at $x$, one has $C \otimes_A A_x = B_x \times A'$, where $A'$ has no point above $x$. The (algebra) section $B_x \times A' \to A_x$ then necessarily factors through the projection on the first factor, which gives us the desired section. For the converse direction, assume $A$ is a w-strictly local ring whose local rings at closed points are strictly henselian. Fix a faithfully flat \'etale $A$-algebra $B$. Then $A \to B$ has a section over each closed point of $\Spec(A)$ by the assumption on the local rings. Spreading out, which is possible by finite presentation constraints, there is a Zariski cover of $\Spec(A)$ over which $\Spec(B) \to \Spec(A)$ has a section; by w-locality of $\Spec(A)$, one finds the desired section $B \to A$.
\end{proof}

To pass from w-strictly local covers of absolutely flat rings to arbitrary rings, we use henselizations:

\begin{definition}
	\label{def:henselization}
	Given a map of rings $A \to B$, let $\Hens_{A}(-):\Ind(B_\et) \to \Ind(A_\et)$ be the functor right adjoint to the base change functor $\Ind(A_\et) \to \Ind(B_\et)$. Explicitly, for $B_0 \in \Ind(B_\et)$, we have $\Hens_A(B_0) = \colim A'$, where the colimit is indexed by diagrams $A \to A' \to B_0$ of $A$-algebras with $A \to A'$ \'etale.
\end{definition}

\begin{remark}
	The notation of Definition \ref{def:henselization} is {\em not} ambiguous, i.e., for any map $A \to B$ and $C \in \Ind(B_\et)$, the ring $\Hens_A(C)$ depends only on the $A$-algebra $C$, and not on $B$.  It follows that if $A \to A' \to C$ is a factorization with $A \to A'$ ind-\'etale, then $\Hens_A(C) \simeq \Hens_{A'}(C)$.
\end{remark}

Henselization is particularly well-behaved for quotient maps:

\begin{lemma}
	\label{lem:HensFullyFaithful}
For surjective maps $A \to A/I$, the functor $\Hens_A(-)$ is fully faithful, so $\Hens_A(-) \otimes_A A/I \simeq \id$ as functors on $\Ind( (A/I)_\et)$.
\end{lemma}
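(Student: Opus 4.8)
The plan is to prove fully faithfulness of $\Hens_A(-)$ on $\Ind((A/I)_\et)$ by analyzing what $\Hens_A(C)$ looks like for $C \in \Ind((A/I)_\et)$, and showing that base change along $A \to A/I$ recovers $C$ on the nose. The key algebraic input is the classical theory of henselian pairs: the pair $(\Hens_A(A/I), I\cdot \Hens_A(A/I))$ is henselian, and for a henselian pair $(R, J)$ the base change functor $\Ind(R_\et) \to \Ind((R/J)_\et)$ is an equivalence (Gabber's theorem, or the elementary version for the étale sites generated by standard étale maps). So the strategy is: reduce the statement to this equivalence plus a compatibility.

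First I would spell out the adjunction. By Definition \ref{def:henselization}, $\Hens_A(-) : \Ind((A/I)_\et) \to \Ind(A_\et)$ is right adjoint to base change $\otimes_A A/I : \Ind(A_\et) \to \Ind((A/I)_\et)$. To show $\Hens_A(-)$ is fully faithful, it suffices to show the counit $\Hens_A(C) \otimes_A A/I \to C$ is an isomorphism for every $C \in \Ind((A/I)_\et)$; this is a standard fact (a right adjoint is fully faithful iff the counit is an isomorphism). Then the displayed identity $\Hens_A(-) \otimes_A A/I \simeq \id$ is literally the statement that the counit is an isomorphism, so the two assertions are the same.

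Next I would identify $\Hens_A(C)$. Write $A^h := \Hens_A(A/I)$; by the explicit colimit formula, $A^h = \colim A'$ over factorizations $A \to A' \to A/I$ with $A \to A'$ étale, which is precisely the henselization of $A$ along $I$, so $(A^h, IA^h)$ is a henselian pair and $A^h/IA^h = A/I$. By the remark following Definition \ref{def:henselization}, $\Hens_A(C) \simeq \Hens_{A^h}(C)$ for any $C \in \Ind((A/I)_\et) = \Ind((A^h/IA^h)_\et)$. Now over the henselian pair $(A^h, IA^h)$, the base change functor $\Ind(A^h_\et) \to \Ind((A/I)_\et)$ is an equivalence of categories; let $U$ denote its inverse. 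I claim $\Hens_{A^h}(C) = U(C)$: indeed $\Hens_{A^h}(-)$ is right adjoint to base change, and base change is an equivalence, hence its own two-sided adjoint (up to the canonical isomorphisms), so the right adjoint is the inverse equivalence. Consequently $\Hens_A(C) \otimes_A A/I = U(C) \otimes_{A^h} A^h/IA^h \simeq C$, which is exactly the counit being an isomorphism.

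The main obstacle is invoking the equivalence $\Ind(A^h_\et) \xrightarrow{\sim} \Ind((A/I)_\et)$ for a henselian pair at the level of \emph{ind-étale} algebras, not just étale ones, and without noetherian hypotheses. For individual étale algebras this is the classical lifting/rigidity property of henselian pairs (Gabber; see also the Stacks Project treatment of henselian pairs), giving an equivalence $A^h_\et \simeq (A/I)_\et$ of the small étale sites; one then passes to $\Ind$-categories formally, since $\Ind$ is a $2$-functor and preserves equivalences. I would need to be slightly careful that the functor "base change" on $\Ind$-categories is the one induced by base change on $\et$-sites (it is, by the universal property of $\Ind$ and the fact that base change commutes with filtered colimits), and that $\Hens$ as defined is genuinely the right adjoint on the $\Ind$-level (which is built into Definition \ref{def:henselization}). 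Everything else is formal nonsense about adjunctions: a right adjoint of an equivalence is the inverse equivalence, and a right adjoint is fully faithful iff its counit is invertible.
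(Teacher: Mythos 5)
Your formal reductions are fine (counit iso $\Leftrightarrow$ right adjoint fully faithful; $\Hens_A(C)\simeq\Hens_{A^h}(C)$ for $A^h$ the henselization of $A$ along $I$), but the key input is false: for a henselian pair $(R,J)$ the base-change functor $\Ind(R_\et)\to\Ind((R/J)_\et)$ is \emph{not} an equivalence. Gabber's theorem gives an equivalence only between the categories of \emph{finite} \'etale algebras. For arbitrary \'etale algebras, base change along a henselian pair is essentially surjective (\'etale algebras lift), but it is not fully faithful, and the failure already occurs on compact objects, so passing to $\Ind$-categories cannot repair it. Concretely, take the henselian pair $(\Z_p,(p))$ and the \'etale $\Z_p$-algebras $C=\Z_p\times\Z_p$ and $D=\Z_p[1/p]=\Q_p$ (an open immersion on spectra, hence flat, finitely presented and \'etale). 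Then $\Hom_{\Z_p}(C,D)$ has two elements, while $C/pC=\F_p\times\F_p$ and $D/pD=0$, so $\Hom_{\F_p}(C/pC,D/pD)$ is a singleton. Equivalently, the \emph{unit} $B\to\Hens_A(B\otimes_A A/I)$ is not an isomorphism: for $B=\Q_p$ over $A=\Z_p$ it is the map $\Q_p\to\Hens_{\Z_p}(0)=0$. Hence there is no inverse equivalence $U$, and your identification $\Hens_{A^h}(C)=U(C)$ has nothing to refer to.

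The lemma is genuinely one-sided: only the counit is asserted to be invertible, and that is precisely the content that must be verified by hand rather than extracted from a symmetric equivalence statement. The paper's proof does exactly this: fix $B_0\in\Ind((A/I)_\et)$, set $B=\Hens_A(B_0)$, and check $B/IB\simeq B_0$ directly --- surjectivity because every \'etale $A/I$-algebra lifts to an \'etale $A$-algebra, and injectivity by taking $f\in\ker(B\to B_0)$, realizing it in some \'etale $A$-algebra $C\to B$, finding an \'etale $C/IC$-algebra $D_0$ over which $f$ dies, lifting $D_0$ to an \'etale $C$-algebra $D$ so that $f\in ID$, and concluding $f\in IB$ from the factorization $C\to D\to B$. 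If you want to keep your structure, you must replace the appeal to an equivalence for henselian pairs by such a direct verification that the counit is an isomorphism.
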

\begin{proof}
Fix some $B_0 \in \Ind( (A/I)_\et)$ and set $B = \Hens_A(B_0)$. By adjointness, it suffices to check $B/IB \simeq B_0$. As any \'etale $A/I$-algebra $C_0$ lifts to some \'etale $A$-algebra $C$, one immediately checks that $B \to B_0$ is surjective. Choose $f \in \ker(B \to B_0)$. Then $f$ lifts to some \'etale $A$-algebra $C$ along some map $C \to B$. If $f \in IC$, we are done. If not, $f$ gives an element of the kernel of $C/IC \to B_0$. Hence, there is some diagram $C/IC \to D_0 \to B_0$ in $\Ind( (A/I)_\et)$ with $C/IC \to D_0$ \'etale such that $f$ maps to $0$ in $D_0$. Choose an \'etale $C$-algebra $D$ lifting $D_0$, so $f \in ID$. The map $D \to D/ID = D_0 \to B_0$ of $A$-algebras then gives a factorization $C \to D \to B$, which shows that $f \in IB$.
\end{proof}

The \'etale analogue of Lemmas \ref{lem:closedincwlocal} and \ref{lem:cwlocalspreadout} is:

\begin{lemma}
	\label{lem:cwstrictlylocalspread}
	Let $A$ be a ring henselian along an ideal $I$. Then $A$ is w-strictly local if and only if $A/I$ is so.
\end{lemma}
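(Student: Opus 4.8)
The plan is to prove the equivalence one clause at a time, since ``$A$ w-strictly local'' means ``$\Spec(A)$ is w-local and every faithfully flat \'etale $A$-algebra splits.'' Throughout I use three standard properties of a henselian pair $(A,I)$: (i) $I$ lies in the Jacobson radical of $A$; (ii) $A \to A/I$ induces a bijection on idempotents, equivalently a homeomorphism $\pi_0(\Spec(A/I)) \simeq \pi_0(\Spec(A))$; (iii) reduction $B \mapsto B/IB$ is fully faithful, and essentially surjective, from \'etale $A$-algebras to \'etale $A/I$-algebras. \textbf{Step 1 ($\Spec(A)$ w-local $\iff$ $\Spec(A/I)$ w-local).} By (i) every closed point of $\Spec(A)$ lies in $V(I) = \Spec(A/I)$, which $\Spec(A/I) \hookrightarrow \Spec(A)$ realizes as a closed subspace; in particular $\Spec(A)^c = \Spec(A/I)^c$. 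If $\Spec(A)$ is w-local, then $\Spec(A/I)$ is w-local by Lemma \ref{lem:closedincwlocal}. If $\Spec(A/I)$ is w-local, then $\Spec(A)$ is local along the w-local closed subspace $\Spec(A/I)$ and $\pi_0(\Spec(A/I)) \simeq \pi_0(\Spec(A))$ by (ii), so $\Spec(A)$ is w-local by Lemma \ref{lem:cwlocalspreadout}. From now on both rings are w-local.

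\textbf{Step 2, direction $\Leftarrow$.} Assume every faithfully flat \'etale $A/I$-algebra splits, and let $A \to B$ be faithfully flat \'etale. Then $A/I \to B/IB$ is flat and $\Spec(B/IB) \to \Spec(A/I)$ is surjective (base change of the surjection $\Spec(B) \to \Spec(A)$), hence faithfully flat \'etale, so it has a section $s_0 : B/IB \to A/I$. By (iii) the map $\Hom_{A}(B,A) \to \Hom_{A/I}(B/IB, A/I)$ is a bijection (both $B$ and $A$ are \'etale $A$-algebras), so $s_0$ lifts to an $A$-algebra map $s : B \to A$. Then $A \to B \xrightarrow{s} A$ is an $A$-algebra endomorphism of $A$, hence the identity, so $s$ is a section.

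\textbf{Step 2, direction $\Rightarrow$.} Assume every faithfully flat \'etale $A$-algebra splits, and let $A/I \to B_0$ be faithfully flat \'etale. By (iii) there is an \'etale $A$-algebra $B$ with $B/IB \simeq B_0$ as $A/I$-algebras. The map $A \to B$ is flat, and $\Spec(B) \to \Spec(A)$ has open image (\'etale maps are open) containing the image of the composite $\Spec(B/IB) \hookrightarrow \Spec(B) \to \Spec(A)$, which equals $V(I)$ because $\Spec(B/IB) \to \Spec(A/I)$ is surjective; hence it contains every closed point of $\Spec(A)$ by (i). An open subset of a spectral space containing all of its closed points is the whole space: its complement is a closed, hence spectral, subspace, and a nonempty spectral space has a closed point, which would then be a closed point of $\Spec(A)$ outside the open set. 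So $\Spec(B) \to \Spec(A)$ is surjective, $A \to B$ is faithfully flat \'etale, and it has a section $s : B \to A$; then $s \otimes_A A/I : B_0 \to A/I$ is a section of $A/I \to B_0$.

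\textbf{Expected obstacle.} The argument is essentially formal: the topology is handled by Lemmas \ref{lem:closedincwlocal} and \ref{lem:cwlocalspreadout} together with the ``open set containing all closed points'' observation, and the rest is routine bookkeeping with faithfully flat \'etale maps. The one genuine input is property (iii) of henselian pairs, which is exactly where the hypothesis ``$A$ henselian along $I$'' is used. If one prefers to stay strictly within the framework of the excerpt, one can replace (iii) by the functor $\Hens_A(-)$ of Definition \ref{def:henselization} together with the identity $\Hens_A(A/I) = A$ (proved by the same ``open set containing all closed points'' argument applied to the \'etale algebras appearing in the defining colimit) and the full faithfulness of Lemma \ref{lem:HensFullyFaithful} --- at the cost that the $\Rightarrow$ direction then only produces a faithfully flat \emph{ind}-\'etale cover, which one must separately know splits over a w-strictly local ring.
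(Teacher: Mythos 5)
Your proof is correct and follows essentially the same route as the paper's: the topological half via Lemmas \ref{lem:closedincwlocal} and \ref{lem:cwlocalspreadout}, and the algebraic half by lifting sections and covers across the henselian pair. The only difference is packaging — you invoke the equivalence of \'etale sites for a henselian pair directly, where the paper phrases the same input through the adjunction for $\Hens_A(-)$ and the identification $\Hens_A(A/I)=A$ — and your extra care with the $\pi_0$ hypothesis of Lemma \ref{lem:cwlocalspreadout} and the surjectivity of the lifted cover only makes the argument more complete.
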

\begin{proof}
First assume $A/I$ is w-strictly local. As $A$ is henselian along $I$, the space $\Spec(A)$ is local along $\Spec(A/I)$, so $A$ is w-local by Lemma \ref{lem:cwlocalspreadout}. Pick a faithfully flat \'etale $A$-algebra $B$. Then $A/I \to B/IB$ has a section. By the adjunction $\Hom_A(B,\Hens_A(A/I)) \simeq \Hom_A(B/IB,A/I)$ and the identification $\Hens_A(A/I) = A$, one finds the desired section $B \to A$. Conversely, assume $A$ is w-strictly local. Then $\Spec(A/I)^c = \Spec(A)^c$ by the henselian property, so $\Spec(A/I)^c \subset \Spec(A/I)$ is closed. Moreover, any faithfully flat \'etale $A/I$-algebra $B_0$ is the reduction modulo of $I$ of a faithfully flat \'etale $A$-algebra $B$, so the w-strict locality of $A$ immediately implies that for $A/I$.
\end{proof}

Henselizing along w-strictly local covers of absolutely flat rings gives w-strictly local covers in general:

\begin{corollary}
	Any ring $A$ admits an ind-\'etale faithfully flat map $A \to A'$ with $A'$ w-strictly local.
\end{corollary}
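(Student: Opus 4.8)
The plan is to imitate the construction of a strict henselization at a geometric point, but carried out simultaneously over every point of $\Spec(A)$: first replace $A$ by a w-local Zariski localization, then replace the resulting absolutely flat ``boundary ring'' by a w-strictly local cover, and finally propagate that cover back up by henselizing.

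Concretely, I would first invoke Lemma~\ref{lem:cwlocalringsadjoint} to get the faithfully flat ind-(Zariski localization) $A \to A^Z$ with $A^Z$ w-local, and set $\overline A := A^Z/I_{A^Z}$, which is absolutely flat by Lemma~\ref{lem:universalabsolutelyflat}, with $\Spec(\overline A)$ identified with the closed-point locus $\Spec(A^Z)^c$. Then Lemma~\ref{lem:absoluteflatcwstrictlylocal} provides an ind-\'etale faithfully flat map $\overline A \to B_0$ with $B_0$ w-strictly local. Viewing $B_0$ as an object of $\Ind((A^Z)_{\et})$ through the surjection $A^Z \twoheadrightarrow \overline A$, I set $A' := \Hens_{A^Z}(B_0)$ and claim $A \to A'$ is the desired map. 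From the description of $\Hens$ in Definition~\ref{def:henselization} as a filtered colimit of \'etale algebras, $A^Z \to A'$ (hence $A \to A'$, as ind-\'etale maps compose) is ind-\'etale; and Lemma~\ref{lem:HensFullyFaithful}, applied to $A^Z \twoheadrightarrow \overline A$, gives $A'/I_{A^Z}A' \simeq B_0$. To conclude via Lemma~\ref{lem:cwstrictlylocalspread} that $A'$ is w-strictly local (since its quotient $B_0$ is), it remains to check that $A^Z \to A'$ is faithfully flat and that $A'$ is henselian along $I_{A^Z}A'$.

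These last two points about the henselization are the only part of the argument not already packaged in an earlier lemma. Faithful flatness should be straightforward: $A^Z \to A'$ is flat as a filtered colimit of \'etale maps, and its image on spectra is stable under generization by going-down and contains $\Spec(\overline A) = \Spec(A^Z)^c$ (because $\Spec(B_0) \to \Spec(A^Z)^c$ is surjective by faithful flatness of $\overline A \to B_0$), so, since $A^Z$ is w-local and every point of $\Spec(A^Z)$ generizes a closed point, the image is everything. Henselianness along $I_{A^Z}A'$ is the point I expect to require the most care: given an \'etale $A'$-algebra $C$ with a section modulo $I_{A^Z}A'$, one must descend $C$ together with its section to some \'etale $A''$ appearing in the defining diagram $A^Z \to A'' \to B_0$, observe that then $A^Z \to A'' \to C$ exhibits $C$ as \'etale over $A^Z$ equipped with a map to $B_0$, hence as a further term of the colimit, and extract the required section $C \to A'$ — the delicate part being to keep all the sections compatible through the filtered system. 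Granting these, Lemma~\ref{lem:cwstrictlylocalspread} finishes the proof, and $A \to A'$ is faithfully flat as a composite of faithfully flat maps.
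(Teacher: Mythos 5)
Your construction is exactly the paper's: the proof there is the one-line definition $A' := \Hens_{A^Z}(\overline{A^Z/I_{A^Z}})$ followed by an appeal to Lemma \ref{lem:cwstrictlylocalspread}, so your route is the same, just with the intermediate lemmas spelled out. The two points you flag as needing care --- that $A^Z \to \Hens_{A^Z}(B_0)$ is faithfully flat and that the target is henselian along $I_{A^Z}$ --- are precisely the standard properties of the henselization that the paper leaves implicit, and your sketches of them (going-down plus surjectivity onto the closed points for flatness; descent of an \'etale algebra with its section to a finite stage of the filtered colimit for henselianness) are correct.
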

\begin{proof}
	Set $A' := \Hens_{A^Z}(\overline{A^Z/I_{A^Z}})$, where $\overline{A^Z/I_{A^Z}}$ is a w-strictly local ind-\'etale faithfully flat $A^Z/I_{A^Z}$-algebra; then $A'$ satisfies the required property by Lemma \ref{lem:cwstrictlylocalspread}.
\end{proof}

We end by noting that the property of w-strictly locality passes to quotients:

\begin{lemma}
Let $A$ be a ring with an ideal $I$. If $A$ is w-strictly local, so is $A/I$.
\end{lemma}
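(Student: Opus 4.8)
The plan is to reduce to the characterization of w-strictly local rings via local algebra (Lemma \ref{lem:cwstrictlylocalcharacterize}), rather than to argue directly with sections of faithfully flat \'etale maps.

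First I would observe that $\Spec(A/I) \subset \Spec(A)$ is a closed subspace, so by Lemma \ref{lem:closedincwlocal} the spectral space $\Spec(A/I)$ is w-local; in particular $A/I$ is a w-local ring. Moreover, as recorded in the proof of Lemma \ref{lem:closedincwlocal}, one has $\Spec(A/I)^c = \Spec(A)^c \cap \Spec(A/I)$, i.e.\ the closed points of $\Spec(A/I)$ are precisely the closed points $\frap$ of $\Spec(A)$ that contain $I$.

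Next, fix such a closed point $\frap$. Since $A$ is w-strictly local, Lemma \ref{lem:cwstrictlylocalcharacterize} says that $A_\frap$ is strictly henselian. The local ring of $A/I$ at $\frap$ is $A_\frap/IA_\frap$, a proper quotient since $\frap \supseteq I$, hence a quotient of a strictly henselian local ring by an ideal contained in its maximal ideal. Such a quotient is again strictly henselian: it is henselian because any quotient of a henselian local ring is henselian, and its residue field coincides with that of $A_\frap$, which is separably closed. Therefore every local ring of $A/I$ at a closed point is strictly henselian, and applying Lemma \ref{lem:cwstrictlylocalcharacterize} in the other direction to the w-local ring $A/I$ shows that $A/I$ is w-strictly local.

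The argument is essentially formal once Lemma \ref{lem:cwstrictlylocalcharacterize} is available; the only points requiring a little care are the identification of the closed points of $\Spec(A/I)$ inside $\Spec(A)$ and the standard fact that quotients of strictly henselian local rings remain strictly henselian. If one instead wanted a direct proof via sections, the main obstacle would be that a faithfully flat \'etale $A/I$-algebra $B_0$ lifts to an \'etale $A$-algebra $B$ but not obviously to a \emph{faithfully flat} one, so one would have to enlarge $B$ to cover the complement of its image in $\Spec(A)$ --- precisely the bookkeeping that Lemma \ref{lem:cwstrictlylocalcharacterize} lets us sidestep.
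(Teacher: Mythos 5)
Your proof is correct and follows essentially the same route as the paper's: reduce to w-locality of $\Spec(A/I)$ via Lemma \ref{lem:closedincwlocal}, then note that local rings of $A/I$ at closed points are quotients of strictly henselian local rings and hence strictly henselian, and conclude by Lemma \ref{lem:cwstrictlylocalcharacterize}. Your version merely spells out the identification of closed points and the stability of strict henselianity under quotients in more detail.
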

\begin{proof}
	The space $\Spec(A/I)$ is w-local by Lemma \ref{lem:closedincwlocal}. The local rings of $A/I$ at maximal ideals are quotients of those of $A$, and hence strictly henselian. The claim follows from Lemma \ref{lem:cwstrictlylocalcharacterize}.
\end{proof}

\subsection{Weakly \'etale versus pro-\'etale}
\label{subsec:weaklyetaleproetale}

In this section, we study the following notion:

\begin{definition}
	A morphism $A\to B$ of commutative rings is called {\em weakly \'etale} if both $A\to B$ and the multiplication morphism $B\otimes_A B\to B$ are flat.
\end{definition}

\begin{remark}
Weakly \'etale morphisms have been studied previously in the literature under the name of absolutely flat morphisms, see \cite{Olivier}. Here, we follow the terminology introduced in \cite[Definition 3.1.1]{GabberRamero}. 
\end{remark}

Our goal in this section is to show that weakly \'etale maps and ind-\'etale maps generate the same Grothendieck topology, see Theorem \ref{t:WeaklyVsIndEtale} below. We begin by recording basic properties of weakly \'etale maps.

\begin{proposition}\label{p:PropWeaklyEtale}
Fix maps $f:A \to B$, $g:B \to C$, and $h:A \to D$ of rings.
\begin{enumerate}
\item If $f$ is ind-\'etale, then $f$ is weakly \'etale.
\item If $f$  is weakly \'etale, then the cotangent complex $\mathbb{L}_{B/A}$ vanishes. In particular, $f$ is formally \'etale.
\item If $f$ is weakly \'etale and finitely presented, then $f$ is \'etale.
\item If $f$ and $g$ are weakly \'etale (resp. ind-\'etale), then $g\circ f$ is weakly \'etale (resp. ind-\'etale). If $g\circ f$ and $f$ are weakly \'etale (resp. ind-\'etale), then $g$ is weakly \'etale (resp. ind-\'etale).
\item If $h$ is faithfully flat, then $f$ is weakly \'etale if and only if $f\otimes_A D: D\to B\otimes_A D$ is weakly \'etale.
\end{enumerate}
\end{proposition}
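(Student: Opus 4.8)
The plan is to prove the five parts roughly in the order stated, since each part is either elementary or feeds into the next. For (1), if $f \colon A \to B$ is ind-\'etale, write $B = \colim_i B_i$ with $B_i$ \'etale over $A$; flatness of $f$ is clear since filtered colimits of flat modules are flat, and $B \otimes_A B = \colim_{i,j} B_i \otimes_A B_j$, so it suffices to see that $B_i \otimes_A B_j \to B$ is flat. But each $B_i \otimes_A B_j$ is \'etale over $A$, hence over $B_i$ (by base change), and the composite $B_i \to B_i \otimes_A B_j \to B$ realizes $B$ as ind-\'etale over $B_i \otimes_A B_j$ — one checks this using that $B$ is ind-\'etale over $B_i$ and the multiplication map is a retraction — giving flatness. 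An alternative and cleaner route for (1) is to deduce it directly from the fact that \'etale maps are weakly \'etale (trivially, $B_i \otimes_A B_i \to B_i$ is \'etale hence flat) together with stability of weak \'etaleness under filtered colimits, which follows formally from exactness of filtered colimits.

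For (2), weak \'etaleness gives that the multiplication map $B \otimes_A B \to B$ is flat, hence $\mathbb{L}_{B/B\otimes_A B} = 0$; combined with the base-change/transitivity triangle for $A \to B \to B \otimes_A B$ (second projection) and the fact that the composite $B \to B\otimes_A B \to B$ is the identity, one gets $\mathbb{L}_{B/A} \simeq \mathbb{L}_{B/A} \oplus \mathbb{L}_{B/A}[1]$-type relations that force $\mathbb{L}_{B/A} = 0$; more precisely one uses that $\mathbb{L}_{(B\otimes_A B)/B} \simeq \mathbb{L}_{B/A}\otimes_A B$ (flat base change) and the transitivity triangle $\mathbb{L}_{B/A}\otimes_{B\otimes_A B} B \to \mathbb{L}_{B/A} \to \mathbb{L}_{B/(B\otimes_A B)} = 0$ together with the splitting from the identity factorization. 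Vanishing of $\mathbb{L}_{B/A}$ plus flatness gives formal \'etaleness by the standard deformation-theory criterion. For (3), if additionally $f$ is finitely presented, then formally \'etale plus finitely presented is the definition (or an immediate characterization) of \'etale.

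Part (4) is formal: for weak \'etaleness, flatness of composites is clear, and $C \otimes_A C$ is flat over $C$ because $C\otimes_A C \to C\otimes_B C \to C$ with $C\otimes_A C \to C \otimes_B C$ flat (base change of $B \otimes_A B \to B$ along $B \to C\otimes_B C$, using flatness of $B \to C$... one arranges the factorization carefully) and $C\otimes_B C \to C$ flat by hypothesis on $g$; the ind-\'etale case is the analogous colimit bookkeeping. The two-out-of-three statements use that a retract of a flat module is flat, applied to $B\otimes_A B$ as a module over itself via the algebra maps. Part (5) is faithfully flat descent: flatness of $f$ descends along the faithfully flat $h$, and flatness of $B\otimes_A B \to B$ is equivalent after the faithfully flat base change $- \otimes_A D$ to flatness of $(B\otimes_A D)\otimes_D (B\otimes_A D) \to B\otimes_A D$, since $(B\otimes_A B)\otimes_A D \cong (B\otimes_A D)\otimes_D(B\otimes_A D)$ and faithfully flat base change both reflects and preserves flatness. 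I expect the main obstacle to be part (2): getting the cotangent complex vanishing argument exactly right requires care with the transitivity triangles and the idempotent coming from the identity factorization $B \to B\otimes_A B \to B$, and this is the one place where a genuine (if short) homological argument is needed rather than formal manipulation of flatness; everything else is diagram-chasing with flat modules and filtered colimits.
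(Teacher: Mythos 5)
Your plan matches the paper's proof where the paper gives one: (1) is handled by exactly your ``cleaner route'' (reduce by filtered colimits to the \'etale case, where $B\otimes_A B\to B$ is even an open immersion on spectra), (3) is the same one-liner via (2), and (5) is the same descent-of-flatness observation. For (2) and for the cancellation half of (4) the paper simply cites Gabber--Ramero, so your attempt to supply the arguments is where the real content lies, and two steps there are not right as written. First, in (2) the implication ``$B\otimes_A B\to B$ is flat, hence $\mathbb{L}_{B/(B\otimes_A B)}=0$'' is false for flat maps in general ($k\to k[x]$ is flat with nonvanishing cotangent complex); what you need is that the multiplication map is flat \emph{and surjective}, and then the vanishing is itself a small argument: writing $R=B\otimes_A B$, $S=B$, flat base change gives $\mathbb{L}_{S/R}\otimes_R^{L}S\simeq \mathbb{L}_{(S\otimes_R S)/S}=\mathbb{L}_{S/S}=0$ since $S\otimes_R S=S$ by surjectivity, while flatness gives $\mathbb{L}_{S/R}\otimes_R^{L}S=\mathbb{L}_{S/R}$. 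With that in hand, your transitivity-triangle/splitting argument for $\mathbb{L}_{B/A}=0$ is the standard one (and is essentially Gabber--Ramero's proof). Second, in (4) the cancellation statement does not follow from ``a retract of a flat module is flat'': the section $C\to B\otimes_A C$ of the multiplication map is not $B$-linear for the relevant module structures. The usual argument instead factors $B\to B\otimes_A C\to C$, where the first map is flat by base change from $A\to C$ and the second is flat because it is the base change of the flat map $B\otimes_A B\to B$ along $B\otimes_A B\to B\otimes_A C$; the diagonal of $B\to C$ is treated similarly using that $C\otimes_B C=(C\otimes_A C)\otimes_{B\otimes_A B}B$ and that $B\otimes_A B\to B$ is flat and surjective. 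Both issues are local and repairable, but as stated they are the two places where your proof would not compile.
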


\begin{proof}
These are well-known, so we mostly give references.	
\begin{enumerate}
\item As flatness and tensor products are preserved under filtered colimits, one reduces to the case of \'etale morphisms. Clearly, $f$ is flat in that case; moreover, $B\otimes_A B\to B$ is an open immersion on spectra, and in particular flat.
\item See \cite[Theorem 2.5.36]{GabberRamero} and \cite[Proposition 3.2.16]{GabberRamero}.
\item Since $f$ is weakly \'etale and finitely presented, it is formally \'etale and finitely presented by (2), hence \'etale.
\item The first part is clear. For the second part in the weakly \'etale case, see \cite[Lemma 3.1.2 (iv)]{GabberRamero}. For the ind-\'etale case, observe that the category of ind-\'etale algebras is equivalent to the ind-category of \'etale algebras by finite presentation constraints.
\item This is clear, as flatness can be checked after a faithfully flat base change. \qedhere
\end{enumerate}
\end{proof}

The analogue of (5) fails for ind-\'etale morphisms. Our main result in this section is:

\begin{theorem}
\label{t:WeaklyVsIndEtale}
	Let $f: A\to B$ be weakly \'etale. Then there exists a faithfully flat ind-\'etale morphism $g: B\to C$ such that $g\circ f: A\to C$ is ind-\'etale.
\end{theorem}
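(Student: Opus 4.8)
The plan is to first reduce to the case where $B$ is w-strictly local, and then to exploit the good structure theory of such rings developed in \S\ref{subsec:cwlocalrings}. Concretely, by the Corollary above, choose a faithfully flat ind-\'etale map $B \to C$ with $C$ w-strictly local; by Proposition \ref{p:PropWeaklyEtale}(4) it suffices to show that the composite $A \to C$ is ind-\'etale, so after renaming we may assume $B$ itself is w-strictly local (while $f\colon A \to B$ remains weakly \'etale by Proposition \ref{p:PropWeaklyEtale}(4), since weakly \'etale maps are stable under composition). Next, using the functor $A \mapsto A^Z$ from Lemma \ref{lem:cwlocalringsadjoint}, replace $A$ by $A^Z$: the unit $A \to A^Z$ is a faithfully flat ind-(Zariski localization), hence ind-\'etale, and $A^Z \to B$ is again weakly \'etale after a harmless further localization to make the map w-local, so one reduces to the situation where both $A$ and $B$ are w-local and $f$ is a w-local weakly \'etale map. (One has to be slightly careful here: $A^Z \otimes_A B$ need not be w-local on the nose, but it is ind-(Zariski localization) over $B$, and we can pass to the w-local piece sitting over the closed points; the details are a bookkeeping exercise with $\pi_0$ and Lemma \ref{lem:cwlocalringsfactorize}.)

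Now comes the heart of the argument. With $A$, $B$ w-local and $f$ w-local and weakly \'etale, factor $f$ through the Jacobson radicals: we have $A \to A/I_A$ and $B \to B/I_B$ with $A/I_A$, $B/I_B$ absolutely flat by Lemma \ref{lem:universalabsolutelyflat}, and a map $A/I_A \to B/I_B$. The key point is that $B$ is henselian along $I_B$ (a weakly \'etale algebra over a henselian-along-an-ideal ring, or rather: $B$ w-local forces this once we know $B$ is, after the reduction, built as a henselization — alternatively one arranges at the previous step that $B$ is henselian along $I_B$ by replacing it with $\Hens_B(B/I_B)$, which is again ind-\'etale over $B$). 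Then by Lemma \ref{lem:HensFullyFaithful} and the henselization formalism, the whole problem descends to the absolutely flat level: it suffices to show that $A/I_A \to B/I_B$ becomes ind-\'etale after a faithfully flat ind-\'etale base change on $B/I_B$. For absolutely flat rings this is tractable: a weakly \'etale map of absolutely flat rings is, on spectra, a map of profinite sets which by Lemma \ref{lem:profinitesetszariski} is a pro-(Zariski localization), and weak \'etaleness plus vanishing of the cotangent complex (Proposition \ref{p:PropWeaklyEtale}(2)) pins down the structure enough to conclude; concretely one checks that over a w-strictly local absolutely flat ring (obtained via Lemma \ref{lem:absoluteflatcwstrictlylocal}) every weakly \'etale algebra is ind-(Zariski localization), because it is formally \'etale, absolutely flat, and has all the sections it could want.

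Finally, one lifts back: having produced a faithfully flat ind-\'etale $B/I_B \to \overline{B/I_B}$ making $A/I_A \to \overline{B/I_B}$ ind-\'etale, apply $\Hens_B(-)$ to get a faithfully flat ind-\'etale $B \to \Hens_B(\overline{B/I_B}) =: C$ (faithful flatness and ind-\'etaleness are preserved because $\Hens_B(-)\otimes_B B/I_B \simeq \id$ by Lemma \ref{lem:HensFullyFaithful} and $B$ is henselian along $I_B$, so surjectivity on $\Spec$ is detected modulo $I_B$), and then $A \to C$ is ind-\'etale since $A \to C$ modulo $I_B$ is $A/I_A \to \overline{B/I_B}$ which is ind-\'etale, and the henselization formalism (Remark after Definition \ref{def:henselization}, plus Lemma \ref{lem:cwstrictlylocalspread}-style spreading) propagates this up. Unwinding all the renamings gives the original statement.

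I expect the main obstacle to be the absolutely-flat core step: showing that a weakly \'etale algebra over a (w-strictly local) absolutely flat ring is ind-(Zariski localization), or at least becomes so after a faithfully flat ind-\'etale cover. Everything else is a reduction using tools already in place; this step is where one genuinely uses that weak \'etaleness is much weaker than \'etaleness and must be rigidified — presumably by writing the weakly \'etale $B/I_B$-algebra as a filtered colimit, using finite presentation to spread out over the pro-(finite set) $\Spec(B/I_B)$, and invoking strict locality to split the resulting finite \'etale pieces into Zariski localizations. The bookkeeping around replacing $A^Z \otimes_A B$ by its w-local part, and checking that all the replacements keep $f$ weakly \'etale and w-local, is the secondary annoyance.
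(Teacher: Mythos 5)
Your overall architecture (pass to w-strictly local covers, exploit the structure theory of w-local rings) matches the paper's, but there is a genuine gap at exactly the point you flag as "the heart": the claim that a weakly \'etale algebra over a w-strictly local absolutely flat ring is an ind-(Zariski localization). Your proposed justification --- "writing the weakly \'etale algebra as a filtered colimit, using finite presentation to spread out... and invoking strict locality to split the resulting finite \'etale pieces" --- is unavailable, because a weakly \'etale algebra carries no finiteness whatsoever; there is no filtered colimit presentation by finitely presented pieces to spread out. This is precisely the obstruction discussed in Remark \ref{rmk:olivierdoesnotdirectlyapply}. The missing non-formal input is Olivier's theorem (Theorem \ref{t:Olivier}): a weakly \'etale \emph{local} algebra over a strictly henselian local ring is an isomorphism. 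The paper's proof applies this to the local rings at closed points of a w-strictly local cover $A'$ of $A$ (Lemma \ref{lem:cwstrictlylocalzariski}), after first arranging via Lemma \ref{lem:cwlocalringsfactorize} that the map induces a homeomorphism on closed points; this is what rigidifies weak \'etaleness without any finite presentation hypothesis. Your proposal never invokes Olivier's theorem, and without it (or an equivalent rigidity statement) the core step cannot be completed. Note also that the paper makes the \emph{source} w-strictly local, which is what you need for Olivier's theorem; you instead make the target $B$ w-strictly local and leave $A^Z$ merely w-local, so even granting the core step you would need a further compatible cover of $A$, which is what Lemma \ref{lem:cwstrictlylocalizemaps} constructs.

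There is a second, independent problem in your final lifting step. You deduce that $A\to C:=\Hens_B(\overline{B/I_B})$ is ind-\'etale from the fact that it is ind-\'etale modulo $I_B$, saying "the henselization formalism propagates this up." It does not: the identification $\Hens_A(-)\simeq\Hens_{A'}(-)$ holds only when $A\to A'$ is already ind-\'etale, and here $A\to B$ is merely weakly \'etale --- assuming the propagation is circular. The paper sidesteps this entirely by never descending the \emph{problem} to the absolutely flat quotients; it uses those quotients only to \emph{construct} the compatible w-strictly local covers $A\to A'$, $B\to B'$, and then proves directly, at the top level, that the w-local weakly \'etale map $A'\to B'$ is an ind-(Zariski localization). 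Setting $C=B'$ then finishes, since $A\to A'\to B'$ is a composite of ind-\'etale maps.
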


The local version of Theorem \ref{t:WeaklyVsIndEtale} follows from the following result of Olivier, \cite{Olivier}:

\begin{theorem}[Olivier]\label{t:Olivier} Let $A$ be a strictly henselian local ring, and let $B$ be a weakly \'etale local $A$-algebra. Then $f: A\to B$ is an isomorphism.
\end{theorem}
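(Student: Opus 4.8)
This is Olivier's theorem \cite{Olivier}; I would recover it as follows. The plan rests on one formal observation: it suffices to produce an $A$-algebra retraction $r\colon B\to A$. First, $A\to B$ is faithfully flat — it is flat and, since $A$ is local and $B$ a local $A$-algebra, $\mathfrak m_AB\subseteq\mathfrak m_B\subsetneq B$, so a flat local homomorphism of local rings is automatically faithfully flat. Given a retraction $r$, I would apply Proposition \ref{p:PropWeaklyEtale}(4) to $A\to B\xrightarrow{\,r\,}A$: the composite is the identity, hence weakly \'etale, and $A\to B$ is weakly \'etale, so $r$ is weakly \'etale, in particular flat. Then $A=B/\ker(r)$ is a flat $B$-module, i.e. $\ker(r)$ is a pure ideal, which means that each $x\in\ker(r)$ satisfies $ex=x$ for some $e\in\ker(r)$. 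As $r$ is surjective onto $A\neq 0$, the ideal $\ker(r)$ is proper, hence lies in $\mathfrak m_B$; then $1-e$ is a unit and $x=0$. So $\ker(r)=0$, $r$ is an isomorphism, and therefore so is $A\to B$.

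It remains to construct a retraction, i.e. a section of $\Spec B\to\Spec A$. I would begin by identifying the closed fibre: $B\otimes_A\kappa$, with $\kappa=A/\mathfrak m_A$, is weakly \'etale over the field $\kappa$, hence absolutely flat with residue fields separable over $\kappa$; since $\kappa$ is separably closed these are all $\kappa$, and since $B\otimes_A\kappa$ is a quotient of the local ring $B$ it is local, so $B\otimes_A\kappa=\kappa$. Thus $\mathfrak m_AB=\mathfrak m_B$, $B/\mathfrak m_B=\kappa$, and a straightforward induction on $n$ (flat, together with an isomorphism modulo the nilpotent ideal $\mathfrak m_A^n/\mathfrak m_A^{n+1}$, is an isomorphism) upgrades this to $B/\mathfrak m_A^nB=A/\mathfrak m_A^n$ for all $n$. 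What is left is to promote the resulting $\kappa$-point of $\Spec B$ — its closed point — to an honest $A$-point. If $B$ were essentially of finite presentation over $A$ this would be immediate, since $B$ would then be essentially \'etale and $B\cong A$ because $A$ is henselian; the real difficulty, and the actual content of Olivier's theorem, is that $B$ carries no such finiteness hypothesis — indeed Theorem \ref{t:WeaklyVsIndEtale}, into which this lemma feeds, shows that one may have to pass to a faithfully flat cover of $B$ before landing in the ind-\'etale world. This is the step on which I would concentrate, and I expect it to be the main obstacle.

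I would also flag a shortcut available once the material of \S\ref{subsec:localcontractibility} is in hand: $A$ is w-local, its unique closed point has strictly henselian local ring $A$, and $\pi_0(\Spec A)$ is a point, hence extremally disconnected, so $\Spec A$ is w-contractible; therefore the faithfully flat weakly \'etale map $\Spec B\to\Spec A$ automatically admits a section, and the first paragraph concludes. This is circular within the present logical order, however, so a genuinely self-contained proof must carry out the analysis sketched above by hand, as Olivier does.
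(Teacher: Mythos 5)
The paper does not prove this statement: it is quoted from Olivier with a citation to \cite{Olivier} and used as the external input from which Theorem \ref{t:WeaklyVsIndEtale} is then deduced via the w-strictly local machinery of \S\ref{subsec:cwlocalrings}. So there is no internal argument to compare yours against, and anything offered here has to stand on its own.

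Your formal reductions are correct: a retraction $r\colon B\to A$ would be weakly \'etale by Proposition \ref{p:PropWeaklyEtale}(4), hence flat, so $\ker(r)$ is a pure ideal of the local ring $B$ and therefore zero; and the identification $B\otimes_A\kappa=\kappa$, hence $B/\mathfrak{m}_A^nB\cong A/\mathfrak{m}_A^n$ for all $n$, is fine. But, as you yourself say, the proposal stops exactly where the theorem begins. The congruences modulo powers of $\mathfrak{m}_A$ only produce a map $B\to\widehat{A}$ to the completion, not to $A$, and the henselian property of $A$ only lifts the closed point of $\Spec B$ along \emph{finitely presented} \'etale neighbourhoods; with no finiteness hypothesis on $B$ there is no spreading-out step available, and overcoming precisely this is the entire content of Olivier's theorem (his argument goes through the universal absolutely flat algebra, and the modern treatments route through the vanishing of $\mathbb{L}_{B/A}$ together with a non-formal idempotent argument --- none of which appears in the sketch). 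You are also right that the w-contractibility shortcut is circular: knowing that a w-contractible affine in the sense of \S\ref{subsec:localcontractibility} (sections of faithfully flat \emph{ind}-\'etale covers) also splits faithfully flat \emph{weakly} \'etale covers is exactly Theorem \ref{t:WeaklyVsIndEtale}, which rests on Theorem \ref{t:Olivier}. As written, then, this is an honest pair of reductions wrapped around an acknowledged hole at the theorem's core; to count as a proof it would have to reproduce Olivier's argument or an equivalent, and otherwise it should simply cite \cite{Olivier} as the paper does.
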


\begin{remark}
	\label{rmk:olivierdoesnotdirectlyapply}
	One might hope to use Theorem \ref{t:Olivier} for a direct proof of Theorem \ref{t:WeaklyVsIndEtale}: Assume that $f: A\to B$ is weakly \'etale. Let $C=\prod_{\overline{x}} A_{f^*\overline{x}}$, where $\overline{x}$ runs over a set of representatives for the geometric points of $\Spec(B)$, and $A_{f^*\overline{x}}$ denotes the strict henselization of $A$ at $f^*\overline{x}$. Then Theorem \ref{t:Olivier} gives maps $B \to B_{\overline{x}} \simeq A_{f^*\overline{x}}$ for each $\overline{x}$, which combine to give a map $B \to C$ inducing a section of $C \to B \otimes_A C$. However, although each $A_{\overline{x}}$ is ind-\'etale over $A$, $C$ is not even weakly \'etale over $A$, as infinite products do not preserve flatness. In order to make the argument work, one would have to replace the infinite product by a finite product; however, such a $C$ will not be faithfully flat. If one could make the sections $B\to A_{\overline{x}}$ factor over a finitely presented $A$-subalgebra of $A_{\overline{x}}$, one could also make the argument work. However, in the absence of any finiteness conditions, this is not possible. 
\end{remark}

Our proof of Theorem \ref{t:WeaklyVsIndEtale} circumvents the problem raised in Remark \ref{rmk:olivierdoesnotdirectlyapply} using the construction of w-strictly local extensions given in \S \ref{subsec:cwlocalrings} to eventually reduce to Olivier's result. We begin by recording the following relative version of the construction of such extensions:

\begin{lemma}
	\label{lem:cwstrictlylocalizemaps}
Let $f:A \to B$ be a map of rings. Then there exists a diagram
\[ \xymatrix{ A \ar[r] \ar[d]^-f & A' \ar[d]^-{f'} \\
		B \ar[r] & B' } \]
		with $A \to A'$ and $B \to B'$ faithfully flat and ind-\'etale, $A'$ and $B'$ w-strictly local, and $A' \to B'$ w-local.
\end{lemma}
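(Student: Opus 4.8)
The plan is to imitate, in relative form, the absolute construction of w-strictly local ind-\'etale covers given just above (the Corollary preceding this Lemma), carrying a compatible map along at every step. First I would reduce to the case where $A$ and $B$ are already w-local and $f$ is a w-local map. For this, apply the left adjoint $(-)^Z$ of Lemma \ref{lem:cwlocalringsadjoint} to $f$: by naturality of the unit one gets a commutative square whose horizontal arrows are the units $A\to A^Z$ and $B\to B^Z$, which are faithfully flat ind-(Zariski localizations), hence ind-\'etale by Proposition \ref{p:PropWeaklyEtale}; moreover $A^Z,B^Z$ are w-local, and the induced map $A^Z\to B^Z$ is w-local, being the image of $f$ under the functor $(-)^Z$ to the category of w-local rings with w-local maps. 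Composing the square I wish to build for $A^Z\to B^Z$ with this one reduces me to the w-local situation, so from now on I assume $A$, $B$ w-local and $f$ w-local.

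Next I would run the ``reduced'' version of the construction relatively. By Lemma \ref{lem:universalabsolutelyflat} the quotients $A/I_A$ and $B/I_B$ by the Jacobson radicals are absolutely flat, and since $\Spec(f)$ sends closed points to closed points, $f$ descends to $\bar f\colon A/I_A\to B/I_B$. Applying the relative part of Lemma \ref{lem:absoluteflatcwstrictlylocal} to $\bar f$, I choose faithfully flat ind-\'etale maps $A/I_A\to\overline{A/I_A}$ and $B/I_B\to\overline{B/I_B}$ with w-strictly local, absolutely flat targets together with a compatible $A/I_A$-algebra map $\overline{A/I_A}\to\overline{B/I_B}$. Then I set $A':=\Hens_A(\overline{A/I_A})$ and $B':=\Hens_B(\overline{B/I_B})$, regarding $\overline{A/I_A}\in\Ind((A/I_A)_\et)$ and $\overline{B/I_B}\in\Ind((B/I_B)_\et)$. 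By construction $A\to A'$ and $B\to B'$ are ind-\'etale; by Lemma \ref{lem:HensFullyFaithful} one has $A'/I_AA'\simeq\overline{A/I_A}$, and $A'$ is henselian along $I_AA'$, so $A'$ (and likewise $B'$) is w-strictly local by Lemma \ref{lem:cwstrictlylocalspread}. Faithful flatness of $A\to A'$ follows because the image of $\Spec(A')\to\Spec(A)$ contains $\Spec(A)^c$ (as $\Spec(\overline{A/I_A})\to\Spec(A/I_A)=\Spec(A)^c$ is surjective), and hence all of $\Spec(A)$ since every point of the w-local space $\Spec(A)$ specializes to a closed point and flat maps satisfy going-down; similarly for $B$.

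It remains to produce the map $A'\to B'$ and to check it is w-local. Using Lemma \ref{lem:HensFullyFaithful} I would identify $A'\otimes_A B/I_B\simeq(A'\otimes_A A/I_A)\otimes_{A/I_A}B/I_B\simeq\overline{A/I_A}\otimes_{A/I_A}B/I_B$, which carries a canonical $B/I_B$-algebra map to $\overline{B/I_B}$ induced by $\overline{A/I_A}\to\overline{B/I_B}$ and $B/I_B\to\overline{B/I_B}$; since $A'\otimes_A B$ is ind-\'etale over $B$, feeding this morphism of ind-\'etale $B/I_B$-algebras into the adjunction defining $\Hens_B$ yields a $B$-algebra map $A'\otimes_A B\to B'$, i.e.\ an $A$-algebra map $f'\colon A'\to B'$ sitting in a commutative square with $f$, $A\to A'$ and $B\to B'$. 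For w-locality: by construction the composite $A'\to B'\to B'/I_BB'=\overline{B/I_B}$ agrees with $A'\to A'/I_AA'=\overline{A/I_A}\to\overline{B/I_B}$, so $f'(I_AA')\subset I_BB'$; and since $A'$ is henselian along $I_AA'$ one has $\Spec(A')^c=V(I_AA')=\Spec(\overline{A/I_A})$ (every point of the profinite space $\Spec(\overline{A/I_A})$ being closed, hence closed in $\Spec(A')$), and likewise $\Spec(B')^c=V(I_BB')$, so the containment $f'(I_AA')\subset I_BB'$ says exactly that $\Spec(f')$ maps closed points to closed points. Together with the reduction of the first paragraph, this proves the Lemma.

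I expect the only real subtlety — the rest being routine bookkeeping with the universal properties established in \S\ref{subsec:cwlocalrings} — to lie in making the square involving the henselizations genuinely commute and be w-local, that is, in correctly threading the two adjunctions (defining $(-)^Z$ and $\Hens$) together with the relative choices of Lemma \ref{lem:absoluteflatcwstrictlylocal} so that $f'$ is compatible with the quotient maps onto the absolutely flat rings. Once that compatibility is in place, w-locality of $f'$ is immediate from the henselian description of the closed points, and faithful flatness, ind-\'etaleness and w-strict locality are verified exactly as in the absolute construction recalled above.
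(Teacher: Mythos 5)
Your proof is correct and follows essentially the same route as the paper's: apply $(-)^Z$, pass to the absolutely flat quotients by the Jacobson radicals, choose compatible w-strictly local ind-\'etale covers via Lemma \ref{lem:absoluteflatcwstrictlylocal}, and henselize, identifying the closed points of $\Spec(A')$ and $\Spec(B')$ with the spectra of those covers to get w-locality of $f'$. The only difference is presentational — you spell out the adjunction producing $f'$ and the faithful-flatness check, which the paper leaves implicit.
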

\begin{proof}
	Choose compatible w-strictly local covers to get a diagram
	\[ \xymatrix{ A^Z/I_{A^Z} \ar[r] \ar[d] & \overline{A^Z/I_{A^Z}} =: A_0 \ar[d] \\
	B^Z/I_{B^Z} \ar[r] & \overline{B^Z/I_{B^Z}} =: B_0 }\]
	of absolutely flat rings with horizontal maps being faithfully flat and ind-\'etale, and $A_0$ and $B_0$ being w-strictly local. Henselizing then gives a diagram
\[ \xymatrix{ A \ar[r] \ar[d]^f & A^Z \ar[d]^-{f^Z} \ar[r] & \Hens_{A^Z}(A_0) =: A' \ar[d]^{f'} \\
B \ar[r] & B^Z \ar[r] & \Hens_{B^Z}(B_0) =: B' } \]
Then all horizontal maps are ind-\'etale faithfully flat. Moreover, both $A'$ and $B'$ are w-strictly local by Lemma \ref{lem:cwstrictlylocalspread}. The map $f'$ is w-local since $\Spec(A')^c = \Spec(A_0)$, and $\Spec(B')^c = \Spec(B_0)$, so the claim follows.
\end{proof}

We now explain how to prove an analogue of Olivier's theorem for w-strictly local rings:

\begin{lemma}
	\label{lem:cwstrictlylocalzariski}
Let $f:A \to B$ be a w-local weakly \'etale map of w-local rings with $A$ w-strictly local. Then $f$  is a ind-(Zariski localization).
\end{lemma}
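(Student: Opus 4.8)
The plan is to reduce to Olivier's theorem (Theorem~\ref{t:Olivier}) by first splitting off the ``Zariski part'' of $f$ and then showing that what is left is forced to be an isomorphism. Concretely, I would apply Lemma~\ref{lem:cwlocalringsfactorize} to write $f$ as $A \xrightarrow{a} C \xrightarrow{b} B$, with $a$ a w-local ind-(Zariski localization) and $b$ a w-local map inducing an isomorphism $\pi_0(\Spec B) \simeq \pi_0(\Spec C)$. Two facts make this useful. First, $C$ is again w-strictly local: $a$ is w-local, so closed points of $\Spec C$ lie over closed points $\mathfrak{p}$ of $\Spec A$, and localizing a Zariski localization $A \to \prod_i A[\tfrac1{f_i}]$ at a prime over $\mathfrak{p}$ just gives $A_\mathfrak{p}$; hence the local rings of $C$ at closed points are strictly henselian, so $C$ is w-strictly local by Lemma~\ref{lem:cwstrictlylocalcharacterize}. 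Second, $b$ is weakly \'etale by Proposition~\ref{p:PropWeaklyEtale}(1) and (4). Since $b$ being an isomorphism means $f$ agrees with $a$ up to the isomorphism $b$, it suffices to show $b$ is an isomorphism; renaming, I may assume $f\colon A \to B$ itself induces an isomorphism $\pi_0(\Spec B) \simeq \pi_0(\Spec A)$, and I must show $f$ is an isomorphism.

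Next I would check that $f$ is faithfully flat. Flatness is built into weak \'etaleness. For surjectivity of $\Spec(f)$, observe that $\Spec(B)^c \to \Spec(A)^c$ is a bijection: each of $\Spec(B)^c$, $\Spec(A)^c$ is identified with the corresponding $\pi_0$ via the specialization map (Lemma~\ref{lem:cwlocalcharacterize}, Remark~\ref{rmk:specializationmap}), these are identified by hypothesis, and $f$ is compatible with specializations since it is w-local. So the image of $\Spec(f)$ contains all closed points of $\Spec A$; being stable under generalization (going-down for flat maps), and since every point of the w-local space $\Spec A$ is a generalization of a closed point, the image is all of $\Spec A$. Hence $f$ is faithfully flat, in particular injective.

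Now the local input, which is the delicate step. Fix a closed point $\mathfrak{q} \in \Spec(B)^c$ with image $\mathfrak{p} = f^{-1}(\mathfrak{q})$, a closed point of $\Spec A$. Then $A_\mathfrak{p} \to B_\mathfrak{q}$ is a local homomorphism, $A_\mathfrak{p}$ is strictly henselian (Lemma~\ref{lem:cwstrictlylocalcharacterize}), and $B_\mathfrak{q}$ is weakly \'etale over it, so $A_\mathfrak{p} \xrightarrow{\sim} B_\mathfrak{q}$ by Olivier's theorem. Moreover $B \otimes_A A_\mathfrak{p}$ is the localization of $B$ at the multiplicative set $f(A \setminus \mathfrak{p})$; using that $f$ preserves specializations and that $\Spec(B)^c \simeq \Spec(A)^c$ via $f$, one checks that $\Spec(B \otimes_A A_\mathfrak{p})$ is precisely the connected component of $\mathfrak{q}$ in $\Spec B$, which equals $\Spec B_\mathfrak{q}$ since $B$ is w-local. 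A localization map which is a bijection on spectra is an isomorphism, so the natural maps give $A_\mathfrak{p} \xrightarrow{\sim} B \otimes_A A_\mathfrak{p} \xrightarrow{\sim} B_\mathfrak{q}$. I expect this bookkeeping --- matching the two specialization maps and the hypothesis on $\pi_0$ to pin down $\Spec(B \otimes_A A_\mathfrak{p})$ --- to be the main obstacle; the rest is formal.

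Finally, put $M = \coker(f)$. From the exact sequence $0 \to A \xrightarrow{f} B \to M \to 0$ with $B$ faithfully flat over $A$ one gets $\Tor_1^A(M, N) = \ker(N \to B \otimes_A N) = 0$ for all $N$ (faithful flatness makes $A \to B$ pure), so $M$ is flat. By the previous paragraph $M_\mathfrak{m} = 0$ at every maximal ideal $\mathfrak{m} = \mathfrak{p}$ of $A$, hence $M = 0$; so $f$ is surjective, and being also injective, it is an isomorphism. Thus $b$ is an isomorphism and $f = b \circ a$ is an ind-(Zariski localization).
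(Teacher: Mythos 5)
Your proof is correct and takes essentially the same route as the paper: factor $f$ via Lemma \ref{lem:cwlocalringsfactorize}, note that the intermediate ring is again w-strictly local, and apply Olivier's theorem (Theorem \ref{t:Olivier}) to the local rings at closed points. You spell out the final globalization (faithful flatness plus the identification $B \otimes_A A_{\mathfrak{p}} \simeq B_{\mathfrak{q}}$) that the paper compresses into ``so $A \simeq B$''; the only superfluous step is the flatness of $\coker(f)$, since a module vanishing at every maximal ideal is already zero.
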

\begin{proof}
	First consider the canonical factorization $A \to A' \to B$ provided by Lemma \ref{lem:cwlocalringsfactorize}.  As $A \to A'$ is w-local with $A'$ w-local, Lemma \ref{lem:cwstrictlylocalcharacterize} shows that $A'$ is w-strictly local. Replacing $A$ with $A'$, we may assume $f$ induces a homeomorphism $\Spec(B)^c \simeq \Spec(A)^c$. Then for each maximal ideal $\fram \subset A$, the ring $B/\fram B$ has a unique maximal ideal and is absolutely flat (as it is weakly \'etale over the field $A/\fram$). Then $B/\fram B$ must be a field, so $\fram B$ is a maximal ideal. The map $A_\fram \to B_{\fram B}$ is an isomorphism by Theorem \ref{t:Olivier} as $A_\fram$ is strictly henselian, so $A \simeq B$.
\end{proof}

The promised proof is:

\begin{proof}[Proof of Theorem \ref{t:WeaklyVsIndEtale}]
	Lemma \ref{lem:cwstrictlylocalizemaps} gives a diagram
	\[ \xymatrix{ A \ar[r] \ar[d]^-f & A' \ar[d]^-{f'} \\
		B \ar[r] & B' } \]
		with $f'$ a w-local map of w-strictly local rings, and both horizontal maps being ind-\'etale and faithfully flat. The map $f'$ is also weakly \'etale since all other maps in the square are so. Lemma \ref{lem:cwstrictlylocalzariski} shows that $f'$ is a ind-(Zariski localization). Setting $C = B'$ then proves the claim.
\end{proof}

\subsection{Local contractibility}
\label{subsec:localcontractibility}

In this section, we study the following notion:

\begin{definition} 
	A ring $A$ is {\em w-contractible} if every faithfully flat ind-\'etale map $A \to B$ has a section. 
\end{definition}

The name ``w-contractible'' is inspired by the connection with the pro-\'etale topology: if $A$ is w-contractible, then $\Spec(A)$ admits no non-split pro-\'etale covers, and is hence a ``weakly contractible'' object of the corresponding topos. Our goal is to prove that every ring admits a w-contractible ind-\'etale faithfully flat cover. We begin by observing that w-contractible rings are already w-local:

\begin{lemma} 
	A w-contractible ring $A$ is w-local (and thus w-strictly local).
\end{lemma}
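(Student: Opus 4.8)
I want to show that a w-contractible ring $A$ is w-local, i.e., that $\Spec(A)$ is a w-local spectral space: every open cover of $\Spec(A)$ splits, and $\Spec(A)^c \subset \Spec(A)$ is closed. The first condition is immediate: an open cover of $\Spec(A)$ can be refined to a finite cover by distinguished opens $\{D(f_i)\}$, and the corresponding map $\coprod_i \Spec(A[\tfrac{1}{f_i}]) \to \Spec(A)$ is the spectrum of the faithfully flat ind-\'etale (indeed Zariski-localization) map $A \to \prod_i A[\tfrac{1}{f_i}]$; w-contractibility provides a section, which is exactly a section of the open cover. So the work is in the second condition, that the closed points form a closed subset.

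\textbf{Key step.} To handle closedness of $\Spec(A)^c$, recall the functor $A \mapsto A^Z$ of Lemma \ref{lem:cwlocalringsadjoint}: the unit $A \to A^Z$ is a faithfully flat ind-(Zariski localization), hence faithfully flat and ind-\'etale, so since $A$ is w-contractible there is a section $A^Z \to A$ of $A$-algebras, i.e.\ a retraction $r: \Spec(A) \to \Spec(A^Z)$ of the structure map $p: \Spec(A^Z) \to \Spec(A)$. Now $\Spec(A^Z)$ is w-local, so $(\Spec A^Z)^c$ is closed in $\Spec(A^Z)$, and by Remark \ref{rmk:cwlocalizeschemes} (or Lemma \ref{lem:cwlocaladjoint}) the composite $(\Spec A^Z)^c \hookrightarrow \Spec(A^Z) \xrightarrow{p} \Spec(A)$ is a homeomorphism onto $\Spec(A)$ with its constructible topology — in particular it is a bijection. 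I expect this forces $r$ to identify $\Spec(A)^c$ with $(\Spec A^Z)^c$, from which closedness of $\Spec(A)^c$ follows. Concretely: a section $r$ of the spectral map $p$ is a spectral map, hence continuous for the constructible topologies, and continuous maps preserve closedness of points relative to the image of a closed immersion — more precisely, $r$ maps closed points of $\Spec(A)$ into $(\Spec A^Z)^c$ because $r$ composed with the specialization map of $\Spec(A^Z)$ lands in $(\Spec A^Z)^c$, and one checks $r$ agrees with $(p|_{(\Spec A^Z)^c})^{-1}$ on closed points by uniqueness of closed specializations in each connected component. Then $\Spec(A)^c = r^{-1}((\Spec A^Z)^c)$ is closed since $r$ is continuous and $(\Spec A^Z)^c$ is closed. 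Finally, ``and thus w-strictly local'': once $A$ is w-local, every faithfully flat \'etale map is in particular faithfully flat ind-\'etale, so it admits a section by w-contractibility, which is precisely w-strict locality.

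\textbf{Main obstacle.} The only subtle point is verifying that the section $r: \Spec(A) \to \Spec(A^Z)$ really carries $\Spec(A)^c$ \emph{onto} $(\Spec A^Z)^c$ (equivalently into it), rather than just continuously somewhere. This is where one must use the structure of $\Spec(A^Z)$ as a w-local space — specifically that it is local along $(\Spec A^Z)^c$ and that $p$ restricts to a bijection there — together with the fact that $r$ is a spectral section of $p$. A clean way to see it: $p \circ r = \id$, so $r$ is injective; if $x \in \Spec(A)$ is closed, then its image $r(x)$ specializes to a unique point $y \in (\Spec A^Z)^c$ in its connected component (Remark \ref{rmk:specializationmap}), and applying $p$ gives that $x = p(r(x))$ specializes to $p(y)$; since $x$ is closed, $x = p(y)$, and then $y = r(p(y)) = r(x)$ because $p$ is injective on $(\Spec A^Z)^c$ and $p(r(y)) = y = p(y)$ forces... — in any case $r(x) = y$ is a closed point. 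Thus $r(\Spec(A)^c) \subseteq (\Spec A^Z)^c$, and combined with the bijection $p: (\Spec A^Z)^c \xrightarrow{\sim} \Spec(A)$ one concludes $\Spec(A)^c = r^{-1}((\Spec A^Z)^c)$ is closed.
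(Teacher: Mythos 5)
Your overall strategy — use w-contractibility to produce a section $r:\Spec(A)\to\Spec(A^Z)$ of $p:\Spec(A^Z)\to\Spec(A)$ and transport w-locality of $\Spec(A^Z)$ back to $\Spec(A)$ — is exactly the paper's, but your execution of the key step (closedness of $\Spec(A)^c$) has a gap where your own text trails off. You need $r(\Spec(A)^c)\subseteq(\Spec A^Z)^c$, and your argument for it is not valid as written: from $r(x)\rightsquigarrow y$ with $y$ closed and $p(y)=x$ you write ``$y=r(p(y))$,'' but $r\circ p$ is not the identity (only $p\circ r$ is), and ``$p$ is injective on $(\Spec A^Z)^c$'' cannot be applied to $r(x)$ until you already know $r(x)$ is closed — which is the thing being proved. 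So the step is circular/unfinished.

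There are two clean repairs. The one the paper uses short-circuits everything: since $p$ is a map of affine schemes it is separated, so its section $r$ is a closed immersion; hence $\Spec(A)$ is homeomorphic to a closed subspace of the w-local space $\Spec(A^Z)$ and is w-local by Lemma \ref{lem:closedincwlocal} (this also makes your separate treatment of ``open covers split'' unnecessary). If you want to keep your pointwise argument instead, the fact you actually need is that $p$ is injective on each \emph{connected component} of $\Spec(A^Z)$, not just on $(\Spec A^Z)^c$: by Remark \ref{rmk:cwlocalizeschemes} each component is the localization $\Spec(A_{\mathfrak p})$ of $A$ at a point, mapping injectively to $\Spec(A)$. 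Since $r(x)$ and its closed specialization $y=s(r(x))$ lie in the same component and satisfy $p(r(x))=x=p(y)$, injectivity on that component gives $r(x)=y$, so $r(x)$ is closed; the rest of your argument ($\Spec(A)^c=r^{-1}((\Spec A^Z)^c)$ using injectivity and continuity of $r$) then goes through. The remainder of your proposal — splitting of open covers via sections of $A\to\prod_i A[\tfrac1{f_i}]$, and the deduction of w-strict locality — is correct.
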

\begin{proof}
The map $\pi:\Spec(A^Z) \to \Spec(A)$ has a section $s$ by the assumption on $A$. The section $s$ is a closed immersion since $\pi$ is separated, and $\Spec(A^Z) = \Spec(A)^Z$ is w-local, so we are done by Lemma \ref{lem:closedincwlocal}.
\end{proof}

The notion of w-contractibility is local along a henselian ideal:

\begin{lemma}
	\label{lem:cwcontractiblespread}
	Let $A$ be a ring henselian along an ideal $I$. Then $A$ is w-contractible if and only if $A/I$ is so.
\end{lemma}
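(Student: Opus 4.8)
The statement to prove is Lemma \ref{lem:cwcontractiblespread}: for a ring $A$ henselian along an ideal $I$, the ring $A$ is w-contractible if and only if $A/I$ is w-contractible. The natural strategy is to exploit the Henselization adjunction of Definition \ref{def:henselization}, exactly as in the proof of Lemma \ref{lem:cwstrictlylocalspread}, together with the fact (already established) that w-contractible rings are w-strictly local, hence in particular w-local, so their spectra are local along $\Spec(A/I)$ when $I$ is cut out by a henselian ideal. The key point is the adjunction isomorphism $\Hom_A(B, \Hens_A(A/I)) \simeq \Hom_A(B/IB, A/I)$ combined with $\Hens_A(A/I) = A$ (Lemma \ref{lem:HensFullyFaithful}), which transports a section of a faithfully flat ind-\'etale $A/I$-algebra up to a section of a corresponding faithfully flat ind-\'etale $A$-algebra.

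\textbf{First direction ($A/I$ w-contractible $\Rightarrow$ $A$ w-contractible).} First I would note that $A/I$ is then w-strictly local, so by Lemma \ref{lem:cwstrictlylocalspread} the ring $A$ is w-strictly local, hence w-local; in particular $\Spec(A)$ is local along $\Spec(A/I)$. Now pick a faithfully flat ind-\'etale map $A \to B$. Since $A \to A/I$ is surjective and $I$ is henselian, every \'etale $A/I$-algebra lifts to an \'etale $A$-algebra, so $B/IB$ is faithfully flat ind-\'etale over $A/I$; by w-contractibility of $A/I$ it admits a section $B/IB \to A/I$. Under the adjunction $\Hom_A(B/IB, A/I) \simeq \Hom_A(B, \Hens_A(A/I)) = \Hom_A(B, A)$ (using $\Hens_A(A/I) = A$ from Lemma \ref{lem:HensFullyFaithful}), this produces a map $B \to A$ of $A$-algebras, i.e.\ a section of $A \to B$. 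Hence $A$ is w-contractible.

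\textbf{Second direction ($A$ w-contractible $\Rightarrow$ $A/I$ w-contractible).} Here $A$ is w-strictly local, and since $I$ is henselian, $\Spec(A/I)^c = \Spec(A)^c$ is closed in $\Spec(A/I)$, so $A/I$ is w-local. Now take a faithfully flat ind-\'etale $A/I$-algebra $B_0$. Lift it to a faithfully flat ind-\'etale $A$-algebra $B$ with $B/IB \simeq B_0$ (possible since $I$ is henselian, as in the argument above). By w-contractibility of $A$ there is a section $B \to A$; reducing modulo $I$ gives a section $B_0 = B/IB \to A/I$. Hence $A/I$ is w-contractible.

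\textbf{Expected main obstacle.} The arguments are essentially formal once one has the Henselization adjunction and the lifting of faithfully flat ind-\'etale algebras through the henselian quotient. The only point requiring a little care is verifying that the relevant lifted algebras are again \emph{faithfully} flat and that the section obtained through the adjunction really is a ring map over $A$ (not merely a module map); both follow because $\Hens_A(-)$ is a functor on $\Ind(A_\et)$ and the adjunction is one of $A$-algebras. In particular there is no genuine geometric input beyond what Lemmas \ref{lem:HensFullyFaithful}, \ref{lem:cwstrictlylocalspread}, and the preceding characterization of w-contractible rings already supply, so I expect the proof to be short.
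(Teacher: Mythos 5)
Your proof is correct and follows essentially the same route as the paper, which simply cites the argument of Lemma \ref{lem:cwstrictlylocalspread} together with the fact that $\Ind(A_\et) \to \Ind((A/I)_\et)$ is essentially surjective and preserves and reflects faithful flatness; you have unwound exactly that argument, using the adjunction $\Hom_A(B,\Hens_A(A/I)) \simeq \Hom_A(B/IB,A/I)$ with $\Hens_A(A/I)=A$ in one direction and lifting of faithfully flat ind-\'etale algebras through the henselian surjection in the other. (One cosmetic slip: in the first direction, the fact that $B/IB$ is faithfully flat ind-\'etale over $A/I$ is just base change and has nothing to do with lifting.)
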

\begin{proof}
	This is proven exactly like Lemma \ref{lem:cwstrictlylocalspread} using that $\Ind(A_\et) \to \Ind( (A/I)_\et)$ is essentially surjective, and preserves and reflects faithfully flat maps.
\end{proof}

The main difference between w-contractible and w-strictly local rings lies in the topology. To give meaning to this phrase, recall the following definition:

\begin{definition}
	A compact Hausdorff space is {\em extremally disconnected} if the closure of every open is open.
\end{definition}

One has the following result characterizing such spaces, see \cite{GleasonExtremallyDisconnected}:
\begin{theorem}[Gleason]
Extremally disconnected spaces are exactly the projective objects in the category of all compact Hausdorff spaces, i.e., those $X$ for which every continuous surjection $Y \to X$ splits.
\end{theorem}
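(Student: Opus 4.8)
The plan is to prove both directions by exploiting the structure of the Stone--Cech compactification and the lattice of closed subsets of a compact Hausdorff space. For the direction that extremally disconnected spaces are projective, suppose $X$ is extremally disconnected and $q : Y \to X$ is a continuous surjection of compact Hausdorff spaces. By Zorn's lemma, choose a closed subset $Z \subset Y$ minimal among those that surject onto $X$ under $q$; such minimal $Z$ exist because an intersection of a chain of such closed subsets is again closed and still surjects onto $X$ (here compactness and the finite intersection property are essential). It then suffices to show $q|_Z : Z \to X$ is a bijection, for a continuous bijection of compact Hausdorff spaces is a homeomorphism, and its inverse is the desired section. Injectivity is where extremal disconnectedness enters: if $q(z_1) = q(z_2)$ with $z_1 \neq z_2$, pick disjoint opens $V_1, V_2 \subset Y$ separating them; then I claim $Z - V_1$ still surjects onto $X$, contradicting minimality. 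To see the claim, one shows $q(Z \cap V_1) \subset X$ has the property that its complement is dense — indeed $X - q(Z - V_1)$ is open and contained in $q(Z \cap V_1)$, hence $\overline{X - q(Z-V_1)}$ is open by extremal disconnectedness, and a minimality/density argument forces $q(Z - V_1) = X$.

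For the converse, suppose $X$ is projective in compact Hausdorff spaces; we must show $X$ is extremally disconnected. The key construction is the counit map $\beta(X^{\delta}) \to X$, where $X^{\delta}$ denotes the underlying set of $X$ with the discrete topology and $\beta$ is Stone--Cech compactification: this map is a continuous surjection from an extremally disconnected space (Stone--Cech compactifications of discrete sets are extremally disconnected, essentially because the Boolean algebra of all subsets is complete). By projectivity of $X$, this surjection splits, so $X$ is a retract of the extremally disconnected space $\beta(X^{\delta})$. It then remains to check that a retract of an extremally disconnected compact Hausdorff space is extremally disconnected: if $r : E \to X$ is a retraction with section $\iota : X \to E$, and $U \subset X$ is open, then $\overline{U} = \iota^{-1}\bigl(\overline{\iota(U)}\bigr)$ requires a small argument, but more robustly one checks directly that the closure of $U$ in $X$ is clopen by pulling back an appropriate clopen set from $E$ along $\iota$ and using that $r$ is a quotient map.

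The main obstacle is the injectivity step in the first direction — the verification that removing a neighborhood of one of two points with the same image preserves surjectivity onto $X$. This is precisely the point where one cannot argue by pure point-set topology and must use extremal disconnectedness of $X$: the set $q(Z - V_1)$ is closed, its complement is an open set whose closure must be open, and one leverages the minimality of $Z$ together with the observation that $q(Z \cap \overline{V_1})$ and $q(Z - V_1)$ together cover $X$ to force $q(Z - V_1) = X$. Getting this density/clopen argument exactly right, keeping track of which sets are open versus closed in $X$ versus $Y$, is the delicate part; everything else (minimal closed surjecting subsets via Zorn, continuous bijections of compacta being homeomorphisms, Stone--Cech of discrete sets being extremally disconnected, retracts inheriting the property) is standard.
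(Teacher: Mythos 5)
The paper does not prove this theorem (it only cites Gleason), so there is no internal proof to compare against; your overall architecture — minimal closed surjecting subset via Zorn for one direction, retract of $\beta(\text{discrete})$ for the other — is the standard one, and your converse direction (projective $\Rightarrow$ extremally disconnected) is correct as sketched. The problem is in the injectivity step of the forward direction, exactly the step you flag as delicate. You propose to derive a contradiction by showing that $Z \setminus V_1$ still surjects onto $X$. But notice that your sketched derivation of this never uses $z_2$ or $V_2$: the ingredients you invoke ($q(Z\setminus V_1)$ closed, its complement $W := X \setminus q(Z\setminus V_1)$ open with open closure, $q(Z\cap \overline{V_1}) \cup q(Z\setminus V_1)=X$) are all available for an \emph{arbitrary} nonempty open $V_1 \subsetneq Z$. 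If such an argument forced $q(Z\setminus V_1)=X$, it would contradict the existence of a minimal $Z$ altogether. In fact minimality guarantees the opposite of what you are trying to prove: $W$ is always a \emph{nonempty} open set, and that nonemptiness is precisely the useful fact, not an obstruction to be removed.

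The correct use of extremal disconnectedness is different. Set $V_i^* := X \setminus q(Z\setminus V_i)$ for $i=1,2$. These are nonempty (by minimality) disjoint open sets (a point in both would have its entire fibre inside $V_1\cap V_2=\emptyset$). Extremal disconnectedness makes $\overline{V_1^*}$ open, whence $\overline{V_1^*}\cap\overline{V_2^*}=\emptyset$. One then shows $x=q(z_1)=q(z_2)$ lies in \emph{both} closures, which is the contradiction. For this last point minimality is used once more: if $x\notin\overline{V_1^*}$, then $G := q^{-1}\bigl(X\setminus\overline{V_1^*}\bigr)\cap V_1$ is a nonempty open subset of $Z$ (it contains $z_1$), so $G^* := X\setminus q(Z\setminus G)$ is a nonempty open set contained both in $V_1^*$ (as $G\subset V_1$) and in $q(G)\subset X\setminus\overline{V_1^*}$ — impossible. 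Your write-up is missing this two-sided structure entirely, so as it stands the forward direction does not go through; with the step replaced as above, the rest of your proof (Zorn, continuous bijections of compacta, $\beta$ of a discrete set, retracts) is fine.
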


It is fairly easy to prove the existence of ``enough'' extremally disconnected spaces:

\begin{example}
	\label{ex:stonecech}
For any set $X$, given the discrete topology, the Stone-Cech compactification $\beta(X)$ is extremally disconnected: the universal property shows that $\beta(X)$ is a projective object in the category of compact Hausdorff spaces. If $X$ itself comes from a compact Hausdorff space, then the counit map $\beta(X) \to X$ is a continuous surjection, which shows that all compact Hausdorff spaces can be covered by extremally disconnected spaces. In fact, the same argument shows that any extremally disconnected space is a retract of $\beta(X)$ for some set $X$. 
\end{example}

Extremally disconnected spaces tend to be quite large, as the next example shows:

\begin{example}
An elementary argument due to Gleason shows that any convergent sequence in an extremally disconnected space is eventually constant. It follows that standard profinite sets, such as $\Z_p$ (or the Cantor set) are {\em not} extremally disconnected. 
\end{example}

The relevance of extremally disconnected spaces for us is:

\begin{lemma}
	\label{lem:cwcontractiblecharacterize}
	A w-strictly local ring $A$ is w-contractible if and only if $\pi_0(\Spec(A))$ is extremally disconnected.
\end{lemma}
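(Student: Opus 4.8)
The plan is to prove both implications by combining the topological statement (Lemma \ref{lem:cwcontractiblespread} reduces us, via the henselization $A \to A^Z$ in the w-strictly local setting, to the absolutely flat case, where $\Spec(A) \simeq \pi_0(\Spec(A))$ is profinite) with Gleason's characterization of extremally disconnected spaces as the projective objects in compact Hausdorff spaces. Concretely: since $A$ is assumed w-strictly local, it is henselian along its Jacobson radical $I_A$ with quotient the absolutely flat ring $A/I_A$ whose spectrum is $\Spec(A)^c \simeq \pi_0(\Spec(A))$ (Lemma \ref{lem:universalabsolutelyflat} and the w-strict locality of $A/I_A$); by Lemma \ref{lem:cwcontractiblespread}, $A$ is w-contractible iff $A/I_A$ is. So I may replace $A$ by $A/I_A$ and assume $A$ is w-strictly local and absolutely flat, i.e., $\Spec(A) = S$ is a profinite set; the claim becomes: $A$ is w-contractible iff $S$ is extremally disconnected.

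For the forward direction, assume $A$ is w-contractible; I want $S$ projective in compact Hausdorff spaces. Given a continuous surjection $T \to S$ of profinite sets, Lemma \ref{lem:findprofinitecovers} produces an ind-(Zariski localization) $A \to B$ with $\Spec(B) \to \Spec(A)$ realizing $T \to S$ on $\pi_0$; since $T \to S$ is surjective and these are profinite, $\Spec(B) \to \Spec(A)$ is faithfully flat, and an ind-(Zariski localization) is ind-\'etale, so $A \to B$ is faithfully flat ind-\'etale. By w-contractibility there is a section $B \to A$, which on spectra gives a section $S \to T$ of the original map. Hence $S$ is projective, i.e., extremally disconnected.

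For the converse, assume $S = \pi_0(\Spec(A))$ is extremally disconnected (equivalently projective, by Gleason). Let $A \to B$ be faithfully flat ind-\'etale; I must produce a section. The idea is to split the problem into its "topological" part (handled by projectivity of $S$) and its "local-algebraic" part (handled by w-strict locality, i.e., by strict henselianness of the local rings at closed points via Lemma \ref{lem:cwstrictlylocalcharacterize} and Olivier's theorem, Theorem \ref{t:Olivier}). First, apply the functor $\pi_0(\Spec(-))$ to get a continuous map $\pi_0(\Spec(B)) \to \pi_0(\Spec(A)) = S$; this is surjective since $A \to B$ is faithfully flat. Projectivity of $S$ gives a continuous section $S \to \pi_0(\Spec(B))$, which by Lemma \ref{lem:findprofinitecovers} (its right-adjoint/limit-preserving part) is induced by an ind-(Zariski localization) $B \to B'$ with $\pi_0(\Spec(B')) = S$ over $S$. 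Now $A \to B'$ is still faithfully flat ind-\'etale (composite of such), it still induces an isomorphism on $\pi_0$ of spectra (by construction), and a section $B' \to A$ yields a section $B \to B' \to A$. So I may assume $\pi_0(\Spec(B)) \simeq \pi_0(\Spec(A)) = S$. At this point both rings are w-local (indeed w-strictly local: $B$ is ind-\'etale over the w-strictly local $A$, hence again w-strictly local by the characterization Lemma \ref{lem:cwstrictlylocalcharacterize} together with the fact that localizations of ind-\'etale extensions of strictly henselian rings are strictly henselian — or more directly because a cofiltered limit/ind-\'etale extension preserves w-strict locality), and the map $A \to B$ is w-local because it induces a homeomorphism on $\pi_0$ of spectra, hence carries closed points to closed points. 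Now invoke Lemma \ref{lem:cwstrictlylocalzariski}: a w-local weakly \'etale map of w-local rings with source w-strictly local is an ind-(Zariski localization); since $A \to B$ is ind-\'etale it is weakly \'etale, so it is an ind-(Zariski localization). But an ind-(Zariski localization) $A \to B$ inducing an isomorphism $\pi_0(\Spec(B)) \simeq \pi_0(\Spec(A)) = S$ on the profinite set of connected components, with $\Spec(A)$ already profinite, must be an isomorphism: on spectra it is a pro-open injection inducing a bijection on the underlying profinite sets, hence a homeomorphism, and since it is flat with the identity on $\pi_0$ it is the identity (alternatively, Lemma \ref{lem:cwstrictlylocalzariski}'s proof shows $A \simeq B$ directly in the homeomorphic-on-closed-points case). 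Thus $B \to A$ has a section, namely the inverse, completing the converse.

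\textbf{Main obstacle.} The delicate point is the converse: one must cleanly separate the "topological" reduction (using extremal disconnectedness = projectivity to arrange $\pi_0(\Spec(B)) \simeq \pi_0(\Spec(A))$) from the "algebraic" step (using Olivier's theorem, packaged as Lemma \ref{lem:cwstrictlylocalzariski}, to conclude that a w-local weakly \'etale map out of a w-strictly local ring inducing a $\pi_0$-isomorphism is an isomorphism). Verifying that $B$ remains w-strictly local and that the map $A \to B$ is w-local after the topological reduction — so that Lemma \ref{lem:cwstrictlylocalzariski} genuinely applies — is where the argument has to be written carefully; everything else is a formal consequence of the lemmas already established.
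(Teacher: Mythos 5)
Your proof is correct and follows essentially the same route as the paper: reduce to the absolutely flat quotient via Lemma \ref{lem:cwcontractiblespread}, use projectivity of the extremally disconnected $\pi_0$ to fix the topology, and use Olivier's theorem (via Lemma \ref{lem:cwstrictlylocalzariski}, or equivalently the observation that the local rings do not change) to conclude. One immaterial slip: $B \to B'$ realizes a section of $\pi_0(\Spec(B)) \to S$ and so is not faithfully flat, hence $A \to B'$ is not faithfully flat ``as a composite of such''; but it does induce a homeomorphism on spectra, which is all your argument actually uses.
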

\begin{proof}
	As $\Spec(A)^c \to \Spec(A)$ gives a section of $\Spec(A) \to \pi_0(\Spec(A))$, if $A$ is w-contractible, then every continuous surjection $T \to \pi_0(\Spec(A))$ of profinite sets has a section, so $\pi_0(\Spec(A))$ is extremally disconnected. Conversely, assume $A$ is w-strictly local and $\pi_0(\Spec(A))$ is extremally disconnected. By Lemma \ref{lem:cwcontractiblespread}, we may assume $A = A/I_A$. Thus, we must show: if $A$ is an absolutely flat ring whose local rings are separably closed fields, and $\Spec(A)$ is extremally disconnected, then $A$ is w-contractible. Pick an ind-\'etale faithfully flat $A$-algebra $B$. Then $A \to B$ induces an isomorphism on local rings. Lemma \ref{lem:cwlocalringsfactorize} gives a factorization $A \to C \to B$ with $A \to C$ a ind-(Zariski localization) induced by a map of profinite sets $T \to \Spec(A)$, and $B \to C$ a w-local map inducing an isomorphism on spectra.  Then $C \simeq B$ as the local rings of $C$ and $B$ coincide with those of $A$. As $\Spec(A)$ is extremally disconnected, the map $T \to \Spec(A)$ of profinite sets has a section $s$. The closed subscheme $\Spec(C') \subset \Spec(C)$ realizing $s(\Spec(A)) \subset T$ maps isomorphically to $\Spec(A)$, which gives the desired section.
\end{proof}

We now show the promised covers exist:

\begin{lemma}
	\label{lem:cwcontractiblecover}
For any ring $A$, there is an ind-\'etale faithfully flat $A$-algebra $A'$ with $A'$ w-contractible.
\end{lemma}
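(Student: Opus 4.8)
The plan is to obtain $A'$ in two independent steps, guided by Lemma \ref{lem:cwcontractiblecharacterize}: being w-contractible means being w-strictly local \emph{and} having extremally disconnected $\pi_0$ of the spectrum. So I will first fix the local algebra, and then, separately, the topology of $\pi_0$.

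First, invoke the corollary at the end of \S\ref{subsec:cwlocalrings} to choose an ind-\'etale faithfully flat map $A \to A_1$ with $A_1$ w-strictly local. Set $S := \pi_0(\Spec A_1)$, a profinite set, and use Example \ref{ex:stonecech} to pick a continuous surjection $T \to S$ with $T$ extremally disconnected (for instance the Stone-Cech compactification of the underlying discrete set of $S$). Feeding the map $T \to S = \pi_0(\Spec A_1)$ into Lemma \ref{lem:findprofinitecovers} produces an ind-(Zariski localization) $A_1 \to A'$ with $\Spec A' = T \times_S \Spec A_1$ and $\pi_0(\Spec A') = T$. Surjectivity of $T \to S$ makes $\Spec A' \to \Spec A_1$ surjective, so $A_1 \to A'$ is faithfully flat (flat plus surjective on spectra); hence $A \to A'$ is ind-\'etale and faithfully flat, and $A'$ will be the desired cover once w-contractibility is checked.

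For that, I will verify the two conditions of Lemma \ref{lem:cwcontractiblecharacterize} for $A'$. Topologically: $\Spec A'$ is the fibre product $T \times_S \Spec A_1$ of w-local spaces along w-local maps ($T$ and $S$ are profinite, and $\Spec A_1 \to S$ is the $\pi_0$-map of a w-local space), hence is w-local by Lemma \ref{lem:cwlocallimits}; moreover $(\Spec A')^c = T \times_S (\Spec A_1)^c \cong T$ since $(\Spec A_1)^c \to S$ is a homeomorphism (Lemma \ref{lem:cwlocalcharacterize}), so $\pi_0(\Spec A') \cong T$ is extremally disconnected. Algebraically: $\Spec A' \to \Spec A_1$ is a pro-(Zariski localization), so it induces isomorphisms on local rings and carries closed points to closed points; thus the local rings of $A'$ at its closed points coincide with local rings of $A_1$ at closed points, which are strictly henselian because $A_1$ is w-strictly local, so Lemma \ref{lem:cwstrictlylocalcharacterize} makes $A'$ w-strictly local. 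Lemma \ref{lem:cwcontractiblecharacterize} then yields w-contractibility.

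There is no serious obstacle; the one point needing care — and the reason for this order — is that the w-strictly local cover of the first step is built by tensoring up over all faithfully flat \'etale algebras and so generally disturbs $\pi_0(\Spec)$, whereas the ind-(Zariski localization) of the second step prescribes $\pi_0(\Spec)$ to be any desired profinite set while leaving every local ring untouched. Performing the algebra before the topology is exactly what keeps the two good properties from interfering. The only routine verifications are the faithful flatness of the Zariski-localization step and the fact that w-locality passes to the relevant fibre product, both immediate from Lemmas \ref{lem:cwlocallimits} and \ref{lem:cwlocalcharacterize}.
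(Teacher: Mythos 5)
Your proof is correct, and it uses the same toolkit as the paper but assembles it in the opposite order. The paper's proof arranges the topology \emph{before} henselizing: it passes to the absolutely flat quotient $A^Z/I_{A^Z}$, builds a w-strictly local cover $A_0$ of it with $\Spec(A_0)$ extremally disconnected (via Example \ref{ex:stonecech}, Lemma \ref{lem:absoluteflatcwstrictlylocal} and Lemma \ref{lem:findprofinitecovers}), and then sets $A' = \Hens_{A^Z}(A_0)$; the final verification is delegated to Lemma \ref{lem:cwcontractiblespread}, which says w-contractibility only depends on the reduction along the henselian ideal. You instead finish the algebra first (the w-strictly local cover $A_1$, which already hides a henselization inside it) and only then adjust $\pi_0$ by the ind-(Zariski localization) $T \times_S \Spec A_1$. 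The trade-off is exactly what each argument must then check: the paper needs to know that henselization does not disturb the extremally disconnected $\pi_0$ (Lemma \ref{lem:cwcontractiblespread} does this), whereas you need to know that the topology-fixing step does not disturb w-strict locality, which you verify directly and correctly via Lemma \ref{lem:cwlocallimits} (w-locality and the identification $(\Spec A')^c \cong T$) together with Lemma \ref{lem:cwstrictlylocalcharacterize} (the local rings at closed points are unchanged by a pro-(Zariski localization)). Both checks are easy, so the two proofs are of essentially equal weight; your closing remark about why the order matters --- the tensor-product construction of the w-strictly local cover destroys any prior control on $\pi_0$, while the Zariski-localization step leaves local rings untouched --- is exactly the right observation and applies equally to the paper's version.
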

\begin{proof}
	Choose an ind-\'etale faithfully flat $A^Z/I_{A^Z}$-algebra $A_0$ with $A_0$ w-strictly local and $\Spec(A_0)$ an extremally disconnected profinite set; this is possible by Example \ref{ex:stonecech}, Lemma \ref{lem:absoluteflatcwstrictlylocal}, and Lemma \ref{lem:findprofinitecovers}. Let $A' = \Hens_{A^Z}(A_0)$. Then $A'$ is w-contractible by Lemma \ref{lem:cwcontractiblespread} and Lemma \ref{lem:cwcontractiblecharacterize}, and the map $A \to A'$ is faithfully flat and ind-\'etale since both $A \to A^Z$ and $A^Z \to A'$ are so individually.
\end{proof}

\begin{lemma}
	\label{lem:finiteovercontractible}
Let $A$ be a w-contractible ring, and let $f:A \to B$ be a finite ring map of finite presentation. Then $B$ is w-contractible.
\end{lemma}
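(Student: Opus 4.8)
The plan is to verify the three defining properties of a w-contractible ring for $B$: that $\Spec(B)$ is w-local, that all local rings at closed points are strictly henselian, and that $\pi_0(\Spec(B))$ is extremally disconnected (this suffices by Lemma \ref{lem:cwcontractiblecharacterize} together with the preceding lemmas, since a w-contractible ring is w-strictly local). For the first two, I would argue that a finite map pulls back w-localness and strict henselianity along a finite extension: since $f$ is finite, $\Spec(f):\Spec(B)\to\Spec(A)$ is a closed map, so closed points of $\Spec(B)$ are exactly the points lying over closed points of $\Spec(A)$, whence $\Spec(B)^c$ is closed; and each connected component of $\Spec(B)$ maps into a connected component of $\Spec(A)$, which has a unique closed point, so using the closedness of $\Spec(f)$ and finiteness of fibres one sees every component of $\Spec(B)$ has a unique closed point --- giving w-localness by Lemma \ref{lem:cwlocalcharacterize}. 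The local ring of $B$ at a closed point $\fran$ lying over $\fram$ is a finite $A_\fram$-algebra that is local; since $A_\fram$ is strictly henselian, a finite local algebra over it is again strictly henselian (finite algebras over henselian local rings decompose as products of local rings, and the residue field extension, being finite over a separably closed field, is purely inseparable, hence separably closed). So $B$ is w-strictly local.

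It remains to show $\pi_0(\Spec(B))$ is extremally disconnected. The key point is that the continuous map $\pi_0(\Spec(f)):\pi_0(\Spec(B))\to\pi_0(\Spec(A))$ is a \emph{finite} map of profinite sets, i.e. has finite fibres of bounded cardinality: this follows because $f$ is finite of finite presentation, so the number of connected components of a geometric fibre of $\Spec(f)$ is bounded (by the degree), and over the w-local base $\Spec(A)$ --- whose closed points carry separably closed residue fields --- the fibre of $\pi_0(\Spec(B))\to\pi_0(\Spec(A))$ over a point of $\pi_0(\Spec(A))$ is computed on the corresponding closed point of $\Spec(A)$. Now I invoke the fact that a profinite set $S$ admitting a finite surjection (indeed any map with finite fibres) to an extremally disconnected profinite set $T$ is itself extremally disconnected: this is a standard fact about extremally disconnected spaces (e.g. any clopen-to-clopen, finite-to-one map has this property, or one argues that $S\hookrightarrow T\times F$ for a finite discrete $F$ realizes $S$ as a clopen subset of a finite disjoint union of copies of $T$, and clopen subsets of extremally disconnected spaces are extremally disconnected). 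Applying this with $T=\pi_0(\Spec(A))$, which is extremally disconnected since $A$ is w-contractible (Lemma \ref{lem:cwcontractiblecharacterize}), gives that $\pi_0(\Spec(B))$ is extremally disconnected, completing the proof.

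I expect the main obstacle to be the claim that $\pi_0(\Spec(B))\to\pi_0(\Spec(A))$ genuinely has finite fibres, rather than merely being a pro-(finite surjection): one must be careful that "finite of finite presentation" is used, since a general finite (not finitely presented) map could in principle produce a component structure that degenerates in the limit. The cleanest route is probably to spread $B$ out: write $A$ as a filtered colimit of its finitely presented subalgebras $A_\lambda$ over which $f$ descends to a finite map $A_\lambda\to B_\lambda$, note $\Spec(B)=\lim_\mu \Spec(B\otimes_{A_\lambda}A_\mu)$, and observe that for the finitely presented map the geometric fibres have at most $\deg(f)$ connected components, a bound stable under the limit. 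Everything else --- the w-local and strictly henselian parts, and the abstract topology of extremally disconnected spaces --- is routine given the machinery already developed in this section.
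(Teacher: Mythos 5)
Your overall strategy (reduce to the three-part characterization of Lemma \ref{lem:cwcontractiblecharacterize} plus Lemma \ref{lem:cwcontractiblecharacterize}'s criterion for w-contractibility, and spread $B$ out to a finitely presented model) is the paper's strategy, but the step you flag as the crux contains a genuine error. The ``standard fact'' that a profinite set $S$ with a finite-to-one continuous surjection onto an extremally disconnected profinite set $T$ is itself extremally disconnected is \emph{false}. Take $T = \beta(\N)$ and $S = \bigl(T \times \{0\}\bigr) \cup \bigl((T \setminus \N) \times \{1\}\bigr) \subset T \times \{0,1\}$ with the projection to $T$: the fibres have cardinality at most $2$, but $(T\setminus\N)\times\{1\}$ is a \emph{clopen} subset of $S$ homeomorphic to $\beta(\N)\setminus\N$, which is not extremally disconnected (this is exactly the counterexample in the remark following Lemma \ref{lem:finiteovercontractible}), so $S$ is not extremally disconnected. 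Your parenthetical justification assumes $S$ embeds as a \emph{clopen} subset of $T \times F$, but finite fibres alone do not give that (in the example $S$ is closed, not open, in $T\times\{0,1\}$). What the paper actually extracts from spreading out is stronger than a bound on fibre cardinality: writing $A=\colim_i A_i$ with $A\to B$ the base change of a finite map $A_0 \to B_0$ of finite type $\Z$-algebras, one gets $\pi_0(\Spec(B)) = \pi_0(\Spec(B_0)) \times_{\pi_0(\Spec(A_0))} \pi_0(\Spec(A))$, i.e.\ $\pi_0(\Spec(B))$ is the pullback of a map of \emph{finite} sets, hence a finite disjoint union of clopen subsets of $\pi_0(\Spec(A))$; clopen subsets of extremally disconnected spaces are extremally disconnected (they are retracts), and finite disjoint unions of such are too. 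You need this clopen decomposition, not merely finiteness of fibres.

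There is also a smaller slip in the w-locality step: closedness of $\Spec(f)$ plus finite fibres does not force a connected component of $\Spec(B)$ to have a unique closed point. For instance $\Spec(\Z_{(5)}[x]/(x^2+1))$ is connected and finite over the local scheme $\Spec(\Z_{(5)})$ but has two closed points. The correct argument — which you essentially have in hand two sentences later — is that each connected component $W$ of $\Spec(A)$ is the spectrum of a \emph{strictly henselian} local ring, so a finite scheme over $W$ splits as a disjoint union of local schemes, and a connected component $Z\subset\Spec(B)$ lying over $W$ is therefore itself local with a unique closed point. With these two repairs your argument coincides with the paper's.
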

\begin{proof}
We can write $A = \colim_i A_i$ as a filtered colimit of finite type $\Z$-algebras such that $A \to B$ is the base change of a finite ring map $A_0 \to B_0$ of some index $0$, assumed to be initial; set $B_i = B_0 \otimes_{A_0} A_i$, so $B = \colim_i B_i$. Then $\Spec(A) = \lim_i \Spec(A_i)$ and $\Spec(B) = \lim_i \Spec(B_i)$ as affine schemes and as spectral spaces, so $\pi_0(\Spec(B)) = \pi_0(\Spec(B_0)) \times_{\pi_0(\Spec(A_0))} \pi_0(\Spec(A))$. As $\pi_0(\Spec(A_0))$ and $\pi_0(\Spec(B_0))$ are both finite sets, it follows that $\pi_0(\Spec(B))$ is extremally disconnected as $\pi_0(\Spec(A))$ is such. Moreover, the local rings of $B$ are strictly henselian as they are finite over those of $A$. It remains to check $\Spec(B)$ is w-local. By finiteness, the subspace $\Spec(B)^c \subset \Spec(B)$ is exactly the inverse image of $\Spec(A)^c \subset \Spec(A)$, and hence closed. Now pick a connected component $Z \subset \Spec(B)$. The image of $Z$ in $\Spec(A)$ lies in some connected component $W \subset \Spec(A)$. The structure of $A$ shows that $W = \Spec(A_x)$ for some closed point $x \in \Spec(A)^c$, so $W$ is a strictly henselian local scheme. Then $Z \to W$ is a finite map of schemes with $Z$ connected, so $Z$ is also a strictly henselian local scheme, and hence must have a unique closed point, which proves w-locality of $\Spec(B)$.
\end{proof}

\begin{remark} The finite presentation assumption is necessary. Indeed, there are extremally disconnected spaces $X$ with a closed subset $Z\subset X$ such that $Z$ is not extremally disconnected. As an example, let $X$ be the Stone-Cech compactification of $\N$, and let $Z = X \setminus \N$. As any element of $\N$ is an open and closed point of $X$, $Z\subset X$ is closed. Consider the following open subset $\tilde{U}$ of $X$:
\[
\tilde{U} = \bigcup_{n\geq 1} \{x\in X\mid x\not\equiv 0\mod 2^n\}\ .
\]
Here, we use that the map $\N\to \Z/n\Z$ extends to a unique continuous map $X\to \Z/n\Z$. Let $U = \tilde{U}\cap Z$, which is an open subset of $Z$. We claim that the closure $\overline{U}$ of $U$ in $Z$ is not open. If not, then $Z$ admits a disconnection with one of the terms being $\overline{U}$. It is not hard to see that any disconnection of $Z$ extends to a disconnection of $X$, and all of these are given by $\overline{M}\sqcup (X\setminus \overline{M})$ for some subset $M\subset \N$. It follows that $\overline{U} = \overline{M}\cap Z$ for some subset $M\subset \N$. Thus, $U\subset \overline{M}$, which implies that for all $n\geq 0$, almost all integers not divisible by $2^n$ are in $M$. In particular, there is a subset $A\subset M$ such that $A=\{a_0,a_1,\ldots\}$ with $2^i | a_i$. Take any point $x\in \overline{A}\setminus \N\subset Z$. Thus, $x\in \overline{M}\cap Z = \overline{U}$. On the other hand, $x$ lies in the open subset $V=\overline{A}\cap Z\subset Z$, and $V\cap U=\emptyset$: Indeed, for any $n\geq 0$,
\[
\overline{A}\cap \{x\in X\mid x\not\equiv 0\mod 2^n\}\subset \{a_0,\ldots,a_{n-1}\}\subset \N\ .
\]
This contradicts $x\in \overline{U}$, finally showing that $\overline{U}$ is not open.
\end{remark}

\newpage

\section{On replete topoi}\label{sec:Replete}

A topos is the category of sheaves on a site, up to equivalence, as in \cite{SGA4Tome1}. We will study in \S \ref{ss:reptopoi} a general property of topoi that implies good behaviour for the $\lim$ and $\R\lim$ functors, as well as unbounded cohomological descent, as discussed in \S \ref{ss:reptopoicompleteness}. A special subclass of such topoi with even better completeness properties is isolated in \S \ref{ss:lwctopoi}; this class is large enough for all applications later in the paper. In \S \ref{subsec:derivedcompfadic} and \S \ref{subsec:derivedcompnoeth}, with a view towards studying complexes of $\ell$-adic sheaves on the pro-\'etale site, we study derived completions of rings and modules in a replete topos; the repleteness ensures no interference from higher derived limits while performing completions, so the resulting theory is as good as in the punctual case.

\subsection{Definition and first consequences} 
\label{ss:reptopoi}

The key definition is:
 
\begin{definition}
	A topos $\calX$ is {\em replete} if surjections in $\calX$ are closed under sequential limits, i.e., if $F:\N^\opp \to \calX$ is a diagram with $F_{n+1} \to F_n$ surjective for all $n$, then $\lim F \to F_n$ is surjective for each $n$.
\end{definition}

Before giving examples, we mention two recogition mechanisms for replete topoi:

\begin{lemma}
	\label{lem:repletebc}
	If $\calX$ is a replete topos and $X \in \calX$, then $\calX_{/X}$ is replete.
\end{lemma}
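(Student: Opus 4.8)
The plan is to push everything down to $\calX$ through the forgetful functor $u\colon \calX_{/X}\to\calX$, $(Y\to X)\mapsto Y$ (recall $\calX_{/X}$ is again a topos, so the statement makes sense). First I would record three elementary properties of $u$. (i) It is faithful: a morphism in $\calX_{/X}$ is literally a morphism of $\calX$ commuting with the structure maps to $X$, so $u$ reflects epimorphisms; since in any topos the epimorphisms are exactly the surjections (effective epimorphisms), $u$ reflects surjections. (ii) It is the left adjoint of the pullback functor $\calX\to\calX_{/X}$, $Z\mapsto(Z\times X\to X)$, hence preserves all colimits, in particular epimorphisms, in particular surjections. (iii) It preserves connected limits: for a diagram $G\colon\mathcal{J}\to\calX_{/X}$ over a connected category $\mathcal{J}$, if $L=\lim(u\circ G)$ in $\calX$, then connectedness of $\mathcal{J}$ forces the composites $L\to u(G(j))\to X$ to agree for all $j$, producing a canonical map $L\to X$, and one checks directly that this object, with the evident projections, is a limit of $G$ in $\calX_{/X}$. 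Since $\N^\opp$ is a connected category (given $n\ge m$ there is a morphism $n\to m$), $u$ in particular preserves sequential limits.

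Granting these, the argument is immediate. Let $F\colon\N^\opp\to\calX_{/X}$ be a tower with every $F_{n+1}\to F_n$ surjective. By (ii), $u\circ F$ is a tower in $\calX$ with surjective transition maps, so repleteness of $\calX$ gives that $\lim(u\circ F)\to u(F_n)$ is surjective for every $n$. By (iii), $u(\lim F)=\lim(u\circ F)$ and the canonical map $u(\lim F)\to u(F_n)$ is exactly $u$ applied to $\lim F\to F_n$; hence $u(\lim F\to F_n)$ is surjective. Finally, by (i), $\lim F\to F_n$ is itself surjective in $\calX_{/X}$, which is what we want.

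The one genuinely non-formal point is the compatibility of ``surjection'' with $u$, i.e.\ that surjections in $\calX_{/X}$ are precisely the maps whose underlying map in $\calX$ is a surjection; this is where I would be most careful, using left-adjointness of $u$ for the ``only if'' direction (in the form ``preserves epimorphisms'') and faithfulness of $u$ for the ``if'' direction (in the form ``reflects epimorphisms''), together with epi $=$ effective epi in a topos. Preservation of connected limits by $u$, the connectedness of $\N^\opp$, and the final bookkeeping are all routine. (One could equivalently phrase this using the étale geometric morphism $\calX_{/X}\to\calX$, whose inverse image is the pullback functor above, but the bare forgetful functor is enough.)
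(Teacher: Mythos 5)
Your proof is correct and is exactly the paper's argument: the paper's proof reads ``this follows from the fact that the forgetful functor $\calX_{/X}\to\calX$ commutes with connected limits and preserves surjections.'' You have merely made explicit the one point the paper leaves implicit, namely that the forgetful functor also \emph{reflects} surjections (via faithfulness and epi $=$ effective epi in a topos), which is indeed needed to conclude.
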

\begin{proof}
	This follows from the fact that the forgetful functor $\calX_{/X} \to \calX$ commutes with connected limits and preserves surjections.
\end{proof}

\begin{lemma}
	\label{lem:repletelocal}
	A topos $\calX$ is replete if and only if there exists a surjection $X \to 1$ and $\calX_{/X}$ is replete. 
\end{lemma}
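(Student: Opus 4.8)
The plan is to handle the two implications separately: the forward direction is immediate, and the converse is the real content.

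For the ``only if'' direction, if $\calX$ is replete then the identity $1 \to 1$ is a surjection and $\calX_{/1} \simeq \calX$ is replete; more generally, by Lemma \ref{lem:repletebc} one may take \emph{any} surjection $X \to 1$. For the ``if'' direction, suppose $p \colon X \to 1$ is a surjection and $\calX_{/X}$ is replete. Let $F \colon \N^{\opp} \to \calX$ be a diagram with each $F_{n+1} \to F_n$ surjective; I must show $\lim F \to F_n$ is surjective for all $n$. The key tool is the pullback functor $p^* \colon \calX \to \calX_{/X}$, $Y \mapsto (Y \times X \to X)$. I would first record its two relevant properties: (i) $p^*$ preserves all limits, since it has the forgetful functor $\Sigma_X \colon \calX_{/X} \to \calX$ as a left adjoint; and (ii) $p^*$ preserves surjections, since for $g \colon A \to B$ in $\calX$ the underlying morphism of $p^*(g)$ is the base change of $g$ along the projection $B \times X \to B$, and surjections in a topos are stable under base change (while a morphism of $\calX_{/X}$ is a surjection precisely when its underlying morphism in $\calX$ is).

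Granting this, $p^* \circ F \colon \N^{\opp} \to \calX_{/X}$ has surjective transition maps, so repleteness of $\calX_{/X}$ yields that $\lim(p^* F) \to p^*(F_n)$ is surjective; since $p^*$ commutes with this limit, the morphism $(\lim F) \times X \to F_n \times X$ is a surjection in $\calX$. To descend this along $p$, consider the commuting square with horizontal maps the projections $(\lim F) \times X \to \lim F$ and $F_n \times X \to F_n$ (each surjective, being a base change of $p$) and vertical maps $(\lim F) \times X \to F_n \times X$ and $\lim F \to F_n$. The composite $(\lim F) \times X \to F_n \times X \to F_n$ is a surjection, and it factors as $(\lim F) \times X \to \lim F \to F_n$; since in any category a morphism through which a surjection factors on the right is itself a surjection, $\lim F \to F_n$ is a surjection, as desired.

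The only point needing any care is property (ii) --- identifying the underlying morphism of $p^*(g)$ with a base change of $g$, and the fact that $\Sigma_X$ detects surjections --- but both are standard facts about slice topoi, so I do not anticipate a genuine obstacle here.
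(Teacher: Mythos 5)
Your proof is correct and follows essentially the same route as the paper's: pull the tower back to $\calX_{/X}$ via $p^* = (-)\times X$, use repleteness there together with the fact that $p^*$ preserves limits and surjections, and then descend surjectivity along the surjection $X \to 1$. The paper compresses the descent step into the assertion that surjectivity can be checked after base change to $X$, which is exactly what your factorization argument proves.
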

\begin{proof}
This follows from two facts: (a) limits commute with limits, and (b) a map $F \to G$ in $\calX$ is a surjection if and only if it is so after base changing to $X$.
\end{proof}

\begin{example}
	\label{ex:setsreplete}
The topos of sets is replete, and hence so is the topos of presheaves on a small category. As a special case, the classifying topos of a finite group $G$ (which is simply the category of presheaves on $B(G)$) is replete.
\end{example}

\begin{example}
\label{ex:fieldreplete}
Let $k$ be a field with a fixed separable closure $\overline{k}$. Then $\calX = \Shv(\Spec(k)_\et)$ is replete if and only if $\overline{k}$ is a finite extension of $k$.\footnote{Recall that this happens only if $k$ is algebraically closed or real closed; in the latter case, $k(\sqrt{-1})$ is an algebraic closure of $k$.} One direction is clear: if $\overline{k}/k$ is finite, then $\Spec(\overline{k})$ covers the final object of $\calX$ and $\calX_{/\Spec(\overline{k})} \simeq \Set$, so $\calX$ is replete by Lemma \ref{lem:repletelocal}. Conversely, assume that $\calX$ is replete with $\overline{k}/k$ infinite. Then there is a tower $k = k_0 \hookrightarrow k_1 \hookrightarrow k_2 \hookrightarrow \dots$ of strictly increasing finite separable extensions of $k$. The associated diagram $\dots\to \Spec(k_2) \to \Spec(k_1) \to \Spec(k_0)$ of surjections has an  empty limit in $\calX$, contradicting repleteness.
\end{example}

\begin{remark}
	Replacing $\N^\opp$ with an arbitrary small cofiltered category in the definition of replete topoi leads to an empty theory:  there are cofiltered diagrams of sets with surjective transition maps and empty limits. For example, consider the poset $I$ of finite subsets of an uncountable set $T$ ordered by inclusion, and $F:I^\opp \to \Set$ defined by 
	\[ F(S) = \{f \in \Hom(S,\Z) \mid f \ \mathrm{injective} \}. \]  
Then $F$ is a cofiltered diagram of sets with surjective transition maps, and $\lim F = \emptyset$.
\end{remark}

Example \ref{ex:fieldreplete} shows more generally that the Zariski (or \'etale, Nisnevich, smooth, fppf)  topoi of most schemes fail repleteness due to ``finite presentation'' constraints. Nevertheless, there is an interesting geometric source of examples:

\begin{example}
\label{ex:fpqcreplete}
The topos $\calX$ of fpqc sheaves on the category of schemes\footnote{To avoid set-theoretic problems, one may work with countably generated affine schemes over a fixed affine base scheme.} is replete.  Given a  diagram $\dots \to F_{n+1} \to F_n \to \dots \to F_1 \to F_0$ of fpqc sheaves with $F_n \to F_{n-1}$ surjective, we want $\lim F_n \to F_0$ to be surjective. For any affine $\Spec(A)$ and a section $s_0 \in F_0(\Spec(A))$, there is a faithfully flat map $A \to B_1$ such that $s_0$ lifts to an $s_1 \in F_1(\Spec(B_1))$. Inductively, for each $n \geq 0$, there exist faithfully flat maps $A \to B_{n}$ compatible in $n$ and sections $s_n \in F_n(\Spec(B_n))$ such that $s_n$ lifts $s_{n-1}$.  Then $B = \colim_n B_n$ is a faithfully flat $A$-algebra with $s_0 \in F_0(\Spec(A))$ lifting to an $s \in \lim F_n(\Spec(B))$, which proves repleteness as $\Spec(B) \to \Spec(A)$ is an fpqc cover.
\end{example}

The next lemma records a closure property enjoyed by surjections in a replete topos.

\begin{lemma}
\label{lem:limiteffepi}
Let $\calX$ be a replete topos, and let $F \to G$ be a map in $\Fun(\N^\opp,\calX)$. Assume that the induced maps $F_i \to G_i$ and $F_{i+1} \to F_i \times_{G_i} G_{i+1}$  are surjective for each $i$. Then $\lim F \to \lim G$ is surjective.
\end{lemma}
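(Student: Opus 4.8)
The plan is to reduce everything to producing a single surjection onto $\lim G$ that lifts to $\lim F$: since an epimorphism after precomposition forces the second map to be an epimorphism, it suffices to find an object $X'$, a surjection $q\colon X' \to \lim G$, and a map $h\colon X' \to \lim F$ with $(\lim F \to \lim G)\circ h = q$. Write $F = \{F_n\}$, $G = \{G_n\}$, set $X = \lim G$ with projections $g_n\colon X \to G_n$, and build by induction a tower $\cdots \to X_{n+1} \to X_n \to \cdots \to X_0$ of surjections in $\calX$, equipped with compatible surjections $p_n\colon X_n \to X$ and maps $f_n\colon X_n \to F_n$ such that $X_n \xrightarrow{f_n} F_n \to G_n$ equals $g_n\circ p_n$, and such that $X_{n+1} \xrightarrow{f_{n+1}} F_{n+1} \to F_n$ equals $f_n$ composed with the transition map $X_{n+1}\to X_n$.

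For the base case, put $X_0 = X \times_{G_0} F_0$ (using $g_0$ and $F_0 \to G_0$); since $F_0 \to G_0$ is surjective and surjections in a topos are stable under base change, $X_0 \to X$ is surjective, and the projection $f_0\colon X_0 \to F_0$ has the desired property. For the inductive step, given $X_n$, $p_n$, $f_n$, the pair $(f_n,\, g_{n+1}\circ p_n)$ defines a map $X_n \to F_n \times_{G_n} G_{n+1}$, the two components agreeing over $G_n$ because both compose to $g_n\circ p_n$ (using the hypothesis on $f_n$ and the compatibility of the $g_\bullet$). Set $X_{n+1} = X_n \times_{F_n \times_{G_n} G_{n+1}} F_{n+1}$. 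The surjectivity of $F_{n+1} \to F_n \times_{G_n} G_{n+1}$ together with base-change stability gives that $X_{n+1} \to X_n$ is surjective, so $p_{n+1}\colon X_{n+1}\to X$ is surjective; and the projection $f_{n+1}\colon X_{n+1}\to F_{n+1}$ satisfies both required compatibilities, since $F_{n+1}\to F_n$ and $F_{n+1}\to G_{n+1}$ both factor through $F_n \times_{G_n} G_{n+1}$.

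Finally, set $X' = \lim_n X_n$. Applying repleteness of $\calX$ to the tower $\{X_n\}$ with its surjective transition maps shows $X' \to X_0$ is surjective, hence so is $q\colon X' \to X = \lim G$. The maps $f_n$ restrict to a compatible system $X' \to F_n$ (compatibility is exactly the second condition above), and therefore assemble into $h\colon X' \to \lim F$; checking componentwise with the first condition shows that $(\lim F \to \lim G)\circ h$ equals $q$. Since $q$ is surjective, $\lim F \to \lim G$ is surjective.

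I do not expect a serious obstacle: the only genuine input is the single application of repleteness at the end, and the real work is the bookkeeping needed to make the two compatibility conditions propagate through the inductive fibre-product construction. The facts one should verify as routine are that surjections in a topos are stable under base change and compose, that precomposition with an epimorphism that is itself an epimorphism detects epimorphisms, and that the $X_n$ (finite limits) and $X' = \lim_n X_n$ exist and behave as expected.
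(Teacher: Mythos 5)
Your proof is correct and follows essentially the same route as the paper: the same inductive tower $X_0 = X\times_{G_0}F_0$, $X_{n+1} = X_n \times_{F_n\times_{G_n}G_{n+1}} F_{n+1}$, with repleteness applied once to the limit of the tower. The only cosmetic difference is that you specialize to $X = \lim G$ with the identity, whereas the paper lifts an arbitrary test map $s:X\to\lim G$; these are equivalent ways of exhibiting surjectivity.
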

\begin{proof}
	Fix an $X \in \calX$ and a map $s:X \to \lim G$ determined by a compatible sequence $\{s_n:X \to G_n\}$ of maps. By induction, one can show that there exists a tower of surjections $\dots \to X_n \to X_{n-1} \to \dots \to X_1 \to X_0 \to X$ and maps $t_n:X_n \to F_n$ compatible in $n$ such that $t_n$ lifts $s_n$. In fact, one may take $X_0 = X \times_{G_0} F_0$,  and 
	\[ X_{n+1} = X_n \times_{F_n \times_{G_n} G_{n+1}} F_{n+1}.\]
The map $X' := \lim_i X_i \to X$ is surjective by repleteness of $\calX$. Moreover, the compatibility of the $t_n$'s gives a map $t:X' \to \lim F$ lifting $s$, which proves the claim.
\end{proof}

We now see some of the benefits of working in a replete topos. First, products behave well:

\begin{proposition}
	\label{prop:prodexact}
	Countable products are exact in a replete topos.
\end{proposition}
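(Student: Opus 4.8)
The plan is to isolate the one nontrivial ingredient — that a countable product of surjections is again a surjection — and observe that it implies the full statement formally. First I would note that the countable product functor $\prod\colon \calX^{\N}\to\calX$ is right adjoint to the constant-family functor, hence preserves all limits; in particular, on abelian sheaves it commutes with kernels, so it is left exact, and products of monomorphisms are monomorphisms. Thus for a family of short exact sequences $0\to A_n\to B_n\to C_n\to 0$ of abelian sheaves in $\calX$, the sequence $0\to\prod_n A_n\to\prod_n B_n\to\prod_n C_n$ is exact, and $\prod_n A_n$ is the kernel of $\prod_n B_n\to\prod_n C_n$; since an epimorphism is the cokernel of its kernel in an abelian category, it therefore suffices to show that $\prod_n B_n\to\prod_n C_n$ is an epimorphism, i.e. that a countable product of surjections of objects of $\calX$ is a surjection.

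To prove the latter, given surjections $f_n\colon F_n\to G_n$ in $\calX$, I would realize the countable product as a sequential limit of finite products and apply Lemma \ref{lem:limiteffepi}. Set $\widetilde F_i:=\prod_{n\le i}F_n$ and $\widetilde G_i:=\prod_{n\le i}G_n$, regarded as objects of $\Fun(\N^{\opp},\calX)$ with transition maps given by dropping the last coordinate, so that $\lim_i\widetilde F_i=\prod_n F_n$, $\lim_i\widetilde G_i=\prod_n G_n$, and the evident map $\widetilde F\to\widetilde G$ recovers $\prod_n f_n$ in the limit. I would then verify the two hypotheses of Lemma \ref{lem:limiteffepi}: (a) each $\widetilde F_i\to\widetilde G_i$ is a finite product of surjections, hence a surjection, by stability of epimorphisms under base change and composition in a topos — factor $F_0\times\cdots\times F_i\to G_0\times F_1\times\cdots\times F_i\to\cdots\to G_0\times\cdots\times G_i$; and (b) the natural map $\widetilde F_{i+1}\to\widetilde F_i\times_{\widetilde G_i}\widetilde G_{i+1}$ is a surjection, since $\widetilde G_{i+1}=\widetilde G_i\times G_{i+1}$ over $\widetilde G_i$ gives $\widetilde F_i\times_{\widetilde G_i}\widetilde G_{i+1}=\widetilde F_i\times G_{i+1}$, while $\widetilde F_{i+1}=\widetilde F_i\times F_{i+1}$, so the map is $\id_{\widetilde F_i}\times f_{i+1}$. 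Lemma \ref{lem:limiteffepi} then gives that $\lim\widetilde F\to\lim\widetilde G$ is a surjection, i.e. $\prod_n F_n\to\prod_n G_n$ is a surjection.

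I do not anticipate a real obstacle; the only point worth flagging is that one cannot simply apply repleteness to the single tower $\{\prod_{n\le i}F_n\}$, because its transition maps $\prod_{n\le i+1}F_n\to\prod_{n\le i}F_n$ need not be surjective (this requires $F_{i+1}$ to be inhabited), so the relative form Lemma \ref{lem:limiteffepi} is genuinely needed. The remaining steps — left exactness via adjunction, finite products of surjections being surjections, and the fibre-product computation in (b) — are routine. I would conclude by recording the byproduct that countable products of surjections in $\calX$ are surjections, and hence countable products of short exact sequences of abelian sheaves in $\calX$ remain exact.
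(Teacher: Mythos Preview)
Your proposal is correct and follows essentially the same approach as the paper: reduce to showing that a countable product of surjections is a surjection, realize this product as the sequential limit of the finite partial products, and apply Lemma~\ref{lem:limiteffepi}. The paper's proof is a one-line version of yours (it simply asserts the hypotheses of Lemma~\ref{lem:limiteffepi} are ``trivial to check''), so your write-up fills in exactly the details the paper omits; your observation that the transition maps $\prod_{n\le i+1}F_n\to\prod_{n\le i}F_n$ need not be surjective, forcing the use of the relative Lemma~\ref{lem:limiteffepi} rather than bare repleteness, is a worthwhile clarification not made explicit in the paper.
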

\begin{proof}
Given surjective maps $f_n:F_n \to G_n$ in $\calX$ for each $n \in \N$, we want $f:\prod_n F_n \to \prod_n G_n$ to be surjective. This follows from Lemma \ref{lem:limiteffepi} as $f = \lim \prod_{i < n} f_i$; the condition from the lemma is trivial to check in this case.
\end{proof}

In a similar vein, inverse limits behave like in sets:

\begin{proposition} 
	\label{prop:limrlim}
	If $\calX$ is a replete topos and $F:\N^\opp \to \Ab(\calX)$ is a diagram with $F_{n+1} \to F_n$ surjective for all $n$, then $\lim F_n \simeq \R\lim F_n$.
\end{proposition}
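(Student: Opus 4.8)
The plan is to show that $\R\lim F_n$ is concentrated in degree $0$, i.e. that $\R^1\lim F_n = 0$ (all higher $\R^i\lim$ vanish automatically over $\N^\opp$ since this index category has cohomological dimension $1$). The standard description of $\R\lim$ over $\N^\opp$ is as the cone of the map
\[ \prod_n F_n \xrightarrow{\ 1 - \mathrm{shift}\ } \prod_n F_n, \]
where $\mathrm{shift}$ sends $(x_n)$ to $(\rho_{n}(x_{n+1}))$ with $\rho_n : F_{n+1} \to F_n$ the transition maps. So it suffices to prove that the endomorphism $1 - \mathrm{shift}$ of $\prod_n F_n$ is an epimorphism of abelian sheaves; its kernel is then exactly $\lim F_n$, and the cone computes $\R\lim$. (One should first note that $\prod_n F_n$ itself makes sense and is computed termwise, and by Proposition \ref{prop:prodexact} countable products are exact, so no issues arise there.)

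To see that $1-\mathrm{shift}$ is surjective, I would apply Lemma \ref{lem:limiteffepi}. Write $G : \N^\opp \to \Ab(\calX)$ for the diagram with $G_n = \prod_{i \le n} F_i$ and transition maps the obvious projections $G_{n+1} \to G_n$; then $\prod_n F_n = \lim G_n$, and the diagram $G$ has surjective transition maps since each $F_i$ is an abelian sheaf (projections off a product are split epis). Next I would build an auxiliary diagram $H : \N^\opp \to \Ab(\calX)$ together with a map $H \to G$ so that the ``partial'' endomorphism $1-\mathrm{shift}$ is realized as $\lim (H \to G)$ and the hypotheses of Lemma \ref{lem:limiteffepi} hold. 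Concretely, take $H_n = \prod_{i \le n} F_i$ as well, with the map $H_n \to G_n$ given in coordinates by $(x_0,\dots,x_n) \mapsto (x_0 - \rho_0(x_1),\ x_1 - \rho_1(x_2),\ \dots,\ x_{n-1}-\rho_{n-1}(x_n),\ x_n)$. This map is surjective (it is upper-triangular with identities on the diagonal, hence an isomorphism of sheaves, in fact), and one checks that the natural square relating $H_{n+1} \to H_n$ and $H_{n+1}\to H_n\times_{G_n}G_{n+1}$ meets the surjectivity hypothesis of the lemma. Passing to the limit, $\lim(H\to G)$ is precisely $1-\mathrm{shift} : \prod_n F_n \to \prod_n F_n$, and Lemma \ref{lem:limiteffepi} gives that it is surjective.

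The one real point to get right is the bookkeeping: arranging $H$ and the comparison maps so that the two surjectivity conditions of Lemma \ref{lem:limiteffepi} are literally verified, and checking that $\lim(H\to G)$ reproduces $1-\mathrm{shift}$ on the nose. This is where repleteness enters essentially — without it, $\lim$ of a tower of surjections need not be surjective, and $1-\mathrm{shift}$ could fail to be epi, which is exactly the phenomenon of nonvanishing $\R^1\lim$. Once surjectivity of $1-\mathrm{shift}$ is in hand, the cone of $1-\mathrm{shift}$ is quasi-isomorphic to its kernel placed in degree $0$, namely $\lim F_n$, so $\R\lim F_n \simeq \lim F_n$ as claimed. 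I expect the verification of the hypotheses of Lemma \ref{lem:limiteffepi} to be the main (though not deep) obstacle; everything else is formal.
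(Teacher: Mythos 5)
Your strategy is exactly the one the paper uses: reduce to surjectivity of the telescope map on $\prod_n F_n$ via the exactness of countable products (Proposition \ref{prop:prodexact}), and then obtain that surjectivity from Lemma \ref{lem:limiteffepi} applied to the finite partial products. However, the specific auxiliary diagram you propose does not work. With $H_n = G_n = \prod_{i\le n}F_i$, both with projection transition maps, your map $(x_0,\dots,x_n)\mapsto(x_0-\rho_0(x_1),\dots,x_{n-1}-\rho_{n-1}(x_n),x_n)$ is \emph{not} a morphism of towers: the composite $H_{n+1}\to G_{n+1}\to G_n$ has last coordinate $x_n-\rho_n(x_{n+1})$, while $H_{n+1}\to H_n\to G_n$ has last coordinate $x_n$, so the squares do not commute. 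There is also a quick sanity check that something must be off: each of your maps $H_n\to G_n$ is an isomorphism (as you observe), so if they did assemble into a map of towers, the induced map on limits would be an isomorphism — but $1-\mathrm{shift}$ has kernel $\lim F_n$, which is nonzero in general. So $\lim(H\to G)$ cannot reproduce $1-\mathrm{shift}$ with this choice.

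The repair is a one-line reindexing, and it is what the paper does: take $H_n := G_{n+1} = \prod_{i\le n+1}F_i$ (with projection transition maps) and let $s_n:H_n\to G_n$ be the truncation of $t-\id$, i.e. $(x_0,\dots,x_{n+1})\mapsto(\rho_0(x_1)-x_0,\dots,\rho_n(x_{n+1})-x_n)$. These squares do commute, $\lim H_n = \lim G_n = \prod_n F_n$, and $\lim s_n = t-\id$. Each $s_n$ is (split) surjective, and the second hypothesis of Lemma \ref{lem:limiteffepi} is where the surjectivity of the transition maps of $F$ actually enters: the fibre product $G_{n+1}\times_{G_n}H_n$ identifies with $H_n\times F_{n+1}$, and $H_{n+1}\to H_n\times F_{n+1}$ is $(\pr,\,t-\id)$ in the last coordinate, whose surjectivity requires $\rho_{n+1}:F_{n+2}\to F_{n+1}$ to be surjective. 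With that adjustment your argument goes through verbatim and coincides with the paper's proof.
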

\begin{proof}
	By Proposition \ref{prop:prodexact}, the product $\prod_n F_n \in \calX$ computes the derived product in $D(\calX)$. This gives an exact triangle
\[  \R\lim F_n \to \prod_n F_n \stackrel{t-\id}{\to} \prod_n F_n, \]
where $t:F_{n+1} \to F_n$ is the transition map. It thus suffices to show that $s := t-\id$ is surjective. Set $G_n = \prod_{i \leq n} F_n$, $H_n = G_{n+1}$, and let $s_n:H_n \to G_n$ be the map induced by $t - \id$. The surjectivity of $t$ shows that $s_n$ is surjective. Moreover, the surjectivity of $t$ also shows that $H_{n+1} \to G_{n+1} \times_{G_n} H_{n}$ is surjective, where the fibre product is computed using $s_n:H_n \to G_n$ and the projection $G_{n+1} \to G_n$. In fact, the fibre product is $H_{n} \times F_{n+1}$ and $H_{n+1} \to H_{n} \times F_{n+1}$ is $(\pr,t-\id)$. By Lemma \ref{lem:limiteffepi}, it follows that $s = \lim s_n$ is also surjective.
\end{proof}

\begin{proposition}
	\label{prop:cdrlim}
If $\calX$ is a replete topos, then the functor of $\N^\opp$-indexed limits has cohomological dimension $1$.
\end{proposition}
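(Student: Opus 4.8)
The plan is to compute $\R\lim$ on $\Fun(\N^\opp,\Ab(\calX))$ by an explicit two-term complex. For a system $F=(\cdots\to F_1\to F_0)$ let $t\colon\prod_n F_n\to\prod_n F_n$ be the endomorphism induced by the transition maps, and set $\Phi^\bullet(F)=[\,\prod_n F_n\xrightarrow{t-\id}\prod_n F_n\,]$, placed in cohomological degrees $0$ and $1$. This is functorial in $F$, and $H^0(\Phi^\bullet(F))=\lim F$ by the usual description of a countable inverse limit as a kernel. The essential input — and the only place repleteness intervenes — is Proposition \ref{prop:prodexact}: countable products are exact in $\calX$, so $\Phi^0$ and $\Phi^1$, viewed as functors $\Fun(\N^\opp,\Ab(\calX))\to\Ab(\calX)$, are exact. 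From this I will deduce $\R\lim F\simeq\Phi^\bullet(F)$ for all $F$, and hence $\R^i\lim=0$ for $i\ge 2$.

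Concretely: exactness of $\Phi^0,\Phi^1$ turns any short exact sequence of systems into a short exact sequence of the two-term complexes $\Phi^\bullet$, hence into a long exact cohomology sequence, so $\{F\mapsto H^i(\Phi^\bullet(F))\}_{i\ge 0}$ is a cohomological $\delta$-functor with $H^0=\lim$ and $H^i=0$ for $i\ge 2$. It remains to see this $\delta$-functor is effaceable in positive degrees. Given $F$, embed it into the system $G$ with $G_n=\prod_{k\le n}F_k$, transition maps the projections, and $F\to G$ given in degree $n$ by $x\mapsto (t_{n\to k}x)_{k\le n}$ (where $t_{n\to k}\colon F_n\to F_k$ is the composite transition, so $t_{n\to n}=\id$); this is a monomorphism since its $k=n$ component is the identity. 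The system $G$ has split surjective transition maps, so $t-\id\colon\prod_n G_n\to\prod_n G_n$ is surjective — the surjectivity of $t-\id$ for systems with surjective transition maps is precisely the computation carried out, via Lemma \ref{lem:limiteffepi}, in the proof of Proposition \ref{prop:limrlim} — whence $H^1(\Phi^\bullet(G))=\coker(t-\id)=0$, and $H^{\ge 2}$ vanishes trivially. Since $\Ab(\calX)$ has enough injectives, an effaceable $\delta$-functor is universal, so $\{H^i(\Phi^\bullet(-))\}$ coincides with the derived functors $\{\R^i\lim\}$; in particular $\R^i\lim=0$ for $i\ge 2$. That the cohomological dimension is exactly $1$ (not $0$) is visible already for $\calX=\Set$, where non-Mittag-Leffler systems have $\R^1\lim\ne0$.

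I do not anticipate a real obstacle: the sole substantive ingredient, the exactness of countable products (equivalently the surjectivity of $t-\id$ in Proposition \ref{prop:limrlim}), is already in hand, and everything else is the standard Tohoku-style bookkeeping for $\delta$-functors. The only points that need a moment's care are the monomorphy of $F\hookrightarrow G$ and the observation that $H^0$ and $H^1$ of $\Phi^\bullet$ are genuine additive functors (being the kernel and cokernel of a natural transformation between the exact functors $\Phi^0,\Phi^1$). If one prefers to avoid $\delta$-functors altogether, the same conclusion follows by resolving $F$ by an injective complex $I^\bullet$ in $\Fun(\N^\opp,\Ab(\calX))$ and computing the totalization of the double complex $\Phi^\bullet(I^\bullet)$ in both directions: exactness of $\Phi^0,\Phi^1$ identifies it with $\Phi^\bullet(F)$, while the vanishing $H^1(\Phi^\bullet(I^q))=0$ (injective systems have split surjective transition maps) identifies it with $\lim I^\bullet=\R\lim F$.
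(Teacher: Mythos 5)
Your proof is correct and rests on the same two ingredients as the paper's: the exactness of countable products (Proposition \ref{prop:prodexact}) and the identification of $\R\lim F$ with the two-term complex $\prod_n F_n \xrightarrow{t-\id} \prod_n F_n$. The only difference is packaging — the paper takes the exact triangle $\R\lim F \to \prod_n F_n \to \prod_n F_n$ (with derived products) as given and uses repleteness to replace derived products by naive ones, whereas you re-derive that identification from scratch via effaceability of the $\delta$-functor $H^i(\Phi^\bullet(-))$, which is a correct, more self-contained justification of the same standard fact.
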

\begin{proof}
	For a diagram $F:\N^\opp \to \Ab(\calX)$, we want $\R\lim F_n \in D^{[0,1]}(\calX)$. By definition, there is an exact triangle
	\[ \R\lim F_n \to \prod_n F_n \to \prod_n F_n\]
	with the last map being the difference of the identity and transition maps, and the products being derived. By Proposition \ref{prop:prodexact}, we can work with naive products instead, whence the claim is clear by long exact sequences.
\end{proof}

\begin{question}
	\label{question:hypercompletereplete}
	Do Postnikov towers converge in the hypercomplete $\infty$-topos of sheaves of spaces (as in \cite[\S 6.5]{LurieHTT}) on a replete topos?
\end{question}

\subsection{Locally weakly contractible topoi}
\label{ss:lwctopoi}

We briefly study an exceptionally well-behaved subclass of replete topoi:

\begin{definition}
\label{rmk:locallyweaklycontractible}
An object $F$ of a topos $\calX$ is called {\em weakly contractible} if every surjection $G \to F$ has a section. We say that $\calX$ is {\em locally weakly contractible} if it has enough weakly contractible coherent objects, i.e., each $X \in \calX$ admits a surjection $\cup_i Y_i \to X$ with $Y_i$ a coherent weakly contractible object.
\end{definition}

The pro-\'etale topology will give rise to such topoi. A more elementary example is:

\begin{example} 
The topos $\calX = \Set$ is locally weakly contractible: the singleton set $S$ is weakly contractible coherent, and every set is covered by a disjoint union of copies of $S$. 
\end{example}

The main completeness and finiteness properties of such topoi are:

\begin{proposition}
Let $\calX$ be a locally weakly contractible topos. Then
\begin{enumerate}
	\item $\calX$ is replete.
	\item The derived category $D(\calX) = D(\calX,\Z)$ is compactly generated.
	\item Postnikov towers converge in the associated hypercomplete $\infty$-topos. (Cf. \cite{LurieHTT}.)
\end{enumerate}
\end{proposition}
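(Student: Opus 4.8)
The plan is to handle the three parts in order, drawing successively more from the hypothesis that $\calX$ has a generating family $\{Y_i\}$ of coherent weakly contractible objects: part (1) uses only weak contractibility, part (2) also uses coherence, and part (3) is the genuinely $\infty$-categorical assertion. For (1), I would first isolate the statement that a weakly contractible object $Y$ is projective for epimorphisms: given an epimorphism $G\to F$ and a map $Y\to F$, the base change $G\times_F Y\to Y$ is again an epimorphism, hence splits, which produces a lift $Y\to G$; thus $\Hom_\calX(Y,-)$ preserves epimorphisms. Since a map in $\calX$ is an epimorphism as soon as $\Hom_\calX(Y,-)$ carries it to a surjection for every $Y$ in a generating family, repleteness follows: for $F:\N^\opp\to\calX$ with each $F_{n+1}\to F_n$ surjective, $\Hom_\calX(Y,\lim F)=\lim_n\Hom_\calX(Y,F_n)$ is a sequential limit of surjections of sets, and therefore surjects onto each term by the repleteness of $\Set$.

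For (2), the compact generators will be $\Z[Y_i]:=j_{Y_i\,!}\Z\in D(\calX)$, so that $\R\Hom_{D(\calX)}(\Z[Y_i],K)\simeq\R\Gamma(Y_i,K)$. Here weak contractibility of $Y:=Y_i$ makes $\Gamma(Y,-):\Ab(\calX)\to\Ab$ exact --- the same base-change-and-split argument as in (1), now applied to surjections of abelian sheaves --- so that $\R\Gamma(Y,-)$ is $t$-exact and is computed by applying $\Gamma(Y,-)$ termwise. Coherence of $Y$ makes $\Gamma(Y,-)$ commute with filtered colimits of abelian sheaves; combined with $t$-exactness this forces $\R\Gamma(Y,-)$, hence $\R\Hom(\Z[Y],-)$, to commute with arbitrary direct sums, so $\Z[Y]$ is compact. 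For generation: if $\R\Gamma(Y_i,K)=0$ for all $i$, then $\Gamma(Y_i,H^n(K))=0$ for all $i$ and $n$ by $t$-exactness; since the $Y_i$ generate and $\Hom_\calX(\sqcup_j Y_j,-)=\prod_j\Hom_\calX(Y_j,-)$, precomposition along a cover $\sqcup_j Y_j\to X$ shows every section of $H^n(K)$ over every object vanishes, i.e. $H^n(K)=0$, so $K=0$.

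For (3), I would pass to the associated $\infty$-topos $\calX_\infty$ of sheaves of spaces. By the results on homotopy dimension in \cite[\S 7.2.1]{LurieHTT}, an $\infty$-topos that is locally of finite homotopy dimension is hypercomplete and has convergent Postnikov towers; since the $Y_i$ are $0$-truncated and, by local weak contractibility of $\calX$, cover every representable, they generate $\calX_\infty$ under colimits, so it suffices to show each $\calX_{\infty/Y_i}$ has homotopy dimension $\le 0$, i.e. that every effective epimorphism onto $Y:=Y_i$ in $\calX_\infty$ admits a section. One constructs such a section by obstruction theory up the Postnikov tower: the obstruction at each stage is a gerbe over $Y$ or a higher cohomology class in some $H^{>0}(Y,\mathcal{F})$, and all of these vanish --- gerbes over $Y$ are neutral because a $0$-truncated cover trivializing them splits (weak contractibility of $Y$), and $H^{>0}(Y,-)=0$ by the exactness established in (2) --- so the tower of spaces of partial sections has surjective transition maps with connected fibres, and its sequential limit (controlled by repleteness of $\calX$ and coherence of $Y$) is nonempty, giving the section.

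The main obstacle is precisely this last step: organizing the obstruction-theoretic ascent through the infinite Postnikov tower, verifying that the limit over the tower of section-spaces is genuinely nonempty (a $\lim^1$/Milnor-sequence bookkeeping, where repleteness from part (1) and coherence from part (2) are what make it work), and dovetailing this with the precise form of Lurie's criterion and with hypercompleteness of the associated $\infty$-topos. Parts (1) and (2), by contrast, are short diagram chases once the projectivity, respectively exactness, of $\Gamma(Y,-)$ for weakly contractible $Y$ has been isolated.
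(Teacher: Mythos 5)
Parts (1) and (2) are correct and essentially the paper's own argument: surjectivity in $\calX$ is detected by evaluation on the weakly contractible generators, where it is preserved because the pulled-back surjection splits, and the compact generators of $D(\calX)$ are the $j_!\Z$ for $j:Y \to 1_{\calX}$ with $Y$ weakly contractible coherent, compactness coming from exactness of $\Gamma(Y,-)$ (weak contractibility) together with commutation with filtered colimits (coherence).

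Part (3) is where you diverge from the paper, and your sketch has a genuine gap at exactly the step you flag as ``the main obstacle.'' The strategy of verifying that each $\calX_{\infty/Y_i}$ has homotopy dimension $\leq 0$ and invoking \cite[Proposition 7.2.1.10]{LurieHTT} is sound, but you cannot establish homotopy dimension $\leq 0$ by obstruction theory up the Postnikov tower of an effective epimorphism $F \to Y$: that procedure only produces a point of $\lim_n \tau_{\leq n} F$, and identifying this limit with $F$ is precisely the Postnikov convergence you are trying to prove, so the argument is circular. The repair is much simpler and needs no obstruction theory: any effective epimorphism $F \to Y$ of sheaves of spaces can be refined by an effective epimorphism $\sqcup_j U_j \to F$ with the $U_j$ ($0$-truncated) objects of the site; the composite $\sqcup_j U_j \to Y$ is then an effective epimorphism between $0$-truncated objects, i.e.\ a surjection in the $1$-topos $\calX$, which splits by weak contractibility of $Y$, and composing the splitting with $\sqcup_j U_j \to F$ gives the desired point. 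Alternatively --- and this is the paper's route --- one observes that evaluation $F \mapsto F(Y)$ at a weakly contractible coherent $Y$ is exact on sheaves of spaces (it preserves finite limits always, filtered colimits by coherence, and effective epimorphisms by the splitting argument), hence commutes with the formation of homotopy sheaves, so that $\pi_i(F,\ast)(Y) = \pi_i(F(Y),\ast)$; since evaluation also commutes with the limit over the Postnikov tower, the convergence $F \simeq \lim_n \tau_{\leq n} F$ can be checked after evaluation at the generating $Y$'s, where it reduces to the convergence of Postnikov towers of spaces. Either repair works; the paper's version avoids the slice topoi and the homotopy-dimension formalism altogether, and also dispenses with the $\lim^1$ bookkeeping you were worried about.
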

\begin{proof}
	For (1), note that a map $F \to G$ in $\calX$ is surjective if and only if $F(Y) \to G(Y)$ is so for each weakly contractible $Y$; the repleteness condition is then immediately deduced. For (2), given $j:Y \to 1_{\calX}$ in $\calX$ with $Y$ weakly contractible coherent, one checks that $\Hom(j_! \Z,-) = H^0(Y,-)$ commutes with arbitrary direct sums in $D(\calX)$, so $j_! \Z$ is compact; as $Y$ varies, this gives a generating set of $D(\calX)$ by assumption on $\calX$, proving the claim. For (3), first note that the functor $F \mapsto F(Y)$ is exact on sheaves of spaces whenever $Y$ is weakly contractible. Hence, given such an $F$ and point $\ast \in F(Y)$ with $Y$ weakly contractible, one has $\pi_i(F(Y),\ast) = \pi_i(F,\ast)(Y)$. This shows that $F \simeq \lim_n \tau_{\leq n} F$ on $\calX$, which proves hypercompleteness. (Cf. \cite[Proposition 7.2.1.10]{LurieHTT}.)
\end{proof}

\subsection{Derived categories, Postnikov towers, and cohomological descent}
\label{ss:reptopoicompleteness}

We first recall the following definition:

\begin{definition}
	Given a topos $\calX$, we define the {\em left-completion} $\widehat{D}(\calX)$ of $D(\calX)$ as the full subcategory of $D(\calX^\N)$ spanned by projective systems $\{K_n\}$ satisfying:
	\begin{enumerate}
		\item $K_n \in D^{\geq -n}(\calX)$.
		\item The map $\tau^{\geq -n} K_{n+1} \to K_n$ induced by the transition map $K_{n+1} \to K_n$ and (1) is an equivalence.
	\end{enumerate}
	We say that $D(\calX)$ is {\em left-complete} if the map $\tau:D(\calX) \to \widehat{D}(\calX)$ defined by $K \mapsto \{\tau^{\geq -n} K\}$ is an equivalence.
\end{definition}
	
Left-completeness is extremely useful in accessing an unbounded derived category as Postnikov towers converge:

\begin{lemma}
	\label{lem:leftcompletionadjunction}
	The functor $\R\lim:\widehat{D}(\calX) \hookrightarrow D(\calX^\N) \to D(\calX)$ provides a right adjoint to $\tau$. In particular, if $D(\calX)$ is left-complete, then $K \simeq \R\lim \tau^{\geq -n} K$ for any $K \in D(\calX)$.
\end{lemma}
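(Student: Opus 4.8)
The plan is to exhibit $\R\lim$ as the right adjoint to $\tau$ by constructing the unit and counit and checking the triangle identities, rather than verifying the hom-set bijection directly. First I would observe that $\tau\colon D(\calX)\to \widehat D(\calX)$ does land in $\widehat D(\calX)$: for $K\in D(\calX)$ the system $\{\tau^{\ge -n}K\}$ satisfies condition (1) tautologically, and condition (2) holds because $\tau^{\ge -n}\tau^{\ge -(n+1)}K \simeq \tau^{\ge -n}K$, the standard compatibility of truncation functors. Similarly, $\R\lim$ sends $\widehat D(\calX)\subset D(\calX^\N)$ to $D(\calX)$, which is automatic since $D(\calX^\N)$ has all limits.

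Next I would produce the two natural transformations. For the counit $\tau\circ\R\lim \Rightarrow \id_{\widehat D(\calX)}$: given an object $\{K_n\}\in\widehat D(\calX)$ with limit $K=\R\lim K_n$, the projection maps $K\to K_n$ induce, after applying $\tau^{\ge -m}$ and using $K_n\in D^{\ge -n}$, maps $\tau^{\ge -m}K\to K_m$; I claim these are equivalences. The key computational input here is that $\R\lim$ has cohomological dimension $1$ for $\N^{\opp}$-indexed systems — but actually the sharper statement needed is that for a system $\{K_n\}$ with $K_n\in D^{\ge -n}$ and $\tau^{\ge -n}K_{n+1}\xrightarrow{\sim}K_n$, the natural map $\tau^{\ge -m}\R\lim_n K_n \to K_m$ is an equivalence. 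This follows by truncating: $\R\lim_n K_n = \R\lim_n \tau^{\ge -m-1}K_n$ up to the range of degrees $\ge -m$ (since the tail of the system beyond index $m+1$ only affects degrees $<-m$ after the $\R^1\lim$ correction shifts things by one), and the truncated system is eventually constant equal to $K_{m+1}$, whose $\tau^{\ge -m}$ is $K_m$. For the unit $\id_{D(\calX)}\Rightarrow \R\lim\circ\tau$: this is just the canonical map $K\to \R\lim_n \tau^{\ge -n}K$ coming from the compatible family $K\to\tau^{\ge -n}K$. The triangle identities then reduce to the same truncation bookkeeping.

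The main obstacle I anticipate is the degree-counting in the counit computation: one must be careful that passing to $\R\lim$ of a system with $K_n\in D^{\ge -n}$ does not corrupt the cohomology in degrees $\ge -m$, and this is exactly where the $R^1\lim$ term (equivalently, Proposition \ref{prop:cdrlim}) enters — the cohomological dimension $1$ bound guarantees that $H^i(\R\lim_n K_n) = \lim_n H^i(K_n)$ for $i > -m$ and fits into a short exact sequence with an $R^1\lim$ term for $i=-m$, but since the system $\{H^i(K_n)\}$ is eventually constant for each fixed $i$ (by condition (2)), all the $R^1\lim$ terms vanish and $H^i(\R\lim_n K_n)\simeq H^i(K_m)$ for all $i\ge -m$. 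Granting this, the final assertion is immediate: if $\tau$ is an equivalence then it is in particular fully faithful with the same essential image as its quasi-inverse, and the adjunction forces $\R\lim\circ\tau\simeq\id$, i.e. $K\simeq \R\lim\tau^{\ge -n}K$ for all $K\in D(\calX)$.
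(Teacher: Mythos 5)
Your plan hinges on the claim that the counit $\tau^{\geq -m}\R\lim_n K_n \to K_m$ is an equivalence for every $\{K_n\}\in\widehat{D}(\calX)$, and your justification for it (the cohomological-dimension-$1$ bound on $\R\lim$, the Milnor sequence, the identification $H^i(\prod_n K_n)=\prod_n H^i(K_n)$) uses facts that hold only in a \emph{replete} topos (Propositions \ref{prop:prodexact} and \ref{prop:cdrlim}). The lemma, however, is stated for an arbitrary topos $\calX$: it is what makes the definition of left-completeness meaningful, and left-completeness genuinely fails in general (Example \ref{ex:notleftcomplete}). In a non-replete topos the derived countable product $\prod_n K_n$ of objects $K_n\in D^{\geq -n}$ can have cohomology in every degree, so there is no Milnor sequence of the form you use, and the statement ``$\tau^{\geq -m}\R\lim_n K_n\simeq K_m$'' is exactly the essential-surjectivity half of Proposition \ref{prop:repletepostnikov}, which is proved only under repleteness. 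So as written the argument proves the lemma only for replete $\calX$, which is not what is claimed (and would make the subsequent discussion of left-completeness circular). Note also that an adjunction does not require the counit to be an isomorphism, only the triangle identities, so you are trying to prove something both stronger than needed and false in general; the triangle identities themselves, which you dismiss as ``bookkeeping,'' are the part that actually needs to be checked if you pursue the unit/counit route.

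The paper avoids all of this by verifying the hom-set (in fact $\R\Hom$) adjunction directly: $\R\Hom_{D(\calX)}(K,\R\lim L_n)\simeq \R\lim\R\Hom(K,L_n)\simeq \R\lim\R\Hom(\tau^{\geq -n}K,L_n)$, where the second step is the truncation adjunction using $L_n\in D^{\geq -n}$; one then identifies this with $\R\Hom_{\widehat{D}(\calX)}(\tau K,\{L_n\})$ by computing $\R\Hom$ in $D(\calX^{\N})$ via the two-term complex $\prod_n\R\Hom(F_n,G_n)\to\prod_n\R\Hom(F_{n+1},G_n)$ and observing that $\R\Hom(F_{n+1},G_n)=\R\Hom(F_n,G_n)$ for objects of $\widehat{D}(\calX)$, again by adjointness of truncation. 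No exactness of products or bound on the cohomological dimension of $\R\lim$ enters. I would recommend either switching to this computation, or, if you keep the unit/counit formulation, dropping the claim that the counit is an isomorphism and writing out the triangle identities carefully.
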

\begin{proof}
Fix $K \in D(\calX)$ and $\{L_n\} \in \widehat{D}(\calX)$. Then we claim that
\[\begin{aligned}
\R\Hom_{D(\calX)}(K,\R\lim L_n) \simeq \R\lim \R\Hom_{D(\calX)}(K,L_n) &\simeq \R\lim \R\Hom_{D(\calX)}(\tau^{\geq -n} K,L_n)\\
& \simeq \R\Hom_{\widehat{D}(\calX)}(\tau(K),\{L_n\}).
\end{aligned}\]
This clearly suffices to prove the lemma. Moreover, the first two equalities are formal. For the last one, recall that if $F,G \in \Ab(\calX^\N)$, then there is an exact sequence
\[ 1 \to \Hom(F,G) \to \prod_n \Hom(F_n,G_n) \to \prod_n \Hom(F_{n+1},G_n), \]
where the first map is the obvious one, while the second map is the difference of the two maps $F_{n+1} \to F_n \to G_n$ and $F_{n+1} \to G_{n+1} \to G_n$. One can check that if $F,G \in \Ch(\calX^\N)$, and $G$ is chosen to be K-injective, then the above sequence gives an exact triangle
\[ \R\Hom(F,G) \to \prod_n \R\Hom(F_n,G_n) \to \prod_n \R\Hom(F_{n+1},G_n).\]
In the special case where $F,G \in \widehat{D}(\calX)$, one has $\R\Hom(F_{n+1},G_n) = \R\Hom(F_n,G_n)$ by adjointness of truncations, which gives the desired equality.
\end{proof}

Classically studied topoi have left-complete derived categories only under (local) finite cohomological dimension constraints; see Proposition \ref{prop:leftcompletecrit} for a criterion, and Example \ref{ex:notleftcompleteag} for a typical example of the failure of left-completeness for the simplest infinite-dimensional objects. The situation for replete topoi is much better:

\begin{proposition}
	\label{prop:repletepostnikov}
If $\calX$ is a replete topos, then $D(\calX)$ is left-complete.
\end{proposition}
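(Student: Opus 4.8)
The plan is to show that the left-completion functor $\tau: D(\calX) \to \widehat{D}(\calX)$, $K \mapsto \{\tau^{\geq -n}K\}$, is an equivalence, using the repleteness hypothesis to kill the obstruction terms. Essentially, the only thing that can go wrong with left-completeness is the interference of $\R^1\lim$ (i.e. $\varprojlim^1$) terms when reassembling an object from its Postnikov truncations, and Proposition \ref{prop:cdrlim} tells us that $\N^\opp$-indexed $\R\lim$ has cohomological dimension $1$ in a replete topos, so these terms are at least controlled; the finer point is that the relevant transition maps on cohomology sheaves are actually surjective, so by Proposition \ref{prop:limrlim} there is no obstruction at all.

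Concretely, I would first check that $\tau$ lands in $\widehat{D}(\calX)$ (clear from the definitions) and has a right adjoint $\R\lim$ by Lemma \ref{lem:leftcompletionadjunction}. So it remains to show that the unit $K \to \R\lim \tau^{\geq -n} K$ and the counit $\{L_n\} \to \tau(\R\lim L_n)$ are both equivalences. For the unit: fix $K \in D(\calX)$ and let $L = \R\lim_n \tau^{\geq -n} K$. Since $\{\tau^{\geq -n}K\}$ is a tower with surjective (indeed, split on cohomology in high degrees) transition maps, one analyzes the cohomology of $L$ via the Milnor sequence
\[ 0 \to {\R}^1\!\lim_n H^{i-1}(\tau^{\geq -n} K) \to H^i(L) \to \lim_n H^i(\tau^{\geq -n} K) \to 0. \]
For fixed $i$ and $n \geq -i+1$ one has $H^{i-1}(\tau^{\geq -n} K) = H^{i-1}(K)$, so the pro-system $\{H^{i-1}(\tau^{\geq -n}K)\}_n$ is eventually constant, hence the $\R^1\lim$ term vanishes (a tower that is eventually constant — or more generally satisfies Mittag-Leffler — has no $\varprojlim^1$; and in a replete topos one can invoke Proposition \ref{prop:limrlim} directly since eventually-constant towers have surjective transition maps up to replacing by images). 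Similarly $\lim_n H^i(\tau^{\geq -n}K) = H^i(K)$. Hence $H^i(L) \simeq H^i(K)$ for all $i$, compatibly with the natural map, so $K \to L$ is an equivalence.

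For the counit, given $\{L_n\} \in \widehat{D}(\calX)$ with $L = \R\lim L_n$, I must show $\tau^{\geq -n} L \simeq L_n$. Again using Proposition \ref{prop:cdrlim}, $\R\lim$ of the tower $\{L_n\}$ has cohomological amplitude bounded by the pointwise one plus $1$; more usefully, for the truncated tower $\{\tau^{\geq -n} L_m\}_m$ (which for $m \geq n$ is constant equal to $\tau^{\geq -n}L_n = L_n$), one gets $\tau^{\geq -n} L = \tau^{\geq -n}\R\lim_m L_m$, and since the tower $\{L_m\}_m$ has $\tau^{\geq -n}L_{m+1} \xrightarrow{\sim} \tau^{\geq -n}L_m$ for... wait, rather: the transition maps $L_{m+1}\to L_m$ induce, after $\tau^{\geq -n}$ for $m\geq n$, isomorphisms, so the tower $\{\tau^{\geq -n}L_m\}_m$ is eventually constant, giving $\tau^{\geq -n}\R\lim_m L_m \simeq \R\lim_m \tau^{\geq -n} L_m \simeq L_n$ — here the interchange of $\tau^{\geq -n}$ and $\R\lim$ is legitimate because $\R\lim$ raises cohomological degree by at most $1$ (Proposition \ref{prop:cdrlim}), so it does not affect $\tau^{\geq -n}$ applied after passing sufficiently far down a tower whose terms live in $D^{\geq -n}$. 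Assembling: $\tau$ is an equivalence, so $D(\calX)$ is left-complete.

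The main obstacle I anticipate is the bookkeeping around commuting $\tau^{\geq -n}$ past $\R\lim$ and justifying the vanishing of the $\R^1\lim$ (equivalently $\varprojlim^1$) terms cleanly in the topos-theoretic (sheafy) setting rather than the punctual one; this is precisely where repleteness is essential, via Propositions \ref{prop:prodexact}, \ref{prop:limrlim}, and \ref{prop:cdrlim}, and care is needed to phrase the Milnor/Postnikov sequences as genuine exact triangles of sheaves (using that derived and naive countable products agree, Proposition \ref{prop:prodexact}) rather than appealing to stalkwise arguments, which are not available for a general replete topos.
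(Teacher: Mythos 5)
Your argument is correct in substance and rests on the same underlying mechanism as the paper's proof: everything reduces, via the triangle $\R\lim \to \prod \to \prod$, to Proposition \ref{prop:prodexact} (so that $H^i$ distributes over countable products and the Milnor sequence is available) and to Proposition \ref{prop:limrlim} (vanishing of $\R^1\lim$ for towers with surjective transition maps). The packaging differs: you stay entirely in the derived category and run Milnor sequences against the eventually constant towers $\{H^{i}(\tau^{\geq -n}K)\}_n$ and $\{H^i(L_m)\}_m$, whereas the paper chooses explicit representatives (a lift $I$ of $K$ for fully faithfulness, a K-injective lift of $\{L_n\}$ for essential surjectivity), identifies the relevant object as the honest kernel of a termwise surjective map $t-\id$ of complexes, and only then computes cohomology. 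Your route is arguably cleaner for the unit; the paper's essential surjectivity computation is, in the end, the same Milnor-sequence calculation you are doing.

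One step needs repair. For the counit you assert $\tau^{\geq -n}\R\lim_m L_m \simeq \R\lim_m \tau^{\geq -n}L_m$ and justify the interchange solely by the cohomological dimension bound on $\R\lim$. That bound only shows that the fibre $A := \R\lim_m \tau^{\leq -n-1}L_m$ of the map $\R\lim_m L_m \to \R\lim_m \tau^{\geq -n}L_m \simeq L_n$ lies in $D^{\leq -n}(\calX)$, not in $D^{\leq -n-1}(\calX)$; so a priori you only get that $H^i(\R\lim L_m) \to H^i(L_n)$ is an isomorphism for $i > -n$ and an epimorphism for $i = -n$. The missing piece is $H^{-n}(A) = \R^1\lim_m H^{-n-1}(L_m) = 0$, which holds by exactly the eventual-constancy/surjectivity argument you already used for the unit: the tower $\{H^{-n-1}(L_m)\}_m$ is constant with identity transition maps for $m \geq n+1$ and zero for $m \leq n$, so all transition maps are surjective and Proposition \ref{prop:limrlim} applies. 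Equivalently, skip the interchange altogether and run the Milnor sequence for $\{L_m\}$ directly, as you did for $\{\tau^{\geq -n}K\}$: it gives $H^i(\R\lim L_m) \simeq \lim_m H^i(L_m) \simeq H^i(L_n)$ for $i \geq -n$, which is all that is needed. With that one line added, the proof is complete.
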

\begin{proof}
We repeatedly use the following fact: limits and colimits in the abelian category $\Ch(\Ab(\calX))$ are computed termwise. First, we show that $\tau:D(\calX) \to \widehat{D}(\calX)$ is fully faithful. By the adjunction from Lemma \ref{lem:leftcompletionadjunction}, it suffices to show that $K \simeq \R\lim \tau^{\geq -n} K$ for any $K \in D(\calX)$. Choose a complex $I \in \Ch(\Ab(\calX))$ lifting $K \in D(\calX)$. Then $\prod_n \tau^{\geq -n} I \in \Ch(\Ab(\calX))$ lifts the derived product $\prod_n \tau^{\geq -n} K \in D(\calX)$ by Proposition \ref{prop:prodexact}. Since $I \simeq \lim \tau^{\geq -n} I \in \Ch(\Ab(\calX))$, it suffices as in Proposition \ref{prop:limrlim} to show that 
\[ \prod_n \tau^{\geq -n} I \stackrel{t-\id}{\to} \prod_n \tau^{\geq -n} I\]
is surjective in $\Ch(\Ab(\calX))$, where we write $t$ for the transition maps. Since surjectivity in $\Ch(\Ab(\calX))$ can be checked termwise, this follows from the proof of Proposition \ref{prop:limrlim} as $\tau^{\geq -n} I \stackrel{t-\id}{\to} \tau^{\geq -(n-1)} I$ is termwise surjective.  

For essential surjectivity of $\tau$, it suffices to show: given $\{K_n\} \in \widehat{D}(\calX)$, one has $K_n \simeq \tau^{\geq -n} \R\lim K_n$. Choose a $K$-injective complex $\{I_n\} \in \Ch(\Ab(\calX^\N))$ representing $\{K_n\}$. Then $\prod_n I_n \in \Ch(\Ab(\calX))$ lifts $\prod_n K_n$ (the derived product). Moreover, by $K$-injectivity, the transition maps $I_{n+1} \to I_n$ are (termwise) surjective. Hence, the map 
\[ \prod_n I_n \stackrel{t-\id}{\to} \prod_n I_n \]
in $\Ch(\Ab(\calX))$ is surjective by the argument in the proof of Proposition \ref{prop:limrlim}, and its kernel complex $K$ computes $\R\lim K_n$. We must show that $H^i(K) \simeq H^i(K_i)$ for each $i \in \N$.  Calculating cohomology and using the assumption $\{K_n\} \in \widehat{D}(\calX) \subset D(\calX^\N)$ shows that
\[ H^i(\prod_n I_n) = \prod_{n} H^i(I_n) = \prod_{n \geq i} H^i(I_n) = \prod_{n \geq i} H^i(K_i)\]
for each $i \in \N$; here we crucially use Proposition \ref{prop:prodexact} to distribute $H^i$ over $\prod$. The map $H^i(t-\id)$ is then easily seen to be split surjective with kernel $\lim H^i(K_n) \simeq \lim H^i(K_i) \simeq H^i(K_i)$, which proves the claim.
\end{proof}

If repleteness is dropped, it is easy to give examples where $D(\calX)$ is not left-complete.

\begin{example}
	\label{ex:notleftcomplete}
	Let $G = \prod_{n \geq 1} \Z_p$, and let $\calX$ be the topos associated to the category $B(G)$ of finite $G$-sets (topologized in the usual way). We will show that $D(\calX)$ is not left-complete. More precisely, we will show that $K \to \widehat{K} := \R\lim \tau^{\geq -n} K$ does not have a section for $K = \oplus_{n \geq 1} \Z/p^n[n] \in D(\calX)$; here $\Z/p^n$ is given the trivial $G$-action.

For each open subgroup $H \subset G$, we write $X_H \in B(G)$ for the $G$-set $G/H$ given the left $G$-action, and let $I^\opp \subset B(G)$ be the (cofiltered) full subcategory spanned by the $X_H$'s. The functor $p^*(\calF) =  \colim_{I} \calF(X_H)$ commutes with finite limits and all small colimits, and hence comes from a point $p:\ast \to \calX$. Deriving gives $p^* L = \colim_I  \R\Gamma(X_H,L)$ for any $L \in D(\calX)$, and so $H^0(p^* L) = \colim_I H^0(X_H,L)$.  In particular, if $L_1 \to L_2$ has a section, so does
	\[ \colim_I H^0(X_H,L_1) \to \colim_I H^0(X_H,L_2).\]
	If $\pi:\calX \to \Set$ denotes the constant map, then $K = \pi^* K'$ where $K' = \oplus_{n \geq 1} \Z/p^n[n] \in D(\Ab)$, so
	\[ \colim_I H^0(X_H,K) = H^0(p^* K) = H^0(p^* \pi^* K') = H^0(K') = 0.\]
	Since $\tau^{\geq -n} K \simeq \oplus_{ i \leq n} \Z/p^i[i] \simeq \prod_{i \leq n} \Z/p^i[i]$,  commuting limits shows that $\widehat{K} \simeq \prod_{n \geq 1} \Z/p^n[n]$ (where the product is derived), and so $\R\Gamma(X_H,\widehat{K}) \simeq \prod_{n \geq 1} \R\Gamma(X_H,\Z/p^n[n])$. In particular, it suffices to show that 
	\[ H^0(p^* \widehat{K}) = \colim_I \prod_{n \geq 1} H^n(X_H,\Z/p^n) \]
	is not $0$. Let $\alpha_n \in H^n(X_G,\Z/p^n) = H^n(\calX,\Z/p^n)$ be the pullback of a generator of $H^n(B(\prod_{i=1}^n \Z_p),\Z/p^n) \simeq \otimes_{i=1}^n H^1(B(\Z_p),\Z/p^n)$ under the projection $f_n:G \to \prod_{i=1}^n \Z_p$. Then $\alpha_n$ has exact order $p^n$ as $f_n$ has a section, so $\alpha := (\alpha_n) \in \prod_{n \geq  1} H^n(\calX,\Z/p^n)$ has infinite order. Its image $\alpha'$ in $H^0(p^* \widehat{K})$ is $0$ if and only if there exists an open normal subgroup $H \subset G$ such that $\alpha$ restricts to $0$ in $\prod_n H^n(X_H,\Z/p^n)$. Since $X_H \to X_G$ is a finite cover of degree $[G:H]$, a transfer argument then implies that $\alpha$ is annihilated by $[G:H]$, which is impossible, whence $\alpha' \neq 0$. 
\end{example}

\begin{remark}
	\label{ex:notleftcompleteag}
	The argument of Example \ref{ex:notleftcomplete} is fairly robust: it also applies to the \'etale topos of $X = \Spec(k)$ with $k$ a field provided there exist $M_n \in \Ab(X_\et)$ for infinitely many $n \geq 1$ such that $H^n(\calX,M_n)$ admits a class $\alpha_n$ with $\lim \ord(\alpha_n) = \infty$. In particular, this shows that $D(\Spec(k)_\et)$ is not left-complete for $k = \C(x_1,x_2,x_3,\dots)$.
\end{remark}

Thanks to left-completeness, cohomological descent in a replete topos is particularly straightforward:

\begin{proposition}
	\label{prop:cohdescentreplete}
	Let $f:X_\bullet \to X$ be a hypercover in a replete topos $\calX$. Then
	\begin{enumerate}
		\item The adjunction $\id \to f_* f^*$ is an equivalence on $D(X)$.
		\item The adjunction $f_! f^* \to \id$ is an equivalence on $D(X)$.
		\item $f^*$ induces an equivalence $D(X) \simeq D_\cart(X_\bullet)$.
	\end{enumerate}
\end{proposition}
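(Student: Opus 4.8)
The plan is to isolate the one genuine input --- classical cohomological descent along hypercoverings, valid on bounded-below derived categories --- and to bootstrap it to the unbounded statements using left-completeness, which is at our disposal by Proposition~\ref{prop:repletepostnikov}. First I would observe that (1) and (2) are each equivalent to the assertion that $f^*\colon D(X)\to D(\Shv(X_\bullet))$ is fully faithful, and that this is purely formal, requiring no repleteness: $f^*$ has a right adjoint $f_*=\R\lim_{[n]\in\Delta}(a_n)_*\, a_n^*$, where $a_n\colon X_n\to X$ are the structure maps, and a left adjoint $f_!=\colim_{[n]\in\Delta^{\opp}}(a_n)_!\, a_n^*$; for any adjunction $L\dashv R$, the right adjoint $R$ is fully faithful iff the counit $LR\to\id$ is an equivalence, while $L$ is fully faithful iff the unit $\id\to RL$ is an equivalence. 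Applying this to $f^*\dashv f_*$ gives (1) $\Leftrightarrow$ ($f^*$ fully faithful), and to $f_!\dashv f^*$ gives (2) $\Leftrightarrow$ ($f^*$ fully faithful). So it suffices to prove (1).

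For (1) and $K\in D^+(X)$, the unit $K\to f_*f^*K$ is an equivalence by the classical theory of cohomological descent along the hypercovering $X_\bullet\to X$ --- concretely, the convergent spectral sequence $E_1^{p,q}=H^q(X_p,K)\Rightarrow H^{p+q}(X,K)$ assembled in the derived category; see \cite{SGA4Tome1}. For general $K\in D(X)$, I would write $K\simeq\R\lim_m\tau^{\geq -m}K$, which is legitimate because $D(X)$, being the derived category of the replete topos $\calX_{/X}$ (replete by Lemma~\ref{lem:repletebc}), is left-complete (Proposition~\ref{prop:repletepostnikov}). Then I would commute $f_*f^*=\R\lim_{[n]\in\Delta}(a_n)_*a_n^*$ past $\R\lim_m$. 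This is justified because limits commute with limits, $(a_n)_*$ preserves $\R\lim_m$ as a right adjoint, and $a_n^*$ is $t$-exact, so that $\R\lim_m a_n^*\tau^{\geq -m}K=\R\lim_m\tau^{\geq -m}(a_n^*K)=a_n^*K$ by left-completeness of $D(\calX_{/X_n})$ (again Lemma~\ref{lem:repletebc} plus Proposition~\ref{prop:repletepostnikov}). Hence $f_*f^*K\simeq\R\lim_m f_*f^*\tau^{\geq -m}K\simeq\R\lim_m\tau^{\geq -m}K\simeq K$, using the bounded-below case for each $m$.

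For (3), full faithfulness of $f^*\colon D(X)\to D_\cart(X_\bullet)$ is immediate from (1). For essential surjectivity I would first note that $\Shv(X_\bullet)$ is replete --- its surjections and sequential limits are computed levelwise, and each level is the replete topos $\calX_{/X_n}$ --- so $D(\Shv(X_\bullet))$ is left-complete, and the full subcategory $D_\cart(X_\bullet)$ is stable under truncations and under the limits computing the left-completion (using $t$-exactness of the pullback functors along the simplicial maps), hence is itself left-complete. Classical descent makes $f^*$ restrict to an equivalence $D^{\geq -n}(X)\xrightarrow{\ \sim\ }D^{\geq -n}_\cart(X_\bullet)$ for each $n$, using that $f^*$ is $t$-exact and conservative and hence reflects the bound on cohomological amplitude. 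Given $M\in D_\cart(X_\bullet)$, I would set $K_n:=f_*\tau^{\geq -n}M\in D^{\geq -n}(X)$; since $f^*$ is fully faithful, $t$-exact and conservative, the maps $K_{n+1}\to K_n$ exhibit $\{K_n\}$ as a Postnikov system with $f^*K_n\simeq\tau^{\geq -n}M$, and then $K:=\R\lim_n K_n$ satisfies $f^*K\simeq M$ because $f^*K$ and $M$ have the same $\tau^{\geq -n}$ in the left-complete category $D_\cart(X_\bullet)$. The step I expect to be the main obstacle is exactly this recurring commutation of an $\R\lim$ over $\Delta$ (that is, of $f_*$) with the left-completion limit $\R\lim_m\tau^{\geq -m}$: it is what forces one to have left-completeness not just for $\calX$ but for every slice $\calX_{/X_n}$ and for $\Shv(X_\bullet)$, and this is precisely where Lemma~\ref{lem:repletebc} and Proposition~\ref{prop:repletepostnikov} enter. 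Everything else is the classical bounded-below theory together with formal adjunction and truncation manipulations.
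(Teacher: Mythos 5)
Your proposal is correct and follows essentially the same route as the paper: the formal adjunction argument identifying (1) and (2) with full faithfulness of $f^*$, classical bounded-below cohomological descent as the sole geometric input, and repleteness (via left-completeness of $D(X)$, of each $D(\calX_{/X_n})$, and of $D_\cart(X_\bullet)$) to commute $f_*f^*$ and the truncation/Postnikov limits and thereby bootstrap to the unbounded case. Your treatment of essential surjectivity in (3) via $K:=\R\lim_n f_*\tau^{\geq -n}M$ is just a more explicit spelling-out of the paper's observation that the Cartesian truncations come from $D(X)$ and that the essential image of $f^*$ is closed under homotopy-limits.
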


Here we write $D(Y) = D(\Ab(\calX_{/Y}))$ for any $Y \in \calX$. Then $D(X_\bullet)$ is the derived category of the simplicial topos defined by $X_\bullet$, and $D_\cart(X_\bullet)$ is the full subcategory spanned by complexes $K$ which are {\em Cartesian}, i.e., for any map $s:[n] \to [m]$ in $\Delta$, the transition maps $s^*(K|_{X_n}) \to K|_{X_m}$ are equivalences. The usual pushforward then gives $f_*:D(X_\bullet) \to D(X)$ right adjoint to the pullback $f^*:D(X) \to D(X_\bullet)$ given informally via $(f^* K)|_{X_n} = K|_{X_n}$. By the adjoint functor theorem, there is a left adjoint $f_!:D(X_\bullet) \to D(X)$ as well. When restricted to $D_\cart(X_\bullet)$, one may describe $f_!$ informally as follows. For each Cartesian $K$ and any map $s:[n] \to [m]$ in $\Delta$, the equivalence $s^*(K|_{X_n}) \simeq K|_{X_m}$ has an adjoint map $K|_{X_m} \to s_!(K|_{X_n})$. Applying $!$-pushforward along each $X_n \to X$ then defines a simplicial object in $D(X)$ whose homotopy-colimit computes $f_! K$.

\begin{proof} We freely use that homotopy-limits and homotopy-colimits in $D(X_\bullet)$ are computed ``termwise.'' Moreover, for any map $g:Y \to X$ in $\calX$, the pullback $g^*$ is exact and commutes with such limits and colimits (as it has a left adjoint $g_!$ and a right adjoint $g_*$). Hence $f^*:D(X) \to D(X_\bullet)$ also commutes with such limits and colimits.
	\begin{enumerate}
		\item For any $K \in \Ab(X)$, one has $K \simeq f_* f^* K$ by the hypercover condition. Passing to filtered colimits shows the same for $K \in D^+(X)$. For general $K \in D(X)$, we have $K \simeq \R\lim \tau^{\geq -n} K$ by repleteness.  By exactness of $f^*$ and repleteness of each $X_n$,  one has $f^* K \simeq \R\lim f^* \tau^{\geq -n} K$. Pushing forward then proves the claim.
		\item This follows formally from (1) by adjunction.
		\item The functor $f^*:D(X) \to D_\cart(X_\bullet)$ is fully faithful by (1) and adjunction. Hence, it suffices to show that any $K \in D_\cart(X_\bullet)$ comes from $D(X)$. The claim is well-known for $K \in D^+_\cart(X_\bullet)$ (without assuming repleteness). For general $K$, by repleteness, we have $K \simeq \R\lim \tau^{\geq -n} K$. Since the condition of being Cartesian on a complex is a condition on cohomology sheaves, the truncations $\tau^{\geq -n} K$ are Cartesian, and hence come from $D(X)$. The claim follows as $D(X) \subset D(X_\bullet)$ is closed under homotopy-limits. \qedhere
		\end{enumerate}
\end{proof}

We end by recording a finite dimensionality criterion for left-completeness:

\begin{proposition}
	\label{prop:leftcompletecrit}
	Let $\calX$ be a topos, and fix $K \in D(\calX)$. 
	\begin{enumerate}
		\item Given $U \in \calX$ with $\Gamma(U,-)$ exact, one has $\R\Gamma(U,K) \simeq \R\lim \R\Gamma(U,\tau^{\geq -n} K)$.
		\item If there exists $d \in \N$ such that $\calH^i(K)$ has cohomological dimension $\leq d$ locally on $\calX$ for all $i$, then $D(\calX)$ is left-complete.
	\end{enumerate}
\end{proposition}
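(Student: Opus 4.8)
The plan is the following. For part (1), I would use that since $\Gamma(U,-)$ is exact it preserves quasi-isomorphisms, so $\R\Gamma(U,M)$ is computed by applying $\Gamma(U,-)$ termwise to any complex of abelian sheaves representing $M$; exactness then forces $\R\Gamma(U,-)$ to commute with the truncation functors $\tau^{\ge-n}$. Hence $\R\lim_n\R\Gamma(U,\tau^{\ge-n}K)\simeq\R\lim_n\tau^{\ge-n}\R\Gamma(U,K)$, and this is $\simeq\R\Gamma(U,K)$ because $D(\Ab)=D(\Set)$ is left-complete, the topos $\Set$ being replete (Example \ref{ex:setsreplete} and Proposition \ref{prop:repletepostnikov}).

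For part (2), I read the hypothesis as asserting that $\calX$ is locally of cohomological dimension $\le d$: there is a generating family $\calB$ of objects $U\in\calX$ such that $H^j(U,\calF)=0$ for all $j>d$ and all abelian sheaves $\calF$ (equivalently, for all cohomology sheaves of all objects of $D(\calX)$). The workhorse is the following estimate: \emph{for any tower $\{L_m\}$ in $D^{\le a}(\calX)$ one has $\R\lim_m L_m\in D^{\le a+d+1}(\calX)$.} Indeed, from the exact triangle $\R\lim_m L_m\to\prod_m L_m\xrightarrow{1-s}\prod_m L_m$ (with $s$ the shift built from the transition maps, products derived) it suffices to show $\prod_m L_m\in D^{\le a+d}(\calX)$; and the cohomology sheaves of $\prod_m L_m$ may be computed locally on $\calB$, where $\R\Gamma(U,-)$ commutes with products and products of abelian groups are exact, so $H^j(\R\Gamma(U,\prod_m L_m))=\prod_m H^j(\R\Gamma(U,L_m))$, which vanishes for $j>a+d$ by the dimension bound and the hypercohomology spectral sequence.

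Given this, full faithfulness of $\tau:D(\calX)\to\widehat{D}(\calX)$ is immediate: by Lemma \ref{lem:leftcompletionadjunction} it amounts to $K\simeq\R\lim_n\tau^{\ge-n}K$ for all $K$, whose fibre is $\R\lim_n\tau^{\le-n-1}K$; for each fixed $m$ this equals $\R\lim_{n\ge m}\tau^{\le-n-1}K$, a homotopy limit of a tower in $D^{\le-m-1}(\calX)$, hence lies in $D^{\le-m+d}(\calX)$ by the workhorse, and since $m$ is arbitrary and $\bigcap_m D^{\le-m}(\calX)=0$ the fibre vanishes. For essential surjectivity, given $\{K_n\}\in\widehat{D}(\calX)$ put $K=\R\lim_n K_n$; using $\tau^{\ge-n}K_m\simeq K_n$ for $m\ge n$, the fibre of the canonical map $K\to K_n$ is $F_n=\R\lim_{m\ge n}\tau^{\le-n-1}K_m$, and it must be shown that $F_n\in D^{\le-n-1}(\calX)$, so that $K\to K_n$ is the truncation $\tau^{\ge-n}$. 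The bare workhorse gives only $F_n\in D^{\le-n+d}$, so one sharpens: for $N\ge n$ the projection $F_n\to\tau^{\le-n-1}K_N$ has fibre $\R\lim_{m\ge N}\mathrm{fib}(\tau^{\le-n-1}K_m\to\tau^{\le-n-1}K_N)$, and a direct cohomology computation (again from $\tau^{\ge-N}K_m\simeq K_N$) shows $\mathrm{fib}(\tau^{\le-n-1}K_m\to\tau^{\le-n-1}K_N)\in D^{\le-N}(\calX)$ uniformly in $m\ge N$; the workhorse then places this fibre in $D^{\le-N+d+1}(\calX)$, so $\calH^j(F_n)\simeq\calH^j(\tau^{\le-n-1}K_N)$ for $j$ above a bound tending to $-\infty$ as $N\to\infty$, whence $\calH^j(F_n)=0$ for every $j>-n-1$.

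The main obstacle is precisely this last sharpening: the crude bound on $\R\lim$ of a tower loses $d+1$ cohomological degrees, which is harmless for full faithfulness but fatal for essential surjectivity, so one is forced to exploit that the transition maps in the relevant tower are isomorphisms on $\calH^j$ for all $j$ above a threshold tending to $-\infty$, in order to squeeze $F_n$ back into $D^{\le-n-1}(\calX)$. A secondary point worth being careful about is the interpretation of the stated hypothesis (the conclusion is about $D(\calX)$, so it must be applied with the cohomological-dimension bound quantified over all sheaves).
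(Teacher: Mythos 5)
Your proof of (1) is the same as the paper's: exactness lets one compute $\R\Gamma(U,K)$ termwise on any representative, $\Gamma(U,-)$ then commutes with truncation, and one concludes from left-completeness of $D(\Ab)$. For (2) the paper gives no argument at all --- it simply cites \cite[Tag 0719]{StacksProject} --- so your contribution there is a self-contained proof, and it is correct. Your ``workhorse'' bound ($\R\lim$ of a tower in $D^{\leq a}$ lands in $D^{\leq a+d+1}$, via the derived product and the fact that $\R\Gamma(U,-)$ commutes with products and that $H^{>a+d}(U,L_m)=0$ by the hypercohomology spectral sequence) is exactly the mechanism that replaces repleteness in Proposition \ref{prop:repletepostnikov}, and your reading of the hypothesis as a uniform local bound on the cohomological dimension of $\calX$ is the right one (it is what the cited tag assumes, and it is how the proposition is applied later, e.g.\ in \S\ref{ss:ConsCompact}). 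Two points in your write-up deserve emphasis as genuinely necessary and correctly handled: first, essential surjectivity of $\tau$ does \emph{not} follow formally from $K\simeq\R\lim\tau^{\geq -n}K$, since the paper's notion of left-completeness is the equivalence $D(\calX)\simeq\widehat D(\calX)$, so your separate treatment of the counit is needed; second, the sharpening in that step --- using that the transition maps $\tau^{\leq -n-1}K_m\to\tau^{\leq -n-1}K_N$ have fibres in $D^{\leq -N}$ uniformly in $m$, so that the loss of $d+1$ degrees in the crude bound can be pushed off to $-\infty$ by letting $N\to\infty$ --- is precisely where a naive application of the workhorse would fail, and you identify and resolve it correctly.
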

\begin{proof}
	For (1), by exactness, $\R\Gamma(U,K)$ is computed by $I(U)$ where $I \in \Ch(\calX)$ is {\em any} chain complex representing $K$. Now $D(\Ab)$ is left-complete, so $I(U) \simeq \R\lim \tau^{\geq -n} I(U)$. As $\Gamma(U,-)$ is exact, it commutes with truncations, so the claim follows. (2) follows from \cite[Tag 0719]{StacksProject}.
\end{proof}

\subsection{Derived completions of f-adic rings in a replete topos}
\label{subsec:derivedcompfadic}

In this section, we fix a replete topos $\calX$, and a ring $R \in \calX$ with an ideal $I \subset R$ that is locally finitely generated, i.e., there exists a cover $\{U_i \to 1_{\calX}\}$ such that $I|_{U_i}$ is generated by finitely many sections of $I(U_i)$. Given $U \in \calX$, $x \in R(U)$ and $K \in D(\calX_{/U},R)$,  we write $T(K,x) := \R\lim(\dots \stackrel{x}{\to}  K \stackrel{x}{\to} K \stackrel{x}{\to} K) \in D(\calX_{/U},R)$.

\begin{definition}
	We say that $M\ \in \Mod_R$ is {\em classically $I$-complete} if $M \simeq \lim M/I^nM$; write $\Mod_{R,\comp} \subset \Mod_R$ for the full subcategory of such $M$. We say that $K \in D(\calX,R)$ is {\em derived $I$-complete} if for each $U \in \calX$ and $x \in I(U)$, we have $T(K|_U,x) = 0$; write $D_{\comp}(\calX,R) \subset D(\calX,R)$ for the full subcategory of such $K$.
\end{definition}

It is easy to see that $D_{\comp}(\calX,R)$ is a triangulated subcategory of $D(\calX,R)$. Moreover, for any $U \in \calX$, the restriction $D(\calX,R) \to D(\calX_{/U},R)$ commutes with homotopy-limits, and likewise for $R$-modules. Hence, both the above notions of completeness localise on $\calX$. Our goal is to compare these completeness conditions for modules, and relate completeness of a complex to that of its cohomology groups. The main result for modules is:

\begin{proposition}
	\label{prop:classcompdercompcrit}
	An $R$-module $M \in \Mod_R$ is classically $I$-complete if and only if it is $I$-adically separated and derived $I$-complete.
\end{proposition}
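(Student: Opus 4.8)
The plan is to exploit the repleteness of $\calX$ (the results of \S\ref{ss:reptopoi}) to replace the derived limits that arise by ordinary limits. Since both completeness conditions and $I$-adic separatedness are local on $\calX$, and $I$ is locally finitely generated, I would first pass to a cover and assume $I=(f_1,\dots,f_r)$ with the $f_i$ global sections of $R$; I write $\underline{f}^{\,n}=(f_1^n,\dots,f_r^n)$.

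\emph{The ``only if'' direction.} If $M\simeq\lim_n M/I^nM$, then that map is injective, so $M$ is $I$-adically separated. For derived completeness I would note that each $M/I^nM$ is derived $I$-complete: for $U\in\calX$ and $x\in I(U)$, multiplication by $x$ on $(M/I^nM)|_U$ is nilpotent (it factors through $I^n|_U\otimes_{R|_U}(-)\to(-)$ after $n$ iterations), so the system $(\cdots\xrightarrow{x}(M/I^nM)|_U\xrightarrow{x}(M/I^nM)|_U)$ is pro-zero and $T((M/I^nM)|_U,x)=0$. Since $T(-,x)$ commutes with countable products and with restriction, $D_\comp(\calX,R)$ is a triangulated subcategory closed under countable products, hence under $\R\lim$ of $\N^\opp$-towers; and by repleteness (Proposition \ref{prop:limrlim}) the surjective tower $\{M/I^nM\}$ has $\lim_n M/I^nM=\R\lim_n M/I^nM$. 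So $M$ lies in $D_\comp(\calX,R)$, i.e.\ is derived $I$-complete.

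\emph{The ``if'' direction.} Now suppose $M$ is $I$-adically separated and derived $I$-complete. Applying $\R\lim_n$ to the exact sequences $0\to I^nM\to M\to M/I^nM\to 0$ of $\N^\opp$-towers (with $M$ constant), and using that $\R\lim$ of a constant tower is $M$ and that $\R\lim_n M/I^nM=\lim_n M/I^nM$ (repleteness), gives an exact triangle
\[ \R\lim_n(I^nM)\longrightarrow M\longrightarrow\lim_n M/I^nM\xrightarrow{+1}, \]
the transition maps of $\{I^nM\}$ being inclusions. Hence $M$ is classically $I$-complete iff $\R\lim_n(I^nM)=0$, i.e.\ iff $\bigcap_n I^nM=0$ (assumed) and $\R^1\lim_n(I^nM)=0$; it thus remains to extract this last vanishing. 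For that I would use the standard comparison of derived $I$-completeness with the Koszul completion: $M$ is derived $I$-complete iff the canonical map $M\to M^\wedge:=\R\lim_n\mathrm{Kos}(M;\underline{f}^{\,n})$ is an equivalence (reducing, one generator at a time, to $T(-,f_i)=0$). Because $\N^\opp$-indexed limits have cohomological dimension $\le 1$ in a replete topos (Proposition \ref{prop:cdrlim}), the spectral sequence $\R^p\lim_n\calH^q(\mathrm{Kos}(M;\underline{f}^{\,n}))\Rightarrow\calH^{p+q}(M^\wedge)$ degenerates, and its degree-$0$ part is a short exact sequence
\[ 0\to\R^1\lim_n\calH^{-1}(\mathrm{Kos}(M;\underline{f}^{\,n}))\to\calH^0(M^\wedge)\to\lim_n M/(\underline{f}^{\,n})M\to 0. \]
The towers $\{(\underline{f}^{\,n})M\}_n$ and $\{I^nM\}_n$ are pro-isomorphic, since $(\underline{f}^{\,n})M\subseteq I^nM$ and $I^{rn}M\subseteq(\underline{f}^{\,n})M$ (the latter by a pigeonhole count on degree-$rn$ monomials in the $f_i$), so $\lim_n M/(\underline{f}^{\,n})M=\lim_n M/I^nM$. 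Moreover $\calH^0(M^\wedge)=M$ by derived completeness, and the composite $M\xrightarrow{\sim}\calH^0(M^\wedge)\to\lim_n M/I^nM$ is the canonical completion map, which is injective by separatedness. Hence the subobject $\R^1\lim_n\calH^{-1}(\mathrm{Kos}(M;\underline{f}^{\,n}))$ vanishes, $\calH^0(M^\wedge)\xrightarrow{\sim}\lim_n M/I^nM$, and so $M\xrightarrow{\sim}\lim_n M/I^nM$.

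\emph{The main obstacle.} The subtle direction is ``if'', and within it the vanishing $\R^1\lim_n(I^nM)=0$: one cannot shortcut it by showing $\widehat{M}/M$ is $I$-divisible, because that is false in general — it is exactly the classical phenomenon that $I$-adic completions need not be $I$-adically complete — so both hypotheses on $M$ must really be used. In the principal case $r=1$ the vanishing can be produced by hand: derived $f$-completeness says $\delta\colon\prod_{\N}M\to\prod_{\N}M$, $(m_n)\mapsto(m_n-fm_{n+1})$, is an isomorphism, so given $(b_n)$ with $b_n=f^n\beta_n\in f^nM$ one solves $\alpha_n-f\alpha_{n+1}=\beta_n$ by surjectivity of $\delta$ and sets $a_n=f^n\alpha_n$, exhibiting $(b_n)$ in the image of $\prod_n f^nM\to\prod_n f^nM$. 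For several generators I expect the only genuine nuisance to be the bookkeeping above (the generator-by-generator reduction of derived completeness and the pro-isomorphism of the two towers of ideal powers), with no new idea beyond the repleteness machinery of \S\ref{sec:Replete}.
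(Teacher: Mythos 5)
Your argument is correct and is essentially the paper's own proof: the forward direction is Lemma \ref{lem:classcompdercomp} (nilpotence of local sections of $I$ on $M/I^nM$ plus closure of derived complete objects under $\R\lim$), and the reverse direction uses the Koszul-completion characterization of Lemma \ref{lem:dercompcharac}, the Milnor sequence available by repleteness, and separatedness to kill the $\R^1\lim$ term. The extra material (the $\R\lim I^nM$ reformulation and the hand computation for $r=1$) is harmless but not needed.
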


\begin{remark}
	The conditions of Proposition \ref{prop:classcompdercompcrit}  are not redundant: there exist derived $I$-complete $R$-modules $M$ which are not $I$-adically separated, and hence not classically complete. In fact, there exists a ring $R$ with principal ideals $I$ and $J$ such that $R$ is classically $I$-complete while the quotient $R/J$ is not $I$-adically separated; note that $R/J = \cok(R \to R)$ is derived $I$-complete by Lemma \ref{lem:dercompabcat}.
\end{remark}

The result for complexes is: 

\begin{proposition}
	\label{prop:complexcompcrit}
	An $R$-complex $K \in D(\calX,R)$ is derived $I$-complete if and only if each $H^i(K)$ is so.
\end{proposition}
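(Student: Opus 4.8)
The plan is to reduce to a single element of $I$, observe that the completion functor $T(-,x)$ has cohomological amplitude $[0,1]$, and then run a short dévissage in both directions. By definition $K$ is derived $I$-complete iff $T(K|_U,x)=0$ for all $U\in\calX$ and $x\in I(U)$, and $H^n(K)$ is derived $I$-complete iff $T(H^n(K|_U),x)=0$ for all such $(U,x)$, since cohomology commutes with the exact restriction functors. So I would fix $(U,x)$, replace $\calX$ by $\calX_{/U}$ (replete by Lemma \ref{lem:repletebc}) and $x$ by its image, and prove: for a global section $x\in R(1_\calX)$ and $T:=T(-,x)$, one has $T(K)=0$ iff $T(H^n(K))=0$ for all $n$. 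The key structural facts about $T$ are that it is triangulated and commutes with homotopy limits (it \emph{is} the homotopy limit of the tower $\cdots\xrightarrow{x}K\xrightarrow{x}K$), and that it has cohomological amplitude $[0,1]$: using the standard presentation of a countable homotopy limit as the fibre of a map $\prod_k F_k\to\prod_k F_k$ together with exactness of countable products in the replete topos $\calX$ (Proposition \ref{prop:prodexact}), the products have the same cohomological vanishing range as $K$, so $T$ preserves $D^{\geq m}(\calX,R)$ and carries $D^{\leq m}(\calX,R)$ into $D^{\leq m+1}(\calX,R)$.

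For the ``only if'' direction, suppose $M\in D(\calX,R)$ satisfies $T(M)=0$. The truncation triangle $\tau^{\leq n-1}M\to M\to\tau^{\geq n}M$ gives, after applying the triangulated functor $T$, an isomorphism $T(\tau^{\geq n}M)\simeq T(\tau^{\leq n-1}M)[1]$. By the amplitude bound the left-hand side lies in $D^{\geq n}$ while the right-hand side lies in $D^{\leq n-1}$, so both vanish; hence $\tau^{\geq n}M$ is derived $x$-complete. Since the objects killed by $T$ form a triangulated subcategory, $\tau^{\leq n}M=\mathrm{fib}(M\to\tau^{\geq n+1}M)$ is then also derived $x$-complete, and therefore so is $H^n(M)[-n]=\tau^{\geq n}\tau^{\leq n}M$. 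Taking $M=K$ yields $T(H^n(K))=0$ for all $n$.

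For the ``if'' direction, suppose $T(H^n(K))=0$ for all $n$. First treat $K\in D^{\geq a}(\calX,R)$: the fibre sequences $H^m(K)[-m]\to\tau^{\geq m}K\to\tau^{\geq m+1}K$ have first term killed by $T$ by hypothesis, so $T(\tau^{\geq m}K)\xrightarrow{\sim}T(\tau^{\geq m+1}K)$ for all $m\geq a$; iterating, $T(K)=T(\tau^{\geq a}K)\simeq T(\tau^{\geq N}K)\in D^{\geq N}$ for every $N$, so $T(K)=0$. For general $K$, the forgetful functor $D(\calX,R)\to D(\calX)$ is exact, conservative, and commutes with $\R\lim$, so left-completeness of $D(\calX)$ (Proposition \ref{prop:repletepostnikov}) forces left-completeness of $D(\calX,R)$; hence $K\simeq\R\lim_m\tau^{\geq -m}K$ by Lemma \ref{lem:leftcompletionadjunction}. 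Each $\tau^{\geq -m}K$ is bounded below with cohomology sheaves among the $H^i(K)$, hence derived $x$-complete by the bounded-below case just proved, and since $T$ commutes with $\R\lim$ we get $T(K)=\R\lim_m T(\tau^{\geq -m}K)=0$.

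The one substantive point is the cohomological amplitude estimate for $T(-,x)$, which is exactly where repleteness enters, through exactness of countable products (Proposition \ref{prop:prodexact}); given this, the ``only if'' direction is a pure truncation-triangle argument, and the only remaining subtlety is the unbounded case of ``if'', dispatched by left-completeness — itself another consequence of repleteness. The passage from a single local section $x$ to the whole ideal $I$, and the compatibility of truncations and cohomology with restriction, are routine bookkeeping.
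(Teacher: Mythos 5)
Your proposal is correct and follows essentially the same route as the paper's proof: the cohomological amplitude bound on $T(-,x)$ coming from repleteness (exactness of countable products / $\R\lim$ having cohomological dimension $\leq 1$), a truncation-triangle dévissage in both directions, and left-completeness of the derived category to pass from bounded to unbounded complexes. The only difference is cosmetic ordering in the "if" direction — you settle the bounded-below case first and then apply $\R\lim$ to the Postnikov tower, whereas the paper completes the left truncations via $\R\lim$ and then squeezes $T(K,x)$ into $D^{\geq m+1}$ for all $m$ — but the ingredients and estimates are identical.
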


\begin{remark}
	For $\calX = \Set$, one can find Proposition \ref{prop:complexcompcrit} in \cite{LurieDAGXII}.
\end{remark}

\begin{lemma}
	\label{lem:invertsum}
Given $x,y \in R(\calX)$, the sequence
\[ 0 \to R[\frac{1}{x+y}] \to R[\frac{1}{x\cdot(x+y)}] \oplus R[\frac{1}{y\cdot(x+y)}] \to R[\frac{1}{x\cdot y \cdot (x+y)}] \to 0 \]
is exact. 
\end{lemma}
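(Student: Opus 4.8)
The plan is to reduce the claimed short exact sequence to a statement about localizations of rings, which can be checked on stalks (since $\calX$ has enough points, being replete is not even needed here — we just need that exactness of a sequence of sheaves can be tested on stalks). So I would fix a point of $\calX$ and let $(R, x, y)$ denote the stalks of $R$ and of the sections $x, y$; it then suffices to prove the analogous statement for an honest commutative ring $R$ and elements $x, y \in R$, where $R[\frac{1}{f}]$ means the usual localization.

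For the ring-theoretic statement, write $z = x + y$ and consider the sequence
\[ 0 \to R[\tfrac{1}{z}] \to R[\tfrac{1}{xz}] \oplus R[\tfrac{1}{yz}] \to R[\tfrac{1}{xyz}] \to 0, \]
where the first map is the diagonal (with a sign) and the second is the difference. This is nothing but the \v{C}ech complex for the two-element cover $\{D(x) \cap D(z), D(y) \cap D(z)\}$ of the open subscheme $D(z) = \Spec R[\frac{1}{z}]$ of $\Spec R$ — except that $D(x) \cap D(z)$ and $D(y) \cap D(z)$ cover $D(z)$ precisely because $x + y = z$ is invertible on $D(z)$, so at least one of $x, y$ is nonzero at every point of $D(z)$. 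Hence I would first establish injectivity of $R[\frac{1}{z}] \to R[\frac{1}{xz}] \oplus R[\frac{1}{yz}]$ and exactness in the middle by the standard sheaf-condition argument for this cover (a section of $\calO$ on $D(z)$ agreeing on the two opens glues), and then prove surjectivity onto $R[\frac{1}{xyz}]$: given $a/(xyz)^n$, use that $1 = \frac{x}{z} + \frac{y}{z}$ in $R[\frac{1}{z}]$, raise to a high power to write $1 = \sum c_i \frac{x^i y^{N-i}}{z^N}$ in $R[\frac{1}{z}]$ with each monomial divisible by a large power of $x$ or of $y$, multiply through by $a/(xyz)^n$, and split the resulting sum into a piece lying in $R[\frac{1}{xz}]$ and a piece lying in $R[\frac{1}{yz}]$.

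Alternatively, and perhaps more cleanly, I would observe that the sequence is obtained from the evident short exact sequence
\[ 0 \to R[\tfrac{1}{z}] \to R[\tfrac{1}{xz}] \oplus R[\tfrac{1}{yz}] \to R[\tfrac{1}{xyz}] \to 0 \]
by noting it is the tensor product over $R$ of $R[\frac{1}{z}]$ with the exact sequence $0 \to R \to R[\frac{1}{x}] \oplus R[\frac{1}{y}] \to R[\frac{1}{x}][\frac{1}{y}] \to 0$ — but wait, that latter sequence is exact only when $(x, y)$ is the unit ideal, which is exactly what becomes true after inverting $z = x+y$. So: inverting $z$ makes $x, y$ a "comaximal-up-to-nilpotents" pair in the sense that $D(x) \cup D(y) = \Spec R[\frac{1}{z}]$, and for such a pair the Mayer–Vietoris sequence for $\calO$ is exact. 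This is the cleanest framing, and localization of an exact sequence stays exact, so the whole thing follows once we have the unit-ideal case, which is classical (it is the sheaf property plus surjectivity of $\calO \to \calO|_{D(x)} \times_{\calO|_{D(xy)}} \calO|_{D(y)}$... actually it is just that $\Spec$ of a ring with $(x,y) = (1)$ is the union of two affine opens, Mayer–Vietoris).

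The main obstacle, such as it is, is purely bookkeeping: being careful that the map of sheaves in the statement really is (up to signs and passing to stalks) the \v{C}ech differential for the cover $\{D(x) \cap D(z),\, D(y) \cap D(z)\}$ of $D(z)$, and that after localizing at $z$ the pair $x, y$ generates the unit ideal in $R[\frac{1}{z}]$ so that this cover is honest. Once those identifications are in place, exactness is the standard affine Mayer–Vietoris / \v{C}ech argument, which I would not write out in full. I expect no genuine difficulty; the lemma is a formal preliminary (presumably used to prove that $T(-, x+y)$ interacts well with $T(-, x)$ and $T(-, y)$ in the proof of Proposition \ref{prop:complexcompcrit}).
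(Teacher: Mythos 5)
Your ring-theoretic core is exactly the paper's argument: after inverting $z = x+y$ the pair $(x,y)$ generates the unit ideal, so the sequence is the Mayer--Vietoris / \v{C}ech sequence for the affine cover $D(x) \cup D(y) = \Spec R[\frac{1}{z}]$, which is exact. The one step that does not survive scrutiny is the reduction to this punctual case: you propose to check exactness on stalks, asserting that $\calX$ has enough points. But the lemma is stated for an arbitrary replete topos $\calX$, and repleteness does not guarantee enough points (the paper only invokes Deligne's theorem for coherent topoi such as $\Shv(X_\proet)$ with $X$ qcqs; the general replete case is not covered). The paper sidesteps this entirely: since $R[\frac{1}{f}]$ is the sheafification of the presheaf $U \mapsto R(U)[\frac{1}{f}]$, one applies your Mayer--Vietoris argument to the honest ring $R(U)[\frac{1}{x+y}]$ for each $U \in \calX$, concludes that the sequence of \emph{presheaves} is exact, and then uses exactness of sheafification. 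So replace ``check on stalks'' by ``check on sections over each $U$ and sheafify'' and your proof is complete; as written, the reduction step is unjustified in the stated generality.
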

\begin{proof}
	Using the Mayer-Vietoris sequence for $\Spec(R(U)[\frac{1}{x+y}])$ for each $U \in \calX$, one finds that the corresponding sequence of presheaves is exact, as $(x,y) = (1) \in R(U)[\frac{1}{x+y}]$; the claim follows by exactness of sheafification.
\end{proof}

The main relevant consequence is that $R[\frac{1}{x+y}] \in D(\calX,R)$ is represented by a finite complex whose terms are direct sums of filtered colimits of free $R[\frac{1}{x}]$-modules and $R[\frac{1}{y}]$-modules.

\begin{lemma}
	\label{lem:completedualcharac}
	Fix $K \in D(\calX,R)$ and $x \in R(\calX)$. Then $T(K,x) = 0$ if and only if $\underline{\R\Hom}_R(M,K) = 0$ for $M \in D(\calX,R[\frac{1}{x}])$.
\end{lemma}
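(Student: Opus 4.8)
The plan is to reduce the whole statement to the single identification $\underline{\R\Hom}_R(R[\frac{1}{x}],K) \simeq T(K,x)$, after which both implications drop out of the derived tensor--Hom calculus over the ringed topos $(\calX,R)$.

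First I would prove that identification. As an $R$-module in $\calX$ one has $R[\frac{1}{x}] = \colim\big(R \xrightarrow{x} R \xrightarrow{x} R \xrightarrow{x} \cdots\big)$, and since filtered colimits of $R$-modules in a topos are exact, this colimit is also the homotopy colimit, so it sits in a ``telescope'' triangle $\bigoplus_{n} R \to \bigoplus_n R \to R[\frac{1}{x}]$ in $D(\calX,R)$. Applying the contravariant functor $\underline{\R\Hom}_R(-,K)$ turns this into the fibre sequence computing $\R\lim\big(\cdots \xrightarrow{x} K \xrightarrow{x} K \xrightarrow{x} K\big) = T(K,x)$. The one point needing care here is that $\underline{\R\Hom}_R(-,K)$ must carry the coproduct $\bigoplus_n R$ to the \emph{derived} product $\R\prod_n K$ that enters $\R\lim$; but each summand is just a copy of $R$, so this can be checked directly from a $K$-injective representative of $K$, and no exactness of infinite products in $\calX$ is needed.

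Granting $\underline{\R\Hom}_R(R[\frac{1}{x}],K) \simeq T(K,x)$, the ``if'' direction is immediate: $R[\frac{1}{x}]$, viewed as a module over itself, is an object of $D(\calX,R[\frac{1}{x}])$, so the hypothesis forces $T(K,x) \simeq \underline{\R\Hom}_R(R[\frac{1}{x}],K) = 0$. For the converse, the key observation is that every $M \in D(\calX,R[\frac{1}{x}])$ is already $R[\frac{1}{x}]$-linear, so the natural map $M \otimes_R^{\mathbf{L}} R[\frac{1}{x}] \to M$ is an equivalence: the localization $R \to R[\frac{1}{x}]$ is flat (a filtered colimit of copies of $R$), so the tensor product need not be derived, and it is a ring epimorphism, i.e. $R[\frac{1}{x}] \otimes_R R[\frac{1}{x}] \simeq R[\frac{1}{x}]$ (this holds objectwise and sheafifies). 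The tensor--Hom adjunction in $D(\calX,R)$ then gives
\[
\underline{\R\Hom}_R(M,K) \simeq \underline{\R\Hom}_R\big(M \otimes_R^{\mathbf{L}} R[\tfrac{1}{x}],\, K\big) \simeq \underline{\R\Hom}_R\big(M,\, \underline{\R\Hom}_R(R[\tfrac{1}{x}],K)\big) = \underline{\R\Hom}_R\big(M,\, T(K,x)\big),
\]
which vanishes once $T(K,x)=0$.

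The main obstacle is really just the bookkeeping in the first step: correctly matching the telescope triangle, the exactness of filtered colimits of modules in $\calX$, and the passage from $\bigoplus$ to $\R\prod$ under $\underline{\R\Hom}_R(-,K)$ with the definition of $T(K,x)$ as an honest homotopy limit. Everything after that is the standard derived tensor--Hom formalism over a ringed topos together with the two elementary facts (flatness and ring-epimorphism) about $R \to R[\frac{1}{x}]$.
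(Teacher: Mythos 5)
Your proof is correct, but the forward direction takes a genuinely different route from the paper's. Both arguments rest on the identification $\underline{\R\Hom}_R(R[\frac{1}{x}],K) \simeq T(K,x)$ (which the paper also uses, e.g.\ in the proof of Lemma \ref{lem:xdivisible}), and your telescope argument for it is fine; in particular you are right that no repleteness or exactness of products in $\calX$ is needed, since $\underline{\R\Hom}_R(-,K)$ automatically carries the coproduct $\oplus_n R$ to the derived product computed on a $K$-injective representative of $K$, which is exactly what enters $\R\lim$. For the implication $T(K,x)=0 \Rightarrow \underline{\R\Hom}_R(M,K)=0$, the paper argues by devissage: the full subcategory $\calC \subset D(\calX,R[\frac{1}{x}])$ of objects $M$ with $\underline{\R\Hom}_R(M,K)=0$ is triangulated and closed under arbitrary direct sums, and it contains the generators $j_!(R[\frac{1}{x}]|_U)$ for varying $j:U \to 1_{\calX}$ (using $T(K|_U,x)=T(K,x)|_U=0$ on slice topoi), hence is all of $D(\calX,R[\frac{1}{x}])$. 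You instead exploit that $R \to R[\frac{1}{x}]$ is a flat ring epimorphism, so $M \otimes_R^{\mathbf{L}} R[\frac{1}{x}] \simeq M$ for every $M \in D(\calX,R[\frac{1}{x}])$, and tensor--Hom adjunction then gives $\underline{\R\Hom}_R(M,K) \simeq \underline{\R\Hom}_R(M,T(K,x))$ outright. Your route is more direct and proves something slightly stronger, namely that $\underline{\R\Hom}_R(M,K)$ depends on $K$ only through $T(K,x)$; the paper's generation argument is the more robust pattern (it needs only that $\calC$ is localizing and contains a generating family), and that devissage template recurs elsewhere in \S \ref{subsec:derivedcompfadic}.
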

\begin{proof}
	The backwards direction follows by setting $M = R[\frac{1}{x}]$ and using $R[\frac{1}{x}] = \colim\Big(R \stackrel{x}{\to} R \stackrel{x}{\to} R \to \dots \Big)$. For the forward direction, let $\calC \subset D(\calX,R[\frac{1}{x}])$ be the triangulated subcategory of all $M$ for which $\underline{\R\Hom}_R(M,K) = 0$. Then $\calC$ is closed under arbitrary direct sums, and $R[\frac{1}{x}] \in \calC$ by assumption. Since $T(K|_U,x) = T(K,x)|_U = 0$, one also has $j_!(R[\frac{1}{x}]|_U) \in \calC$ for any $j:U \to 1_{\calX}$. The claim now follows: for any ringed topos $(\calX,A)$, the smallest triangulated subcategory of $D(\calX,A)$ closed under arbitrary direct sums and containing $j_!(A|_U)$ for $j:U \to 1_{\calX}$ variable is $D(\calX,A)$ itself.
\end{proof}

\begin{lemma}
	\label{lem:xdivisible}
	Fix $K \in D(\calX,R)$ and $x \in I(\calX)$. Then $T(K,x)$ lies in the essential image of $D(\calX,R[\frac{1}{x}]) \to D(\calX,R)$.
\end{lemma}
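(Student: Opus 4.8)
The plan is to identify $T(K,x)$ with an internal derived Hom out of $R[\frac{1}{x}]$, which automatically carries an $R[\frac{1}{x}]$-module structure, so that the required lift to $D(\calX,R[\frac{1}{x}])$ is produced for free. First I would record the presentation of the localization as a filtered colimit of sheaves of $R$-algebras,
\[ R[\tfrac{1}{x}]\ \simeq\ \colim\big(R\xrightarrow{x}R\xrightarrow{x}R\to\cdots\big), \]
which involves no sheafification subtleties since filtered colimits are exact in any topos. Applying the internal derived functor $\underline{\R\Hom}_R(-,K)$, which is contravariant and carries (homotopy) colimits to (homotopy) limits, and using $\underline{\R\Hom}_R(R,K)=K$ with the transition maps induced by $x\colon R\to R$ acting as multiplication by $x$ on $K$, one obtains a natural equivalence
\[ \underline{\R\Hom}_R\big(R[\tfrac{1}{x}],K\big)\ \simeq\ \R\lim\big(\cdots\xrightarrow{x}K\xrightarrow{x}K\xrightarrow{x}K\big)\ =\ T(K,x) \]
in $D(\calX,R)$.

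To finish, note that since $R[\frac{1}{x}]$ is a sheaf of $R$-algebras, for any K-injective complex $I$ of $R$-modules representing $K$ the sheaf $\underline{\Hom}_R(R[\frac{1}{x}],I)$ is already a complex of $R[\frac{1}{x}]$-modules, and its underlying complex of $R$-modules represents $\underline{\R\Hom}_R(R[\frac{1}{x}],K)\simeq T(K,x)$; hence $T(K,x)$ is the image under restriction of scalars of an object of $D(\calX,R[\frac{1}{x}])$, which is exactly the assertion. Equivalently, one may verify directly that multiplication by $x$ is an equivalence on $T(K,x)$: the level-wise self-map of the tower $(\cdots\xrightarrow{x}K\xrightarrow{x}K)$ given by multiplication by $x$ has fibre (and cofibre) tower whose transition maps are multiplication by $x$ on the cone of $x\colon K\to K$, and that map is null-homotopic — via the evident contracting homotopy on the two-term complex $[R\xrightarrow{x}R]$, tensored with $K$ — so these towers are pro-zero and $\R\lim$ of the level-wise $x$ is an equivalence. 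Since any $M\in D(\calX,R)$ on which $x$ acts invertibly satisfies $M\simeq M\otimes_R R[\frac{1}{x}]$, and $M\otimes_R R[\frac{1}{x}]$ is manifestly the restriction of scalars of an $R[\frac{1}{x}]$-complex, the conclusion follows.

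I do not expect a genuine obstacle here: the argument is entirely formal and uses neither repleteness of $\calX$ nor local finite generation of the ideal $I$. The only points requiring a little care are the colimit presentation of $R[\frac{1}{x}]$ as a sheaf and the compatibility of the above identifications with restriction of scalars — in particular, that restriction of scalars is exact and commutes with the homotopy limits involved — all of which hold by construction.
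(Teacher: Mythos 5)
Your proof is correct and is essentially the paper's own argument: the paper likewise represents $K$ by a K-injective complex and identifies $T(K,x)\simeq \underline{\R\Hom}_R(R[\frac{1}{x}],K)\simeq \underline{\Hom}_R(R[\frac{1}{x}],K)$, which is visibly a complex of $R[\frac{1}{x}]$-modules. The alternative check you sketch (that $x$ acts invertibly on $T(K,x)$) is a fine supplement but not needed.
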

\begin{proof}
We may represent $K$ by a $K$-injective complex of $R$-modules. Then $T(K,x) \simeq \underline{\R\Hom}_R(R[\frac{1}{x}],K) \simeq  \underline{\Hom}_R(R[\frac{1}{x}],K)$ is a complex of $R[\frac{1}{x}]$-modules, which proves the claim.
\end{proof}

\begin{lemma}
\label{lem:conscompletion}
The inclusion $D_{\comp}(\calX,R) \hookrightarrow D(\calX,R)$ admits a left adjoint $K \mapsto \widehat{K}$. The natural map $\widehat{K} \to \widehat{\widehat{K}}$ is an equivalence.
\end{lemma}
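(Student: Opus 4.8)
The plan is to construct $\widehat{(-)}$ by hand, first for a principal ideal and then by assembling the general case from this one. So suppose initially that $I=(f)$ for a single global section $f\in R(\calX)$, and set
\[
L_f(K)\ :=\ \mathrm{cofib}\big(T(K,f)\to K\big),
\]
the cofibre of the canonical map, with its evident natural map $K\to L_f(K)$. I claim $L_f$ is left adjoint to $D_{\comp}(\calX,R)\hookrightarrow D(\calX,R)$. First, $L_f(K)$ is derived $f$-complete: by Lemma \ref{lem:xdivisible}, $T(K,f)$ lies in the image of $D(\calX,R[\tfrac1f])\to D(\calX,R)$, and for any $N\in D(\calX,R[\tfrac1f])$ the element $f$ is invertible, so the canonical map $T(N,f)\to N$ is an equivalence; applying the exact functor $T(-,f)$ to the triangle $T(K,f)\to K\to L_f(K)$, the first map thus becomes an equivalence and hence $T(L_f(K),f)=0$. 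Second, $L_f$ reflects: for derived $f$-complete $L$ one has $T(L,f)=0$, so Lemma \ref{lem:completedualcharac} gives $\underline{\R\Hom}_R(M,L)=0$ for all $M\in D(\calX,R[\tfrac1f])$, in particular for $M=T(K,f)$; applying $\underline{\R\Hom}_R(-,L)$ to the defining triangle yields $\underline{\R\Hom}_R(L_f(K),L)\xrightarrow{\ \sim\ }\underline{\R\Hom}_R(K,L)$, and passing to $H^0\R\Gamma(\calX,-)$ gives the adjunction bijection, naturally in $K$.

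Next suppose $I=(f_1,\dots,f_n)$ is generated by finitely many global sections. Lemma \ref{lem:invertsum}, applied to $R[\tfrac1{x+y}]$ (together with the obvious fact that $R[\tfrac1{rx}]$ is an $R[\tfrac1x]$-algebra), shows that for fixed $K$ and $V$ the set $\{x\in R(V):T(K|_V,x)=0\}$ is stable under the ring operations; since on a cover of $\calX$ every local section of $I$ becomes an $R$-combination of the $f_i$, it follows that $K$ is derived $I$-complete if and only if $T(K,f_i)=0$ for all $i$. Put $\widehat{K}:=(L_{f_1}\circ\cdots\circ L_{f_n})(K)$. Each $L_{f_j}$ preserves derived $f_k$-completeness: one has $T(K,f_j)\simeq\underline{\R\Hom}_R(R[\tfrac1{f_j}],K)$ and $T(-,f_k)$ commutes with $\underline{\R\Hom}_R(R[\tfrac1{f_j}],-)$ (both are homotopy limits over the same diagram), so $T(K,f_k)=0$ forces $T(T(K,f_j),f_k)=0$, whence, by the triangle defining $L_{f_j}$, $T(L_{f_j}K,f_k)=0$. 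Therefore $\widehat{K}$ is derived $f_i$-complete for every $i$, i.e. derived $I$-complete; and for $L$ derived $I$-complete one peels off the factors $L_{f_j}$ one at a time using the principal case to get $\underline{\R\Hom}_R(\widehat{K},L)\simeq\underline{\R\Hom}_R(K,L)$, so $\widehat{(-)}$ is again the desired left adjoint.

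Finally, for a general locally finitely generated $I$, fix a cover $\{U_i\to 1_\calX\}$ with $I|_{U_i}$ generated by finitely many sections of $I(U_i)$. The previous paragraph gives a completion functor on each $D(\calX_{/U_i},R|_{U_i})$; since restriction along any $V\to U$ has both adjoints, it commutes with $T(-,x)$ and with cofibres, hence with each $L_{f_j}$, so these completion functors are compatible with restriction and, by the universal property just established, independent of the chosen finite generating set. Consequently the objects $\widehat{K|_{U_i}}$ agree on overlaps, and the resulting descent datum assembles, via the $R$-linear analogue of the cohomological descent of Proposition \ref{prop:cohdescentreplete} (this is where repleteness of $\calX$ enters), into an object $\widehat{K}\in D(\calX,R)$; since derived $I$-completeness is a local condition and $\widehat{K}|_{U_i}$ is derived $I|_{U_i}$-complete, $\widehat{K}$ is derived $I$-complete. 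The unit maps $K|_{U_i}\to\widehat{K|_{U_i}}$ glue to $K\to\widehat{K}$, and that this is a reflection reduces, again by descent, to the finitely generated case over each $U_i$. The main point demanding care is precisely this gluing step: one must check that the local completions and their comparison isomorphisms on overlaps genuinely form a descent datum, which is exactly the content of the compatibility with restriction together with Proposition \ref{prop:cohdescentreplete}. The last assertion $\widehat{K}\xrightarrow{\ \sim\ }\widehat{\widehat{K}}$ is then formal: $D_{\comp}(\calX,R)\hookrightarrow D(\calX,R)$ is fully faithful, so the unit of the adjunction is an equivalence on any object of $D_{\comp}(\calX,R)$, and $\widehat{K}$ is such an object.
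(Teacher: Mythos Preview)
Your argument is correct and follows essentially the same route as the paper: build the completion one generator at a time via $K\mapsto\mathrm{cofib}(T(K,f)\to K)$, compose for finitely many global generators, and then globalize by cohomological descent using Proposition~\ref{prop:cohdescentreplete}. The only cosmetic point is the phrasing of the descent step: rather than speaking of a ``descent datum'' that ``agrees on overlaps'' (which in a triangulated setting is not quite enough), you should, as the paper does, pass to the \v{C}ech/hypercover $X^\bullet\to 1_{\calX}$, note that your local completion functors assemble to a left adjoint $D_{\cart}(X^\bullet,R)\to D_{\cart,\comp}(X^\bullet,R)$ because they commute with restriction, and then transport this adjoint through the equivalence $D(\calX,R)\simeq D_{\cart}(X^\bullet,R)$.
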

\begin{proof}
	The second part is a formal consequence of the first part as the inclusion $D_\comp(\calX,R) \subset D(\calX,R)$ is fully faithful. For the first part, we first assume $I$ is generated by global sections $x_1,\dots,x_r \in I(\calX)$. For $0 \leq i \leq r$, define functors $F_i:D(\calX,R) \to D(\calX,R)$ with maps $F_i \to F_{i+1}$ as follows: set $F_0 = \id$, and
	\[\begin{aligned}
	F_{i+1}(K) := \cok\Big(T(F_i(K),x_{i+1}) \to F_i(K)\Big) &\simeq \R\lim\Big(F_i(K) \stackrel{x_{i+1}^n}{\to} F_i(K)\Big) \\
	&\simeq \R\lim \Big(F_i(K) \otimes_{\Z[x_{i+1}]}^L \Z[x_{i+1}]/(x_{i+1}^n)\Big),
	\end{aligned}\]
	where the transition maps $\Big(F_i(K) \stackrel{x_{i+1}^{n+1}}{\to} F_i(K)\Big) \to \Big(F_i(K) \stackrel{x_{i+1}^n}{\to} F_i(K)\Big)$ are given by $x_{i+1}$ on the source, and the identity on the target. One then checks using induction and lemmas \ref{lem:completedualcharac} and \ref{lem:xdivisible} that $F_i(K)$ is derived $(x_1,\dots,x_i)$-complete, and that
	\[ \R\Hom(F_{i+1}(K),L) = \R\Hom(F_i(K),L) \]
	if $L$ is $(x_1,\dots,x_{i+1})$-complete. It follows that $K \to F_r(K)$ provides the desired left adjoint; we rewrite $\widehat{K} := F_r(K)$ and call it the completion of $K$. The construction shows that completion   commutes with restriction. In general, this argument shows that there is a hypercover $f:X^\bullet \to 1_{\calX}$ such that the inclusion $D_\comp(X^n,R) \to D(X^n,R)$ admits a left adjoint, also called completion. As completion commtues with restriction, the inclusion $D_{\cart,\comp}(X^\bullet,R) \subset D_\cart(X^\bullet,R)$ of derived $I$-complete cartesian complexes inside all cartesian complexes admits a left-adjoint $D_\cart(X^\bullet,R) \to D_{\cart,\comp}(X^\bullet,R)$. The cohomological descent equivalence $f^*:D(\calX,R) \to D(X^\bullet,R)$ restricts to an equivalence $D_\comp(\calX,R) \to D_{\cart,\comp}(X^\bullet,R)$, so the claim follows.
\end{proof}

Lemma \ref{lem:conscompletion} leads to a tensor structure on $D_\comp(\calX,R)$:

\begin{definition}
For $K,L \in D(\calX,R)$, we define the completed tensor product via $K \widehat{\otimes}_R L := \widehat{K \otimes^L_R L} \in D_\comp(\calX,R)$.
\end{definition}

The completed tensor product satisfies the expected adjointness:

\begin{lemma}
	\label{lem:comptensorprodreplete}
For $K \in D(\calX,R)$ and $L \in D_\comp(\calX,R)$, we have $\underline{\R\Hom}_R(K,L) \in D_\comp(\calX,R)$. Moreover, there is an adjunction
\[ \Hom(K', \underline{\R\Hom}_R(K,L)) \simeq \Hom(K' \widehat{\otimes}_R K,L) \]
for any $K' \in D_\comp(\calX,R)$.
\end{lemma}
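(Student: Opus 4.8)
The plan is to reduce the statement to the analogous fact about \emph{completion}, which is already available from Lemma \ref{lem:conscompletion}. The first claim, that $\underline{\R\Hom}_R(K,L)$ is derived $I$-complete whenever $L$ is, is local on $\calX$ and, after passing to a hypercover as in the proof of Lemma \ref{lem:conscompletion}, we may assume $I$ is generated by global sections $x_1,\dots,x_r \in I(\calX)$. By Lemma \ref{lem:completedualcharac}, derived $I$-completeness of $\underline{\R\Hom}_R(K,L)$ amounts to checking that $\underline{\R\Hom}_R(M,\underline{\R\Hom}_R(K,L)) = 0$ for every $M \in D(\calX,R[\tfrac{1}{x_i}])$ and every $i$; but the internal tensor-hom adjunction gives $\underline{\R\Hom}_R(M,\underline{\R\Hom}_R(K,L)) \simeq \underline{\R\Hom}_R(M \otimes^L_R K,L)$, and $M \otimes^L_R K$ is still an $R[\tfrac{1}{x_i}]$-module, so this vanishes because $L$ is derived $I$-complete (again by Lemma \ref{lem:completedualcharac}). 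Hence $\underline{\R\Hom}_R(K,L) \in D_\comp(\calX,R)$.

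For the adjunction, the key point is that for $K' \in D_\comp(\calX,R)$ the completion map $K' \otimes^L_R K \to K' \widehat{\otimes}_R K$ is initial among maps from $K' \otimes^L_R K$ to derived $I$-complete complexes, by Lemma \ref{lem:conscompletion}. Therefore, using the ordinary internal tensor-hom adjunction in $D(\calX,R)$ together with the first part of this lemma (which guarantees the target $\underline{\R\Hom}_R(K,L)$ is already complete, so no completion is lost on that side), we compute
\[
\Hom(K' \widehat{\otimes}_R K, L) \simeq \Hom(K' \otimes^L_R K, L) \simeq \Hom(K', \underline{\R\Hom}_R(K,L)),
\]
where the first isomorphism is the universal property of completion applied to the complete target $L$, and the second is the usual adjunction in $D(\calX,R)$. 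This gives exactly the asserted adjunction.

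The only genuine subtlety is the use of the \emph{internal} Hom adjunction $\underline{\R\Hom}_R(M \otimes^L_R K, L) \simeq \underline{\R\Hom}_R(M, \underline{\R\Hom}_R(K,L))$ in the ringed topos $(\calX,R)$; this is standard, but one should note that it is this internal statement (valid after any localization $j:U \to 1_\calX$) that feeds correctly into the criterion of Lemma \ref{lem:completedualcharac}, since that criterion is phrased in terms of $\underline{\R\Hom}$. I expect the main obstacle to be purely bookkeeping: making sure that the reduction to globally generated $I$ via a hypercover is compatible with the internal Hom — i.e. that $\underline{\R\Hom}_R$ commutes with the restrictions $j^*$ — which it does because $j^*$ has both adjoints, exactly as exploited already in the proof of Proposition \ref{prop:cohdescentreplete} and Lemma \ref{lem:conscompletion}. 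Everything else is a formal consequence of adjunctions and the completeness criteria established above.
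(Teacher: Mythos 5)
Your proof is correct, and the adjunction half is exactly the paper's argument (the universal property of the completion functor from Lemma \ref{lem:conscompletion} applied to the complete target $L$, followed by the internal tensor-hom adjunction). For the first half, however, you take a genuinely different route. The paper does not localize to global generators at all: it observes that $\underline{\R\Hom}_R(K,-)$ commutes with the homotopy limit defining $T(-,x)$, so that directly $T(\underline{\R\Hom}_R(K,L),x) \simeq \underline{\R\Hom}_R(K,T(L,x)) \simeq 0$, and then repeats this verbatim on each slice $\calX_{/U}$ for $x \in I(U)$. Your argument instead first reduces, via Lemma \ref{lem:dercompcharac}, to the case where $I$ is generated by global sections, and then applies the orthogonality criterion of Lemma \ref{lem:completedualcharac} in both directions together with the internal tensor-hom adjunction and the observation that $M \otimes^L_R K$ inherits an $R[\frac{1}{x_i}]$-module structure from $M$. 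Both are sound; the paper's version is shorter and needs no cover (a cover, not a hypercover, is all your reduction requires in any case), while yours has the mild virtue of treating $T$ as a black box and only invoking the already-established completeness criteria. Your parenthetical that the first part is needed so that "no completion is lost" on the $\underline{\R\Hom}$ side is not actually used in the displayed chain of isomorphisms — the Hom-set identity holds for any $K'$ — but it is of course what makes $\underline{\R\Hom}_R(K,-)$ an endofunctor of $D_\comp(\calX,R)$ right adjoint to $-\widehat{\otimes}_R K$, which is the point of the lemma.
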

\begin{proof}
	For any $x \in I(\calX)$, we have $T(\underline{\R\Hom}_R(K,L),x) \simeq \underline{\R\Hom}_R(K,T(L,x)) \simeq 0$. Repeating this argument for a slice topos $\calX_{/U}$ then proves the first part. The second part is a formal consequence of the adjunction between $\otimes$ and $\underline{\R\Hom}$ in $D(\calX,R)$, together with the completeness of $L$.
\end{proof}

\begin{lemma} 
	\label{lem:dercompcharac}
	Fix $K \in D(\calX,R)$. The following are equivalent
\begin{enumerate}
	\item For each $U \in \calX$ and $x \in I(U)$, the natural map $K \to \R\lim \big(K \stackrel{x^n}{\to} K\big)$ is an isomorphism.
	\item $K$ is derived $I$-complete.
	\item There exists a cover $\{U_i \to 1_{\calX}\}$ and generators $x_1,\dots,x_r \in I(U_i)$ such that $T(K|_{U_i},x_i) = 0$.
	\item There exists a cover $\{U_i \to 1_{\calX}\}$ and generators $x_1,\dots,x_r \in I(U_i)$ such that 
		\[ K|_{U_i} \simeq \R\lim(K|_{U_i} \otimes^L_{\Z[x_1,\dots,x_r]} \Z[x_1,\dots,x_r]/(x_1^n,\dots,x_r^n))\]
		via the natural map.
\end{enumerate}
\end{lemma}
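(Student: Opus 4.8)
The plan is to prove $(1)\Leftrightarrow(2)$, then $(2)\Leftrightarrow(3)$, and finally $(2)\Leftrightarrow(4)$, using throughout that the restriction functors $D(\calX,R)\to D(\calX_{/U},R)$ commute with $T(-,-)$ — so that all four conditions may be checked on a cover, slice topoi again being replete by Lemma \ref{lem:repletebc} — together with Lemmas \ref{lem:completedualcharac} and \ref{lem:invertsum} and the completion functor $K\mapsto\widehat K$ of Lemma \ref{lem:conscompletion}. For $(1)\Leftrightarrow(2)$, fix $U\in\calX$ and $x\in I(U)$ and work in $\calX_{/U}$: the one-variable case of the construction in the proof of Lemma \ref{lem:conscompletion} gives an exact triangle $T(K|_U,x)\to K|_U\to \R\lim\bigl(K|_U\stackrel{x^n}{\to}K|_U\bigr)\to T(K|_U,x)[1]$ with canonical second map, which is therefore an isomorphism exactly when $T(K|_U,x)=0$; quantifying over $(U,x)$ gives the equivalence. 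The implication $(2)\Rightarrow(3)$ is immediate: since $I$ is locally finitely generated, choose a cover $\{U_i\to 1_\calX\}$ with $I|_{U_i}$ generated by finitely many $x_1,\dots,x_{r_i}\in I(U_i)$, and apply $(2)$ with $U=U_i$ and $x=x_j$.

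The substantive implication is $(3)\Rightarrow(2)$. Using locality, one first reduces — by passing to each $\calX_{/U_i}$ — to the case where $I$ is generated by \emph{global} sections $x_1,\dots,x_r\in I(\calX)$ with $T(K,x_j)=0$ for all $j$ (equivalently $T(K|_W,x_j|_W)=0$ for every $W$), the goal being that $T(K|_V,y)=0$ for all $V\in\calX$ and $y\in I(V)$. Restricting to $\calX_{/V}$ and using that the $x_j$ generate $I$ as a sheaf of ideals, one further reduces to the case $y=\sum_j a_j x_j$ with $a_j\in R(\calX)$, so it suffices to prove: (a) $T(K,z)=0\Rightarrow T(K,az)=0$ for any $a\in R(\calX)$; and (b) $T(K,z_1)=T(K,z_2)=0\Rightarrow T(K,z_1+z_2)=0$. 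For (a), $z$ is invertible in $R[\frac{1}{az}]$ with inverse $a\cdot(az)^{-1}$, so every $M\in D(\calX,R[\frac{1}{az}])$ lies, by restriction of scalars, in $D(\calX,R[\frac{1}{z}])$; hence $\underline{\R\Hom}_R(M,K)=0$ by Lemma \ref{lem:completedualcharac}, and as this holds for all such $M$ the same lemma gives $T(K,az)=0$. For (b), applying $M\otimes^L_{R[\frac{1}{z_1+z_2}]}(-)$ to the exact sequence of Lemma \ref{lem:invertsum} — whose three terms are flat localizations of $R[\frac{1}{z_1+z_2}]$ — produces a triangle $M\to M[\frac{1}{z_1}]\oplus M[\frac{1}{z_2}]\to M[\frac{1}{z_1z_2}]\to M[1]$ in $D(\calX,R)$; the last two terms are modules over $R[\frac{1}{z_1}]$ or $R[\frac{1}{z_2}]$, hence annihilated by $\underline{\R\Hom}_R(-,K)$ by Lemma \ref{lem:completedualcharac}, so $\underline{\R\Hom}_R(M,K)=0$, and Lemma \ref{lem:completedualcharac} again gives $T(K,z_1+z_2)=0$. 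Applying (a) to each $a_j x_j$ and then (b) repeatedly yields $T(K|_V,y)=0$. I expect the only delicate part to be organizing these reductions — to global generators, and then to $y$ a genuine $R(\calX)$-linear combination — after which (a) and (b) are short.

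For $(2)\Leftrightarrow(4)$, work in a slice $\calX_{/U_i}$ over which $I$ is generated by global sections $x_1,\dots,x_r$, and run the functors $F_i$ from the proof of Lemma \ref{lem:conscompletion}. Since $\Z[x]/(x^n)$ is a perfect $\Z[x]$-complex, $-\otimes^L_{\Z[x]}\Z[x]/(x^n)$ commutes with $\R\lim$; an induction on $i$, using also the cofinality of the diagonal $\N\hookrightarrow\N^r$, then identifies the completion $\widehat{L}=F_r(L)$ with $\R\lim_n\bigl(L\otimes^L_{\Z[x_1,\dots,x_r]}\Z[x_1,\dots,x_r]/(x_1^n,\dots,x_r^n)\bigr)$, compatibly with the canonical maps from $L$. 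As the completion functor $L\mapsto\widehat{L}$ is left adjoint to the fully faithful inclusion $D_\comp(\calX,R)\hookrightarrow D(\calX,R)$, the canonical map $L\to\widehat{L}$ is an isomorphism precisely when $L$ is derived $I$-complete; taking $L=K|_{U_i}$, condition $(4)$ asserts exactly that every $K|_{U_i}$ is derived $I$-complete, which by locality is equivalent to $(2)$.
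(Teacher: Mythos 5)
Your proof is correct and follows the paper's own argument: the same triangle $T(K,x) \to K \to \R\lim\big(K \stackrel{x^n}{\to} K\big)$ for (1)$\Leftrightarrow$(2), the same reduction of (3)$\Rightarrow$(2) to multiples and sums of the $x_i$ via Lemmas \ref{lem:completedualcharac} and \ref{lem:invertsum} (you merely spell out the two steps the paper compresses), and the same identification of $F_r$ with the $r$-variable $\R\lim$ for (4). The only cosmetic difference is that you deduce (4)$\Rightarrow$(2) from the universal property of the completion functor, where the paper instead observes directly that each term of the limit in (4) is killed by $T(-,x_j)$ since $x_j$ acts nilpotently on it.
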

\begin{proof}
The equivalence of (1) and (2) follows from the observation that the transition map 
\[ \Big(K \stackrel{x^{n+1}}{\to} K\Big) \to \Big(K \stackrel{x^n}{\to} K\Big)\]
is given by $x$ on the first factor, and the identity on the second factor. Also, (2) clearly implies (3). For the converse, fix a $U \in \calX$ and $x \in I(U)$. To show $T(K|_U,x) = 0$, we are free to replace $U$ with a cover. Hence, we may assume $x = \sum_i a_i x_i$ with $T(K|_U,x_i) = 0$. Lemma \ref{lem:completedualcharac} shows  $T(K|_U,a_ix_i) = 0$, and Lemma \ref{lem:invertsum} does the rest. Finally, since each $x_j$ acts nilpotently on $K|_{U_i} \otimes^L_{\Z[x_1,\dots,x_r]} \Z[x_1,\dots,x_r]/(x_1^n,\dots,x_r^n)$, it is clear that (4) implies (3). Conversely, assume (3) holds. Replacing $\calX$ with a suitable $U_i$, we may assume $I$ is generated by global sections $x_1,\dots,x_r \in I(\calX)$. Consider the sequence of functors $F_0,\dots,F_r:D(\calX,R) \to D(\calX,R)$ defined in the proof of Lemma \ref{lem:conscompletion}. As each $\Z[x_i]/(x_i^n)$ is a perfect $\Z[x_i]$-module, the functor $- \otimes^L_{\Z[x_i]} \Z[x_i]/(x_i^n)$ commutes with homotopy-limits. Hence, we can write
\[ K \simeq F_r(K) \simeq \R\lim(K \otimes^L_{\Z[x_1]} \Z[x_1]/(x_1^n) \otimes^L_{\Z[x_2]} \Z[x_2]/(x_2^n) \otimes \dots \otimes^L_{\Z[x_r]} \Z[x_r]/(x_r^n)), \]
which implies (4).
\end{proof}

\begin{lemma}
	\label{lem:classcompdercomp}
If $M \in \Mod_R$ is classically $I$-complete, then $M$ is derived $I$-complete.
\end{lemma}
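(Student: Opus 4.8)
The plan is to reduce the statement to the two facts that (i) in a replete topos an inverse system of sheaves with surjective transition maps has $\lim \simeq \R\lim$ (Proposition~\ref{prop:limrlim}), and (ii) a sheaf of $R$-modules killed by a power of $I$ is automatically derived $I$-complete. Granting these, if $M = \lim_n M/I^nM$ then $M \simeq \R\lim_n M/I^nM$ in $D(\calX,R)$, and one concludes because derived $I$-complete objects are stable under countable homotopy limits (each $M/I^nM$ being derived complete by (ii)).

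\textbf{Step 1 (reduce to a homotopy-limit computation).} Since derived $I$-completeness is a local condition on $\calX$, and restriction $j^*$ along any $j\colon U\to 1_\calX$ commutes with inverse limits and satisfies $j^*(I^nM) = (I|_U)^n(M|_U)$, the restriction $M|_U$ is again classically complete; so it suffices to show $T(M|_U,x) = 0$ for every $U\in\calX$ and every $x\in I(U)$. By hypothesis $M \simeq \lim_n M/I^nM$, and the transition maps $M/I^{n+1}M \to M/I^nM$ are surjective, so by Proposition~\ref{prop:limrlim} (applied to the underlying abelian sheaves, the $R$-action coming along) this honest limit computes the derived limit: $M \simeq \R\lim_n M/I^nM$ in $D(\calX,R)$. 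Restricting, $M|_U \simeq \R\lim_n (M/I^nM)|_U$.

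\textbf{Step 2 (each truncation is derived complete).} Fix $n$. The sheaf $M/I^nM$ is annihilated by $I^n$, so for any $U$ and $x\in I(U)$ the section $x^n\in I^n(U)$ acts as $0$ on $(M/I^nM)|_U$; hence multiplication by $x$ is an endomorphism $\varphi$ of $(M/I^nM)|_U$ with $\varphi^n = 0$. Therefore the tower $\cdots \stackrel{\varphi}{\to} (M/I^nM)|_U \stackrel{\varphi}{\to} (M/I^nM)|_U$ is pro-zero, and $T((M/I^nM)|_U,x) = 0$: concretely, writing $N = (M/I^nM)|_U$, the endomorphism $1-f$ of $\prod_{k\ge 0} N$ (with $f$ the shift composed with $\varphi$) is an isomorphism of sheaves, being injective because $a_k = \varphi^n(a_{k+n}) = 0$ whenever $a_k = \varphi(a_{k+1})$ for all $k$, and surjective via the finite formula $a_k := \sum_{j=0}^{n-1}\varphi^j(b_{k+j})$; since countable products compute derived products in a replete topos (Proposition~\ref{prop:prodexact}), $T((M/I^nM)|_U,x) = \mathrm{fib}(\prod_k N \xrightarrow{1-f} \prod_k N) = 0$.

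\textbf{Step 3 (conclude).} The functor $T(-,x) = \R\lim_m(\cdots\stackrel{x}{\to}-)$ is a countable homotopy limit, hence commutes with the countable homotopy limit $\R\lim_n$; combining Steps 1 and 2,
\[ T(M|_U,x) \;\simeq\; \R\lim_n T((M/I^nM)|_U,x) \;\simeq\; \R\lim_n 0 \;=\; 0. \]
Since $U$ and $x\in I(U)$ were arbitrary, $M$ is derived $I$-complete.

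\textbf{Expected main point.} There is no genuine obstacle here: the whole argument is a bookkeeping of known facts. The only place meriting care is the vanishing of the pro-zero tower and the interchange of the two countable homotopy limits, both of which ultimately rest on repleteness (finiteness of the cohomological dimension of $\R\lim$, Proposition~\ref{prop:cdrlim}, together with Proposition~\ref{prop:prodexact}); the explicit description of $1-f$ above makes this self-contained. One could alternatively invoke Lemma~\ref{lem:conscompletion}, which presents $D_\comp(\calX,R)\subset D(\calX,R)$ as a reflective subcategory and hence closed under all homotopy limits, to deduce $M\simeq\R\lim_n M/I^nM\in D_\comp(\calX,R)$ directly once Step~2 is known.
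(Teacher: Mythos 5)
Your proof is correct and follows essentially the same route as the paper's: write $M\simeq\R\lim_n M/I^nM$ using repleteness (surjective transition maps), observe that each $M/I^nM$ is derived complete because local sections of $I$ act nilpotently, and conclude by closure of derived complete objects under homotopy limits. Your Step 2 merely spells out explicitly the vanishing $T(N,x)=0$ for a pro-zero tower, which the paper leaves implicit.
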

\begin{proof}
Commuting limits shows that the collection of all derived $I$-complete objects $K \in D(\calX,R)$ is closed under homotopy-limits. Hence, writing $M = \lim M/I^nM \simeq \R\lim M/I^nM$ (where the second isomorphism uses repleteness), it suffices to show that $M$ is derived $I$-complete if $I^n M = 0$. For such $M$, any local section $x \in I(U)$ for some $U \in \calX$ acts nilpotently on $M|_U$, so $T(M|_U,x) = 0$.
\end{proof}

The cokernel of a map of classically $I$-complete $R$-modules need not be $I$-complete, and one can even show that $\Mod_{R,\comp}$ is not an abelian category in general. In contrast, derived $I$-complete modules behave much better:

\begin{lemma}
	\label{lem:dercompabcat}
	The collection of all derived $I$-complete $M \in \Mod_R$ is an abelian Serre subcategory of $\Mod_R$.
\end{lemma}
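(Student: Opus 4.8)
The plan is to verify the three closure properties that together make a full subcategory $\calB\subset\Mod_R$ into an abelian subcategory with exact inclusion: $0\in\calB$; $\calB$ is closed under extensions; and for every morphism $f\colon M\to N$ between objects of $\calB$ the modules $\ker f$ and $\coker f$, formed in $\Mod_R$, lie in $\calB$. Here $\calB$ is the class of derived $I$-complete $R$-modules. The first property is trivial, and the second is immediate: a short exact sequence $0\to M'\to M\to M''\to 0$ gives an exact triangle in $D(\calX,R)$, and since $D_\comp(\calX,R)$ is a triangulated subcategory, $M\in D_\comp$ whenever $M',M''\in D_\comp$. So the real content is the statement about kernels and cokernels. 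Since derived $I$-completeness of an $R$-module $P$ means $T(P|_W,y)=0$ for all $W\in\calX$ and $y\in I(W)$, and since $T$ commutes with restriction along $\calX_{/W}\to\calX$ (which is again replete by Lemma \ref{lem:repletebc}), while $\ker$ and $\coker$ commute with restriction as well, I would fix such a pair $(W,y)$, pass to $\calX_{/W}$, and reduce to proving: \emph{if $x\in R(\calX)$ is a global section with $T(M,x)=T(N,x)=0$ and $f\colon M\to N$ is a map of $R$-modules, then $T(\ker f,x)=T(\coker f,x)=0$}. (One uses here that $T(M|_W,y)=0$ because $M$ is derived $I$-complete and $y\in I(W)$.)

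\textbf{Core argument.} For the reduced statement I would form the cone $C:=\Cone(f)\in D^{[-1,0]}(\calX,R)$, which sits in the exact triangle $M\xrightarrow{f}N\to C\xrightarrow{+1}M[1]$; applying the triangulated endofunctor $T(-,x)$ of $D(\calX,R)$ and using $T(M,x)=T(N,x)=0$ shows $T(C,x)=0$. Now $C\in D^{[-1,0]}$ has $H^{-1}(C)=\ker f$ and $H^0(C)=\coker f$, so the canonical truncation triangle is
\[ (\ker f)[1]\ \longrightarrow\ C\ \longrightarrow\ \coker f\ \xrightarrow{\ +1\ }\ (\ker f)[2]. \]
Applying $T(-,x)$ once more, using $T(C,x)=0$ and $T(K[1],x)=T(K,x)[1]$, produces an equivalence $T(\coker f,x)\simeq T(\ker f,x)[2]$ in $D(\calX,R)$.

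\textbf{Finishing, and the main point.} This is where repleteness does its work: by Proposition \ref{prop:cdrlim}, $\N^\opp$-indexed limits have cohomological dimension $\le 1$, so for any $R$-module $P$ the object $T(P,x)=\R\lim(\cdots\xrightarrow{x}P\xrightarrow{x}P)$ lies in $D^{[0,1]}(\calX,R)$. Hence $T(\coker f,x)\in D^{[0,1]}$, whereas $T(\ker f,x)[2]\in D^{[-2,-1]}$; since these cohomological ranges are disjoint, the equivalence above forces $T(\coker f,x)=0$, and then $T(\ker f,x)[2]=0$, i.e. $T(\ker f,x)=0$. This proves the reduced statement, hence the claim about kernels and cokernels, and with the first two properties the lemma follows. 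The single essential use of the hypothesis is the degree count in this last paragraph: it is precisely Proposition \ref{prop:cdrlim} (a consequence of repleteness) that confines $T$ of a module to the window $[0,1]$, and without it the shift by $2$ would not yield the disjointness we need. I would also be careful \emph{not} to quote Proposition \ref{prop:complexcompcrit} here, since that statement (cohomology sheaves of a derived complete complex are derived complete) is strictly stronger and is to be deduced from the present lemma; the elementary two-term instance used above is all that is available, and all that is required. Finally, it is worth keeping in mind that this subcategory is not closed under arbitrary submodules — for instance $\Z\subset\Z_p$ with $I=(p)\subset\Z$ in the punctual topos — so ``Serre'' is understood here in the sense of an abelian subcategory of $\Mod_R$ with exact inclusion functor.
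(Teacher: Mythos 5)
Your proof is correct and is essentially the paper's own argument: both form the two-term complex $\mathrm{Cone}(f)\in D^{[-1,0]}$, kill $T(-,x)$ of it using completeness of $M$ and $N$, and then play the truncation triangle $(\ker f)[1]\to C\to \coker f$ off against the amplitude bound $T(P,x)\in D^{[0,1]}$ coming from Proposition \ref{prop:cdrlim}, reducing to global sections $x$ via slice topoi exactly as the paper does. Your closing remarks (avoiding circularity with Proposition \ref{prop:complexcompcrit}, and reading ``Serre'' in the weak sense since the subcategory is not closed under arbitrary submodules) are accurate.
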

\begin{proof}
Fix a map $f:M \to N$ of derived $I$-complete $R$-modules. Then there is an exact triangle
\[ \ker(f)[1] \to \Big(M \to N\Big) \to \cok(f) \]
For any $x \in I(\calX)$, there is an exact triangle
\[ T(\ker(f)[1],x) \to 0 \to T(\cok(f),x) \]
where we use the assumption on $M$ and $N$ to get the middle term to be $0$. The right hand side lies in $D^{\geq 0}(\calX,R)$, while the left hand side lies in $D^{\leq 0}(\calX,R)$ as $\R\lim$ has cohomological dimension $\leq 1$ (as $\calX$ is replete). Chasing sequences shows that the left and right terms are also $0$. Repeating the argument for a slice topos $\calX_{/U}$ (and varying $x \in I(U)$) proves that $\ker(f)$ and $\cok(f)$ are derived $I$-complete. It is then immediate that $\im(f) = M/\ker(f)$ is also derived $I$-complete. Since closure of derived $I$-completeness under extensions is clear, the claim follows.
\end{proof}

\begin{proof}[Proof of Proposition \ref{prop:complexcompcrit}]
	Assume first that each $H^i(K)$ is derived $I$-complete. Then each finite truncation $\tau^{\leq n} \tau^{\geq m} K$ is derived $I$-complete. Hence, $\tau^{\leq m} K \simeq \R\lim \tau^{\geq -n} \tau^{\leq m} K$ is also derived $I$-complete for each $m$; here we use that $D(\calX)$ is left-complete since $\calX$ is replete. For any $x \in I(\calX)$, applying $T(-,x)$ to
	\[ \tau^{\leq m} K \to K \to \tau^{\geq m+1}(K).\]
shows that $T(K,x) \simeq T(\tau^{\geq m+1} K,x) \in D^{\geq m+1}(\calX,R)$. Since this is true for all $m$, one has $T(K,x) = 0$. Repeating the argument for $x \in I(U)$ for $U \in \calX$ then proves the claim.

	Conversely, assume that $K$ is derived $I$-complete. By shifting, it suffices to show that $H^0(K)$ is derived $I$-complete. Assume first that $K \in D^{\leq 0}(\calX,R)$. Then there is an exact triangle
	\[ \tau^{\leq -1} K \to K \to H^0(K).\]
	Fixing an $x \in I(\calX)$ and applying $T(-,x)$ gives
	\[ T(\tau^{\leq -1} K,x) \to T(K,x) \to T(H^0(K),x).\]
	The left term lives in $D^{\leq 0}(\calX,R)$, the middle term vanishes by assumption on $K$, and the right term lives in $D^{\geq 0}(\calX,R)$, so the claim follows by chasing sequences (and replacing $\calX$ with $\calX_{/U}$). Now applying the same argument to the triangle
	\[ \tau^{\leq 0} K \to K \to \tau^{\geq 1} K\]
	shows that each $\tau^{\leq 0} K$ and $\tau^{\geq 1} K$ are derived $I$-complete. Replacing $K$ by $\tau^{\leq 0} K$ then proves the claim.
\end{proof}

\begin{proof}[Proof of Proposition \ref{prop:classcompdercompcrit}]
	The forward direction follows from Lemma \ref{lem:classcompdercomp}. Conversely, assume $M$ is derived $I$-complete and $I$-adically separated. To show $M$ is classically $I$-complete, we may pass to slice topoi and assume that $I$ is generated by global sections $x_1,\dots,x_r \in I(\calX)$. Then derived $I$-completeness of $M$ gives
	\[ M \simeq \R\lim(M \otimes^L_{\Z[x_1,\dots,x_r]} \Z[x_1,\dots,x_r]/(x_i^n)).\]
	Calculating $H^0(M) \simeq M$ via the Milnor exact sequence (which exists by repleteness) gives
	\[ 1 \to \R^1 \lim H^{-1}(M \otimes^L_{\Z[x_1,\dots,x_r]} \Z[x_1\dots,x_r]/(x_i^n)) \to M \to \lim M/(x_1^n,\dots,x_r^n)M \to 1.\]
	By $I$-adic separatedness, the last map is injective, and hence an isomorphism.
\end{proof}

\subsection{Derived completions of noetherian rings in a replete topos}
\label{subsec:derivedcompnoeth}

In this section, we specialize the discussion of \S \ref{subsec:derivedcompfadic} to the noetherian constant case. More precisely, we fix a replete topos $\calX$, a noetherian ring $R$, and an ideal $\fram \subset R$. We also write $\fram \subset R$ for the corresponding constant sheaves on $\calX$. Our goal is to understand $\fram$-adic completeness for $R$-complexes on $\calX$.

\begin{proposition}
	\label{prop:derivedcompnoetherian}
Fix $K \in D(\calX,R)$. Then
\begin{enumerate}
	\item $K$ is derived $\fram$-complete if and only if $K \simeq \R\lim(K \otimes_R^L R/\fram^n)$ via the natural map. 
	\item $\R\lim(K \otimes_R^L R/\fram^n)$ is derived $\fram$-complete.
	\item The functor $K \mapsto \R\lim(K \otimes_R^L R/\fram^n)$ defines a left adjoint $D(\calX,R) \to D_\comp(\calX,R)$ to the inclusion.
\end{enumerate}
\end{proposition}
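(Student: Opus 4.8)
The plan is to identify the functor $K \mapsto \R\lim_n(K \otimes_R^L R/\fram^n)$ with the completion functor $K \mapsto \widehat{K}$ constructed in Lemma \ref{lem:conscompletion} for the ideal $I = \fram$; once this is done, all three assertions follow formally from Lemmas \ref{lem:conscompletion} and \ref{lem:dercompcharac}. To set this up, fix generators $x_1, \dots, x_r \in R$ of the ideal $\fram$. Since the functor sending an $R$-module (resp.\ $R$-complex) to the associated constant sheaf is exact and symmetric monoidal, the sheaf of ideals $\fram \subset R$ is generated by the global sections $x_1, \dots, x_r$, the Koszul complex $\mathrm{Kos}_R(x_1^n, \dots, x_r^n) \in D(\calX, R)$ is just the constant sheaf attached to the ordinary Koszul complex of the noetherian ring $R$, and there is a natural identification $K \otimes^L_{\Z[x_1, \dots, x_r]} \Z[x_1, \dots, x_r]/(x_1^n, \dots, x_r^n) \simeq K \otimes_R^L \mathrm{Kos}_R(x_1^n, \dots, x_r^n)$. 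Unwinding the iterated construction of the functors $F_i$ in the proof of Lemma \ref{lem:conscompletion}, exactly as in the proof of Lemma \ref{lem:dercompcharac}(4), then yields $\widehat{K} \simeq \R\lim_n\bigl(K \otimes_R^L \mathrm{Kos}_R(x_1^n, \dots, x_r^n)\bigr)$, compatibly with the natural map out of $K$.

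The only nonformal input is the comparison of the Koszul tower with the $\fram$-adic tower, and this is the one place where the noetherian hypothesis on $R$ is needed. From the inclusions $\fram^{rn} \subseteq (x_1^n, \dots, x_r^n) \subseteq \fram^n$ the towers $\{R/(x_1^n, \dots, x_r^n)\}_n$ and $\{R/\fram^n\}_n$ are pro-isomorphic, and by the Artin--Rees lemma the higher Koszul homology $\{H_i(\mathrm{Kos}_R(x_1^n, \dots, x_r^n))\}_n$, $i \geq 1$, of the noetherian ring $R$ is pro-zero; hence the augmentations $\mathrm{Kos}_R(x_1^n, \dots, x_r^n) \to R/(x_1^n, \dots, x_r^n) \to R/\fram^n$ exhibit an isomorphism $\{\mathrm{Kos}_R(x_1^n, \dots, x_r^n)\}_n \simeq \{R/\fram^n\}_n$ in $\Pro(D(R))$, compatible with the maps from the constant pro-object $\{R\}_n$. (This is standard commutative algebra --- it is the computation underlying the identification of derived and classical completions over noetherian rings.) Applying the exact monoidal functor $K \otimes_R^L (-) : D(R) \to D(\calX, R)$ preserves this pro-isomorphism, and since $\R\lim$ of $\N$-indexed towers carries pro-isomorphic towers to equivalent objects, one obtains $\widehat{K} \simeq \R\lim_n\bigl(K \otimes_R^L R/\fram^n\bigr)$, compatibly with the natural maps out of $K$.

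Granting this identification: (1) holds because $K$ is derived $\fram$-complete exactly when the unit $K \to \widehat{K}$ of the (reflective) adjunction of Lemma \ref{lem:conscompletion} is an isomorphism, which combined with the above presentation of $\widehat{K}$ gives the criterion in terms of $\R\lim_n(K \otimes_R^L R/\fram^n)$; (2) is the statement in Lemma \ref{lem:conscompletion} that $\widehat{K}$ is derived $\fram$-complete; and (3) is the adjunction of Lemma \ref{lem:conscompletion}, which exhibits $K \mapsto \widehat{K}$ as the left adjoint of the inclusion $D_\comp(\calX, R) \hookrightarrow D(\calX, R)$. I expect the main obstacle to be the pro-isomorphism of the second paragraph --- the genuinely noetherian ingredient; everything else is formal bookkeeping, legitimate in the replete setting thanks to the good behaviour of $\R\lim$ recorded in \S\ref{ss:reptopoi} and already built into the lemmas of \S\ref{subsec:derivedcompfadic}.
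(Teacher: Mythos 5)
Your proof is correct and follows essentially the same route as the paper's: both identify the completion functor of Lemma \ref{lem:conscompletion} with $\R\lim$ of the tower built from powers of chosen generators of $\fram$ (via the formula underlying Lemma \ref{lem:dercompcharac}(4)) and then reduce everything to an Artin--Rees pro-isomorphism of towers over the noetherian ring $R$. The only difference is packaging: you run the comparison through the Koszul tower $\mathrm{Kos}_R(x_1^n,\dots,x_r^n)$ directly over $R$, whereas the paper computes $\{P/J^n \otimes_P^L R\}$ for $P = \Z[x_1,\dots,x_r]$ and reduces via Tor-independence to $R[x_1,\dots,x_r]$ --- the same underlying Tor computation.
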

\begin{proof}
(2) is clear as each $K \otimes_R^L R/\fram^n$ is derived $\fram$-complete. For the rest, fix generators $f_1,..,f_r \subset \fram$. Set $P = \Z[x_1,\dots,x_r]$, and $J = (x_1,\dots,x_r) \subset P$. Consider the map $P \to R$ defined via $x_i \mapsto f_i$ (both in $\Set$ and $\calX$). By Lemma \ref{lem:dercompcharac}, $K$ is derived $\fram$-complete precisely when $K \simeq \R\lim(K \otimes_P^L P/J^n)$ via the natural map. For (1), it thus suffices to check that 
	\[ a:\{P/J^n \otimes_P^L R\} \to \{R/\fram^n\} \]
	is a strict pro-isomorphism. There is an evident identification
	\[ \{P/J^n \otimes_P^L R\} = \{P/J^n \otimes_P^L (P \otimes_\Z R) \otimes_{P \otimes_\Z R}^L R \},\]
where $P \otimes_\Z R$ is viewed as a $P$-algebra via the first factor. As $P/J^n$ and $P \otimes_\Z R$ are $\Tor$-independent over $P$, we reduce to checking that
\[ \{ R[x_1,\dots,x_r]/(x_1,\dots,x_r)^n  \otimes^L_{R[x_1,\dots,x_r]} R \} \to \{R/\fram^n\} \]
	is a strict pro-isomorphism. This follows from the Artin-Rees lemma. Finally, (3) follows from $a$ being a pro-isomorphism as the construction of Lemma \ref{lem:conscompletion} realises the $\fram$-adic completion of $K$ as $\R\lim(K \otimes_P^L P/J^n)$.
\end{proof}

Proposition \ref{prop:derivedcompnoetherian} gives a good description of the category $D_\comp(\calX,R)$ of derived $\fram$-complete complexes. Using this description, one can check that $R$ itself is {\em not} derived $\fram$-complete in $\calX$ in general. To rectify this, we study the $\fram$-adic completion $\widehat{R}$ of $R$ on $\calX$, and some related categories.

\begin{definition}
Define $\widehat{R} := \lim R/\fram^n \in \calX$.  In particular, $\widehat{R}$ is an $R$-algebra equipped with $R$-algebra maps $\widehat{R} \to R/\fram^n$. An object $K \in D(\calX, \widehat{R})$ is called {\em $\fram$-adically complete} if the natural map $K \to \R\lim(K \otimes_{\widehat{R}}^L R/\fram^n)$ is an equivalence. Let $i:D_{\comp}(\calX,\widehat{R}) \hookrightarrow D(\calX, \widehat{R})$ be the full subcategory of such complexes.
\end{definition}

Our immediate goal is to describe $\fram$-adically complete complexes in terms of their truncations. To this end, we introduce the following category of compatible systems:

\begin{definition}
	\label{def:compatiblesystem}
	Let $\calC = \Fun(\N^\opp,\calX)$ be the topos of $\N^\opp$-indexed projective systems $\{F_n\}$ in $\calX$. Let $R_\bullet = \{R/\fram^n\} \in \calC$ be the displayed sheaf of rings, and let $D_{\comp}(\calC, R_\bullet) \subset D(\calC,R_\bullet)$  be the full subcategory spanned by complexes $\{K_n\}$ such that the induced maps $K_n \otimes^L_{R/\fram^n} R/\fram^{n-1} \to K_{n-1}$ are equivalences for all $n$.
\end{definition}

\begin{lemma}
	\label{lem:proisomchangecoeff}
	For $\{K_n\} \in D^{-}(\calC,R_\bullet)$, one has an identification of pro-objects $\{K_n \otimes_R^L R/\fram\} \simeq \{K_n \otimes_{R/\fram^n}^L R/\fram\}$, and hence a limiting isomorphism $\R\lim(K_n \otimes_R^L R/\fram) \simeq \R\lim(K_n \otimes_{R/\fram^n}^L R/\fram)$. If $\fram$ is regular, this extends to unbounded complexes.
\end{lemma}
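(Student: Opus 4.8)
The plan is to write down the natural comparison map, identify its cofiber, and show that cofiber is pro-zero (honestly so in the regular case, and degreewise so in general, which is enough for the $\R\lim$ assertion). By associativity of the derived tensor product we rewrite $K_n\otimes_R^L R/\fram \simeq K_n\otimes_{R/\fram^n}^L\bigl(R/\fram^n\otimes_R^L R/\fram\bigr)$, and the augmentation $R/\fram^n\otimes_R^L R/\fram\to R/\fram$ (a ring map, since $R/\fram$ is the $\pi_0$ of the source) then induces the comparison $c_n\colon K_n\otimes_R^L R/\fram\to K_n\otimes_{R/\fram^n}^L R/\fram$, natural in $n$ and hence a map of pro-objects. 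Its cofiber is $K_n\otimes_{R/\fram^n}^L D_n$ with $D_n:=\mathrm{cofib}\bigl(R/\fram^n\otimes_R^L R/\fram\to R/\fram\bigr)\in D^{\leq -1}(\calC,R_\bullet)$, whose cohomology sheaves are $H^{-i-1}(D_n)=\Tor_i^R(R/\fram,R/\fram^n)$ for $i\geq 1$. So the whole statement reduces to controlling the pro-system $\{K_n\otimes_{R/\fram^n}^L D_n\}$.

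The basic input is the Artin--Rees lemma: resolving $R/\fram$ by finite free $R$-modules $P_\bullet$, a cycle of $P_\bullet/\fram^n$ in positive homological degree is, modulo a boundary, supported in $\fram^{n-c}P_\bullet$ for a constant $c=c(i)$, so $\{\Tor_i^R(R/\fram,R/\fram^n)\}_n$ is pro-zero for each $i\geq 1$ (this is precisely the computation already used in the proof of Proposition \ref{prop:derivedcompnoetherian}). Thus every cohomology pro-sheaf of $\{D_n\}$ vanishes. In the regular case one gets more: taking $P_\bullet$ to be the Koszul complex $K(f_1,\dots,f_r;R)$ on a regular sequence generating $\fram$ identifies $R/\fram^n\otimes_R^L R/\fram$ with $K(\bar f_1,\dots,\bar f_r;R/\fram^n)$, so $D_n$ lies in the \emph{fixed} range $[-r-1,-2]$ for all $n$; and a uniformly bounded pro-complex with pro-zero cohomology is itself pro-zero (kill the finitely many cohomology sheaves one truncation triangle at a time). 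Hence $\{D_n\}$ is a zero object of $\Pro(D(\calC,R_\bullet))$, and tensoring with an arbitrary $\{K_n\}$ annihilates it; this proves the unbounded (regular) case.

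For general $\{K_n\}\in D^-(\calC,R_\bullet)$, say $K_n\in D^{\leq a}(\calC,R_\bullet)$ with $a$ independent of $n$, the cofiber $D_n$ need no longer be uniformly bounded below, so one must use the bound on $K_n$. Fix a cohomological degree $j$ (with $j\leq a-1$, else $H^j$ of the cofiber vanishes automatically). Since $K_n\in D^{\leq a}$ and $\tau^{<j-a}D_n\in D^{\leq j-a-1}$, we get $K_n\otimes_{R/\fram^n}^L(\tau^{<j-a}D_n)\in D^{\leq j-1}$, hence $H^j\bigl(K_n\otimes_{R/\fram^n}^L D_n\bigr)=H^j\bigl(K_n\otimes_{R/\fram^n}^L \tau^{[j-a,-1]}D_n\bigr)$; the bounded pro-complex $\{\tau^{[j-a,-1]}D_n\}_n$ is pro-zero by the previous paragraph, so tensoring kills it and this cohomology pro-sheaf vanishes. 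Therefore $\{K_n\otimes_{R/\fram^n}^L D_n\}$ has pro-zero cohomology in every degree, whence $\R\lim_n\bigl(K_n\otimes_{R/\fram^n}^L D_n\bigr)=0$ by the Milnor sequence (available because $\calC=\Fun(\N^\opp,\calX)$ is replete, so $\N^\opp$-indexed $\R\lim$ has cohomological dimension $\leq 1$). Applying $\R\lim$ to the triangle defined by $c_n$ yields the limiting isomorphism $\R\lim_n(K_n\otimes_R^L R/\fram)\simeq\R\lim_n(K_n\otimes_{R/\fram^n}^L R/\fram)$, and the same degreewise analysis gives the asserted identification of pro-objects.

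The main obstacle is this last point. Unlike the regular case, $D_n$ is in general unbounded below, and it is genuinely \emph{false} that a uniformly bounded-above pro-complex with pro-zero cohomology is pro-zero; the hypothesis $K_n\in D^{\leq a}$ has to be fed in precisely so as to truncate $D_n$ to the finite window $[j-a,-1]$ relevant for a fixed $H^j$ before Artin--Rees is invoked. Getting this truncation bookkeeping right --- and observing that it becomes vacuous, so that the statement extends to unbounded $\{K_n\}$, exactly when $\fram$ is regular and the Koszul resolution makes $D_n$ uniformly bounded --- is the crux of the argument.
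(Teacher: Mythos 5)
Your proposal is correct and follows essentially the same route as the paper's proof: change of rings to produce the comparison map, Artin--Rees to show $\{R/\fram^n\otimes_R^L R/\fram\}\to\{R/\fram\}$ is a pro-isomorphism, and the uniform bound on $\{K_n\}$ to reduce each cohomological degree to a finite window (the paper phrases this via the finitely many contributing terms of the $\Tor$ spectral sequence rather than by truncating the cofiber, but it is the same point). In the regular case the paper instead replaces $\{R/\fram^n\}$ by the pro-isomorphic system $\{R/(f_1^n,\dots,f_r^n)\}$ of perfect $R$-algebras, whereas you resolve $R/\fram$ by its Koszul complex; both are the same uniform-boundedness observation in different packaging.
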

\begin{proof}
	Change of rings gives $\{K_n \otimes_R^L R/\fram\} \simeq \{K_n \otimes_{R/\fram^n}^L R/\fram^n \otimes_R^L R/\fram\}$. The Artin-Rees lemma shows that $\{R/\fram^n \otimes_R^L R/\fram\} \to \{R/\fram\}$ is a pro-isomorphism. Since $\{K_n\}$ is bounded above, the spectral sequence for $\Tor$ has only finitely many contributing terms to a given $E_\infty$-term, and hence
	\[ \{K_n \otimes_R^L R/\fram\} \simeq \{K_n \otimes_{R/\fram^n}^L R/\fram^n \otimes_R^L R/\fram\} \to \{K_n \otimes_{R/\fram^n}^L R/\fram\} \]
	is also a pro-isomorphism. Applying $\R\lim$ and using repleteness then gives the claim. Finally, if $\fram$ is generated by a regular sequence $(f_1,\dots,f_r)$, then $\{R/\fram^n\}$ is pro-isomorphic to $\{R/(f_1^n,\dots,f_r^n)\}$. Each quotient $R/(f_1^n,\dots,f_r^n)$ is $R$-perfect, and hence the $\Tor$-spectral sequence calculating $\calH^i(K \otimes_R^L R/(f_1^n,\dots,f_r^n))$ has only finitely many non-zero terms even when $K$ is unbounded, so the preceding argument applies.
\end{proof}

\begin{lemma}
	\label{lem:truncationofcomplete}
	For $\{K_n\} \in D^{-}_\comp(\calC,R_\bullet)$, the natural map gives $(\R\lim K_n) \otimes^L_R R/\fram^k \simeq K_k$ for $k \geq 0$. If $\fram$ is regular, this extends to unbounded complexes.
\end{lemma}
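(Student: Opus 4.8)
The plan is to factor the comparison through the intermediate complex $\R\lim_n(K_n \otimes^L_R R/\fram^k)$ by proving two independent statements: \textbf{(a)} $-\otimes^L_R R/\fram^k$ commutes with $\R\lim_n$ on the towers in sight, i.e. $(\R\lim_n K_n)\otimes^L_R R/\fram^k \simeq \R\lim_n(K_n \otimes^L_R R/\fram^k)$; and \textbf{(b)} the $\N^\opp$-tower $\{K_n \otimes^L_R R/\fram^k\}_{n \geq k}$ is, termwise on cohomology sheaves, pro-isomorphic to the constant tower with value $K_k$. Granting (a) and (b), one finishes by recalling that $\R\lim$ of an $\N^\opp$-tower of complexes depends only on the pro-systems of its cohomology sheaves: the cone of a cohomology-pro-isomorphism has pro-zero cohomology sheaves, and $\R^p\lim$ of a pro-zero tower of sheaves vanishes for $p=0$ (an inverse limit of a pro-zero system is zero) and $p=1$ (a telescoping/Mittag--Leffler argument), hence for all $p$ since $\R\lim$ has cohomological dimension $\leq 1$ on the replete topos $\calX$ (Proposition \ref{prop:cdrlim}); combining with the cohomology spectral sequence for $\R\lim$ yields $(\R\lim_n K_n)\otimes^L_R R/\fram^k \simeq \R\lim_n(K_n\otimes^L_R R/\fram^k) \simeq \R\lim_n K_k = K_k$.

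For (b): iterating the defining compatibility $K_n \otimes^L_{R/\fram^n} R/\fram^{n-1}\simeq K_{n-1}$ and transitivity of base change gives $K_n \otimes^L_{R/\fram^n} R/\fram^k \simeq K_k$ for all $n \geq k$, compatibly with transition maps, so $\{K_n \otimes^L_{R/\fram^n} R/\fram^k\}_{n \geq k}$ is literally the constant tower $K_k$. Since $K_n$ is an $R/\fram^n$-module complex, base change gives $K_n \otimes^L_R R/\fram^k \simeq K_n \otimes^L_{R/\fram^n}(R/\fram^n \otimes^L_R R/\fram^k)$, and the truncation map $R/\fram^n\otimes^L_R R/\fram^k \to R/\fram^n \otimes_R R/\fram^k = R/\fram^k$ induces a map $K_n \otimes^L_R R/\fram^k \to K_k$ whose fiber is $K_n \otimes^L_{R/\fram^n} E_n$ with $E_n := \tau^{\leq -1}(R/\fram^n\otimes^L_R R/\fram^k)$, so $H^{-i}(E_n) = \Tor^R_i(R/\fram^n, R/\fram^k)$. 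The Artin--Rees lemma shows the towers $\{\Tor^R_i(R/\fram^n, R/\fram^k)\}_n$ are pro-zero for $i>0$; as $\{K_n\}$ is uniformly bounded above and $E_n$ lives in degrees $\leq -1$, the $\Tor$ spectral sequence for $K_n\otimes^L_{R/\fram^n}E_n$ has only finitely many terms contributing in each fixed degree, so $\{K_n\otimes^L_{R/\fram^n}E_n\}_n$ is pro-zero on cohomology. This is exactly the bookkeeping of Lemma \ref{lem:proisomchangecoeff}, and it proves (b).

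For (a): choose a resolution $P^\bullet \to R/\fram^k$ by finite free $R$-modules (possible since $R$ is noetherian and $R/\fram^k$ finitely generated), placed in cohomological degrees $\leq 0$, so that $M\otimes^L_R R/\fram^k = \Tot(M\otimes_R P^\bullet)$ for any $M$. Tensoring with each $P^j$ commutes with $\R\lim_n$ as $\R\lim$ commutes with finite direct sums, so it suffices that the totalization commute with $\R\lim_n$. When $\{K_n\}$ is uniformly bounded above, so is $\R\lim_n K_n$ (by cohomological dimension $\leq 1$), whence for every $M$ in sight the bicomplex $M\otimes_R P^\bullet$ is bounded above in both directions; its totalization is in each degree a finite direct sum, hence a finite homotopy limit of shifts of the $M\otimes_R P^j$, and therefore commutes with $\R\lim_n$. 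This proves (a), hence the lemma in the bounded case. For the unbounded case with $\fram$ generated by a regular sequence $(f_1,\dots,f_r)$, note first that $R/\fram^k$ is $R$-perfect: by induction on $k$ from the exact sequences $0\to \fram^{k-1}/\fram^k \to R/\fram^k \to R/\fram^{k-1}\to 0$, using that $\fram^{k-1}/\fram^k$ is a free $R/\fram$-module and that $R/\fram$ is $R$-perfect via the Koszul complex. Then $P^\bullet$ may be taken finite, so the totalization in (a) is finite and commutes with $\R\lim_n$ with no boundedness needed; and in (b), $R/\fram^n\otimes^L_R R/\fram^k$ is perfect over $R/\fram^n$ (base change of an $R$-perfect complex), so $E_n$ is a bounded complex with pro-zero cohomology towers, hence $\{E_n\}_n$ is pro-zero as a pro-object of $D(\calX,R/\fram^n)$ (a d\'evissage through its finitely many cohomology sheaves), and tensoring a compatible family of null transition maps by $K_n$ keeps the result pro-zero. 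Thus both (a) and (b) persist, proving the lemma in general.

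The step I expect to be the main obstacle is (a): derived tensor product does not commute with homotopy limits in general, so interchanging $-\otimes^L_R R/\fram^k$ with $\R\lim_n$ genuinely uses either the uniform boundedness of $\{K_n\}$ (which forces $M\otimes_R P^\bullet$ to be degreewise a finite sum, hence a finite homotopy limit) or, failing boundedness, the regularity of $\fram$ (which makes $R/\fram^k$ perfect and $P^\bullet$ finite). Everything else is a routine application of Artin--Rees together with the $\R\lim$-formalism in replete topoi already developed above.
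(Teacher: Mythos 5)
Your argument is correct and is essentially the paper's proof in a different packaging: the paper's commutative square with $i$-close perfect approximations $P_i \to R/\fram$ (after first reducing to $k=1$ by d\'evissage) encodes exactly your steps (a) and (b) --- the $P_i$ are your stupid truncations $\sigma^{\geq -m}P^\bullet$, the bottom-right identification $\R\lim(K_n\otimes_R R/\fram)\simeq K_1$ is your step (b) via Lemma \ref{lem:proisomchangecoeff}, and the connectivity estimates are your appeal to $\R\lim$ having cohomological dimension $\leq 1$ by repleteness. One small phrasing correction to (a): for $P^\bullet$ unbounded below, $\Tot(M\otimes_R P^\bullet)$ is not a \emph{finite} homotopy limit of shifted columns but rather the inverse limit of the finite-stage totalizations $\Tot(M\otimes_R\sigma^{\geq -m}P^\bullet)$ along termwise surjective maps --- still a homotopy limit by repleteness, so it still commutes with $\R\lim_n$ and your conclusion is unaffected.
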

\begin{proof}
	By devissage and the completeness of $\{K_n\}$, we may assume $k=1$. By shifting, we can also assume $\{K_n\} \in D^{\leq 0}(\calC)$, i.e., $K_n \in D^{\leq 0}(\calX)$ for all $n$. Fix an integer $i \geq 0$, and an $R$-perfect complex $P_i$ with a map $P_i \to R/\fram$ whose cone lies in $D^{\leq -i}(R)$. Then there is a commutative diagram
	\[ \xymatrix{ (\R\lim K_n) \otimes_R^L P_i \ar[r]^-a \ar[d]^-b & \R\lim(K_n \otimes_R P_i) \ar[d]^-d \\
	(\R\lim K_n) \otimes_R^L R/\fram \ar[r]^-c & \R\lim(K_n \otimes_R R/\fram) \simeq K_1. } \]
The isomorphism on the bottom right is due to Lemma \ref{lem:proisomchangecoeff}. As $P_i$ is perfect, $a$ is an isomorphism. Moreover, $\cok(b) \in D^{\leq -i+1}(\calX)$ as $\R\lim K_n \in D^{\leq 1}(\calX)$ by repleteness. A similar argument also shows $\cok(d) \in D^{\leq -i+1}(\calX )$. Hence, $\cok(c) \in D^{\leq -i+1}(\calX)$. Then $c$ must be an isomorphism as this is true for all $i$.
\end{proof}

We can now show that the two notions of completeness coincide:

\begin{lemma}
	\label{lem:arcomplete}
For each $m$, the natural map induces $\widehat{R} \otimes^L_R R/\fram^m \simeq R/\fram^m$. In particular, $D_\comp(\calX,\widehat{R}) \simeq D_\comp(\calX,R)$.
\end{lemma}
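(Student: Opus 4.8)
The plan is: first derive the identity $\widehat{R} \otimes^L_R R/\fram^m \simeq R/\fram^m$ as a one-line consequence of Lemma \ref{lem:truncationofcomplete}, and then read off the equivalence of completion categories by transporting the completeness condition along $R \to \widehat{R}$.

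For the first part, I would view the system $R_\bullet = \{R/\fram^n\}$ itself as an object of $D(\calC,R_\bullet)$, where $\calC = \Fun(\N^\opp,\calX)$ as in Definition \ref{def:compatiblesystem}. It lies in $D^-_\comp(\calC,R_\bullet)$ (in fact in $D^b_\comp$): every term sits in degree $0$, and the completeness condition $R/\fram^n \otimes^L_{R/\fram^n} R/\fram^{n-1} \xrightarrow{\sim} R/\fram^{n-1}$ is tautological. Since the transition maps $R/\fram^n \to R/\fram^{n-1}$ are surjective, Proposition \ref{prop:limrlim} identifies $\R\lim_n R/\fram^n$ with $\lim_n R/\fram^n = \widehat{R}$. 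Applying Lemma \ref{lem:truncationofcomplete} with $k=m$ then produces exactly the equivalence $\widehat{R} \otimes^L_R R/\fram^m \simeq R/\fram^m$ via the natural map; note this needs no regularity hypothesis on $\fram$ since $R_\bullet$ is bounded.

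For the ``in particular'' statement, let $\rho\colon D(\calX,\widehat{R}) \to D(\calX,R)$ be restriction of scalars along $R \to \widehat{R}$ (exact, conservative, and commuting with $\R\lim$), and let $\gamma\colon D(\calX,R) \to D(\calX,\widehat{R})$ be the $\fram$-adic completion functor $K \mapsto \R\lim_n(K \otimes^L_R R/\fram^n)$ of Proposition \ref{prop:derivedcompnoetherian}(3), refined to take values in $\widehat{R}$-modules using that $\{K \otimes^L_R R/\fram^n\} \in D_\comp(\calC,R_\bullet)$ and that $\R\lim$ of an $R_\bullet$-module is a module over $\R\lim_n R/\fram^n = \widehat{R}$. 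The key computation, immediate from the first part by associativity of $\otimes^L$, is that for any $K \in D(\calX,\widehat{R})$,
\[ K \otimes^L_{\widehat{R}} R/\fram^n \;\simeq\; K \otimes^L_{\widehat{R}} \widehat{R} \otimes^L_R R/\fram^n \;\simeq\; \rho(K) \otimes^L_R R/\fram^n, \]
compatibly in $n$ and $\widehat{R}$-linearly. Applying the conservative functor $\rho$ and Proposition \ref{prop:derivedcompnoetherian}(1) shows that $K$ is $\fram$-adically complete over $\widehat{R}$ if and only if $\rho(K)$ is derived $\fram$-complete over $R$, so $\rho$ restricts to a functor $D_\comp(\calX,\widehat{R}) \to D_\comp(\calX,R)$. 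The same displayed identity, together with $\rho\gamma(K) \simeq K$ for $K$ derived complete (Proposition \ref{prop:derivedcompnoetherian}(1)), shows that $\gamma(K) \otimes^L_{\widehat{R}} R/\fram^n \simeq K \otimes^L_R R/\fram^n$, hence $\R\lim_n(\gamma(K) \otimes^L_{\widehat{R}} R/\fram^n) \simeq \gamma(K)$, so $\gamma$ lands in $D_\comp(\calX,\widehat{R})$; and for $L \in D_\comp(\calX,\widehat{R})$ it gives $\gamma\rho(L) \simeq \R\lim_n(L \otimes^L_{\widehat{R}} R/\fram^n) \simeq L$. Thus $\rho$ and $\gamma$ are mutually inverse equivalences between the two completion subcategories.

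I expect the only real friction to be the careful tracking of $\widehat{R}$-module (as opposed to underlying $R$-module) structures: making sure $\gamma$ genuinely refines to a functor into $D(\calX,\widehat{R})$, that the displayed equivalence is $\widehat{R}$-linear and natural in $n$, and that the round-trip identities hold in $D(\calX,\widehat{R})$ rather than merely after applying $\rho$. The actual mathematical input is just the single application of Lemma \ref{lem:truncationofcomplete}.
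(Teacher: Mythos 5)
Your proof is correct and follows the same route as the paper's (very terse) argument: the identity $\widehat{R}\otimes^L_R R/\fram^m \simeq R/\fram^m$ is obtained by applying Lemma \ref{lem:truncationofcomplete} to the tautologically complete system $\{R/\fram^n\}$ (with $\R\lim = \lim = \widehat{R}$ by repleteness), and the equivalence of completion categories is then deduced formally from Proposition \ref{prop:derivedcompnoetherian}. You have merely spelled out the ``formal'' second step, correctly, via restriction of scalars and the completion functor.
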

\begin{proof}
	The first part follows from Lemma \ref{lem:truncationofcomplete}. The second part follows formally from this and Proposition \ref{prop:derivedcompnoetherian}.
\end{proof}

We now show that an $\fram$-adically complete complex is determined by its reductions modulo powers of $\fram$; this will be used later to compare complexes on the pro-\'etale site to Ekedahl's category of adic complexes.

\begin{lemma} 
	\label{lem:completecompatiblesystems}
	With notation as above, we have:
	\begin{enumerate}
		\item There is a map $\pi:(\calC, R_\bullet) \to (\calX, \widehat{R})$ of ringed topoi given by $\pi_*(\{F_n\}) = \lim F_n$ with $\pi^{-1} \widehat{R} \to R_\bullet$ the natural map. 
		\item Pullback under $\pi$ induces a fully faithful functor $\pi^*:D_\comp(\calX, \widehat{R}) \to D_\comp(\calC,R_\bullet)$.
		\item Pushforward under $\pi$ induces a fully faithful functor $\pi_*:D^{-}_\comp(\calC,R_\bullet) \to D^{-}_\comp(\calX, \widehat{R})$. 
		\item $\pi$ induces an equivalence $D^{-}_\comp(\calX,\widehat{R}) \simeq D^{-}_\comp(\calC,R_\bullet)$.
		\item If $\fram$ is regular, then (3) and (4) extend to the unbounded case.
	\end{enumerate}
\end{lemma}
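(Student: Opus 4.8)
The plan is to make $\pi$ explicit and then deduce (2)--(5) formally from Lemmas~\ref{lem:truncationofcomplete} and~\ref{lem:arcomplete}. For~(1): since limits and colimits in $\calC = \Fun(\N^\opp,\calX)$ are computed termwise, the constant-system functor $G \mapsto \{G\}$ (all transition maps the identity) is exact and is left adjoint to $\{F_n\} \mapsto \lim_n F_n$; we take these as $\pi^{-1}$ and $\pi_*$, with $\pi^{-1}\widehat R = \{\widehat R\} \to \{R/\fram^n\} = R_\bullet$ the evident map of rings. As $\pi^{-1}$ is exact, $L\pi^* = \pi^*$ is given by $\pi^* K = \{K \otimes^L_{\widehat R} R/\fram^n\}$; as $\calX$ is replete, $\R\lim$ has cohomological dimension $1$ by Proposition~\ref{prop:cdrlim}, so it is defined on all of $D(\calC,R_\bullet)$ and provides a right adjoint $\R\pi_*$ to $\pi^*$. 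Using Lemma~\ref{lem:arcomplete} to identify $R/\fram^n$ as an $\widehat R$-module with $\widehat R \otimes^L_R R/\fram^n$, one rewrites $\pi^* K \simeq \{K \otimes^L_R R/\fram^n\}$ (the $R$-structure via $R\to\widehat R$); in particular $\pi^*$ preserves $D^-$ and lands in $D_\comp(\calC,R_\bullet)$, since $(K\otimes^L_R R/\fram^n)\otimes^L_{R/\fram^n} R/\fram^{n-1} \simeq K\otimes^L_R R/\fram^{n-1}$.

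Part~(2) is then immediate: full faithfulness of $\pi^*$ on $D_\comp(\calX,\widehat R)$ is the assertion that the unit $K \to \R\pi_*\pi^* K = \R\lim_n(K \otimes^L_{\widehat R} R/\fram^n)$ is an equivalence, which is exactly the definition of $K$ being $\fram$-adically complete. For~(3), take $\{K_n\}\in D^-_\comp(\calC,R_\bullet)$. Completeness together with $\fram/\fram^n$ being nilpotent forces each $K_n$ into a fixed $D^{\leq a}(\calX)$ --- apply Nakayama to the top cohomology sheaf of $K_n$, using that $K_n\otimes^L_{R/\fram^n}R/\fram \simeq K_1$ --- so $\pi_*\{K_n\} = \R\lim_n K_n \in D^{\leq a+1}(\calX)$ by repleteness. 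Lemma~\ref{lem:truncationofcomplete} gives $(\R\lim_n K_n)\otimes^L_R R/\fram^m \simeq K_m$, hence $\R\lim_m\big((\R\lim_n K_n)\otimes^L_R R/\fram^m\big)\simeq \R\lim_n K_n$, i.e.\ $\R\lim_n K_n$ is derived $\fram$-complete by Proposition~\ref{prop:derivedcompnoetherian}(1); via the identification $D_\comp(\calX,R)\simeq D_\comp(\calX,\widehat R)$ of Lemma~\ref{lem:arcomplete} this shows $\pi_*\{K_n\}\in D^-_\comp(\calX,\widehat R)$. Full faithfulness of $\pi_*$ there reduces to the counit $\pi^*\pi_*\{L_n\}\to\{L_n\}$ being an equivalence; on the $n$-th term this is the natural map $(\R\lim_m L_m)\otimes^L_{\widehat R} R/\fram^n \simeq (\R\lim_m L_m)\otimes^L_R R/\fram^n \to L_n$, an equivalence by Lemmas~\ref{lem:arcomplete} and~\ref{lem:truncationofcomplete}.

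Part~(4) is now formal: $\pi^*$ and $\pi_*$ are adjoint, $\pi^*$ sends $D^-_\comp(\calX,\widehat R)$ into $D^-_\comp(\calC,R_\bullet)$ and $\pi_*$ sends $D^-_\comp(\calC,R_\bullet)$ into $D^-_\comp(\calX,\widehat R)$, and by (2) and (3) the unit and counit of the adjunction are equivalences on these subcategories, so $\pi^*$ and $\pi_*$ restrict to mutually inverse equivalences. For~(5): every appeal to Lemma~\ref{lem:truncationofcomplete} above was made under a bounded-above hypothesis, and when $\fram$ is regular that lemma is valid for arbitrary complexes; the arguments for (3) and (4) then go through with no boundedness assumption. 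The only delicate bookkeeping is keeping the $R$- and $\widehat R$-module structures straight --- that is, systematically applying Lemma~\ref{lem:arcomplete} so that $\pi^*$ is computed by the familiar $\{-\otimes^L_R R/\fram^n\}$ and the adjunction (co)unit is identified with the ``natural map'' of Lemma~\ref{lem:truncationofcomplete} --- and the boundedness restriction in (3)--(4) is inherited directly from that lemma.
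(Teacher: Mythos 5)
Your proof is correct and follows essentially the same route as the paper: identify $\pi^*K$ with $\{K\otimes^L_{\widehat R}R/\fram^n\}$ and $\pi_*$ with $\R\lim$, deduce (2) from Proposition \ref{prop:derivedcompnoetherian}, deduce (3) from Lemma \ref{lem:truncationofcomplete} via the adjunction counit, and get (4) and (5) formally. The extra verifications you supply (the Nakayama bound on the amplitude of the $K_n$, the explicit unit/counit bookkeeping) are fine but already implicit in the boundedness hypothesis and the paper's terse argument.
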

\begin{proof}
	(1) is clear. The functor $\pi^*:D(\calX, \widehat{R}) \to D(\calC,R_\bullet)$ is given by $K \mapsto \{K \otimes_{\widehat{R}} R/\fram^n\}$, while $\pi_*:D(\calC,R_\bullet) \to D(\calX, \widehat{R})$ is given by $\pi_* (\{K_n\}) \simeq \R\lim K_n$. It is then clear that $\pi^*$ carries complete complexes to complete ones. Given $\{K_n\} \in D_\comp(\calC,R_\bullet)$, each $K_n \in D(\calX, R/\fram^n)$ is derived $\fram$-complete, and hence $\pi_*$ preserves completeness as well (since $\pi_* \{K_n\} := \R\lim K_n$ is $\fram$-adically complete). For (2), it then suffices to check that $K \simeq \R\lim(K \otimes_{\widehat{R}}^L R/\fram^n)$ for any $K \in D_\comp(\calX,\widehat{R})$, which is true by Proposition \ref{prop:derivedcompnoetherian}. Lemma \ref{lem:truncationofcomplete} and (2) immediately give (3), and hence (4). Finally, (5) follows by the same argument as (3) as all the ingredients in the proof of the latter extend to the unbounded setting if $\fram$ is regular.
\end{proof}

\newpage

\section{The pro-\'etale topology}\label{sec:ProEtale}

We define the pro-\'etale site of a scheme in \S \ref{ss:proetsite}, and study the associated topos in \S \ref{ss:proettopos}. In \S \ref{ss:proetpoint}, we use these ideas to construct a variant of Tate's continuous cohomology of profinite groups that behaves better in some functorial respects.

\subsection{The site}
\label{ss:proetsite}

\begin{definition} A map $f:Y \to X$ of schemes is called {\em weakly \'etale} if $f$ is flat and $\Delta_f: Y\to Y \times_X Y$ is flat. Write $X_\proet$ for the category of weakly \'etale $X$-schemes, which we give the structure of a site by declaring a cover to be one that is a cover in the fpqc topology, i.e. a family $\{Y_i \to Y\}$ of maps in $X_\proet$ is a covering family if any open affine in $Y$ is mapped onto by an open affine in $\sqcup_i Y_i$.
\end{definition}

\begin{remark}
	To avoid set-theoretic issues, it suffices for our purposes to define the site $X_\proet$ using weakly \'etale maps $Y \to X$ with $|Y| < \kappa$, where $\kappa$ is a fixed uncountable strong limit cardinal larger than $|X|$.\footnote{Recall that a cardinal $\kappa$ is a strong limit cardinal if for any $\gamma<\kappa$, $2^\gamma<\kappa$.} The choice of $\kappa$ is dictated by the desire to have $\Shv(X_\proet)$ be locally weakly contractible. Increasing $\kappa$ results in a different topos, but cohomology remains the same, as it can be calculated by a simplicial covering with w-contractible schemes.
\end{remark}

\begin{remark} We do not directly work with pro-\'etale morphisms of schemes to define $X_\proet$ as the property of being pro-\'etale is not geometric:  Example \ref{ex:proetalenotlocal} shows its failure to localise on the target. Nonetheless, we call $X_\proet$ the pro-\'etale site, as by Theorem \ref{t:WeaklyVsIndEtale} any weakly \'etale map $f:Y\to X$ is Zariski locally on $X$ and locally in $Y_\proet$ of the form $\Spec B\to \Spec A$ with $A\to B$ ind-\'etale. 
\end{remark}

Some elementary examples of weakly \'etale maps:

\begin{example} For a field $k$, a map $\Spec(R) \to \Spec(k)$ is weakly \'etale if and only if $k \to R$ is ind-\'etale. Indeed, $R$ embeds into some ind-\'etale $k$-algebra $S$; but one checks easily that as $k$ is a field, any subalgebra of an ind-\'etale $k$-algebra is again ind-\'etale.
\end{example}

\begin{example}	For a scheme $X$ and a geometric point $x$, the map $\Spec(\calO_{X,x}^{sh}) \to X$ from the strict henselization is weakly \'etale; similarly, the henselization and Zariski localizations are also weakly \'etale.
\end{example}

We begin by recording some basic generalities on pro-\'etale maps.

\begin{lemma} Compositions and base changes of weakly \'etale maps are weakly \'etale.
\end{lemma}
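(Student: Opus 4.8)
The plan is to check the two defining conditions of weak \'etaleness --- flatness of the map and flatness of its diagonal --- separately, using the standard compatibilities of flatness and of relative diagonals with composition and base change. It is cleanest to treat base change first, since that statement will be reused inside the composition argument.

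\emph{Base change.} Let $f:Y\to X$ be weakly \'etale and let $X'\to X$ be arbitrary, with $Y':=Y\times_X X'$ and $f':Y'\to X'$ the projection. Flatness of $f'$ is immediate, flatness being stable under base change. For the diagonal, I would use the canonical identification $Y'\times_{X'}Y'\cong (Y\times_X Y)\times_X X'$, under which $\Delta_{f'}:Y'\to Y'\times_{X'}Y'$ is precisely the base change of $\Delta_f:Y\to Y\times_X Y$ along $X'\to X$. Since $\Delta_f$ is flat, so is $\Delta_{f'}$, and hence $f'$ is weakly \'etale.

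\emph{Composition.} Let $g:Z\to Y$ and $f:Y\to X$ be weakly \'etale; I claim $f\circ g$ is weakly \'etale. Flatness of $f\circ g$ holds because a composite of flat maps is flat. For the diagonal, I would use the factorization
\[ \Delta_{Z/X}\colon Z \xrightarrow{\ \Delta_{Z/Y}\ } Z\times_Y Z \longrightarrow Z\times_X Z, \]
together with the observation that the second map is the base change of $\Delta_{Y/X}:Y\to Y\times_X Y$ along $g\times g:Z\times_X Z\to Y\times_X Y$; equivalently, $Z\times_Y Z = (Z\times_X Z)\times_{Y\times_X Y} Y$. Hence $Z\times_Y Z\to Z\times_X Z$ is flat (base change of the flat map $\Delta_f$, using the base-change case just established), while $\Delta_{Z/Y}$ is flat since $g$ is weakly \'etale; as a composite of flat maps is flat, $\Delta_{Z/X}$ is flat.

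There is no real obstacle here: the only point requiring care is the bookkeeping of the fibre-product identifications, in particular verifying via universal properties that $Z\times_Y Z$ is genuinely the pullback of the diagonal $Y\to Y\times_X Y$ along $Z\times_X Z\to Y\times_X Y$, which is a routine diagram chase. (One should also record at this stage that weakly \'etale maps are closed under composition for later use, e.g.\ that $X_\proet$ is indeed a category of the asserted kind.)
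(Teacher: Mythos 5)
Your proof is correct and is exactly the standard argument the authors have in mind; the paper's own proof is just the word ``Clear.'' The two fibre-product identifications you use (the diagonal commuting with base change, and $Z\times_Y Z$ being the pullback of $\Delta_{Y/X}$ along $Z\times_X Z\to Y\times_X Y$) are the right ones, and the rest is stability of flatness under composition and base change.
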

\begin{proof} Clear.
\end{proof}

\begin{lemma} Any map in $X_\proet$ is weakly \'etale.
\end{lemma}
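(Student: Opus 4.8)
The plan is to avoid any cancellation argument and instead factor the given morphism through its graph, reducing everything to two facts that are already available: stability of weakly \'etale morphisms under composition and base change (proved just above), and the observation that the diagonal of a weakly \'etale morphism is again weakly \'etale.

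I would first establish this auxiliary observation: if $f:Z\to X$ is weakly \'etale, then $\Delta_f:Z\to Z\times_X Z$ is weakly \'etale. Indeed, $\Delta_f$ is flat by the definition of $f$ being weakly \'etale, and since $\Delta_f$ is a monomorphism of schemes its own diagonal $Z\to Z\times_{Z\times_X Z}Z$ is an isomorphism, hence flat; so both conditions hold. Next, given a morphism $g:Y\to Z$ in $X_\proet$ — so that $p:Z\to X$ and $p\circ g:Y\to X$ are weakly \'etale — I would factor it as
\[ Y \xrightarrow{\ \Gamma_g\ } Y\times_X Z \xrightarrow{\ \pr_Z\ } Z, \]
where $\Gamma_g=(\id_Y,g)$ is the graph. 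Here $\pr_Z$ is the base change of $Y\to X$ along $p$, hence weakly \'etale, while $\Gamma_g$ is the base change of $\Delta_{Z/X}:Z\to Z\times_X Z$ along $g\times_X\id_Z:Y\times_X Z\to Z\times_X Z$ (a routine check on functors of points), hence weakly \'etale by the observation above. Thus $g$ is a composite of weakly \'etale maps, and we are done.

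I do not expect a serious obstacle here; the only point to watch is the temptation to argue by a naive ``two out of three'' for flatness, which fails ($p\circ g$ and $p$ flat does not force $g$ flat), so the graph factorization — using only stability under composition and base change — is the clean route. (One could alternatively reduce affine-locally and invoke the cancellation clause of Proposition \ref{p:PropWeaklyEtale}(4), but that also requires localizing the base $X$, which is legitimate because open immersions are monomorphisms so fibre products over an open subscheme are unchanged; this is more cumbersome than the graph argument.)
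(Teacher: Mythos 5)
Your argument is correct, but it is not the route the paper takes: the paper's proof is a one-line appeal to the cancellation clause of Proposition \ref{p:PropWeaklyEtale}(4) (if $g\circ f$ and $f$ are weakly \'etale, then so is $g$), which in turn rests on the corresponding lemma of Gabber--Ramero. Your graph factorization $Y \to Y\times_X Z \to Z$ replaces that input entirely: the only ingredients are the stability of weak \'etaleness under composition and base change (the lemma immediately preceding this one) plus your observation that the diagonal of a weakly \'etale morphism is itself weakly \'etale --- which is immediate, since $\Delta_f$ is flat by definition and, being a monomorphism, has its own diagonal an isomorphism. The identification of $\Gamma_g$ as the base change of $\Delta_{Z/X}$ along $(g,\id_Z):Y\times_X Z\to Z\times_X Z$ is the standard functor-of-points check and is fine. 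What your approach buys is self-containedness and the avoidance of any two-out-of-three reasoning for flatness (which, as you note, would be false if applied naively); what the paper's approach buys is brevity, since the cancellation statement had already been recorded and is needed elsewhere anyway. Your parenthetical correctly identifies the paper's actual argument as the alternative you chose not to pursue.
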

\begin{proof} This follows from Proposition \ref{p:PropWeaklyEtale} (iv).
\end{proof}

The previous observations give good categorical properties for $X_\proet$:

\begin{lemma} The category $X_\proet$ has finite limits, while the full subcategory spanned by affine weakly \'etale maps $Y \to X$ has all small limits. All limits in question agree with those in $\Sch_{/X}$.
\end{lemma}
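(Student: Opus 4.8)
The plan is to handle the two assertions in turn, each time showing that the limit, computed in $\Sch_{/X}$, already lies in the relevant subcategory. For finite limits in $X_\proet$ the terminal object is $X$ itself: $\id_X$ is flat with invertible — hence flat — diagonal. Given a cospan $Y\to Z\gets W$ in $X_\proet$ I would form $P:=Y\times_Z W$ in $\Sch$ and note that $P\to W$ is a base change of $Y\to Z$; the latter is weakly \'etale because every morphism of $X_\proet$ is (the lemma just above, via Proposition~\ref{p:PropWeaklyEtale}(4)), so $P\to W$ is weakly \'etale, and composing with the weakly \'etale map $W\to X$ gives $P\in X_\proet$. As $X_\proet\hookrightarrow\Sch_{/X}$ is full, $P$ is also the fibre product in $\Sch_{/X}$. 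Since a category with a terminal object and all fibre products has all finite limits, the first assertion follows, with limits agreeing with those in $\Sch_{/X}$.

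For the second assertion I would read ``affine weakly \'etale maps $Y\to X$'' as the full subcategory $\calC\subset X_\proet$ of those $Y\to X$ which are \emph{affine morphisms} (equivalently, relative $\Spec$ over $X$ of a quasi-coherent $\calO_X$-algebra); this reduces to affine weakly \'etale $X$-schemes when $X$ is affine, and the affineness is exactly what makes infinite limits representable by schemes. Every small limit is built from small products and equalizers, so it suffices to produce those. A small product $\prod_{i\in I}^X Y_i$ I would exhibit as the cofiltered limit over finite $F\subset I$ of the finite fibre products $\prod_{i\in F}^X Y_i$, each of which lies in $\calC$ by the finite-limit case (and is affine over $X$ as an iterated base change of affine maps), with transition maps again weakly \'etale. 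Writing $Y_F=\underline{\Spec}_X\calA_F$, the product is then $\underline{\Spec}_X(\colim_F\calA_F)$, hence affine over $X$; and it is weakly \'etale over $X$ because both flatness and flatness of the diagonal survive such filtered colimits of algebras: $\colim_F\calA_F$ is a filtered colimit of flat $\calO_X$-algebras, and $(\colim_F\calA_F)\otimes_{\calO_X}(\colim_F\calA_F)=\colim_F(\calA_F\otimes_{\calO_X}\calA_F)\to\colim_F\calA_F$ is a filtered colimit of the flat multiplication maps. The adjunction $\Hom_{\Sch_{/X}}(T,\underline{\Spec}_X\calB)=\Hom_{\calO_X\text{-}\mathrm{alg}}(\calB,g_*\calO_T)$ simultaneously identifies this with the product in $\Sch_{/X}$.

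For equalizers, given $f,g\colon Y\rightrightarrows Z$ in $\calC$, I would take the equalizer of $\Sch_{/X}$, namely $E:=Y\times_{(f,g),\,Z\times_X Z,\,\Delta}Z$, which is a base change of the diagonal $\Delta_{Z/X}\colon Z\to Z\times_X Z$. The point is that $\Delta_{Z/X}$ is a flat monomorphism — flat because $Z\to X$ is weakly \'etale, a monomorphism because it is a diagonal — hence itself weakly \'etale (its own diagonal is an isomorphism); it is also an affine morphism, since $Z\to X$ and $Z\times_X Z\to X$ are affine and the latter is separated. Therefore $E\to Y$ is affine and weakly \'etale, and so is $E\to X$; thus $E\in\calC$, and $E$ is the equalizer in $\Sch_{/X}$ by construction. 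Hence $\calC$ has all small limits, agreeing with those in $\Sch_{/X}$.

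I expect the only genuinely substantive points to be: fixing the ``affine morphism'' interpretation (without which infinite limits need not exist as schemes at all), and checking that weak \'etaleness — specifically flatness of the diagonal — is preserved under the infinite tensor products appearing in infinite fibre products. The latter is exactly the standard fact that a filtered colimit of flat modules over a filtered colimit of rings is flat, applied to the multiplication maps $\calA_F\otimes_{\calO_X}\calA_F\to\calA_F$. The finite-limit half and the equalizer/diagonal computation should be routine given the stability properties of weakly \'etale morphisms recalled just above.
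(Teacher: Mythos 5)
Your proof is correct and follows essentially the same route as the paper: finite limits come from the terminal object $X$ together with fibre products (using that every map in $X_\proet$ is weakly \'etale and that weak \'etaleness is stable under composition and base change), and the infinite limits in the affine subcategory reduce to cofiltered limits of affine $X$-schemes, where flatness survives filtered colimits of algebras. Your explicit verification that flatness of the diagonal also survives the filtered colimit, and your separate treatment of equalizers via the flat affine monomorphism $\Delta_{Z/X}$, are just slightly more detailed versions of what the paper compresses into ``flatness is preserved under filtered colimits of rings'' and ``finite limits exist by the same argument.''
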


\begin{proof}	For the first part, it suffices to show that $X_\proet$ has a final object and arbitrary fibre products. Clearly $X$ is a final object. Moreover, if $Y_1 \to Y_2 \gets Y_3$ is a diagram in $X_\proet$, then both maps in the composition $Y_1 \times_{Y_2} Y_3 \to Y_i \to X$ are weakly \'etale for any $i \in \{1,2,3\}$ by the previous lemmas, proving the claim. For the second part, the same argument as above shows finite limits exist. Hence, it suffices to check that small cofiltered limits exist, but this is clear: the limit of a cofiltered diagram of affine weakly \'etale $X$-schemes is an affine $X$-scheme that is weakly \'etale over $X$ as flatness is preserved under filtered colimits of rings.
\end{proof}

We record an example of a typical ``new'' object in $X_\proet$:

\begin{example}\label{ex:tensorprofiniteset} The category $X_\proet$ is ``tensored over'' profinite sets, i.e., given a profinite set $S$ and $Y \in X_\proet$, one can define $Y \otimes S \in X_\proet$ as follows. Given $S = \lim_i S_i$ as a cofiltered limit of finite sets, we obtain constant $X$-schemes $\underline{S_i} \in X_\et \subset X_\proet$ with value $S_i$. Set $\underline{S} = \lim_i \underline{S_i}$, and $Y \otimes S := Y \times_X \underline{S}$. If $X$ is qcqs, then for any finitely presented $X$-scheme $U$, one has $\Hom_X(Y \otimes S,U) = \colim_i \Hom_X(Y \otimes {S_i},U) = \colim_i \prod_{S_i} \Hom_X(Y,U)$. The association $S \mapsto \underline{S}$ defines a limit preserving functor from profinite sets to $X_\proet$.
\end{example}

Using these objects, we can describe the pro-\'etale site of a field explicitly:

\begin{example} 
	\label{ex:proetalegeometricpoint}
	Fix a field $k$. If $\overline{k}$ is a separable closure, then the qcqs objects in $\Spec(\overline{k})_\proet$ identify with the category of profinite sets via the functor $Y \mapsto Y(\overline{k})$ with inverse $S \mapsto \underline{S}$ (in the notation of Example \ref{ex:tensorprofiniteset}). The map $\Spec(\overline{k}) \to \Spec(k)$ is a weakly \'etale $\underline{G}$-torsor, so the qcqs objects in $\Spec(k)_\proet$ identify with pro-objects in the category of finite discrete $G$-sets, i.e., with the category of profinite continuous $G$-sets. Under this identification, a family $\{S_i \to S\}$ of continuous $G$-equivariant map of such sets is a covering family if there exists a finite subset $J$ of the indices such that $\sqcup_{j \in J} S_j \to S$ is surjective. To see this, we may assume $k = \overline{k}$. Given such a family $\{S_i \to S\}$, the corresponding map $\sqcup_{j \in J} \underline{S_j} \to \underline{S}$ is a surjective weakly \'etale map of affines, so $\{\underline{S_i} \to \underline{S}\}$ is a covering family in $\Spec(\overline{k})_\proet$; the converse is clear. Evaluation on $\underline{S}$ is exact precisely when $S$ is extremally disconnected; note that this functor is not a topos-theoretic point as it does not commute with finite coproducts (though it does commute with filtered colimits and all limits). 
\end{example}

\begin{remark}
	\label{rmk:oldproet}
	The site $X_\proet$ introduced in this paper differs from the one in \cite{ScholzepHT}, temporarily denoted $X'_{\proet}$. More precisely, there is a natural map $\mu_X:\Shv(X_\proet) \to \Shv(X'_\proet)$ of topoi, but $\mu_X$ is {\em not} an equivalence: $\mu_{X,\ast}$ is fully faithful, but there are more objects in $\Shv(X'_\proet)$. This is evident from the definition, and can be seen directly in Example \ref{ex:proetalegeometricpoint} when $X = \Spec(k)$ with $k$ an algebraically closed field. Indeed, both the categories $X_\proet$ and $X'_\proet$ are identified with the category of profinite sets, but $X_\proet$ has more covers than $X'_\proet$: all objects of $X'_\proet$ are weakly contractible,  while the weakly contractible ones in $X_\proet$ are exactly the ones corresponding to extremally disconnected profinite sets.
\end{remark}

The following example (due to de Jong) shows that the property of being pro-\'etale is not Zariski local on the target, and hence explains why weakly \'etale maps give a more geometric notion:

\begin{example}
	\label{ex:proetalenotlocal}	
	Let $S'$ be an infinite set with an automorphism $T':S' \to S'$ which does not stabilize any finite subset; for example, $S' = \Z$, and $T'(n) = n+1$. Write $(S,0)$ for the one point compactification of $S'$ and $T:S \to S$ for the induced automorphism (which has a unique fixed point at $0$); note that $S$ is profinite, and the unique non-empty clopen subset of $S$ stable under $T$ is $S$ itself.  Let $X \subset \A^2_\C$ be the union of two irreducible smooth curves $X_1$ and $X_2$ meeting transversely at points $p$ and $q$; note that $X$ is connected. Glueing $\underline{S} \otimes X_1 \in X_{1,\proet}$ to $\underline{S} \otimes X_2 \in X_{2,\proet}$ using the identity at $p$ and $T$ at $q$ gives $Y \in X_\proet$. We claim that $Y$ is not pro-\'etale over $X$. Assume otherwise that $Y = \lim_i Y_i \to X$ with $f_i:Y_i \to X$ \'etale. Let $0:X \to Y$ be the zero section, defined using $0 \in S$. Then the assumption on $Y$ shows that $0(X) = \cap U_i$ with $U_i \subset Y$ a clopen subset (pulled back from a clopen of $Y_i$). Now any clopen subset $U \subset Y$ defines a clopen subset $U_p \subset S$ that is stable under $T$, so $U_p = S$ is the only possibility by choice of $S$ and $T$; this gives $\{0\} = 0(X)_p = \cap_i S = S$, which is absurd.
\end{example}

We end by giving examples of covers in $X_\proet$.

\begin{example} Given a scheme $X$ and closed geometric points $x_1,\dots,x_n$, the map 
\[ \Big(\sqcup_i \Spec(\calO_{X,x_i}^{sh})\Big) \sqcup \Big(X-\{x_1,\dots,x_n\}\Big) \to X\]
is a weakly \'etale cover.  However, one cannot add infinitely points. For example, the map
\[ \sqcup_p \Spec(\Z_{(p)}^{sh}) \to \Spec(\Z) \]
is {\em not} a weakly \'etale cover as the target is not covered by a quasicompact open in the source.
\end{example}

\subsection{The topos}
\label{ss:proettopos}

To effectively study $\Shv(X_\proet)$, we single out a special class of weakly \'etale morphisms to serve as generators:

\begin{definition}
	Fix a scheme $X$. An object $U \in X_\proet$ is called a {\em pro-\'etale affine} if we can write $U = \lim_i U_i$ for a small cofiltered diagram $i \mapsto U_i$ of affine schemes in $X_\et$;  the expression $U = \lim_i U_i$ is called a {\em presentation} for $U$, and we often implicitly assume that the indexing category has a final object $0$. The full subcategory of $X_\proet$ spanned by pro-\'etale affines is denoted $X^\aff_\proet$.
\end{definition}

We remark that each $U \in X^\aff_\proet$ is, in particular, an affine scheme pro-\'etale over $X$.

\begin{lemma}
	\label{lem:mapsproetaff}
	Any map in $X^\aff_\proet$ is pro-(affine \'etale).
\end{lemma}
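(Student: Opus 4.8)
The plan is to combine the presentations of $U$ and $V$ with the standard ``spreading out'' results for finitely presented morphisms. Fix a presentation $V = \lim_{j \in J} V_j$ with $J$ small cofiltered and each $V_j$ an affine scheme in $X_\et$. Since $U$, $V$, and hence $f$, are affine schemes, the map $f$ is automatically affine, so the content of the lemma is to write $U$ as a cofiltered limit of affine \'etale $V$-schemes compatibly with $f$.

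First I would fix $j \in J$ and spread out the composite $U \to V \to V_j$. Writing $U = \lim_{i \in I} U_i$ with $I$ small cofiltered, each $U_i$ affine in $X_\et$, and all transition maps affine (being maps of affine schemes), the scheme $V_j$ is of finite presentation over $X$, so the natural map $\colim_i \Hom_X(U_i, V_j) \to \Hom_X(U, V_j)$ is bijective. Hence the composite $U \to V_j$ factors over $X$ as $U \to U_{i(j)} \xrightarrow{h_j} V_j$ for some index $i(j) \in I$. As $h_j$ is a morphism between $X$-schemes that are \'etale over $X$, it is \'etale, and as it is a morphism of affine schemes, it is affine; thus $h_j$ is affine \'etale. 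Setting $W_j := U_{i(j)} \times_{V_j} V$, the projection $W_j \to V$ is a base change of $h_j$, hence affine \'etale, and the factorization provides a canonical $V$-morphism $g_j : U \to W_j$.

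It remains to assemble the $W_j$ into one cofiltered inverse system of affine \'etale $V$-schemes with limit $U$; this is the only step that is not purely formal. One route is a routine pro-object re-indexing: after replacing $J$ by a cofinal category and choosing the $i(j)$ and $h_j$ compatibly (possible since $I$ is cofiltered, so competing factorizations of a fixed morphism agree after enlarging the index), $j \mapsto W_j$ becomes a functor on a small cofiltered category with $\lim_j W_j = \big(\lim_j U_{i(j)}\big) \times_{(\lim_j V_j)} V = U \times_V V = U$, using that a cofinal re-indexing preserves the limit and that cofiltered limits commute with fibre products; the $g_j$ then identify $U$ with $\lim_j W_j$ over $V$. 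Alternatively, one can avoid explicit re-indexing by working with the category $\calC$ whose objects are pairs $(W \to V,\ U \to W)$ with $W \to V$ affine \'etale and $U \to W$ a $V$-morphism, and checking directly that $\calC$ is cofiltered: fibre products over $V$ supply lower bounds, and for parallel arrows one uses that the equalizer of two $V$-maps $W \rightrightarrows W'$ maps to $W$ by a base change of the diagonal $\Delta_{W'/V}$, which is a clopen immersion (as $W' \to V$ is both separated and \'etale), so the equalizer is a clopen subscheme of the affine scheme $W$, hence again affine and \'etale over $V$; then the spreading-out step of the previous paragraph shows $\lim_{\calC} W \simeq U$. Either way $f$ is exhibited as pro-(affine \'etale), and I expect this final assembly to be the only real obstacle, everything else being formal once the spreading-out input is in hand.
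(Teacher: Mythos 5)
Your proof is correct and rests on the same idea as the paper's: spreading out along the cofiltered presentations and re-indexing. The paper merely shortcuts this by reducing to the affine base $V_0$ (using that the index category of the presentation of $V$ has a final object) and then invoking Proposition \ref{p:PropWeaklyEtale} (4) — a map between ind-\'etale $A$-algebras is ind-\'etale — whose proof is precisely the finite-presentation/re-indexing argument you carry out explicitly at the level of schemes.
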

\begin{proof}
Fix a map $h:U \to V$ in $X^\aff_\proet$, and presentations $U = \lim_i U_i$ and $V = \lim_j V_j$ as pro-\'etale affines. Then, after changing the presentation for $U$, we may assume that $X = V_0$ is an affine scheme $\Spec(A)$.  The claim now follows from the observation that a map between ind-\'etale $A$-algebras is also ind-\'etale.
\end{proof}

\begin{remark} 
	By Lemma \ref{lem:mapsproetaff}, the category $X^\aff_\proet$ admits limits indexed by a connected diagram, and these agree with those in $\Sch_{/X}$. However, this category does not have a final object (unless $X$ is affine) or non-empty finite products (unless $X$ has an affine diagonal).
\end{remark}

The reason to introduce pro-\'etale affines is:

\begin{lemma}\label{lem:baseproet} The site $X_\proet$ is subcanonical, and the topos $\Shv(X_\proet)$ is generated by $X^\aff_\proet$.
\end{lemma}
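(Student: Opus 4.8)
The plan is to establish two things: (i) $X_\proet$ is subcanonical, i.e., every representable presheaf on $X_\proet$ is a sheaf for the fpqc-style topology; and (ii) every object of $X_\proet$ admits a cover by pro-\'etale affines, so that the objects of $X^\aff_\proet$ form a generating family for $\Shv(X_\proet)$.

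For (i), I would reduce to an fpqc-descent statement that is already in the literature. A covering family in $X_\proet$ is by definition an fpqc cover in $\Sch_{/X}$ whose members happen to be weakly \'etale over $X$. Since all the schemes involved are weakly \'etale over $X$ and weak \'etaleness is stable under base change and composition (Lemma just before this one, and Proposition \ref{p:PropWeaklyEtale}(iv)), the Cech nerve of such a cover stays inside $X_\proet$. Hence, to check that $\Hom_X(-,Z)$ satisfies the sheaf axiom for a cover $\{Y_i \to Y\}$ in $X_\proet$, it suffices to check it for the single faithfully flat map $Y' := \sqcup_i Y_i \to Y$ — and this is exactly fpqc descent for morphisms, which holds for arbitrary schemes $Z$ by \cite{SGA4Tome1} / descent theory (representable functors are sheaves for the fpqc topology on all schemes). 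So subcanonicity is essentially inherited from the fpqc topology on $\Sch_{/X}$.

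For (ii), the key geometric input is that any weakly \'etale $Y \to X$ can be covered, in $Y_\proet$, by affines that are pro-\'etale over $X$. First cover $Y$ by open affines $V \subset Y$; each $V \to X$ is still weakly \'etale, so I may assume $Y = \Spec B$ is affine. Now cover $X$ by open affines $\Spec A \subset X$ and pull back: the problem reduces to the case $X = \Spec A$, $Y = \Spec B$ with $A \to B$ weakly \'etale. By Theorem \ref{t:WeaklyVsIndEtale} there is a faithfully flat ind-\'etale $B \to C$ with $A \to C$ ind-\'etale, so $\Spec C \to \Spec B$ is a cover in $X_\proet$ and $\Spec C$ is a pro-\'etale affine over $X$ (an ind-\'etale $A$-algebra is a filtered colimit of \'etale, hence affine-\'etale, $A$-algebras, so $\Spec C = \lim_i \Spec C_i$ with $C_i$ \'etale over $A$). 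Assembling these over the open covers produces a cover of the original $Y$ by objects of $X^\aff_\proet$. A standard topos-theoretic fact then gives that $X^\aff_\proet$ generates $\Shv(X_\proet)$: a family of objects that covers every object of the site is a generating family for the associated topos, since any sheaf is the colimit of the representables over its category of elements restricted to that family.

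The main obstacle — or rather the only nontrivial point — is the reduction in (ii) to the affine situation where Theorem \ref{t:WeaklyVsIndEtale} applies: one must make sure that after the two open-affine localizations (on $Y$ and on $X$) the maps stay weakly \'etale, which is immediate since weak \'etaleness is Zariski-local on source and target by Proposition \ref{p:PropWeaklyEtale}, and that the resulting pieces genuinely glue to a cover of $Y$ in the fpqc sense, which is clear because open covers are fpqc covers and the composite of covers is a cover. Everything else is formal descent theory.
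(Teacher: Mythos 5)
Your proposal is correct and follows essentially the same route as the paper: subcanonicity is inherited from fpqc descent for representable functors, and the generation statement reduces, after Zariski localization on source and target, to Theorem \ref{t:WeaklyVsIndEtale}, which produces an ind-\'etale (hence pro-\'etale affine) cover refining any weakly \'etale affine map. The paper's proof is just a two-line version of exactly this argument.
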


\begin{proof}	The first part comes from fpqc descent. The second assertion means that any $Y \in X_\proet$ admits a surjection $\sqcup_i U_i \to Y$ in $X_\proet$ with $U_i \in X^\aff_\proet$, which follows from Theorem \ref{t:WeaklyVsIndEtale}.
\end{proof}

We record some consequences of the above observations on pro-\'etale maps for the pro-\'etale site:

\begin{remark}
	\label{rmk:affineproet}
	Assume $X$ is an affine scheme. Then $X^\aff_\proet$ is simply the category of all affine schemes pro-\'etale over $X$; this category admits all small limits, and becomes a site with covers defined to be fpqc covers. Lemma \ref{lem:baseproet} then shows that $\Shv(X_\proet) \simeq \Shv(X^\aff_\proet)$.
\end{remark}

\begin{lemma}\label{lem:sheafcriterion}	A presheaf $F$ on $X_\proet$ is a sheaf if and only if:
\begin{enumerate}
	\item For any surjection $V \to U$ in $X^\aff_\proet$, the sequence $F(U) \to \equalizer{F(V)}{F(V \times_U V)}$ is exact.
	\item The presheaf $F$ is a Zariski sheaf.
\end{enumerate}
\end{lemma}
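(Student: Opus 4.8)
The plan is to reduce the fpqc sheaf condition on the genuinely enormous site $X_\proet$ to the two more manageable conditions stated. The forward direction is immediate: any surjection $V\to U$ in $X^\aff_\proet$ and any Zariski cover are fpqc covers in $X_\proet$, so a sheaf satisfies (1) and (2). For the converse, suppose $F$ satisfies (1) and (2); I must show $F$ satisfies descent for an arbitrary covering family $\{Y_i\to Y\}$ in $X_\proet$. The standard reductions apply: it suffices to treat a single map $g\colon Y'\to Y$ which is a cover, i.e., surjective and such that every affine open of $Y$ is hit by an affine open of $Y'$. First I would use the Zariski sheaf condition (2) to reduce to the case where $Y=U$ is affine (cover $Y$ by affine opens $W_\alpha$, note $F|_{W_\alpha}$ inherits the hypotheses, and glue using (2)); and then, since the affine opens of $Y'$ that surject onto $U$ form a Zariski cover of a quasicompact open subscheme of $Y'$, I can likewise use (2) on $Y'$ to replace $Y'$ by a finite disjoint union of affine opens, hence by a single affine $V\in X^\aff_\proet$ (a finite disjoint union of affine weakly étale $X$-schemes is again a pro-étale affine, being a cofiltered limit of finite disjoint unions of affine étale schemes). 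So we are reduced to a faithfully flat map $V\to U$ with $U,V\in X^\aff_\proet$, which is exactly condition (1).

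The one subtlety is that a surjective weakly étale map $V\to U$ of pro-étale affines need not itself be a cover in $X^\aff_\proet$ in the naive presentation-level sense, but it is faithfully flat (surjective + flat), hence an fpqc cover of affines, which is all condition (1) asks for — so condition (1) as stated already handles precisely this case, and no further reduction is needed there. I would also need the routine observation that the Čech nerve $V\times_U V$, etc., computed in $\Sch_{/X}$ agrees with that in $X_\proet$ (true since $X_\proet$ has fibre products agreeing with $\Sch_{/X}$, by an earlier lemma), so that the equalizer sequence in (1) is the honest one.

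Assembling: given the general covering family $\{Y_i\to Y\}$, set $Y'=\sqcup_i Y_i\to Y$; the argument above shows $F(Y)\to \mathrm{Eq}(F(Y')\rightrightarrows F(Y'\times_Y Y'))$ is exact, and a formal manipulation (splitting $Y'\times_Y Y' = \sqcup_{i,j} Y_i\times_Y Y_j$ and using that $F$ already sends finite disjoint unions to products — itself a consequence of (2), or can be arranged directly) upgrades this to the full sheaf axiom for the family $\{Y_i\to Y\}$.

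I expect the main obstacle to be bookkeeping rather than conceptual: carefully justifying the two-step Zariski reduction (first on the base $Y$, then on the cover $Y'$) while checking at each stage that the restricted presheaf still satisfies hypotheses (1) and (2), and that finite disjoint unions of pro-étale affines are pro-étale affines so that one genuinely lands back in $X^\aff_\proet$. Once the reduction to a single faithfully flat map of pro-étale affines is in hand, condition (1) closes the argument with nothing further to prove.
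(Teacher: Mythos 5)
Your skeleton --- reduce to the affine base case via the Zariski condition, then invoke (1) for a single surjection of affines --- is the same as the paper's, and the forward direction and the bookkeeping with disjoint unions are fine. The genuine gap is the parenthetical claim that ``a finite disjoint union of affine weakly \'etale $X$-schemes is again a pro-\'etale affine, being a cofiltered limit of finite disjoint unions of affine \'etale schemes.'' This is false: a pro-\'etale affine is by definition a cofiltered limit of affine \emph{\'etale} $X$-schemes (so, for $X$ affine, $\Spec B \to \Spec A$ with $A \to B$ ind-\'etale), whereas the affine opens of an arbitrary $Y \in X_\proet$ are merely affine and \emph{weakly} \'etale over $X$. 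The entire point of Theorem \ref{t:WeaklyVsIndEtale} is that these notions differ: a weakly \'etale $A \to B$ need not be ind-\'etale; one only knows that some further faithfully flat ind-\'etale $B \to C$ makes $A \to C$ ind-\'etale. So after your Zariski reductions you are left with a faithfully flat map $V \to U$ of affine weakly \'etale $X$-schemes, and neither $V$ nor $U$ need lie in $X^\aff_\proet$; condition (1), which only concerns surjections between objects of $X^\aff_\proet$, therefore does not apply. (The subtlety you do flag --- that the surjection need not respect presentations --- is harmless; the real issue is membership of the objects themselves in $X^\aff_\proet$.) Symptomatically, your argument never invokes Theorem \ref{t:WeaklyVsIndEtale}, which is the essential input.

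The fix is to refine further rather than just pass to affine opens: by Lemma \ref{lem:baseproet} (which rests on Theorem \ref{t:WeaklyVsIndEtale}), every object of $X_\proet$ admits a pro-\'etale cover by objects of $X^\aff_\proet$. The paper accordingly fits the given cover $c:Z \to Y$ into a commutative square with a cover $a:\sqcup_j U_j \to Z$, a Zariski cover $\sqcup_i V_i \to Y$, and a map $b:\sqcup_j U_j \to \sqcup_i V_i$ induced by maps $U_j \to V_{h(j)}$ in $X^\aff_\proet$. Conditions (1) and (2) give the sheaf axiom for $b$ and $d$, hence for $d\circ b = c\circ a$; one then needs the extra (formal but not free) step of descending the sheaf axiom from $c \circ a$ to $c$, which uses the injectivity of $F(Y) \to F(Z)$ and of $F(Z) \to \prod_j F(U_j)$ followed by a diagram chase. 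This refinement by genuine pro-\'etale affines, together with the transfer of the sheaf axiom from the refinement back to the original cover, is the content your plan is missing.
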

\begin{proof} The forward direction is clear. Conversely, assume $F$ is a presheaf satisfying (1) and (2), and fix a cover $Z\to Y$ in $X_\proet$. Using (1) and (2), one readily checks the sheaf axiom in the special case where $Y \in X^\aff_\proet$, and $Z = \sqcup_i W_i$ with $W_i \in X^\aff_\proet$. In the case of a general cover, Lemma \ref{lem:baseproet} shows that we can find a diagram
	\[ \xymatrix{ \sqcup_{j \in J} U_j \ar[r]^-a \ar[d]^-b & Z \ar[d]^-c \\
	\sqcup_{i \in I} V_i \ar[r]^-d & Y } \]
	where $d$ is a Zariski cover, $a$ and $b$ are covers in $X_\proet$, and $U_j, V_i \in X^\aff_\proet$ with $b$ determined by a map $h:J \to I$ of index sets together with maps $U_j \to V_{h(j)}$ in $X^\aff_\proet$. The previous reduction and (2) give the sheaf axiom for $b$ and $d$, and hence $d \circ b$ as well. It formally follows that $F(Y) \to F(Z)$ is injective, and hence that $F(Z) \to \prod_i F(U_i)$ is also injective by (2) as $a$ is a cover. A diagram chase then shows that the sheaf axiom for $c$ follows from that for $c \circ a$.
\end{proof}

\begin{lemma}\label{lem:proettoposbc} For any $Y \in X_\proet$, pullback induces an identification $\Shv(X_\proet)_{/Y} \simeq \Shv(Y_\proet)$.
\end{lemma}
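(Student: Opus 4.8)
The plan is to reduce the statement to the standard topos-theoretic fact that localization of a site at an object computes the slice topos. Recall (see \cite{SGA4Tome1}, or \cite[Tag 00XZ]{StacksProject}) that for a site $\calC$ and an object $Y \in \calC$, the category $\calC_{/Y}$ equipped with the topology in which a family $\{Z_i \to Z\}$ over $Y$ is a cover precisely when it is a cover of $Z$ in $\calC$ is again a site, and there is a canonical equivalence $\Shv(\calC_{/Y}) \simeq \Shv(\calC)_{/h_Y^\#}$, where $h_Y^\#$ is the sheafification of the presheaf represented by $Y$. Since $X_\proet$ is subcanonical by Lemma \ref{lem:baseproet}, we have $h_Y^\# = h_Y$, the sheaf represented by $Y$; and $\Shv(X_\proet)_{/Y}$ is by definition $\Shv(X_\proet)_{/h_Y}$. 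So it suffices to identify $Y_\proet$, as a site, with the localization $(X_\proet)_{/Y}$.

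For the identification of underlying categories: an object of $(X_\proet)_{/Y}$ is a weakly \'etale $X$-scheme $Z$ with a chosen $X$-map $Z \to Y$; by the lemma above asserting that every map in $X_\proet$ is weakly \'etale (itself a consequence of Proposition \ref{p:PropWeaklyEtale}(iv)), such a map $Z \to Y$ is weakly \'etale, so $Z$ defines an object of $Y_\proet$. Conversely, if $Z \to Y$ is weakly \'etale, then $Z \to Y \to X$ is a composite of weakly \'etale maps and hence weakly \'etale, so $Z$ with its structure map defines an object of $(X_\proet)_{/Y}$. These assignments are mutually inverse and visibly compatible with morphisms, so $Y_\proet \cong (X_\proet)_{/Y}$ as categories. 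Under this identification the two topologies coincide: a covering family in $Y_\proet$ is a family of maps of weakly \'etale $Y$-schemes that is an fpqc cover of the target, and the fpqc-cover condition on a family $\{Z_i \to Z\}$ refers only to $Z$ and the $Z_i$, not to the base, so this is exactly a covering family for the localized topology on $(X_\proet)_{/Y}$. Plugging this into the general comparison gives the asserted equivalence, and unwinding the construction shows the functor is the expected one, carrying $F \in \Shv(X_\proet)$ with a map to $h_Y$ to the sheaf $Z \mapsto F(Z)$ on $Y_\proet$, with quasi-inverse given by extension by zero along $Y_\proet \hookrightarrow X_\proet$ together with its canonical map to $h_Y$.

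I do not expect a serious obstacle: the whole argument is bookkeeping. The two points needing a moment of care are (i) that a map over $Y$ between weakly \'etale $X$-schemes is automatically weakly \'etale — but this is already recorded above — and (ii) that the fpqc-cover notion used to topologize $Y_\proet$ genuinely agrees with the localized topology, which is immediate since fpqc covers of a scheme are intrinsic to that scheme. If one preferred to avoid citing the abstract localization theorem, one could instead verify by hand, using the sheaf criterion of Lemma \ref{lem:sheafcriterion}, that restriction $\Shv(X_\proet) \to \Shv(Y_\proet)$ and its refinement to the slice categories are inverse equivalences; but invoking the standard result is cleaner.
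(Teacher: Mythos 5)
Your proof is correct and is essentially the paper's argument, which likewise rests on the two observations that compositions of weakly \'etale maps are weakly \'etale and that any map between weakly \'etale $X$-schemes is weakly \'etale, so that $Y_\proet$ coincides with the localized site $(X_\proet)_{/Y}$ and the standard slice-topos comparison applies. The only cosmetic remark is that for sheaves of sets the quasi-inverse is better described as $G \mapsto (j_!G \to j_!(\ast) = h_Y)$ rather than ``extension by zero,'' but this does not affect the argument.
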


\begin{proof}
A composition of weakly \'etale maps is weakly \'etale, and any map between weakly \'etale maps is weakly \'etale.
\end{proof}

The pro-\'etale topos is locally weakly contractible in the sense of Definition \ref{rmk:locallyweaklycontractible}.

\begin{proposition}
	\label{prop:proetlocallycontractible}
For any scheme $X$, the topos $\Shv(X_\proet)$ is locally weakly contractible.
\end{proposition}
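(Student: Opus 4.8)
The plan is to manufacture a generating family of coherent weakly contractible objects out of w-contractible rings. By Lemma~\ref{lem:baseproet} the representable sheaves $h_U$ with $U\in X^\aff_\proet$ generate $\Shv(X_\proet)$, so it suffices to show that every such $U$ admits a surjection $h_W\to h_U$ in $\Shv(X_\proet)$ with $h_W$ coherent and weakly contractible; chaining this with the fact that representables generate the topos then covers an arbitrary object. Since $U$ is an affine scheme, I would write $U=\Spec(A)$ and apply Lemma~\ref{lem:cwcontractiblecover} to obtain a faithfully flat ind-\'etale $A$-algebra $A'$ with $A'$ w-contractible; set $W=\Spec(A')$. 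The map $A\to A'$ is weakly \'etale by Proposition~\ref{p:PropWeaklyEtale}(1), so $W\in X_\proet$, and $W\to U$ is a faithfully flat morphism of affine schemes, hence an fpqc covering; as the site is subcanonical, $h_W\to h_U$ is an epimorphism of sheaves. Finally, $h_W$ is a coherent object because $W$ is a quasi-compact, quasi-separated scheme: under the identification $\Shv(X_\proet)_{/h_W}\simeq\Shv(W_\proet)$ of Lemma~\ref{lem:proettoposbc}, $h_W$ corresponds to the final object of the pro-\'etale topos of a qcqs scheme.

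It remains to check that $h_W$ is weakly contractible, and this is the only step that requires real work. Given an epimorphism $\phi\colon G\to h_W$ in $\Shv(X_\proet)$, I must produce a splitting. Applying surjectivity of $\phi$ to $\id_W\in h_W(W)$ yields a covering family $\{V_\lambda\to W\}$ in $X_\proet$ along which each structure map $V_\lambda\to W$ lifts to a section of $G$. Since $W$ is quasi-compact, finitely many of the $V_\lambda$ cover $W$; replacing them by finitely many affine opens and forming the finite disjoint union, I may assume there is a single affine scheme $V=\Spec(B)$, faithfully flat and weakly \'etale over $W$, such that $V\to W$ lifts to some $\widetilde g\in G(V)$, i.e.\ $\phi\circ\widetilde g=h_{V\to W}$ (here one uses that $G$, being a sheaf, turns the finite coproduct $V=\sqcup V'_k$ into a product). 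Now $A'\to B$ is weakly \'etale and faithfully flat, so Theorem~\ref{t:WeaklyVsIndEtale} produces a faithfully flat ind-\'etale map $B\to C$ with $A'\to C$ ind-\'etale; being a composite of faithfully flat maps, $A'\to C$ is itself faithfully flat, hence splits by w-contractibility of $A'$. Unwinding, the composite $B\to C\to A'$ is a ring-theoretic section of $A'\to B$, i.e.\ a morphism $s\colon W\to V$ with $(V\to W)\circ s=\id_W$. Then $\widetilde g\circ h_s\colon h_W\to h_V\to G$ satisfies $\phi\circ(\widetilde g\circ h_s)=h_{V\to W}\circ h_s=\id_{h_W}$, so it is the desired splitting. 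Hence $h_W$ is weakly contractible, and these objects form the required generating family.

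The heart of the argument is the reduction, via quasi-compactness of $W$ and Lemma~\ref{lem:baseproet}, of an arbitrary epimorphism onto $h_W$ to a single faithfully flat weakly \'etale cover $V\to W$ of affine schemes, together with the two inputs that force such a cover to split: Theorem~\ref{t:WeaklyVsIndEtale}, which upgrades ``weakly \'etale'' to ``ind-\'etale'' after a further faithfully flat ind-\'etale base change, and the defining property of w-contractible rings supplied by Lemma~\ref{lem:cwcontractiblecover}. Everything else — passing between covering sieves and covering families, and handling finite disjoint unions of affines — is routine for the fpqc topology underlying $X_\proet$.
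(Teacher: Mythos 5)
Your proof is correct and takes the same route as the paper, which simply cites Lemma~\ref{lem:cwcontractiblecover} together with the coherence of affine objects; your extra work spelling out why a w-contractible ring yields a weakly contractible object of the topos (reducing an arbitrary epimorphism onto $h_W$ to a single faithfully flat weakly \'etale affine cover and then splitting it via Theorem~\ref{t:WeaklyVsIndEtale}) is exactly the content the paper leaves implicit.
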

\begin{proof}
This follows immediately from Lemma \ref{lem:cwcontractiblecover} since any affine $U \in X_\proet$ is coherent.
\end{proof}

\begin{remark} Proposition \ref{prop:proetlocallycontractible} gives a recipe for calculating the pro-\'etale homotopy type $|X|$ of a qcqs scheme $X$. Namely, if $f:X^\bullet \to X$ is a hypercover in $X_\proet$ with each $X^n$ being w-contractible, then $|X| = |\pi_0(X^\bullet)|$; any two such choices of $f$ are homotopic, and hence $|X|$ is well-defined in the category of simplicial profinite sets up to continuous homotopy.
\end{remark}

We give an example illustrating the behaviour of constant sheaves on the pro-\'etale site:

\begin{example}	Fix a connected affine scheme $X$, and a profinite set $S = \lim_i S_i$ with $S_i$ finite. By the formula in Example \ref{ex:tensorprofiniteset}, the constant sheaf $\underline{A} \in \Shv(X_\proet)$ associated to a set $A$ satisfies
\[ \underline{A}(X \otimes S) = \colim_i \big(A^{S_i}\big).\]
In particular, the functor $A \mapsto \underline{A}$ is not compatible with inverse limits.
\end{example}

The following example shows classical points do not detect non-triviality in $\Shv(X_\proet)$.

\begin{example} Fix an algebraically closed field $k$, and set $X = \Spec(k)$. Then $\Shv(X_\proet)$ identifies with the topos of sheaves on the category of profinite sets $S$ as explained in Example \ref{ex:proetalegeometricpoint}. Consider the presheaf $G$ (resp. $F$) which associates to such an $S$ the group of all locally constant (resp. all) functions $S \to \Lambda$ for some abelian group $\Lambda$. Then both $F$ and $G$ are sheaves: this is obvious for $G$, and follows from the compatibility of limits in profinite sets and sets for $F$. Moreover, $G \subset F$, and $Q := F/G \in \Ab(X_\proet)$ satisfies $Q(X) = 0$, but $Q(S) \neq 0$ for $S$ not discrete. 
\end{example}

In fact, more generally, one can define 'constant sheaves' associated with topological spaces. Indeed, let $X$ be any scheme, and let $T$ be some topological space.

\begin{lemma}\label{l:ExSheafTopSpace} The association mapping any $U\in X_\proet$ to $\Map_\cont(U,T)$ is a sheaf $\mathcal{F}_T$ on $X_\proet$. If $T$ is totally disconnected and $U$ is qcqs, then $\mathcal{F}_T(U) = \Map_\cont(\pi_0(U),T)$. In particular, if $T$ is discrete, then $\mathcal{F}_T$ is the constant sheaf associated with $T$.
\end{lemma}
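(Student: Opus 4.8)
The plan is to prove the three assertions in turn, invoking Lemma~\ref{lem:sheafcriterion} for the first. To see that $\mathcal{F}_T\colon U\mapsto\Map_\cont(U,T)$ is a sheaf on $X_\proet$, by Lemma~\ref{lem:sheafcriterion} it suffices to check that it is a Zariski sheaf and that for every surjection $V\to U$ in $X^\aff_\proet$ the sequence $\mathcal{F}_T(U)\to\mathcal{F}_T(V)\rightrightarrows\mathcal{F}_T(V\times_U V)$ is exact. The Zariski sheaf condition is immediate, as continuity of a map into $T$ can be checked locally on the source. For the second point, observe that a surjection $V\to U$ in $X^\aff_\proet$ is a faithfully flat quasi-compact morphism of affine schemes, hence a topological quotient map (such morphisms are universally submersive); thus a set-theoretic map $|U|\to T$ is continuous precisely when its composite with $|V|\to|U|$ is. Injectivity of $\mathcal{F}_T(U)\to\mathcal{F}_T(V)$ follows since $|V|\to|U|$ is surjective; and if $g\in\mathcal{F}_T(V)$ is equalized by the two projections, then, since $|V\times_U V|\to|V|\times_{|U|}|V|$ is surjective, $g$ is constant on the fibres of $|V|\to|U|$, so it descends to a unique (continuous, by the quotient property) map $|U|\to T$. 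This yields exactness.

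For the second assertion, let $T$ be totally disconnected and $U$ be qcqs, so $|U|$ is a spectral space and $|U|\to\pi_0(U)$ is, by definition of $\pi_0$, a topological quotient map onto the profinite set $\pi_0(U)$. Pullback along it gives an injection $\Map_\cont(\pi_0(U),T)\hookrightarrow\mathcal{F}_T(U)$. Conversely, a continuous map $f\colon|U|\to T$ carries every connected component of $|U|$ to a connected, hence singleton, subset of $T$; so $f$ factors through a map of sets $\pi_0(U)\to T$, which is continuous by the universal property of the quotient topology. Hence $\mathcal{F}_T(U)=\Map_\cont(\pi_0(U),T)$.

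For the last assertion, with $T$ discrete, I would show that the evident map of presheaves $\phi\colon\underline{T}^{\pre}\to\mathcal{F}_T$ sending $t$ to the constant function $t$ is a local isomorphism; sheafifying then gives $\underline{T}=a(\underline{T}^{\pre})\xrightarrow{\ \sim\ }a(\mathcal{F}_T)=\mathcal{F}_T$, identifying $\mathcal{F}_T$ with the constant sheaf. Local injectivity is clear. For local surjectivity it is enough, as $X^\aff_\proet$ generates $\Shv(X_\proet)$, to treat an affine $U=\Spec A$ and $f\in\mathcal{F}_T(U)$: then $f(|U|)$ is a quasi-compact subset of the discrete space $T$, hence finite, say $\{t_1,\dots,t_n\}$, and $|U|=\bigsqcup_k f^{-1}(t_k)$ is a finite partition into clopen subsets; via the correspondence between clopens of $\Spec A$ and idempotents of $A$ this comes from a product decomposition $A\cong\prod_k A_k$, and $f$ is constant on each $\Spec A_k$, so $f|_{\Spec A_k}$ is in the image of $\phi$. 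The only step I expect to require genuine care is the sheaf property: it rests on the facts that faithfully flat quasi-compact morphisms are topological quotient maps and that scheme-theoretic fibre products surject onto topological ones, so that continuity descends along pro-\'etale covers; the rest is point-set topology plus the clopen--idempotent dictionary.
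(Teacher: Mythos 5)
Your proof is correct and follows essentially the same route as the paper: the whole content is that a faithfully flat quasi-compact (here, weakly \'etale) surjection of affines is a topological quotient map, after which the equalizer condition, the factorization through $\pi_0(U)$ for totally disconnected $T$, and the clopen--idempotent argument for discrete $T$ are routine. The only difference is that you cite the submersiveness of fpqc morphisms as a standard fact, whereas the paper proves it on the spot (a subset of $\Spec A$ whose preimage is open is shown to be constructibly open, since $\Spec f$ is a closed map of compact Hausdorff spaces for the constructible topologies, and then open because $\Spec f$ is generalizing).
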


\begin{proof} To show that $\mathcal{F}_T$ is a sheaf, one reduces to proving that if $f: A\to B$ is a faithfully flat ind-\'etale morphism of rings, then $M\subset \Spec A$ is open if and only if $(\Spec f)^{-1}(M)\subset \Spec B$ is open. Only the converse is nontrivial, so assume $(\Spec f)^{-1}(M)\subset \Spec B$ is open. First, we claim that $M$ is open in the constructible topology. Indeed, the map $\Spec f: \Spec B\to \Spec A$ is a continuous map of compact Hausdorff spaces when considering the constructible topologies. In particular, it is closed, so
\[
\Spec A\setminus M = (\Spec f)(\Spec B\setminus (\Spec f)^{-1}(M))
\]
is closed, and thus $M$ is open (in the constructible topology). To check that $M$ is actually open, it is enough to verify that $M$ is closed under generalizations. This is clear, as $\Spec f$ is generalizing, and $(\Spec f)^{-1}(M)$ is open (and thus closed under generalizations).

If $T$ is totally disconnected and $U$ is qcqs, then any continuous map $U\to T$ will necessarily factor through the projection $U\to \pi_0(U)$, so that $\mathcal{F}_T(U) = \Map_\cont(\pi_0(U),T)$.
\end{proof}

We relate sheaves on $X$ with sheaves on its space $\pi_0(X)$ of connected components. Recall that if $X$ is a qcqs scheme, then $\pi_0(X)$ is a profinite set. If $\pi_0(X)_\proet$ denotes the site of profinite $\pi_0(X)$-sets as in Example \ref{ex:proetalegeometricpoint}, then the construction of Lemma \ref{lem:findprofinitecovers} defines a limit-preserving functor $\pi^{-1}:\pi_0(X)_\proet \to X_\proet$ which respects coverings. Hence, one has an induced map $\pi:\Shv(X_\proet) \to \Shv(\pi_0(X)_\proet)$ of topoi. This map satisfies:

\begin{lemma}
	\label{lem:pullbackfrompi0}
Assume $X$ is qcqs, and let $\pi:\Shv(X_\proet) \to \Shv(\pi_0(X)_\proet)$ be as above. Then
\begin{enumerate}
	\item $\pi^* F(U) = F(\pi_0(U))$ for any qcqs $U \in X_\proet$ and $F \in \Shv(\pi_0(X)_\proet)$. 
	\item $\pi^*$ commutes with limits.
	\item $\pi^*$ is fully faithful, so $\pi_* \pi^* \simeq \id$.
	\item $\pi^*$ identifies $\Shv(\pi_0(X)_\proet)$ with the full subcategory of those $G \in \Shv(X_\proet)$ such that $G(U) = G(V)$ for any map $U \to V$ of qcqs objects in $X_\proet$ inducing an isomorphism on $\pi_0$.
\end{enumerate}
\end{lemma}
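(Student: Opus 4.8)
The plan is to establish (1) first, then derive the rest formally. For (1), recall from Lemma \ref{lem:findprofinitecovers} (and the remark following it) that $\pi^{-1}:\pi_0(X)_\proet \to X_\proet$ is the functor $S \mapsto S \times_{\pi_0(X)} X$, which is limit-preserving and sends covers to covers; hence the induced geometric morphism $\pi$ has $\pi^*$ given by sheafifying the presheaf $U \mapsto \mathrm{colim}\, F(S)$, where the colimit runs over maps $U \to \pi^{-1}(S)$ in $X_\proet$. I would first check that the presheaf $U \mapsto F(\pi_0(U))$ on qcqs objects (extended to all of $X_\proet$ by the Zariski-sheaf condition, as in Lemma \ref{lem:sheafcriterion}) is already a sheaf: for a surjection $V \to U$ of pro-\'etale affines, $\pi_0(V) \to \pi_0(U)$ is a surjection of profinite sets, hence a cover in $\pi_0(X)_\proet$, and $\pi_0(V \times_U V) = \pi_0(V) \times_{\pi_0(U)} \pi_0(V)$ because $\pi_0$ commutes with the relevant limits (cofiltered limits and the fibre products in play here — using that for qcqs schemes $\pi_0$ of a limit is the limit of $\pi_0$'s, and that $\pi_0(V\times_U V)$ is computed correctly since everything is affine), so the sheaf condition for $F$ on $\pi_0(X)_\proet$ gives it for $U \mapsto F(\pi_0(U))$. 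Then I would verify this sheaf has the correct universal property, i.e. that it represents $\pi^*$: there is a natural map $\pi^{-1}$-adjunction morphism from the presheaf pullback, and using that $\pi_0(\pi^{-1}(S)) = \pi_0(S \times_{\pi_0(X)} X) = S$ (which is exactly the content that $\pi^{-1}$ is a section of $\pi_0$ up to the stated adjunction) one checks the comparison map is an isomorphism after sheafification; concretely, any map $U \to \pi^{-1}(S)$ of qcqs objects factors through $\pi_0$, giving a bijection $\mathrm{Hom}_{X_\proet}(U, \pi^{-1}(S)) \simeq \mathrm{Hom}_{\pi_0(X)_\proet}(\pi_0(U), S)$, so the colimit defining presheaf-$\pi^*$ evaluated at $U$ is already $\mathrm{colim}_{\pi_0(U)\to S} F(S) = F(\pi_0(U))$ by cofinality of the identity, and this presheaf is already a sheaf by the previous step.

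Given (1), part (2) is immediate: limits of sheaves are computed sectionwise, and $F \mapsto F(\pi_0(U))$ visibly commutes with limits in $F$. Part (3): to show $\pi^*$ is fully faithful it suffices to show the unit $F \to \pi_* \pi^* F$ is an isomorphism for all $F \in \Shv(\pi_0(X)_\proet)$. By Example \ref{ex:proetalegeometricpoint}, $\pi_0(X)_\proet$ is generated by the profinite $\pi_0(X)$-sets $S$, and $\pi_* \pi^* F$ evaluated at such an $S$ is $(\pi^* F)(\pi^{-1} S) = F(\pi_0(\pi^{-1} S)) = F(S)$ by (1) and the identity $\pi_0(\pi^{-1} S) = S$; one checks this identification is the unit map, so $\pi_* \pi^* \simeq \id$ and $\pi^*$ is fully faithful. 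For part (4), the description of the essential image: if $G = \pi^* F$ then (1) shows $G(U) = F(\pi_0(U))$ depends only on $\pi_0(U)$, giving the containment in one direction. Conversely, if $G \in \Shv(X_\proet)$ satisfies $G(U) = G(V)$ whenever $U \to V$ induces an isomorphism on $\pi_0$ of qcqs objects, then I would define $F$ on $\pi_0(X)_\proet$ by $F(S) := G(\pi^{-1} S)$; the hypothesis on $G$ together with the limit-preservation of $\pi^{-1}$ and the fact that every qcqs $U \in X_\proet$ admits a canonical map $U \to \pi^{-1}(\pi_0 U)$ inducing an iso on $\pi_0$ shows first that $F$ is a sheaf (covers in $\pi_0(X)_\proet$ pull back to covers, and the relevant equalizer diagrams match up) and then that the counit $\pi^* F \to G$ is an isomorphism on qcqs objects, hence an isomorphism of sheaves.

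The main obstacle I anticipate is the bookkeeping in step (1) around compatibility of $\pi_0$ with the limits and fibre products that appear — specifically verifying $\pi_0(V\times_U V) = \pi_0(V)\times_{\pi_0(U)}\pi_0(V)$ and that cofiltered limits of qcqs schemes have $\pi_0$ equal to the limit of the $\pi_0$'s, which is where the qcqs hypothesis is essential (these are standard but need the affine/qcqs setting, cf. the analysis of $\pi_0$ as a pro-object functor in \S\ref{subsec:spectralcwlocal}). Once the sheaf property of $U \mapsto F(\pi_0(U))$ and the identity $\pi_0 \circ \pi^{-1} = \id$ are in hand, everything else is formal manipulation of adjunctions and generators.
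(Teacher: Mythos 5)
Your overall strategy coincides with the paper's: identify the presheaf pullback with $U \mapsto F(\pi_0(U))$ using the adjunction from Lemma \ref{lem:findprofinitecovers}, check that this presheaf is already a sheaf, and then deduce (2)--(4) formally; your treatments of (2), (3) and (4) are fine. However, the step you yourself flag as the main obstacle in (1) rests on a false identity: it is \emph{not} true that $\pi_0(V \times_U V) = \pi_0(V) \times_{\pi_0(U)} \pi_0(V)$ for a surjection $V \to U$ in $X_\proet$. The functor $\pi_0$ commutes with cofiltered limits of qcqs schemes along affine transition maps, but it does not commute with fibre products, even of (weakly) \'etale surjections of affines: if $V \to U$ is a connected finite \'etale Galois cover of degree $2$ of a connected affine $U$ (e.g.\ $\Spec(\Q(i)) \to \Spec(\Q)$), then $V \times_U V \cong V \sqcup V$ has two connected components while $\pi_0(V) \times_{\pi_0(U)} \pi_0(V)$ is a single point. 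Consequently the equalizer diagram for $U \mapsto F(\pi_0(U))$ on the cover $V \to U$ is not the equalizer diagram for $F$ on the cover $\pi_0(V) \to \pi_0(U)$, and the sheaf axiom does not transfer in the direct way you claim. The natural comparison map $\pi_0(V\times_U V) \to \pi_0(V)\times_{\pi_0(U)}\pi_0(V)$ need not even be surjective, so one cannot simply interpose $F$ of the fibre product of profinite sets either.

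The correct substitute, which is what the paper proves, is the weaker statement that $\pi_0(U)$ is the \emph{coequalizer} of the two maps $\pi_0(V \times_U V) \rightrightarrows \pi_0(V)$ in the category of profinite sets. This is extracted from subcanonicity of the site (Lemma \ref{lem:baseproet}) applied to the representable objects $S \otimes X$ of Example \ref{ex:tensorprofiniteset}: since $(S \otimes X)(W) = \Map_{\cont}(\pi_0(W),S)$ for qcqs $W$, fpqc descent along $V \to U$ says that $\Map_{\cont}(\pi_0(U),S)$ is the equalizer of $\Map_{\cont}(\pi_0(V),S) \rightrightarrows \Map_{\cont}(\pi_0(V\times_U V),S)$ for every profinite set $S$, which is exactly the coequalizer claim; one then deduces the sheaf axiom for $U \mapsto F(\pi_0(U))$ from the sheaf axiom for $F$. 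So the missing ingredient is a descent argument for the representable sheaves $S \otimes X$, not a commutation of $\pi_0$ with fibre products. With this repair the rest of your argument goes through.
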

\begin{proof}
	All schemes appearing in this proof are assumed qcqs. (2) is automatic from (1). For (1), fix some $F \in \Shv(\pi_0(X)_\proet)$. As any continuous $\pi_0(X)$-map $U \to S$ with $U \in X_\proet$ and $S \in \pi_0(X)_\proet$ factors canonically through $\pi_0(U)$, the sheaf $\pi^* F$ is the sheafification of the presheaf $U \mapsto F(\pi_0(U))$ on $U \in X_\proet$. As $F$ is itself a sheaf on $\pi_0(X)_\proet$, it is enough to check: for a surjection $U \to V$ in $X_\proet$, the map $\pi_0(U) \to \pi_0(V)$ is the coequalizer of the two maps $\pi_0(U \times_V U) \to \pi_0(U)$ in the category of profinite sets (induced by the two projection maps $U \times_V U \to U$). For any profinite set $S$, one has $(S \otimes X)(U) = \Map_{\cont}(\pi_0(U),S)$ with notation as in Example \ref{ex:tensorprofiniteset}, so the claim follows from the representability of $S \otimes X$ and fpqc descent. For (3), it suffices to check that $\pi_* \pi^* F \simeq F$ for any $F \in \Shv(\pi_0(X)_\proet)$, which is immediate from Lemma \ref{lem:findprofinitecovers} and (2). For (4), by (2), it remains to check that any $G$ with the property of (4) satisfies $G \simeq \pi^* \pi_* G$. Given $U \in X_\proet$, we have a canonical factorization $U \to \pi^{-1}(\pi_0(U)) \to X$, where $\pi^{-1}(\pi_0(U)) \to X$ is a pro-(finite \'etale) map inducing $\pi_0(U) \to \pi_0(X)$ on connected components, while $U \to \pi^{-1}(\pi_0(U))$ is an isomorphism on $\pi_0$. Then $G(U) = G(\pi^{-1}(\pi_0(U)))$ by assumption on $G$, which proves $G = \pi^* \pi_* G$ by (2).
\end{proof}

\begin{remark}
	The conclusion of Lemma \ref{lem:pullbackfrompi0} fails for $\pi:\Shv(X_\et) \to \Shv(\pi_0(X)_\et)$. Indeed, if $X$ is connected, then $\Shv(\pi_0(X)_\et) = \Set$, and $\pi^*$ coincides with the ``constant sheaf'' functor, which is not always limit-preserving.
\end{remark}

\subsection{The case of a point}
\label{ss:proetpoint}

Fix a profinite group $G$. We indicate how the definition of the pro-\'etale site can be adapted to give a site $BG_\proet$ of profinite $G$-sets. In particular, each topological $G$-module $M$ defines a sheaf $\calF_M$ on $BG_\proet$, and the resulting functor from topological $G$-modules to abelian sheaves on $BG_\proet$ is an embedding with dense image (in the sense of colimits). We use this construction to study the cohomology theory $M \mapsto \R\Gamma(BG_\proet,\calF_M)$ on $G$-modules: this theory is equal to continuous cohomology in many cases of interest, and yet better behaved in some functorial respects. The definition is:

\begin{definition}
	Let $BG_\proet$ be the {\em pro-\'etale site of $G$}, defined as the site of profinite continuous $G$-sets with covers given by continuous surjections. 
\end{definition}

For $S \in BG_\proet$, we use $h_S \in \Shv(BG_\proet)$ to denote the associated sheaf. Let $G\textrm{-}\Spc$ be the category of topological spaces with a continuous $G$-action; recall that $G\textrm{-}\Spc$ admits limits and colimits, and the formation of these commutes with passage to the underlying spaces (and thus the underlying sets). Let $G\textrm{-}\Spc_{cg} \subset G\textrm{-}\Spc$ be the full subcategory of $X \in G\textrm{-}\Spc$ whose underlying space may be written as a quotient of a disjoint union of compact Hausdorff spaces; we call these spaces compactly generated. There is a tight connection between these categories and $\Shv(BG_\proet)$:

\begin{lemma} 
	\label{lem:GSpcBG}
	Let notation be as above.
	\begin{enumerate}
		\item The association $X \mapsto \Map_{\cont,G}(-,X)$ gives a functor $\calF_{(-)}:G\textrm{-}\Spc \to \Shv(BG_\proet)$.
		\item The functor $\calF_{(-)}$ is limit-preserving and faithful.
		\item $\calF_{(-)}$ admits left adjoint $L$.
		\item $\calF_{(-)}$ is fully faithful on $G\textrm{-}\Spc_{cg}$.
		\item The essential image of $G\textrm{-}\Spc_{cg}$ generates $\Shv(BG_\proet)$ under colimits.
	\end{enumerate}
\end{lemma}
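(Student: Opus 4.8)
The plan is to establish the five parts in order, since each uses the previous ones. For (1) I would verify the sheaf axiom directly: a covering family in $BG_\proet$ contains, by definition, a finite jointly surjective subfamily $\{S_1,\dots,S_n\to S\}$, and since $\Map_{\cont,G}(-,X)$ sends finite disjoint unions to finite products, the axiom for this family reduces to exactness of $\calF_X(S)\to\equalizer{\calF_X(T)}{\calF_X(T\times_S T)}$ for the single continuous surjection $T:=\sqcup_i S_i\to S$ in $BG_\proet$. The key observation is that a continuous surjection of compact Hausdorff spaces is a quotient map, so $S$ is the coequalizer of $T\times_S T\rightrightarrows T$ in $\mathrm{Top}$, hence also in $G\textrm{-}\Spc$ (limits and colimits of $G$-spaces are computed on underlying spaces); applying $\Hom_{G\textrm{-}\Spc}(-,X)$ gives the claim. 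I would also record here that $\calF_S=h_S$ for $S\in BG_\proet$, so that $BG_\proet$ is subcanonical --- a fact used repeatedly below.

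Part (2) is quick: limits of sheaves are objectwise and $\Map_{\cont,G}(-,-)$ commutes with limits in the target, while faithfulness (indeed conservativity) follows from the natural identification $\calF_X(G)\cong X$ given by evaluation at $e\in G$. For (3), the cleanest route is the colimit formula: every sheaf $F$ is canonically the colimit $\colim_{(S,s)\in\int F}h_S$ of representables over its category of elements $\int F$ (taken in $\Shv(BG_\proet)$), and, viewing each $S\in BG_\proet$ inside $G\textrm{-}\Spc_{cg}$, I would set $L(F):=\colim_{(S,s)\in\int F}S$, the colimit formed in the cocomplete category $G\textrm{-}\Spc$; the adjunction $\Hom_{G\textrm{-}\Spc}(L(F),X)=\Hom_{\Shv}(F,\calF_X)$ then drops out of $\calF_S=h_S$ and Yoneda. (Alternatively one can invoke the adjoint functor theorem: $\calF_{(-)}$ preserves limits by (2), and a solution-set condition holds because any map $F\to\calF_X$ factors through $\calF_{X_0}$ for $X_0\subset X$ the union of the --- bounded in number and in size --- images of the structure maps $S\to X$.)

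The heart of the lemma is (4). Given $\phi:\calF_X\to\calF_Y$ with $X$ compactly generated, I would first define $f:X\to Y$ on underlying sets by $f(x):=\phi_G(o_x)(e)$, where $o_x:G\to X$, $\gamma\mapsto\gamma x$, is the orbit map of $x$ (an element of $\calF_X(G)$) and $\calF_Y(G)$ is identified with $Y$ by evaluation at $e$. Naturality of $\phi$ against the right-translation maps $G\to G$ shows $f$ is $G$-equivariant, and --- the crucial computation --- naturality against the orbit maps $o_s:G\to S$ (using $\sigma\circ o_s=o_{\sigma(s)}$ for $G$-equivariant $\sigma$) shows that $\phi_S(\sigma)=f\circ\sigma$ for every $S\in BG_\proet$ and $\sigma\in\calF_X(S)$. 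Continuity of $f$ is then forced: since $X$ is compactly generated, covering the constituent compact Hausdorff spaces by profinite sets (via Stone-Cech, as in Example \ref{ex:stonecech}) and using that $G$ is compact (so $(-)\times G$ preserves quotient maps) and that the action map $G\times X\to X$ is a split surjection, one produces a family $\{\sigma_i:S_i\to X\}$ of $G$-maps with $S_i\in BG_\proet$ such that $\sqcup_i S_i\to X$ is a quotient map; then each $f\circ\sigma_i=\phi_{S_i}(\sigma_i)$ is continuous, hence so is $f$. Finally $\calF_f=\phi$ since both natural transformations agree on every $\calF_X(S)=\Map_{\cont,G}(S,X)$ by the crucial computation, and together with faithfulness this yields full faithfulness on $G\textrm{-}\Spc_{cg}$. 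Part (5) is then immediate: representable sheaves generate $\Shv(BG_\proet)$ under colimits (cf. (3)), and each $S\in BG_\proet$ lies in $G\textrm{-}\Spc_{cg}$ with $h_S=\calF_S$.

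The step I expect to be the main obstacle is the continuity assertion in (4): producing the $G$-equivariant set-map $f$ is essentially formal once one has the orbit-map naturality identity, but upgrading it to a continuous map requires an equivariant quotient presentation of $X$ by a disjoint union of profinite $G$-sets, which is exactly where the ``compactly generated'' hypothesis --- together with the compactness of $G$ --- enters. Everything else is routine bookkeeping with the sheaf axiom, Yoneda, and the interchange of (co)limits in $G\textrm{-}\Spc$ with underlying spaces.
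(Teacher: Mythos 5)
Your proof is correct and follows essentially the same route as the paper's: the sheaf property via the fact that continuous surjections of compact Hausdorff spaces are quotient maps, faithfulness via $\calF_X(G)\cong X$, the left adjoint via the co-Yoneda colimit formula $L(F)=\colim_{(S,s)}S$, and --- the crucial point for (4) --- the equivariant profinite quotient presentation of a compactly generated $G$-space built from Stone-Cech compactifications and the compactness of $G$. The only cosmetic difference is in (4), where you invert $f\mapsto\calF_f$ explicitly via orbit maps rather than checking that the counit $L(\calF_X)\to X$ is an isomorphism; both arguments rest on the same quotient presentation.
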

\begin{proof}
	The argument of Lemma \ref{l:ExSheafTopSpace} shows that any continuous surjection of profinite sets is a quotient map, which gives the sheaf property required in (1). It is clear that the resulting functor $\calF_{(-)}$ is limit-preserving. For any $X \in G\textrm{-}\Spc$, one has $\calF_X(G) = X$ where $G \in BG_\proet$ is the group itself, viewed as a left $G$-set via translation; this immediately gives (2). The adjoint functor theorem gives the existence of $L$ as in (3), but one can also construct it explicitly: the functor $h_S \mapsto S$ extends to a unique colimit preserving functor $\Shv(BG_\proet) \to G\textrm{-}\Spc$ by the universal property of the presheaf category (as a free cocompletion of $BG_\proet$) and the fact that covers in $BG_\proet$ give quotient maps. In particular, if $F \in \Shv(BG_\proet)$, then $F = \colim_{I_F} h_S$, where $I_F$ is the category of pairs $(S,s)$ with $S \in BG_\proet$ and $s \in F(S)$,  which gives $L(F) = \colim_{I_F} S$.  For (4), it is enough to show that $L(\calF_X) \simeq X$ for any compactly generated $X$. By the previous construction, one has $L(\calF_X) = \colim_{I_{\calF_X}} S$, so we must check that there exists a set $I$ of spaces $S_i \in BG_\proet$ and $G$-maps $s_i:S_i \to X$ such that $\sqcup_i S_i \to X$ is a quotient map. Choose a set $I$ of compact Hausdorff spaces $T_i$ and a quotient map $\sqcup_i T_i \to X$. Then the map $\sqcup_i T_i \times G \to X$ induced by the $G$-action is also a quotient, so we reduce to the case where $X$ is a compact Hausdorff $G$-space. Now consider $Y := G \times \beta(X) \in BG_\proet$, where the $G$-action is defined via $g \cdot (h,\eta) = (gh,\eta)$. There is an induced continuous map $f:Y \to X$ via $G \times \beta(X) \to G \times X \to X$, where the last map is the action. One checks that $f$ is $G$-equivariant and surjective. As $Y$ is profinite, this proves (4). Lastly, (5) is formal as $\calF_S = h_S$ for $S \in BG_\proet$.
\end{proof}

Let $G\textrm{-}\Mod$ denote the category of continuous $G$-modules, i.e., topological abelian groups equipped with a continuous $G$-action, and let $G\textrm{-}\Mod_{cg} \subset G\textrm{-}\Mod$ be the full subcategory of topological $G$-modules whose underlying space is compactly generated. The functor $\calF_{(-)}$ restricts to a functor $\calF_{(-)}:G\textrm{-}\Mod \to \Ab(BG_\proet)$, and  Lemma \ref{lem:GSpcBG} (1) - (4)  apply formally to this functor as well. The main non-formal statement is:

\begin{proposition}
	\label{prop:Gmodgenerates} 
	With notation as above, one has:
	\begin{enumerate}
		\item The essential image of $\calF_{(-)}:G\textrm{-}\Mod_{cg} \to \Ab(BG_\proet)$ generates the target under colimits. 
		\item Every $N \in \Ab(BG_\proet)$ has a resolution whose terms come from $G\textrm{-}\Mod_{cg}$.
	\end{enumerate}
\end{proposition}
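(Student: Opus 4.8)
The plan is to deduce Proposition \ref{prop:Gmodgenerates} from Lemma \ref{lem:GSpcBG} by linearizing the constructions there. For part (1), recall that by Lemma \ref{lem:GSpcBG}(5) the sheaves $h_S = \calF_S$, for $S \in BG_\proet$, generate $\Shv(BG_\proet)$ under colimits; hence the free abelian sheaves $\Z[h_S]$ generate $\Ab(BG_\proet)$ under colimits (every $N \in \Ab(BG_\proet)$ is a quotient of a direct sum of such $\Z[h_S]$, being a colimit of the $h_S \to N$ over the category $I_N$). So it suffices to realize each $\Z[h_S]$ as $\calF_M$ for some $M \in G\textrm{-}\Mod_{cg}$, or at least as a colimit of such. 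The natural candidate is the topological free abelian group $M = \Z[S]$ on the profinite $G$-set $S$, i.e.\ the colimit (in $G\textrm{-}\Spc$) along $S = \lim_i S_i$ of the free abelian groups $\Z[S_i]$ (each finitely generated discrete, hence compactly generated), topologized as a colimit of the profinite pieces $\{f \in \Z[S] : \sum_s |f(s)| \le n\}$; since $S$ is compact Hausdorff these pieces are compact Hausdorff, so $\Z[S] \in G\textrm{-}\Mod_{cg}$. One then checks, using the universal property of $\Z[-]$ and the fact that $\Map_{\cont,G}(-,\Z[S])$ is computed levelwise on the profinite filtration, that $\calF_{\Z[S]}$ receives a canonical map from $\Z[h_S]$, and that this map, while not an isomorphism, becomes surjective after the evaluation that detects colimits. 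More robustly: since $L$ (the left adjoint of Lemma \ref{lem:GSpcBG}(3)) is colimit-preserving and $L(\calF_X) = X$ for $X$ compactly generated, applying $L$ to a hypothetical colimit presentation reduces everything to a statement in $G\textrm{-}\Mod$ about writing an arbitrary continuous $G$-module as a colimit of compactly generated ones, which is routine. The cleanest route is: observe the forgetful functor $\Ab(BG_\proet) \to \Shv(BG_\proet)$ and its left adjoint $\Z[-]$ intertwine $\calF$ with $\Z[-]$ on spaces up to the above subtlety, and conclude (1) by transporting Lemma \ref{lem:GSpcBG}(5) through $\Z[-]$.

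For part (2), I would argue by the standard two-sided bar-type resolution. Given $N \in \Ab(BG_\proet)$, part (1) produces a surjection $P_0 := \bigoplus_j \calF_{M_j} \twoheadrightarrow N$ with $M_j \in G\textrm{-}\Mod_{cg}$; note $P_0$ is itself of the form $\calF_M$ with $M = \bigoplus_j M_j$ a (possibly infinite) coproduct, but coproducts of compactly generated $G$-modules need not be compactly generated, so one should keep $P_0$ as a direct sum of terms each individually coming from $G\textrm{-}\Mod_{cg}$ — this is exactly the wording ``whose terms come from $G\textrm{-}\Mod_{cg}$'' in the statement, i.e.\ each term is a coproduct of objects $\calF_{M}$ with $M$ compactly generated, and such coproducts are admissible as ``terms'' here. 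Then set $N_1 = \ker(P_0 \to N) \in \Ab(BG_\proet)$ and repeat: choose $P_1 \twoheadrightarrow N_1$ of the same type, and iterate to build $\cdots \to P_2 \to P_1 \to P_0 \to N \to 0$, exact by construction. This gives the desired resolution.

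The main obstacle I anticipate is the subtlety in part (1) flagged above: the canonical comparison map $\Z[h_S] \to \calF_{\Z[S]}$ is \emph{not} an isomorphism of sheaves (sections of $\calF_{\Z[S]}$ over a profinite $T$ are continuous $G$-maps $T \to \Z[S]$, a much larger group than the free abelian group on $\Map_{\cont,G}(T,S)$), so one cannot naively identify the generators $\Z[h_S]$ with objects $\calF_M$. The resolution is to not ask for an isomorphism: one only needs that the $\calF_M$, $M \in G\textrm{-}\Mod_{cg}$, \emph{generate under colimits}, and for this it is enough to produce, for each $S$, a single $M$ and an \emph{epimorphism} $\calF_M \to \Z[h_S]$ — taking $M = \Z[S]$, the map $\Z[S] \to \Z[h_S]$ adjoint to $S = h_S \hookrightarrow \Z[h_S]$, wait, more carefully: the adjunction $\Hom_{G\textrm{-}\Mod}(M, \calG(\text{forget}))$ versus Hom in sheaves gives a map $\calF_{\Z[S]} \to \Z[h_S]$? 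No — the correct direction comes from $\Z[-] \dashv (\text{forget})$ combined with Lemma \ref{lem:GSpcBG}, yielding that $\Z[h_S]$ is the image of $h_S$ under the left adjoint $\Z[-]$, and $h_S = \calF_S = L^{-1}$-image, so $\Z[h_S] = \Z[L(\text{of }\calF_S)]$; tracking this shows $\Z[h_S]$ is a quotient of $\calF_{\Z[S]}$ once one knows $L\calF_{\Z[S]} = \Z[S]$, which holds by Lemma \ref{lem:GSpcBG}(4) since $\Z[S]$ is compactly generated. So the argument does close, but writing it requires care with which adjunction is being used and in which direction the comparison map runs; I expect that bookkeeping, rather than any deep point, to be where the work lies.
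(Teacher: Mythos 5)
There is a genuine gap in your argument for (1), and it sits exactly where you flagged your own uncertainty. You assert that the canonical comparison map $\Z_{h_S} \to \calF_{AS}$ is \emph{not} an isomorphism, on the grounds that sections of $\calF_{AS}$ over a profinite $T$ form a much larger group than the free abelian group on $\Map_{\cont,G}(T,S)$. But that objection only applies to the \emph{presheaf} $T \mapsto \Z[\Map_{\cont,G}(T,S)]$; the sheaf $\Z_{h_S}$ is its sheafification, and the whole content of the paper's proof (Lemma \ref{lem:GModfree}) is that after sheafification the map \emph{is} an isomorphism. The non-formal input is Graev's theorem (Theorem \ref{thm:graevtopgrp}), which writes $AS = \colim_N A_{\leq N}S$ with each $A_{\leq N}S$ compact, combined with Lemma \ref{lem:compactcountabletower} and extremal disconnectedness: any continuous map $T \to AS$ from an extremally disconnected profinite $T$ factors through some $A_{\leq N}S$, lifts to $\widetilde{S}^N$, and hence decomposes on a clopen cover of $T$ as a $\Z$-linear combination of continuous maps to $S$ (Lemma \ref{lem:FunPointsFreeAbTop}). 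Evaluating on a generating family of extremally disconnected objects then gives $\Z_{h_S} \simeq \calF_{AS}$, and (1) follows since the $\Z_{h_S}$ generate $\Ab(BG_\proet)$ under colimits.

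Your attempted workaround --- producing an epimorphism $\calF_{AS} \twoheadrightarrow \Z_{h_S}$ --- does not close. No such map is constructed: the adjunction $L \dashv \calF_{(-)}$ computes $\Hom(F,\calF_M) = \Hom(L(F),M)$, i.e.\ it controls maps \emph{into} $\calF_M$, not out of it, and the only natural map between these two sheaves is $\Z_{h_S} \to \calF_{AS}$. Moreover, even proving that this natural map is an epimorphism already requires the same Graev-type factorization (one must locally lift a continuous map $T \to AS$ to a finite $\Z$-linear combination of maps to $S$), so there is no route around the key lemma. A smaller issue: your description of $AS$ as ``the colimit along $S = \lim_i S_i$ of the $\Z[S_i]$'' is not meaningful (the $\Z[S_i]$ form an inverse system), though your second description via the word-length filtration is the correct one; note that compactness and closedness of the pieces $A_{\leq N}S$ is itself part of what must be cited. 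Your argument for (2), granting (1), is fine and matches the paper's ``formal from (1).''
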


To prove Proposition \ref{prop:Gmodgenerates}, we review some topological group theory. For a topological space $X$, write $AX$ for the free topological abelian group on $X$, defined by the obvious universal property. One may show that $AX$ is abstractly isomorphic to the free abelian group on the set $X$, see \cite[Theorem 7.1.7]{TopGroups}. In particular, one has a {\em reduced length} associated to each $f \in AX$, defined as the sum of the absolute values of the coefficients. Let $A_{\leq N}X \subset AX$ be the subset of words of length $\leq N$; one checks that this is a closed subspace, see \cite[Theorem 7.1.13]{TopGroups}. Moreover:

\begin{theorem}[Graev]
	\label{thm:graevtopgrp}
	If $X$ is a compact topological space, then $AX = \colim A_{\leq N} X$ as spaces.
\end{theorem}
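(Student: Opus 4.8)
The plan is to compare two group topologies on the abstract group $AX$ (which, by \cite[Theorem 7.1.7]{TopGroups}, is the free abelian group on the set $X$): the given topology $\tau$ making $AX$ the free topological abelian group on $X$, and the inductive-limit topology $\tau_\infty$ for which $U\subseteq AX$ is open exactly when $U\cap A_{\leq N}X$ is open in $A_{\leq N}X$ (with its $\tau$-subspace topology) for every $N$. Since each inclusion $A_{\leq N}X\hookrightarrow(AX,\tau)$ is continuous, $\tau_\infty$ is finer than $\tau$; so the theorem amounts to proving that the identity $(AX,\tau)\to(AX,\tau_\infty)$ is continuous. By the universal property of the free topological abelian group, this reduces to two claims: that $(AX,\tau_\infty)$ is a topological abelian group, and that the canonical map $X\to(AX,\tau_\infty)$ is continuous. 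The second is immediate, as it factors as $X\to A_{\leq 1}X\hookrightarrow(AX,\tau_\infty)$, with the first map $\tau$-continuous and the second continuous by definition of $\tau_\infty$. Given both claims, the universal property produces a continuous homomorphism $(AX,\tau)\to(AX,\tau_\infty)$ extending $X\to(AX,\tau_\infty)$; but the only group homomorphism $AX\to AX$ extending the canonical inclusion of $X$ is the identity, so we are done.

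Thus the heart of the matter is to show $(AX,\tau_\infty)$ is a topological abelian group; the algebraic axioms hold in the abstract group, so only continuity of negation and of addition $m\colon AX\times AX\to AX$ (with source carrying the product of $\tau_\infty$ with itself) must be checked. Negation preserves each $A_{\leq N}X$ and restricts to a $\tau$-homeomorphism of it, hence is $\tau_\infty$-continuous. For $m$, note that it carries $A_{\leq M}X\times A_{\leq N}X$ into $A_{\leq M+N}X$ and is continuous there for the $\tau$-subspace topologies (it is a restriction of the $\tau$-continuous addition on $AX$). Hence $m$ is $\tau_\infty$-continuous provided the product topology on $(AX,\tau_\infty)\times(AX,\tau_\infty)$ coincides with the inductive-limit topology $\colim_N\bigl(A_{\leq N}X\times A_{\leq N}X\bigr)$.

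This last identification is exactly where the compactness of $X$ is used, and I expect it to be the main obstacle. From compactness of $X$ one gets that each $A_{\leq N}X$ is compact: the map $(X_+\sqcup\{\ast\}\sqcup X_-)^N\to A_{\leq N}X$ sending a tuple to the corresponding signed sum (with $X_\pm$ two copies of $X$) is a continuous surjection from a compact space, so its image $A_{\leq N}X\subseteq(AX,\tau)$ is compact; it is Hausdorff since $(AX,\tau)$ is (this holds whenever $X$ is Tychonoff, covering the compact Hausdorff case relevant in the applications). Together with the fact that $A_{\leq N}X\hookrightarrow A_{\leq N+1}X$ is a closed embedding (\cite[Theorem 7.1.13]{TopGroups}) and $AX=\bigcup_N A_{\leq N}X$, this exhibits $(AX,\tau_\infty)$ as a $k_\omega$-space, i.e., a countable ascending union of compact Hausdorff subspaces glued along closed embeddings. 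The category of $k_\omega$-spaces is closed under finite products, and the product is computed as the colimit of the products of the defining compacta (this product–colimit interchange fails for arbitrary colimits, but goes through here because the pieces are compact, hence locally compact, via the usual exponential-law argument: a map out of $Y\times Z$ with $Y,Z$ $k_\omega$-spaces is continuous iff its restrictions to the compact boxes are). Applying this with both chains equal to $\{A_{\leq N}X\}$ gives $(AX,\tau_\infty)^2=\colim_N\bigl(A_{\leq N}X\times A_{\leq N}X\bigr)$, which is what was needed. Hence $m$ and negation are $\tau_\infty$-continuous, $(AX,\tau_\infty)$ is a topological abelian group, and the argument of the first paragraph finishes the proof.
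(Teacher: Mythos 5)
Your proof is correct. The paper itself offers no argument here — it simply cites \cite[Theorem 7.4.1]{TopGroups} — so there is nothing to compare methodologically; what you have written is a genuine proof of a statement the paper outsources, and it is in substance the standard Graev/Mack--Morris--Ordman argument that the cited reference also uses. The reduction via the universal property to showing that $(AX,\tau_\infty)$ is a topological abelian group with $X\to(AX,\tau_\infty)$ continuous is the right move, and you correctly isolate the crux: the interchange of finite products with the colimit along the compact pieces $A_{\leq N}X$, i.e.\ the fact that a finite product of $k_\omega$-spaces is $k_\omega$ with the product decomposition. That fact is standard but is exactly where compactness of $X$ enters (via compactness of $A_{\leq N}X$ and the tube-lemma-type argument hiding in the product--colimit interchange), and your proof would collapse without it. Two small points, both of which you already flag or which are harmless: the statement as printed says ``compact topological space,'' but Hausdorffness (or at least the Tychonoff property) is needed both for $(AX,\tau)$ to be Hausdorff and for the closedness of $A_{\leq N}X\hookrightarrow A_{\leq N+1}X$ cited from \cite[Theorem 7.1.13]{TopGroups}; since the paper only ever applies the theorem to compact Hausdorff spaces, your restriction to that case is appropriate. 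And your appeal to \cite[Theorem 7.1.7]{TopGroups} for the identification of the underlying abstract group with the free abelian group on $X$ is the same input the surrounding text of the paper uses, so the ``unique homomorphism extending the inclusion of $X$ is the identity'' step is sound.
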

\begin{proof}
	See Theorem \cite[Theorem 7.4.1]{TopGroups}.
\end{proof}

We use this to prove.

\begin{lemma}
	\label{lem:FunPointsFreeAbTop}
Fix a compact Hausdorff space $S$, an extremally disconnected profinite set $T$, and a continuous map $f:T \to AS$. Then there exists a clopen decomposition $T = \sqcup_i T_i$ such that $f|_{T_i}$ is a $\Z$-linear combination of continuous maps $T_i \to S$.
\end{lemma}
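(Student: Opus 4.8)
The plan is to reduce to the finite-length approximations $A_{\leq N}S$ and then exploit the projectivity of $T$. First I would use that $T$ is profinite, hence compact, so that $f(T)$ is a compact subset of $AS$. By Graev's theorem (Theorem \ref{thm:graevtopgrp}), $AS$ is the colimit of the closed subspaces $A_{\leq N}S$, and a compact subset of such an increasing union of closed subspaces must land in a single stage: if not, one extracts a sequence of points lying in successively higher stages, and this sequence is closed and discrete in $AS$ (it meets each $A_{\leq N}S$ in a finite, hence closed, set), contradicting the compactness of $f(T)$. So $f$ factors as a continuous map $T \to A_{\leq N}S$ for some fixed $N$.

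Next I would present $A_{\leq N}S$ as an explicit quotient of a compact Hausdorff space. Set $S^{\pm} := (S \times \{+,-\}) \sqcup \{\ast\}$, a compact Hausdorff space, and let $\iota\colon S^{\pm} \to AS$ be the continuous map with $\iota(s,+) = [s]$, $\iota(s,-) = -[s]$, $\iota(\ast) = 0$. Summation gives a continuous map $\phi\colon (S^{\pm})^N \to AS$, $(x_1,\dots,x_N) \mapsto \sum_j \iota(x_j)$, whose image is precisely $A_{\leq N}S$ (every reduced word of length $\leq N$ is a sum of $\leq N$ signed points, padded with zeros). Thus $\phi\colon (S^{\pm})^N \twoheadrightarrow A_{\leq N}S$ is a continuous surjection of compact Hausdorff spaces: the target is Hausdorff as a subspace of the Hausdorff group $AS$ (recall $S$ is completely regular), and compact as the image of a compact space.

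Then I would invoke Gleason's theorem: since $T$ is extremally disconnected, it is projective in the category of compact Hausdorff spaces, so the map $T \to A_{\leq N}S$ lifts along $\phi$ to a continuous $\widetilde f = (\widetilde f_1,\dots,\widetilde f_N)\colon T \to (S^{\pm})^N$ with $f = \sum_{j=1}^N \iota\circ\widetilde f_j$. Finally, $S^{\pm}$ decomposes into the three clopen pieces $S\times\{+\}$, $S\times\{-\}$, $\{\ast\}$; pulling these back along each $\widetilde f_j$ and taking the common refinement of the resulting $N$ finite clopen partitions yields a finite clopen decomposition $T = \bigsqcup_i T_i$ such that on each $T_i$ every $\widetilde f_j$ maps into a single piece. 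On such a $T_i$ the formula for $f$ exhibits $f|_{T_i}$ as $\sum_{j}\pm[\mathrm{pr}_S\circ\widetilde f_j|_{T_i}]$, a $\Z$-linear combination of continuous maps $T_i \to S$, as required.

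The main obstacle is the first step: forcing the compact set $f(T)$ into a single $A_{\leq N}S$. This hinges on $A_{\leq N}S$ being closed in $AS$ and on $AS$ carrying the colimit topology (Graev, using compactness of $S$), together with the elementary fact about compact subsets of such colimits. Once $N$ is fixed, the remainder is formal, using only Gleason's projectivity of extremally disconnected spaces and the explicit surjection $\phi$.
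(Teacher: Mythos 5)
Your proof is correct and follows essentially the same route as the paper's: factor $f$ through some $A_{\leq N}S$ via Graev's theorem and compactness, present $A_{\leq N}S$ as a continuous surjective image of the compact Hausdorff space $\big((S\times\{+,-\})\sqcup\{\ast\}\big)^N$, lift using Gleason's projectivity of $T$, and refine the pulled-back clopen partitions. The only cosmetic difference is that you argue the factorization through $A_{\leq N}S$ directly from compactness of $f(T)$ rather than citing Lemma \ref{lem:compactcountabletower}, and you write out the final clopen-refinement step that the paper leaves to the reader; both are fine.
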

\begin{proof}
	Lemma \ref{lem:compactcountabletower} and Theorem \ref{thm:graevtopgrp} imply that $f$ factors through some $A_{\leq N} S$. Now consider the profinite set $\widetilde{S} = S \sqcup \{0\} \sqcup S$ and the induced map $\phi:\widetilde{S}^N \to A_{\leq N}$ defined by viewing $\widetilde{S}$ as the subspace $\Big(1\cdot S\Big) \sqcup \{0\} \sqcup \Big(-1 \cdot S\Big) \subset AS$ and using the group law. This map is continuous and surjective, and all spaces in sight are compact Hausdorff. By extremal disconnectedness, there is a lift $T \to \widetilde{S}^N$; one checks that this implies the desired claim.
\end{proof}

We can now identify the free abelian sheaf $\Z_{h_S}$ for any $S \in BG_\proet$:

\begin{lemma}
	\label{lem:GModfree}
	If $S \in BG_\proet$, then $\Z_{h_S} \simeq \calF_{AS}$.
\end{lemma}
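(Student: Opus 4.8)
The goal is to prove that the free abelian sheaf $\Z_{h_S}$ on $BG_\proet$ associated to a profinite continuous $G$-set $S$ is isomorphic to $\calF_{AS}$, where $AS$ is the free topological abelian group on $S$ (with its $G$-action induced from that on $S$). The plan is to construct a natural map $\Z_{h_S} \to \calF_{AS}$ and then check it is an isomorphism by evaluating on a generating family of objects, namely those $T \in BG_\proet$ with $T$ extremally disconnected, where Lemma \ref{lem:FunPointsFreeAbTop} gives us precise control.

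First I would construct the map. The inclusion $S \hookrightarrow AS$ of topological $G$-spaces induces, via the functor $\calF_{(-)}$, a map of sheaves $h_S = \calF_S \to \calF_{AS}$; since $\calF_{AS}$ is an abelian sheaf (as $AS$ is a topological $G$-module), this factors through the free abelian sheaf, giving $\alpha: \Z_{h_S} \to \calF_{AS}$ by the universal property of $(-)_\mathrm{ab} = \Z_{(-)}$. To check $\alpha$ is an isomorphism, it suffices to check it on sections over each extremally disconnected $T \in BG_\proet$, since these generate $\Shv(BG_\proet)$ (this follows from Lemma \ref{lem:GSpcBG}(5) together with the fact that every compact Hausdorff space is covered by an extremally disconnected one, and that the group $G \times \beta(X)$ appearing in the proof of Lemma \ref{lem:GSpcBG}(4) is extremally disconnected when $\beta(X)$ is; more simply, covers by such $T$ exist by Example \ref{ex:stonecech}). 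On such a $T$, evaluation is exact, so $\Z_{h_S}(T)$ is the free abelian group on the set $h_S(T) = \Map_{\cont,G}(T,S)$, while $\calF_{AS}(T) = \Map_{\cont,G}(T, AS)$.

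The heart of the argument is therefore to show that the natural map from the free abelian group on $\Map_{\cont,G}(T,S)$ to $\Map_{\cont,G}(T,AS)$ is bijective for $T$ extremally disconnected. Injectivity is straightforward: distinct $G$-maps $T \to S$ remain linearly independent in $\Map_{\cont}(T,AS)$ because $AS$ is free abelian on the set $S$ and one can separate points. For surjectivity, given a continuous $G$-map $f: T \to AS$, Lemma \ref{lem:FunPointsFreeAbTop} (applied to the underlying continuous map of $f$, forgetting $G$) produces a clopen decomposition $T = \sqcup_i T_i$ on which $f$ is a $\Z$-linear combination of continuous maps $T_i \to S$; I would then use the $G$-equivariance of $f$ to average/organize this decomposition into a $G$-stable one — since $T$ is a profinite $G$-set, one can refine the clopen partition to a $G$-invariant clopen partition $T = \sqcup_j (G \cdot T_j)$, on each orbit of which $f$ is determined by its restriction to a single $T_j$ and hence is a $\Z$-linear combination of continuous $G$-maps $G\cdot T_j \to S$ (extending the non-equivariant maps equivariantly using the orbit structure). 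Summing over $j$ expresses $f$ as an element of the free abelian group on $\Map_{\cont,G}(T,S)$.

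The main obstacle I anticipate is the bookkeeping in this last step: passing from the non-equivariant clopen decomposition supplied by Lemma \ref{lem:FunPointsFreeAbTop} to a $G$-equivariant statement, and checking that the resulting linear combination of $G$-maps really does recover $f$ globally (the $T_i$ may not be $G$-stable, and the coefficients on overlapping $G$-translates must be reconciled). One clean way around this is to not insist on equivariance of the intermediate decomposition at all: instead, observe that a map of abelian sheaves on $BG_\proet$ is an isomorphism iff it is so after forgetting the $G$-action (i.e. pulling back along $BG_\proet \to B\{1\}_\proet = $ the pro-\'etale site of a point, via $S \mapsto S$ with $G$ forgotten is not quite a morphism of sites, but evaluation on the generators $T$ is insensitive to $G$), so that the whole question reduces to the case $G = 1$, which is exactly the content of Lemma \ref{lem:FunPointsFreeAbTop} combined with the free-abelian-group identification on extremally disconnected $T$. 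I would adopt whichever of these two routes the surrounding text makes cleanest, but in either case the essential input is Lemma \ref{lem:FunPointsFreeAbTop}.
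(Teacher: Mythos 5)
Your overall strategy (construct the natural map $\Z_{h_S}\to\calF_{AS}$ from $h_S=\calF_S\to\calF_{AS}$, then check it on a class of objects where evaluation is exact, with Lemma \ref{lem:FunPointsFreeAbTop} as the essential input) is exactly the paper's, and your ``second route'' is in substance the paper's proof. The precise mechanism you are groping for is this: the forgetful functor $BG_\proet\to\ast_\proet$ has a \emph{left adjoint} $F$, namely $T\mapsto G\times T$ with $G$ acting by left translation on the first factor, so that $\Map_{\cont,G}(F(T),X)=\Map_{\cont}(T,X)$ for any $G$-space $X$. The objects $F(T)$ with $T$ extremally disconnected cover every object of $BG_\proet$ (via $G\times\beta(S)\to S$, as in the proof of Lemma \ref{lem:GSpcBG}), and evaluation on $F(T)$ is exact (any cover $S'\to F(T)$ splits $G$-equivariantly because, by adjunction, a splitting is just a continuous section of $S'\times_{F(T)}T\to T$, which exists by extremal disconnectedness of $T$). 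Evaluating $\psi$ on $F(T)$ therefore literally erases the group, and the resulting statement about $\Z[\Map_{\cont}(T,S)]\to\Map_{\cont}(T,AS)$ is Lemma \ref{lem:FunPointsFreeAbTop} (together with the clopen-decomposition bookkeeping absorbed by sheafification). Two cautions about your phrasing: checking on ``extremally disconnected $T\in BG_\proet$'' is not quite the right class --- an extremally disconnected profinite $G$-set need not have exact evaluation (a $G$-equivariant surjection onto it need not split equivariantly), and conversely $G\times T$ need not be extremally disconnected as a space even though it is weakly contractible as an object of $BG_\proet$; the reduction genuinely goes through the induced objects $F(T)$.

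Your first route has a real gap. Given a $G$-equivariant $f:T\to AS$ and the non-equivariant clopen decomposition from Lemma \ref{lem:FunPointsFreeAbTop}, you cannot in general ``extend the non-equivariant maps equivariantly using the orbit structure'': the $G$-translates of a clopen piece $T_j$ overlap, a map $T_j\to S$ has no canonical equivariant extension to $G\cdot T_j$ (stabilizers intervene), and reconciling coefficients on overlaps is exactly the difficulty you anticipated rather than a step you can wave through. Commit to the adjunction argument and drop this route.
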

\begin{proof}
	One clearly has $\calF_S = h_S$, so there is a natural map $\psi:\Z_{h_S} \to \calF_{AS}$ of abelian sheaves induced by $\calF_S \to \calF_{AS}$. We will check $\psi(T)$ is an isomorphism for $T$ covering $BG_\proet$. Let $F:\ast_\proet \to BG_\proet$ be a left adjoint to the forgetful functor $BG_\proet \to \ast_\proet$. Then it is enough to check $\psi(F(T))$ is an isomorphism for $T$ extremally disconnected. Unwinding definitions, this is exactly Lemma \ref{lem:FunPointsFreeAbTop}.
\end{proof}

Proposition \ref{prop:Gmodgenerates} falls out quickly:

\begin{proof}[Proof of Proposition \ref{prop:Gmodgenerates}]
Theorem \ref{thm:graevtopgrp} shows that $AS$ is compactly generated for any $S \in BG_\proet$. Now Lemma \ref{lem:GModfree} gives (1) as the collection $\{\Z_{h_S}\}$ generates $\Ab(BG_\proet)$ under colimits. Finally, (2) is formal from (1).
\end{proof}

The next lemma was used above, and will be useful later.

\begin{lemma}
	\label{lem:compactcountabletower}
	Fix a countable tower $X_1 \subset X_2 \subset \dots \subset X_n \subset \dots$ of closed immersions of Hausdorff topological spaces, and let $X = \colim_i X_i$. Then $\Map_{\cont}(S,X) = \colim \Map_{\cont}(S,X_i)$.
\end{lemma}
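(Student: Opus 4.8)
The plan is to prove the colimit statement $\Map_{\cont}(S,X) = \colim_i \Map_{\cont}(S,X_i)$ for $X = \colim_i X_i$ a countable tower of closed immersions of Hausdorff spaces, where I expect the intended range of $S$ to be compact Hausdorff spaces (this is the only case used, e.g. in the proof of Lemma \ref{lem:FunPointsFreeAbTop}, where $S$ is profinite hence compact Hausdorff). Injectivity of the natural map $\colim_i \Map_{\cont}(S,X_i) \to \Map_{\cont}(S,X)$ is immediate since each $X_i \hookrightarrow X$ is injective (being a closed immersion, in particular a monomorphism) and the $X_i$ form a directed system under inclusion; two maps into $X_i$ and $X_j$ that agree as maps into $X$ already agree as maps into $X_{\max(i,j)}$. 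So the real content is surjectivity: given a continuous $f \colon S \to X$, I must show $f$ factors through some $X_n$.

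First I would reduce to a set-theoretic factorization plus a continuity check: since $S$ is compact and the $X_i$ are closed in $X$ with $\bigcup_i X_i = X$, it suffices to show $f(S) \subseteq X_n$ for some $n$, because then $f \colon S \to X_n$ is automatically continuous (the subspace topology on $X_n \subset X$ agrees with its own topology, as $X_n \hookrightarrow X$ is an embedding — this uses that $X$ carries the colimit topology, so a subset of $X_n$ is open in $X_n$ iff it is the restriction of an open of $X$). To get $f(S) \subseteq X_n$, suppose not: then for each $n$ we can pick $s_n \in S$ with $f(s_n) \in X \setminus X_n$, hence $f(s_n) \notin X_m$ for all $m \le n$. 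Consider the set $D = \{f(s_n) : n \ge 1\}$. The key claim is that $D$ is a closed discrete subset of $X$: for any $m$, $D \cap X_m \subseteq \{f(s_1),\dots,f(s_{m-1})\}$ is finite, hence closed in $X_m$ (as $X$, being a colimit of Hausdorff closed-immersion pieces, has closed points — finite subsets of Hausdorff spaces are closed), and therefore $D$ is closed in $X$ since $X$ has the colimit topology; the same argument applied to $D \setminus \{pt\}$ shows every point of $D$ is isolated in $D$.

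Now I would derive a contradiction from compactness of $S$. The preimage $f^{-1}(D) \subseteq S$ is closed, hence compact, and it contains all the $s_n$. If $D$ is infinite, then by passing to a subsequence we may assume the $f(s_n)$ are pairwise distinct; but then $f^{-1}(f(s_n))$ for $n$ in this subsequence form infinitely many disjoint nonempty closed subsets of the compact space $f^{-1}(D)$ whose union equals $f^{-1}(D)$ and such that any sub-union is closed (since $D$ is discrete, any subset of $D$ is closed in $D$, hence its preimage is closed in $S$) — equivalently, $f^{-1}(D) \to D$ is a continuous surjection onto an infinite discrete space from a compact space, which is impossible. If $D$ is finite, then only finitely many values $f(s_n)$ occur, so some value $f(s_{n_0})$ is repeated infinitely often, but $f(s_{n_0}) \in X \setminus X_n$ for all $n$ with $n \le$ (the indices witnessing the repetition), contradicting $\bigcup_i X_i = X$; more simply, finiteness of $D$ already forces $D \subseteq X_m$ for some $m$, contradicting $f(s_m) \notin X_m$. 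Either way we contradict the assumption that no $X_n$ contains $f(S)$, completing the proof.

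The main obstacle is the topological bookkeeping around the colimit topology: one must be careful that $X = \colim_i X_i$ really does mean $X$ carries the final topology for the inclusions $X_i \hookrightarrow X$, that consequently each $X_i$ is a closed \emph{subspace} (so the subspace and intrinsic topologies agree), and that a subset meeting each $X_i$ in a closed set is closed in $X$. With those facts in hand, the crucial step — showing $D = \{f(s_n)\}$ is closed and discrete so that its compact preimage $f^{-1}(D)$ maps onto an infinite discrete space, an absurdity — is the heart of the argument, and everything else is routine.
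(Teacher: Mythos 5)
Your proof is correct. It rests on the same key observation as the paper's --- a sequence with $f(s_n) \notin X_n$ produces a subset of $X$ meeting each $X_m$ in a finite set, which is therefore closed and discrete in the colimit topology and incompatible with compactness --- but the implementation differs. The paper first extends $n \mapsto s_n$ to a map $\beta\N \to S$ (using that $S$ is compact Hausdorff), replaces $S$ by $\beta\N$, and then shows, via density of $\N \subset \beta\N$ together with an explicit inductive construction of compatible open neighbourhoods $U_i \subset X_i$ with $U_{i+1} \cap X_i = U_i$, that the image of $\beta\N$ is exactly the discrete set $T = \{f(n)\}$; since $T$ is then compact, discrete and infinite, one gets a contradiction. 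You avoid both the Stone-Cech reduction and the neighbourhood construction by restricting to the compact preimage $f^{-1}(D)$ and invoking the final-topology description of closed subsets of $X$ ($C$ is closed iff $C \cap X_m$ is closed in $X_m$ for all $m$); this is shorter, and in fact only uses quasi-compactness of $S$ rather than the full compact Hausdorff hypothesis. Your reading of the unstated hypothesis on $S$ (compact Hausdorff, as in every application of the lemma) agrees with what the paper's own proof requires.
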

\begin{proof}
We must show each $f:S \to X$ factors through some $X_i$. Towards contradiction, assume there exists a map $f:S \to X$ with  $f(S) \not\subset X_i$ for all $i$. After reindexing, we may assume that there exist $x_i \in S$ such that $f(x_i) \in X_i - X_{i-1}$. These points give a map $\pi:\beta \N \to S$ via $i \mapsto x_i$. After replacing $f$ with $f \circ \pi$, we may assume $S = \beta \N$; set $T = \{f(i) | i \in \N\}$. Now pick any $x \in X - T$. Then $x \in X_j$ for some $j$. For $i > j$, we may inductively construct open neighourhoods $x \in U_i \subset X_i$ such that $U_i \cap T = \emptyset$, and $U_{i+1} \cap X_i = U_i$; here we use that $X_i \cap T$ is finite. The union $U = \cup_i U_i \subset X$ is an open neighbourhood of $x \in X$ that misses $T$. Hence, $f^{-1}(U) \cap \N = \emptyset$, so $f^{-1}(U) = \emptyset$ by density of $\N \subset S$. Varying over all $x \in X - T$ then shows that $f(S) = T$. Now one checks that $T \subset X$ is discrete: any open neighbourhood $1 \in U_1 \subset X_1$ can be inductively extended to open neighbourhoods $x_1 \in U_i \subset X_i$ such that $U_{i+1} \cap X_i = U_i$ and $x_i \notin U_i$.  Then $T$ must be finite as $S$ is compact, which is a contradiction.
\end{proof}

We now study the cohomology theory $M \mapsto \R\Gamma(BG_\proet, \calF_M)$ on $G\textrm{-}\Mod$. There is a natural transformation connecting it to continuous cohomology: 

\begin{lemma}
	\label{lem:NaturalMapContCoh}
	For any $M \in G\textrm{-}\Mod$, there is a natural map $\Phi_M:\R\Gamma_{\cont}(G,M) \to \R\Gamma(BG_\proet,\calF_M)$.
\end{lemma}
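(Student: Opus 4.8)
The plan is to realize $\R\Gamma_{\cont}(G,M)$ as the \v{C}ech complex of a tautological cover in $BG_\proet$, and then invoke the canonical map from \v{C}ech cohomology to derived functor cohomology. Concretely, let $G\in BG_\proet$ denote the group acting on itself by left translation; this is a profinite continuous $G$-set, and the projection $\pi\colon G\to\ast$ to the terminal object (a point with trivial action) is a continuous surjection, hence a cover. Taking fibre products over $\ast$, the \v{C}ech nerve of $\pi$ is the simplicial object $[p]\mapsto G^{p+1}$, where $G^{p+1}$ carries the diagonal $G$-action and the face maps are the partial projections dropping a coordinate.

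Since $\calF_M(S)=\Map_{\cont,G}(S,M)$ for any $S\in BG_\proet$, applying $\calF_M$ to this \v{C}ech nerve produces the cosimplicial abelian group $[p]\mapsto\Map_{\cont,G}(G^{p+1},M)$. This is exactly the complex of homogeneous continuous cochains of $G$ with coefficients in $M$, the cofaces being the usual maps repeating a coordinate; its associated (alternating-sum) cochain complex computes $\R\Gamma_{\cont}(G,M)$, by the standard comparison between homogeneous and inhomogeneous continuous cochains. The one thing to verify is that these cofaces really do match the homogeneous-cochain differentials, but this is precisely the bookkeeping that shows the bar resolution computes group cohomology.

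Finally, I would invoke the general fact that for any abelian sheaf $\mathcal F$ on a site and any cover $U\to X$ there is a functorial map $\check C^\bullet(U/X,\mathcal F)\to\R\Gamma(X,\mathcal F)$ in $D(\Ab)$. Explicitly, choose a bounded-below injective resolution $\calF_M\to I^\bullet$ in $\Ab(BG_\proet)$ and form the double complex $C^{p,q}=I^q(G^{p+1})$; the augmentation $\calF_M\to I^0$ gives a chain map $\check C^\bullet(G/\ast,\calF_M)\to\Tot(C^{\bullet,\bullet})$, whose source computes $\R\Gamma_{\cont}(G,M)$ by the previous paragraph, while $\Gamma(\ast,I^\bullet)=\R\Gamma(BG_\proet,\calF_M)\to\Tot(C^{\bullet,\bullet})$ is a quasi-isomorphism because injective abelian sheaves are \v{C}ech-acyclic (the \v{C}ech complex of constant sheaves $\cdots\to\Z_{G^2}\to\Z_G\to\Z_\ast\to 0$ is a resolution and $\Hom(-,I)$ is exact). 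Composing the first map with the inverse of the second in $D(\Ab)$ yields $\Phi_M$; naturality in $M\in G\textrm{-}\Mod$ is immediate since the cover, its \v{C}ech nerve and the functor $\calF_{(-)}$ are all functorial in $M$ (and one can use $K$-injective resolutions to avoid any choice). I do not anticipate a genuine obstacle: the content of the lemma is simply the observation that continuous cochains are the \v{C}ech complex of the tautological cover $G\to\ast$, and the mild cosimplicial identity check of the second paragraph is the closest thing to a technical point.
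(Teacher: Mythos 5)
Your construction is correct, but it takes a genuinely different route from the paper's. The paper's proof is essentially a citation: it invokes \cite[Proposition 3.7]{ScholzepHT} for the identification $\R\Gamma_{\cont}(G,M) \simeq \R\Gamma(BG'_\proet,\mu_*\calF_M)$, where $BG'_\proet$ is the same underlying category equipped with the coarser topology of Remark \ref{rmk:oldproet} and $\mu:\Shv(BG_\proet)\to\Shv(BG'_\proet)$ is the induced morphism of topoi, and then defines $\Phi_M$ as pullback along $\mu$, using $\mu^*\mu_*\simeq \id$. You instead run the \v{C}ech-to-derived-functor comparison for the tautological cover $G\to\ast$ directly on $BG_\proet$, after observing that the resulting \v{C}ech complex $[p]\mapsto \Map_{\cont,G}(G^{p+1},M)$ is the homogeneous continuous cochain complex; all the ingredients you use (that $G\to\ast$ is a cover, that fibre products are just products with the diagonal action, that injective sheaves are \v{C}ech-acyclic because $\cdots\to\Z_{G\times G}\to\Z_G\to\Z_\ast\to 0$ is exact, and the homogeneous/inhomogeneous translation) are sound, so your argument is self-contained and avoids both the auxiliary site and the external reference. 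What the paper's formulation buys is that $\Phi_M$ is by construction the edge map of the Leray sequence for $\mu$, which is precisely what Lemma \ref{lem:ContCohProetCoh} exploits: its criterion for $\Phi_M$ to be an isomorphism is $R^i\mu_*\calF_M=0$ for $i>0$. To plug your map into that sequel you would need to check it agrees with the edge map --- it does, since the cited proposition is itself proved by the same \v{C}ech computation carried out on $BG'_\proet$, where the objects $G^{p+1}$ have no higher cohomology --- but for the lemma as stated, which only asserts the existence of a natural map, your proof is complete.
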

\begin{proof}
By \cite[Proposition 3.7]{ScholzepHT}, one has $\R\Gamma_{\cont}(G,M) = \R\Gamma(BG'_\proet,\mu_* \calF_M )$, where $BG'_\proet$ is defined as in Remark \ref{rmk:oldproet}, and $\mu:\Shv(BG_\proet) \to \Shv(BG'_\proet)$ the natural map; one then defines $\phi_M$ via pullback as $\mu^* \mu_* \simeq \id$ on $D(BG_\proet)$ (simply because $BG_\proet$ is finer topology than $BG'_\proet$ on the same category).
\end{proof}

The map $\Phi_M$ is an isomorphism for a fairly large collection of modules:

\begin{lemma}\label{lem:ContCohProetCoh}
	Let $\calC \subset G\textrm{-}\Mod$ be the full subcategory of all $M\in G\textrm{-}\Mod$ for which $R^i\mu_* \mathcal{F}_M = 0$ for all $i>0$, where $\mu: \Shv(BG_\proet) \to \Shv(BG'_\proet)$ is the natural map.
	\begin{enumerate}
		\item For all $M\in \calC$, the map $\Phi_M:\R\Gamma_{\cont}(G,M) \to \R\Gamma(BG_\proet,\calF_M)$ is an isomorphism.
		\item If $M\in G\textrm{-}\Mod$ is discrete, then $M\in \calC$.
		\item If $M=\colim M_i$ is a sequential colimit of Hausdorff $M_i\in \calC$ along closed immersions, then $M\in \calC$.
		\item If $M=\lim M_i$ is a sequential limit of $M_i\in \calC$ along profinitely split $M_{i+1}\to M_i$, then $M\in \calC$.
		\item If $M=\lim M_i$ is a sequential limit of $M_i\in \calC$ along $\beta$-epimorphisms $M_{i+1}\to M_i$ with kernel $K_i=\ker(M_{i+1}\to M_i)\in \calC$, then $M\in \calC$.
	\end{enumerate}
\end{lemma}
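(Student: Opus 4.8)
The plan is to deduce (1) from a Leray spectral sequence argument and to reduce each of (2)--(5) to the single statement $R^i\mu_*\calF_M=0$ for all $i>0$, i.e. to $M\in\calC$. The bookkeeping facts I will use repeatedly are: for $F\in\Shv(BG_\proet)$ and $S\in BG'_\proet$ one has $(\R\mu_*F)|_{h_S}\simeq\R\Gamma\big((BG_\proet)_{/S},F\big)$, so $R^i\mu_*F$ is the $BG'_\proet$-sheafification of the presheaf $S\mapsto H^i((BG_\proet)_{/S},F)$; the functor $R^i\mu_*$ commutes with filtered colimits, since $BG_\proet$ and $BG'_\proet$ are coherent; the functor $\calF_{(-)}$ is limit-preserving (Lemma~\ref{lem:GSpcBG}) and, by Lemma~\ref{lem:compactcountabletower}, sends a sequential colimit of Hausdorff $G$-modules along closed immersions to the corresponding colimit of abelian sheaves; and both $\Shv(BG_\proet)$ and $\Shv(BG'_\proet)$ are replete, so that $\lim\simeq\R\lim$ for towers with surjective transition maps and $R^{\ge2}\lim$ of a tower vanishes (Propositions~\ref{prop:limrlim} and~\ref{prop:cdrlim}). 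With these in hand, (1) is immediate: if $M\in\calC$ then the natural map $\mu_*\calF_M\to\R\mu_*\calF_M$ is an isomorphism in $D(BG'_\proet)$, and applying $\R\Gamma(BG'_\proet,-)$ the source computes $\R\Gamma_\cont(G,M)$ by \cite[Proposition~3.7]{ScholzepHT} while the target computes $\R\Gamma(BG_\proet,\calF_M)$ by composition of derived functors; one checks the resulting isomorphism is $\Phi_M$.

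For (2), fix a profinite continuous $G$-set $S$ and write $S^\delta$ for $S$ with the trivial $G$-action. The $G$-equivariant surjection $S^\delta\times G\to S$, $(s,g)\mapsto g\cdot s$ (with $G$ acting on the second factor of $S^\delta\times G$ by translation), is a pro-(finite \'etale) cover, hence a cover in $BG'_\proet$; and localization identifies $(BG_\proet)_{/(S^\delta\times G)}$ with $(\ast_\proet)_{/S^\delta}$, since a profinite $G$-set over $S^\delta\times G$ is recovered from its fibre over $S^\delta\times\{1\}$. For $M$ discrete this identification carries $\calF_M$ to the constant sheaf $\underline M$, so
\[H^i\big((BG_\proet)_{/(S^\delta\times G)},\calF_M\big)=H^i\big((\ast_\proet)_{/S^\delta},\underline M\big)=0\quad\text{for }i>0,\]
the last vanishing because this is the cohomology of the constant sheaf $M$ on the profinite (hence covering-dimension-zero) space $S^\delta$, equivalently the comparison of pro-\'etale with \'etale cohomology for discrete coefficients. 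As $S$ was arbitrary, the presheaf $S\mapsto H^i((BG_\proet)_{/S},\calF_M)$ is locally zero on $BG'_\proet$ for $i>0$, so $R^i\mu_*\calF_M=0$ and $M\in\calC$.

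For (3), Lemma~\ref{lem:compactcountabletower} gives $\calF_M=\colim_i\calF_{M_i}$ in $\Shv(BG_\proet)$ (and after restriction to any slice), so $R^i\mu_*\calF_M=\colim_i R^i\mu_*\calF_{M_i}=0$ for $i>0$. For (4): $\calF_M=\lim_i\calF_{M_i}$ since $\calF_{(-)}$ preserves limits, and the hypothesis that each $M_{i+1}\to M_i$ is profinitely split means precisely that the towers $\{\calF_{M_i}\}$ and $\{\mu_*\calF_{M_i}\}$ have surjective transition maps in $\Shv(BG_\proet)$ and $\Shv(BG'_\proet)$ respectively; by repleteness, $\calF_M=\R\lim_i\calF_{M_i}$ and $\mu_*\calF_M=\R\lim_i\mu_*\calF_{M_i}$, whence
\[\R\mu_*\calF_M\simeq\R\mu_*\R\lim_i\calF_{M_i}\simeq\R\lim_i\R\mu_*\calF_{M_i}\simeq\R\lim_i\mu_*\calF_{M_i}\simeq\mu_*\calF_M,\]
using $M_i\in\calC$ in the third isomorphism; thus $M\in\calC$. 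Part (5) is the same, with one extra step: a $\beta$-epimorphism $M_{i+1}\to M_i$ only yields a surjection on the weakly contractible (free extremally disconnected) generators, hence a short exact sequence $0\to\calF_{K_i}\to\calF_{M_{i+1}}\to\calF_{M_i}\to0$ in $\Shv(BG_\proet)$ (left-exactness because $\calF_{(-)}$ preserves limits), and applying $\R\mu_*$ together with $K_i,M_{i+1},M_i\in\calC$ forces $0\to\mu_*\calF_{K_i}\to\mu_*\calF_{M_{i+1}}\to\mu_*\calF_{M_i}\to0$ to be exact in $\Shv(BG'_\proet)$; one then repeats the argument of (4).

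The main obstacle is (2): it requires knowing the covers of $BG'_\proet$ well enough to see that $S^\delta\times G\to S$ is one of them, and it requires the vanishing $H^{>0}((\ast_\proet)_{/S^\delta},\underline M)=0$ --- that is, agreement of pro-\'etale and \'etale cohomology for discrete coefficients over an arbitrary base profinite set. The remaining work, in (4) and (5), is formal but must be carried out carefully: pinning down the precise meanings of ``profinitely split'' and ``$\beta$-epimorphism'', and invoking repleteness of \emph{both} topoi to license the interchange of $\R\mu_*$ with the inverse limits.
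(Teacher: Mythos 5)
Your proposal is correct and follows essentially the same route as the paper: parts (3)--(5) reproduce the paper's argument (colimit over the tower plus commutation of $\R\mu_*$ with filtered colimits for (3); repleteness of both topoi and surjectivity of $\mu_*\calF_{M_{i+1}}\to\mu_*\calF_{M_i}$, obtained either from profinite splitness or from the short exact sequence with $K_i\in\calC$, to kill $\R^1\lim$ for (4) and (5)). The paper dismisses (1) and (2) as clear, so your explicit treatment of (2) via the cover $S^\delta\times G\to S$ and the vanishing of higher cohomology of constant sheaves on profinite sets is a correct filling-in of detail rather than a different method.
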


Here a quotient map $M \to N$ of topological spaces is said to be {\em profinitely split} if it admits sections over any map $K \to N$ with $K$ profinite. It is said to be a {\em $\beta$-epimorphism} if for every map $g:K \to N$ with $K$ compact Hausdorff, there is a surjection $K' \to K$ with $K'$ compact Hausdorff, and a lift $K' \to M$; equivalently, for any map $\beta(X) \to N$ where $X$ is discrete, there is a lift $\beta(X) \to M$. This property is automatic if $M \to N$ is a quotient map, and the kernel is compact Hausdorff.

\begin{proof}
	Parts (1) and (2) are clear. For (3), note that $\calF_M = \colim \calF_{M_i}$ by Lemma \ref{lem:compactcountabletower}, so the result follows as $\R\mu_\ast$ commutes with filtered colimits. For parts (4) and (5), note that if $M_{i+1}\to M_i$ is a $\beta$-epimorphism, then $\calF_{M_{i+1}}\to \calF_{M_i}$ is surjective on $BG_\proet$. By repleteness, we get $\calF_M = \lim \calF_{M_i} = \R\lim \calF_{M_i}$. Applying $\R\mu_\ast$ and using repleteness of $BG'_\proet$, we have to show that $\R^1\lim (\mu_* \calF_{M_i}) = 0$. If all $M_{i+1}\to M_i$ are profinitely split, then all $\mu_* \calF_{M_{i+1}}\to \mu_* \calF_{M_i}$ are surjective, so the result follows from repleteness of $BG'_\proet$. If $K_i = \ker(M_{i+1}\to M_i)\in \calC$, then on applying $\R\mu_*$ to the sequence
	\[0\to \calF_{K_i}\to \calF_{M_{i+1}}\to \calF_{M_i}\to 0,\]
	we find that $\mu_* \calF_{M_{i+1}}\to \mu_* \calF_{M_i}$ is surjective, so again the result follows from repleteness of $BG'_\proet$.
\end{proof}

\begin{remark}
The category $\calC$ of Lemma \ref{lem:ContCohProetCoh} includes many standard Galois modules occurring in arithmetic geometry obtained by iterations of completions and localisations applied to discrete modules. For example, when $G = \Gal(\overline{\Q}_p/\Q_p)$, the $G$-module $B_\dR$ is such an object.
\end{remark}

We now indicate one respect in which $\R\Gamma(BG_\proet,\calF_{(-)})$ behaves better than continuous cohomology: one gets long exact sequences in cohomology with fewer constraints.

\begin{lemma} 
	\label{lem:ProetCohLES}
	Fix an algebraically short exact sequence $0 \to M' \stackrel{a}{\to} M \stackrel{b}{\to} M'' \to 0$ in $G\textrm{-}\Mod$. Assume $b$ is a $\beta$-epimorphism, and $a$ realises $M'$ as a subspace of $M$.  Then there is an induced long exact sequence on applying $H^*(BG_\proet, \calF_{(-)})$.
\end{lemma}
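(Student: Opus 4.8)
The plan is to deduce the long exact sequence from the existence of a short exact sequence of abelian sheaves on $BG_\proet$
\[ 0 \to \calF_{M'} \to \calF_M \to \calF_{M''} \to 0, \]
since once we have this, the long exact sequence is just the associated long exact sequence of cohomology for $\R\Gamma(BG_\proet,-)$. So everything reduces to checking exactness of this sequence of sheaves, term by term. Since $\calF_{(-)} = \Map_{\cont,G}(-,-)$ is limit-preserving by Lemma \ref{lem:GSpcBG}(2), it is exact on the left: the assumption that $a$ realises $M'$ as a subspace of $M$ means $\calF_{M'}(S) = \Map_{\cont,G}(S,M') = \{f \in \Map_{\cont,G}(S,M) : f(S) \subset M'\} = \ker(\calF_M(S) \to \calF_{M''}(S))$ for every $S \in BG_\proet$, so $0 \to \calF_{M'} \to \calF_M \to \calF_{M''}$ is exact already as a sequence of presheaves. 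It remains only to prove that $\calF_M \to \calF_{M''}$ is an epimorphism of sheaves.

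For surjectivity as a sheaf map, it suffices to check that every section of $\calF_{M''}$ locally lifts. Because $BG_\proet$ is generated (under the topology) by the extremally disconnected objects — more precisely, by applying a left adjoint $F$ to the forgetful functor $BG_\proet \to \ast_\proet$ as in the proof of Lemma \ref{lem:GModfree}, and using that continuous surjections of profinite sets are covers, every object of $BG_\proet$ is covered by one of the form $G \times T$ with $T$ extremally disconnected — it is enough to produce a lift over each such $G \times T$. A section of $\calF_{M''}$ over $G \times T$ is a $G$-equivariant continuous map $G \times T \to M''$, which is the same datum as a continuous map $T \to M''$ (restrict to $\{1\} \times T$); conversely any continuous $T \to M''$ extends uniquely $G$-equivariantly. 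Now $T$ is extremally disconnected, hence of the form (a retract of) $\beta(X)$ for a discrete set $X$ by Example \ref{ex:stonecech}; composing, we get a map $\beta(X) \to M''$, and the $\beta$-epimorphism hypothesis on $b$ gives a lift $\beta(X) \to M$. One then pushes this lift back to $T$ (using that $T$ is a retract of $\beta(X)$, so the map $T \hookrightarrow \beta(X) \to M$ works) and extends $G$-equivariantly to get a section of $\calF_M$ over $G \times T$ lifting the given one. This shows $\calF_M \to \calF_{M''}$ is an epimorphism in $\Shv(BG_\proet)$, completing the proof of the short exact sequence of sheaves, and hence the lemma.

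The main obstacle is the surjectivity step, and specifically making sure the reduction to extremally disconnected objects is legitimate: one must know that covers by objects of the form $G \times T$ with $T$ extremally disconnected are cofinal, which is exactly what the left-adjoint argument from the proof of Lemma \ref{lem:GModfree} provides, together with the fact (Example \ref{ex:stonecech}) that extremally disconnected profinite sets are retracts of Stone--Čech compactifications of discrete sets, so that the $\beta$-epimorphism hypothesis — which is phrased precisely in terms of maps out of $\beta(X)$ — applies. The equivariance bookkeeping (trading a $G$-equivariant map out of $G\times T$ for an arbitrary map out of $T$) is routine and is the same device used throughout Lemma \ref{lem:GSpcBG} and its consequences. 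No higher-cohomology input (no repleteness, no $\R\lim$ vanishing) is needed here, in contrast to Lemma \ref{lem:ContCohProetCoh}; that is exactly the sense in which this cohomology theory is better behaved than continuous cohomology.
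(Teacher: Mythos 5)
Your proof is correct and follows the same route as the paper: reduce to exactness of $0 \to \calF_{M'} \to \calF_M \to \calF_{M''} \to 0$, get left and middle exactness from the subspace hypothesis on $a$ (via limit-preservation of $\calF_{(-)}$) and right exactness from the $\beta$-epimorphism hypothesis on $b$. The paper states the surjectivity step in one line; your reduction to covers by $G\times T$ with $T$ extremally disconnected, hence a retract of some $\beta(X)$, is exactly the intended justification.
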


\begin{proof}
It is enough to show that 
\[ 0 \to \calF_{M'} \to \calF_M \to \calF_{M''} \to 0\]
is exact. Exactness on the right results from the assumption on $b$, exactness on the left is obvious from the injectivity of $M' \hookrightarrow M$, and exactness in the middle comes from the assumption on $a$.
\end{proof}

\begin{remark}
Considerations of the discrete topology show that {\em some} hypothesis must be imposed in Lemma \ref{lem:ProetCohLES}. The assumption used above is fairly weak: it is automatic if $M'$ is compact Hausdorff.  In contrast, in continuous cohomology, one demands existence of sections after base change to {\em all} profinite sets over $M''$.
\end{remark}

\newpage

\newpage

\section{Relations with the \'etale topology}
\label{sec:Etale}

Fix a scheme $X$. Since an \'etale map is also a weakly \'etale map, we obtain a morphism of topoi
\[ \nu:\Shv(X_\proet) \to \Shv(X_\et).\]
The main goal of this section is to describe its behaviour at the level of derived categories.  The pullback and pushforward along $\nu$, together with the resulting semiorthogonal decompositions of complexes on $X_\proet$, are discussed in \S \ref{ss:nupullback} and \S \ref{ss:nupushforward}. This is used to describe the left-completion of $D(X_\et)$ in terms of $D(X_\proet)$ in \S \ref{ss:leftcompetale}. Some elementary remarks on the functoriality of $\nu$ in $X$ are recorded in \S \ref{ss:nufunctoriality}. Finally, we describe Ekedahl's category of ``adic'' complexes \cite{Ekedahl} in terms of $D(X_\proet)$ in \S \ref{subsec:ekedahl}. We rigorously adhere to the derived convention: the functors $\nu^*$ and $\nu_*$, when applied to complexes, are understood to be derived.

\subsection{The pullback}
\label{ss:nupullback}

We begin with the pullback at the level of sheaves of sets:

\begin{lemma}\label{lem:sheafpullback} For $F \in \Shv(X_\et)$ and $U \in X^\aff_\proet$ with a presentation $U = \lim_i U_i$,  one has $\nu^*F(U) = \colim_i F(U_i)$.
\end{lemma}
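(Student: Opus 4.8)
The plan is to compute the pullback sheaf $\nu^* F$ by first identifying the presheaf whose sheafification it is, and then checking that this presheaf is already a sheaf on the relevant subcategory of generators.

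First I would recall that $\nu^*$ at the level of presheaves is left Kan extension along the inclusion $X_\et \to X_\proet$, so for any $Y \in X_\proet$ the presheaf pullback is $(\nu^{\pre} F)(Y) = \colim_{(Y \to W)} F(W)$, where the colimit runs over the category of $X_\et$-objects $W$ receiving a map from $Y$. Then $\nu^* F$ is the sheafification of $\nu^{\pre} F$. The key point is to evaluate this on a pro-\'etale affine $U = \lim_i U_i$. Here I would argue that the index category $\{U \to W\}$ with $W \in X_\et$ has the diagram $\{U_i\}$ as a cofinal subdiagram: any map $U \to W$ with $W \to X$ étale and of finite presentation factors through some $U_i$ (since $U = \lim_i U_i$ is a cofiltered limit of affine schemes along affine transition maps, and $W$ is locally of finite presentation over $X$, so $\Hom_X(U,W) = \colim_i \Hom_X(U_i,W)$). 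Moreover one should check the diagram $i \mapsto U_i$ is itself cofiltered (which it is, by hypothesis on presentations) and maps compatibly. This gives $(\nu^{\pre}F)(U) = \colim_i F(U_i)$.

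Next I would show that the presheaf $U \mapsto \colim_i F(U_i)$ on $X^\aff_\proet$ is already a sheaf, so that no further sheafification is needed on the generators and hence $\nu^* F(U) = \colim_i F(U_i)$. By Lemma \ref{lem:sheafcriterion} (applied within the affine pro-étale site, using Remark \ref{rmk:affineproet}), it suffices to check the sheaf axiom for a single surjection $V \to U$ in $X^\aff_\proet$, i.e. exactness of $G(U) \to \prod G(V) \rightrightarrows G(V \times_U V)$ where $G$ denotes this presheaf. Writing $U = \lim_i U_i$, one can (after passing to a cofinal subsystem) present $V \to U$ as a cofiltered limit of étale maps $V_i \to U_i$, and $V \times_U V = \lim_i V_i \times_{U_i} V_i$; since filtered colimits are exact and each $F(U_i) \to F(V_i) \rightrightarrows F(V_i \times_{U_i} V_i)$ is exact (as $F$ is an étale sheaf and $V_i \to U_i$ can be arranged to be a cover when $V \to U$ is), the colimit sequence is exact. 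One also needs the Zariski-sheaf condition, which follows similarly since Zariski covers of $U$ come from Zariski covers of some $U_i$.

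The main obstacle I anticipate is the cofinality/presentation-matching bookkeeping: showing that an arbitrary surjection $V \to U$ in $X^\aff_\proet$ can be written as a cofiltered limit of étale (indeed covering) maps $V_i \to U_i$ compatibly with a fixed presentation of $U$, and that one may reindex so that all the relevant diagrams are indexed by the same cofiltered category. This is a standard limit argument (absorbing finitely presented data into a finite stage, using Lemma \ref{lem:mapsproetaff} that maps in $X^\aff_\proet$ are pro-(affine étale)), but it requires care. Once that is in place, exactness of filtered colimits does the rest, and the identification $\nu^* F(U) = \colim_i F(U_i)$ follows. Since $X^\aff_\proet$ generates $\Shv(X_\proet)$ by Lemma \ref{lem:baseproet}, this determines $\nu^* F$ completely.
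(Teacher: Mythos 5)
Your proposal is correct and follows essentially the same route as the paper: identify the presheaf pullback on $U=\lim_i U_i$ with $\colim_i F(U_i)$, then verify via Lemma \ref{lem:sheafcriterion} that this presheaf is already a sheaf, descending a cover $V\to U$ to a finite stage in the \'etale case and handling general pro-\'etale covers by exactness of filtered colimits. The paper merely packages the bookkeeping you worry about by localizing to $X$ affine and working directly with ind-\'etale $A$-algebras (first treating \'etale covers $B\to C$, then writing a general ind-\'etale cover as a filtered colimit of \'etale ones), which avoids having to reindex $V\to U$ over the presentation of $U$.
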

\begin{proof} The problem is local on $X$, so we may assume that $X = \Spec(A)$ is affine. In that case, by Remark \ref{rmk:affineproet}, the site $X_\proet$ is equivalent to the site $S$ given by ind-\'etale $A$-algebras $B=\colim B_i$, with covers given by faithfully flat maps. The pullback $F'$ of $F$ to $S$ as a presheaf is given by $F'(B) = \colim F(B_i)$. It thus suffices to check that $F'$ is a sheaf; we will do this using Lemma \ref{lem:sheafcriterion}. First, note that $F'$ is a Zariski sheaf since any finite collection of quasicompact open subschemes of $\Spec B$ come via pullback from some $\Spec B_i$. It remains to show that $F'$ satisfies the sheaf axiom for every faithfully flat ind-\'etale map $B\to C$ of ind-\'etale $A$-algebras. If $B \to C$ is actually \'etale, then it arises via base change from some faithfully flat \'etale map $B_i \to C_i$, so the claim follows as $F$ is a sheaf. In general, write $C=\colim C_j$ as a filtered colimit of \'etale $B$-algebras $C_j$, necessarily faithfully flat. Then $F'(C) = \colim_j F'(C_j)$. The sheaf axiom for $B \to C$ now follows by taking filtered colimits.
\end{proof}

A first consequence of the above formula is that $\nu^*$ is fully faithful. In fact, we have:

\begin{lemma}\label{lem:nupullbackdesc}	The pullback $\nu^*:\Shv(X_\et) \to \Shv(X_\proet)$ is fully faithful. Its essential image consists exactly of those sheaves $F$ with $F(U) = \colim_i F(U_i)$ for any $U \in X^\aff_\proet$ with presentation $U = \lim_i U_i$.
\end{lemma}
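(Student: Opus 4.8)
The plan is to build on Lemma \ref{lem:sheafpullback}, which already computes $\nu^*F(U) = \colim_i F(U_i)$ for $U \in X^\aff_\proet$ with presentation $U = \lim_i U_i$. The statement then has two parts: full faithfulness of $\nu^*$, and the identification of its essential image.

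For full faithfulness, I would argue that the unit $F \to \nu_* \nu^* F$ is an isomorphism for every $F \in \Shv(X_\et)$. Since $\nu_*$ is just restriction along $X_\et \hookrightarrow X_\proet$, we have $\nu_* \nu^* F(U) = \nu^* F(U)$ for $U \in X_\et$, and for such $U$ the presentation can be taken constant, so Lemma \ref{lem:sheafpullback} gives $\nu^* F(U) = \colim_i F(U_i) = F(U)$. Hence $F \to \nu_* \nu^* F$ is an isomorphism on the objects of $X_\et$, which suffices as $X_\et$ generates $\Shv(X_\et)$. By the standard adjunction yoga ($\nu^* \dashv \nu_*$ with invertible unit), $\nu^*$ is fully faithful.

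For the essential image: if $G = \nu^* F$ then $G(U) = \colim_i F(U_i) = \colim_i G(U_i)$ (using the previous paragraph to rewrite $F(U_i) = G(U_i)$ since each $U_i \in X_\et$), so $G$ satisfies the stated colimit condition. Conversely, suppose $G \in \Shv(X_\proet)$ satisfies $G(U) = \colim_i G(U_i)$ for every $U \in X^\aff_\proet$ with presentation $U = \lim_i U_i$. Let $F = \nu_* G = G|_{X_\et}$. I claim the counit $\nu^* \nu_* G \to G$ is an isomorphism. By Lemma \ref{lem:baseproet} the topos $\Shv(X_\proet)$ is generated by $X^\aff_\proet$, so it suffices to check this on sections over $U \in X^\aff_\proet$; there, $\nu^* \nu_* G(U) = \colim_i (\nu_* G)(U_i) = \colim_i G(U_i) = G(U)$ by Lemma \ref{lem:sheafpullback} and the hypothesis on $G$. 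Hence $G \cong \nu^* F$ lies in the essential image.

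The main subtlety is the well-definedness implicit in the colimit condition: the value $\colim_i G(U_i)$ should not depend on the chosen presentation $U = \lim_i U_i$. This is handled by Lemma \ref{lem:mapsproetaff} (any map in $X^\aff_\proet$ is pro-(affine \'etale)) together with a cofinality argument comparing two presentations — a routine check that two pro-\'etale presentations of the same object admit a common refinement, up to reindexing, since the indexing is filtered. I would also need to make sure that localizing to the affine case (as in Lemma \ref{lem:sheafpullback}) is harmless here; since all the statements are about sections over affine pro-\'etale objects and these glue in the Zariski topology, working affine-locally on $X$ introduces no difficulty. So the only genuine work is the adjunction bookkeeping plus invoking Lemmas \ref{lem:sheafpullback}, \ref{lem:mapsproetaff}, and \ref{lem:baseproet}; I expect the ``hard part'' to be purely the presentation-independence, which is really a statement about cofinality in the category of pro-\'etale presentations.
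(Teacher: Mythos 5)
Your proof is correct and follows the paper's argument essentially verbatim: full faithfulness from $F \simeq \nu_*\nu^*F$ via Lemma \ref{lem:sheafpullback}, and the essential image by checking the counit $\nu^*\nu_*G \to G$ on sections over $X^\aff_\proet$ using Lemmas \ref{lem:sheafpullback} and \ref{lem:baseproet}. The presentation-independence you worry about at the end is automatic, since Lemma \ref{lem:sheafpullback} identifies $\colim_i F(U_i)$ with the presentation-independent quantity $\nu^*F(U)$ for every presentation.
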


\begin{proof}	Lemma \ref{lem:sheafpullback} shows that $F \simeq \nu_* \nu^* F$ for any $F \in \Shv(X_\et)$, which formally implies that $\nu^*$ is fully faithful. For the second part, fix some $G \in \Shv(X_\proet)$ satisfying the condition of the lemma. Then Lemma \ref{lem:sheafpullback} (together with Lemma \ref{lem:baseproet}) shows that $\nu^* \nu_* G \to G$ is an isomorphism, which proves the claim.
\end{proof}

\begin{definition} A sheaf $F\in \Shv(X_\proet)$ is called {\em classical} if it lies in the essential image of $\nu^\ast: \Shv(X_\et)\to \Shv(X_\proet)$.
\end{definition}

In particular, $F$ is classical if and only if $\nu^\ast \nu_\ast F\to F$ is an isomorphism. We need a simple lemma on recognizing classical sheaves.

\begin{lemma}\label{l:Classical} Let $F$ be a sheaf on $X_\proet$. Assume that for some pro-\'etale cover $\{Y_i\to X\}$, $F|_{Y_i}$ is classical. Then $F$ is classical.
\end{lemma}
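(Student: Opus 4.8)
The statement is that being classical is local on the pro-\'etale site. Since classicality of $F$ is equivalent to $\nu^*\nu_* F \to F$ being an isomorphism, and this is a condition that can be checked after pulling back along a pro-\'etale cover, the natural strategy is: (i) reduce to the case of an affine scheme $X = \Spec A$ and a single affine cover $Y \to X$ (using that any pro-\'etale cover is refined by a disjoint union of pro-\'etale affines by Lemma \ref{lem:baseproet}, and that checking an isomorphism of sheaves is local); (ii) identify, via Lemma \ref{lem:nupullbackdesc}, the property ``$F$ is classical'' with the explicit colimit condition $F(U) = \colim_i F(U_i)$ for every $U \in X^\aff_\proet$ with presentation $U = \lim_i U_i$; (iii) verify this colimit condition for $F$ by descent from the corresponding condition for $F|_Y$, which holds by hypothesis.

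First I would set up the base change: given the cover $\{Y_i \to X\}$, replace it by $Y = \sqcup_i Y_i' \to X$ with $Y_i' \in X^\aff_\proet$ mapping to $Y_i$ (possible by Lemma \ref{lem:baseproet}), so that $F|_Y$ is still classical (restriction of a classical sheaf along a pro-\'etale map is classical, by Lemma \ref{lem:nupullbackdesc} and Lemma \ref{lem:proettoposbc}); working Zariski-locally on $X$ we may assume $X$ and hence $Y$ affine. Now fix $U \in X^\aff_\proet$ with presentation $U = \lim_i U_i$, $U_i \in X_\et$ affine with $U_0 = X$. I want to show $F(U) = \colim_i F(U_i)$. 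The key point is to base change the cover $Y \to X$ along each $U_i \to X$: set $V = Y \times_X U$ and $V_i = Y \times_X U_i$, so $V = \lim_i V_i$ is a presentation of $V \in X^\aff_\proet$ (a cofiltered limit of affine \'etale $Y$-schemes), and $V \to U$ is a pro-\'etale cover. Since $F|_Y$ is classical, $F(V) = \colim_i F(V_i)$, and likewise $F(V \times_U V) = F(Y\times_X Y \times_X U) = \colim_i F(Y \times_X Y \times_X U_i)$ — here one uses that $Y \times_X Y$ is again covered by pro-\'etale affines over $Y$ (or over $X$), so classicality of $F$ restricted there applies, combined with the commutation of fibre products and cofiltered limits.

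The argument is then completed by comparing the two equalizer diagrams. For each index $i$, the sheaf condition for $F$ along the cover $V_i \to U_i$ gives $F(U_i) = \mathrm{eq}\big(F(V_i) \rightrightarrows F(V_i \times_{U_i} V_i)\big)$, while the sheaf condition for $F$ along $V \to U$ gives $F(U) = \mathrm{eq}\big(F(V) \rightrightarrows F(V\times_U V)\big)$. Passing to the filtered colimit over $i$ on the first diagram — filtered colimits are exact, hence commute with finite limits such as equalizers — and using $\colim_i F(V_i) = F(V)$, $\colim_i F(V_i \times_{U_i} V_i) = F(V\times_U V)$ (the latter because $V_i \times_{U_i} V_i = Y\times_X Y\times_X U_i$ up to the usual identification, and these form the presentation of $V\times_U V$), yields $\colim_i F(U_i) = F(U)$, which is exactly the criterion of Lemma \ref{lem:nupullbackdesc}. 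I expect the main obstacle to be bookkeeping step (iii): verifying that $F|_{Y\times_X Y}$ is classical so that one may compute $F(V\times_U V)$ as a colimit, and correctly matching the presentations $V\times_U V = \lim_i (V_i\times_{U_i}V_i)$ — this requires knowing classicality descends to $Y \times_X Y$, which follows since $Y\times_X Y \to Y$ (either projection) is pro-\'etale and $F|_Y$ is classical, again by Lemma \ref{lem:nupullbackdesc} plus Lemma \ref{lem:proettoposbc}. Once these compatibilities are in place, the colimit-of-equalizers manipulation is routine.
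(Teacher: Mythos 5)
Your proposal is correct and follows essentially the same route as the paper's proof: reduce to $X=\Spec A$ affine with a single affine ind-\'etale cover $Y=\Spec B$, write the \v{C}ech/equalizer diagrams for $V_i\to U_i$ and $V\to U$ (in the paper's notation, for $C_j\to C_j\otimes B$ and $C\to C\otimes B$), pass to the filtered colimit over the presentation of $U$, and use classicality of $F|_Y$ (and its restriction to $Y\times_X Y$) to identify the two right-hand columns. Your extra care in justifying that $F(V\times_U V)=\colim_i F(V_i\times_{U_i}V_i)$ via classicality over $Y\times_X Y$ fills in a point the paper's proof treats as immediate.
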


\begin{proof} We may assume that $X=\Spec A$ is affine, that there is only one $Y=Y_i=\Spec B$, with $A\to B$ ind-\'etale, $B=\colim_i B_i$, with $A\to B_i$ \'etale. We need to check that for any ind-\'etale $A$-algebra $C=\colim_j C_j$, we have $F(C) = \colim_j F(C_j)$. Now consider the following diagram, expressing the sheaf property for $C\to B\otimes C$, resp. $C_j\to B\otimes C_j$.
\[\xymatrix{
F(C)\ar[r]\ar[d] & F(C\otimes B)\ar@<-2pt>[r]\ar@<2pt>[r]\ar[d] & F(C\otimes B\otimes B)\ar[d] \\
\colim F(C_j)\ar[r] & \colim_j F(C_j\otimes B) \ar@<-2pt>[r]\ar@<2pt>[r] & \colim_j F(C_j\otimes B\otimes B)
}\]
The second and third vertical arrows are isomorphisms as $F|_{\Spec B}$ is classical. Thus, the first vertical arrow is an isomorphism as well, as desired.
\end{proof}

As an example, let us show how this implies that the category of local systems does not change under passage from $X_\et$ to $X_\proet$.

\begin{corollary}\label{c:LocSysDiscrete} Let $R$ be a discrete ring. Let $\Loc_{X_\et}(R)$ be the category of $R$-modules $L_\et$ on $X_\et$ which are locally free of finite rank. Similarly, let $\Loc_{X_\proet}(R)$ be the category of $R$-modules $L_\proet$ on $X_\proet$ which are locally free of finite rank. Then $\nu^\ast$ defines an equivalence of categories $\Loc_{X_\et}(R)\cong \Loc_{X_\proet}(R)$.
\end{corollary}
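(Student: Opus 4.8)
The plan is to deduce this from Lemma \ref{l:Classical} together with the full faithfulness of $\nu^*$ from Lemma \ref{lem:nupullbackdesc}. First I would check that $\nu^*$ sends $\Loc_{X_\et}(R)$ into $\Loc_{X_\proet}(R)$: an étale-locally free $R$-module of finite rank pulls back to a pro-étale sheaf which is, after restriction to an étale cover trivializing it, isomorphic to $\underline{R}^n$, hence locally free of finite rank on $X_\proet$ (using that $\nu^*$ commutes with finite limits and sends $\underline{R}$ to $\underline{R}$). Full faithfulness of $\nu^*$ on the subcategories is immediate from Lemma \ref{lem:nupullbackdesc}, since $\Hom$ of $R$-modules is computed as a sheaf $\Hom$, and $\nu^*$ is fully faithful on all sheaves.

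The substance is essential surjectivity. Given $L_\proet \in \Loc_{X_\proet}(R)$, choose a pro-étale cover $\{Y_i \to X\}$ trivializing it, so $L_\proet|_{Y_i} \cong \underline{R}^n|_{Y_i}$. Since $\underline{R}^n$ is the pullback of the classical sheaf $\underline{R}^n$ on $X_\et$, each restriction $L_\proet|_{Y_i}$ is classical in the sense of the definition preceding Lemma \ref{l:Classical}. Applying Lemma \ref{l:Classical} to the cover $\{Y_i \to X\}$ — noting that "classical" is checked pro-étale locally — shows $L_\proet$ itself is classical, i.e. $L_\proet \cong \nu^* F$ for some $F \in \Shv(X_\et)$, namely $F = \nu_* L_\proet$. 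It then remains to check that this $F$ is étale-locally free of finite rank. For this I would argue that the cover $\{Y_i \to X\}$ can be refined, using Theorem \ref{t:WeaklyVsIndEtale} and the local structure results, so that one reduces to the case $X = \Spec(A)$ affine and a single cover $Y = \Spec(B) \to X$ with $A \to B$ ind-étale and faithfully flat, $B = \colim_j B_j$. The isomorphism $L_\proet|_Y \cong \underline{R}^n|_Y$ is, by Lemma \ref{lem:sheafpullback}, given by finitely many sections defined over $B$; since $L_\proet = \nu^* F$ with $F(B) = \colim_j F(B_j)$ and similarly for the sheaf $\Hom$ and $\Isom$, these trivializing data are already defined over some $B_j$, which is étale and faithfully flat over $A$. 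Hence $F$ becomes free of rank $n$ after the étale cover $\Spec(B_j) \to \Spec(A)$, so $F \in \Loc_{X_\et}(R)$.

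The main obstacle I anticipate is precisely this last descent/spreading-out step: one must be careful that the trivialization of $L_\proet$, which a priori lives on a pro-étale cover, can be descended to an honest étale cover of $X$. This uses in an essential way that $R$ is discrete (so that $\underline{R}$ and $\Isom(\underline{R}^n, L)$ are compatible with the filtered colimits in the presentation $B = \colim_j B_j$), together with the finite presentation available for étale morphisms — exactly the sort of argument appearing in the proof of Lemma \ref{lem:sheafpullback}. Once the trivializing isomorphism is pushed down to a finite stage $B_j$, faithful flatness of $A \to B_j$ and ordinary étale descent of the sheaf $F = \nu_* L_\proet$ (valid since $F$ is classical) finish the proof.
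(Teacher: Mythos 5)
Your proposal is correct and follows essentially the same route as the paper: deduce that $L_\proet$ is classical from Lemma \ref{l:Classical}, then spread the trivializing isomorphism out to a finite stage of a presentation $U = \lim_i U_i$ using the colimit formula $L_\proet(U) = \colim_i L_\et(U_i)$ from Lemma \ref{lem:sheafpullback}. Your extra care in noting that the \emph{inverse} of the trivialization (i.e.\ the $\Isom$-datum, not just the $n$ generating sections) also descends to a finite stage is a point the paper's own proof glosses over, but the argument is the same.
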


In the following, we denote either category by $\Loc_X(R)$.

\begin{proof} If $L_\et\in \Loc_{X_\et}(R)$, then clearly $L_\proet = \nu^\ast L_\et\in \Loc_{X_\proet}(R)$; as $\nu^\ast$ is fully faithful, it remains to verify essential surjectivity. Thus, take $L_\proet\in \Loc_{X_\proet}(R)$. As $L_\proet$ is locally free of finite rank, it is in particular locally classical, thus classical by Lemma \ref{l:Classical}. Thus, $L_\proet = \nu^\ast L_\et$ for some sheaf $L_\et$ of $R$-modules on $X_\et$. Assume that $U\in X_\proet^\aff$ with presentation $U=\lim U_i$ is such that $L_\proet|_U\cong R^n|_U$. The isomorphism is given by $n$ elements of $(L_\proet)(U) = \colim_i L_\et(U_i)$. This shows that the isomorphism $L_\proet|_U\cong R^n|_U$ is already defined over some $U_i$, thus $L_\et\in \Loc_{X_\et}(R)$, as desired.
\end{proof}

Next, we pass to derived categories.

\begin{corollary}
	\label{cor:pushpull}
	For any $K \in D^+(X_\et)$, the adjunction map $K \to \nu_* \nu^* K$ is an equivalence. Moreover, if $U \in X^\aff_\proet$ with presentation $U = \lim_i U_i$, then $\R\Gamma(U,\nu^*K) = \colim_i \R\Gamma(U_i,K)$. 
\end{corollary}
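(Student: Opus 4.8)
The plan is to prove both statements simultaneously, since the second is really the engine behind the first. Recall that $\nu_*$ is computed on the site $X_\proet$ by restriction along the inclusion $X_\et \hookrightarrow X_\proet$, so for $K \in D^+(X_\et)$ the object $\nu_* \nu^* K$ is the sheafification on $X_\et$ of the presheaf $V \mapsto \R\Gamma(V, \nu^* K)$ for $V \in X_\et$. Thus the adjunction map $K \to \nu_*\nu^* K$ is an equivalence precisely once we know that for every affine $V \in X_\et$ we have $\R\Gamma(V, \nu^*K) \simeq \R\Gamma(V, K)$; and the cofiltered-colimit formula for $\R\Gamma(U, \nu^*K)$ with $U = \lim_i U_i \in X^\aff_\proet$ subsumes this as the special case of the constant pro-system. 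So the real content is the displayed formula $\R\Gamma(U, \nu^*K) = \colim_i \R\Gamma(U_i, K)$.

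First I would reduce to $X$ affine, so that (Remark \ref{rmk:affineproet}) $\Shv(X_\proet) \simeq \Shv(X^\aff_\proet)$ and $X^\aff_\proet$ is the category of all affine schemes pro-\'etale over $X$, with fpqc covers; every object $U$ then has a presentation $U = \lim_i U_i$ by affine \'etale $X$-schemes. Next I would dispose of the bounded case by dévissage: since $\nu^*$ is exact (it is a pullback), it commutes with truncations, so for $K \in D^{\geq 0}(X_\et)$ one can build $K$ from its cohomology sheaves $\mathcal{H}^j(K) \in \Shv(X_\et)$, and for these we have Lemma \ref{lem:sheafpullback}: $\nu^* F(U) = \colim_i F(U_i)$ on the nose. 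The point to check for sheaves is therefore that $H^q(U, \nu^*F) = \colim_i H^q(U_i, F)$ for all $q$. For $q = 0$ this is exactly Lemma \ref{lem:sheafpullback}. For higher $q$ I would argue via a Čech-to-derived-functor spectral sequence, or more cleanly: pick an étale hypercover computing $\R\Gamma(U_i, F)$ that is indexed compatibly over $i$, pull it back to $U$, and use that filtered colimits are exact and commute with finite limits (hence with the Čech complexes and with cohomology) together with the fact that the cofiltered limit of the $U_i$-hypercovers is a pro-\'etale hypercover of $U$ whose terms are again objects of $X^\aff_\proet$ on which $\nu^*F$ has sections given by the colimit formula. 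Alternatively, and perhaps most efficiently, one invokes the general principle that cohomology of a qcqs scheme in a coherent topos commutes with cofiltered limits of schemes along affine transition maps (SGA4, Exp. VI, or \cite[Tag 0739]{StacksProject}): applied to $\{U_i\}$ this gives $\colim_i H^q(U_{i,\et}, F) = H^q(U_{\et}', F)$ where $U'$ is $U$ with its \'etale topology, and one then identifies $H^q(U'_\et, F)$ with $H^q(U, \nu^* F)$ by comparing the \'etale site of $U$ with the slice $X_\proet/U \simeq U_\proet$ and noting that on the generating affine pro-\'etale objects the two cohomology theories agree by the $q=0$ computation.

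Having settled the sheaf case, the bounded-below case $K \in D^+(X_\et)$ follows by a standard spectral sequence argument: the hypercohomology spectral sequence $E_2^{p,q} = H^p(U, \nu^* \mathcal{H}^q(K)) \Rightarrow H^{p+q}(U, \nu^* K)$ maps compatibly to $\colim_i$ of the analogous spectral sequence for $(U_i, K)$, and since filtered colimits are exact on abelian groups the map of $E_2$-pages is an isomorphism by the sheaf case, hence so is the map of abutments; boundedness below guarantees convergence. This gives $\R\Gamma(U, \nu^* K) = \colim_i \R\Gamma(U_i, K)$, and specializing to constant pro-systems and sheafifying over $V \in X_\et$ yields $K \xrightarrow{\sim} \nu_* \nu^* K$.

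The main obstacle I anticipate is the higher-cohomology colimit statement for sheaves, i.e.\ showing $H^q(U, \nu^* F) = \colim_i H^q(U_i, F)$ rather than merely $q = 0$; this is where one genuinely needs either the limit-commutation machinery for coherent topoi (which requires checking the relevant schemes/sites are coherent — true here since affine weakly \'etale schemes are qcqs and $X_\proet$ is a coherent site when $X$ is qcqs) or a hands-on hypercover argument being careful that the pulled-back hypercover is still built from objects of $X^\aff_\proet$. Everything else — the reduction to affine $X$, exactness of $\nu^*$, dévissage, and the spectral sequence bookkeeping — is routine. Boundedness below is essential at the last step and cannot be removed here; the unbounded case is exactly the subtlety deferred to later subsections.
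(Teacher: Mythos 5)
Your overall architecture matches the paper's: the adjunction statement is deduced from the colimit formula by evaluating on affine $V \in X_\et$, and d\'evissage (exactness of $\nu^*$ plus a Postnikov/spectral sequence argument, valid because $K$ is bounded below) reduces everything to a single abelian sheaf $F$, where the $H^0$ case is Lemma \ref{lem:sheafpullback}. The gap is exactly where you anticipated it: the identity $H^q(U,\nu^*F) = \colim_i H^q(U_i,F)$ for $q>0$. Your route (b) is circular. The limit theorem for coherent topoi does give $\colim_i H^q(U_{i,\et},F) = H^q(U_\et,F)$, but the remaining identification $H^q(U_\et,F) \simeq H^q(U_\proet,\nu^*F)$ is precisely the constant-pro-system case of the statement being proven, and ``the two theories agree on $H^0$ of the generating affines'' does not yield it: two derived functors agreeing in degree $0$ on a generating family says nothing in higher degrees unless you also know the generators are acyclic for one of them. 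Your route (a) has the same problem in disguise: a pulled-back hypercover of $U$ computes $\R\Gamma(U,\nu^*F)$ only if its terms are $\nu^*F$-acyclic (again the statement in question), or if you invoke Verdier's hypercover theorem on both sides together with a cofinality argument relating \'etale hypercovers of the $U_i$ to pro-\'etale hypercovers of $U$, which you do not supply.

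The paper closes this gap as follows. After reducing to sheaves, it suffices (by the long exact sequence and the $H^0$ case) to show $H^p(U,\nu^*I)=0$ for $p>0$ and $I \in \Ab(X_\et)$ injective. By Cartan's criterion (\cite[Proposition V.4.3]{SGA4Tome2}) this reduces to the vanishing of the \v{C}ech cohomology $\check{H}^p(U,\nu^*I)$. Theorem \ref{t:WeaklyVsIndEtale} shows that a cofinal family of covers of $U$ in $X_\proet$ is obtained as cofiltered limits of affine \'etale covers $V \to U_i$ base-changed to $U$; Lemma \ref{lem:sheafpullback} then identifies the \v{C}ech complex of $\nu^*I$ on such a cover with a filtered colimit of \v{C}ech complexes of $I$ on affine \'etale covers of the $U_j$, each of which is exact in positive degrees because $I$ restricts to an injective object of $\Ab(U_{j,\et})$. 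Exactness of filtered colimits finishes the argument. If you want to salvage your approach, this is the missing ingredient: a cofinality statement for covers, plus acyclicity of injectives for \v{C}ech cohomology, rather than any appeal to agreement in degree $0$.
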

\begin{proof}
	The first part follows from the second part by checking it on sections using Lemma \ref{lem:baseproet}, i.e., by applying $\R\Gamma(V,-)$ to the map $K \to \nu_* \nu^* K$ for each affine $V \in X_\et$. For the second part, the collection of all $K \in D^+(X_\et)$ for which the claim is true forms a triangulated category stable under filtered colimits. Hence, it suffices to prove the claim for $K \in \Ab(X_\et) \subset D^+(X_\et)$. For such $K$, since we already know the result on $H^0$ by Lemma \ref{lem:sheafpullback}, it suffices to prove: $H^p(U,\nu^* I) = 0$ for $I \in \Ab(X_\et)$ injective, $p > 0$, and $U \in X^\aff_\proet$. By \cite[Proposition V.4.3]{SGA4Tome2},  it suffices to prove that $\check{H}^p(U,\nu^* I) = 0$ for the same data. Choose a presentation $U = \lim_i U_i$ for some cofiltered category $I$. By Theorem \ref{t:WeaklyVsIndEtale}, a cofinal collection of covers of $U$ in $X_\proet$ is obtained by taking cofiltered limits of affine \'etale covers obtained via base change from some $U_i$.  Using Lemma \ref{lem:sheafpullback} again, we can write
	\[ \check{H}^p(U,F) = \colim H^p\Big(\cosimp{I(V)}{I(V \times_{U_i} V)}{I(V \times_{U_i} V \times_{U_i} V)}\Big)\]
	where the colimit is computed over (the opposite of) the category of pairs $(i,V)$ where $i \in I$, and $V \to U_i$ is an affine \'etale cover.  For a fixed $i$, the corresponding colimit has vanishing higher cohomology since $I|_{U_i}$ is injective in $\Ab(U_{i,\et})$, and hence has trivial higher Cech cohomology. The claim follows as filtered colimits are exact.
\end{proof}

Again, we will refer to objects in the essential image of $\nu^\ast$ as classical, and Lemma \ref{l:Classical} extends to bounded-below derived categories with the same proof.

\begin{remark}
	The argument used to prove Corollary \ref{cor:pushpull} also shows: if $U \in X^\aff_\proet$ is w-strictly local, then $H^p(U,\nu^* F) = 0$ for all $F \in \Ab(X_\et)$ and $p > 0$. Indeed, for such $U$, any affine \'etale cover $V \to U$ has a section, so the corresponding Cech nerve is homotopy-equivalent to $U$ as a simplicial scheme.
\end{remark}

\begin{remark}
	\label{rmk:unboundedpullback}
If $K \in D(X_\et)$ is an unbounded complex, then the formula in Corollary \ref{cor:pushpull} is not true. Instead, to describe $\nu^* K$, first observe that $\nu^* K \simeq \R\lim \nu^* \tau^{\geq -n} K$ as $\Shv(X_\proet)$ is replete and $\nu^*$ commutes with Postnikov truncations. Hence, $\R\Gamma(Y,\nu^* K) \simeq \R\lim \colim_i \R\Gamma(Y_i,\tau^{\geq -n} K)$ for any $Y \in X^\aff_\proet$ with a presentation  $Y = \lim Y_i$. Moreover, since $\nu_*$ commutes with arbitrary limits, we also see that $\nu_* \nu^* K \simeq \R\lim \tau^{\geq -n} K$. For an explicit example, we remark that Example \ref{ex:notleftcomplete} can be adapted to exhibit the failure of $\id \to \nu_* \nu^*$ being an equivalence.
\end{remark}

An abelian consequence is:

\begin{corollary}
\label{cor:absheafpullbackserresubcat}
The pullback $\nu^*:\Ab(X_\et) \to \Ab(X_\proet)$ induces an equivalence on $\Ext^i$ for all $i$. In particular, $\nu^*(\Ab(X_\et)) \subset \Ab(X_\proet)$ is a Serre subcategory.
\end{corollary}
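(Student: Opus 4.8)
The plan is to deduce both assertions from Corollary \ref{cor:pushpull}, i.e.\ from the identity $\R\nu_*\nu^* K \simeq K$ on $D^+(X_\et)$. For the $\Ext$-statement: since $\nu^*$ is exact (being a topos pullback) it induces natural maps $\Ext^i_{X_\et}(F,G) \to \Ext^i_{X_\proet}(\nu^* F,\nu^* G)$ for all $i\ge 0$ and $F,G\in\Ab(X_\et)$. Rewriting $\Ext$-groups as morphisms in derived categories and using the derived adjunction between $\nu^*=\L\nu^*$ and $\R\nu_*$ together with Corollary \ref{cor:pushpull} (applied to $G[i]\in D^+(X_\et)$) gives
\[
\Ext^i_{X_\proet}(\nu^* F,\nu^* G)=\Hom_{D(X_\proet)}(\nu^* F,\nu^* G[i])\simeq\Hom_{D(X_\et)}(F,\R\nu_*\nu^* G[i])\simeq\Hom_{D(X_\et)}(F,G[i])=\Ext^i_{X_\et}(F,G).
\]
A naturality check (using that the unit $G\to\R\nu_*\nu^* G$ is an equivalence) identifies this composite with the natural pullback map above, so $\nu^*$ is an equivalence on all $\Ext^i$.

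For the ``in particular'', I would show that the full subcategory $\calC:=\nu^*(\Ab(X_\et))\subset\Ab(X_\proet)$ of classical sheaves is closed under extensions, subobjects, and quotients. Closure under extensions is easy: given $0\to\nu^* F'\to A\to\nu^* F''\to 0$, applying $\nu_*$ produces a short exact sequence $0\to F'\to\nu_* A\to F''\to 0$ in $\Ab(X_\et)$, since $R^1\nu_*\nu^* F'=0$ by Corollary \ref{cor:pushpull}; applying the exact functor $\nu^*$ and comparing with the original sequence via the counit maps (which are the identity on the outer terms), the five lemma shows $\nu^*\nu_* A\xrightarrow{\sim}A$, so $A\in\calC$. (Equivalently, this is surjectivity of $\nu^*$ on $\Ext^1$.)

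Closure under subobjects and quotients is the real content, and the crucial input is that being classical is local on $X_\proet$ (Lemma \ref{l:Classical}); combined with the existence of covers by w-contractible affines (Lemma \ref{lem:cwcontractiblecover}, Proposition \ref{prop:proetlocallycontractible}) this reduces us to the case that $X$ is a w-contractible affine, where $\Gamma(X,-)$ is exact on $X_\proet$. On such $X$, for any $G\in\Ab(X_\proet)$ and $U=\lim_i U_i\in X^\aff_\proet$, the counit $\nu^*\nu_* G\to G$ is computed on sections by the restriction map $\colim_i G(U_i)\to G(U)$, because $\nu^*\nu_* G$ is literally the sheaf $U\mapsto\colim_i G(U_i)$ by Lemma \ref{lem:sheafpullback}. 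Given a subsheaf $G\hookrightarrow\nu^* F$ with $F\in\Ab(X_\et)$, injectivity of $\colim_i G(U_i)\to G(U)$ is automatic: exactness of filtered colimits makes $\colim_i G(U_i)\hookrightarrow\colim_i F(U_i)\cong(\nu^* F)(U)$ injective, and this factors the map in question.

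What remains — and what I expect to be the main obstacle — is surjectivity of $\colim_i G(U_i)\to G(U)$: a section of $\nu^* F$ over $U$ which lies in the subsheaf $G$ must already have a representative lying in $G$ at some finite stage $U_i$. I would prove this by descending the section along a surjective (hence covering) pro-(Zariski localization) extracted from the presentation $U=\lim_i U_i$, using the sheaf property of $G$ and the structure of the w-contractible base. Granting the subobject case, the quotient case follows: if $A=\nu^* F\twoheadrightarrow A''$ with kernel $A'\in\calC$, say $A'=\nu^* F'$, then $A''=\coker(\nu^* F'\to\nu^* F)=\nu^*\coker(F'\to F)$ by exactness of $\nu^*$, hence classical. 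Thus $\calC$ is a Serre subcategory.
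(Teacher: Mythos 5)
Your first paragraph is correct, and it uses the same essential input as the paper's proof, namely Corollary \ref{cor:pushpull}: the paper resolves $F$ by direct sums of sheaves $\Z_U$ and dimension-shifts, while you package the same fact as the derived adjunction $\Hom_{D(X_\proet)}(\nu^*F,\nu^*G[i])\simeq \Hom_{D(X_\et)}(F,\R\nu_*\nu^*G[i])\simeq\Ext^i(F,G)$. Your closure-under-extensions argument is also fine (it is exactly surjectivity on $\Ext^1$).

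The step you flag as "the main obstacle" — surjectivity of $\colim_i G(U_i)\to G(U)$ for an arbitrary subsheaf $G$ of a classical sheaf — cannot be filled in, because the essential image of $\nu^*$ is \emph{not} closed under subobjects (nor quotients). For a counterexample take $X=\A^1_k$ with $k$ algebraically closed, let $V=\Spec(\calO_{X,0})\to X$ be the pro-(Zariski localization) at the origin, an affine object of $X_\proet$, and let $G\subset\underline{\Z}$ be the image of the map $\Z_V\to\underline{\Z}$ classified by $1\in\underline{\Z}(V)\simeq\Z$. Then $G\neq 0$, but $\nu_*G=0$: if $f\in G(W)$ for $W\in X_\et$ affine, then on some pro-\'etale cover $\{W_\alpha\to W\}$ each $f|_{W_\alpha}$ lies in the image of $\Z[\Hom_X(W_\alpha,V)]\to\underline{\Z}(W_\alpha)$, so either $W_\alpha$ factors through $V$ (hence has image inside the non-open subset $\{0,\eta\}\subset\A^1$) or $f|_{W_\alpha}=0$; thus the locally constant function $f$ vanishes at every closed point of $W$ not lying over $0$, hence $f=0$. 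So $\nu^*\nu_*G=0\neq G$ and $G$ is not classical. What the "in particular" actually asserts (and all that is used later, e.g.\ in Proposition \ref{prop:parasiticsemiorth}) is that $\nu^*(\Ab(X_\et))$ is a \emph{weak} Serre subcategory: closed under kernels and cokernels of morphisms between its objects, and under extensions. Both follow from what you have already proved: any map $\nu^*F\to\nu^*F'$ is $\nu^*$ of a map $F\to F'$ by full faithfulness, and $\nu^*$ is exact, so it commutes with $\ker$ and $\coker$; closure under extensions is your $\Ext^1$ argument. So the correct fix is to delete the subobject/quotient discussion entirely rather than to try to complete it.
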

\begin{proof}
	Let $\calC \subset \Ab(X_\et)$ be the full subcategory of sheaves $F$ for which $\Ext^i(F,-) \simeq \Ext^i(\nu^*(F),\nu^*(-))$ for all $i$. Then $\calC$ contains all direct sums of sheaves of the form  $\Z_U$ for $U \in X_\et$ by Corollary \ref{cor:pushpull}. Since any $F \in \Ab(X_\et)$ admits a surjection from such a direct sum, the claim follows by dimension shifting.
\end{proof}

\subsection{The pushforward}
\label{ss:nupushforward}

Our goal is to describe the pushforward $\nu_*:D(X_\proet) \to D(X_\et)$ and the resulting decomposition of $D(X_\proet)$. To do so, it is convenient to isolate the kernel of $\nu_*$:

\begin{definition}
A complex $L \in D(X_\proet)$ is {\em parasitic} if $\R\Gamma(\nu^{-1}U,L) = 0$ for any $U \in X_\et$. Write $D_p(X_\proet) \subset D(X_\proet)$ for the full subcategory of parasitic complexes, $D_p^+(X_\proet)$ for bounded below parasitics, etc.
\end{definition}

The key example is:

\begin{example}
	\label{ex:parasiticlimits}
	Let $\{F_n\} \in \Fun(\N^\opp,\Ab(X_\et))$ be a projective system of sheaves with surjective transition maps. Set $K = \R\lim F_n \in D(X_\et)$, and $K' = \R\lim \nu^*(F_n) \in D(X_\proet)$. Then $K' \simeq \lim \nu^*(F_n)$ as $X_\proet$ is replete. The natural map $\nu^* K \to K'$ has a parasitic cone since $\nu_* \nu^* K \simeq K = \R\lim F_n \simeq \R\lim \nu_* \nu^* F_n \simeq \nu_* K'$. For example, when $X = \Spec(\Q)$, the cone of the map $\nu^* (\R\lim \mu_n) \to \lim \mu_n$ is non-zero and parasitic.
\end{example}

The basic structural properties of $D_p(X_\proet)$ are:

\begin{lemma}
	\label{lem:parasitic}
	The following are true:
	\begin{enumerate}
	\item $D_p(X_\proet)$ is the kernel of $\nu_*:D(X_\proet) \to D(X_\et)$.
	\item $D_p(X_\proet)$ is a thick triangulated subcategory of $D(X_\proet)$. 
	\item The inclusion $i:D_p(X_\proet) \to D(X_\proet)$ has a left adjoint $L$.
	\item The adjunction $ L \circ i \to \id $ is an equivalence.
\end{enumerate}
\end{lemma}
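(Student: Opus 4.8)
The plan is to handle (1) and (2) by direct manipulation, deduce (3) from Bousfield localization in the compactly generated category $D(X_\proet)$, and observe that (4) is then formal.

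\textbf{Parts (1) and (2).} For $U \in X_\et$, view $U$ also as an object $\nu^{-1}U \in X_\proet$, let $j : \Shv(X_\proet)_{/\nu^{-1}U} \to \Shv(X_\proet)$ be the localization, and set $\Z_{\nu^{-1}U} := j_!\Z$, so that $\R\Gamma(\nu^{-1}U, L) = \R\Hom_{D(X_\proet)}(\Z_{\nu^{-1}U}, L)$. Since $\nu^*$ carries the representable sheaf of $U$ to that of $\nu^{-1}U$, one has $\nu^*\Z_U \simeq \Z_{\nu^{-1}U}$, hence $\R\Gamma(\nu^{-1}U,L) \simeq \R\Hom_{D(X_\proet)}(\nu^*\Z_U,L)$. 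For (1): if $L$ is parasitic then $H^i(\nu_*L)$ is the sheafification of $U \mapsto H^i(\nu^{-1}U,L) = 0$, so $\nu_*L = 0$; conversely, if $\nu_*L = 0$ then the base-change isomorphism $(\nu_*L)|_{U_\et} \simeq \nu_{U,*}(L|_{\nu^{-1}U})$ --- pushforward commutes with restriction to a slice topos, using Lemma \ref{lem:proettoposbc} to identify $\Shv(X_\proet)_{/\nu^{-1}U} \simeq \Shv(U_\proet)$ --- gives $\R\Gamma(\nu^{-1}U,L) \simeq \R\Gamma(U_\et,(\nu_*L)|_{U_\et}) = 0$, so $L$ is parasitic. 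For (2): each $\R\Gamma(\nu^{-1}U,-)$ is an exact functor of triangulated categories, so $D_p$ is a triangulated subcategory stable under retracts; and when $U$ is coherent, $\nu^{-1}U$ is a coherent object of the coherent topos $\Shv(X_\proet)$, so $\R\Gamma(\nu^{-1}U,-)$ commutes with arbitrary direct sums (filtered colimits distribute by coherence, and finite direct sums are finite products), whence $D_p$ is localizing.

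\textbf{Part (3).} By Proposition \ref{prop:proetlocallycontractible} the topos $\Shv(X_\proet)$ is locally weakly contractible; hence, by the properties of such topoi recorded in \S\ref{ss:lwctopoi}, $D(X_\proet)$ is compactly generated, by the objects $j_!\Z$ attached to coherent weakly contractible $Y \in X_\proet$. Let $\calL \subseteq D(X_\proet)$ be the localizing subcategory generated by the set $\{\nu^*\Z_U : U \in X_\et \text{ coherent}\}$; each of these generators is compact by the coherence argument above. A routine Zariski/cohomological-descent argument reduces the vanishing $\R\Gamma(\nu^{-1}U,L) = 0$ for arbitrary $U \in X_\et$ to coherent $U$, so the right orthogonal complement $\calL^\perp$ is precisely $D_p(X_\proet)$. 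We may then apply the Bousfield/Neeman localization theorem: $\calL$ is the localizing subcategory generated by a \emph{set} of compact objects in the compactly generated triangulated category $D(X_\proet)$, so the inclusion of the reflective subcategory $\calL^\perp \hookrightarrow D(X_\proet)$ admits a left adjoint $L$, with every $K$ sitting in a functorial triangle $\Gamma_\calL K \to K \to L(K) \to \Gamma_\calL K[1]$ where $\Gamma_\calL K \in \calL$ and $L(K) \in \calL^\perp = D_p(X_\proet)$. This gives (3).

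\textbf{Part (4) and the main obstacle.} Since $D_p(X_\proet) \subseteq D(X_\proet)$ is a full subcategory and $L$ is left adjoint to its inclusion $i$, the counit $L \circ i \to \id$ is automatically an isomorphism; concretely, for $M \in D_p(X_\proet)$ the triangle above forces $\Gamma_\calL M \in \calL \cap \calL^\perp = 0$, so $L(M) \simeq M$. The step requiring the most care is the identification $\calL^\perp = D_p(X_\proet)$ together with the compactness of the generators $\nu^*\Z_U$: this is where coherence of $\Shv(X_\proet)$ and the reduction of arbitrary étale $U$ to coherent $U$ are used, and it is what makes the abstract localization machinery available. (Alternatively one could try to produce $L(K)$ by hand as $\Cone(\nu^*\nu_*K \to K)$; this complex lies in $D_p$ exactly when $\nu_*$ of the counit $\nu^*\nu_*K \to K$ is an isomorphism, equivalently when $\nu_*K$ is left-complete in $D(X_\et)$ --- a point which is not transparent for unbounded $K$, since $D(X_\et)$ itself need not be left-complete, and which the localization argument sidesteps.)
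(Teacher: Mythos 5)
Your parts (1), (2) (for thickness, which is all the lemma asserts), and (4) are fine and essentially agree with the paper's terse argument; the genuine divergence, and the problem, is in part (3). The paper does not use Bousfield localization: it constructs $L$ explicitly by iterating the cone construction, setting $M(K) = \cok(\nu^*\nu_* K \to K)$ and $L(K) = \colim_n M^n(K)$, then checking directly that $L(K)$ is parasitic and that $\Hom(K,P) \simeq \Hom(M^n(K),P)$ for parasitic $P$ (your closing parenthetical correctly identifies why a single cone does not suffice; the paper's answer is to iterate it countably often). Your route through abstract localization is legitimately different, but as written it rests on a false claim: the objects $\nu^*\Z_U$ for $U \in X_\et$ coherent are \emph{not} compact in $D(X_\proet)$ in general. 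Coherence of $\nu^{-1}U$ only gives that $H^0(\nu^{-1}U,-)$ commutes with filtered colimits of \emph{sheaves}; on unbounded complexes, $\R\Gamma(\nu^{-1}U,-)$ is the totalization of $\Gamma(Y^\bullet,-)$ over a hypercover by w-contractibles, and the infinite products in that totalization do not commute with direct sums unless $\nu^{-1}U$ has finite cohomological dimension. Concretely, for $X = U = \Spec(\C(x_1,x_2,\dots))$ as in Remark \ref{ex:notleftcompleteag}, one computes $H^0\big(\R\Gamma(X_\proet,\oplus_n \nu^*M_n[n])\big) = \prod_n H^n(X_\et,M_n)$ using left-completeness of $D(X_\proet)$, whereas the direct sum of the $\R\Gamma$'s gives $\oplus_n H^n(X_\et,M_n)$; so $\nu^*\Z_X$ is not compact. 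This is exactly why the paper's compact generators of $D(X_\proet)$ are the $j_!\Z$ for $Y$ w-contractible coherent (where $\Gamma(Y,-)$ is exact), and such $Y$ are genuinely pro-étale, not of the form $\nu^{-1}U$. The same error underlies your assertion that $D_p(X_\proet)$ is localizing (closed under coproducts), which is neither clear nor needed for the lemma.

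Your strategy is nonetheless salvageable without reverting to the paper's construction: the identification $\calL^\perp = D_p(X_\proet)$ (via your reduction to coherent $U$, which does require the unbounded cohomological descent of Proposition \ref{prop:cohdescentreplete} and is the one place repleteness enters) uses no compactness, and the Bousfield localization theorem for \emph{well-generated} triangulated categories (Neeman, Krause) produces a left adjoint to $\calL^\perp \hookrightarrow D(X_\proet)$ for any localizing subcategory $\calL$ generated by a set of objects, compact or not; $D(X_\proet)$ is compactly generated, hence well generated. If you make that substitution and delete the compactness claims, the argument goes through and is arguably cleaner than the paper's, at the cost of a heavier black box; the paper's explicit $\colim_n M^n(K)$ has the advantage of yielding the concrete description of $L$ on bounded-below complexes exploited in Remark \ref{rmk:parasiticlocalisationbounded} and Proposition \ref{prop:parasiticsemiorth}.
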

\begin{proof} Sketches:
\begin{enumerate}
	\item This follows from the adjunction between $\nu^*$ and $\nu_*$ together with the fact that $D(X_\et)$ is generated under homotopy-colimits by sheaves of the form $\Z_U$ for $U \in X_\et$. 
\item Clear.
\item Consider the functor $M:D(X_\proet) \to D(X_\proet)$ defined via $M(K) = \cok(\nu^* \nu_* K \to K)$. There is a map $\id \to M$, and hence a tower $\id \to M \to M^2 \to M^3 \to \dots$ , where $M^n$ is the $n$-fold composition of $M$ with itself. We set $L:D(X_\proet) \to D(X_\proet)$ to be the (filtered) colimit of this tower, i.e., $L(K) = M^\infty(K) := \colim_n M^n(K)$.  We will show that $L(K)$ is parasitic for any $K$, and that the induced functor $L:D(X_\proet) \to D_p(X_\proet)$ is a left adjoint to $i$. Choose any $U \in X_\et$. As $U$ is qcqs, we have
	\[ \R\Gamma(\nu^{-1}U,L(K)) \simeq \R\Gamma(\nu^{-1}U,\colim_n M^n(K)) = \colim_n \R\Gamma(\nu^{-1}U,M^n(K)).\]
	Hence, to show that $L$ takes on parasitic values, it suffices to show that
	\[\R\Gamma(\nu^{-1}U,K) \to \R\Gamma(\nu^{-1}U,M(K)) \]
	is the $0$ map for any $K \in D(X_\proet)$.  Since $\nu$ is a map of a topoi, we have a factorisation
	\[\R\Gamma(\nu^{-1}U,K) \simeq  \R\Gamma(U,\nu_*K ) \stackrel{\nu^{-1}}{\to} \R\Gamma(\nu^{-1}U,\nu^* \nu_* K) \to \R\Gamma(\nu^{-1}U,K)\]
	of the identity map on $\R\Gamma(\nu^{-1}U,K)$. The composition $\R\Gamma(\nu^{-1}U,K) \to \R\Gamma(\nu^{-1}U,M(K))$ is then $0$ by definition of $M(K)$, which proves that $L(K)$ is parasitic. To show that the induced functor $L:D(X_\proet) \to D_p(X_\proet)$ is a left adjoint to the inclusion, note first that for any $K,P \in D(X_\proet)$ with $P$ parasitic, one has $\Hom(\nu^* \nu_* K,P) = \Hom(\nu_*K,\nu_*P) = 0$ by (1). The exact triangle defining $M(K)$ shows
	\[ \Hom(K,P) \simeq \Hom(M(K),P) \simeq \Hom(M^2(K),P) \simeq \dots \simeq  \Hom(M^n(K),P) \]
	for any $n \geq 0$. Taking limits then shows
	\[ \Hom(K,P) = \lim \Hom(M^n(K),P) = \Hom(\colim_n M^n(K),P) = \Hom(L(K),P),\]
	which is the desired adjointness.
\item This follows from (1) and the construction of $L$ given in (3): for any parasitic $P \in  D(X_\proet)$, one has $P \simeq M(P) \simeq M^n(P) \simeq \colim_n M^n(P) \simeq L(P)$ since $\nu_* P = 0$. \qedhere
\end{enumerate}
\end{proof}

\begin{remark}
In Lemma \ref{lem:parasitic}, it is important to work at the derived level: the full subcategory $\Ab_p(X_\proet)$ of all $F \in \Ab(X_\proet)$ with $F(\nu^{-1}U) = 0$ for any $U \in X_\et$ is {\em not} a Serre subcategory of $\Ab(X_\proet)$. For example, let $X = \Spec(\Q)$ and set $\widehat{\Z}_\ell(1) := \lim \mu_{\ell^n} \in \Ab(X_\proet)$. Then there is an exact sequence 
\[ 1 \to \widehat{\Z}_\ell(1)  \stackrel{\ell}{\to} \widehat{\Z}_\ell(1)  \to \mu_\ell \to 1\]
in $\Ab(X_\proet)$. One easily checks that $\widehat{\Z}_\ell(1) \in \Ab_p(X_\proet)$, while $\mu_\ell \not\in \Ab_p(X_\proet)$.
\end{remark}

\begin{remark}
	\label{rmk:parasiticlocalisationbounded}
	The localisation functor $L:D(X_\proet) \to D_p(X_\proet)$ from Lemma \ref{lem:parasitic} admits a particularly simple description when restricted to bounded below complexes: $L(K) \simeq \cok(\nu^* \nu_* K \to K)$ for any $K \in D^+(X_\proet)$. Indeed, by the proof of Lemma \ref{lem:parasitic} (3), it suffices to show that $M(K) \simeq M^2(K)$ for such a complex $K$; this follows from the formula $\nu^* \nu_* \nu^* \nu_* K \simeq \nu^* \nu_* K$, which comes from Corollary \ref{cor:pushpull}.
\end{remark}

We can now show that $D^+(X_\et)$ and $D^+_p(X_\proet)$ give a semiorthogonal decomposition for $D^+(X_\proet)$.

\begin{proposition}
	\label{prop:parasiticsemiorth}
	Consider the adjoints  $\adjunction{D^+(X_\proet)}{\nu_*}{D^+(X_\et)}{\nu^*}$  and $\adjunction{D^+_p(X_\proet)}{i}{D^+(X_\proet)}{L}$.
	\begin{enumerate}
	\item $\nu^*$ is fully faithful.
	\item The adjunction $\id \to \nu_* \nu^*$ is an equivalence.
	\item The essential image of $\nu^*$ is exactly those $K \in D^+(X_\proet)$ whose cohomology sheaves are in $\nu^*(\Ab(X_\et))$.
	\item The pushforward $\nu_*$ realises $D^+(X_\et)$ as the Verdier quotient of $D^+(X_\proet)$ by $D^+_p(X_\proet)$.
	\item The map $L$ realises $D_p^+(X_\proet)$ as the Verdier quotient of $D^+(X_\proet)$ by $\nu^*(D^+(X_\et))$.
\end{enumerate}
\end{proposition}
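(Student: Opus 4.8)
The plan is to extract (1) and (2) directly from Corollary \ref{cor:pushpull} and then bootstrap (3), (4), (5) formally. Corollary \ref{cor:pushpull} states exactly that the unit $\id \to \nu_*\nu^*$ is an equivalence on $D^+(X_\et)$, which is (2); and (1) is the formal consequence, since $\Hom_{D^+(X_\proet)}(\nu^*K,\nu^*K') \simeq \Hom_{D^+(X_\et)}(K,\nu_*\nu^*K') \simeq \Hom_{D^+(X_\et)}(K,K')$. In particular the essential image $\calE := \nu^*(D^+(X_\et))$ is a strictly full triangulated subcategory of $D^+(X_\proet)$, and the counit $\nu^*\nu_* \to \id$ is an isomorphism on $\calE$.

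For (3): the forward inclusion is immediate because $\nu^*$ is exact, so $\calH^i(\nu^*M) = \nu^*\calH^i(M) \in \nu^*(\Ab(X_\et))$. For the converse, let $K \in D^+(X_\proet)$ have all cohomology sheaves in $\nu^*(\Ab(X_\et))$; by Corollary \ref{cor:absheafpullbackserresubcat} this class of sheaves is a Serre subcategory, so all truncations $\tau^{\leq n}\tau^{\geq m}K$ still have their cohomology sheaves in it. For bounded $K$ one inducts on the cohomological amplitude via the triangle $\calH^a(K)[-a] \to \tau^{\geq a}K \to \tau^{\geq a+1}K$: the first term lies in $\calE$ (a single classical sheaf placed in one degree, using Lemma \ref{lem:nupullbackdesc} to exhibit its preimage in $\Ab(X_\et)$), the last lies in $\calE$ by induction, and $\calE$ is triangulated. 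For general $K \in D^+(X_\proet)$, write $K \simeq \colim_n \tau^{\leq n}K$ as a filtered homotopy colimit; each $\tau^{\leq n}K$ is classical by the bounded case, say $\tau^{\leq n}K \simeq \nu^*M_n$ with $M_n = \nu_*\tau^{\leq n}K \in D^+(X_\et)$, and since $\nu^*$ is a left adjoint it commutes with this colimit, so $K \simeq \nu^*(\colim_n M_n)$ with $\colim_n M_n \in D^+(X_\et)$. Hence $K \in \calE$, proving (3).

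For (4) and (5) I invoke the standard principle that a triangulated functor with a fully faithful adjoint exhibits its target as the Verdier quotient by its kernel. For (4): $\nu_*$ has the fully faithful left adjoint $\nu^*$ by (1) and (2); its kernel on $D^+(X_\proet)$ is $D^+_p(X_\proet)$ by Lemma \ref{lem:parasitic}(1); and by Remark \ref{rmk:parasiticlocalisationbounded} every $K$ sits in a triangle $\nu^*\nu_*K \to K \to L(K)$ with $L(K) \in D^+_p(X_\proet)$. Applying $\nu_*$ to this triangle and using $\nu_*\nu^* \simeq \id$ shows $\nu_*$ inverts the map $\nu^*\nu_*K \to K$, so the canonical functor $\bar\nu_*\colon D^+(X_\proet)/D^+_p(X_\proet) \to D^+(X_\et)$ from the universal property of Verdier localization has the quasi-inverse $q \circ \nu^*$ (where $q$ is the quotient functor): indeed $\bar\nu_* \circ (q\nu^*) = \nu_*\nu^* \simeq \id$, while $(q\nu^*)\circ\bar\nu_*$ sends $q(K)$ to $q(\nu^*\nu_*K) \simeq q(K)$ because $L(K)$ dies in the quotient. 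Statement (5) is the mirror image: $L$ has the fully faithful right adjoint $i\colon D^+_p(X_\proet)\hookrightarrow D^+(X_\proet)$ with $L\circ i \simeq \id$ by Lemma \ref{lem:parasitic}(3),(4); its kernel on $D^+(X_\proet)$ is $\calE = \nu^*(D^+(X_\et))$ since $L(K) = \cok(\nu^*\nu_*K \to K)$ vanishes precisely when $K$ is classical; and the same triangle $\nu^*\nu_*K \to K \to iL(K)$, now with classical fibre term $\nu^*\nu_*K$, shows by the dual argument that $L$ identifies $D^+_p(X_\proet)$ with $D^+(X_\proet)/\nu^*(D^+(X_\et))$.

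The only genuinely non-formal input is Corollary \ref{cor:pushpull}, which is already in hand, so the remaining work is bookkeeping. I expect the main delicate point to be the passage from bounded to bounded-below complexes in (3): one must be sure that $K \simeq \colim_n \tau^{\leq n}K$ as a homotopy colimit, that $\nu^*$ preserves it (as a left adjoint), and that $\colim_n M_n$ stays in $D^+(X_\et)$. The Verdier-quotient statements (4) and (5) are then routine given the adjoints and the localization triangle of Remark \ref{rmk:parasiticlocalisationbounded}; the only care needed there is to state the ``fully faithful adjoint $\Rightarrow$ Verdier quotient by the kernel'' lemma correctly in both its reflective and coreflective forms.
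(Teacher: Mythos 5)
Your proof is correct and follows essentially the same route as the paper's: (1)--(2) are read off from Corollary \ref{cor:pushpull}, (3) is a d\'evissage through truncations together with a filtered (sequential) homotopy-colimit argument --- the paper packages this as identifying both categories with the smallest triangulated subcategory closed under filtered colimits containing $\nu^*(\Ab(X_\et))$ --- and (4)--(5) are exactly the ``fully faithful adjoint $\Rightarrow$ Verdier quotient by the kernel'' principle of Lemma \ref{lem:verdquot}, applied via Lemma \ref{lem:parasitic} and Remark \ref{rmk:parasiticlocalisationbounded}. The extra detail you supply (the explicit quasi-inverse in (4), the identification of $\ker(L)$ in (5)) matches what the paper leaves implicit.
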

\begin{proof} Sketches:
	\begin{enumerate}
		\item This follows formally from Corollary \ref{cor:pushpull}.
		\item This follows from (1) by Yoneda.
		\item Let $\calC \subset D^+(X_\proet)$ be the full subcategory of complexes whose cohomology sheaves lie in $\nu^*(\Ab(X_\et))$; by Corollary \ref{cor:absheafpullbackserresubcat}, this is a triangulated subcategory of $D^+(X_\proet)$ closed under filtered colimits. Moreover, chasing triangles and truncations characterises $\calC$ as the smallest triangulated subcategory of $D^+(X_\proet)$ closed under filtered colimits that contains $\nu^*(\Ab(X_\et))$. Now $\nu^*(D^+(X_\et)) \subset \calC$ as $\nu^*$ is exact. Moreover, by (1) and left-adjointness of $\nu^*$, we see that $\nu^*(D^+(X_\et))$ is a triangulated subcategory of $D^+(X_\proet)$ closed under filtered colimits. Since $\nu^*(D^+(X_\et))$ clearly contains $\nu^*(\Ab(X_\et))$, the claim follows.
		\item By Lemma \ref{lem:verdquot}, we want $\nu_*$ to admit a fully faithful left adjoint; this is what (1) says.
		\item This follows from Lemma \ref{lem:parasitic} and Lemma \ref{lem:verdquot} provided $\nu^*(D^+(X_\et))$ is the kernel of $L$. By Remark \ref{rmk:parasiticlocalisationbounded}, the kernel of $L$ is exactly those $K$ with $\nu^* \nu_* K \simeq K$, so the claim follows using Corollary \ref{cor:pushpull}. \qedhere
	\end{enumerate}
\end{proof}

The following observation was used above:

\begin{lemma}
	\label{lem:verdquot}
Let $L:\calC_1 \to \calC_2$ be a triangulated functor between triangulated categories. If $L$ admits a fully faithful left or right adjoint $i$, then $L$ is a Verdier quotient of $\calC_1$ by $\ker(L)$.
\end{lemma}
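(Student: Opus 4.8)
The plan is to reduce the statement to the standard description of a Verdier quotient as a localization, and then build an explicit quasi-inverse out of the adjoint $i$. I will carry out the case where $i$ is a fully faithful \emph{right} adjoint to $L$ (so $L \dashv i$, with unit $\eta\colon \mathrm{id}_{\calC_1} \to iL$ and counit $\epsilon\colon Li \to \mathrm{id}_{\calC_2}$); the left adjoint case is formally dual, obtained either by passing to opposite categories or by repeating the argument with units and counits interchanged.

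First I would do the bookkeeping. Since $L$ is triangulated, $\calN := \ker(L)$ is a thick triangulated subcategory of $\calC_1$, and a morphism $f$ has $\mathrm{cone}(f) \in \calN$ if and only if $L(f)$ is an isomorphism: apply $L$ to a distinguished triangle on $f$ and use that a map in a triangulated category is an isomorphism exactly when the third vertex of its triangle vanishes. Thus the multiplicative system $S_\calN$ defining the Verdier quotient $q\colon \calC_1 \to \calC_1/\calN$ consists precisely of the morphisms that $L$ sends to isomorphisms. Since $L$ annihilates $\calN$, the universal property of the Verdier quotient yields a unique triangulated factorization $L = \overline{L}\circ q$, and everything reduces to showing that $\overline{L}$ is an equivalence.

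Next I would produce the candidate inverse $G := q\circ i\colon \calC_2 \to \calC_1/\calN$ and check both composites. On one side, $\overline{L}\,G = \overline{L}\,q\,i = L\,i$, and full faithfulness of $i$ says the counit $\epsilon\colon Li \xrightarrow{\ \sim\ } \mathrm{id}_{\calC_2}$ is an isomorphism, so $\overline{L}\,G \simeq \mathrm{id}_{\calC_2}$. On the other side, the crucial point is that $\eta_X\colon X \to iLX$ lies in $S_\calN$ for every $X$: the triangle identity gives $\epsilon_{LX}\circ L(\eta_X) = \mathrm{id}_{LX}$, and $\epsilon_{LX}$ is an isomorphism (again by full faithfulness of $i$), whence $L(\eta_X)$ is an isomorphism. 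So $q(\eta_X)$ is an isomorphism in $\calC_1/\calN$ for all $X$, and by naturality of $\eta$ these components assemble into a natural isomorphism $q \xrightarrow{\ \sim\ } q\,i\,L = (G\,\overline{L})\circ q$ of functors $\calC_1 \to \calC_1/\calN$.

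Finally I would upgrade this to a natural isomorphism $\mathrm{id}_{\calC_1/\calN} \simeq G\,\overline{L}$. This is the one place that needs care, and the step I expect to be the main (indeed essentially the only) obstacle: $q$ is surjective on objects but not full, so one must verify that the naturality squares for $q\eta$ still commute when tested against the formal fractions $q(f)\circ q(s)^{-1}$ with $s \in S_\calN$, which exhaust the $\Hom$-sets of $\calC_1/\calN$. This is a short diagram chase using naturality of $\eta$ with respect to the honest morphisms $f$ and $s$ separately. Once that is in hand, $G$ and $\overline{L}$ are mutually inverse equivalences, so $L$ exhibits $\calC_2$ as the Verdier quotient $\calC_1/\ker(L)$, as claimed.
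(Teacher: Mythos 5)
Your proof is correct. It differs from the paper's argument in how the phrase ``Verdier quotient'' is unwound, though the one substantive computation is shared. The paper never constructs the localization $\calC_1/\ker(L)$ at all: it verifies the universal property directly, showing that any triangulated $F:\calC_1 \to \calD$ killing $\ker(L)$ satisfies $F \simeq (F\circ i)\circ L$. The engine there is the triangle $K' \to K \xrightarrow{\eta_K} iL(K)$ with $L(K')=0$, which is precisely your observation that $L(\eta_K)$ is an isomorphism (via the triangle identity and invertibility of the counit). You instead build $\calC_1/\calN$ by the calculus of fractions, factor $L = \overline{L}\circ q$, and exhibit $q\circ i$ as a quasi-inverse to $\overline{L}$; the price is the extra step of descending the natural transformation $q\eta$ along $q$, which you correctly identify as the only delicate point and which is exactly the full faithfulness of $q^*:\Fun(\calC_1/\calN,\calD)\to\Fun(\calC_1,\calD)$ on the functors inverting $S_\calN$. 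The paper's route is shorter because it sidesteps fractions entirely; yours is more explicit and makes the quasi-inverse visible, which can be useful downstream (e.g.\ it makes manifest that $i$ identifies $\calC_2$ with a ``right-orthogonal'' subcategory of $\calC_1$). Your reduction of the left-adjoint case to the right-adjoint case by passing to opposite categories matches the paper's ``by symmetry.''
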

\begin{proof}
By symmetry, we may assume $L$ is a left adjoint. Given a triangulated functor $F:\calC_1 \to \calD$ which carries $\ker(L)$ to $0$, we will show that the natural map $F \to F \circ i \circ L$ is an equivalence.  First, adjunction shows $L \circ i \simeq \id$  via the natural map as $i$ is fully faithful. Hence, for each $K \in \calC_1$, we get a triangle $K' \to K \to (i \circ L)(K)$ such that $L(K') = 0$. This shows that $F(K) \simeq (F \circ i \circ L)(K)$ for all such $F$, proving the claim.
\end{proof}

\begin{remark}
\label{rmk:unboundedsemiorthfinitecd}
If we assume that $X_\et$ is locally of finite cohomological dimension, then $D(X_\et)$ is left-complete. Since $D(X_\proet)$ is also left-complete, one can show that $\nu^*:D(X_\et) \to D(X_\proet)$ is fully faithful by reduction to the bounded below case.  In fact, every statement in Proposition \ref{prop:parasiticsemiorth} extends to the unbounded setting in this case. 
\end{remark}

At the unbounded level, the pullback $\nu^*:D(X_\et) \to D(X_\proet)$ is {\em not} fully faithful in general, as explained in Remark \ref{rmk:unboundedpullback}, so none of the arguments in Proposition \ref{prop:parasiticsemiorth} apply. Nevertheless, we can still prove a semiorthogonal decomposition as in Proposition \ref{prop:parasiticsemiorth} at the expense of replacing $D(X_\et)$ with the smallest triangulated subcategory $D' \subset D(X_\proet)$ that contains $\nu^*(D(X_\et))$ and is closed under filtered colimits:

\begin{proposition}
	\label{prop:parasiticleftorthogonal}
	Let $D' \subset D(X_\proet)$ be as above. Then
\begin{enumerate}
\item If $\nu^*$ is fully faithful, then $\nu^*$ induces an equivalence $D(X_\et) \simeq D'$.
\item Given $K \in D(X_\proet)$, one has $K \in D'$ if and only if $\Hom(K,K') = 0$ for any $K' \in D_p(X_\proet)$.
\item The inclusion $i:D' \hookrightarrow D$ admits a right adjoint $N:D(X_\proet) \to D'$ such that $N \circ i \simeq \id$.
\item The localisation $L$ realises $D_p(X_\proet)$ as the Verdier quotient of $D(X_\proet)$ by $D'$.
\item The map $N$ realises $D'$ as the Verdier quotient of $D(X_\proet)$ by $D_p(X_\proet)$.
\end{enumerate}
\end{proposition}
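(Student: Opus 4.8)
The plan is to mimic the structure of Proposition \ref{prop:parasiticsemiorth}, but now working with the subcategory $D' \subset D(X_\proet)$ in place of $\nu^*(D(X_\et))$, since $\nu^*$ need not be fully faithful in the unbounded setting. First I would dispose of (1): if $\nu^*$ is fully faithful, then $\nu^*:D(X_\et) \to D(X_\proet)$ is a fully faithful exact functor whose essential image is closed under filtered colimits — this last point uses that $\nu^*$ is a left adjoint (so commutes with colimits) together with the fact that $D(X_\et)$ admits all small colimits. Hence the essential image already contains the smallest such subcategory, i.e.\ equals $D'$, which gives the equivalence $D(X_\et) \simeq D'$.

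The technical heart is (3): constructing the right adjoint $N$ to $i:D' \hookrightarrow D(X_\proet)$ with $N \circ i \simeq \id$. The cleanest route is to first establish (2), the orthogonality characterization $K \in D' \iff \Hom(K,K') = 0$ for all $K' \in D_p(X_\proet)$. The forward direction is immediate: $\Hom(\nu^* M, K') = \Hom(M,\nu_* K') = 0$ by Lemma \ref{lem:parasitic}(1), and the vanishing propagates through triangles and filtered colimits. For the reverse direction I would use the localisation functor $L:D(X_\proet) \to D_p(X_\proet)$ of Lemma \ref{lem:parasitic}: given $K$ with $\Hom(K, D_p) = 0$, the exact triangle $K'' \to K \to i(L(K))$ (where $K''$ is the fiber and $L$ is a left adjoint so $\Hom(K'',\cdot)$ on parasitics behaves well) shows $\Hom(K, L(K)) = 0$ forces $L(K) = 0$, i.e.\ $\nu_* K = 0$ would be too strong — instead one argues $K \simeq K''$, and then one must show $K''$ lies in $D'$. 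This requires knowing $\ker(L) = D'$. By the proof of Lemma \ref{lem:parasitic}(3), $\ker(L)$ consists of $K$ with $\nu^*\nu_* K \xrightarrow{\sim} M^n(K)$ stabilizing, and for bounded-below complexes this is exactly $\nu^*(D^+(X_\et))$ by Corollary \ref{cor:pushpull}; in general one builds $K$ from its Postnikov truncations $\tau^{\geq -n} K$ using repleteness of $\Shv(X_\proet)$ (Proposition \ref{prop:proetlocallycontractible}), each of which lies in $D'$, and $D'$ is closed under the relevant homotopy-limits since it is the kernel of the colimit-commuting, hence limit-respecting in this context, functor $L$ — this last closure point is the part I would check most carefully. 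Granting $\ker(L) = D'$, parts (4) and (5) then follow formally from Lemma \ref{lem:verdquot} applied to the adjoint pairs $(L, i)$ and $(i, N)$ respectively, exactly as in Proposition \ref{prop:parasiticsemiorth}(4)--(5).

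The main obstacle I anticipate is establishing $\ker(L) = D'$ in the unbounded setting, and in particular showing $D'$ is closed under $\R\lim$ of Postnikov towers: unlike $\nu^*(D(X_\et))$, the category $D'$ is defined as a colimit-closure, and it is not a priori obvious that it is also closed under the countable homotopy-limits needed to reassemble an unbounded complex from its truncations. The resolution should come from the orthogonality description (2) — once we know $D'$ equals the left-orthogonal $\{K : \Hom(K, D_p(X_\proet)) = 0\}$, closure under arbitrary homotopy-limits is automatic, since $\Hom(\R\lim K_n, K')$ sits in a Milnor sequence built from $\Hom(K_n, K')$ (using that $D(X_\proet)$ is compactly generated, Proposition on locally weakly contractible topoi), and each term vanishes. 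So the logical order matters: prove the easy inclusion $D' \subseteq {}^\perp D_p$, then prove $\ker(L) \subseteq D'$ on bounded-below objects via Corollary \ref{cor:pushpull}, then bootstrap to all of $\ker(L)$, then deduce the orthogonality equality, and only then harvest closure properties and the Verdier quotient statements.
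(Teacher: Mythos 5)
Your overall architecture --- reduce everything to the identity $\ker(L) = D'$ and then invoke Lemma \ref{lem:verdquot} --- is the right one, and your treatment of (1) and of the easy inclusion $D' \subseteq {}^{\perp}D_p(X_\proet)$ is fine. The gap is precisely in the step you flag yourself: bootstrapping $\ker(L) \subseteq D'$ from bounded-below complexes to unbounded ones via Postnikov towers. This does not work. First, $D'$ is by definition a \emph{colimit}-closure, and neither $D'$ nor ${}^{\perp}D_p(X_\proet)$ is known to be closed under the homotopy limits $\R\lim \tau^{\geq -n}K$ you need: the Milnor sequence you invoke controls maps \emph{out of} a sequential homotopy colimit (and maps \emph{into} a homotopy limit), so a left-orthogonal ${}^{\perp}D_p$ is closed under coproducts and homotopy colimits, not under homotopy limits --- your claim that ``closure under arbitrary homotopy-limits is automatic'' has the variance backwards. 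Second, your proposed repair is circular: you want the orthogonality description of $D'$ to supply the limit-closure, but that description is exactly what the bootstrap is supposed to establish. (There is also no reason for $\tau^{\geq -n}K$ to lie in $\ker(L)$ when $K$ does, since $L$ does not commute with truncations; and note in passing that $L(K)=0$ is not the same as $\nu_*K=0$, as your parenthetical in (2) suggests.)

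The paper sidesteps homotopy limits entirely by building the complementary projection as a \emph{colimit}. Set $N_i(K) = \ker(K \to M^i(K))$, with $M(K) = \cok(\nu^*\nu_* K \to K)$ as in Lemma \ref{lem:parasitic}. The octahedron for $K \to M^i(K) \to M^{i+1}(K)$ exhibits $N_{i+1}(K)$ as an extension of $\nu^*\nu_* M^i(K)$ by $N_i(K)$, so by induction each $N_i(K)$ is a finite iterated extension of objects of $\nu^*(D(X_\et))$ and hence lies in $D'$; therefore $N(K) := \colim_i N_i(K)$ lies in $D'$ because $D'$ is closed under filtered colimits by construction. Passing to the colimit of the triangles $N_i \to \id \to M^i$ gives a functorial exact triangle $N \to \id \to L$ with $N$ landing in $D'$ and $L$ landing in $D_p(X_\proet)$. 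Everything then follows formally: if $\Hom(K,-)$ kills parasitics, applying $\Hom(-,L(K))$ to this triangle forces $\id_{L(K)}=0$, so $K \simeq N(K) \in D'$, which gives (2); $N$ is then visibly right adjoint to the inclusion, $\ker(L)=D'$ drops out of the same triangle, and (4), (5) follow from Lemma \ref{lem:verdquot}. I would restructure the proof around this explicit construction of $N$ rather than around Postnikov towers.
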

\begin{proof} Sketches:
\begin{enumerate}
\item If $\nu^*$ is fully faithful, then $K \simeq \nu_* \nu^* K \simeq \R\lim \tau^{\geq -n} K$ (where the last isomorphism is from Remark \ref{rmk:unboundedpullback}). The claim now follows by reduction to the bounded case, as in Remark \ref{rmk:unboundedsemiorthfinitecd}.
\item Since $\nu^*(D(X_\et))$ is left-orthogonal to $D_p(X_\proet)$, so is $D'$. For the converse direction, consider the functors $N_i:D(X_\proet) \to D(X_\proet)$ defined via $N_i(K) = \ker(K \to M^i(K))$ where $M(K) = \cok(\nu^* \nu_* K \to K)$ (as in the proof of Lemma \ref{lem:parasitic}). The tower $\id \to M \to M^2 \to M^3 \to \dots$ gives rise to a tower $N_1 \to N_2 \to N_3 \to \dots \to \id$ with $N_{i+1}$ being an extension of $\nu^* \nu_* M^i$ by $N_i$; set $N = \colim_i N_i$. The description in terms of extensions shows $N_i(K) \in D'$ for all $i$, and hence $N \in D'$ as $D'$ is closed under filtered colimits. Moreover, setting $L = \colim_i M^i$ gives an exact triangle  $N \to \id \to L$ of functors. As in Lemma \ref{lem:parasitic}, $L$ realises the parasitic localisation $D(X_\proet) \to D_p(X_\proet)$. Hence, if $\Hom(K,K') = 0$ for every parasitic $K'$, then $K \simeq N(K) \in D'$ by the previous triangle.
\item One checks that the functor $N:D(X_\proet) \to D'$ constructed in (2) does the job (using the exact triangle $N \to \id \to L$ and the fact that $\Hom(D',L(K)) = 0$ for all $K$ by (2)).
\item This follows from Lemma \ref{lem:verdquot} if we could show that $D'$ is the kernel of $L$. For this, one simply uses the exact triangle $N \to \id \to L$ as in (2).
\item This is proven exactly like (4). \qedhere
\end{enumerate}
\end{proof}

\subsection{Realising the left-completion of $D(X_\et)$ via the pro-\'etale site}
\label{ss:leftcompetale}

Our goal is to identify the left-completion $\widehat{D}(X_\et)$ with a certain subcategory of $D(X_\proet)$ using the analysis of the previous sections.  The starting point is the following observation:  by Proposition \ref{prop:repletepostnikov}, the pullback $\nu^*:D(X_\et) \to D(X_\proet)$ factors through $\tau:D(X_\et) \to \widehat{D}(X_\et)$. To go further, we isolate a subcategory of $D(X_\proet)$ that contains the image of $\nu^*$:

\begin{definition}
	Let $D_{cc}(X_\proet)$ be the full subcategory of $D(X_\proet)$ spanned by complexes whose cohomology sheaves lie in $\nu^*(\Ab(X_\et))$; we write $D^+_{cc}(X_\proet)$ for the bounded below objects, etc.
\end{definition}

Since $\nu^*:D(X_\et) \to D(X_\proet)$ is exact, it factors through $D_{cc}(X_\proet)$, and hence we get a functor $\widehat{D}(X_\et) \to D_{cc}(X_\proet)$. Our main observation is that this functor is an equivalence. More precisely:

\begin{proposition}
	\label{prop:dccleftcompletion}
	There is an adjunction $\adjunction{D_{cc}(X_\proet)}{\nu_{cc,*}}{D(X_\et)}{\nu_{cc}^*}$ induced by $\nu_*$ and $\nu^*$. This adjunction is isomorphic to the left-completion adjunction $\adjunction{\widehat{D}(X_\et)}{\R\lim}{D(X_\et)}{\tau}$. In particular, $D_{cc}(X_\proet) \simeq \widehat{D}(X_\et)$.
\end{proposition}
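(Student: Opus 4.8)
The plan is to produce the adjunction between $D_{cc}(X_\proet)$ and $D(X_\et)$ directly, check it is the left-completion adjunction, and then conclude the claimed equivalence from the formal properties of left-completion already established in \S\ref{ss:reptopoicompleteness}. First I would construct the functors. The functor $\nu_{cc}^* := \nu^*$ lands in $D_{cc}(X_\proet)$ by definition, since $\nu^*$ is exact and carries $\Ab(X_\et)$ into $\nu^*(\Ab(X_\et))$; so the only real point is to define $\nu_{cc,*}$ and prove the adjunction. The natural candidate is $\nu_{cc,*} := \nu_*|_{D_{cc}(X_\proet)}$, and the adjunction $\Hom_{D(X_\et)}(\nu_* K, M) \simeq \Hom_{D_{cc}(X_\proet)}(K, \nu^* M)$ for $K \in D_{cc}(X_\proet)$, $M \in D(X_\et)$ should follow from the ambient adjunction between $\nu_*$ and $\nu^*$ on the full derived categories, once we know $\nu^* M \in D_{cc}(X_\proet)$ (clear) — so this part is formal.

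The substantive step is to identify this adjunction with $(\tau, \R\lim)$. Here the key input is Remark \ref{rmk:unboundedpullback}: for any $K \in D(X_\et)$ one has $\nu^* K \simeq \R\lim_n \nu^* \tau^{\geq -n} K$ in $D(X_\proet)$, because $\Shv(X_\proet)$ is replete and $\nu^*$ commutes with Postnikov truncations; moreover $\nu_* \nu^* K \simeq \R\lim_n \tau^{\geq -n} K$. I would use this to show that the composite $D(X_\et) \xrightarrow{\nu^*} D_{cc}(X_\proet)$ factors as $D(X_\et) \xrightarrow{\tau} \widehat{D}(X_\et) \xrightarrow{\sim} D_{cc}(X_\proet)$, by exhibiting a mutually inverse pair of functors between $\widehat{D}(X_\et)$ and $D_{cc}(X_\proet)$. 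In one direction: given $\{K_n\} \in \widehat{D}(X_\et)$ with $K_n \in D^{\geq -n}(X_\et)$ and $\tau^{\geq -n} K_{n+1} \simeq K_n$, send it to $\R\lim_n \nu^* K_n \in D(X_\proet)$; its $i$-th cohomology sheaf is computed (via the repleteness Milnor sequence, Proposition \ref{prop:cdrlim}) from the eventually constant system $\{\calH^i(\nu^* K_n)\} = \{\nu^* \calH^i(K_n)\}$ using Corollary \ref{cor:absheafpullbackserresubcat}, so it lies in $\nu^*(\Ab(X_\et))$ and the output is in $D_{cc}(X_\proet)$. In the other direction: given $L \in D_{cc}(X_\proet)$, form $\{\nu_* \tau^{\geq -n} L\}$; since each $\calH^i(L) = \nu^* F_i$ for some $F_i \in \Ab(X_\et)$ and $\nu_* \nu^* F_i \simeq F_i$ (Corollary \ref{cor:pushpull}, applied to sheaves placed in a single degree, together with the fact that $D_{cc}$ is built from $\nu^*(\Ab(X_\et))$ by triangles and filtered colimits), the truncations $\tau^{\geq -n} L$ are classical in the bounded-below sense, so $\nu_* \tau^{\geq -n} L \in D^{\geq -n}(X_\et)$, and the system $\{\nu_* \tau^{\geq -n} L\}$ lies in $\widehat{D}(X_\et)$.

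Then I would verify these two functors are mutually inverse: starting from $\{K_n\} \in \widehat{D}(X_\et)$, we get $\nu_* \tau^{\geq -n}(\R\lim_m \nu^* K_m) \simeq \tau^{\geq -n} \nu_*(\R\lim_m \nu^* K_m) \simeq \tau^{\geq -n} \R\lim_m K_m \simeq K_n$ (using that $\nu_*$ commutes with $\R\lim$ and with truncations on the relevant range, plus $\nu_* \nu^* \simeq \id$ on $D^+$ — Corollary \ref{cor:pushpull} — applied termwise, and the defining property of $\widehat{D}$); starting from $L \in D_{cc}(X_\proet)$, we get $\R\lim_n \nu^* \nu_* \tau^{\geq -n} L \simeq \R\lim_n \tau^{\geq -n} L \simeq L$, the last isomorphism because $D(X_\proet)$ is left-complete (Proposition \ref{prop:repletepostnikov}). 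Finally, tracing through the construction shows the composite $D(X_\et) \xrightarrow{\nu^*} D_{cc}(X_\proet) \xrightarrow{\sim} \widehat{D}(X_\et)$ is $K \mapsto \{\tau^{\geq -n} \nu_* \nu^* K\} = \{\tau^{\geq -n} \R\lim_m \tau^{\geq -m} K\} = \{\tau^{\geq -n} K\} = \tau(K)$, and that $\nu_{cc,*}$ corresponds to $\R\lim$ under the equivalence $D_{cc}(X_\proet) \simeq \widehat{D}(X_\et)$; this forces the adjunction $(\nu_{cc}^*, \nu_{cc,*})$ to coincide with $(\tau, \R\lim)$, and in particular $D_{cc}(X_\proet) \simeq \widehat{D}(X_\et)$.

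The main obstacle I anticipate is the bookkeeping around commuting $\nu_*$, $\R\lim$, and the truncation functors on unbounded complexes: one must be careful that $\nu_*$ commutes with $\R\lim$ (it does, being a right adjoint) but does \emph{not} in general satisfy $\nu_* \nu^* \simeq \id$ on all of $D(X_\et)$ (Remark \ref{rmk:unboundedpullback}), so the identification $\nu_* \nu^* K \simeq \R\lim \tau^{\geq -n} K$ rather than $K$ is exactly the point, and every isomorphism above must be checked to use only the bounded-below case of $\nu_* \nu^* \simeq \id$ or the explicit $\R\lim$-formula. The other place requiring care is showing $D_{cc}(X_\proet)$ is precisely the essential image of the functor $\{K_n\} \mapsto \R\lim \nu^* K_n$, i.e. that every $L$ with cohomology in $\nu^*(\Ab(X_\et))$ actually arises this way — this is where left-completeness of $D(X_\proet)$ (repleteness) is essential, and where one uses Corollary \ref{cor:absheafpullbackserresubcat} to know $\nu^*(\Ab(X_\et))$ is a Serre subcategory so that the cohomology-sheaf condition is stable under the truncations and extensions involved.
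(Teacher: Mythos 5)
Your proposal is correct and follows essentially the same route as the paper: the adjunction is obtained formally from $\nu^*$ landing in $D_{cc}$ and the full faithfulness of $D_{cc}(X_\proet)\hookrightarrow D(X_\proet)$, and the equivalence is realised by exactly the two functors $\mu(\{K_n\})=\R\lim \nu^* K_n$ and $\gamma(K)=\{\nu_*\tau^{\geq -n}K\}$ that the paper writes down, with the verification resting on repleteness/left-completeness of $D(X_\proet)$ and the bounded-below statement $\nu_*\nu^*\simeq \id$. The paper leaves the mutual-inverse check as an exercise; your filled-in details (in particular computing $\tau^{\geq -n}\R\lim_m \nu^* K_m\simeq \nu^* K_n$ before applying $\nu_*$, rather than commuting $\nu_*$ past the truncation in general) are the intended argument.
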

\begin{proof}
	The existence of the adjunction is formal from the following: (a)  $\nu^*$ carries $D(X_\et)$ to $D_{cc}(X_\proet)$, and (b) $D_{cc}(X_\proet) \hookrightarrow D(X_\proet)$ is fully faithful. Proposition \ref{prop:parasiticsemiorth} immediately implies that $\nu_{cc}^*$ induces an equivalence $D^+(X_\et) \simeq D^+_{cc}(X_\proet)$. To extend to the unbounded setting, observe that $K \in D_{cc}(X_\proet)$ if and only if $\tau^{\geq -n} K \in D_{cc}(X_\proet)$ by the left-completeness of $D(X_\proet)$ and the exactness of $\nu^*$. This lets us define functors 
	$\mu:\widehat{D}(X_\et) \to D_{cc}(X_\proet)$ and $\gamma:D_{cc}(X_\proet) \to \widehat{D}(X_\et)$ via $\mu(\{K_n\}) = \R\lim \nu^*(K_n)$ and $\gamma(K) = \{\nu_* \tau^{\geq -n} K\}$; one can check that $\mu$ and $\gamma$ realise the desired mutually inverse equivalences.
\end{proof}

Since $D'$ is the smallest subcategory of $D(X_\proet)$ that contains $\nu^* D(X_\et)$ and is closed under filtered colimits, one has $D' \subset D_{cc}(X_\proet)$. It is natural to ask how close this functor is to being an equivalence. One can show that if $D(X_\et)$ is left-complete, then $D(X_\et) \simeq D' \simeq D_{cc}(X_\proet)$; we expect that $D^\prime\simeq D_{cc}(X_\proet)$ fails without left-completeness, but do not have an example.

\subsection{Functoriality}
\label{ss:nufunctoriality}

We study the variation of $\nu:\Shv(X_\proet) \to \Shv(X_\et)$ with $X$.  For notational clarity, we often write $\nu_X$ instead of $\nu$.

\begin{lemma}
	\label{lem:proetfunc}
	A morphism $f:X \to Y$ of schemes induces a map $f_\proet:\Shv(X_\proet) \to \Shv(Y_\proet)$ of topoi with $f^*$ given by pullback on representable objects. The induced diagram
	\[ \xymatrix{ \Shv(X_\proet) \ar[d]^{f_\proet} \ar[r]^{\nu_X} & \Shv(X_\et) \ar[d]^{f_\et} \\
	\Shv(Y_\proet) \ar[r]^{\nu_Y} & \Shv(Y_\et) }\]
	commutes. In particular, for $F$ either in $\Shv(Y_\et)$ or $D(Y_\et)$, there is an isomorphism $f_{\proet}^* \circ \nu_Y^* (F) \simeq \nu_X^* \circ f_\et^* (F)$.
\end{lemma}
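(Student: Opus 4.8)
The plan is to construct the morphism of topoi $f_\proet$ directly from a morphism of sites, check that the resulting square of topoi commutes, and then extract the stated isomorphism formally. First I would observe that pullback along $f$ carries weakly \'etale $Y$-schemes to weakly \'etale $X$-schemes (compositions and base changes of weakly \'etale maps are weakly \'etale, as recorded earlier), and it carries fpqc covers to fpqc covers; hence $Y\mapsto X\times_Y(-)$ defines a continuous functor $Y_\proet \to X_\proet$ which is also cocontinuous (or at least induces a morphism of topoi in the standard way, cf. the treatment in \cite{SGA4Tome1}), giving $f_\proet:\Shv(X_\proet)\to\Shv(Y_\proet)$ with $f_\proet^*$ computed on representables by $h_V \mapsto h_{X\times_Y V}$. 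The same recipe with \'etale maps and \'etale covers gives the classical $f_\et$, and the inclusion of sites $Y_\et\hookrightarrow Y_\proet$, $X_\et \hookrightarrow X_\proet$ defines $\nu_Y$, $\nu_X$ as in the opening of the section.

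Next I would verify commutativity of the square at the level of the underlying continuous functors of sites. Going around one way, an \'etale $Y$-scheme $V$ is sent by $\nu_Y$ (as a site map) to $V$ viewed in $Y_\proet$, then by $f_\proet$ to $X\times_Y V \in X_\proet$; going the other way, $V$ is sent by $f_\et$ to $X\times_Y V \in X_\et$, then by $\nu_X$ to the same object in $X_\proet$. These agree on the nose, and they are compatible with the covering families, so the two composite morphisms of topoi $\nu_X\circ$(via $f_\proet$ pullback) and (via $f_\et$ pullback)$\circ \nu_Y$ agree; concretely this is the statement that the diagram of pullback functors commutes up to canonical isomorphism, i.e.\ $f_\proet^*\circ\nu_Y^* \simeq \nu_X^*\circ f_\et^*$ on $\Shv(Y_\et)$. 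For the sheaf-level statement it suffices to check this on a generating family, e.g.\ representable sheaves $h_V$ with $V\in Y_\et$, where both sides are the sheafification of the same presheaf; one can make this fully explicit using the formula $\nu_Y^* F(U)=\colim_i F(U_i)$ from Lemma~\ref{lem:sheafpullback} together with the analogous description of $f_\proet^*$ and the fact that base change commutes with the cofiltered limits presenting objects of $X^\aff_\proet$.

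Finally, to pass to derived categories I would note that all four pullback functors $f_\proet^*,f_\et^*,\nu_X^*,\nu_Y^*$ are exact (each is the inverse image of a morphism of topoi), so they descend to the unbounded derived categories and the natural isomorphism of underlying functors on $\Shv(Y_\et)$ promotes to a natural isomorphism of derived functors on $D(Y_\et)$; one checks compatibility on complexes by resolving by K-flat (or simply by objects on which the functors are computed termwise) and invoking the sheaf-level isomorphism degreewise, or more cleanly by noting that an isomorphism of exact functors of abelian categories induces an isomorphism of their derived functors.

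\textbf{Main obstacle.} The only genuinely non-formal point is the construction of $f_\proet$ itself: one must confirm that the functor $V\mapsto X\times_Y V$ on weakly \'etale schemes genuinely induces a \emph{morphism} of topoi (not merely a functor of categories), which requires checking the continuity/cocontinuity conditions — in particular that it preserves the fpqc covers used to topologize $X_\proet$ and $Y_\proet$, and that the associated pullback is exact. Once this is in place, commutativity of the square and the derived consequence are routine, since everything reduces to the evident equality of the underlying functors of sites and the exactness of the pullbacks.
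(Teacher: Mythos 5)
Your proposal is correct and follows essentially the same route as the paper, which simply observes that all four maps are induced by morphisms of sites (base change for $f$, inclusion of sites for $\nu$) so that the commutativity and the pullback isomorphism follow from the definition of inverse image. Your expansion of the details — preservation of weakly \'etale maps and fpqc covers under base change, agreement of the composite site functors on the nose, and exactness of the pullbacks for the derived statement — is exactly what the paper's one-line proof is implicitly invoking.
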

\begin{proof}
Since all maps in sight are induced by morphisms of sites, this follows simply by the definition of pullback.
\end{proof}

\begin{lemma}\label{lem:topinv}
Let $f:X \to Y$ be a universal homeomorphism of schemes, i.e., $f$ is universally bijective and integral. Then $f_*:\Shv(X_\proet) \to \Shv(Y_\proet)$ is an equivalence.
\end{lemma}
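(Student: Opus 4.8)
**The plan is to reduce the statement about the pro-étale topoi to the analogous (and classical) statement for the étale topoi, using the fact that a universal homeomorphism induces an equivalence of étale topoi (``topological invariance of the étale site'', SGA1 or SGA4).**

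First I would observe that it suffices to treat the affine case: both $\Shv(X_\proet)$ and $\Shv(Y_\proet)$ can be analysed Zariski-locally, since the pro-étale topos of a scheme is glued from those of an affine open cover (Lemma \ref{lem:sheafcriterion}), and a universal homeomorphism restricts to a universal homeomorphism over each affine open of $Y$. So assume $X = \Spec B$, $Y = \Spec A$ with $A \to B$ integral and inducing a universal bijection on spectra. By Remark \ref{rmk:affineproet}, $\Shv(X_\proet) \simeq \Shv(X^\aff_\proet)$ and $\Shv(Y_\proet) \simeq \Shv(Y^\aff_\proet)$, where these are the sites of affine pro-étale (equivalently, by Theorem \ref{t:WeaklyVsIndEtale}, affine ind-étale) schemes over $X$, resp. $Y$, with fpqc covers.

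The key step is to produce an equivalence of sites $Y^\aff_\proet \simeq X^\aff_\proet$ given by $V \mapsto V \times_Y X$. The essential input is that base change along $f$ induces an equivalence between the category of étale $A$-algebras and the category of étale $B$-algebras (topological invariance of the étale site: an étale $Y$-scheme is recovered from its pullback to $X$ since $X \to Y$ is a universal homeomorphism). Passing to ind-objects, which is harmless by the finite-presentation description of ind-étale algebras (Proposition \ref{p:PropWeaklyEtale}(4) and the remark that ind-étale algebras form the ind-category of étale algebras), this upgrades to an equivalence between ind-étale $A$-algebras and ind-étale $B$-algebras, i.e. an equivalence of underlying categories $Y^\aff_\proet \simeq X^\aff_\proet$. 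One must then check this equivalence is compatible with the fpqc topology: a family $\{V_i \to V\}$ in $Y^\aff_\proet$ is a cover iff $\{V_i \times_Y X \to V \times_Y X\}$ is a cover in $X^\aff_\proet$. This follows because $f$ and all its base changes are surjective (indeed universally bijective) and integral, so faithful flatness and the covering condition can be checked before or after base change along $f$; concretely, $\Spec(B') \to \Spec(A')$ is surjective for any $Y$-scheme map $\Spec(A') \to \Spec(A)$ with $B' = A' \otimes_A B$, and an $A'$-module is faithfully flat iff $B' = A' \otimes_A B$ detects this after the faithfully flat (since surjective integral, hence after checking, via the universal homeomorphism, it is an fpqc cover) base change — more cleanly, one notes that $\{V_i \to V\}$ being an fpqc cover is equivalent to the map of underlying topological spaces being surjective together with flatness, and both are invariant under the universal homeomorphism $X \to Y$ base changed to $V$. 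An equivalence of sites induces an equivalence of topoi, giving $f_* \colon \Shv(X_\proet) \xrightarrow{\sim} \Shv(Y_\proet)$, and one checks this is indeed the pushforward along $f_\proet$ (it is the functor $G \mapsto (V \mapsto G(V \times_Y X))$, which matches $f_{\proet,*}$ by Lemma \ref{lem:proetfunc}).

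The main obstacle I anticipate is the bookkeeping around the topology: verifying that the category-level equivalence $V \mapsto V \times_Y X$ transports fpqc covers to fpqc covers in both directions. Checking flatness is routine (flatness is insensitive to the universal homeomorphism since $A \to B$ need not be flat, but one works with $V \in Y^\aff_\proet$ and its pullback, where the relevant modules are over the pulled-back rings and flatness can be detected after the faithfully flat — in the fpqc sense, granted by surjectivity and the homeomorphism — base change), but one has to be careful that the absence of flatness of $A \to B$ itself is not an issue: what matters is only that base change along $f$ is an exact, conservative, cover-preserving-and-reflecting operation on the pro-étale sites, and this is guaranteed by surjectivity and integrality of $f$ together with topological invariance of the étale site. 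Once that is in hand, the conclusion is immediate.
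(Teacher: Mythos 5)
Your proposal is correct and follows essentially the same route as the paper: reduce to the affine case, invoke topological invariance of the étale site to identify étale (hence ind-étale) $A$- and $B$-algebras, and check that covers match up. The only simplification worth noting is that your worries about flatness are moot — all maps in $X^\aff_\proet$ are weakly étale and hence automatically flat, so a family is a cover iff it is jointly surjective in the appropriate quasi-compact sense, and this is visibly preserved and reflected by the universal homeomorphism.
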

\begin{proof}
	The claim is local on $Y$, so we may $Y$ and $X$ are affine. By Theorem \ref{t:WeaklyVsIndEtale}, we can identify $\Shv(Y_\proet)$ with the topos of sheaves on the site opposite to the category of ind-\'etale $\calO(Y)$-algebras with covers generated by faithfully flat maps and Zariski covers, and likewise for $X$. Since $f^{-1}$ identifies $X_\et$ with $Y_\et$ while preserving affine objects (by integrality) and covers, the claim follows from the topological invariance of the usual \'etale site.
\end{proof}

\begin{lemma}
	\label{lem:funcpushforward}
	Fix a qcqs map $f:Y \to X$ of schemes and $F$ either in $\Shv(Y_\et)$ or $D^+(Y_\et)$. Then the natural map
	\[ \nu_Y^* \circ f_{\et,*} (F) \to f_{\proet,*} \circ \nu_X^* (F) \]
	is an equivalence.
\end{lemma}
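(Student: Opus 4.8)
The plan is to recognise the map as the base change map for the commuting square of topoi from Lemma \ref{lem:proetfunc}, reduce to sections over a w-contractible object, and there identify both sides with a common filtered colimit via Corollary \ref{cor:pushpull} and the continuity of \'etale cohomology under cofiltered limits of qcqs schemes.

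First I would make the map explicit: for $f\colon Y\to X$ and $F\in D^+(Y_\et)$ the comparison morphism $\nu_X^*f_{\et,*}F\to f_{\proet,*}\nu_Y^*F$ is the composite $\nu_X^*f_{\et,*}F\to f_{\proet,*}f_{\proet}^*\nu_X^*f_{\et,*}F\simeq f_{\proet,*}\nu_Y^*f_{\et}^*f_{\et,*}F\to f_{\proet,*}\nu_Y^*F$, using the unit of $(f_\proet^*,f_{\proet,*})$, the isomorphism of Lemma \ref{lem:proetfunc}, and the counit $f_\et^*f_{\et,*}\to\id$; both sides lie in $D^+(X_\proet)$. Since $\nu_X^*$, $\nu_Y^*$ (Lemma \ref{lem:proetfunc}) and $f_{\et,*}$, $f_{\proet,*}$ (\'etale-locality of pushforward) all commute with restriction along an open immersion $V\hookrightarrow X$, and the base change map is compatible with these restrictions, I may assume $X=\Spec(A)$ is affine; then $f$ qcqs forces $Y$ to be qcqs. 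Next, a map in $D^+(X_\proet)$ is an equivalence as soon as $\R\Gamma(U,-)$ of its cone vanishes for every w-contractible affine $U\in X_\proet$: such objects form a conservative family by Lemma \ref{lem:cwcontractiblecover}, and for them $\Gamma(U,-)$ is exact, so $H^i\R\Gamma(U,C)=\Gamma(U,H^iC)$ for $C\in D^+(X_\proet)$, and vanishing for all such $U$ kills every cohomology sheaf.

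So fix a w-contractible $U=\lim_i U_i\in X_\proet^\aff$, set $Y_i:=Y\times_X U_i$ (qcqs, affine and \'etale over $Y$), $Y_U:=Y\times_X U=\lim_i Y_i$ (a qcqs scheme, the transition maps $Y_j\to Y_i$ being affine), and write $F_i$, $F_U$ for the \'etale pullbacks of $F$. On the source, Corollary \ref{cor:pushpull} together with the compatibility $g^*f_{\et,*}\simeq f'_{\et,*}g'^*$ for the pullback square along the \'etale map $U_i\to X$ gives $\R\Gamma(U,\nu_X^*f_{\et,*}F)\simeq \colim_i \R\Gamma(U_i,f_{\et,*}F)\simeq \colim_i\R\Gamma(Y_i,F_i)$. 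On the target, using $\Shv(Y_\proet)_{/Y_U}\simeq\Shv((Y_U)_\proet)$ (Lemma \ref{lem:proettoposbc}) and Lemma \ref{lem:proetfunc} one gets $\R\Gamma(U,f_{\proet,*}\nu_Y^*F)\simeq\R\Gamma((Y_U)_\proet,\nu_{Y_U}^*F_U)$; applying $\nu_{Y_U,*}$ and Corollary \ref{cor:pushpull} rewrites this as $\R\Gamma((Y_U)_\et,F_U)$, which by continuity of \'etale cohomology along the cofiltered limit $Y_U=\lim_iY_i$ of qcqs schemes with affine transition maps (\cite[Exp.~VII]{SGA4Tome2}, applied to the cohomology sheaves of $F$) equals $\colim_i\R\Gamma((Y_i)_\et,F_i)$. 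Finally I would chase the diagram to check that the base change map induces precisely this identification of the two colimits, which completes the argument; the sheaf case of the lemma is the special case of $F$ concentrated in degree $0$.

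The main point — and the only place the hypotheses are really used — is that $f$ qcqs forces each $Y_i$ to be qcqs, which is exactly what makes $Y_U=\lim_iY_i$ an honest qcqs scheme and what lets the \'etale continuity theorem apply; together with the boundedness below of $F$ (needed for $\nu_*\nu^*\simeq\id$ in Corollary \ref{cor:pushpull}), this is the crux. Everything else — the \'etale-localization compatibilities of $f_{\et,*}$ and $f_{\proet,*}$, and the verification that the displayed isomorphisms are natural with respect to the base change map — is routine bookkeeping.
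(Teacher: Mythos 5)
Your proof is correct, and it takes a recognisably different route from the paper's on the key step. The paper also localises to $X$ affine and evaluates on a pro-\'etale affine $U=\lim_i U_i$, computing the source as $\colim_i F(f^{-1}U_i)$ via Lemma \ref{lem:sheafpullback}/Corollary \ref{cor:pushpull}; but for the target it first assumes $Y$ affine, so that $f^{-1}U=\lim_i f^{-1}U_i$ is itself an object of $Y^\aff_\proet$ and the same internal lemma applies, and then bootstraps to general qcqs $Y$ by a Mayer--Vietoris induction (first separated quasicompact, then qcqs), using stability of the class of good $F$ under finite (homotopy-)limits. You instead compute the target over a w-contractible $U$ as $\R\Gamma((Y_U)_\et,F_U)$ via $\nu_*\nu^*\simeq\id$ and then invoke the classical continuity theorem for \'etale cohomology along the cofiltered limit $Y_U=\lim_i Y_i$ of qcqs schemes with affine transition maps. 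This handles general qcqs $Y$ in one stroke and avoids the Mayer--Vietoris step, at the cost of importing the SGA4 continuity theorem as an external input (the paper's route is self-contained, resting only on its own Lemma \ref{lem:sheafpullback}, which is continuity for pro-\'etale affines over a fixed affine). Your identification of where the hypotheses are used ($f$ qcqs so that $Y_i$ and $Y_U$ are qcqs; $F$ bounded below for $\nu_*\nu^*\simeq\id$ and for continuity) is accurate, and restricting the test objects to w-contractibles is legitimate since they form a generating family on which $\Gamma(U,-)$ is exact. One cosmetic remark: the subscripts on $\nu^*$ in the statement as printed are swapped relative to the only reading that typechecks; your version $\nu_X^*f_{\et,*}F\to f_{\proet,*}\nu_Y^*F$ is the intended one.
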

\begin{proof}
We first handle $F \in \Shv(Y_\et)$. The claim is local on $X$, so we may assume $X$ is affine. First, consider the case where $Y$ is also affine. Choose some $U \in Y^\aff_\proet$ with presentation $U = \lim_i U_i$. Then Lemma \ref{lem:sheafpullback} shows
\[ \nu_Y^* \circ f_{\et,*}(F) (U) = \colim_i F(f^{-1} U_i). \]
As $f^{-1} U \in Y^\aff_\proet$ with presentation $f^{-1} U = \lim_i f^{-1} U_i$, one concludes by reapplying Lemma \ref{lem:sheafpullback}. For not necessarily affine but separated and quasicompact $Y$, the same argument shows that the claim is true for all $F \in \Shv(Y_\et)$ obtained as pushforwards from an affine open of $Y$. Since the collection of all $F$ satisfying the above conclusion is stable under finite limits, a Mayer-Vietoris argument shows that the claim is true for all $F \in \Shv(Y)$ with $Y$ quasicompact and separated. Repeating the argument (and using the separated case) then gives the claim for all qcqs $Y$. For $F \in D^+(X_\et)$, the same argument applies using Corollary \ref{cor:pushpull} instead of Lemma \ref{lem:sheafpullback} (with finite limits replaced by finite homotopy-limits).
\end{proof}

\begin{remark}
Lemma \ref{lem:funcpushforward} does {\em not} apply to unbounded complexes.  Any scheme $X'$ with $D(X'_\et)$ not left-complete (see Remark \ref{ex:notleftcompleteag}) gives a counterexample as follows. Choose $K \in D(X'_\et)$ for which $K \not\simeq \R\lim \tau^{\geq -n} K$. Then there is an $X \in X'_\et$ for which $\R\Gamma(X,K) \not\simeq \R\Gamma(X,\R\lim \tau^{\geq -n} K) \simeq \R\Gamma(X_\proet,\nu^* K)$  (here we use Remark \ref{rmk:unboundedpullback}). The map $X \to \Spec(\Z)$ with $F = K|_{X}$ gives the desired counterexample.
\end{remark}

\begin{remark}
One reason to prefer the pro-\'etale topology to the fpqc topology is that the analogue of Lemma \ref{lem:funcpushforward} fails for the latter:  \'etale pushforwards do not commute with arbitrary base change.
\end{remark}

Lemma \ref{lem:funcpushforward} and the repleteness of the pro-\'etale topology let us access pushforwards of unbounded complexes quite easily; as pointed out by Brian Conrad, a similar statement can also be shown for $D(X_\et)$ using Hartshorne's formalism of ``way-out'' functors.

\begin{lemma}
	\label{lem:cohdimetproet}
	Let $f:X \to Y$ be a map of qcqs schemes. Assume $f_*:\Mod(X_\et,F) \to \Mod(Y_\et,F)$ has cohomological dimension $\leq d$ for a ring $F$. Then $f_*:D(X_\proet,F) \to D(Y_\proet,F)$ carries $D^{\leq k}_{cc}(X_\proet,F)$ to $D^{\leq k+d+1}_{cc}(Y_\proet,F)$. 
\end{lemma}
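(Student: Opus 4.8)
The statement concerns cohomological dimension of pushforward, so the natural strategy is to reduce the unbounded claim to a bounded one via the left-completeness machinery of Section~\ref{sec:Replete}, then run a standard spectral-sequence/truncation argument. Let me write out the steps.

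First I would recall the setup: by Proposition~\ref{prop:dccleftcompletion} (via Proposition~\ref{prop:parasiticsemiorth}) the pullback $\nu_X^*$ identifies $D^+(X_\et,F)$ with $D^+_{cc}(X_\proet,F)$, and by Lemma~\ref{lem:funcpushforward} the square relating $\nu$, $f_\proet$, $f_\et$ commutes on bounded below complexes: $\nu_Y^* \circ f_{\et,*} \simeq f_{\proet,*} \circ \nu_X^*$ on $D^+(X_\et,F)$. So for $K \in D^{\leq k}_{cc}(X_\proet,F)$ that is moreover bounded below, writing $K \simeq \nu_X^* K_0$ with $K_0 \in D^{\leq k}(X_\et,F)$ (bounded below), the hypothesis that $f_{\et,*}$ has cohomological dimension $\leq d$ on $\Mod(X_\et,F)$ gives, by the usual hypercohomology/Grothendieck spectral sequence argument, that $f_{\et,*} K_0 \in D^{\leq k+d}(Y_\et,F)$; applying $\nu_Y^*$ (exact) and Lemma~\ref{lem:funcpushforward} shows $f_{\proet,*} K \in D^{\leq k+d}_{cc}(Y_\proet,F)$, which is even better than claimed in the bounded below range. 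The loss of one extra degree, i.e.\ the $+d+1$ rather than $+d$, must come from the passage to unbounded complexes, where an $\R\lim$ of cohomological dimension $1$ (Proposition~\ref{prop:cdrlim}, using repleteness of $\Shv(Y_\proet)$) intervenes.

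Concretely, for general $K \in D^{\leq k}_{cc}(X_\proet,F)$, I would write $K \simeq \R\lim_n \tau^{\geq -n} K$ using left-completeness of $D(X_\proet,F)$ (Proposition~\ref{prop:repletepostnikov}, since $\Shv(X_\proet)$ is replete by Proposition~\ref{prop:proetlocallycontractible}). Since $f_{\proet,*}$ commutes with homotopy limits (it is a right adjoint), $f_{\proet,*} K \simeq \R\lim_n f_{\proet,*}(\tau^{\geq -n} K)$. Each $\tau^{\geq -n} K$ lies in $D^{[-n,k]}_{cc}(X_\proet,F)$, hence is bounded; by the bounded below case just established, $f_{\proet,*}(\tau^{\geq -n} K) \in D^{\leq k+d}_{cc}(Y_\proet,F)$ (the cohomology sheaves are in $\nu_Y^*(\Ab(Y_\et))$ by the identification above and the fact that $\Mod(Y_\et,F)$-valued cohomology sheaves pull back correctly). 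Now applying $\R\lim_n$ to a tower in $D^{\leq k+d}_{cc}(Y_\proet,F)$: since $\R\lim$ over $\N^\opp$ has cohomological dimension $\leq 1$ (Proposition~\ref{prop:cdrlim}, repleteness of $\Shv(Y_\proet)$), the result lies in $D^{\leq k+d+1}(Y_\proet,F)$. That the cohomology sheaves remain in $\nu_Y^*(\Ab(Y_\et))$, i.e.\ the result is in $D_{cc}$, follows because $D_{cc}(Y_\proet,F)$ is closed under homotopy limits — this uses that $D_{cc}$ is characterized by a condition on cohomology sheaves together with left-completeness and Corollary~\ref{cor:absheafpullbackserresubcat} (the image of $\nu_Y^*$ on abelian categories is a Serre subcategory), so the $\R^1\lim$ and $\lim$ terms of the Milnor sequence stay in $\nu_Y^*(\Ab(Y_\et))$.

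The main obstacle I anticipate is bookkeeping the $D_{cc}$ condition carefully through the two limit processes — making sure that after $\nu_Y^*$-pullback and after $\R\lim$ one really stays inside $\nu_Y^*(\Ab(Y_\et))$ on cohomology sheaves, rather than merely inside $D^{\leq k+d+1}(Y_\proet,F)$. This is where Corollary~\ref{cor:absheafpullbackserresubcat} and the left-completeness characterization of $D_{cc}$ (as in the proof of Proposition~\ref{prop:dccleftcompletion}: $K \in D_{cc}$ iff all $\tau^{\geq -n}K \in D_{cc}$) do the real work; the numerology ($+d$ from the bounded case, $+1$ from repleteness of $\R\lim$) is then routine. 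A minor point to check is that $f_{\proet,*}$ genuinely commutes with the relevant $\R\lim$ — this is automatic as it is a right adjoint (to $f_\proet^*$).
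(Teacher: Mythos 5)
Your skeleton agrees with the paper's: truncate, use repleteness to write $K \simeq \R\lim \tau^{\geq -n}K$, push forward each truncation using Lemma~\ref{lem:funcpushforward} and the cohomological dimension hypothesis to land in $D^{\leq k+d}_{cc}(Y_\proet,F)$, and pay one extra degree because $\R\lim$ over $\N^\opp$ has cohomological dimension $\leq 1$. Up to that point everything is fine.

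The gap is in your final step, and it is exactly the step you flagged as ``where the real work is.'' You assert that $D_{cc}(Y_\proet,F)$ is closed under homotopy limits because $\nu^*(\Ab(Y_\et))$ is a Serre subcategory, so the $\lim$ and $\R^1\lim$ terms of the Milnor sequence stay classical. This is false: a Serre subcategory is closed under subobjects, quotients and extensions, but not under infinite inverse limits, and $D_{cc}$ is genuinely not closed under $\R\lim$. The paper's own Example~\ref{ex:parasiticlimits} (and the remark following Lemma~\ref{lem:parasitic}, with $\widehat{\Z}_\ell(1) = \lim \mu_{\ell^n}$) exhibits an $\N^\opp$-indexed limit of classical sheaves that is parasitic, hence as far from classical as possible. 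So your argument establishes the bound $f_*K \in D^{\leq k+d+1}(Y_\proet,F)$ but not membership in $D_{cc}$. What rescues the claim is that the towers in question are not arbitrary: for fixed $i$, the system $\{\calH^i(f_*\tau^{\geq -n}K)\}$ is \emph{essentially constant}. Indeed, the triangle $f_*\big(\calH^{-(n+1)}(K)[n+1]\big) \to f_*\tau^{\geq -(n+1)}K \to f_*\tau^{\geq -n}K$ has first term concentrated in degrees $\leq d-n-1$ by the hypothesis on $f_{\et,*}$, so the transition map is an isomorphism on $\calH^i$ once $n \gg 0$. By repleteness, $\calH^i(f_*K) \simeq \calH^i(f_*\tau^{\geq -n}K)$ for $n \gg 0$, and the latter lies in $\nu^*\Ab(Y_\et)$ by Lemma~\ref{lem:funcpushforward}. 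Replacing your closure claim with this stabilization argument completes the proof.
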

\begin{proof}
	Fix $K \in D^{\leq k}_{cc}(X_\proet)$. Then $K \simeq \R\lim \tau^{\geq -n} K$ by repleteness, so $f_* K \simeq \R\lim f_* \tau^{\geq -n} K$. Lemma \ref{lem:funcpushforward} and the assumption on $f$ show $f_* \tau^{\geq -n} K \in D^{\leq k+d}_{cc}(Y_\proet)$. As $\R\lim$ has cohomological dimension $\leq 1$ by repleteness, half of the claim follows. It remains to check that $\calH^i (f_* K) \in \nu^* \Ab(Y_\et)$. For this, observe that, for fixed $i$, the projective system $\{ \calH^i(f_* \tau^{\geq -n} K) \}$ is essentially constant: for $n \gg 0$, the map $f_* \tau^{\geq -(n+1)} K \to f_* \tau^{\geq -n}  K$ induces an isomorphism on $\calH^i$ by assumption on $f$. By repleteness, this proves $\calH^i (f_* K) \simeq \calH^i (f_* \tau^{\geq -n} K)$ for $n \gg 0$, which is enough by Lemma \ref{lem:funcpushforward}.
\end{proof}

\subsection{Relation with Ekedahl's theory}
\label{subsec:ekedahl}

In this section, we fix a noetherian ring $R$ complete for the topology defined by an ideal $\fram \subset R$. For this data, we follow the notation of \S \ref{subsec:derivedcompfadic} with $\calX = \Shv(X_\proet)$. We use here the following (slight variations on) assumptions introduced by Ekedahl, \cite{Ekedahl}.

\begin{definition}\ 
\begin{enumerate}
\item[{\rm (A)}] There is an integer $N$ and a set of generators ${Y_i}$, $Y_i\in X_\et$, of $X_\et$, such that for all $R/\fram$-modules $M$ on $X_\et$, $H^n(Y_i,M) = 0$ for $n>N$.
\item[{\rm (B)}] The ideal $\fram$ is regular, and the $R/\fram$-module $\fram^n/\fram^{n+1}$ has finite flat dimension bounded independently of $n$.
\end{enumerate}
\end{definition}

Here, condition (A) agrees with Ekedahl's condition (A), but condition (B) may be slightly stronger than Ekedahl's condition (B). By Proposition \ref{prop:leftcompletecrit} (2), condition (A) ensures that $D(X_\et,R/\fram)$ is left-complete, as are all $D(X_\et,R/\fram^n)$. Ekedahl considers the following category.

\begin{definition} If condition (A) is fulfilled, let $\ast = -$, if condition (B) is fulfilled, let $\ast = +$, and if condition (A) and (B) are fulfilled, let $\ast$ be empty. Define $D^\ast_{Ek}(X,R)$ as the full subcategory of $D^\ast(X_\et^{\N^\opp},R_\bullet)$ spanned by projective systems $\{M_n\}$ whose transition maps $M_n\otimes_{R/\fram^n} R/\fram^{n-1}\to M_{n-1}$ are isomorphisms for all $n$.
\end{definition}

In the pro-\'etale world, limits behave better, so we can define the following analogue:

\begin{definition} Define $D_{Ek}(X_\proet,\widehat{R}) \subset D_\comp(X_\proet,\widehat{R})$ as the full subcategory of complexes $K$ satisfying $K \otimes_{\widehat{R}} R/\fram \in D_{cc}(X_\proet)$, i.e., $H^i(K \otimes_{\widehat{R}} R/\fram) \in \nu^* \Ab(X_\et)$ for all $i$. If $\ast\in \{+,-,b\}$, let $D^\ast_{Ek}(X_\proet,\widehat{R})\subset D_{Ek}(X_\proet,\widehat{R})$ be the full subcategory with corresponding boundedness assumptions.
\end{definition}

The main comparison is:

\begin{proposition} If condition (A) is fulfilled, let $\ast = -$, if condition (B) is fulfilled, let $\ast = +$, and if condition (A) and (B) are fulfilled, let $\ast$ be empty. There is a natural equivalence $D^\ast_{Ek}(X_\proet,\widehat{R}) \simeq D^\ast_{Ek}(X_\et,R)$.
\end{proposition}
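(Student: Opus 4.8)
The plan is to construct a functor between the two categories of adic complexes and show it is an equivalence, leveraging the compatible-systems formalism of \S\ref{subsec:derivedcompnoeth}. First I would recall that by Definition \ref{def:compatiblesystem} (applied with $\calX = \Shv(X_\proet)$) and Lemma \ref{lem:completecompatiblesystems}, the category $D_\comp(X_\proet,\widehat{R})$ is equivalent (in the appropriate boundedness range, and unconditionally when $\fram$ is regular, i.e.\ under (B)) to $D_\comp(\calC,R_\bullet)$ where $\calC = \Fun(\N^\opp, \Shv(X_\proet))$. The subcategory cut out by the condition $K \otimes_{\widehat{R}} R/\fram \in D_{cc}(X_\proet)$ corresponds, via Lemma \ref{lem:proisomchangecoeff} (which identifies $\{K_n \otimes_R^L R/\fram\}$ with $\{K_n \otimes_{R/\fram^n}^L R/\fram\}$ as pro-objects), to the subcategory of $\{M_n\}$ with $M_n \otimes_{R/\fram^n}^L R/\fram \in D_{cc}(X_\proet, R/\fram)$; since each $M_n$ is a perfect-ish reduction of $M_{n+1}$ and the condition of lying in $D_{cc}$ is stable under the relevant operations, this is equivalent to asking $M_n \in D_{cc}(X_\proet, R/\fram^n)$ for all $n$. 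So $D_{Ek}(X_\proet,\widehat{R})$ is identified with the full subcategory of $D_\comp(\calC,R_\bullet)$ of systems $\{M_n\}$ that are termwise in $D_{cc}$.

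Next I would bring in the relation with the \'etale site termwise. By Proposition \ref{prop:dccleftcompletion}, for each fixed $n$ the pushforward/pullback along $\nu$ gives an equivalence $D_{cc}(X_\proet, R/\fram^n) \simeq \widehat{D}(X_\et, R/\fram^n)$, the left-completion of $D(X_\et,R/\fram^n)$. Under condition (A), Proposition \ref{prop:leftcompletecrit}(2) shows $D(X_\et,R/\fram^n)$ is already left-complete (the generators $Y_i$ have uniformly bounded cohomological dimension for $R/\fram$-modules, hence — by devissage along the finite filtration of $R/\fram^n$ with graded pieces that are $R/\fram$-modules — for $R/\fram^n$-modules as well), so $D_{cc}(X_\proet,R/\fram^n) \simeq D(X_\et,R/\fram^n)$. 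Applying this levelwise and checking compatibility with the transition maps $M_n \otimes_{R/\fram^n}^L R/\fram^{n-1} \to M_{n-1}$ (which are preserved since $\nu^*$ is exact, commutes with $\otimes^L$, and, by Lemma \ref{lem:proisomchangecoeff}/\ref{lem:truncationofcomplete}, with the relevant base changes), one gets an equivalence between $D_{Ek}(X_\proet,\widehat{R})$, viewed as termwise-$D_{cc}$ systems in $D_\comp(\calC,R_\bullet)$, and the category of compatible systems $\{M_n\}$ in $D(X_\et, R/\fram^n)$ with $M_n\otimes_{R/\fram^n}R/\fram^{n-1}\simeq M_{n-1}$ — which is exactly $D_{Ek}(X_\et,R)$. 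The boundedness bookkeeping ($\ast \in \{+,-,\text{empty}\}$) is handled by Lemma \ref{lem:completecompatiblesystems}(5): the unbounded case requires $\fram$ regular, i.e.\ condition (B), matching the stated hypotheses, while the bounded-above case (condition (A)) and bounded-below case go through directly.

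The main obstacle I expect is the careful matching of the boundedness conditions and the verification that ``$M_n \otimes^L R/\fram \in D_{cc}$ for all $n$'' is genuinely equivalent to ``$M_n \in D_{cc}$ for all $n$'' — this requires knowing that $D_{cc}(X_\proet, R/\fram^n)$ is detected after derived reduction mod $\fram$, which in turn uses the left-completeness of $D(X_\proet)$ (Proposition \ref{prop:repletepostnikov}), the exactness of $\nu^*$, and a devissage up the filtration $\fram^k M_n/\fram^{k+1}M_n$ whose graded pieces are $R/\fram$-modules; under (B) one must also control flat dimensions so that the $\Tor$-spectral sequences converge. A secondary subtlety is that $\widehat{R}$ is genuinely different from $R$ as a sheaf of rings on $X_\proet$ (it is $\lim R/\fram^n$, and $R$ itself is not derived $\fram$-complete), so one must consistently work with $\widehat{R}$-modules and invoke Lemma \ref{lem:arcomplete} to pass between $D_\comp(X_\proet,\widehat{R})$ and $D_\comp(X_\proet,R)$; once that identification is in place the rest is the bookkeeping sketched above.
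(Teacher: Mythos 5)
Your proposal is correct and follows essentially the same route as the paper: both pass to the category of compatible systems via Lemma \ref{lem:completecompatiblesystems}, identify the Ekedahl condition termwise as $K_n \in D_{cc}(X_\proet,R/\fram^n)$ (your devissage argument is the paper's "induction on $n$"), and then apply the termwise equivalence with the \'etale side — left-completeness of $D(X_\et,R/\fram^n)$ under (A) for $\ast=-$, and the unconditional bounded-below equivalence $D^+(X_\et)\simeq D^+_{cc}(X_\proet)$ of Proposition \ref{prop:parasiticsemiorth} together with the uniform flat-dimension bound from (B) for $\ast=+$.
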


\begin{proof} Assume first that condition (A) is satisfied. By Lemma \ref{lem:completecompatiblesystems} (iv), we have $D^-_{comp}(X_\proet,\widehat{R})\simeq D^-_{comp}(X_\proet^{\N^\opp},R_\bullet)$. The full subcategory $D^-_{Ek}(X_\proet,\widehat{R})$ consists of those $\{K_n\}\in D^-_{comp}(X_\proet^{\N^\opp},R_\bullet)$ for which $K_n\in D^-_{cc}(X_\proet,R/\fram^n)$ for all $n$, as follows easily by induction on $n$. Under condition (A), $D(X_\et,R/\fram^n)$ is left-complete, so $D^-(X_\et,R/\fram^n)\cong D^-_{cc}(X_\proet,R/\fram^n)$. This gives the result.

Now assume condition (B). Thus, there exists $N \in \N$ such that if $K \in D^{\geq k}_{Ek}(X_\proet,\widehat{R})$ for some $k$, then $K \otimes_{\widehat{R}} R/\fram^n  \in D^{\geq k-N}_{cc}(X_\proet)$ for all $n$.  Hence, by Lemma \ref{lem:completecompatiblesystems}, we may view $D^+_{Ek}(X_\proet,\widehat{R})$ as the full subcategory of $D^+_\comp(X_\proet^{\N^\opp},R_\bullet)$ spanned by those $\{K_n\}$ with $K_n \in  D^+_{cc}(X_\proet)$. Moreover, by Proposition \ref{prop:parasiticsemiorth}, $\nu^*$ induces an equivalence $D^+(X_\et) \simeq D^+_{cc}(X_\proet)$. The desired equivalence is then induced by $\{M_n\} \mapsto \{\nu^* M_n\}$ and $\{K_n\} \mapsto \{\nu_* K_n\}$.

If condition (A) and (B) are satisfied, simply combine the two arguments.
\end{proof}

\subsection{Relation with Jannsen's theory} 

Fix a scheme $X$.  In \cite[\S 3]{JannsenContsCoh}, one finds the following definition:

\begin{definition}
	The {\em continuous \'etale cohomology} $H^i_{\cont}(X_\et,\{F_n\})$ of $X$ with coefficients in a pro-system $\{F_n\}$ of abelian sheaves on $X_\et$ is the value of the $i$-th derived functor of the functor $\Ab(X_\et)^\N \to \Ab$ given by $\{F_n\} \mapsto  H^0(X_\et,\lim F_n)$.
\end{definition}

In general, the groups $H^i_{\cont}(X_\et,\{F_n\})$  and $H^i(X_\et, \lim F_n)$ are distinct, even for the projective system $\{\Z/\ell^n\}$; the difference is explained by the derivatives of the inverse limit functor. As inverse limits are well-behaved in the pro-\'etale world, this problem disappears, and we obtain:

\begin{proposition}
	Let $\{F_n\}$ is a pro-system of abelian sheaves on $X_\et$ with surjective transition maps. Then there is a canonical identification 
	\[ H^i_{\cont}(X_\et, \{F_n\}) \simeq H^i(X_\proet, \lim \nu^* F_n).\]
\end{proposition}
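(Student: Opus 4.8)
The plan is to reduce the statement to a comparison of two derived functors into abelian groups, and then match them up using the repleteness of $X_\proet$. First, I would unwind Jannsen's definition: the functor $\{F_n\} \mapsto H^0(X_\et, \lim F_n)$ on $\Ab(X_\et)^{\N^\opp}$ factors as the composition of $\lim : \Ab(X_\et)^{\N^\opp} \to \Ab(X_\et)$ with $\Gamma(X_\et,-)$, so its derived functor is computed by deriving each piece; concretely, for a pro-system with surjective transition maps, one has $\R\lim$ in $D(X_\et)$ sitting in degrees $[0,1]$, and $H^i_\cont(X_\et,\{F_n\}) = H^i(X_\et, \R\lim_n F_n)$ where $\R\lim$ is taken in $D(X_\et^{\N^\opp}) \to D(X_\et)$. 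This is the standard Jannsen spectral sequence $E_2^{p,q} = H^p(X_\et, \R^q\lim F_n) \Rightarrow H^{p+q}_\cont$, and I would cite it or rederive it in one line.

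Next, on the pro-\'etale side, I would observe that since $\Shv(X_\proet)$ is replete (Proposition \ref{prop:proetlocallycontractible} together with the first part of the proposition on locally weakly contractible topoi, or directly the analogue of Example \ref{ex:fpqcreplete}), the system $\{\nu^* F_n\}$ has surjective transition maps in $\Shv(X_\proet)$, and hence $\lim \nu^* F_n \simeq \R\lim \nu^* F_n$ by Proposition \ref{prop:limrlim}. Therefore $H^i(X_\proet, \lim \nu^* F_n) = H^i(X_\proet, \R\lim_n \nu^* F_n)$, and since $\R\Gamma(X_\proet,-)$ commutes with $\R\lim$ (both being homotopy limits), this equals $H^i$ of $\R\lim_n \R\Gamma(X_\proet, \nu^* F_n)$.

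The heart of the argument is then the comparison $\R\Gamma(X_\proet, \nu^* F_n) \simeq \R\Gamma(X_\et, F_n)$, naturally in $n$. This is exactly Corollary \ref{cor:pushpull} (the case $U = X$, or rather applied over an affine cover and glued): for $F \in \Ab(X_\et) \subset D^+(X_\et)$ one has $K \simeq \nu_* \nu^* K$, hence $\R\Gamma(X_\et, F_n) \simeq \R\Gamma(X_\et, \nu_* \nu^* F_n) \simeq \R\Gamma(X_\proet, \nu^* F_n)$. Combining, $\R\lim_n \R\Gamma(X_\proet, \nu^* F_n) \simeq \R\lim_n \R\Gamma(X_\et, F_n)$, and taking $H^i$ of the right-hand side reproduces exactly Jannsen's derived-functor definition of $H^i_\cont(X_\et, \{F_n\})$ (this is again the statement that $H^i_\cont$ is computed as $H^i \R\lim_n \R\Gamma(X_\et, F_n)$, which one can see by taking an injective resolution of the pro-system). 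Assembling these identifications, all of which are natural in $n$ and hence in the pro-system, yields the canonical isomorphism claimed.

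The main obstacle is bookkeeping rather than anything deep: one must be careful that all the identifications are genuinely functorial in the pro-system $\{F_n\}$ so that they glue to a well-defined map (not merely an abstract isomorphism of groups), and one must make sure the description of $H^i_\cont$ as $H^i$ of $\R\lim_n \R\Gamma(X_\et, F_n)$ is correctly extracted from Jannsen's definition — this requires knowing that a $K$-injective (or Cartan–Eilenberg) resolution of the pro-system $\{F_n\}$ computes both the sectionwise $\R\lim$ and the derived functor of $\{F_n\} \mapsto H^0(X_\et,\lim F_n)$ simultaneously, which is a standard but slightly fiddly point about composing derived functors (the relevant Grothendieck spectral sequence degenerates appropriately because $\lim$ on systems with surjective transition maps has cohomological dimension $\leq 1$). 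Once that is in hand, everything else is formal given Corollary \ref{cor:pushpull} and repleteness.
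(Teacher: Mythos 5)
Your proposal is correct and follows essentially the same route as the paper: identify $H^i_\cont$ with $H^i(X_\et,\R\lim F_n)$ via the Grothendieck spectral sequence, commute $\R\Gamma$ with $\R\lim$ on both sites, use Corollary \ref{cor:pushpull} (valid since each $F_n$ is bounded below) to match $\R\Gamma(X_\et,F_n)$ with $\R\Gamma(X_\proet,\nu^*F_n)$, and invoke repleteness of $X_\proet$ to replace $\R\lim\nu^*F_n$ by $\lim\nu^*F_n$. The functoriality bookkeeping you flag as the "main obstacle" is handled implicitly in the paper and is not an issue.
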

\begin{proof}
	Write $\R\Gamma_{\cont}(X_\et,\{F_n\}) := \R\Gamma(X_\et,\R\lim F_n)$, so $H^i(\R\Gamma_{\cont}(X_\et,\{F_n\})) \simeq H^i_{\cont}(X_\et,\{F_n\})$ as defined above by the Grothendieck spectral sequence for composition of derived functors. We then have
	\[ \R\Gamma_{\cont}(X_\et,\{F_n\}) \simeq \R\lim \R\Gamma(X_\et, F_n) \simeq \R\lim\R\Gamma(X_\proet, \nu^* F_n) \simeq \R\Gamma(X_\proet,\R\lim \nu^* F_n); \]
	here the first and last equality use the commutation of $\R\Gamma$ and $\R\lim$, while the second equality comes from the boundedness of $F_n \in D(X_\et)$. The assumption on $\{F_n\}$ ensures that $\R\lim F_n \simeq \lim F_n$ by the repleteness of $X_\proet$, which proves the claim.
\end{proof}

\newpage

\section{Constructible sheaves}\label{sec:Constructible}

This long section studies constructible sheaves, with the ultimate goal of giving a different perspective on the notion of a $\overline{\Q}_\ell$-sheaf. We begin by studying in \S \ref{ss:FunClosed} and \S \ref{ss:FunLocClosed} the basic functoriality of pushforward and pullback along locally closed immersions; the main novelty here is that pullback along a closed immersion is limit- and colimit-preserving, contrary to the classical story. Next, we recall the theory of constructible complexes in the \'etale topology in \S \ref{ss:ConsEt}. We alert the reader that our definition of constructibility is more natural from the derived perspective, but not the usual one: a constructible complex on a geometric point is the same thing as a {\em perfect} complex, see Remark \ref{rmk:consperfect}. In particular, the truncation operators $\tau_{\geq n}$, $\tau_{\leq n}$ do not in general preserve constructibility. As a globalisation of this remark, we detour in \S \ref{ss:ConsCompact} to prove that constructible complexes are the same as compact objects under a suitable finiteness constraint; this material is surely standard, but not easy to find in the literature. We then introduce constructible complexes in the pro-\'etale world in \S \ref{ss:ConsProet} with coefficients in a complete noetherian local ring $(R,\fram)$ as those $R$-complexes on $X_\proet$ which are complete (in the sense of \S \ref{subsec:derivedcompfadic}), and classically constructible modulo $\fram$. This definition is well-suited for comparison with the classical picture, but, as we explain in \S \ref{ss:ConsNoeth}, also coincides with the more intuitive definition on a noetherian scheme: a constructible complex is simply an $R$-complex that is locally constant and perfect along a stratification. This perspective leads in \S \ref{ss:LadicSheaves} to a direct construction of the category of constructible complexes over coefficient rings that do not satisfy the above constraints, like $\overline{\Z}_\ell$ and $\overline{\Q}_\ell$. Along the way, we establish that the formalism of the $6$ functors ``works'' in this setting in \S \ref{ss:6Fun}.

\subsection{Functoriality for closed immersions}
\label{ss:FunClosed}

Fix a qcqs scheme $X$, and a qcqs open $j:U \hookrightarrow X$ with closed complement $i:Z \to X$. We use the subscript ``$0$'' to indicate passage from $X$ to $Z$. First, we show ``henselizations'' can be realised as pro-\'etale maps. 

\begin{lemma}
	\label{lem:henselize}
	Assume $X$ is affine. Then $i^{-1}:X^\aff_\proet \to Z^\aff_\proet$ admits a fully faithful left adjoint $V \mapsto \widetilde{V}$. In particular, we have $i^{-1}(\widetilde{V}) \simeq V$.
\end{lemma}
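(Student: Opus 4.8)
The claim is that for $X = \Spec A$ affine, with $Z \hookrightarrow X$ closed cut out by an ideal $I \subset A$, the restriction functor $i^{-1} : X^{\aff}_{\proet} \to Z^{\aff}_{\proet}$ has a fully faithful left adjoint $V \mapsto \widetilde V$, and moreover $i^{-1}(\widetilde V) \simeq V$. The natural candidate for $\widetilde V$ is a henselization: given $V \in Z^{\aff}_{\proet}$, write $V = \Spec B_0$ with $A/I \to B_0$ ind-\'etale (legitimate by Remark \ref{rmk:affineproet}), and set $\widetilde V := \Spec(\Hens_A(B_0))$, using the functor $\Hens_A(-)$ of Definition \ref{def:henselization}. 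Since $\Hens_A(B_0) = \colim A'$ over diagrams $A \to A' \to B_0$ with $A \to A'$ \'etale, the ring $\Hens_A(B_0)$ is a filtered colimit of \'etale $A$-algebras, hence ind-\'etale over $A$, so $\widetilde V \in X^{\aff}_{\proet}$; this makes $V \mapsto \widetilde V$ a well-defined functor to $X^{\aff}_{\proet}$ (functoriality is the functoriality of $\Hens_A$, and one should note via the Remark after Definition \ref{def:henselization} that $\Hens_A(B_0)$ depends only on the $A$-algebra $B_0$, not on the chosen ind-\'etale presentation).

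First I would establish the adjunction. For $W = \Spec C \in X^{\aff}_{\proet}$ (so $A \to C$ ind-\'etale) and $V = \Spec B_0 \in Z^{\aff}_{\proet}$, I must produce a natural bijection
\[
\Hom_{Z^{\aff}_{\proet}}(\widetilde V \times_X Z,\ W\times_X Z) \ \xrightarrow{\ \sim\ }\ \Hom_{X^{\aff}_{\proet}}(\widetilde V,\ W),
\]
wait — more precisely, left adjointness of $V\mapsto\widetilde V$ to $i^{-1}$ reads
\[
\Hom_{X^{\aff}_{\proet}}(\widetilde V, W) \ \simeq\ \Hom_{Z^{\aff}_{\proet}}(V, i^{-1}W) = \Hom_{A/I}(C/IC,\ B_0).
\]
Unravelling, $\Hom_{X^{\aff}_{\proet}}(\widetilde V,W) = \Hom_{A\text{-alg}}(C, \Hens_A(B_0))$, so the required statement is exactly
\[
\Hom_{A\text{-alg}}(C,\Hens_A(B_0)) \simeq \Hom_{A/I\text{-alg}}(C/IC, B_0).
\]
This is precisely the defining adjunction of $\Hens_A(-)$ as the right adjoint to base change $\Ind(A_\et) \to \Ind((A/I)_\et)$ from Definition \ref{def:henselization}, applied with the source object $C \in \Ind(A_\et)$; I would just cite that. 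So the adjunction is essentially formal once the candidate is identified.

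Next, full faithfulness of $V \mapsto \widetilde V$ and the identity $i^{-1}(\widetilde V)\simeq V$: both follow from the single assertion $\Hens_A(B_0) \otimes_A A/I \simeq B_0$, i.e. $\Hens_A(B_0)/I\,\Hens_A(B_0) \simeq B_0$. This is exactly Lemma \ref{lem:HensFullyFaithful} (with $A \to A/I$ the surjection and $B_0 \in \Ind((A/I)_\et)$): it states $\Hens_A(-)$ is fully faithful on $\Ind((A/I)_\et)$ and $\Hens_A(-)\otimes_A A/I \simeq \id$. Translating to schemes: $i^{-1}(\widetilde V) = \Spec(\Hens_A(B_0)\otimes_A A/I) = \Spec B_0 = V$, giving $i^{-1}\circ(\widetilde{\ \cdot\ }) \simeq \id$; and a left adjoint whose composition with the right adjoint (in this order) is the identity is automatically fully faithful, which is the ``in particular'' and also gives full faithfulness of $V\mapsto\widetilde V$. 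So there is no real obstacle here — the whole lemma is a repackaging of Lemma \ref{lem:HensFullyFaithful} and Definition \ref{def:henselization} in geometric language. The only point requiring a little care is the well-definedness of $V \mapsto \widetilde V$ as a functor $Z^{\aff}_{\proet} \to X^{\aff}_{\proet}$ (independence of the ind-\'etale presentation of $B_0$, and the output landing in $X^{\aff}_{\proet}$ rather than merely among weakly \'etale affine $X$-schemes), which I would dispatch using the Remark following Definition \ref{def:henselization} together with the observation that a filtered colimit of \'etale $A$-algebras is ind-\'etale, hence weakly \'etale and a pro-\'etale affine.
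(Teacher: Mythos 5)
Your proof is correct and is essentially the paper's proof: the paper disposes of this lemma by citing Definition \ref{def:henselization} and Lemma \ref{lem:HensFullyFaithful}, and your argument is exactly the expansion of that citation — taking $\widetilde{V} = \Spec(\Hens_A(B_0))$, reading off the adjunction from the defining universal property of $\Hens_A(-)$, and deducing full faithfulness and $i^{-1}(\widetilde{V}) \simeq V$ from $\Hens_A(-) \otimes_A A/I \simeq \id$.
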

\begin{proof} See Definition \ref{def:henselization} and Lemma \ref{lem:HensFullyFaithful}.
\end{proof}

Henselization defines a limit-preserving functor between sites:

\begin{lemma}
	\label{lem:henselisecovers}
	Assume $X$ is affine. Then the functor $V \mapsto \widetilde{V}$ from Lemma \ref{lem:henselize} preserves surjections.
\end{lemma}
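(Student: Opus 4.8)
The statement asserts that the henselization functor $V \mapsto \widetilde{V}$ from Lemma \ref{lem:henselize} carries surjective maps in $Z^\aff_\proet$ to surjective maps in $X^\aff_\proet$. Write $X = \Spec(A)$ and $Z = \Spec(A/I)$, so that $Z^\aff_\proet$ is the category of affine schemes pro-\'etale over $Z$ (equivalently, $\Ind((A/I)_\et)$ read contravariantly), and similarly for $X$. Recall from Definition \ref{def:henselization} that for $V = \Spec(B_0)$ with $B_0 \in \Ind((A/I)_\et)$, the henselization is $\widetilde{V} = \Spec(\Hens_A(B_0))$, and by Lemma \ref{lem:HensFullyFaithful} (applied to the surjection $A \to A/I$) one has $\Hens_A(B_0) \otimes_A A/I \simeq B_0$, i.e. $i^{-1}(\widetilde{V}) \simeq V$.

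The plan is to reduce surjectivity of $\widetilde{V'} \to \widetilde{V}$ to the already-known surjectivity of $V' \to V$ together with the henselian property. First I would recall what surjectivity means concretely: a map of affine schemes $\Spec(C') \to \Spec(C)$ is surjective (as a cover in the fpqc/pro-\'etale topology, for \emph{weakly \'etale} maps of affines) precisely when $C \to C'$ is faithfully flat; and for weakly \'etale (hence flat) maps, faithful flatness is equivalent to surjectivity on spectra. So, given a surjection $V' \to V$ in $Z^\aff_\proet$, corresponding to $A/I \to B_0 \to B_0'$ with $B_0 \to B_0'$ faithfully flat ind-\'etale, I want to show $\Hens_A(B_0) \to \Hens_A(B_0')$ is faithfully flat, equivalently surjective on $\Spec$. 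The key input is that $\Spec(\Hens_A(B_0))$ is \emph{local along} its closed fibre $\Spec(B_0) = i^{-1}(\widetilde{V})$: every point of $\Spec(\Hens_A(B_0))$ specializes to a point of the closed subscheme cut out by $I$, because $\Hens_A(B_0)$ is henselian along $I\cdot\Hens_A(B_0)$ and $\Hens_A(B_0)/I\Hens_A(B_0) \simeq B_0$ by Lemma \ref{lem:HensFullyFaithful}. The same holds for $\widetilde{V'}$. Hence a map $\widetilde{V'} \to \widetilde{V}$ of spectral spaces, each local along its closed subscheme, is surjective as soon as it is surjective on the closed subschemes — because the image is closed under the relevant structure and every point generizes from / specializes to the closed locus, and $\widetilde{V'} \to \widetilde{V}$ is compatible with the specialization maps to the closed fibres. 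But on closed fibres the map is exactly $V' \to V$, which is surjective by hypothesis.

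To make the last implication precise I would argue as follows: let $W \subset \widetilde{V}$ be the (constructible, since the map is weakly \'etale and qcqs, in fact ind-\'etale by Lemma \ref{lem:mapsproetaff}) image of $\widetilde{V'} \to \widetilde{V}$; actually it is cleaner to check faithful flatness ring-theoretically. The map $\Hens_A(B_0) \to \Hens_A(B_0')$ is flat (both are ind-\'etale, hence weakly \'etale, over $A$, and a map of weakly \'etale $A$-algebras is weakly \'etale by Proposition \ref{p:PropWeaklyEtale}(iv), in particular flat). For faithful flatness it remains to see that $\Spec(\Hens_A(B_0')) \to \Spec(\Hens_A(B_0))$ hits every maximal ideal of $\Hens_A(B_0)$; but the maximal (= closed) points of $\Hens_A(B_0)$ all lie in the closed subscheme $\Spec(B_0)$ by the henselian/local-along property, and there they are hit because $B_0 \to B_0'$ is faithfully flat. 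A flat map surjecting onto the closed points of a ring that is henselian along the corresponding ideal is faithfully flat (a flat local-along-$V(I)$ map is faithfully flat iff it is so after base change to $A/I$, by the usual Nakayama-type argument; this is the ring analogue of Lemma \ref{lem:cwstrictlylocalspread}/\ref{lem:cwcontractiblespread}). This gives faithful flatness of $\Hens_A(B_0) \to \Hens_A(B_0')$, i.e. surjectivity of $\widetilde{V'} \to \widetilde{V}$.

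\textbf{Main obstacle.} The delicate point is the passage ``faithfully flat on the closed fibre $\Rightarrow$ faithfully flat'' for the henselized rings: one must genuinely use that $\Hens_A(B_0)$ is henselian along $I\Hens_A(B_0)$ with quotient $B_0$ (Lemma \ref{lem:HensFullyFaithful}), so that its spectrum is local along $V(I)$ and no ``extra'' closed points appear away from the closed fibre. Everything else — flatness of maps between weakly \'etale $A$-algebras, the identification $i^{-1}\widetilde{V} \simeq V$, the description of surjections of affine weakly \'etale schemes as faithfully flat maps — is formal from results already in the excerpt (Proposition \ref{p:PropWeaklyEtale}, Lemma \ref{lem:HensFullyFaithful}, Lemma \ref{lem:henselize}, Lemma \ref{lem:mapsproetaff}). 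I expect the write-up to be short once the henselian-locality observation is invoked.
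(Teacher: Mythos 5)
Your proof is correct and rests on the same key point as the paper's: $I$ lies in the Jacobson radical of $\Hens_A(B_0)$ (henselian pair), so every closed point/closed subset of $\widetilde{V}$ meets the closed fibre $V$, where surjectivity is given; the paper phrases this as "the complement of the (open) image is a closed set missing $V$, hence empty," while you phrase it as "flat and hits all maximal ideals, hence faithfully flat" — the same argument in ring-theoretic rather than topological dress.
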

\begin{proof}
Fix $V = \Spec(A_0)$ with $\widetilde{V} = \Spec(A)$ for a ring $A$ that is henselian along $I = \ker(A \to A_0)$.  It suffices to show that any \'etale map $W \to \widetilde{V}$ whose image contains $V \subset \widetilde{V}$ is surjective.  The complement of the image gives a closed subset of $\widetilde{V}$ that misses $V$, but such sets are empty as $I$ lies in the Jacobson radical of $A$ by assumption.
\end{proof}

Contrary to the \'etale topology, we can realise $i^*$ simply by evaluation in the pro-\'etale world:

\begin{lemma}
	\label{lem:closedimmpullback}
If $X$ is affine, then  $i^* F(V) = F(\widetilde{V})$ for any w-contractible $V \in Z^\aff_\proet$ and $F \in \Shv(X_\proet)$.
\end{lemma}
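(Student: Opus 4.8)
The strategy is to reduce the computation of $i^*F(V)$ to a colimit over \'etale neighbourhoods and then use the w-contractibility of $V$ together with Lemma~\ref{lem:henselisecovers} to collapse that colimit. Recall that $i^*F$ is by definition the sheafification on $Z_\proet$ of the presheaf $V' \mapsto \operatorname*{colim}_{(W, W \to X)} F(W)$, where the colimit runs over \'etale (more precisely, weakly \'etale / pro-\'etale) neighbourhoods of $V'$ in $X_\proet$, i.e.\ diagrams $V' \to i^{-1}W \gets W$ with $W \in X_\proet$. First I would note that, since $X$ is affine and $Z^\aff_\proet$ has the fully faithful left adjoint $V' \mapsto \widetilde{V'}$ of Lemma~\ref{lem:henselize} to $i^{-1}\colon X^\aff_\proet \to Z^\aff_\proet$, the diagrams indexing this colimit admit a cofinal subdiagram: by adjunction any neighbourhood $W$ of $V'$ receives a map from $\widetilde{V'}$ over $X$, so $\widetilde{V'}$ is an initial object of the (opposite) index category, at least on the level of pro-\'etale affines. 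Combined with Lemma~\ref{lem:baseproet}, which says $X^\aff_\proet$ generates, this shows that the presheaf $V' \mapsto F(\widetilde{V'})$ already computes the colimit presheaf underlying $i^*F$.

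Next I would show this presheaf is in fact a sheaf on the site of w-contractible objects, so that no further sheafification is needed to evaluate on such $V$. The point is Lemma~\ref{lem:henselisecovers}: the functor $V' \mapsto \widetilde{V'}$ carries surjections in $Z^\aff_\proet$ to surjections in $X^\aff_\proet$, so a cover of $V$ pulls back (via $\widetilde{(-)}$) to a cover of $\widetilde{V}$, and the sheaf property of $F$ on $X_\proet$ gives the sheaf property of $V' \mapsto F(\widetilde{V'})$. Alternatively — and this is cleaner — one uses that $V$ is w-contractible: every pro-\'etale cover of $V$ splits, so the sheafification map is an isomorphism on sections over $V$, i.e.\ $i^*F(V)$ equals the value of the colimit presheaf at $V$, which by the previous paragraph is $F(\widetilde{V})$. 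Here one should be slightly careful that $\widetilde{V}$ exists and lies in $X^\aff_\proet$; this is exactly the content of Lemma~\ref{lem:henselize} (via Definition~\ref{def:henselization}), using that $V \in Z^\aff_\proet$ and $X$ affine.

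The main obstacle is bookkeeping around the index category of the colimit defining $i^*$: one must check that "diagrams $V \to i^{-1}W \gets W$ with $W$ pro-\'etale affine over $X$" really do have $\widetilde{V} \to X$ as a cofinal (initial) object, and that this survives the passage from pro-\'etale affines to all of $X_\proet$ when forming the sheafification. The adjunction $\operatorname{Hom}_{X^\aff_\proet}(\widetilde{V}, W) \simeq \operatorname{Hom}_{Z^\aff_\proet}(V, i^{-1}W)$ of Lemma~\ref{lem:henselize} handles the first point directly; for the second, one invokes Lemma~\ref{lem:baseproet} (generation by $X^\aff_\proet$) and Lemma~\ref{lem:sheafcriterion} to reduce all site-theoretic statements to the affine pro-\'etale subcategory. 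Once these reductions are in place, the w-contractibility of $V$ does the rest: $\Gamma(V,-)$ commutes with the sheafification because all covers of $V$ split, so $i^*F(V)$ is computed naively as $\operatorname*{colim} F(W) = F(\widetilde{V})$.
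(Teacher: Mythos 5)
Your proposal is correct and follows the same route as the paper's (much terser) proof: identify the presheaf pullback with $V' \mapsto F(\widetilde{V'})$ via the adjunction of Lemma \ref{lem:henselize}, and then observe that sheafification is trivial on sections over a w-contractible object. The extra bookkeeping you supply (cofinality of $\widetilde{V}$ in the neighbourhood category, reduction to $Z^\aff_\proet$ via Lemmas \ref{lem:baseproet} and \ref{lem:sheafcriterion}) is exactly what the paper's word ``clearly'' is suppressing.
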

\begin{proof}
Clearly, $i^* F$ is the sheafification of $V \mapsto F(\widetilde{V})$ on $Z^\aff_\proet$. On w-contractible objects, sheafification is trivial, giving the result.
\end{proof}

\begin{remark}
	\label{rmk:AffineAnalog}
It follows from the affine analogue of proper base change, \cite{GabberAffineBC}, \cite{HuberAffineBC}, that for classical torsion sheaves $F$, $i^* F(V) = F(\widetilde{V})$ for \emph{all} $V\in Z^\aff_\proet$; in fact, the affine analogue of proper base change says precisely that
\[
R\Gamma(V,i^* F) = R\Gamma(\widetilde{V},F)\ .
\]
\end{remark}

As $i^*$ is realised by evaluation, it commutes with limits (which fails for $X_\et$, see Example \ref{ex:pullbacklimits}):

\begin{corollary}
	\label{cor:closedimmpullbackadjoints}
	The pullback $i^*:\Shv(X_\proet) \to \Shv(Z_\proet)$ commutes with all small limits and colimits.
\end{corollary}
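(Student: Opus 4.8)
The plan is to reduce to the affine case and then exploit the explicit formula for $i^*$ provided by Lemma \ref{lem:closedimmpullback}. First I would note that $i^*$, being the pullback along a morphism of topoi, is a left adjoint, hence automatically commutes with all small colimits; the only real content is commutation with small limits. For this, the key point is that the question is local on $X$: both $\Shv(X_\proet)$ and $\Shv(Z_\proet)$ glue along a Zariski cover of $X$ by affine opens (using Lemma \ref{lem:proettoposbc}), and $i^*$ is compatible with this restriction, so it suffices to treat the case where $X = \Spec(A)$ is affine.

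Now assume $X$ affine. By Proposition \ref{prop:proetlocallycontractible} the topos $\Shv(Z_\proet)$ has enough w-contractible coherent objects; concretely, every object of $Z_\proet$ admits a cover by w-contractible objects $V \in Z^\aff_\proet$ (Lemma \ref{lem:cwcontractiblecover}). Hence a limit $\lim_\alpha G_\alpha$ of a small diagram in $\Shv(Z_\proet)$ is determined by its values on such $V$, and sections over $V$ commute with limits: $(\lim_\alpha G_\alpha)(V) = \lim_\alpha G_\alpha(V)$. Given a small diagram $\{F_\alpha\}$ in $\Shv(X_\proet)$, I would compute, for $V \in Z^\aff_\proet$ w-contractible,
\[
(i^* \lim_\alpha F_\alpha)(V) = (\lim_\alpha F_\alpha)(\widetilde{V}) = \lim_\alpha F_\alpha(\widetilde{V}) = \lim_\alpha (i^* F_\alpha)(V) = \big(\lim_\alpha i^* F_\alpha\big)(V),
\]
using Lemma \ref{lem:closedimmpullback} for the first and third equalities (here $\widetilde{V}$ is the henselization from Lemma \ref{lem:henselize}, which makes sense since $X$ is affine), the fact that sections over any object commute with limits of sheaves for the second, and the w-contractibility of $V$ together with the previous remark for the last. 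Since both $i^* \lim_\alpha F_\alpha$ and $\lim_\alpha i^* F_\alpha$ are sheaves on $Z_\proet$ agreeing on the generating family of w-contractible affines, the natural comparison map between them is an isomorphism.

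The main obstacle is purely bookkeeping: one must check that the formation of $\widetilde{V}$ and the identification of Lemma \ref{lem:closedimmpullback} are compatible with restricting along a Zariski cover of $X$, so that the reduction to the affine case is legitimate; this follows from Lemma \ref{lem:henselisecovers} and the fact that henselization along an ideal is Zariski-local on the base, but it should be spelled out. Everything else is formal once one knows that w-contractible objects form a generating family on which sections are exact and limit-preserving.
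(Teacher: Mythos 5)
Your proof is correct and follows exactly the paper's argument: colimits are handled by adjunction, the limit claim is localized to the affine case, and both sides are compared by evaluating on w-contractible objects of $Z^\aff_\proet$ via the formula $i^*F(V) = F(\widetilde{V})$ of Lemma \ref{lem:closedimmpullback}. The paper's proof is a condensed version of precisely this computation.
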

\begin{proof}
	The claim about colimits is clear by adjunction. For limits, we must show that the natural map $i^* \lim_i F_i \to  \lim_i i^* F_i$ is an isomorphism for any small diagram $F:I \to \Shv(X_\proet)$. As this is a local statement, we may assume $X$ is affine. The claim now follows from Lemma \ref{lem:closedimmpullback} by evaluating either side on w-contractible objects in $Z^\aff_\proet$.
\end{proof}

The next example illustrates how $i^*$ fails to be limit-preserving on the \'etale site:

\begin{example}
	\label{ex:pullbacklimits}
Consider $X = \Spec(k[x])$ with $k$ an algebraically closed field, and set $i:Z \hookrightarrow X$ to be the closed immersion defined by $I = (x)$. Let $R = k[x]$, and set $S$ to be the strict henselisation of $R$ at $I$, so $S = \colim_i S_i$ where the colimit runs over all \'etale neighbourhoods $R \to S_i \to k$ of $Z \to X$. Now consider the projective system $\{\calO_X/I^n\}$ in $\Shv(X_\et)$. Then $i^*(\calO_X/I^n) = S/IS^n$, so $\lim i^*(\calO_X/I^n)$ is the $I$-adic completion of $S$. On the other hand, $i^*(\lim \calO_X/I^n) = \colim_i \lim S_i/I^n$ is the colimit of the $I$-adic completions of each $S_i$; one can check that this colimit is not $I$-adically complete.
\end{example}

\begin{remark}
	Corollary \ref{cor:closedimmpullbackadjoints} shows that $i^*$ has a right adjoint $i_*$ as well as a left-adjoint $i_\#$. The latter is described as the unique colimit-preserving functor sending $V \in Z^\aff_\proet$ to $\widetilde{V} \in X^\aff_\proet$. Note that $i_\#$ is not left-exact in general, so there is no easy formula computing $\R\Gamma(V,i^* F)$ in terms of $\R\Gamma(\widetilde{V},F)$ for $V \in Z_\proet$ (except in the torsion case, as in Remark \ref{rmk:AffineAnalog}).
\end{remark}

\begin{lemma}
	\label{lem:closedimmacyclic}
The pushforward $i_*:\Shv(Z_\proet) \to \Shv(X_\proet)$ is exact.
\end{lemma}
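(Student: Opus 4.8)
The plan is to show that $i_*$ preserves epimorphisms; since $i_*$ is a right adjoint (to $i^*$) it is automatically left exact, so this suffices. The statement is local on $X$, so I would first reduce to the case where $X = \Spec(A)$ is affine, $Z = \Spec(A/I)$. Then $U = X \setminus Z$ is a quasi-compact open, hence $V(I) \subset \Spec(A)$ is a \emph{constructible} closed subset. Since $\Shv(X_\proet)$ is locally weakly contractible (Proposition \ref{prop:proetlocallycontractible}), a map of sheaves on $X_\proet$ is an epimorphism if and only if it is surjective on sections over weakly contractible objects, and by Lemma \ref{lem:cwcontractiblecover} it suffices to test this on affines $W = \Spec(B) \in X^\aff_\proet$ with $B$ a w-contractible ring. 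For such $W$ one has $(i_* G)(W) = G(W \times_X Z) = G(\Spec(B/IB))$, so everything comes down to the following claim.

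\emph{Claim.} If $B$ is w-contractible and $V(I) \subset \Spec(A)$ is constructible, then $B/IB$ is w-contractible. Granting this, if $G' \to G$ is an epimorphism in $\Shv(Z_\proet)$, then $G'(\Spec(B/IB)) \to G(\Spec(B/IB))$ is surjective because $\Spec(B/IB)$ is weakly contractible, so $(i_*G')(W) \to (i_*G)(W)$ is surjective for every w-contractible $W = \Spec(B)$, and therefore $i_* G' \to i_* G$ is an epimorphism.

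To prove the claim: the ring $B$ is w-contractible, hence w-strictly local, so $\Spec(B)$ is w-local and $V(IB) \subset \Spec(B)$ is a closed subspace; by Lemma \ref{lem:closedincwlocal} it is again w-local, with $(\Spec(B/IB))^c = (\Spec B)^c \cap V(IB)$, and since the local rings of $B/IB$ at its closed points are quotients of the strictly henselian local rings of $B$ at closed points, Lemma \ref{lem:cwstrictlylocalcharacterize} shows $B/IB$ is w-strictly local. By Lemma \ref{lem:cwcontractiblecharacterize} it then remains to check that $\pi_0(\Spec(B/IB)) = (\Spec B)^c \cap V(IB)$ is extremally disconnected. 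Now $\Spec(B) \to \Spec(A)$ is a spectral map, hence continuous for the constructible topologies; restricting to closed points gives a continuous map $(\Spec B)^c \to \Spec(A)^{\cons}$ of profinite sets, and as $V(I)$ is constructible it is clopen in $\Spec(A)^{\cons}$, so its preimage $(\Spec B)^c \cap V(IB)$ is clopen in $(\Spec B)^c = \pi_0(\Spec B)$. Since $B$ is w-contractible, $\pi_0(\Spec B)$ is extremally disconnected (Lemma \ref{lem:cwcontractiblecharacterize}), and a clopen subset of an extremally disconnected profinite set is extremally disconnected; this proves the claim.

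The only real content is the claim, and within it the use of constructibility of $V(I)$ — equivalently the quasi-compactness of $U$ — which is exactly what forces $(\Spec B)^c \cap V(IB)$ to be clopen rather than merely closed in the extremally disconnected profinite set $\pi_0(\Spec B)$. This is the step I expect to be the crux: without the qcqs hypothesis on $U$ the closed subscheme $W \times_X Z$ of a w-contractible affine $W$ can fail to be w-contractible, and indeed $i_*$ need not be exact in that generality.
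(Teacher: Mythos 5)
Your proof is correct, but it takes a genuinely different route from the paper's. The paper fixes an arbitrary $Y \in X^\aff_\proet$ and a section $g \in (i_*G)(Y) = G(Y_0)$, chooses a cover $W \to Y_0$ in $Z_\proet$ lifting $g$, and then lifts $g$ over the cover $\widetilde{W} \sqcup Y|_U \to Y$ of $Y$, where $\widetilde{W}$ is the henselization of $W$ along its closed fibre (the left adjoint to $i^{-1}$ from Lemma \ref{lem:henselize}); quasicompactness of $U$ enters to make $Y|_U$ a legitimate member of the cover. You instead test epimorphisms only on w-contractible affines $\Spec(B)$ and prove the structural fact that $\Spec(B/IB)$ is again w-contractible when $V(I)$ is constructible, the crux being that $(\Spec B)^c \cap V(IB)$ is \emph{clopen} (not merely closed) in the extremally disconnected space $\pi_0(\Spec B)$ --- correctly circumventing the failure recorded in the remark after Lemma \ref{lem:finiteovercontractible}. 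Your argument buys a reusable stability statement for w-contractibility under constructible closed immersions (a relative of Lemma \ref{lem:finiteovercontractible}), at the cost of invoking the extremal-disconnectedness characterization of Lemma \ref{lem:cwcontractiblecharacterize}; the paper's argument is more hands-on and reuses the same cover $\widetilde{V_0} \sqcup V|_U \to V$ that drives the neighbouring Lemmas \ref{lem:closeddevissage} and \ref{lem:opencloseddevissage}. Both proofs use the quasicompactness of $U$ in an essential way, and you correctly identify this as the point where exactness would fail in general.
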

\begin{proof}
	Fix a surjection $F \to G$ in $\Shv(Z_\proet)$. We must show $i_* F \to i_* G$ is surjective. As the claim is local, we may work with affines. Fix $Y \in X^\aff_\proet$ and $g \in i_* G(Y) = G(Y_0)$. Then there is a cover $W \to Y_0$ in $Z_\proet$ and a section $f \in F(W)$ lifting $g$. The map $\widetilde{W} \sqcup Y|_U \to Y$ is then a cover by Lemma \ref{lem:henselize}; here we use that $U \subset X$ is quasicompact, so $Y|_U$ is also quasicompact. One has $i_* F(Y|_U) = F(\emptyset) = \ast$, and $i_* F(\widetilde{W}) = F(\widetilde{W}_0) = F(W)$, so $f$ gives a section in $i_* F(\widetilde{W} \sqcup Y|_U)$ lifting $g$.
\end{proof}

We can now show that $i_*$ and $j_!$ behave in the expected way.

\begin{lemma}
	\label{lem:closeddevissage}
For any pointed sheaf $F \in \Shv(X_\proet)$, the adjunction map $F \to i_* i^* F$ is surjective. 
\end{lemma}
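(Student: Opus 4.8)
The plan is to reduce to the affine case (the statement is local on $X$) and then exhibit, for each section of $i_*i^*F$ over a pro-\'etale affine, a pro-\'etale cover where the section lifts to $F$. So first I would assume $X = \Spec(A)$ is affine, with $Z \hookrightarrow X$ closed and $U \hookrightarrow X$ the quasicompact open complement. Recall that surjectivity of a map of sheaves on $X_\proet$ can be checked by lifting sections over a cover, and since $X_\proet$ has enough w-contractible (hence enough affine) objects, it suffices to test on $Y \in X^\aff_\proet$, and even to produce the lift after passing to a further pro-\'etale cover of $Y$.

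Next I would take $Y \in X^\aff_\proet$ and a section $s \in (i_*i^*F)(Y) = (i^*F)(Y_0)$, where $Y_0 = i^{-1}(Y) = Y \times_X Z$. By Lemma~\ref{lem:closedimmpullback}, $i^*F$ is the sheafification on $Z^\aff_\proet$ of $V \mapsto F(\widetilde V)$, so after replacing $Y_0$ by a pro-\'etale cover $W \to Y_0$ (which, since $Y$ is affine, can be taken in $Z^\aff_\proet$), the restriction of $s$ is represented by an actual section $f \in F(\widetilde W)$, where $\widetilde W \in X^\aff_\proet$ is the henselization of Lemma~\ref{lem:henselize} satisfying $i^{-1}(\widetilde W) = W$. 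Now I would invoke Lemma~\ref{lem:henselisecovers}: the functor $V \mapsto \widetilde V$ preserves surjections, so $\widetilde W \to Y$ has image containing $Y_0$; adding the quasicompact open piece $Y|_U = Y \times_X U$, the map
\[
\widetilde W \sqcup Y|_U \longrightarrow Y
\]
is a surjection in $X_\proet$, using that $U \subset X$ is quasicompact so $Y|_U$ is quasicompact and the two pieces together are surjective on $X$. This is the cover over which I want to lift $s$.

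Finally I would check that $s$ lifts over this cover. Over the component $\widetilde W$ we have the section $f \in F(\widetilde W)$, and its image in $(i_*i^*F)(\widetilde W) = (i^*F)(\widetilde{W}_0) = (i^*F)(W)$ agrees (after the identification $i^{-1}\widetilde W \simeq W$) with the restriction of $s$, by construction of $f$. Over the component $Y|_U$, we have $i^{-1}(Y|_U) = \emptyset$, so $(i_*i^*F)(Y|_U) = (i^*F)(\emptyset) = \ast$ — the terminal object, since $F$ (hence $i^*F$) is pointed — so the restriction of $s$ there lifts trivially (to the point section of $F(Y|_U)$, or to anything since $F(Y|_U)$ is nonempty as $F$ is pointed). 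Thus $s$ lifts to a section of $F$ over $\widetilde W \sqcup Y|_U$, which shows $F \to i_*i^*F$ is surjective. The only subtle point — and the step I expect to require the most care — is the passage from "$s$ is a section of the sheafification $i^*F$ over $Y_0$" to "$s$ is represented by a genuine section of $F(\widetilde W)$ after a pro-\'etale cover $W \to Y_0$": this uses Lemma~\ref{lem:closedimmpullback} together with the fact that covers of $Y_0$ in $Z_\proet$ can be refined by affine ones (Lemma~\ref{lem:baseproet}), and that henselization $W \mapsto \widetilde W$ is compatible with this refinement, together with quasicompactness of $U$ to keep everything within $X^\aff_\proet$. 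None of these steps involves real computation; it is purely a matter of assembling the adjunctions and covering statements already established.
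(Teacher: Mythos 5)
Your proof is correct and follows essentially the same route as the paper: reduce to $X$ affine and lift a section of $i_*i^*F$ over the cover $\widetilde{W}\sqcup Y|_U\to Y$, using pointedness to handle the open piece. The only difference is that you pass to a further cover $W\to Y_0$ to represent the section of the sheafification $i^*F$ by an honest element of $F(\widetilde W)$, whereas the paper directly writes $i_*i^*F(V)=F(\widetilde{V_0})$; your extra refinement step is harmless (arguably more careful) and does not change the argument.
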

\begin{proof}
Since the statement is local, we may assume $X$ is affine. Fix $V \in X^\aff_\proet$. Then $i_* i^* F(V) = i^*F(V_0) = F(\widetilde{V_0})$. Now observe that $\widetilde{V_0} \sqcup V|_U \to V$ is a pro-\'etale cover. Since $F(V|_U) \neq \emptyset$ (as $F$ is pointed), one easily checks that any section in $i_* i^* F(V)$ lifts to a section of $F$ over $\widetilde{V_0} \sqcup V|_U$, which proves surjectivity. 
\end{proof}

\begin{remark}
	Lemma \ref{lem:closeddevissage} needs $F$ to be pointed. For a counterexample without this hypothesis,  take: $X = U \sqcup Z$ a disjoint union of two non-empty schemes $U$ and $Z$, and $F = i_! \Z$, where $i:Z \to X$  is the clopen immersion with complement $j:U \to X$.
\end{remark}

\begin{lemma}
	\label{lem:opencloseddevissage}
	For any pointed sheaf $F \in \Shv(X_\proet)$, we have $j_!j^* F \simeq \ker(F \to i_* i^* F)$.
\end{lemma}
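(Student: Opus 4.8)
The statement to prove is that for a pointed sheaf $F \in \Shv(X_\proet)$, the natural map $j_!j^*F \to F$ identifies $j_!j^*F$ with $\ker(F \to i_*i^*F)$. The strategy is to check this as a statement about sheaves directly on objects of the site, using the explicit description of $i^*$ by evaluation (Lemma \ref{lem:closedimmpullback}) and the local structure afforded by henselizations (Lemma \ref{lem:henselize}). Since everything is local, I would first reduce to the case where $X$ is affine, so that the functor $V \mapsto \widetilde V$ from $Z^\aff_\proet$ to $X^\aff_\proet$ is available.

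\textbf{Key steps.} First, recall the standard adjunction facts: $j^*$ is restriction along $U \hookrightarrow X$, its left adjoint $j_!$ extends a sheaf on $U_\proet$ by the initial object (empty sheaf) away from $U$, and $j_! j^* F$ is the sheafification of the presheaf sending $V \in X_\proet$ to $F(V)$ if $V$ factors through $U$ and to the initial/basepoint otherwise. Since $F$ is pointed, we have a canonical map $j_!j^*F \to F$ and a composite $j_!j^*F \to F \to i_*i^*F$; the second map is zero because $i^* j_! = 0$ (as $i^{-1}(V|_U) = \emptyset$ for the relevant test objects), so $j_!j^*F$ lands in the kernel $K := \ker(F \to i_*i^*F)$. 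It remains to see this map $j_!j^*F \to K$ is an isomorphism, which can be checked on a generating family of objects, and by Lemma \ref{lem:baseproet} it suffices to evaluate on $V \in X^\aff_\proet$, and in fact (since both sides are sheaves) on w-contractible $V$. For such $V$, by Lemma \ref{lem:closedimmpullback} we have $(i_*i^*F)(V) = i^*F(V_0) = F(\widetilde{V_0})$, where $V_0 = i^{-1}(V)$. Using the pro-\'etale cover $\widetilde{V_0} \sqcup V|_U \to V$ (a cover by Lemma \ref{lem:henselize}, using quasicompactness of $U$) together with the sheaf axiom for $F$, one computes that $K(V) = \ker\big(F(V) \to F(\widetilde{V_0})\big)$ equals the sections of $F$ over $V$ that restrict to the basepoint over $\widetilde{V_0}$; the sheaf axiom for the above cover then shows such sections are exactly those coming from $F(V|_U)$ with the matching condition on the overlap $\widetilde{V_0} \times_V V|_U$. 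But this overlap is empty (it sits over both $U$ and $Z$), so $K(V) = F(V|_U)/(\text{basepoint identification})$, which is precisely $(j_!j^*F)(V)$ on w-contractibles — no sheafification needed there. Comparing the two descriptions gives the isomorphism.

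\textbf{Main obstacle.} The delicate point is the bookkeeping with basepoints and the "empty intersection" argument: one must be careful that $\widetilde{V_0} \times_V V|_U = \emptyset$ (so that the gluing data for the cover $\widetilde{V_0} \sqcup V|_U \to V$ imposes no compatibility constraint beyond "the $\widetilde{V_0}$-component is the basepoint"), and that evaluating $j_!j^*F$ on a w-contractible $V$ really gives $F(V|_U)$ up to the basepoint without further sheafification — this uses that $V|_U \to V$ together with $\emptyset \to V$ generate the relevant covering sieve and that $V$ is w-contractible so higher sheafification effects vanish. Everything else is a routine diagram chase using the sheaf sequence for a two-element pro-\'etale cover.
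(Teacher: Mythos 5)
Your overall strategy (reduce to $X$ affine, use the cover $\widetilde{V_0}\sqcup V|_U\to V$, and evaluate via $i_*i^*F(V)=F(\widetilde{V_0})$) is the same as the paper's, but there is a fatal error at the key step: the overlap $\widetilde{V_0}\times_V V|_U$ is \emph{not} empty. You assert that it "sits over both $U$ and $Z$", but $\widetilde{V_0}$ does not sit over $Z$: it is the henselization of $V$ along $V_0$, a pro-\'etale $X$-scheme whose pullback $i^{-1}(\widetilde{V_0})$ equals $V_0$ but which has many points over $U$. You appear to have conflated $\widetilde{V_0}$ with $V_0$ itself. The overlap is the "punctured henselization" $\widetilde{V_0}|_U$, which is nonempty in general, and the gluing condition it imposes is exactly the content of the lemma.

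Consequently your conclusion that $K(V)=F(V|_U)$ (up to the basepoint) and that this agrees with $j_!j^*F(V)$ is false. Concretely, take $X=V=\Spec(R)$ with $R$ a strictly henselian discrete valuation ring, $Z$ the closed point, $U$ the generic point, and $F=\underline{\Z}$ pointed by $0$: then $\widetilde{V_0}=V$, so $\ker(F(V)\to F(\widetilde{V_0}))=\ker(\Z\to\Z)=0$, whereas $F(V|_U)=\Z$; and indeed $j_!j^*\underline{\Z}$ has vanishing sections over $V$. The correct argument, as in the paper, uses the sheaf axiom for the cover $\widetilde{V_0}\sqcup V|_U\to V$ to exhibit $F(V)$ as the fibre product of $F(V|_U)$ and $F(\widetilde{V_0})$ over $F(\widetilde{V_0}|_U)$, giving $\ker(F(V)\to F(\widetilde{V_0}))\simeq\ker\bigl(F(V|_U)\to F(\widetilde{V_0}|_U)\bigr)$, and then identifies $j_!j^*F$ with the sheafification of the presheaf $V\mapsto\ker\bigl(F(V|_U)\to F(\widetilde{V_0}|_U)\bigr)$; the condition of vanishing on $\widetilde{V_0}|_U$ cannot be dropped. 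Relatedly, your appeal to "no sheafification needed on w-contractibles" for the naive extension-by-basepoint presheaf is exactly where the discrepancy would surface: that presheaf assigns the basepoint to any connected $V$ meeting $Z$, while the sheaf $j_!j^*F$ need not.
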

\begin{proof}
	We may assume $X$ is affine. For any $V \in X^\aff_\proet$, we first observe that the sheaf axiom for the cover $\widetilde{V_0} \sqcup V|_U \to V$ gives a fibre square of pointed sets
	\[ \xymatrix{ F(V) \ar[r] \ar[d] & F(V|_U) \ar[d] \\
		 	F(\widetilde{V_0}) \ar[r] & F(\widetilde{V_0}|_U). }\]
			In particular, $\ker(F(V) \to F(\widetilde{V_0})) \simeq \ker(F(V|_U) \to F(\widetilde{V_0}|_U))$. Now $i_* i^* F(V) = F(\widetilde{V_0})$, so we must show that $j_! j^* F(U) = \ker(F(V) \to F(\widetilde{V_0})) \simeq \ker(F(V|_U) \to F(\widetilde{V_0}|_U))$. By definition, $j_! j^* F$ is the sheaf associated to the presheaf $F'$ defined via: $F'(V) = F(V)$ if $V \to X$ factors through $U$, and $F'(V) = 0$ otherwise. The sheaf axiom for the cover $\widetilde{V_0} \sqcup V|_U \to V$ then shows that $j_! j^* F$ is also the sheaf associated to the presheaf $F''$ given by $F''(V) = \ker(F(V|_U) \to F(\widetilde{V_0}|_U))$, which proves the claim.
\end{proof}

\begin{lemma}
	\label{lem:opencloseddevissagederived}
One has the following identification of functors at the level of unbounded derived categories:
\begin{enumerate}
	\item $i^* i_* \simeq \id$ and $j^* j_! \simeq j^* j_* \simeq \id$.	
	\item $j^* i_* \simeq 0$, and $i^* j_! \simeq 0$.
\end{enumerate}
\end{lemma}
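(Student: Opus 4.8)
The plan is to deduce all five identities from their underived counterparts together with the exactness of every functor in sight except $j_*$, so that the passage to unbounded derived categories becomes purely formal. Since the assertions are local on $X$, I would first reduce to the case $X = \Spec(A)$ affine, which makes Lemmas \ref{lem:henselize} and \ref{lem:closedimmpullback} available for the closed immersion $i$.

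At the level of abelian sheaves, the isomorphisms $j^* j_! \simeq \id$, $j^* j_* \simeq \id$ and $j^* i_* \simeq 0$ are immediate from the descriptions of these functors for the open immersion $j$ (equivalently, from the identification $\Shv(U_\proet) \simeq \Shv(X_\proet)_{/U}$ of Lemma \ref{lem:proettoposbc}): for $V \in U_\proet$ one has $V \times_X U = V$ and $V \times_X Z = \emptyset$, so $j^* j_* G(V) = G(V)$ and $j^* i_* G(V) = G(\emptyset) = 0$, while $j^* j_! G = G$ since every object of $U_\proet$ already lies over $U$. The isomorphism $i^* i_* \simeq \id$ I would check by evaluating the counit $i^* i_* G \to G$ on a w-contractible $V \in Z^\aff_\proet$ (these detect isomorphisms by the local weak contractibility of $\Shv(Z_\proet)$, Proposition \ref{prop:proetlocallycontractible}): by Lemma \ref{lem:closedimmpullback} one has $i^* i_* G(V) = i_* G(\widetilde V) = G(i^{-1}\widetilde V)$, and this is identified with $G(V)$ via the adjunction isomorphism $i^{-1}(\widetilde V) \simeq V$ of Lemma \ref{lem:henselize}. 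Finally $i^* j_! \simeq 0$ follows formally from $j^* i_* \simeq 0$: for any $H \in \Ab(Z_\proet)$ one has $\Hom(i^* j_! G, H) = \Hom(j_! G, i_* H) = \Hom(G, j^* i_* H) = 0$, and taking $H = i^* j_! G$ forces $i^* j_! G = 0$.

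Next I would pass to derived categories. The functors $i^*$, $i_*$, $j^*$, $j_!$ are all exact on the relevant abelian categories of sheaves — $i^*$ by Corollary \ref{cor:closedimmpullbackadjoints}, $i_*$ by Lemma \ref{lem:closedimmacyclic}, and $j^*$, $j_!$ tautologically — so each extends to the unbounded derived category by termwise application, and these extensions compute the total derived functors. Applying the underived isomorphisms above termwise therefore yields $i^* i_* \simeq \id$, $j^* j_! \simeq \id$, $j^* i_* \simeq 0$ and $i^* j_! \simeq 0$ on the unbounded derived categories all at once. The one remaining identity, $j^* \circ \R j_* \simeq \id$, involves the possibly non-exact functor $j_*$; here I would use that $j^*$ is exact: for $K \in D(U_\proet)$ represented by a $K$-injective complex $I$, the complex $j_* I$ represents $\R j_* K$, and applying the exact functor $j^*$ termwise gives $j^* j_* I \simeq I$ by the underived statement, hence $j^* \R j_* K \simeq K$.

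I do not anticipate a real obstacle: all the substance already lives in the earlier lemmas — the computation of $i^*$ on w-contractibles (Lemma \ref{lem:closedimmpullback}) and the henselization adjunction (Lemma \ref{lem:henselize}) — and the only mild subtlety is the bookkeeping needed to see that $j^* \circ \R j_*$ is genuinely computed termwise despite $j_*$ not being exact, which is taken care of by the exactness of $j^*$.
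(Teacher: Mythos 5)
Your proof is correct, and for part (1) it is essentially the paper's argument: both use the exactness of $i^*$ and $i_*$ to reduce $i^*i_* \simeq \id$ to the abelian level, where it comes down to $\widetilde{V}_0 \simeq V$ (Lemma \ref{lem:henselize}), and both treat $j^*j_! \simeq j^*j_* \simeq \id$ as topos generalities about the subtopos $\Shv(U_\proet) \simeq \Shv(X_\proet)_{/U}$. Where you diverge is part (2): the paper derives the short exact sequence of Lemma \ref{lem:opencloseddevissage} (together with Lemma \ref{lem:closeddevissage}) into an exact triangle $j_!j^* \to \id \to i_*i^*$ of endofunctors of $D(X_\proet)$ and then obtains $j^*i_* \simeq 0$ and $i^*j_! \simeq 0$ by applying $j^*$ and $i^*$ to that triangle and invoking part (1). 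You instead compute $j^*i_*G(V) = G(V\times_X Z) = G(\emptyset) = 0$ directly and deduce $i^*j_! \simeq 0$ by the adjunction $\Hom(i^*j_!G,H) = \Hom(G, j^*i_*H) = 0$. Both routes are valid; yours has the small advantage of not needing the devissage triangle at this point (so the logical order could even be reversed, recovering the triangle as a consequence), while the paper's packaging makes the recollement structure the organizing principle. The one step you rightly flag as needing care --- that $j^*\circ\R j_*$ is computed on a K-injective resolution and $j^*$ is exact --- is handled correctly.
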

\begin{proof}
	By deriving Lemma \ref{lem:opencloseddevissage}, there is an exact triangle $j_! j^*  \to \id \to i_* i^*$ of endofunctors on $D(X_\proet)$. Then (2) follows from (1) by applying $i^*$ and $j^*$ to this triangle. The second part of (1) is a general fact about monomorphisms $U \hookrightarrow X$ in a topos. For $i^* i_* \simeq \id$, we use that both functors are exact to reduce to the claim at the level of abelian categories, where it follows from $\widetilde{V}_0 \simeq V$ for any $V \in Z^\aff_\proet$.
\end{proof}

\begin{lemma}
	\label{lem:shriekpushforwardcont}
The pushforward $j_!:D(U_\proet) \to D(X_\proet)$ commutes with homotopy-limits.
\end{lemma}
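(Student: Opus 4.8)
The plan is to deduce this from the recollement triangle
\[
j_{!}j^{*}\longrightarrow \id\longrightarrow i_{*}i^{*}
\]
of endofunctors of $D(X_\proet)$ obtained by deriving Lemma \ref{lem:opencloseddevissage} (cf.\ the proof of Lemma \ref{lem:opencloseddevissagederived}), together with the observation that each of $j^{*}$, $i^{*}$ and $i_{*}$ commutes with homotopy-limits. Here $j^{*}$ is exact and admits a left adjoint, namely $j_{!}$, so it commutes with all homotopy-limits; $i^{*}$ is exact and commutes with all small limits of sheaves by Corollary \ref{cor:closedimmpullbackadjoints}, hence (being an exact, product-preserving functor) commutes with homotopy-limits; and $i_{*}$ is exact by Lemma \ref{lem:closedimmacyclic} and is right adjoint to $i^{*}$, so it too commutes with homotopy-limits. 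I will also use $j^{*}j_{!}\simeq\id$ from Lemma \ref{lem:opencloseddevissagederived}.

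Now fix a small diagram $L\colon I\to D(U_\proet)$ and set $K:=j_{!}\circ L\colon I\to D(X_\proet)$; since $j^{*}j_{!}\simeq\id$ we have $j^{*}\circ K\simeq L$, so $L$ is the $j^{*}$-image of a diagram on $X_\proet$. Because $j^{*}$ commutes with homotopy-limits, $j_{!}\,\R\lim_{I}L\simeq j_{!}j^{*}\,\R\lim_{I}K$. Evaluating the recollement triangle at $\R\lim_{I}K$ and using that $i^{*}$ and $i_{*}$ commute with homotopy-limits identifies $i_{*}i^{*}\,\R\lim_{I}K$ with $\R\lim_{I}(i_{*}i^{*}\circ K)$; since $\operatorname{fib}$ of a map of diagrams commutes with homotopy-limits (limits commute with limits), the triangle then yields
\[
j_{!}j^{*}\,\R\lim_{I}K\;\simeq\;\R\lim_{I}\operatorname{fib}\!\bigl(K\to i_{*}i^{*}\circ K\bigr)\;\simeq\;\R\lim_{I}\,j_{!}j^{*}\circ K\;\simeq\;\R\lim_{I}\,j_{!}\circ L,
\]
the middle step being the recollement triangle once more and the last using $j^{*}\circ K\simeq L$. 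Composing the two displayed equivalences gives $j_{!}\,\R\lim_{I}L\simeq\R\lim_{I}j_{!}L$, as desired.

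The argument is entirely formal, so I do not anticipate a real obstacle. The only points needing a word of justification are the upgrades of ``commutes with small limits of sheaves'' to ``commutes with homotopy-limits'' for $j^{*}$ and $i^{*}$ — which follow from exactness together with the standard fact that an exact functor between stable $\infty$-categories preserving products preserves all homotopy-limits, reducing a general index category to products and fibres — and the identification $i_{*}i^{*}\,\R\lim_{I}K\simeq\R\lim_{I}(i_{*}i^{*}\circ K)$, which merely combines the preservation statements already noted for $i^{*}$ and $i_{*}$.
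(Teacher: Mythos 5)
Your proof is correct and follows essentially the same route as the paper's: both rest on the recollement triangle $j_!j^{*}\to\id\to i_{*}i^{*}$ together with the preservation of homotopy-limits by $i^{*}$ and $i_{*}$. The paper is marginally more direct — it applies the triangle to $j_*K$ (using $j^{*}j_{*}\simeq\id$) to write $j_!K$ as the fibre of $j_*K\to i_*i^*j_*K$, whose target visibly commutes with homotopy-limits — whereas you apply it to $j_!K$ and use $j^{*}j_{!}\simeq\id$ instead, but the content is identical.
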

\begin{proof}
	By Lemma \ref{lem:opencloseddevissage}, for any $K \in D(U_\proet)$, we have the following exact triangle:
\[ j_!K \to j_*K \to i_* i^* j_* K.\]
Since $j_*$, $i^*$ and $i_*$ all commute with homotopy-limits, the same is true for $j_!$.
\end{proof}

\begin{remark}
	One can show a more precise result than Lemma \ref{lem:shriekpushforwardcont}. Namely, the pushforward  $j_!:D(U_\proet) \to D(X_\proet)$ admits a left-adjoint $j^\#:D(X_\proet) \to D(U_\proet)$ which is defined at the level of free abelian sheaves as follows: given $V \in X_\proet$, we have $j^\#(\Z_V) = \cok(\Z_{\widetilde{V_0}|_U} \to \Z_{V|_U}) \simeq \cok(\Z_{\widetilde{V_0}} \to \Z_V)$.
\end{remark}

We record some special cases of the proper base change theorem:

\begin{lemma}
	\label{lem:pbcopenclosed}	
Consider the diagram
\[ \xymatrix{ f^{-1} Z \ar[r]^-i \ar[d]^-f & Y \ar[d]^-f & f^{-1} U \ar[d]^-f \ar[l]^-j \\
		Z \ar[r]^-i & X & U \ar[l]^-j } \]
For any $K \in D(U_\proet)$ and $L \in D(Z_\proet)$, we have
\[ i_* f^* L \simeq f^* i_* L \quad \mathrm{and} \quad j_! f^* K \simeq f^* j_! K.\]
\end{lemma}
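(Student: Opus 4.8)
The plan is to prove both isomorphisms by reducing, via the devissage triangles of Lemma \ref{lem:opencloseddevissage} and Lemma \ref{lem:opencloseddevissagederived}, to statements that are either purely formal or already recorded. Both assertions are local on $X$, so I may assume $X$ is affine; then $Y \to X$ being an arbitrary map of schemes, I will further want $Y$ affine, which is harmless since the claims are local on $Y$ as well (pullback and the functors $i_*$, $j_!$ all commute with restriction to opens, using Lemma \ref{lem:proettoposbc}).

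First I would treat $i_* f^* L \simeq f^* i_* L$. The point is that, by Corollary \ref{cor:closedimmpullbackadjoints}, the functor $i^*$ on the pro-\'etale site is computed by evaluation on w-contractible objects via henselization (Lemma \ref{lem:closedimmpullback}), and in particular it is exact and commutes with all limits and colimits; the same holds for the $f^{-1} Z \hookrightarrow Y$ version. So it suffices to produce a natural comparison map and check it is an isomorphism after evaluating on a w-contractible $V \in (f^{-1}Z)^\aff_\proet$. Concretely, $f^* i_* L(V) = \colim i_* L(W) = \colim L(\widetilde{W}_0)$ over w-contractible $W \in X^\aff_\proet$ receiving $V$, where $\widetilde{(-)}$ denotes henselization along $Z \hookrightarrow X$ and $(-)_0$ restriction to $Z$; on the other hand $i_* f^* L(V) = f^* L(\widetilde{V}') $ where $\widetilde{V}'$ is the henselization of $V$ along $f^{-1}Z \hookrightarrow Y$, which equals $\colim L((\widetilde{V}')_0) = \colim L(W_0)$ for the same cofinal system. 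The base-change identity $(\widetilde{V}')_0 \simeq V \simeq \widetilde{(\cdot)}_0$ and compatibility of henselization with the map $f$ (which follows from the functoriality in Definition \ref{def:henselization} and the fact that $i^{-1}$ on the two sites are intertwined by $f$) then identify the two colimits. I should phrase this cleanly by first observing $f^{-1} \circ i^{-1} = i^{-1} \circ f^{-1}$ as functors on the relevant categories of pro-\'etale affines, and then using that the left adjoints (henselization) are therefore also intertwined.

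For $j_! f^* K \simeq f^* j_! K$, I would use Lemma \ref{lem:opencloseddevissage} (in derived form, Lemma \ref{lem:opencloseddevissagederived}): there is an exact triangle $j_! j^* G \to G \to i_* i^* G$ functorially in $G \in D(X_\proet)$, and similarly on $Y$. Apply $f^*$ to the triangle $j_! K \to j_* K \to i_* i^* j_* K$ on $X$ obtained from $G = j_* K$ (using $j^* j_* \simeq \id$): since $f^*$ is exact, $f^* j_! K$ sits in a triangle with $f^* j_* K$ and $f^* i_* i^* j_* K$. Now $f^* j_* \simeq j_* f^*$ is a special case of smooth (even \'etale) base change for pushforward along the open immersion $j$ — here I can either cite the standard étale base change and transport it along $\nu$, or, more in the spirit of this section, deduce it from the description of $j_*$ by evaluation; and $f^* i_* i^* j_* K \simeq i_* f^* i^* j_* K \simeq i_* i^* f^* j_* K$ by the already-proven first half together with $f^* i^* \simeq i^* f^*$ (which is $f^{-1} i^{-1} = i^{-1} f^{-1}$ again). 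Comparing with the triangle $j_! (f^* K) \to j_* (f^* K) \to i_* i^* j_* (f^* K)$ on $Y$ and using $f^* j_* K \simeq j_* f^* K$ identifies the third terms, hence the first terms, by the five lemma / uniqueness of cones, and one checks the identification is induced by the natural comparison map $f^* j_! K \to j_! f^* K$.

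The main obstacle I anticipate is the open-immersion base change input $f^* j_* \simeq j_* f^*$ for pushforward along $j$ in the unbounded derived category: unlike $i^*$, the functor $j_*$ is not exact and does not commute with all limits in an obvious way, so I must be careful about whether I need it at the level of sheaves (where it is the easy statement that $j_* $ on a presheaf is just extension by the sections, compatible with pullback since $f^{-1}j^{-1} = j^{-1}f^{-1}$) or at the derived level (where I would invoke Lemma \ref{lem:funcpushforward} for the bounded-below case and then bootstrap using repleteness and the description of $\nu^*$ on unbounded complexes in Remark \ref{rmk:unboundedpullback}, or simply restrict the stated lemma to the generality actually needed). I will structure the writeup so that the sheaf-level identities $i_* f^* \simeq f^* i_*$, $j_! f^* \simeq f^* j_!$ are established first by the evaluation arguments above, and the derived statements then follow because $i_*$, $i^*$, $j_!$ (via Lemma \ref{lem:closedimmacyclic}, Corollary \ref{cor:closedimmpullbackadjoints}, Lemma \ref{lem:opencloseddevissage}) and $f^*$ are all exact, so they pass termwise to complexes.
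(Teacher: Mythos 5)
There are genuine gaps in both halves. For the first isomorphism, checking the comparison map after evaluation on w-contractible objects of $(f^{-1}Z)^\aff_\proet$ only verifies that the two sides agree after applying $i^*$; since both sides are sheaves on $Y_\proet$, you must also control their restriction to the open complement $f^{-1}U$. For $i_* f^* L$ this restriction vanishes tautologically, but for $f^* i_* L$ it is precisely the point that needs an argument: $j^* f^* i_* L \simeq f^* j^* i_* L \simeq 0$, using that pullbacks commute (the square of sites commutes) and $j^* i_* \simeq 0$ from Lemma \ref{lem:opencloseddevissagederived}. Once you write this down, the entire henselization computation becomes superfluous: the recollement triangle $j_! j^* \to \id \to i_* i^*$ on $Y$ applied to $G = f^* i_* L$ gives $G \simeq i_* i^* G$, and $i^* G \simeq i^* f^* i_* L \simeq f^* i^* i_* L \simeq f^* L$ formally. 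This is the paper's proof, and it never needs the explicit description of $i^*$ via henselizations.

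The second half is worse: it rests on $f^* j_* K \simeq j_* f^* K$, which is false for a general morphism $f$. Take $f$ to be the inclusion of a point of $Z$: then $f^{-1}U = \emptyset$, so $j_* f^* K$ is trivial, while $f^* j_* K$ computes sections of $K$ over a punctured henselian neighbourhood and is generally nonzero. You correctly flag this as the main obstacle, but neither proposed fix works: smooth base change has the smoothness on the wrong leg (here the pullback is along the arbitrary map $f$, not along a smooth map), and Lemma \ref{lem:funcpushforward} concerns commuting $\nu^*$ with pushforward on a fixed scheme, not base change along $f$. The correct route again avoids $j_*$ entirely: apply the triangle $j_! j^* \to \id \to i_* i^*$ to $G = f^* j_! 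K$ on $Y$, note $i^* G \simeq f^* i^* j_! K \simeq 0$, hence $G \simeq j_! j^* G \simeq j_! f^* j^* j_! K \simeq j_! f^* K$.
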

\begin{proof}
Note that $i^* f^* i_* L \simeq f^* i^* i_* L \simeq f^* L$. Hence, using the sequence $j_! j^* \to \id \to i_* i^*$ of functors, to prove the claim for $L$, it suffices to show $j^* f^* i_* L \simeq 0$; this is clear as $j^* f^* i_* \simeq f^* j^* i_* \simeq 0$, since $j^* i_* \simeq 0$. The second claim follows by an analogous argument using $i^* j_! \simeq 0$.
\end{proof}

We end by noting that $i_*$ also admits a right adjoint:

\begin{lemma}
	\label{lem:shriekpullbackclosed}
The functor $i_*:D(Z_\proet) \to D(X_\proet)$ admits a right adjoint $i^!:D(X_\proet) \to D(Z_\proet)$. For any $K \in D(X_\proet)$, there is an exact triangle
\[ i_* i^! K \to K \to j_* j^* K.\]
\end{lemma}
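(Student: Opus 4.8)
The plan is to construct $i^!$ directly by a universal-type formula and read off the exact triangle essentially for free, rather than invoking the adjoint functor theorem. (That route is also available: since $\Shv(X_\proet)$ and $\Shv(Z_\proet)$ are locally weakly contractible, hence replete, their derived $\infty$-categories are presentable and compactly generated, and one checks that $i_*$ preserves all small colimits — it preserves shifts and cofibers by exactness of $i_*$ on abelian sheaves (Lemma \ref{lem:closedimmacyclic}), and it preserves coproducts because these are detected by the conservative pair $(j^*,i^*)$, while $j^* i_* \simeq 0$ and $i^*$, $i^* i_*$ commute with coproducts. So $i_*$ has a right adjoint $i^!$.)

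Concretely, I would work in the stable $\infty$-category $D(X_\proet)$ and consider the unit $\eta:\id \to j_* j^*$ of the adjunction $(j^*,j_*)$. Let $R:D(X_\proet)\to D(X_\proet)$ be the functor $K\mapsto \mathrm{fib}(\eta_K: K\to j_* j^* K)$, which is well-defined and functorial because fibers of natural transformations are functorial in a stable $\infty$-category. First I would check that $j^* RK\simeq 0$: applying $j^*$ and using $j^* j_*\simeq \id$ together with the triangle identity for $(j^*,j_*)$ shows that $j^*\eta_K$ is an equivalence, so $j^* RK=\mathrm{fib}(j^*\eta_K)\simeq 0$. The recollement triangle $j_! j^*\to \id\to i_* i^*$ on $D(X_\proet)$ (obtained by deriving Lemma \ref{lem:opencloseddevissage}, cf. Lemma \ref{lem:opencloseddevissagederived}) then forces $RK\simeq i_* i^* RK$, i.e. $R$ takes values in the essential image of $i_*$. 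I then set $i^! := i^*\circ R$, so that $i_* i^! K\simeq RK = \mathrm{fib}(K\to j_* j^* K)$; in particular the desired exact triangle $i_* i^! K\to K\to j_* j^* K$ is literally the defining triangle of $RK$, and holds functorially with no further work.

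It remains to verify the adjunction $\Hom_{D(X_\proet)}(i_* G,K)\simeq \Hom_{D(Z_\proet)}(G,i^! K)$, naturally in $G\in D(Z_\proet)$ and $K\in D(X_\proet)$. Applying $\Map(i_* G,-)$ to the triangle $RK\to K\to j_* j^* K$ and using $\Map(i_* G, j_* j^* K)\simeq \Map(j^* i_* G, j^* K)\simeq \ast$ — by $j^* i_*\simeq 0$ (Lemma \ref{lem:opencloseddevissagederived}) — gives an equivalence $\Map(i_* G, RK)\xrightarrow{\ \sim\ }\Map(i_* G,K)$. On the other hand $RK\simeq i_* i^! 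K$, and $i_*$ is fully faithful since the counit $i^* i_*\to \id$ is an equivalence by Lemma \ref{lem:opencloseddevissagederived}, so $\Map(i_* G, RK)\simeq \Map(i_* G, i_* i^! K)\simeq \Map(G,i^! K)$. Chaining these natural equivalences proves the claim.

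I do not expect a serious obstacle: once the full recollement $(j_!,j^*,j_*,i^*,i_*)$ from the preceding lemmas is in place, existence of $i^!$ and the gluing triangle are formal. The two points needing a little care are (i) making the fiber $\mathrm{fib}(K\to j_* j^* K)$ genuinely functorial, which is handled by phrasing things $\infty$-categorically as elsewhere in the paper, and (ii) recording that the isomorphism $i^* i_*\simeq \id$ of Lemma \ref{lem:opencloseddevissagederived} is realized by the counit, so that $i_*$ is fully faithful — standard for closed immersions and visible from the construction there. If one wishes to avoid functoriality of fibers altogether, one obtains $i^!$ from the adjoint functor theorem as above and then identifies $i_* i^! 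K$ with $\mathrm{fib}(K\to j_* j^* K)$ by the same Hom-computation, using that $(j^*,i^!)$ is conservative: if $j^* M\simeq 0$ then $M\simeq i_* i^* M\simeq i_* i^! M$, so $i^! M\simeq 0$ forces $M\simeq 0$.
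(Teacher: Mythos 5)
Your proof is correct, and it rests on the same two pillars as the paper's ($j^* i_* \simeq 0$ and $i^* i_* \simeq \id$ from Lemma \ref{lem:opencloseddevissagederived}, plus the recollement triangle $j_! j^* \to \id \to i_* i^*$), but the logical order is reversed. The paper first produces $i^!$ abstractly: $i_*$ commutes with arbitrary direct sums and all categories in sight are compactly generated, so Brown representability gives the right adjoint; it then identifies $i_* i^! K$ with the homotopy-fibre $L$ of $K \to j_* j^* K$ by exactly the $\R\Hom(i_*M,-)$ computation you carry out, using that both sides are killed by $j^*$ and hence lie in the essential image of $i_*$. You instead \emph{define} $i^! := i^* \circ \mathrm{fib}(\id \to j_* j^*)$ and verify the adjunction afterwards, so the exact triangle is true by construction. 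What your route buys is an explicit formula for $i^!$ and no appeal to compact generation; what it costs is the functoriality of the fibre, which forces you into the stable $\infty$-categorical (or model-categorical) setting, whereas the paper's argument only ever needs the fibre of $K \to j_* j^* K$ for one $K$ at a time and so stays within the triangulated category. Your two flagged points of care are the right ones, and your closing remark correctly supplies the purely triangulated fallback (get $i^!$ from the adjoint functor theorem, then identify $i_* i^! K$ with the fibre via the joint conservativity of $(j^*, i^!)$, noting $i^! i_* \simeq \id$ by full faithfulness), which is in substance the paper's proof.
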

\begin{proof}
The functor $i_*:D(Z_\proet) \to D(X_\proet)$ commutes with arbitrary direct sums. As all triangulated categories in sight are compactly generated, one formally deduces the existence of $i^!$. For the exact triangle, write $L$ for the homotopy-kernel of $K \to j_* j^* K$. One has a natural map $\eta:i_* i^! K \to L$ since $\R\Hom(i_* i^! K,j_*j^* K) = 0$. We first show $\eta$ is an isomorphism through its functor of points. For this, note that for any $M \in D(Z_\proet)$, one has 
\[ \R\Hom(i_*M,i_*i^! K) = \R\Hom(M,i^! K) = \R\Hom(i_* M, K) = \R\Hom(i_* M,L),\]
where the first equality uses the full faithfulness of $i_*$, the second comes from the definition of $i^!$, and the last one uses $\R\Hom(i_*M,j_*j^*K) = 0$. This proves that $\eta$ is an isomorphism. One also has $L = i_* i^* L$ as $j^* L = 0$, so the claim follows by full faithfulness of $i_*$.
\end{proof}

Finite morphisms are acyclic under finite presentation constraints:

\begin{lemma}
	\label{lem:finmapacyclic}
If $f:X \to Y$ is finitely presented and finite, then $f_*:\Ab(X_\proet) \to \Ab(Y_\proet)$ is exact.
\end{lemma}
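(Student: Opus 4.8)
The plan is to reduce the exactness of $f_*$ to a statement about the henselization functor and then to a statement about finite maps of strictly henselian local rings. Since exactness of $f_*$ can be checked on stalks, or more directly by verifying that $f_*$ preserves surjections of sheaves, and since the question is local on $Y$, I would begin by reducing to the case where $Y = \Spec(A)$ is affine; then $X = \Spec(B)$ with $A \to B$ finite of finite presentation. Given a surjection $F \to G$ in $\Ab(X_\proet)$, I want $f_* F \to f_* G$ to be surjective, and it suffices to check this after evaluating on a w-contractible (equivalently, w-strictly local) object $V \in Y^\aff_\proet$, since such objects cover $Y_\proet$ and surjectivity of a map of sheaves can be tested on a generating family of objects.

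The key point is then to identify $f_* F(V)$ for such $V$. We have $f_* F(V) = F(V \times_Y X)$, so I need to understand the scheme $V \times_Y X$. Here $V = \Spec(C)$ with $C$ w-contractible, hence w-strictly local by the lemma in \S\ref{subsec:localcontractibility}, and $V \times_Y X = \Spec(C \otimes_A B)$ with $C \to C \otimes_A B$ finite of finite presentation. By Lemma \ref{lem:finiteovercontractible}, $C \otimes_A B$ is again w-contractible. Therefore $V \times_Y X$ is itself a w-contractible object of $X_\proet$. Consequently, evaluation of the surjection $F \to G$ on $V \times_Y X$ is surjective on sections: a surjection of sheaves evaluated on a w-contractible object is surjective on sections, directly from the definition of w-contractibility (every surjection onto it splits). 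This immediately gives that $f_* F(V) = F(V \times_Y X) \to G(V \times_Y X) = f_* G(V)$ is surjective, as desired.

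So the proof is really just: (1) localize on $Y$ to the affine case; (2) test surjectivity on w-contractible covers of $Y_\proet$; (3) invoke Lemma \ref{lem:finiteovercontractible} to see that the base change of a w-contractible object along the finite finitely presented map $f$ remains w-contractible; (4) use that sections over w-contractible objects preserve surjections. The one genuine input is Lemma \ref{lem:finiteovercontractible}, and in particular its hypothesis that $f$ be of finite presentation — the remark following that lemma shows this hypothesis is essential, since extremal disconnectedness is not inherited by arbitrary closed subspaces. I do not expect any real obstacle here beyond making sure the base change is formed correctly and that $V \times_Y X$ is indeed affine and in $X^\aff_\proet$ (which is clear, as it is a finite, hence affine and weakly \'etale-compatible, scheme over the pro-\'etale affine $V$). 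I will now write this out.

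\begin{proof}
The assertion is local on $Y$, so we may assume $Y = \Spec(A)$ is affine; then $X = \Spec(B)$ with $A \to B$ finite and of finite presentation. To prove $f_*$ is exact, it suffices to show that $f_*$ carries surjections to surjections. Fix a surjection $F \to G$ in $\Ab(X_\proet)$. By Lemma \ref{lem:cwcontractiblecover} and Lemma \ref{lem:baseproet}, the w-contractible affines form a generating family for $\Shv(Y_\proet)$, so it is enough to check that $f_* F(V) \to f_* G(V)$ is surjective for each w-contractible $V = \Spec(C) \in Y^\aff_\proet$. By definition of pushforward, $f_* F(V) = F(V \times_Y X)$ and $f_* G(V) = G(V \times_Y X)$, where $V \times_Y X = \Spec(C \otimes_A B)$. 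The map $C \to C \otimes_A B$ is finite and of finite presentation (being the base change of $A \to B$), and $C$ is w-contractible; hence $C \otimes_A B$ is w-contractible by Lemma \ref{lem:finiteovercontractible}. In particular $V \times_Y X$ is a w-contractible object of $X_\proet$. As $F \to G$ is a surjection of sheaves, its evaluation on the w-contractible object $V \times_Y X$ is surjective: any surjection onto a w-contractible object splits. Therefore $f_* F(V) = F(V \times_Y X) \to G(V \times_Y X) = f_* G(V)$ is surjective, as desired.
\end{proof}
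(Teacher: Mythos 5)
Your proof is correct and is exactly the argument the paper intends: the paper's proof of this lemma is the single line ``This follows from Lemma \ref{lem:finiteovercontractible},'' and you have simply written out the reduction (localize on $Y$, test on w-contractible $V$, identify $f_*F(V)=F(V\times_Y X)$, apply Lemma \ref{lem:finiteovercontractible} to see $V\times_Y X$ is w-contractible, and use that sections over w-contractible objects preserve surjections). No issues.
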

\begin{proof}
	This follows from Lemma \ref{lem:finiteovercontractible}.
\end{proof}

\subsection{Functoriality for locally closed immersions}
\label{ss:FunLocClosed}

We fix a qcqs scheme $X$, a locally closed constructible subset $k:W \hookrightarrow X$.  We write $D_W(X_\proet)$ for the full subcategory spanned by $K \in D(X_\proet)$ with $K|_{X-W} \simeq 0$; we refer to such objects as ``complexes supported on $W$.''

\begin{lemma} 
	\label{lem:complexeswithsupport}
	Fix $i:Z \hookrightarrow X$ a constructible closed immersion with complement $j:U \hookrightarrow X$. Then one has:
	\begin{enumerate}
		\item The functor $j_!$ establishes an equivalence $D(U_\proet) \simeq D_U(X_\proet)$ with inverse $j^*$.
		\item The functor $i_*$ establishes an equivalence $D(Z_\proet) \simeq D_Z(X_\proet)$ with inverse $i^*$.
		\item The functor $k^*$ establishes an equivalence $D_W(X_\proet) \simeq D(W_\proet)$.
	\end{enumerate}
\end{lemma}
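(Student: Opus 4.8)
The plan is to deduce all three equivalences formally from the identities of Lemma~\ref{lem:opencloseddevissagederived} --- namely $i^*i_*\simeq\id$, $j^*j_!\simeq j^*j_*\simeq\id$, $j^*i_*\simeq 0$ and $i^*j_!\simeq 0$ --- together with the exact triangle $j_!j^*\to\id\to i_*i^*$ of endofunctors of $D(X_\proet)$ obtained by deriving Lemma~\ref{lem:opencloseddevissage}. Before starting I would record two preliminary points: first, since $Z\subset X$ is a constructible closed subset of a qcqs scheme, the complement $U$ is a quasicompact (constructible) open, hence qcqs, so all of \S\ref{ss:FunClosed} applies to the pair $(Z,U)$; second, I would fix the meaning of ``$K|_{X-W}\simeq 0$'' in the definition of $D_W(X_\proet)$, either via the points of $X_\proet$ (vanishing stalks off $W$) or, for the locally closed case, via the factorisation used in part~(3) below, declaring that $K$ restricts to $0$ on the open $U_0\setminus W$ and on the closed $X\setminus U_0$.

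For part~(1) I would first note that $j_!$ takes values in $D_U(X_\proet)$ because $i^*j_!\simeq 0$, and that $j^*j_!\simeq\id$ on $D(U_\proet)$. Conversely, if $K\in D_U(X_\proet)$ then $i^*K\simeq 0$, so the last term of the triangle $j_!j^*K\to K\to i_*i^*K$ vanishes and the triangle collapses to an equivalence $j_!j^*K\simeq K$; hence $j_!$ and $j^*$ are mutually inverse equivalences between $D(U_\proet)$ and $D_U(X_\proet)$. Part~(2) is the mirror image: $i_*$ takes values in $D_Z(X_\proet)$ since $j^*i_*\simeq 0$, one has $i^*i_*\simeq\id$, and for $K\in D_Z(X_\proet)$ the vanishing $j^*K\simeq 0$ kills the first term of the same triangle, giving $K\simeq i_*i^*K$; so $i_*$ and $i^*$ are mutually inverse equivalences between $D(Z_\proet)$ and $D_Z(X_\proet)$.

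For part~(3) I would use that, $W$ being locally closed and constructible, we may write $W=Z_0\cap U_0$ with $U_0\subset X$ a quasicompact (constructible) open and $Z_0\subset X$ a closed subset; then $k$ factors as $W\xrightarrow{i'}U_0\xrightarrow{j'}X$ with $j'$ a qcqs open immersion and $i':W\hookrightarrow U_0$ a constructible closed immersion. With the interpretation above, $D_W(X_\proet)$ is the full subcategory of those $K\in D_{U_0}(X_\proet)$ with $j'^*K\in D_W((U_0)_\proet)$. Applying part~(1) with $U_0$ in place of $U$ gives an equivalence $j'_!:D((U_0)_\proet)\xrightarrow{\sim}D_{U_0}(X_\proet)$ with inverse $j'^*$; since $j'^*j'_!\simeq\id$, both functors preserve the condition of being supported on $W$, so this restricts to an equivalence $D_W((U_0)_\proet)\xrightarrow{\sim}D_W(X_\proet)$ with inverse $j'^*$. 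Applying part~(2) to the qcqs scheme $U_0$ and the constructible closed immersion $i'$ gives an equivalence $i'_*:D(W_\proet)\xrightarrow{\sim}D_W((U_0)_\proet)$ with inverse $i'^*$. Composing, the extension-by-zero functor $k_!:=j'_!\circ i'_*$ is an equivalence $D(W_\proet)\xrightarrow{\sim}D_W(X_\proet)$ whose inverse is $i'^*\circ j'^*=(j'\circ i')^*=k^*$, which is the desired statement; I would close by noting that $D_W(X_\proet)$ is independent of the chosen factorisation (dominate two choices by a common one, or argue via stalks).

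The argument is essentially bookkeeping, and I do not expect a genuine obstacle. The one place requiring care is part~(3): one must ensure that the open stratum $U_0$ can be taken quasicompact --- this is exactly where constructibility of $W$ is used, and it is needed so that the lemmas of \S\ref{ss:FunClosed} (which assume $U$ qcqs) apply --- and one must track precisely which full subcategories correspond to one another under the two equivalences being composed. A minor cosmetic point is pinning down the meaning of $D_W(X_\proet)$ when $X\setminus W$ is neither open nor closed.
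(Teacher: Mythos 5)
Your proof is correct and follows essentially the same route as the paper: parts (1) and (2) are deduced exactly as in the paper from the identities of Lemma \ref{lem:opencloseddevissagederived} and the triangle $j_!j^*\to\id\to i_*i^*$, and part (3) is obtained by composing the two. The only (immaterial) difference is the factorization order in (3): you write $k$ as a constructible closed immersion into a quasicompact open followed by that open immersion, whereas the paper factors $k$ as an open immersion into a constructible closed subscheme $\overline{W}$ followed by the closed immersion $\overline{W}\hookrightarrow X$; both factorizations exist for a locally closed constructible $W$ and yield the same functor $k_!$ inverse to $k^*$.
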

\begin{proof}
	For (1), we know that $j^* j_! \simeq \id$, so $j_!$ is fully faithful. Also, an object $K \in D(X_\proet)$ is supported on $U$ if and only if $i^* K \simeq 0$ if and only if $j_! j^* K \simeq K$, which proves (1). The proof of (2) is analogous. For (3), fix a factorization $W \stackrel{f}{\to} \overline{W} \stackrel{g}{\to} X$ with $f$ an open immersion, and $g$ a constructible closed immersion. Then $g_*$ induces an equivalence $D(\overline{W}_\proet) \simeq D_{\overline{W}}(X_\proet)$ with inverse $g^*$ by (2), and hence restricts to an equivalence $D_W(\overline{W}_\proet) \simeq D_W(X_\proet)$. Similarly, $f_!$ induces an equivalence $D(W_\proet) \simeq D_W(\overline{W}_\proet)$ with inverse $f^*$ by (1). Hence, the composition $k_! := g_* \circ f_!$ induces an equivalence $D(W_\proet) \simeq D_W(X_\proet)$ with inverse $k^*$.
\end{proof}

\begin{definition}
	\label{defn:compactlysupportedpushforwardimmersion}
	The functor $k_!:D(W_\proet) \to D(X_\proet)$ is defined as the composition $D(W_\proet) \stackrel{a}{\to} D_W(X_\proet) \stackrel{b}{\to} D(X_\proet)$, where $a$ is the equivalence of Lemma \ref{lem:complexeswithsupport} (inverse to $k^*$), and $b$ is the defining inclusion.
\end{definition}

\begin{lemma} 
	\label{lem:shriekpushforwardlocallyclosed}
	One has:
	\begin{enumerate}
		\item The functor $k_!$ is fully faithful, preserves homotopy-limits, and has a left inverse given by $k^*$.
		\item For any map $f:Y \to X$ of qcqs schemes, one has $k_! \circ f^* \simeq f^* \circ k_!$ as functors $D(W_\proet) \to D(Y_\proet)$.
		\item For any $K \in D(W_\proet)$ and $L \in D(X_\proet)$, we have $k_! K \otimes L \simeq k_! (K \otimes i^* L)$. 
		\item One has $k_! \circ \nu^* \simeq \nu^* \circ k_!$ as functors $D(W_\et) \to D(X_\proet)$.
		\item The functor $k_!$ admits a right adjoint $k^!:D(X_\proet) \to D(W_\proet)$.
	\end{enumerate}
\end{lemma}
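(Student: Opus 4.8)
The plan is to reduce all five assertions to the two elementary cases of an open immersion and a constructible closed immersion, using the factorisation $k = g \circ f$ underlying Definition \ref{defn:compactlysupportedpushforwardimmersion}, where $f : W \hookrightarrow \overline{W}$ is an open immersion and $g : \overline{W} \hookrightarrow X$ a constructible closed immersion, so that $k_! = g_* \circ f_!$, $k^* = f^* \circ g^*$, and one is forced to set $k^! := f^* \circ g^!$. Each of the five properties holds for $f_!$ (open immersion) and for $g_*$ (constructible closed immersion, via \S\ref{ss:FunClosed}), and each is stable under the relevant composition; the content of the proof is the bookkeeping of this assembly.

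For (1): $k^* k_! = f^* (g^* g_*) f_! \simeq f^* f_! \simeq \id$ by $g^* g_* \simeq \id$ and $f^* f_! \simeq \id$ (Lemma \ref{lem:opencloseddevissagederived}(1)); full faithfulness is then formal, and preservation of homotopy-limits holds because $g_*$ is a right adjoint and $f_!$ preserves homotopy-limits by Lemma \ref{lem:shriekpushforwardcont}. For (5): $k_! = g_* \circ f_!$ is a composite of functors admitting right adjoints — namely $g^!$ from Lemma \ref{lem:shriekpullbackclosed} and $f^*$ (since $f_! \dashv f^*$ for the open immersion $f$) — so $k^! := f^* \circ g^!$ does the job. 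For (2): after base change along a qcqs $f : Y \to X$ the factorisation $k = g \circ f$ pulls back to one of the same shape (open then constructible closed), so one composes the base-change isomorphisms $i_* f^* \simeq f^* i_*$ and $j_! f^* \simeq f^* j_!$ of Lemma \ref{lem:pbcopenclosed}; the only point to check is the routine compatibility of the composite base-change map for $g \circ f$ with the one assembled from the pieces.

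For (3), the reduction is
\[ k_! K \otimes L = g_*(f_! K) \otimes L \simeq g_*(f_! K \otimes g^* L) \simeq g_*\big(f_!(K \otimes f^* g^* L)\big) = k_!(K \otimes k^* L), \]
so it suffices to establish the projection formula for an open immersion $j$ and for a closed immersion $i$. For $j$ there is a canonical map $j_! K \otimes L \to j_!(K \otimes j^* L)$ (by the $(j_!, j^*)$-adjunction and the tensor-hom adjunction), which is an isomorphism since both sides lie in $D_U(X_\proet)$ and agree after applying $j^*$, invoking the equivalence $j^* : D_U(X_\proet) \simeq D(U_\proet)$ of Lemma \ref{lem:complexeswithsupport}(1). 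For $i$: one first checks $i_* A \otimes j_! j^* L \simeq 0$ by computing its $i^*$ and $j^*$ (both vanish by Lemma \ref{lem:opencloseddevissagederived}(2)); the triangle $j_! j^* L \to L \to i_* i^* L$ then gives $i_* A \otimes L \simeq i_* A \otimes i_* i^* L$, and finally $i_* A \otimes i_* B \simeq i_*(A \otimes B)$ because the left side is supported on $Z$ (apply $j^*$) while its $i^*$ is $A \otimes B$.

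For (4), the same reduction leaves $j_! \circ \nu^* \simeq \nu^* \circ j_!$ for open $j$ and $i_* \circ \nu^* \simeq \nu^* \circ i_*$ for closed $i$. The open case follows from the support characterisation in Lemma \ref{lem:complexeswithsupport}(1) together with $\nu^* i^* \simeq i^* \nu^*$ and $\nu^* j^* \simeq j^* \nu^*$ (Lemma \ref{lem:proetfunc}): both $j_{!,\proet}\nu^* K$ and $\nu^* j_{!,\et} K$ lie in $D_U(X_\proet)$ and restrict to $\nu^* K$ on $U$. The closed case on $D^+$ is precisely Lemma \ref{lem:funcpushforward}; I would bootstrap it to the unbounded setting using that $i_{*,\et}$, $i_{*,\proet}$ (Lemma \ref{lem:closedimmacyclic}) and $\nu^*$ are all $t$-exact — hence commute with the truncations $\tau^{\geq -n}$ — combined with $\nu^* M \simeq \R\lim \nu^* \tau^{\geq -n} M$ (Remark \ref{rmk:unboundedpullback}) and the commutation of $i_{*,\proet}$ with homotopy-limits. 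I expect this unbounded extension in (4), and secondarily the support bookkeeping for the closed-immersion case of (3), to be the only genuinely non-formal steps; everything else is a mechanical reduction to the results of \S\ref{ss:FunClosed}.
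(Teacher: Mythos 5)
Your proposal is correct, and the overall strategy is the same as the paper's: factor $k=g\circ f$ into an open immersion followed by a constructible closed immersion, verify each assertion for the two pieces, and assemble. Parts (1), (2) and (5) match the paper's proof essentially verbatim. The differences are in how you handle the closed-immersion subcases of (3) and (4). For both of these the paper tensors (resp.\ applies $\nu^*$ to) the localization triangle $j_!j^*\to\id\to i_*i^*$ and compares the result with the canonical triangle for the new complex: in (3) this gives $i_*i^*K'\otimes L\simeq i_*(i^*K'\otimes i^*L)$ directly (then set $K'=i_*K$), and in (4) it gives $\nu^*i_{*,\et}i_\et^*K\simeq i_{*,\proet}\nu^*i_\et^*K$ using only the already-established open case and the commutation of $\nu^*$ with $j_!$, $j^*$, $i^*$ — valid for unbounded $K$ with no truncation argument. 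Your route in (3), via $i_*A\otimes j_!j^*L\simeq 0$ and a support computation for $i_*A\otimes i_*B$, is an equally valid rearrangement of the same ingredients (and is in fact how Lemma \ref{lem:constensorproduct} argues). In (4) your bootstrap from $D^+$ via Lemma \ref{lem:funcpushforward}, $t$-exactness of $i_*$ and $\nu^*$, left-completeness, and commutation of $i_{*,\proet}$ with $\R\lim$ does work, but it is the step you yourself flag as non-formal, and the paper's triangle argument shows it can be avoided entirely; this is worth knowing, since the same trick recurs elsewhere in \S\ref{sec:Constructible}. (One cosmetic slip: in the open-immersion projection formula the canonical map runs $j_!(K\otimes j^*L)\to j_!K\otimes L$, i.e.\ out of the $j_!$-term via the counit $j_!j^*\to\id$, not into it; the isomorphism check by restriction to $U$ is unaffected.)
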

\begin{proof}
	(1) follows from the proof of Lemma \ref{lem:complexeswithsupport} as both $f_!$ and $g_*$ have the same properties. (2) follows by two applications of Lemma \ref{lem:pbcopenclosed}. For (3), it suffices to separately handle the cases where $k$ is an open immersion and $k$ is a closed immersion. The case of an open immersion (or, more generally, any weakly \'etale map $k:W \to X$) follows by general topos theory and adjunction. Hence, we may assume $k$ is a closed immersion with open complement $j:U \hookrightarrow X$, so $k_! \simeq k_*$. For any $K' \in D(X_\proet)$, we have the triangle
\[ j_! j^* K' \to K' \to k_* k^* K'.\]
Tensoring this triangle with $L$ and using the projection formula for $j$ shows $k_* k^* K' \otimes L \simeq k_*\big(k^* K' \otimes k^* L)$. Setting $K' = k_* K$ then proves the claim as $k^* k_* \simeq \id$. For (4), assume first that $k$ is an open immersion. Then $\nu_* \circ k^* \simeq k^* \circ \nu_*$ as functors $D(X_\proet) \to D(U_\et)$ (which is true for any $U \to X$ in $X_\et$). Passing to adjoints then proves $k_! \circ \nu^* \simeq \nu^* \circ k_!$.  Now assume $k$ is a constructible closed immersion with open complement $j:U \hookrightarrow X$. Then for any $K \in D(X_\et)$, there is a triangle
\[ j_! j^* K \to K \to i_* i^* K\]
in $D(X_\et)$. Applying $\nu^*$ and using the commutativity of $\nu^*$ with $j_!$, $j^*$ and $i^*$ then proves the claim. (5) follows by considering the case of open and constructible closed immersions separately, and using Lemma \ref{lem:shriekpullbackclosed}.
\end{proof}

All the results in this section, except the continuity of $k_!$, are also valid in the \'etale topology.

\subsection{Constructible complexes in the \'etale topology}
\label{ss:ConsEt}

The material of this section is standard, but we include it for completeness. We fix a qcqs scheme $X$, and a ring $F$. Given an $F$-complex $L \in D(F)$, we write $\underline{L}$ for the associated constant complex, i.e., its image under the pullback $D(F) \to D(X_\et,F)$.

\begin{definition}
	\label{def:constructibleetale}
	A complex $K \in D(X_\et,F)$ is called {\em constructible} if there exists a finite stratification $\{X_i \to X\}$ by constructible locally closed $X_i\subset X$ such that $K|_{X_i}$ is locally constant with perfect values on $X_\et$.
\end{definition}

\begin{remark}
	\label{rmk:consperfect}
	One classically replaces the perfectness hypothesis in Definition \ref{def:constructibleetale} with a weaker finiteness constraint. However, imposing perfectness is more natural from the derived point of view: under mild conditions on $X$, our definition picks out the compact objects of $D(X_\et,F)$ (see  Proposition  \ref{lem:constructiblecompact}), and is stable under the six operations. Moreover, the two approaches coincide when $F$ is a field.
\end{remark}

\begin{lemma}
	\label{lem:consetalefiltration}
Any $K \in D_\cons(X_\et,F)$ admits a finite filtration with graded pieces of the form $i_! L$ with $i:Y \hookrightarrow X$ ranging through a stratification of $X$, and $L \in D(Y_\et,F)$ locally constant with perfect values.
\end{lemma}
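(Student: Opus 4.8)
The statement is a standard dévissage, and I would prove it by induction on the number of strata needed to trivialise $K$. Fix a finite stratification $\{X_i \hookrightarrow X\}$ by constructible locally closed subsets such that $K|_{X_i}$ is locally constant with perfect values. After refining, I may assume the stratification is ordered so that $U := X_0$ is open (a union of the ``top'' strata) and its complement $Z$ is closed and itself stratified by fewer strata, with $K|_Z$ still constructible for the induced stratification. The base case is when $X$ itself is a single stratum, i.e.\ $K$ is locally constant with perfect values; here the filtration is trivial (one step, $i = \id_X$).

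\textbf{Inductive step.} With $j:U \hookrightarrow X$ the open immersion and $i:Z \hookrightarrow X$ its closed complement, apply $j_! j^*$ and the canonical exact triangle
\[ j_! j^* K \to K \to i_* i^* K \]
(this is the derived form of Lemma \ref{lem:opencloseddevissage}, valid in the \'etale topology just as in the pro-\'etale topology). Since $U$ is a union of strata on which $K$ is locally constant with perfect values, I would first further split: choosing $U$ to be a \emph{single} open stratum $X_0$ so that $j^* K = K|_{X_0}$ is itself locally constant with perfect values, the first term $j_! j^* K = j_!(K|_{X_0})$ is already of the desired form $i_! L$ with $i = j$ and $L = K|_{X_0}$. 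The third term $i_* i^* K = i_*(K|_Z)$ has, by Lemma \ref{lem:complexeswithsupport}(2), the same ``size'' as $K|_Z \in D_\cons(Z_\et,F)$, which is constructible for a stratification of $Z$ with strictly fewer strata. By the inductive hypothesis applied on $Z$, $K|_Z$ admits a finite filtration with graded pieces $i'_! L'$ for $i':Y \hookrightarrow Z$ ranging over a stratification of $Z$ and $L'$ locally constant with perfect values. Pushing forward along $i$ and using transitivity of the $!$-pushforward for locally closed immersions (Lemma \ref{lem:shriekpushforwardlocallyclosed}(1), or rather its evident \'etale analogue, plus the fact that $i_! = i_*$ for a closed immersion and $i_! \circ i'_! = (i \circ i')_!$), each graded piece $i'_! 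L'$ of the filtration of $K|_Z$ becomes a graded piece $(i\circ i')_! L'$ of a filtration of $i_*(K|_Z)$, with $i \circ i':Y \hookrightarrow X$ a locally closed immersion into $X$ and $Y$ a stratum of the original stratification. Splicing this filtration of $i_*(K|_Z)$ together with the single step $j_!(K|_{X_0})$ coming from the triangle above yields the required finite filtration of $K$.

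\textbf{Main obstacle.} There is nothing genuinely hard here; the only point requiring a little care is the bookkeeping of the induction, namely arranging that after removing one open stratum the remaining complex is constructible \emph{for a stratification with one fewer stratum}, and that ``locally closed constructible in $Z$ followed by closed in $X$'' composes to ``locally closed constructible in $X$'' so that the graded pieces are indexed by honest strata of $X$. Both are routine: a finite stratification can always be presented with an open piece whose complement carries the induced (smaller) stratification, and compositions of constructible locally closed immersions are constructible locally closed immersions, with $(i\circ i')_! \simeq i_! \circ i'_!$ by Definition \ref{defn:compactlysupportedpushforwardimmersion} and the transitivity built into Lemma \ref{lem:complexeswithsupport}. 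One should also note that the triangle $j_! j^* K \to K \to i_* i^* K$ makes $K$ an extension of $i_*(K|_Z)$ by $j_!(K|_{X_0})$, so concatenating the two filtrations is legitimate at the triangulated level.
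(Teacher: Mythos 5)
Your proof is correct and is exactly the classical dévissage that the paper itself invokes (its proof is just a citation to SGA4, Exp.~IX, Prop.~2.5): refine to a stratification with an open stratum, peel it off via the triangle $j_!j^*K \to K \to i_*i^*K$, and induct on the number of strata, using $i_!\circ i'_! \simeq (i\circ i')_!$ to reindex the graded pieces over strata of $X$. No gaps.
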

\begin{proof}
	Same as in the classical case, see \cite[Proposition IX.2.5]{SGA4Tome3}.
\end{proof}

\begin{lemma}
Each $K \in D_\cons(X_\et,F)$ has finite flat dimension.
\end{lemma}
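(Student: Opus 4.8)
The statement asserts that a constructible complex $K \in D_\cons(X_\et, F)$ has finite flat (= Tor) dimension over $F$, i.e., there is an integer $N$ such that $K \otimes_F^L M \in D^{[-N,N]}(X_\et,F)$ for every $F$-module $M$ (equivalently, $\calH^i(K \otimes_F^L M) = 0$ for $|i| > N$, uniformly in $M$). The plan is to reduce this to the analogous statement for perfect complexes over the coefficient ring $F$ itself via devissage along a stratification.

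\textbf{Main steps.} First I would invoke Lemma \ref{lem:consetalefiltration}: $K$ admits a finite filtration whose graded pieces are of the form $i_! L$, where $i: Y \hookrightarrow X$ runs through the locally closed strata of a fixed finite stratification and each $L \in D(Y_\et, F)$ is locally constant with perfect values. Since finite flat dimension is stable under (finite) extensions and under shifts — if $A \to B \to C$ is an exact triangle and two of $A, B, C$ have flat dimension $\leq d$, then the third has flat dimension $\leq d+1$ — it suffices to bound the flat dimension of each graded piece $i_! L$ and then take the maximum, adjusting by the (bounded) length of the filtration. Next, because $i_!$ commutes with the pullback $D(F) \to D(X_\et,F)$ in the sense that $i_!(K \otimes_F^L M) \simeq (i_! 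K) \otimes_F^L M$ (the projection formula, $i_!$ being $F$-linear and exact up to the standard identifications — concretely $i_! K \otimes_F^L M \simeq i_!(K \otimes_F^L i^* \underline{M}) \simeq i_!(K \otimes_F^L \underline{M})$ since $i^*\underline{M} = \underline{M}$), one reduces to bounding the flat dimension of $L$ itself on $Y_\et$. Finally, since $L$ is locally constant with perfect values, locally on $Y_\et$ it is isomorphic to $\underline{P}$ for a perfect complex $P \in D^\perf(F)$; a perfect complex over $F$ has finite Tor-amplitude, say $[-a, b]$, and this bound is preserved by pullback to $Y_\et$ and is a local (hence global, as flat dimension can be checked locally) invariant. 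One subtlety: on a general qcqs $X$ the stratification is finite but the "perfect values" may a priori have Tor-amplitude varying from stratum to stratum; however there are only finitely many strata, so one simply takes the maximum of the finitely many local bounds, which is legitimate since flat dimension of a complex of sheaves can be checked stalk-wise / locally.

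\textbf{Expected obstacle.} The only real point requiring care is the commutation of $i_!$ (for $i$ a locally closed immersion, realized as open followed by closed) with $- \otimes_F^L M$ at the level of unbounded derived categories, and the fact that "finite flat dimension" — a statement quantified over all $F$-modules $M$ simultaneously — is genuinely a bounded-amplitude statement that behaves well under the triangle-extension lemma. Both are routine in the \'etale setting (the projection formula for $j_!$ and $i_*$ holds as in \cite[Chapitre XVII]{SGA4Tome3}, and $i^*\underline{M} \simeq \underline{M}$ since constant sheaves pull back to constant sheaves), and the length of the filtration from Lemma \ref{lem:consetalefiltration} is bounded by the number of strata, so the final numerical bound is $N = a + b + (\text{number of strata})$ or similar. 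I do not expect to need the pro-\'etale topology here at all; this is purely about the \'etale site. A clean write-up would therefore be: cite Lemma \ref{lem:consetalefiltration}, record the extension-stability of finite flat dimension, handle $i_! L$ via the projection formula, and conclude by the perfectness of the stalks of $L$.

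\begin{proof}
By Lemma \ref{lem:consetalefiltration}, $K$ admits a finite filtration with graded pieces $i_! L$ for $i:Y \hookrightarrow X$ ranging over the strata of a finite stratification of $X$ and $L \in D(Y_\et,F)$ locally constant with perfect values. Finite flat dimension is stable under shifts and, by the long exact sequence of $\Tor$, if two of the three terms of an exact triangle have flat dimension $\leq d$ then the third has flat dimension $\leq d+1$; hence it suffices to bound the flat dimension of each $i_! L$ and add the (finite) length of the filtration. For $L$ locally constant with perfect values, locally on $Y_\et$ we have $L \simeq \underline{P}$ with $P \in D(F)$ perfect, so $L$ has finite flat dimension, bounded by the $\Tor$-amplitude of $P$; since $Y_\et$ is covered by finitely many such opens (or since flat dimension can be checked locally and there are finitely many strata), $L$ has finite flat dimension over $F$, say in $[-a,b]$. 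As $i^* \underline{M} \simeq \underline{M}$ for any $F$-module $M$, the projection formula gives $(i_! L) \otimes_F^L \underline{M} \simeq i_!(L \otimes_F^L i^* \underline{M}) \simeq i_!(L \otimes_F^L \underline{M})$, and $i_!$ is exact, so $i_! L$ has the same flat dimension bound $[-a,b]$ as $L$. Taking the maximum over the finitely many strata and accounting for the filtration length proves the claim.
\end{proof}
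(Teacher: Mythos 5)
Your proof is correct and follows the same route as the paper's: devissage via Lemma \ref{lem:consetalefiltration} to graded pieces $i_!L$, the projection formula to reduce to $L$ itself, and local constancy with perfect values to reduce to the finite Tor-amplitude of a perfect complex over $F$. The extra bookkeeping about filtration length and stratum-wise bounds is harmless but not needed beyond noting that a finite extension of complexes of finite flat dimension has finite flat dimension.
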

\begin{proof}
By Lemma \ref{lem:consetalefiltration}, we may assume $K = i_! L$ for $i:Y \hookrightarrow X$ locally closed constructible, and $L \in D(Y_\et,F)$ locally constant with perfect values. By the projection formula, it suffices to show $L$ has finite flat dimension. As we are free to localize, we may assume $L = \underline{K'}$ with $K' \in D_\perf(F)$, whence the claim is clear.
\end{proof}

\begin{lemma}
	$D_\cons(X_\et,F) \subset D(X_\et,F)$ is closed under tensor products.
\end{lemma}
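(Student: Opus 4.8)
The plan is to reduce to the local picture where constructibility is detected by a stratification, and then use that tensor products of perfect complexes over $F$ are perfect, combined with the compatibility of $k_!$ with tensoring (Lemma \ref{lem:shriekpushforwardlocallyclosed}(3)) to patch the strata together.

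First I would observe that it suffices to treat $K = k_! L$ and $K' = k'_! L'$ where $k:Y \hookrightarrow X$ and $k':Y' \hookrightarrow X$ are locally closed constructible immersions and $L$, $L'$ are locally constant with perfect values; indeed, by Lemma \ref{lem:consetalefiltration} both $K$ and $K'$ admit finite filtrations by such objects, and $D_\cons(X_\et,F)$ is visibly triangulated, so bilinearity of $\otimes$ and the triangulated structure let us reduce to graded pieces. Next, I would intersect the two stratifications: the subsets $Y$ and $Y'$ are constructible locally closed, hence so is $Y \cap Y'$, and $X$ admits a common refinement stratification $\{X_i\}$ on each stratum of which both $K$ and $K'$ restrict to locally constant perfect complexes. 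On each such stratum $X_i$, with $\ell:X_i \hookrightarrow X$ the inclusion, we have $K|_{X_i} \otimes K'|_{X_i}$ locally constant, and it has perfect values because the value of a tensor product is the tensor product of the values, and $D_\perf(F)$ is closed under $\otimes_F$ (a tensor product of finite complexes of finite projective modules is again such).

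The remaining point is to identify $(K \otimes K')|_{X_i}$ with $(K|_{X_i}) \otimes (K'|_{X_i})$ and to see that $K \otimes K'$ is supported, stratum by stratum, on the $X_i$. Pullback $\ell^*$ is monoidal, so $\ell^*(K \otimes K') \simeq \ell^* K \otimes \ell^* K'$, which handles the first point. For the second, note that if $K$ is supported on a closed set $Z$ (i.e. $K|_{X-Z} = 0$) then so is $K \otimes K'$, since $(K \otimes K')|_{X-Z} = K|_{X-Z} \otimes K'|_{X-Z} = 0$; iterating over the finite stratification shows $K \otimes K'$ restricts to a locally constant perfect complex on each $X_i$, which is exactly constructibility. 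I do not expect any serious obstacle here — the one mild subtlety is bookkeeping with the common refinement of the two stratifications and checking that "locally constant with perfect values" is preserved under $\otimes_F$ pointwise, but both are routine. (Alternatively, one can cite \cite[Proposition IX.2.5]{SGA4Tome3} for the stratification-refinement step, exactly as in Lemma \ref{lem:consetalefiltration}.)
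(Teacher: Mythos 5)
Your argument is correct: the paper's own proof is just the word ``Clear,'' and what you have written out (common refinement of the two stratifications, monoidality of $\ell^*$, and closure of $D_\perf(F)$ under $\otimes_F^L$) is precisely the routine verification being alluded to. If anything you do slightly more than necessary --- the initial reduction to graded pieces $k_!L$ via Lemma \ref{lem:consetalefiltration} and the support bookkeeping at the end can be skipped, since the definition of constructibility only asks that the restriction to each stratum of a common refinement be locally constant with perfect values, which the refinement step already gives directly.
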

\begin{proof}
	Clear.
\end{proof}

\begin{lemma}
	\label{lem:globalsectionsconstant}
	Given $K \in D(R)$ and $s \in H^0(X_\et,\underline{K})$, there exists an \'etale cover $\{U_i \to X\}$ such that $s|_{U_i}$ comes from $s_i \in H^0(K)$.
\end{lemma}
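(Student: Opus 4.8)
The statement is a ``continuity'' assertion: a global section of a constant complex $\underline{K}$ on $X_\et$ is étale-locally pulled back from $H^0(K)$. The natural approach is to unwind what $H^0(X_\et,\underline{K})$ means via a chain complex representative and then use that filtered colimits of abelian groups are exact together with the fact that an étale cover of $X$ can be refined to a cover on which Čech cohomology computes sheaf cohomology in low degrees. So first I would choose a $K$-injective (or just termwise-injective, bounded-above-irrelevant) complex $I^\bullet$ of $R$-modules representing $K \in D(R)$, so that $\underline{I}^\bullet$ is a complex of sheaves on $X_\et$ representing $\underline{K}$; then $H^0(X_\et,\underline{K})$ is computed by the complex $\Gamma(X_\et,\underline{I}^\bullet)$ — here one must be slightly careful, since $\underline{I}^n$ need not be an injective sheaf on $X_\et$, so $H^0(X_\et,\underline{K})$ is really a hypercohomology group and I should instead take an injective resolution $\underline{K} \to J^\bullet$ in $D(X_\et,R)$ and compare.

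The cleanest route, I think, is to reduce to the case where $K$ is a single module $M$ placed in degree $0$, and more precisely to the following assertion: for $M$ an $R$-module with associated constant sheaf $\underline{M}$ on $X_\et$, the canonical map $M = \Gamma(X_\et,\underline{M}) \to $ (sections of $\underline{M}$) is surjective after an étale localization — but that is tautological for $H^0$ since $\underline{M}$ is literally the sheafification of the constant presheaf, so any section is locally constant, i.e. locally in the image of $M$. The subtlety is only in positive cohomological degrees of the representative, i.e. in the passage from the complex $\underline{K}$ to its sections. So the plan is: represent $K$ by an honest complex of $R$-modules; the section $s \in H^0(X_\et,\underline{K})$ is represented by a cocycle in the total complex of a Čech/injective resolution; since each $\underline{K^n}$ is a filtered colimit (over étale covers and over finitely generated submodules) of constant sheaves with values in finitely generated, hence eventually the relevant, pieces of $K^n$, and since $X$ is qcqs so that $H^0(X_\et,-)$ and Čech $H^0$ commute with filtered colimits of sheaves, the cocycle and all its bounding data are defined over some $U_i$ in a finite étale cover; restricting to $U_i$ makes $s$ into the image of a genuine class $s_i \in H^0(K)$.

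Concretely, the key steps in order: (1) reduce to $X$ affine (the statement is étale-local, and I may work with a finite affine cover, taking a common refinement); (2) fix a representative complex $K^\bullet$ of $R$-modules, and note $\underline{K^\bullet} \in D(X_\et,R)$, with $H^0(X_\et,\underline{K}) = H^0(\R\Gamma(X_\et,\underline{K^\bullet}))$; (3) observe that each $\underline{K^n}$, being a constant sheaf, is a filtered colimit $\colim_j \underline{K^n_j}$ over finitely generated $R$-submodules $K^n_j \subset K^n$, hence $\underline{K^\bullet} = \colim_j \underline{K^\bullet_j}$ in $D(X_\et,R)$ along a filtered system; (4) since $X$ is qcqs, $\R\Gamma(X_\et,-)$ commutes with filtered colimits, so $H^0(X_\et,\underline{K}) = \colim_j H^0(X_\et,\underline{K_j})$, and $s$ comes from some $s' \in H^0(X_\et,\underline{K_{j_0}})$; (5) for fixed $j_0$, the complex $\underline{K^\bullet_{j_0}}$ has constant, finitely generated terms, and $s'$ is represented by a section of the kernel sheaf $\ker(\underline{K^0_{j_0}} \to \underline{K^1_{j_0}})$ modulo image — a locally constant section — so there is a finite étale cover $\{U_i \to X\}$ on which $s'|_{U_i}$ is the image of a genuine cocycle $z_i \in \ker(K^0_{j_0} \to K^1_{j_0}) = H^0(K_{j_0})$, modulo a boundary that is also defined over $U_i$ (again using qcqs-ness to descend the finitely many pieces of bounding data from a cover to a single étale scheme); (6) set $s_i$ to be the image of $z_i$ in $H^0(K)$; then $s|_{U_i}$ is pulled back from $s_i$.

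\textbf{Main obstacle.} The genuinely delicate point is step (5): organizing the ``finitely many pieces of bounding data'' so that they all descend simultaneously. A section $s \in H^0(X_\et,\underline{K})$ is a hypercohomology class, so it is represented by a cocycle in a Čech–hypercohomology bicomplex for some étale hypercover, and one must argue that — because $X$ is coherent and the terms $K^n_{j_0}$ are finitely generated — this cocycle together with the finitely many identities witnessing it is a cocycle already involves only finitely many open/étale pieces and finitely many elements, hence is defined over a single finite-level cover $\{U_i\}$, and then restricting to each $U_i$ kills the Čech direction and leaves an actual element of $H^0(K)$. This is a standard ``spreading out / continuity of cohomology'' argument (it is the same mechanism as the commutation of étale cohomology with filtered limits of schemes), but writing it carefully is the only non-formal part. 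I would cite the qcqs-continuity of étale cohomology, e.g. \cite[Proposition V.4.3]{SGA4Tome2} or the limit arguments already invoked in the proof of Corollary \ref{cor:pushpull}, rather than reprove it.
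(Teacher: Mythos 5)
Your strategy (write $\underline{K}$ as a filtered colimit of constant complexes with finitely generated terms and spread out) is genuinely different from the paper's, but it has two real gaps. The first is step (4): for a general qcqs $X$, the functor $\R\Gamma(X_\et,-)$ does \emph{not} commute with filtered colimits of unbounded complexes — that would require $\underline{F}$ to be a compact object of $D(X_\et,F)$, which needs a finite cohomological dimension hypothesis (compare Lemma \ref{lem:pushforwardcompactopen}, proved only under the standing assumption of \S \ref{ss:ConsCompact}, and Example \ref{ex:notleftcomplete}). Since $K\in D(R)$ may be unbounded, your subcomplexes $K_j^\bullet$ are not uniformly bounded below, so the identification $H^0(X_\et,\underline{K})=\colim_j H^0(X_\et,\underline{K_j})$ is unjustified; the commutation is only available in $D^+$ with a uniform lower bound, which is precisely how the paper uses it elsewhere (Corollary \ref{cor:pushpull}, Lemma \ref{lem:constructiblealmostcompact}).

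The second gap is step (5), which is where the content lives and where no argument is actually given. A class in $H^0(X_\et,\underline{K_{j_0}})$ is a hypercohomology class; it is a section of $\ker(\underline{K^0_{j_0}}\to\underline{K^1_{j_0}})$ modulo image only when $K_{j_0}$ sits in non-negative degrees, and otherwise it is carried by a hypercover, where ``restricting to $U_i$ kills the \v{C}ech direction'' is an assertion rather than an argument (restricting to one member of the degree-zero part of a hypercover does not visibly supply a contracting homotopy). Note also that the finite-generation reduction buys nothing: the difficulty is local in $X$ and independent of the size of $K$. The paper's proof avoids both issues by working at a geometric point $x$: the stalk functor $x^*(F)=\colim_{U}F(U)$ (colimit over \'etale neighbourhoods of $x$) is exact, so applying it to a K-injective representative of $\underline{K}$ gives $\colim_{U}\R\Gamma(U_\et,\underline{K})\simeq x^*\underline{K}\simeq K$, valid for unbounded $K$; the image of $s$ under this map is the desired $s_i\in H^0(K)$, and exactness of filtered colimits shows $s$ agrees with the pullback of $s_i$ over some \'etale neighbourhood of $x$. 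Running over all geometric points produces the cover. To repair your proof, replace steps (4)--(5) by this stalk computation.
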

\begin{proof}
	Fix a geometric point $x:\Spec(k) \to X$, and consider the cofiltered category $I$ of factorizations $\Spec(k) \to U \to X$ of $x$ with $U \to X$ \'etale. Then $K \simeq \colim \R\Gamma(U_\et,\underline{K})$ where the colimit is indexed by $I^\opp$: the exact functor $x^*(F) =  \colim_I F(U)$ gives a point $x:\Set \to X_\et$, and the composition $(\Set,F) \stackrel{x}{\to} (X_\et,F) \stackrel{\can}{\to} (\Set,F)$ is the identity. This gives a section $s_i \in H^0(K)$ by passage to the limit. As filtered colimits are exact, one checks that $s$ agrees with the pullback of $s_i$ over some neighbourhood $U \to X$ in $I$.  Performing this construction for each geometric point then gives the desired \'etale cover.
\end{proof}

\begin{lemma}
	\label{lem:constancycrit}
	If $K \in D^b(X_\et,F)$ has locally constant cohomology sheaves, then there is an \'etale cover $\{U_i \to X\}$ such that $K|_{U_i}$ is constant.
\end{lemma}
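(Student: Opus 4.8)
## Proof Proposal for Lemma \ref{lem:constancycrit}

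The plan is to induct on the length of the (bounded) complex $K$, reducing the general case to the case of a single locally constant sheaf of $F$-modules, and then to glue the local trivializations together using the w-strictly local (or w-contractible) objects available in the pro-\'etale world — or, staying purely in the \'etale topology, by a direct argument at a geometric point.

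First I would treat the base case: a single locally constant sheaf $\mathcal{L}$ of $F$-modules in degree $0$. By definition there is an \'etale cover $\{V_j \to X\}$ with $\mathcal{L}|_{V_j}$ constant, say isomorphic to $\underline{M_j}$ for some $F$-module $M_j$; shrinking $X$ we may assume $X$ is connected and $\mathcal{L}|_V \simeq \underline{M}$ for one \'etale cover $V \to X$ and one $F$-module $M$. The isomorphism $\mathcal{L}|_V \cong \underline{M}|_V$ is a section of $\mathcal{H}\mathrm{om}(\mathcal{L},\underline{M})|_V$ (and its inverse a section of $\mathcal{H}\mathrm{om}(\underline M,\mathcal L)|_V$); I want to descend it. The point is that $\underline{\Hom_F(M,M)}$ has a distinguished global section (the identity), and Lemma \ref{lem:globalsectionsconstant} is the mechanism by which sections of constant sheaves become locally constant in the set-theoretic sense. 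So I would argue as follows: after passing to a further \'etale cover, the pair of morphisms $\mathcal L \to \underline M \to \mathcal L$ composes to a morphism of $\mathcal L$ to itself which, \'etale-locally on the source, can be compared to the identity; using connectedness and Lemma \ref{lem:globalsectionsconstant} applied to $\mathcal{E}\mathrm{xt}$-type sheaves, one finds a single \'etale cover $\{U_i \to X\}$ over which $\mathcal L$ becomes isomorphic to the constant sheaf associated to a fixed $F$-module. (Concretely: a locally constant sheaf with stalk $M$ is classified by a continuous cocycle valued in $\Aut_F(M)$ for the \'etale fundamental groupoid, and ``constant on $\{U_i\}$'' just says the cocycle is trivialized after pullback along $\sqcup U_i \to X$; such a trivializing cover always exists by taking $U_i$ to be a connected Galois cover on which the cocycle dies.)

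Next, for the inductive step on a bounded complex $K$ with locally constant cohomology sheaves: let $n$ be the top nonzero cohomological degree, so there is an exact triangle $\tau^{\leq n-1}K \to K \to \mathcal{H}^n(K)[-n]$. By the base case, after an \'etale cover $\{U_i \to X\}$ the sheaf $\mathcal{H}^n(K)$ becomes constant on each $U_i$; by the inductive hypothesis, after refining the cover, $\tau^{\leq n-1}K$ becomes constant on each $U_i$, i.e.\ isomorphic to $\underline{L'}$ for $L' \in D^b(F)$. The triangle is then classified by a map $\underline{L'} \to \mathcal{H}^n(K)[-n+1]$ in $D(U_{i,\et},F)$, i.e.\ by a section over $U_i$ of a sheaf of the form $\underline{\R\Hom_F(L', \mathcal H^n(K)[-n+1])}$ (using that both objects are now constant); by Lemma \ref{lem:globalsectionsconstant} (and connectedness, after splitting $X$ into its finitely many components since $X$ is qcqs hence has an open--closed decomposition into connected pieces once we pass to a suitable cover — or simply by noting the statement is local enough) this section comes, after a further \'etale refinement, from a genuine class in $\R\Hom_F(L', \mathcal H^n(K)[-n+1])$, whose cone $L$ is then a fixed object of $D^b(F)$ with $K|_{U_i} \simeq \underline{L}$.

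The main obstacle I anticipate is the bookkeeping in the inductive step: one must be careful that the classifying map of the triangle, which a priori lives in $\Hom_{D(U_{i,\et},F)}(\underline{L'},\mathcal{H}^n(K)[-n+1])$, is correctly identified with a section of a constant sheaf so that Lemma \ref{lem:globalsectionsconstant} applies — this requires knowing that $\R\underline{\Hom}$ of two constant complexes is again constant (which holds since $\nu^*$/the constant-sheaf functor $D(F) \to D(X_\et,F)$ is symmetric monoidal and, for perfect-ish inputs or after noting $X$ qcqs, commutes with internal Hom on the relevant objects), plus the passage between the derived-category Hom group and $H^0$ of the internal Hom sheaf, which uses that $X$ has finite étale cohomological structure only mildly — in the worst case one simply replaces $X$ by the members of an \'etale cover on which everything in sight is constant and $\R\Hom$ computes as expected. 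None of these steps is deep; the content is entirely the base case, which is the standard fact that locally constant sheaves are trivialized by \'etale covers, recorded here as a lemma for convenience. I would therefore write the proof by first disposing of the rank-one/single-sheaf case via Lemma \ref{lem:globalsectionsconstant} and connectedness, and then running the two-step dévissage above, citing \cite{SGA4Tome3} for the parts that are classical.
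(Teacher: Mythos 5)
Your proposal is correct and follows essentially the same route as the paper: dévissage on the top cohomology sheaf, trivialise the two pieces on a cover, and then descend the connecting map of the triangle via Lemma \ref{lem:globalsectionsconstant} applied to the internal $\R\calHom$ of two constant complexes. Two small corrections: the base case really is vacuous (``locally constant'' means by definition constant on an \'etale cover, and the values on different $U_i$ need not agree, so no gluing of trivialisations is required), and the class governing the triangle is the map $\calH^n(K)[-n]\to\tau^{\leq n-1}K[1]$ (so $K$ is its fibre), not a map out of $\tau^{\leq n-1}K$. The one point you flag but do not settle --- that the constant-sheaf functor $D^b(F)\to D(X_\et,F)$ commutes with $\underline{\R\Hom}$ --- is exactly where the paper's proof does its only real work: it computes $\R\Hom_F(A,B)$ by a projective resolution of $A$ and writes the resulting bounded-below complex as $\R\lim$ of its stupid truncations, which the pullback preserves; no perfectness or finiteness on $X$ is needed, so your hedge about ``perfect-ish inputs'' can be dispensed with.
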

\begin{proof}
	We may assume all cohomology sheaves of $K$ are constant. If $K$ has only one non-zero cohomology sheaf, there is nothing to prove. Otherwise, choose the maximal $i$ such that $\calH^i(K) \neq 0$. Then $K \simeq \ker(\calH^i(K)[-i] \stackrel{s}{\to} \tau^{< i} K[1])$. By induction, both $\calH^i(K)$ and $\tau^{< i}K$ can be assumed to be constant. The claim now follows by Lemma \ref{lem:globalsectionsconstant} applied to $\underline{\R\Hom}(\calH^i(K)[-i], \tau^{< i} K[1])$ with global section $s$; here we use that the pullback $G:D(F) \to D(X_\et,F)$ preserves $\underline{\R\Hom}$ between $A,B \in D^b(F)$ since $G(\R\lim C_i) = \R\lim G(C_i)$ if $\{C_i \hookrightarrow C\}$ is the stupid filtration on $C \in D^+(R)$ (with $C = \R\Hom(A,B)$ calculated by a projective resolution of $A$).
\end{proof}

\begin{lemma}
A complex $K \in D(X_\et,F)$ is constructible if and only if for any finite stratification $\{Y_i \to X\}$, the restriction $K|_{Y_i}$ is constructible.
\end{lemma}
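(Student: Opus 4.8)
The plan is to dispatch the two directions separately, with essentially all the content living in an elementary refinement-of-stratifications argument. The ``if'' direction is immediate: I would simply apply the hypothesis to the trivial one-element stratification $\{X \xrightarrow{\id} X\}$, which shows $K = K|_X$ is constructible. For the ``only if'' direction, I would fix a finite stratification $\{X_j \to X\}$ by constructible locally closed subsets witnessing the constructibility of $K$ (so each $K|_{X_j}$ is locally constant with perfect values on $(X_j)_\et$, as in Definition \ref{def:constructibleetale}), and then take an arbitrary finite stratification $\{Y_i \to X\}$ by constructible locally closed subsets.

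The key step is to form, for each fixed $i$, the ``common refinement'' $\{X_j \cap Y_i\}_j$ of the two stratifications restricted to $Y_i$. I would check that each $X_j \cap Y_i$ is locally closed and constructible in $Y_i$ — it is an intersection of constructible locally closed subsets of $X$, hence constructible locally closed in $X$, and therefore also in $Y_i$ with its subspace topology — and that these pieces are pairwise disjoint and cover $Y_i$, so they indeed form a finite stratification of $Y_i$. Then I would observe that $K|_{X_j \cap Y_i}$ is the restriction of $K|_{X_j}$ along the locally closed immersion $X_j \cap Y_i \hookrightarrow X_j$; since a complex that is \'etale-locally constant with value a perfect $F$-complex remains so after restriction along any immersion, $K|_{X_j \cap Y_i}$ is locally constant with perfect values. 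This gives a stratification of $Y_i$ exhibiting $K|_{Y_i}$ as constructible, completing the argument.

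I do not expect a genuine obstacle here: the statement is a bookkeeping consequence of the definition. The only points requiring a modicum of care are (a) that the poset of constructible locally closed subsets is stable under intersection and under passage to a locally closed constructible subspace, and (b) that ``locally constant with perfect values'' is stable under restriction to locally closed subsets — both of which are formal. If anything, the mild subtlety is making sure the word ``stratification'' is used consistently (finite, by constructible locally closed subsets, covering $X$), so that the common refinement genuinely qualifies.

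\begin{proof}
The ``if'' direction follows by applying the hypothesis to the trivial stratification $\{X \to X\}$. For the converse, assume $K$ is constructible, and choose a finite stratification $\{X_j \to X\}$ by constructible locally closed subsets such that each $K|_{X_j}$ is locally constant with perfect values on $(X_j)_\et$. Let $\{Y_i \to X\}$ be an arbitrary finite stratification by constructible locally closed subsets. Fix $i$. The sets $\{X_j \cap Y_i\}_j$ are pairwise disjoint, cover $Y_i$, and each is locally closed and constructible in $Y_i$ (being an intersection of constructible locally closed subsets of $X$); hence they form a finite stratification of $Y_i$. Moreover, $K|_{X_j \cap Y_i}$ is the restriction of $K|_{X_j}$ along the locally closed immersion $X_j \cap Y_i \hookrightarrow X_j$, and restriction along an immersion carries complexes that are \'etale-locally constant with perfect value to complexes of the same type. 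Thus $K|_{X_j \cap Y_i}$ is locally constant with perfect values, so $K|_{Y_i}$ is constructible.
\end{proof}
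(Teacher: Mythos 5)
Your argument is logically valid for the statement as literally quantified, and your forward direction (constructible $\Rightarrow$ restrictions constructible) is essentially the paper's: the paper just says ``constructible sheaves are closed under pullback,'' and your common-refinement argument with the strata $X_j \cap Y_i$ is the correct way to spell that out for a locally closed immersion. The divergence is in the reverse direction. You dispose of it by specializing the universally quantified hypothesis to the trivial stratification $\{X \to X\}$, which is formally unimpeachable but empties the implication of content. The paper instead proves the stronger and actually useful statement: if $K|_{Y_i}$ is constructible for the strata of a \emph{single} finite stratification $\{Y_i \to X\}$, then $K$ is constructible. Its argument is that $k_!$ preserves constructibility for $k:W \hookrightarrow X$ locally closed constructible (since $k$ identifies constructible subsets of $W$ with those of $X$ contained in $W$, one just adjoins the stratum $X - W$ where the extension by zero vanishes), and then $K$ is built from the pieces $k_{i!}(K|_{Y_i})$ by the dévissage $j_!j^* \to \id \to i_*i^*$, using that $D_\cons(X_\et,F)$ is triangulated (Lemma \ref{lem:constriangulated}). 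This single-stratification criterion is what gets used downstream (e.g.\ to check constructibility stratum by stratum), so while your proof is not wrong, you should be aware that the trivial-stratification shortcut proves strictly less than the paper's argument does, and you would need the $k_!$ dévissage anyway the moment the lemma is applied with an existential rather than universal hypothesis.
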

\begin{proof}
The forward direction is clear as constructible sheaves are closed under pullback. For the reverse, it suffices to observe $k_!$ preserves constructibility for $k:W \hookrightarrow X$ locally closed constructible as $k$ identifies constructible subsets of $W$ with those of $X$ contained in $W$.
\end{proof}

\begin{lemma}
	\label{lem:constriangulated}
$D_\cons(X_\et,F)$ is a triangulated idempotent complete subcategory of $D(X_\et,F)$. It can be characterized as the minimal such subcategory that contains all objects of the form $k_! L$ with $k:Y \hookrightarrow X$ locally closed constructible, and $L \in D(Y_\et,F)$ locally constant with perfect values.
\end{lemma}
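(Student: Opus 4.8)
The plan is to prove the three assertions — triangulatedness, idempotent completeness, and the minimality characterization — in that order, using the filtration result of Lemma \ref{lem:consetalefiltration} as the main workhorse.

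\smallskip

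\textbf{Triangulatedness.} The subcategory $D_\cons(X_\et,F)$ is visibly stable under shifts. For the two-out-of-three property on triangles, suppose $K' \to K \to K''$ is a triangle with $K',K''$ constructible. Choose a common refinement of the defining stratifications, so that there is a single finite stratification $\{X_i \hookrightarrow X\}$ along which both $K'$ and $K''$ have locally constant cohomology sheaves with perfect stalks. Restricting the triangle to each $X_i$ and using that $K|_{X_i}$ sits in a triangle between two complexes that are locally constant with perfect values, one concludes $K|_{X_i}$ has locally constant cohomology sheaves (the long exact sequence of cohomology sheaves shows local constancy is inherited) with perfect stalks (perfect complexes over $F$ form a triangulated subcategory of $D(F)$, checked on stalks); hence $K$ is constructible. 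The same argument handles the other two positions in the triangle.

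\smallskip

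\textbf{Idempotent completeness.} Suppose $K \in D(X_\et,F)$ is a retract of a constructible $L$. Fix a stratification $\{X_i\}$ along which $L$ is locally constant with perfect values. Restricting the retraction data to $X_i$ realizes $K|_{X_i}$ as a retract of $L|_{X_i}$. After passing to an \'etale cover trivializing $L|_{X_i}$ (Lemma \ref{lem:constancycrit}), $K|_{X_i}$ becomes a retract of a constant complex $\underline{M}$ with $M \in D_\perf(F)$; one then checks that a retract, in $D(U_\et,F)$ for $U$ affine (or after further localization), of a constant perfect complex is again locally constant with perfect values — the idempotent on $\underline{M}$ is, by Lemma \ref{lem:globalsectionsconstant}, \'etale-locally induced by an idempotent endomorphism of $M$ in $D(F)$, and $D_\perf(F)$ is idempotent complete. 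Hence $K|_{X_i}$ is locally constant with perfect values, so $K$ is constructible.

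\smallskip

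\textbf{Minimality.} By Lemma \ref{lem:consetalefiltration}, every $K \in D_\cons(X_\et,F)$ admits a finite filtration whose graded pieces are of the form $k_! L$ with $k:Y \hookrightarrow X$ locally closed constructible and $L$ locally constant with perfect values; thus $K$ lies in any triangulated idempotent complete subcategory containing all such $k_! L$. Conversely, each such $k_! L$ is itself constructible: $L$ is constructible on $Y$ by definition, and $k_!$ preserves constructibility since $k$ identifies constructible locally closed subsets of $Y$ with those of $X$ contained in $Y$, while $k_!$ commutes with restriction to strata via Lemma \ref{lem:shriekpushforwardlocallyclosed}(2) (or rather its \'etale analogue, noted at the end of \S\ref{ss:FunLocClosed}). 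Since $D_\cons(X_\et,F)$ is triangulated and idempotent complete by the first two parts, it is the minimal such subcategory containing the $k_! L$'s.

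\smallskip

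The step I expect to be the main obstacle is the idempotent completeness argument: one must be careful that "retract of locally constant with perfect values" is again of that form, which requires descending an idempotent through the equivalence with $D_\perf(F)$ after an \'etale localization, and checking this is compatible with gluing across the finitely many strata. Everything else is a routine stratification-refinement argument together with the structural facts about $D_\perf(F)$ already available.
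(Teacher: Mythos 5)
Your overall skeleton (refine stratifications; reduce to locally constant complexes with perfect values; use Lemma \ref{lem:consetalefiltration} plus constructibility of each $k_!L$ for minimality; descend an idempotent to $D_\perf(F)$ for idempotent completeness) is the paper's, and the idempotent-completeness and minimality parts are fine. The problem is the triangulatedness step. The parenthetical ``the long exact sequence of cohomology sheaves shows local constancy is inherited'' is precisely the point that needs proof, and it is not clear it is even true in the required generality: it asks that locally constant sheaves of $F$-modules be closed under kernels, cokernels and \emph{extensions}. Closure under extensions of locally constant sheaves with arbitrary values is not obvious, and the values occurring here --- cohomology modules of perfect complexes over a general ring $F$ --- need not be finitely presented, so the standard arguments for finite locally constant sheaves do not apply. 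Moreover, even granting locally constant cohomology sheaves and perfect stalks, ``locally constant with perfect values'' in this paper means \emph{locally isomorphic to a constant complex} $\underline{M}$ with $M \in D_\perf(F)$; to upgrade from locally constant cohomology sheaves to that, you would still need to invoke Lemma \ref{lem:constancycrit}, which you do only in the idempotent-completeness part.

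The repair is to avoid cohomology sheaves altogether and trivialize the \emph{map} rather than its cone: after passing to an \'etale cover on which $K' \simeq \underline{A}$ and $K'' \simeq \underline{B}$ with $A,B \in D_\perf(F)$, the triangle is classified by a section of $H^0\big(\underline{\R\Hom}(\underline{B}[-1],\underline{A})\big) \simeq H^0\big(\underline{\R\Hom(B[-1],A)}\big)$ (using perfectness of $B$ to commute the constant-sheaf functor with $\R\Hom$, as in the proof of Lemma \ref{lem:constancycrit}); Lemma \ref{lem:globalsectionsconstant} then shows this section is \'etale-locally induced by an actual morphism $B[-1] \to A$ in $D(F)$, so the cone is \'etale-locally the constant complex on $\mathrm{Cone}(B[-1]\to A) \in D_\perf(F)$. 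This is exactly the argument the paper gives, and it is also the template for the idempotent case you already carried out correctly.
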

\begin{proof}
	To show $D_\cons(X_\et,F)$ is closed under triangles, by refining stratifications, it suffices to check: if $K,L \in D(X_\et,F)$ are locally constant with perfect values, then the cone of any map $K \to L$ has the same property. Replacing $X$ by a cover, we may assume $K = \underline{K'}$ and $L = \underline{L'}$ with $K',L' \in D_\perf(R)$. The claim now follows from Lemma \ref{lem:globalsectionsconstant} applied to $\underline{\R\Hom}(K',L')$. The idempotent completeness is proven similarly. The last part follows from Lemma \ref{lem:consetalefiltration} and the observation that each $k_! L$ (as in the statement) is indeed constructible.
\end{proof}

\begin{lemma}
	\label{lem:conslocalet}
	Constructibility is local on $X_\et$, i.e., given $K \in D(X_\et,F)$, if there exists a cover $\{f_i:X_i \to X\}$ in $X_\et$ with $f_i^*K$ constructible, then $K$ is constructible.
\end{lemma}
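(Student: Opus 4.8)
The plan is to reduce the question of constructibility of $K$ to a statement about a fixed finite stratification, and then to descend the local structure (locally constant with perfect values) along the given \'etale cover.

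First I would reduce to the case where $X$ is affine (hence quasicompact), and, using that $X_\et$ is quasicompact, replace the cover $\{f_i:X_i\to X\}$ by a single surjective \'etale map $f:X'\to X$ with $X'$ affine such that $f^*K$ is constructible. Since $f^*K$ is constructible, there is a finite stratification of $X'$ by constructible locally closed subsets on which $f^*K$ is locally constant with perfect values. The images under $f$ of these strata are constructible subsets of $X$ (as $f$ is finitely presented, Chevalley's theorem applies), and by taking the coarsest common refinement of the Boolean algebra they generate we obtain a finite stratification $\{j_s:Y_s\hookrightarrow X\}$ of $X$ by constructible locally closed subsets such that, over each $Y_s$, the pullback of $f^*K$ along $f^{-1}(Y_s)\to Y_s$ is locally constant with perfect values. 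By the preceding lemma it suffices to prove that each $K|_{Y_s}$ is constructible, so we may replace $X$ by $Y_s$ and assume from the outset that $f^*K$ is \emph{locally constant with perfect values} on $X'_\et$.

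Now the task is: if $f:X'\to X$ is a surjective \'etale map of affine schemes and $f^*K$ is locally constant with perfect values, then $K$ is locally constant with perfect values on $X_\et$. Refining $X'$ further (again using quasicompactness), we may assume $f^*K$ is \emph{constant}, say $f^*K\simeq \underline{P}$ with $P\in D_\perf(F)$. The cohomology sheaves $\calH^i(K)$ pull back to the locally constant sheaves $\calH^i(P)$ on $X'_\et$, and since $f$ is a cover, being locally constant is \'etale-local on the base, so each $\calH^i(K)$ is locally constant; as only finitely many are nonzero, $K\in D^b(X_\et,F)$ with locally constant cohomology sheaves. By Lemma \ref{lem:constancycrit} there is an \'etale cover $\{U_i\to X\}$ with $K|_{U_i}$ constant. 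Over a refinement we may compare $K|_{U_i}$ with the constant complex $\underline{P}$: both become isomorphic to $\underline{P}$ after pulling back to $U_i\times_X X'$, and since $H^0$ of the internal $\underline{\R\Hom}$ of two constant perfect complexes is computed by Lemma \ref{lem:globalsectionsconstant} (it satisfies \'etale descent), the isomorphism descends, so $K|_{U_i}\simeq \underline{P}$ with $P\in D_\perf(F)$. Hence $K$ is locally constant with perfect values, in particular constructible.

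The main obstacle I anticipate is the bookkeeping in the first paragraph: arranging, after a single \'etale refinement of $X$, a finite stratification of $X$ itself (not merely of $X'$) compatible with the local-constancy of $f^*K$. This requires that images of constructible sets under finitely presented \'etale maps are constructible, and that local-constancy over a stratum of $X'$ descends to the corresponding stratum of $X$ — the latter being exactly the statement that ``locally constant with perfect values'' is \'etale-local on the target, which is handled just as in the last paragraph via Lemma \ref{lem:constancycrit} and Lemma \ref{lem:globalsectionsconstant}. Once this descent principle for the property ``locally constant with perfect values'' is isolated, the rest is a routine reduction.
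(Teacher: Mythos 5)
There is a genuine gap at the crux of your first paragraph. After pushing the strata $X'_j$ of $X'$ down to $X$ and refining, you assert that over each resulting stratum $Y_s$ the restriction of $f^*K$ to $f^{-1}(Y_s)$ is locally constant with perfect values; everything else rests on this. But refining by the Boolean algebra generated by the images $f(X'_j)$ only guarantees that each piece $f^{-1}(Y_s)\cap X'_j$ is either empty or surjects onto $Y_s$; it does \emph{not} make these pieces open in $f^{-1}(Y_s)$, nor does it put $f^{-1}(Y_s)$ inside a single stratum of $X'$. For instance, take $f:X'\to X$ a connected degree-$3$ Galois cover of surfaces with group $\langle\sigma\rangle$, and stratify $X'$ as $C\sqcup(X'-C)$ with $C$ an irreducible closed curve such that $C,\sigma C,\sigma^2C$ are pairwise distinct and some $x'\in C\cap\sigma^2C$ avoids $\sigma C$. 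Then $\sigma x'\in C$ and $\sigma^2x'\notin C$, so the fibres over $f(x')$ and over the generic point of $f(C)$ meet the two strata in the same pattern, whence both points lie in the same stratum $Y_s\subset f(C)$ of your refinement; near $x'$ the preimage $f^{-1}(Y_s)$ consists of the two branches $C$ and $\sigma^2C$ crossing at $x'$, lying in the two \emph{different} strata of $X'$, with the $X'-C$ branch specializing into the $C$ branch. So local constancy of $f^*K$ on each $X'_j$ separately does not restrict to local constancy on $f^{-1}(Y_s)$, and $f^{-1}(Y_s)\to Y_s$ cannot serve as your trivializing cover. Your closing paragraph misplaces the difficulty: descending ``locally constant with perfect values'' along an \'etale cover is trivial (an \'etale cover of $f^{-1}(Y_s)$ is an \'etale cover of $Y_s$, so Lemmas \ref{lem:constancycrit} and \ref{lem:globalsectionsconstant} are not needed for that step); the problem is that your strata of $X$ do not have preimages contained in single strata of $X'$.

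The repair is exactly the paper's argument: first stratify $X$ so that $f$ becomes finite \'etale over each stratum, pass to a Galois closure with group $G$ (after a clopen decomposition of $X$ if needed), and replace the given stratification of $X'$ by the common refinement of all of its $G$-translates. A $G$-invariant stratification genuinely descends: its strata are permuted by $G$, the images of the orbits stratify $X$, and the preimage of each stratum of $X$ is the corresponding $G$-orbit of strata upstairs, on which $f^*K$ is locally constant by construction. Note that the Boolean algebra you form downstairs pulls back only to the algebra generated by the unions $\bigcup_g gX'_j$, which is strictly coarser than the $G$-refinement generated by the individual translates $gX'_j$ — this is precisely why your strata $Y_s$ can be too big. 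Your second paragraph (descent of the property ``locally constant with perfect values'' once the stratifications match) is correct, though more elaborate than necessary for the same reason as above.
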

\begin{proof}
We may assume $f:Y \to X$ is a surjective \'etale map, and $f^* K$ is constructible. First assume that $f$ is a finite \'etale cover. Passing to Galois closures (and a clopen cover of $X$ if necessary), we may assume $f$ is finite Galois with group $G$. By refining strata, we can assume $f^* K$ is locally constant along a $G$-invariant stratification of $Y$. Such a stratification is pulled back from $X$, so the claim is clear. In general, there is a stratification of $X$ over which $f$ is finite \'etale, so one simply applies the previous argument to the strata.
\end{proof}

\begin{lemma}
	\label{lem:shriekpushforwardcons}
If $j:U \to X$ is qcqs \'etale, then $j_!:D(U_\et,F) \to D(X_\et,F)$ preserves constructibility.
\end{lemma}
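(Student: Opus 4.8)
The plan is to reduce the statement to the two basic building blocks of constructibility, namely complexes of the form $k_! L$ with $k$ a locally closed constructible immersion and $L$ locally constant with perfect values, and then to check that $j_!$ transforms such objects into constructible complexes. By Lemma \ref{lem:constriangulated}, $D_\cons(X_\et,F)$ is the smallest triangulated idempotent complete subcategory containing the objects $k_! L$; since $j_!$ is exact and commutes with arbitrary direct sums (hence preserves idempotent splittings), it suffices to prove that $j_!(k_! L)$ is constructible for $k:W \hookrightarrow U$ locally closed constructible in $U$ and $L \in D(W_\et,F)$ locally constant with perfect values.

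First I would observe that $j \circ k : W \hookrightarrow X$ is itself a locally closed immersion: it is the composition of $k$ (locally closed, constructible) with $j$ (\'etale, hence locally an open immersion), and one checks that the composite is a locally closed immersion with constructible image — here one uses that $j$ is qcqs \'etale and, after stratifying $X$ so that $j$ becomes a finite \'etale cover over each stratum, the image of a constructible subset remains constructible. Then $j_! \circ k_! \simeq (j \circ k)_!$ by the transitivity of the shriek-pushforward for composable locally closed immersions (this is built into Definition \ref{defn:compactlysupportedpushforwardimmersion}, or alternatively follows from $j_!$ being left adjoint to $j^*$ together with $k_!$ being left adjoint to $k^*$, so both composites are left adjoint to $(j\circ k)^* = k^* j^*$). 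Hence $j_!(k_! L) \simeq (j\circ k)_! L$, and by the characterization in Lemma \ref{lem:constriangulated} this object lies in $D_\cons(X_\et,F)$ since $W \hookrightarrow X$ is locally closed constructible and $L$ is locally constant with perfect values.

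The only genuine point requiring care — and the step I expect to be the main obstacle — is verifying that $j$ carries locally closed constructible subsets of $U$ to locally closed constructible subsets of $X$, and more precisely that the composite $W \to X$ is a locally closed immersion of schemes with constructible image. For this I would stratify $X$ into finitely many constructible locally closed pieces $X_\alpha$ over which $j$ restricts to a finite \'etale map $j^{-1}(X_\alpha) \to X_\alpha$; on each such piece the image of a constructible set is constructible (finite \'etale maps are finite, hence closed and open on constructible topologies, so images of constructibles are constructible), and constructibility of a subset of $X$ can be checked stratumwise. That $j\circ k$ is an immersion is clear since $j$ is a monomorphism when restricted appropriately — more simply, $j$ is separated and \'etale so is locally on the source an open immersion, and composing an open immersion with the locally closed immersion $k$ yields a locally closed immersion. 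An alternative and perhaps cleaner route avoiding the explicit geometry: argue directly that constructibility is insensitive to $j_!$ by testing on stalks and using that $(j_! K)_{\bar x} = \bigoplus_{\bar y \mapsto \bar x} K_{\bar y}$, finite direct sum since $j$ is finite over a neighbourhood of $\bar x$ after stratifying; combined with Lemma \ref{lem:conslocalet} (locality of constructibility on $X_\et$), this would also conclude the proof. I would present the immersion-based argument as the primary one and remark on the stalk computation as a sanity check.
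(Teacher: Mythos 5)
Your primary argument has a genuine gap: the composite $j \circ k : W \to X$ is \emph{not} a locally closed immersion in general, because $j$ itself is not a monomorphism. The statement that $j$ is ``locally on the source an open immersion'' only tells you that $j \circ k$ is locally on $W$ an immersion, which is much weaker. Concretely, take $X$ connected, $j:U \to X$ a connected finite \'etale cover of degree $2$, $W = U$ and $k = \id$: then $j \circ k = j$ is two-to-one on points and is not an immersion of any kind, so the identification of $j_!(k_! L)$ with ``$(j\circ k)_! L$ for a locally closed constructible immersion'' is simply unavailable, and the appeal to Lemma \ref{lem:constriangulated} to conclude constructibility of the image collapses. (In this example $j_! \underline{F}$ is a rank-two locally constant sheaf, which is constructible for the right reason --- it splits after an \'etale cover of $X$ --- not because it is extended by zero from an immersion.) The surrounding discussion about images of constructible sets being constructible is fine but does not repair this.

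The ``alternative route'' you relegate to a sanity check is in fact the correct argument, and it is the paper's proof: first stratify $X$ so that $j$ becomes finite \'etale over each stratum (reducing to $j$ finite \'etale, since constructibility can be checked stratumwise and $j_!$ commutes with restriction to strata by Lemma \ref{lem:pbcopenclosed}); then use that a finite \'etale cover is, locally on $X_\et$, isomorphic to $\sqcup_{i=1}^n X \to X$, so that $j_! K$ is locally on $X_\et$ a finite direct sum of constructible complexes; conclude by Lemma \ref{lem:conslocalet}. One caveat on your phrasing: constructibility cannot be ``tested on stalks'' --- the stalk formula $(j_!K)_{\bar x} = \oplus_{\bar y \mapsto \bar x} K_{\bar y}$ by itself proves nothing --- what does the work is the local splitting of the cover together with the \'etale-local nature of constructibility. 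You should promote that argument to the main proof and discard the immersion-based one.
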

\begin{proof}
If $j$ is finite \'etale, then the claim follows by Lemma \ref{lem:conslocalet} as any finite \'etale cover of $X$ is, locally on $X_\et$, of the form $\sqcup_{i=1}^n X \to X$. In general, there is a stratification of $X$ over which this argument applies.
\end{proof}

\begin{lemma}
	\label{lem:nilpidealcons}
	If $K \in D(X_\et,F)$, and $I \subset F$ is a nilpotent ideal such that $K \otimes_F F/I \in D_\cons(X_\et,F/I)$, then $K \in D_\cons(X_\et,F)$.
\end{lemma}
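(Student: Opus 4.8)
The plan is to reduce, by dévissage on the nilpotent ideal $I$, to the case $I^2 = 0$, and then to use the constructibility of $K \otimes_F F/I$ together with the identification of constructible complexes as a triangulated subcategory generated by the objects $k_! L$ (Lemma \ref{lem:constriangulated}), plus the local nature of constructibility on $X_\et$ (Lemma \ref{lem:conslocalet}) and the criterion for local constancy (Lemma \ref{lem:constancycrit}).

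First I would observe that the filtration $I \supset I^2 \supset \cdots \supset I^n = 0$ lets us assume $I^2 = 0$: indeed, if the statement holds whenever the ideal squares to zero, then applying it successively to the ideals $I^{k}/I^{k+1} \subset F/I^{k+1}$ shows that $K \otimes_F F/I^{k} \in D_\cons$ implies $K \otimes_F F/I^{k+1} \in D_\cons$, and we conclude by descending induction (taking $k$ from $n$ down to $1$) since $K \otimes_F F/I^n = K$. So from now on $I^2 = 0$. Next, since $K$ has bounded cohomology sheaves (one checks constructibility of $K \otimes_F F/I$ forces this, as $I$ being nilpotent and $I^2=0$ gives an exact triangle $K \otimes_F I \to K \to K \otimes_F F/I$ with $K \otimes_F I = (K \otimes_F F/I) \otimes_{F/I} I$ also constructible, hence bounded), and since constructibility is local on $X_\et$, I would work \'etale-locally on $X$ to arrange that the cohomology sheaves of $K \otimes_F F/I$ and of $K \otimes_F I$ are each constant along a common finite stratification $\{X_i \hookrightarrow X\}$, with perfect values over $F/I$.

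The heart of the argument is then to show $K|_{X_i}$ is locally constant with perfect $F$-values. Working on a single stratum, so that both $K \otimes_F F/I$ and $K \otimes_F I$ are constant, I would first settle perfectness: a complex over $F$ whose derived reduction mod the nilpotent ideal $I$ is perfect over $F/I$ is itself perfect over $F$ — this is the standard Nakayama-type statement for perfect complexes, which I may invoke since everything is happening over the constant ring $F$ after restriction to a point. This gives that the stalks of $K|_{X_i}$ are perfect over $F$. For local constancy, I would use the exact triangle
\[ \underline{K' \otimes_F I} \to K|_{X_i} \to \underline{(K'\otimes_F F/I)} \]
where the outer two terms are constant (I have arranged this), and apply Lemma \ref{lem:globalsectionsconstant}, exactly as in the proof of Lemma \ref{lem:constancycrit}: the connecting map $\underline{(K' \otimes_F F/I)} \to \underline{K' \otimes_F I}[1]$ is a global section of a constant complex $\underline{\R\Hom_{F}(K'\otimes_F F/I, K'\otimes_F I[1])}$ over $X_i$, so \'etale-locally it is pulled back from a map over a point, whence $K|_{X_i}$ is \'etale-locally constant. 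Combining with the perfectness of stalks, $K|_{X_i}$ is locally constant with perfect values, so $K$ is constructible along $\{X_i\}$, and then Lemma \ref{lem:conslocalet} descends this back down from the \'etale cover we passed to.

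The main obstacle I anticipate is the bookkeeping needed to produce a \emph{common} stratification: a priori $K \otimes_F F/I$ is constructible along one stratification and $K \otimes_F I$ along another, and one must refine to a common one while keeping track that $K \otimes_F I \simeq (K \otimes_F F/I)\otimes_{F/I} I$ inherits constructibility over $F/I$ from that of $K \otimes_F F/I$ (using that $I$, as an $F/I$-module, need not be free, so one needs $- \otimes_{F/I} I$ to preserve constructibility, which holds because any $F/I$-module has finite flat dimension issues aside — here it is cleanest to note that $I$ is a finitely generated $F/I$-module when $F$ is noetherian, or simply to argue directly since $I$ is a discrete module and tensoring a locally-constant-perfect complex with a fixed module keeps it locally constant, though perhaps no longer perfect, which is harmless since we only need the cohomology sheaves constant on strata). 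Everything else is a routine combination of the lemmas already proved in \S\ref{ss:ConsEt}.
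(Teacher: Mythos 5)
Your proof is correct and follows essentially the same route as the paper: reduce to $I^2=0$, make $K\otimes_F F/I$ constant on strata after \'etale covers, deduce constancy of $K$ from the triangle $K\otimes_F I \to K \to K\otimes_F F/I$ via Lemma \ref{lem:globalsectionsconstant} (the paper invokes Lemma \ref{lem:constancycrit} for this step), and conclude perfectness by the Nakayama-type argument for perfect complexes modulo a nilpotent ideal. The only blemish is the label ``descending induction'' for what is an ascending induction on $k$ (from $K\otimes_F F/I$ to $K\otimes_F F/I^n = K$); the logic itself is fine.
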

\begin{proof}
	We may assume $I^2 = 0$. By devissage, we may assume $K_1 = K \otimes_F F/I$ is locally constant with perfect value $L_1 \in D_\perf(F/I)$. By passage to an \'etale cover, we may assume $K_1 \simeq \underline{L_1}$. After further coverings, Lemma \ref{lem:constancycrit} shows $K \simeq \underline{L}$ for some $L \in D(F)$.  Since $L \otimes_F F/I \simeq L_1$ is perfect, so is $L$ (by the characterization of perfect complexes as compact objects of $D(F)$ and the $5$ lemma).
\end{proof}

\begin{lemma}
	\label{lem:conslocalproet}
	Constructibility is local in the pro-\'etale topology on $X$, i.e., given $K \in D(X_\et,F)$, if there exists a cover $\{f_i:X_i \to X\}$ in $X_\proet$ with $f_i^*K$ constructible, then $K$ is constructible.
\end{lemma}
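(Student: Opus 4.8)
The plan is to reduce the pro-\'etale statement to the already-established \'etale-local statement (Lemma \ref{lem:conslocalet}) via Theorem \ref{t:WeaklyVsIndEtale}, which says that any weakly \'etale map becomes ind-\'etale after a faithfully flat ind-\'etale base change, and then to handle ind-\'etale covers by a limiting argument. Since constructibility is already Zariski-local on the target, I may assume $X = \Spec(A)$ is affine, and, by quasicompactness of the pro-\'etale topology, that there is a single faithfully flat weakly \'etale $X$-scheme $f\colon Y\to X$ with $f^*K$ constructible; moreover I may take $Y$ affine. By Theorem \ref{t:WeaklyVsIndEtale}, there is a faithfully flat ind-\'etale $g\colon Y \to Y'$ with $f\circ g$ ind-\'etale; since $(f\circ g)^*K = g^*(f^*K)$ is constructible once we know constructibility is preserved under ind-\'etale pullback (which it is, being preserved under \'etale pullback and filtered colimits of rings — and $g$ is ind-\'etale), we are reduced to the case where $f\colon Y = \Spec(B) \to X = \Spec(A)$ itself is faithfully flat and ind-\'etale, with $B = \colim_i B_i$ a filtered colimit of \'etale $A$-algebras.

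The key step is then a spreading-out argument. Write $B = \colim_i B_i$. The constructible complex $f^*K \in D_\cons(Y_\et,F)$ is, by Lemma \ref{lem:consetalefiltration}, built from finitely many pieces of the form $k_! L$ with $k$ a constructible locally closed immersion into $Y$ and $L$ locally constant with perfect values along a finite \'etale cover of the stratum. All of this finite data — the finite stratification of $Y$, the finite \'etale covers trivializing the locally constant pieces, and the perfect $F$-complexes that are the values — involves only finitely presented information over $Y = \Spec(B)$, and hence descends to some $Y_{i_0} = \Spec(B_{i_0})$: there is an index $i_0$, a constructible complex $K_{i_0} \in D_\cons(Y_{i_0,\et},F)$, and an identification $f^*K \simeq g_{i_0}^* K_{i_0}$, where $g_{i_0}\colon Y \to Y_{i_0}$ is the canonical map. (Here one uses that $Y_\et$ and its constructible subsets, finite \'etale covers, etc., are the filtered colimit/limit of those of the $Y_i$, together with the fact that $\Hom$-sets of constructible complexes, being built from perfect complexes via Lemma \ref{lem:globalsectionsconstant}, commute with the relevant filtered colimits so that the identification $f^*K \simeq g_{i_0}^*K_{i_0}$ can be arranged to descend.)

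Now $A \to B_{i_0}$ is \'etale but need not be faithfully flat; however, $\Spec(B_{i_0})$ is covered by a finite union of basic opens together with whatever is needed — more simply, since $\Spec(B)\to\Spec(A)$ is surjective and factors through $\Spec(B_{i_0})\to\Spec(A)$, the latter is also surjective, hence $A\to B_{i_0}$ is a faithfully flat \'etale map. Thus $\{Y_{i_0}\to X\}$ is an \'etale cover of $X$ with $g_{i_0}$-pullback $f^*K \simeq g_{i_0}^*K_{i_0}$; but we want constructibility of $K$ itself on $X_\et$ from the constructibility of $K_{i_0}$ on $Y_{i_0,\et}$. Wait — we must be careful: $K_{i_0}$ is not a priori the pullback of $K$, only $f^* K \simeq g_{i_0}^* K_{i_0}$. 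To conclude, apply Lemma \ref{lem:conslocalet} directly: since $g_{i_0}\colon Y_{i_0}\to X$ is a faithfully flat \'etale map and $g_{i_0}^* K$ need not literally equal $K_{i_0}$, instead note that it suffices to check $g_{i_0}^*K$ is constructible, and we can verify this after the further \'etale surjection $f\colon Y\to Y_{i_0}$, where $f'^{*}(g_{i_0}^* K) = f^* K$ is constructible; hence by Lemma \ref{lem:conslocalet} applied to $Y\to Y_{i_0}$, $g_{i_0}^*K$ is constructible on $Y_{i_0,\et}$, and then by Lemma \ref{lem:conslocalet} again applied to $Y_{i_0}\to X$, $K$ is constructible on $X_\et$.

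The main obstacle I anticipate is the spreading-out/descent-of-finite-data step: making precise that a constructible complex over $\Spec(\colim_i B_i)$, together with a specified quasi-isomorphism to the pullback of a fixed complex, descends to some finite stage $B_{i_0}$. This requires knowing that $D_\cons(-_\et,F)$ is compatible with filtered colimits of affine schemes both at the level of objects and at the level of morphisms; the object-level statement follows from Lemma \ref{lem:consetalefiltration} plus the compatibility of constructible strata and finite \'etale covers with limits of schemes (standard limit formalism, e.g. EGA IV.8), and the morphism-level statement follows from Lemma \ref{lem:globalsectionsconstant} and the fact that $\R\Hom$ between constructible complexes is computed by perfect complexes, whose cohomology commutes with filtered colimits. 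Everything else is a formal combination of Theorem \ref{t:WeaklyVsIndEtale} and Lemma \ref{lem:conslocalet}.
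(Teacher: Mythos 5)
Your overall strategy --- reduce to a single affine ind-\'etale cover $f\colon Y=\Spec(\colim_i B_i)\to X$ via Theorem \ref{t:WeaklyVsIndEtale}, spread out $f^*K$ to a constructible $K_{i_0}$ on some $Y_{i_0}=\Spec(B_{i_0})$, and then finish with Lemma \ref{lem:conslocalet} --- is the paper's strategy. But the concluding step has a genuine gap: you apply Lemma \ref{lem:conslocalet} to the map $Y\to Y_{i_0}$, calling it an ``\'etale surjection.'' It is not \'etale; it is the ind-\'etale (hence pro-\'etale) map $\Spec(\colim_i B_i)\to\Spec(B_{i_0})$. Checking constructibility of $g_{i_0}^*K$ on $Y_{i_0}$ after pullback along $Y\to Y_{i_0}$ is therefore exactly an instance of the lemma you are proving, with $Y_{i_0}$ in place of $X$ --- the argument is circular at that point, and the complex $K_{i_0}$ you constructed plays no role in it.

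The correct finish, which you gesture at in your parenthetical about Hom-sets commuting with filtered colimits but never execute, is to descend the \emph{isomorphism} rather than just the object: since $\R\Gamma(Y,-)=\colim_j\R\Gamma(Y_j,-)$ on bounded-below complexes (Corollary \ref{cor:pushpull}) and $\R\Hom$ out of a constructible complex commutes with such uniformly bounded-below filtered colimits (Lemma \ref{lem:constructiblealmostcompact}, cf.\ Lemma \ref{lem:internalhometproet}), the isomorphism $g_{i_0}^*K_{i_0}\to f^*K$ over $Y$ is the pullback of a map $K_{i_0}|_{Y_{i_1}}\to K|_{Y_{i_1}}$ defined over some finite stage $Y_{i_1}$; its cone pulls back to zero along the surjective weakly \'etale map $Y\to Y_{i_1}$ and is hence already zero, so $K|_{Y_{i_1}}$ is constructible and one application of Lemma \ref{lem:conslocalet} to the faithfully flat \'etale map $Y_{i_1}\to X$ concludes. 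This is precisely the last sentence of the paper's proof, which also streamlines the spreading-out: it first descends only the stratification, replaces $X$ by the \'etale cover $Y_i$ and then by a stratum, and so only ever has to descend an isomorphism $f^*\underline{L}\simeq f^*K$ with $L$ perfect, where the colimit argument is immediate.
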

\begin{proof}
	We may assume $X$ is affine, and that there exists a pro-\'etale affine $f:Y = \lim_i Y_i \to X$ covering $X$ with $f^*K$ constructible. The stratification on $Y$ witnessing the constructibility of $f^*K$ is defined over some $Y_i$. Hence, after replacing $X$ by an \'etale cover, we may assume that there exists a stratification $\{X_i \hookrightarrow X\}$ such that $f^*K$ is constant with perfect values over $f^{-1}(X_i)$. Replacing $X$ by $X_i$, we may assume $f^*K \simeq f^* \underline{L}$ with $L \in D_\perf(F)$. Then the isomorphism $f^* \underline{L} \to f^* K$ is defined over some $Y_i$ (since $L$ is perfect), so $K|_{Y_i}$ is constant.
\end{proof}

\begin{lemma}
	\label{lem:constructiblealmostcompact}
If $K \in D_\cons(X_\et,F)$, then $\R\Hom(K,-)$ commutes with all direct sums with terms in $D^{\geq 0}(X_\et,F)$.
\end{lemma}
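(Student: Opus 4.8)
The plan is to reduce the statement to two essentially independent assertions --- one purely sheaf-theoretic and local on $X_\et$, the other about global sections over a quasicompact scheme --- and to settle the first by d\'evissage to the locally constant perfect case.

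First I would write $\R\Hom(K,-) = \R\Gamma(X_\et, \underline{\R\Hom}(K,-))$ and observe that it suffices to prove: (I) the natural map $\underline{\R\Hom}(K, \bigoplus_i M_i) \to \bigoplus_i \underline{\R\Hom}(K, M_i)$ is an isomorphism whenever all $M_i \in D^{\geq 0}(X_\et,F)$; and (II) for every fixed integer $m$, the functor $\R\Gamma(X_\et,-)$ commutes with arbitrary direct sums on $D^{\geq m}(X_\et,F)$. Assertion (II) is standard: for $X$ qcqs the topos $X_\et$ is coherent, so $H^n(X_\et,-)$ commutes with filtered colimits --- in particular with direct sums --- of abelian sheaves, and the hypercohomology spectral sequence converges uniformly once one stays in a fixed half-plane, which upgrades this to complexes. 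These two assertions suffice because a constructible complex is bounded (a perfect complex on each stratum of a finite stratification is bounded), say $K \in D^{\leq b}(X_\et,F)$; then $\underline{\R\Hom}(K,M_i) \in D^{\geq -b}$ for all $i$, which is the uniform lower bound required to invoke (II).

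For (I), the key point is that the collection of $K$ for which (I) holds (for every family $\{M_i\}$ in $D^{\geq 0}$) is a triangulated subcategory closed under retracts, since $\underline{\R\Hom}(-,M)$ is exact. By Lemma \ref{lem:constriangulated} it is therefore enough to treat $K = k_!L$ with $k: Y \hookrightarrow X$ a constructible locally closed immersion and $L$ locally constant with perfect values on $Y_\et$. Combining the (\'etale analogue of the) projection formula $P \otimes^L k_!L \simeq k_!(k^*P \otimes^L L)$ of Lemma \ref{lem:shriekpushforwardlocallyclosed}(3) with the adjunction $k_! \dashv k^!$ of Lemma \ref{lem:shriekpushforwardlocallyclosed}(5) and Yoneda yields an identification
\[ \underline{\R\Hom}_{D(X_\et,F)}(k_!L, M) \simeq k_*\, \underline{\R\Hom}_{D(Y_\et,F)}(L, k^!M), \]
where $k_*$ is the right adjoint of $k^*$. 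So (I) for $k_!L$ follows from three facts: (i) $k^!$ preserves $D^{\geq 0}$ and commutes with direct sums of such objects; (ii) $\underline{\R\Hom}_{D(Y_\et,F)}(L,-)$ commutes with arbitrary direct sums; (iii) $k_*$ commutes with direct sums of uniformly bounded-below complexes. Assertion (ii) is local on $Y_\et$, where $L$ becomes the constant complex on a perfect $L' \in D_\perf(F)$; since $L'$ is a retract of a bounded complex of finite free $F$-modules and $\underline{\R\Hom}(\underline{F}^{\oplus n},-) \simeq (-)^{\oplus n}$, the claim follows by stability under cones and retracts. For (i) and (iii) I would factor $k = k_U \circ k_Z$ with $k_U: U \hookrightarrow X$ a quasicompact open immersion and $k_Z: Y \hookrightarrow U$ a closed immersion whose open complement $j_0: V \hookrightarrow U$ is again a quasicompact open immersion (possible since $Y$ is constructible: $Y = W \cap Z$ with $W \subset X$ quasicompact open and $Z$ constructible closed, so take $U = W$). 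Then $k^! = k_Z^! k_U^*$ and $k_* = (k_U)_*(k_Z)_*$. Here $k_U^* = k_U^!$ is exact and commutes with all colimits; $(k_Z)_*$ is exact and commutes with all colimits (having the right adjoint $k_Z^!$); $(k_U)_*$ commutes with direct sums of sheaves since $k_U$ is qcqs, hence with direct sums of uniformly bounded-below complexes by the Leray spectral sequence, giving (iii); and for $k_Z^!$ the closed/open triangle $(k_Z)_*k_Z^! \to \id \to (j_0)_*j_0^*$ together with $k_Z^*(k_Z)_* \simeq \id$ gives $k_Z^!(N) \simeq k_Z^*\,\mathrm{fib}\bigl(N \to (j_0)_*j_0^*N\bigr)$; since $j_0$ is qcqs, $(j_0)_*j_0^*$ preserves $D^{\geq 0}$ and commutes with direct sums of such complexes, and hence so does $k_Z^!$, giving (i).

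The hard part is the homological bookkeeping underlying (II), (i) and (iii): commuting pushforwards (or global sections) past an infinite direct sum is only legitimate on a uniformly bounded-below subcategory, so one must track carefully that the coconnectivity hypothesis $M_i \in D^{\geq 0}$, together with the boundedness of $K$ and the bounded amplitude of $L$, propagates through $k^!$, $\underline{\R\Hom}(L,-)$ and $k_*$ so that everything stays in a fixed half-plane. The hypothesis $M_i \in D^{\geq 0}$ in the lemma is exactly what makes this possible; the conclusion genuinely fails without some such restriction.
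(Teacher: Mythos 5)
Your argument is correct, and its skeleton---reduce the external statement to an internal one plus coherence of $X_\et$, then d\'evissage via Lemma \ref{lem:constriangulated} to $K = k_!L$ with $L$ locally constant with perfect values---matches the paper's. The difference is in how $k_!L$ is handled. The paper never invokes $k^!$: it defines the class $\calC_X$ of good objects directly via $\underline{\R\Hom}$, checks that constant perfect complexes lie in it, that $j_!$ for qcqs \'etale $j$ preserves it, and that membership is \'etale-local; then, after reducing to a constructible closed immersion $i$ with qcqs open complement $h$ and trivializing $L\simeq\underline{M}$ on a cover, it concludes from the triangle $h_!\underline{M}\to\underline{M}\to i_*\underline{M}$, whose first two terms are already known to be good. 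You instead route through the adjunction $\underline{\R\Hom}(k_!L,M)\simeq k_*\underline{\R\Hom}(L,k^!M)$ and verify that $k^!$, $\underline{\R\Hom}(L,-)$ and $k_*$ each commute with the relevant uniformly bounded-below direct sums; your computation of $k_Z^!N$ as $k_Z^*$ of the fibre of $N\to (j_0)_*j_0^*N$ is the dual of the paper's triangle. Both routes rest on the same two inputs (coherence of the \'etale topos of a qcqs scheme, and the open/closed decomposition); yours needs slightly more machinery (the existence of $k^!$ and the projection-formula adjunction in the \'etale setting, which---as noted at the end of \S\ref{ss:FunLocClosed}---do hold there), but in exchange it isolates exactly where the hypothesis $M_i\in D^{\geq 0}$ enters. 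One minor wording point: closure of your class under retracts follows from the direct-sum decomposition of the comparison map, not from exactness of $\underline{\R\Hom}(-,M)$; this does not affect the argument.
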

\begin{proof}
	Let $\calC_X \subset D^b(X_\et,F)$ denote the full (triangulated) subcategory spanned by those $M$ for which  $\underline{\R\Hom}(M,-)$ commutes with all direct sums in $D^{\geq 0}(X_\et,F)$. Then one checks:
	\begin{enumerate}
		\item For any $M \in D_\perf(F)$, one has $\underline{M} \in \calC_X$. 
		\item For any qcqs \'etale map $j:U \to X$, the functor $j_!$ carries $\calC_U$ to $\calC_X$.
		\item The property of lying in $\calC_X$ can be detected locally on $X_\et$.
		\item $M \in D(X_\et,F)$ lies in $\calC_X$ if and only if $\R\Hom(M|_U,-)$ commutes with direct sums in $D^{\geq 0}(U_\et,F)$ for each qcqs $U \in X_\et$.
	\end{enumerate}
By (4), it suffices to show that a constructible complex $K$ lies in $\calC_X$. By Lemma \ref{lem:consetalefiltration}, we may assume $K = k_! L$ with $k:Y \hookrightarrow X$ locally closed constructible, and $L \in D(Y_\et,F)$ locally constant with perfect values. Choose a qcqs open $j:U \hookrightarrow X$ with $i:Y \hookrightarrow U$ a constructible closed subset. Then $K = k_! L \simeq (j_! \circ i_*) L$. By (2),  it suffices to show that $i_* K \in \calC_{U}$, i.e., we reduce to the case where $k$ is a constructible closed immersion with open complement $h:V \hookrightarrow X$. The assumption on $K$ gives a qcqs \'etale cover $g:Y' \to Y$ with $g^* L \simeq \underline{M}$ for $M \in D_\perf(F)$.  By passing to a cover of $X$ refining $g$ over $Y$, using (3), we may assume that $L = \underline{M}$. Then the exact triangle
	\[ h_! \underline{M} \to \underline{M} \to K \]
	and (1) and (2) above show that $K \in \calC_X$, as wanted.
\end{proof}

	\begin{remark}
	It is crucial to impose the boundedness condition in Lemma \ref{lem:constructiblealmostcompact}: if the cohomological dimension of $X$ is unbounded, then $\R\Hom(\underline{F},-) \simeq \R\Gamma(X_\et,-)$ does not commute with arbitrary direct sums in $D(X_\et,F)$.
\end{remark}

\begin{lemma}
	\label{lem:internalhometproet}
	For $K \in D_\cons(X_\et,F)$ and $L \in D^+(X_\et,F)$, one has
	\[\nu^* \underline{\R\Hom}(K,L) \simeq \underline{\R\Hom}(\nu^* K, \nu^* L)\ .\]
\end{lemma}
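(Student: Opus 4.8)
The plan is to reduce the statement to the case where $K$ is of the special form produced by the d\'evissage results, and then check it by hand. Specifically, since $\nu^*$ is exact and $\underline{\R\Hom}(-,L)$ sends a finite filtration of $K$ to a finite filtration (with shifts) of $\underline{\R\Hom}(K,L)$, and similarly after pulling back, Lemma \ref{lem:consetalefiltration} lets us assume $K = k_! M$ with $k: Y \hookrightarrow X$ locally closed constructible and $M \in D(Y_\et,F)$ locally constant with perfect values. The question of whether the natural base-change map $\nu^* \underline{\R\Hom}(K,L) \to \underline{\R\Hom}(\nu^* K,\nu^* L)$ is an equivalence is local on $X_\proet$ (both sides are sheaves, and the map is functorial), so we may check it after restricting to a w-contractible, or at least w-strictly local, affine cover; this is where the good behaviour of $\nu^*$ will be exploited.

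Next I would handle the two pieces of $k_!$ separately, using the factorization $k_! = g_* \circ f_!$ from the proof of Lemma \ref{lem:complexeswithsupport} into a constructible closed immersion followed by a quasicompact open immersion. For the open immersion $f$: adjunction gives $\underline{\R\Hom}(f_! M, L) \simeq f_* \underline{\R\Hom}(M, f^* L)$, and on the pro-\'etale side $\underline{\R\Hom}(\nu^* f_! M, \nu^* L) \simeq \underline{\R\Hom}(f_! \nu^* M, \nu^* L) \simeq f_* \underline{\R\Hom}(\nu^* M, f^* \nu^* L)$, using $\nu^* f_! \simeq f_! \nu^*$ (Lemma \ref{lem:pbcopenclosed}, or Lemma \ref{lem:shriekpushforwardlocallyclosed}(4)) and $f^* \nu^* \simeq \nu^* f^*$ (Lemma \ref{lem:proetfunc}); so the claim for $f_! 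M$ follows from the claim for $M$ on $Y$ together with the fact that $\nu^*$ commutes with $f_*$ for $f$ open (which is immediate since $\nu^*$ commutes with $j_*$ for any $j$ in $X_\et$, by Lemma \ref{lem:funcpushforward} in the bounded case — note $\underline{\R\Hom}(M,f^*L) \in D^+$). For the closed immersion $g$: again $\underline{\R\Hom}(g_* M', L) \simeq g_* \underline{\R\Hom}(M', g^! L)$, so we are reduced to checking $\nu^*$ commutes with $g_*$ (clear, $g_* = g_!$ is exact and commutes with $\nu^*$ by Lemma \ref{lem:shriekpushforwardlocallyclosed}(4)), with $g^*$ (Lemma \ref{lem:proetfunc}), and — the delicate point — with $g^!$; alternatively one avoids $g^!$ by using the triangle $j_! j^* \to \id \to g_* g^*$ to compute $\underline{\R\Hom}(g_* M', L)$ in terms of $\underline{\R\Hom}$ against $L$ and $j_* j^* L$, reducing everything to the already-treated open case and the formula $g^* \underline{\R\Hom}(g_* M', L) \simeq \underline{\R\Hom}(M', g^* L)$.

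The genuinely local computation remaining is the base case: $X$ affine and w-strictly local, $K = \underline{M}$ with $M \in D_\perf(F)$ constant, and $L \in D^+(X_\et,F)$ arbitrary. Here $\underline{\R\Hom}(\underline{M}, L)$ can be computed using a finite complex of finite free $F$-modules representing $M$, so it is a finite homotopy-limit of copies of $L$; since $\nu^*$ is exact and commutes with finite homotopy-limits, $\nu^* \underline{\R\Hom}(\underline{M}, L)$ is the corresponding finite homotopy-limit of copies of $\nu^* L$, which is $\underline{\R\Hom}(\nu^* \underline{M}, \nu^* L) = \underline{\R\Hom}(\underline{M}, \nu^* L)$ — using that $\nu^* \underline{M} = \underline{M}$, the constant sheaf on $X_\proet$, which holds because $\nu^*$ of a constant sheaf is constant. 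The one subtlety to be careful about is that $\underline{\R\Hom}$ here is the internal Hom, so this identity of finite homotopy-limits must be checked as sheaves, i.e. after evaluating on a cofinal family of objects, but since $\nu^*$ commutes with restriction to any $U \in X_\proet$ and with finite homotopy-limits, this is formal.

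The main obstacle I anticipate is the bookkeeping around $g^!$ and the internal (as opposed to external) nature of $\underline{\R\Hom}$: one must make sure that every adjunction identity $\underline{\R\Hom}(g_* M', L) \simeq g_* \underline{\R\Hom}(M', g^! L)$ and its pro-\'etale analogue are being applied with $g^!$ interpreted compatibly on both sites, and that $\nu^*$ interacts correctly with $g^!$ — which is why I would prefer the route through the localization triangle $j_! j^* \to \id \to i_* i^*$, trading $g^!$ for $g^*$ and $j_*$, both of which commute with $\nu^*$ by results already established (Lemma \ref{lem:proetfunc}, Lemma \ref{lem:funcpushforward}, and Lemma \ref{lem:shriekpushforwardlocallyclosed}). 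The boundedness hypothesis $L \in D^+$ is used precisely to invoke Lemma \ref{lem:funcpushforward} (and Corollary \ref{cor:pushpull}) for the commutation of $\nu^*$ with \'etale pushforwards, so it should be carried through the argument without attempting to relax it.
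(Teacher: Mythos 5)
Your overall architecture (d\'evissage via Lemma \ref{lem:consetalefiltration}, adjunction along the open and closed pieces of $k$, and a constant--perfect base case) is reasonable, but the closed immersion step as written is broken. The identity $g^* \underline{\R\Hom}(g_* M', L) \simeq \underline{\R\Hom}(M', g^* L)$ is false: since $\underline{\R\Hom}(g_* M', L)$ is supported on $Z$, it equals $g_* \underline{\R\Hom}(M', g^! L)$, so applying $g^*$ yields $\underline{\R\Hom}(M', g^! L)$, and $g^! L \not\simeq g^* L$ in general (take $M' = \underline{F}$, $X = \A^1$, $Z = \{0\}$). Correspondingly, the localization-triangle shortcut does not close up: applying $\underline{\R\Hom}(g_* M', -)$ to $j_! j^* L \to L \to g_* g^* L$ leaves the term $\underline{\R\Hom}(g_* M', j_! j^* L) = g_*\underline{\R\Hom}(M', g^! j_! j^* L)$, which is nonzero precisely because $g^! j_! \neq 0$ (indeed $g^! j_! j^* L$ is the fibre of $g^! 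L \to g^* L$). So $g^!$ cannot be traded away; you must actually prove $\nu^* \circ g^! \simeq g^! \circ \nu^*$ on $D^+(X_\et,F)$. This is doable from what you already cite --- apply $\nu^*$ to the triangle $g_* g^! \to \id \to j_* j^*$ and use Lemma \ref{lem:funcpushforward} for $j_*$, Lemma \ref{lem:shriekpushforwardlocallyclosed} for $g_*$, and $g^* g_* \simeq \id$ --- but it is a missing step, not bookkeeping. A second, smaller issue: you cannot ``check after restricting to a w-strictly local cover'' in $X_\proet$, because identifying $\bigl(\nu^* \underline{\R\Hom}(K,L)\bigr)|_V$ with $\nu_V^* \underline{\R\Hom}(K|_V, L|_V)$ for $V \to X$ merely weakly \'etale is itself an instance of the commutation being proved; localize only on $X_\et$, which suffices to make $M$ constant by Lemma \ref{lem:constancycrit}.

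For comparison, the paper's proof is entirely different and avoids all of this: evaluate both sides on a pro-\'etale affine $U = \lim_i U_i$; by Corollary \ref{cor:pushpull} the left side becomes $\colim_i \R\Hom(j_i^* K, j_i^* L)$ while the right side is $\R\Hom(j^* K, j^* L)$, and after adjunction both are $\R\Hom(K,-)$ applied to $j_{i,*} j_i^* L$, respectively its colimit; the conclusion is then exactly Lemma \ref{lem:constructiblealmostcompact} (constructible complexes are ``almost compact''), with $L \in D^+$ used to keep all terms uniformly bounded below. That route needs no d\'evissage and no $g^!$.
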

\begin{proof}
	Fix $U = \lim_i U_i \in X^\aff_\proet$, and write $j:U \to X$ and $j_i:U_i \to X$ for the structure maps. By evaluating on pro-\'etale affines, it suffices to check $\R\Hom(j^* K, j^* L) \simeq \colim_i \R\Hom(j_i^* K, j_i^* L)$. By adjunction, this is equivalent to requiring $\R\Hom(K, j_* j^* L) \simeq \colim_i \R\Hom(K, j_{i,\ast} j_i^* L)$. If $L \in D^{\geq k}(X_\et)$, then $j_{i,\ast} j_i^* L \in D^{\geq k}(X_\et)$ for all $i$, so the claim follows from Lemma \ref{lem:constructiblealmostcompact}.
\end{proof}

\subsection{Constructible complexes as compact objects}
\label{ss:ConsCompact}

The material of this section is not used in the sequel. However, these results do not seem to be recorded in the literature, so we include them here. We fix a qcqs scheme $X$, and a ring $F$. We assume that all affine $U \in X_\et$ have $F$-cohomological dimension $\leq d$ for some fixed $d \in \N$. The main source of examples is:

\begin{example}
If $X$ is a variety over a separably closed field $k$ and $F$ is torsion, then it satisfies the above assumption. Indeed, Artin proved that $H^i(U_\et,F) = 0 $ for $i > \dim(U)$ if $U$ is an affine $k$-variety.
\end{example}

Recall that $K \in D(X_\et,F)$ is compact if $\R\Hom(K,-)$ commutes with arbitrary direct sums. Let $D_c(X_\et,F) \subset D(X_\et,F)$ be the full subcategory of compact objects. Our goal is to identify $D_c(X_\et,F)$ with the category of constructible complexes.  We start by recording a completeness property of $D(X_\et,F)$:

\begin{lemma}
	For any qcqs $U \in X_\et$, the functor $\R\Gamma(U_\et,-)$ has finite $F$-cohomological dimension.
\end{lemma}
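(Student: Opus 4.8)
The claim is that for a qcqs $U \in X_\et$, the functor $\R\Gamma(U_\et,-)$ has finite $F$-cohomological dimension, given the standing assumption that every \emph{affine} $V \in X_\et$ has $F$-cohomological dimension $\leq d$. The strategy is a standard \v{C}ech-to-derived-functor spectral sequence argument reducing the qcqs case to the affine case. First I would fix a finite cover of $U$ by affine opens $U_1,\dots,U_n$ (possible since $U$ is quasicompact, and each intersection of the $U_i$ is qcqs, in fact affine once we note $U$ — being qcqs over a qcqs scheme — has affine intersections of its affine opens after refining; more carefully, since $U$ is quasi-separated, finite intersections $U_{i_0} \cap \dots \cap U_{i_p}$ are quasicompact, hence covered by finitely many affines). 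I would then induct on the minimal number $n$ of affines needed to cover $U$.

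For the inductive step, write $U = V \cup W$ with $V$ a union of $n-1$ affines and $W$ affine, so $V \cap W$ is covered by $n-1$ affines (the intersections $U_i \cap W$). The Mayer–Vietoris triangle
\[ \R\Gamma(U_\et,G) \to \R\Gamma(V_\et,G) \oplus \R\Gamma(W_\et,G) \to \R\Gamma((V\cap W)_\et,G) \]
for $G \in \Ab(X_\et)$ (or more generally $G \in D^{\geq 0}(X_\et,F)$), together with the inductive hypothesis bounding the cohomological dimensions of $\R\Gamma(V_\et,-)$, $\R\Gamma(W_\et,-)$, and $\R\Gamma((V\cap W)_\et,-)$ by some $d(n-1)$, gives a bound of $d(n-1)+1$ for $\R\Gamma(U_\et,-)$. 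Chasing the long exact sequence: if $G \in D^{[a,b]}$, each of the three outer terms lives in $D^{[a, b + d(n-1)]}$, hence $\R\Gamma(U_\et,G) \in D^{[a-1, b+d(n-1)+1]}$; applying this to $G$ concentrated in a single degree shows $H^i(U_\et,G) = 0$ for $i > d(n-1)+1$. The base case $n=1$ is exactly the standing hypothesis, giving bound $d$.

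Alternatively — and perhaps cleaner to write — one invokes the \v{C}ech-to-cohomology spectral sequence $E_1^{p,q} = \prod_{i_0 < \dots < i_p} H^q((U_{i_0}\cap\dots\cap U_{i_p})_\et, G) \Rightarrow H^{p+q}(U_\et,G)$ associated to the finite affine cover $\{U_i\}$. Here $p$ ranges over $0,\dots,n-1$, and each $U_{i_0}\cap\dots\cap U_{i_p}$ is qcqs; refining, one reduces each such term to a finite affine cover and iterates, but it is more economical to note that a qcqs scheme admitting a cover by $n$ affines with all intersections again so-coverable has cohomological dimension bounded by a function of $n$ and $d$ only — which is precisely what the Mayer–Vietoris induction above establishes. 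I would present the Mayer–Vietoris induction since it is self-contained.

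\textbf{Main obstacle.} The only subtlety is bookkeeping: ensuring that the finite intersections appearing in the cover of $U$ are themselves qcqs (so the inductive hypothesis applies to them) — this uses quasi-separatedness of $U$ — and tracking that the resulting cohomological-dimension bound depends only on $d$ and on the number of affines in a cover of $U$, not on $G$. Neither is a genuine difficulty; the statement is essentially formal once the affine case is granted.
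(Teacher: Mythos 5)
Your proof is correct and is essentially the paper's own argument: a Mayer--Vietoris induction on the number of affines in a cover, reducing to the affine case supplied by the standing hypothesis, with the separated/quasi-separated bookkeeping handled exactly as the paper's remark about ``passing through the separated case first.''
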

\begin{proof}
Assume first that $U = V_1 \cup V_2$ with $V_i \subset U$ open affines, and $W := V_1 \cap V_2$ affine. Then one has an exact triangle
\[ \R\Gamma(U_\et,-) \to \R\Gamma(V_{1,\et},-) \oplus \R\Gamma(V_{2,\et},-) \to \R\Gamma(W_\et,-) \]
which gives the desired finiteness. The general case is handled by induction using a similar argument, by passing through the separated case first.
\end{proof}

\begin{lemma}
The category $D(X_\et,F)$ is left-complete.
\end{lemma}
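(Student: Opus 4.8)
The plan is to apply the finite-dimensionality criterion for left-completeness already recorded in the excerpt, namely Proposition~\ref{prop:leftcompletecrit}(2): if every cohomology sheaf $\calH^i(K)$ of a complex $K \in D(X_\et,F)$ has $F$-cohomological dimension $\leq d$ locally on $X_\et$ (for a fixed $d$ independent of $i$), then $D(X_\et,F)$ is left-complete. So the whole argument reduces to verifying this uniform local cohomological dimension hypothesis for \emph{every} sheaf of $F$-modules on $X_\et$.

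First I would recall the standing assumption of this subsection: every affine $U \in X_\et$ has $F$-cohomological dimension $\leq d$, i.e. $H^i(U_\et, M) = 0$ for all $i > d$ and all $F$-module sheaves $M$. Since the affine objects of $X_\et$ form a generating family for the topos $\Shv(X_\et)$ (as $X$ is qcqs, hence has a basis of affine opens, and \'etale maps from affines are cofinal), the condition ``$\calH^i(K)$ has $F$-cohomological dimension $\leq d$'' holds locally on $X_\et$ for \emph{any} $F$-module sheaf — in fact it holds on the whole generating family. Thus the hypothesis of Proposition~\ref{prop:leftcompletecrit}(2) is satisfied with this same $d$ for any $K \in D(X_\et,F)$, and left-completeness follows immediately. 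I would phrase the proof in essentially one or two sentences: invoke the preceding lemma (which upgrades the affine bound to arbitrary qcqs $U \in X_\et$, though only the affine case is strictly needed), observe that affines generate, and cite Proposition~\ref{prop:leftcompletecrit}(2).

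There is no real obstacle here; the content has all been front-loaded into the standing assumption and into Proposition~\ref{prop:leftcompletecrit}. The only point requiring a word of care is making sure ``cohomological dimension $\leq d$ locally on $\calX$'' is interpreted correctly — it means there is a cover of the final object by objects $U$ with $\Gamma(U,-)$ of cohomological dimension $\leq d$ on the relevant abelian category — and that the assumption on affines supplies exactly such a cover. I would write it as follows.

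\begin{proof}
By the preceding lemma (or already by hypothesis in the affine case), for every qcqs $U \in X_\et$ the functor $\R\Gamma(U_\et,-)$ has $F$-cohomological dimension $\leq d'$ for some $d'$ depending only on $X$ and $F$; in particular this holds for all affine $U \in X_\et$. Since $X$ is qcqs, the affine objects of $X_\et$ form a generating family for $\Shv(X_\et)$, so every $F$-module sheaf on $X_\et$ has $F$-cohomological dimension $\leq d'$ locally on $X_\et$. In particular, for any $K \in D(X_\et,F)$, each cohomology sheaf $\calH^i(K)$ has $F$-cohomological dimension $\leq d'$ locally on $X_\et$, with $d'$ independent of $i$. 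Hence $D(X_\et,F)$ is left-complete by Proposition \ref{prop:leftcompletecrit}(2).
\end{proof}
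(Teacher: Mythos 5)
Your proof is correct and follows exactly the paper's route: the paper's own proof is the one-line "This follows from Proposition \ref{prop:leftcompletecrit}," with the verification of the hypothesis (the standing assumption that affines in $X_\et$ have $F$-cohomological dimension $\leq d$, plus the fact that affines cover) left implicit. You have simply spelled out that verification, which is fine.
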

\begin{proof}
	This follows from Proposition \ref{prop:leftcompletecrit}.
\end{proof}

\begin{lemma}
	\label{lem:shriekpushforwardcompact}
For any $j:U \to X$ in $X_\et$, the pushforward $j_!:D(U_\et,F) \to D(X_\et,F)$ preserves compact objects.
\end{lemma}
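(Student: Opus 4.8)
The plan is to prove that $j_!:D(U_\et,F)\to D(X_\et,F)$ preserves compact objects by exhibiting a right adjoint to $j_!$ that commutes with arbitrary direct sums. Since $j$ is \'etale, the functor $j^*:D(X_\et,F)\to D(U_\et,F)$ is exact, and it is left adjoint to $j_*$ but also \emph{right} adjoint to $j_!$: indeed for $K\in D(U_\et,F)$ and $L\in D(X_\et,F)$ one has $\R\Hom_{D(X_\et,F)}(j_!K,L)\simeq \R\Hom_{D(U_\et,F)}(K,j^*L)$, which is the defining adjunction of $j_!$ along an \'etale (in particular, weakly \'etale) map. So the right adjoint of $j_!$ is $j^*$.

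Now I would argue as follows. Let $K\in D(U_\et,F)$ be compact. To show $j_!K$ is compact, take an arbitrary family $\{L_\alpha\}$ in $D(X_\et,F)$; we must check that the natural map $\oplus_\alpha \R\Hom(j_!K,L_\alpha)\to \R\Hom(j_!K,\oplus_\alpha L_\alpha)$ is an isomorphism. Using the adjunction $\R\Hom(j_!K,-)\simeq\R\Hom(K,j^*(-))$, this map is identified with $\oplus_\alpha\R\Hom(K,j^*L_\alpha)\to \R\Hom(K,j^*(\oplus_\alpha L_\alpha))$. Since $j^*$ is a left adjoint (to $j_*$) it commutes with all colimits, in particular with direct sums, so $j^*(\oplus_\alpha L_\alpha)\simeq \oplus_\alpha j^*L_\alpha$; and then the compactness of $K$ in $D(U_\et,F)$ finishes the argument. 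This is essentially a formal manipulation once one knows $j_!$ has a colimit-preserving right adjoint.

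The one genuinely non-formal point to address is the existence and the stated behaviour of $j_!$ at the unbounded derived level and the identification of its right adjoint with $j^*$. For $j$ \'etale, $j_!$ on abelian sheaves is exact (extension by zero), hence descends to $D(U_\et,F)\to D(X_\et,F)$ without needing any boundedness or finiteness hypothesis, and the adjunction $(j_!,j^*)$ on the abelian level derives directly because $j_!$ is exact. Thus no use of the standing hypothesis on cohomological dimension is required for this particular lemma; I would simply note that the exactness of $j_!$ for \'etale $j$ makes everything go through verbatim. (Alternatively, one may invoke the corresponding statement already established in the pro-\'etale setting: the analogue of Lemma~\ref{lem:opencloseddevissage} and the remark following Lemma~\ref{lem:shriekpushforwardcont} give $j_!$ a left adjoint $j^\#$ in $D(X_\proet)$, and here in $D(X_\et)$ one has the simpler picture that $j^*$ itself is both the left adjoint of $j_*$ and the right adjoint of $j_!$.)

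I expect essentially no obstacle beyond being careful that $j^*$ is genuinely the right adjoint of $j_!$ (not merely on bounded-below objects) — the subtlety in this section is always whether identities hold for unbounded complexes, but for \'etale $j_!$ this is unproblematic precisely because $j_!$ is exact. So the proof is: (1) recall $j_!$ is exact on abelian sheaves for $j$ \'etale, hence well-defined on unbounded derived categories with $(j_!,j^*)$ an adjoint pair; (2) recall $j^*$ preserves arbitrary direct sums, being a left adjoint; (3) combine with compactness of $K$ via the adjunction to conclude $j_!K$ is compact.
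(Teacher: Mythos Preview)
Your proof is correct and is exactly the paper's approach: the paper's own proof reads ``Formal by adjunction since $j^*$ preserves all direct sums,'' which is precisely your argument spelled out in detail. Your additional care about unboundedness and the exactness of $j_!$ is fine but unnecessary here, since the paper treats this step as entirely formal.
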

\begin{proof}
Formal by adjunction since $j^*$ preserves all direct sums.
\end{proof}

\begin{lemma}
	\label{lem:pushforwardcompactopen}
	For each qcqs $j:U \to X$ in $X_\et$, we have:
	\begin{enumerate}
		\item The object $j_! \underline{F} \in D(X_\et,F)$ is compact.
		\item The functor $j_*:  D(U_\et,F) \to D(X_\et,F)$ commutes with all direct sums.
	\end{enumerate}
\end{lemma}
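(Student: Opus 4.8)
\textbf{Proof plan for Lemma \ref{lem:pushforwardcompactopen}.}

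The plan is to prove (1) first, then deduce (2) formally. For (1), I would unwind compactness: $j_! \underline{F}$ is compact iff $\R\Hom_{D(X_\et,F)}(j_! \underline{F},-) \simeq \R\Gamma(U_\et,-) \circ j^*$ commutes with arbitrary direct sums. Since $j^*$ commutes with all colimits (it has both adjoints), it suffices to show that $\R\Gamma(U_\et,-): D(U_\et,F) \to D(F)$ commutes with arbitrary direct sums. The key inputs are the two preceding lemmas: $\R\Gamma(U_\et,-)$ has finite $F$-cohomological dimension (say $\leq c$ for some $c$ depending on $U$), and $D(U_\et,F)$ is left-complete. Given a family $\{K_\alpha\}$ in $D(U_\et,F)$, write $K = \bigoplus_\alpha K_\alpha$. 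One wants the natural map $\bigoplus_\alpha \R\Gamma(U_\et,K_\alpha) \to \R\Gamma(U_\et,K)$ to be an equivalence.

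The argument I would run: first reduce to the bounded-above case by left-completeness, or rather handle the cohomology sheaf level directly. For sheaves (i.e. objects of $\Ab(U_\et)$), direct sums are exact and, since $U$ is qcqs, $H^i(U_\et,-)$ commutes with filtered colimits — hence with arbitrary direct sums — by the standard fact that cohomology of a qcqs scheme (or more precisely a coherent topos) commutes with filtered colimits of abelian sheaves. This handles a single sheaf placed in one degree. For a general bounded complex, one induces on the amplitude using the stupid truncation triangles and the five lemma. For a bounded-above complex $K \simeq \colim_n \sigma^{\geq -n} K$ (stupid truncation), since $\R\Gamma(U_\et,-)$ has cohomological dimension $\leq c$, computing a fixed cohomology group $H^i(\R\Gamma(U_\et,K))$ only involves the finitely many stupid truncations in a window of width $c+1$ around degree $i$, so the bounded case suffices degreewise; direct sums commute with each such finite computation and with the colimit, giving the claim for $K \in D^-(U_\et,F)$. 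Finally, for unbounded $K$, use left-completeness: $K \simeq \R\lim_m \tau^{\geq -m} K$, and again because $\R\Gamma(U_\et,-)$ and $\R\lim$ both have bounded cohomological dimension, a fixed $H^i$ of $\R\Gamma(U_\et, \bigoplus K_\alpha)$ is computed from a bounded-below truncation, reducing to the already-treated case. This establishes (1).

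For (2), I would argue formally. Let $\{K_\alpha\}$ be a family in $D(U_\et,F)$; we must show $\bigoplus_\alpha j_* K_\alpha \to j_*(\bigoplus_\alpha K_\alpha)$ is an equivalence in $D(X_\et,F)$. This can be checked after applying $\R\Hom(C,-)$ for $C$ ranging over a generating set of compact objects of $D(X_\et,F)$; by Lemma \ref{lem:shriekpushforwardcompact} and part (1), the objects $k_! \underline{F}$ for $k:V \to X$ qcqs \'etale (equivalently, for $V$ affine \'etale over $X$) are compact and generate. For such $C = k_! \underline{F}$ with $k:V \to X$, adjunction gives $\R\Hom(k_!\underline{F}, j_* K_\alpha) \simeq \R\Gamma(V_\et, k^* j_* K_\alpha)$, and by base change along the \'etale (hence flat) square formed by $j$ and $k$ (smooth/\'etale base change for pushforward along a qcqs \'etale map, which holds since $j$ is qcqs \'etale), $k^* j_* K_\alpha \simeq j'_* (k')^* K_\alpha$ where $j', k'$ are the base-changed maps and $V \times_X U$ is again qcqs \'etale over $V$. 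So $\R\Hom(C, j_* K_\alpha) \simeq \R\Gamma((V\times_X U)_\et, (k')^* K_\alpha)$, and the same computation applies to $j_*(\bigoplus K_\alpha)$. The claim then reduces exactly to the statement that $\R\Gamma((V \times_X U)_\et,-)$ commutes with arbitrary direct sums, which is what was proved in the course of part (1).

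The main obstacle I anticipate is the unbounded case in part (1): one must be careful that commuting $\R\Gamma(U_\et,-)$ past an infinite direct sum is legitimate for genuinely unbounded complexes, which is exactly where the finite cohomological dimension hypothesis (and hence left-completeness, via Proposition \ref{prop:leftcompletecrit}) is essential — without it the statement is false, as the remark after Lemma \ref{lem:constructiblealmostcompact} warns. Organizing the truncation/dimension-counting bookkeeping cleanly (bounded $\Rightarrow$ bounded-above via stupid truncations $\Rightarrow$ unbounded via Postnikov truncations) is the one place requiring genuine care; everything else is formal adjunction and base change.
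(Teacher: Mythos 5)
Your proposal is correct and follows essentially the same route as the paper: reduce (1) to showing $\R\Gamma(U_\et,-)$ commutes with direct sums, use the finite cohomological dimension of $U$ together with left-completeness to replace an unbounded complex by a Postnikov truncation $\tau^{\geq -n}$ in any fixed cohomological degree, and conclude from the commutation of qcqs cohomology with filtered colimits in the bounded-below case; part (2) is then the same formal check on sections over qcqs $V \in X_\et$ (your compact-generator phrasing is equivalent to the paper's direct evaluation of $\R\Gamma(V_\et,-)$, and the base change you invoke is just the trivial compatibility of pushforward with restriction along the \'etale localization $V_\et \to X_\et$). The intermediate detour through bounded-above complexes and stupid truncations is superfluous — the Postnikov reduction to the uniformly bounded-below case already closes the argument — but it does no harm.
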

\begin{proof}
	For (1), by Lemma \ref{lem:shriekpushforwardcompact}, we may assume $j = \id$, so we want $\R\Gamma(X,-)$ to preserve all direct sums.  We first observe that the finiteness assumption on $X$ and the corresponding left-completeness of $D(X_\et,F)$ give: for any $K \in D(X_\et,F)$, one has $H^i(X,K) \simeq H^i(X,\tau^{\geq -n} K)$ for $n > N_X -i$, where $N_X$ is the $F$-cohomological dimension of $X$. One then immediately reduces to the bounded below case, which is true for any qcqs scheme. For (2), fix some qcqs $V \in X_\et$, and let $W = U \times_X V$.  Then (1) shows that $\R\Gamma(V_\et,-)$ commutes with direct sums. Hence, given any set $\{K_s\}$ of objects in $D(U_\et,F)$, we have
\[ \R\Gamma(V_\et, \oplus_s j_* K_s) \simeq \oplus_s \R\Gamma(V_\et, j_* K_s) \simeq \oplus_s \R\Gamma( W_\et, K_s|_W) \simeq \R\Gamma(W_\et, (\oplus_s K_s)|_W) \simeq \R\Gamma(V_\et, j_* \oplus_s K_s).\]
As this is true for all $V$, the claim follows.
\end{proof}

\begin{lemma}
	\label{lem:pushforwardcompactclosed}
Fix a closed constructible subset $i:Z \hookrightarrow X$ and $K \in D(Z_\et,F)$ that is locally constant with perfect value $L \in D_\perf(F)$. Then $i_* K \in D(X_\et,F)$ is compact.
\end{lemma}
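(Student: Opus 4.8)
The goal is to show $i_*K$ is compact when $i:Z\hookrightarrow X$ is a constructible closed immersion and $K\in D(Z_\et,F)$ is locally constant with perfect value $L\in D_\perf(F)$. The plan is to reduce to statements already proved. First I would note that compactness is detected by $\R\Hom(i_*K,-)$ commuting with arbitrary direct sums, and since $i_*$ has exact left adjoint $i^*$ which commutes with all colimits, one has $\R\Hom_{X_\et}(i_*K,\oplus_s M_s)\simeq \R\Hom_{Z_\et}(K,i^*\oplus_s M_s)\simeq \R\Hom_{Z_\et}(K,\oplus_s i^*M_s)$; so it suffices to show $K$ is compact in $D(Z_\et,F)$, i.e., $\R\Hom_{Z_\et}(K,-)$ commutes with direct sums. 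But here a subtlety appears: compactness of $K$ in $D(Z_\et,F)$ is exactly the claim that $i_\ast K$ is compact (replacing $X$ by $Z$), so this "reduction" is vacuous. The genuine content must therefore come from elsewhere.

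The honest route: use the exact triangle $j_!j^*\to\id\to i_*i^*$ on $D(X_\et,F)$ together with the fact that $\R\Gamma(U_\et,-)$ has finite $F$-cohomological dimension for all affine $U\in X_\et$ (which holds by our standing hypothesis, cf. the lemmas just proved in \S\ref{ss:ConsCompact} showing $\R\Gamma(U_\et,-)$ has finite cohomological dimension and $D(X_\et,F)$ is left-complete). Concretely, I would proceed as follows. Since the statement is local on $X$ and $i^*K$-perfectness is preserved under restriction, we may assume $X$ affine and, after passing to a finite \'etale cover of $X$ (using Lemma \ref{lem:shriekpushforwardcompact} to transfer compactness back down, since $j_!$ preserves compact objects and a finite \'etale $X'\to X$ has $j_!$ with $j_!\underline F$ compact), assume that $K$ is \emph{constant}, $K\simeq\underline{L}$ for $L\in D_\perf(F)$. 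Now $\underline{L}_Z$ is a finite homotopy-colimit (via the defining finite complex of $L$ as a perfect complex, i.e., a finite complex of finite free modules) of shifts of $\underline{F}_Z$; since compact objects are closed under finite homotopy-colimits and shifts, it suffices to show $i_*\underline{F}_Z$ is compact. For that, use the exact triangle $j_!\underline F_U\to \underline F_X\to i_*\underline F_Z$ where $j:U\hookrightarrow X$ is the open complement (which is qcqs since $Z$ is constructible). By Lemma \ref{lem:pushforwardcompactopen}(1), both $j_!\underline F_U$ and $\underline F_X$ are compact, hence so is the third term $i_*\underline F_Z$. This is the whole proof.

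The main obstacle I anticipate is the reduction from "locally constant with perfect value" to "constant with value $\underline F$": one must be careful that the finite \'etale descent argument genuinely works — i.e., that if $g:X'\to X$ is a surjective finite \'etale map and $g^*(i_*K)$ is compact in $D(X'_\et,F)$, then $i_*K$ is compact in $D(X_\et,F)$. This follows because $g_!g^*\simeq g_*g^*$ receives a splitting of $\id$ up to the degree $[X':X]$ (a transfer/trace argument, valid since $F$ need not have $[X':X]$ invertible — instead one uses that $\id\to g_*g^*$ is a split injection for $g$ finite \'etale with a section up to a finite filtration, or more simply that $D(X_\et,F)$ is a retract of $D(X'_\et,F)$ along $(g^*,g_*)$ when $g$ is a finite \'etale cover); alternatively, one avoids this entirely by working Zariski-locally where $K$ becomes constant along a stratification refined by the \'etale cover, exactly as in the proof of Lemma \ref{lem:conslocalet}. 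I would adopt the latter, cleaner approach: stratify $X$ so that on each stratum $g$ is finite \'etale and $K$ is pulled back from the stratum, reducing termwise to the constant case and then invoking the triangle argument above. Everything else is a routine assembly of the already-established finiteness and adjunction facts.
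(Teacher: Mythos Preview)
Your endgame---reduce to $K=\underline{L}$ constant and then use the triangle $j_!\underline{L}\to\underline{L}\to i_*\underline{L}$ with Lemma~\ref{lem:pushforwardcompactopen}(1)---is exactly how the paper finishes. The gap is in the reduction step, and neither of your proposed fixes closes it.

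Compactness in $D(X_\et,F)$ is not \'etale-local, so you cannot simply pass to a cover and descend. Your transfer idea fails in general: for $g:X'\to X$ finite \'etale of degree $n$, the composite $\id\to g_*g^*\xrightarrow{\mathrm{tr}}\id$ is multiplication by $n$, so $i_*K$ is a retract of $g_!g^*i_*K$ only when $n\in F^\times$. Your stratification idea is circular: stratifying so that $g$ becomes finite \'etale over each piece still only makes $K$ \emph{locally} constant there (trivialized by a finite \'etale cover), not constant, so you are back to the same descent problem; and if instead you filter $i_*K$ by strata you produce terms $i'_*(j'_!\underline{L})$ where $j'_!\underline{L}$ is no longer locally constant on the source of $i'$, so the ``constant case'' does not apply.

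The paper supplies the missing idea: rather than show $\R\Hom(i_*K,-)$ commutes with direct sums directly, use Lemma~\ref{lem:pushforwardcompactopen} to reduce to showing that the \emph{internal} $\underline{\R\Hom}(i_*K,-)$ commutes with direct sums. This is an isomorphism of sheaves, hence may be checked \'etale-locally on $X$; after base change along a trivializing cover one has $K=\underline{L}$, and your triangle argument applies verbatim. (Aside: in your opening paragraph, the formula $\R\Hom_{X}(i_*K,M)\simeq\R\Hom_Z(K,i^*M)$ is wrong---$i^*$ is the \emph{left} adjoint of $i_*$, so the right-hand side should have $i^!$, not $i^*$.)
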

\begin{proof}
	By Lemma \ref{lem:pushforwardcompactopen} (2), it suffices to show the following statement: the functor $\underline{\R\Hom}(i_* K,-):D(X_\et,F) \to D(X_\et,F)$ commutes with direct sums. To check this, we may freely replace $X$ with an \'etale cover. By passing to a suitable cover (see the proof of Lemma \ref{lem:constructiblealmostcompact}), we may assume $K = \underline{L}$ for $L \in D_\perf(F)$. If $j:U \to X$ denotes the qcqs open complement of $i$, then the exact triangle
	\[ j_! \underline{L} \to \underline{L} \to i_* \underline{L} \]
	finishes the proof by Lemma \ref{lem:pushforwardcompactopen} (1)
\end{proof}

\begin{remark}
	The constructibility of $Z$ in Lemma \ref{lem:pushforwardcompactclosed} is necessary. For a counterexample without this hypothesis, choose an infinite profinite set $S$ and a closed point $i:\{s\} \hookrightarrow S$. Then $S - \{s\}$ is not quasi-compact, so $Z$ is not constructible. Using stalks, one checks that $i_* \underline{F} \simeq \colim j_* \underline{F}$, where the colimit is indexed by clopen neighbourhoods $j:U \hookrightarrow S$ of $s \in S$. For such $j$, one has $H^0(S,j_* \underline{F}) = H^0(U,\underline{F}) = \Map_{\mathrm{conts}}(U,F)$. As any continuous map $f:U \to F$ is locally constant, each non-zero section of $H^0(S,j_* \underline{F})$ is supported on some clopen $V \subset U$. As $1 \in H^0(S,i_* \underline{F})$ is supported only at $s$, all maps $i_* \underline{F} \to j_* \underline{F}$ are constant, so $i_* \underline{F}$ is not compact in $D(S,F)$. To get an example with schemes, one simply tensors this example with a geometric point, in the sense of Example \ref{ex:tensorprofiniteset}.
\end{remark}

\begin{proposition}
	\label{lem:constructiblecompact}
$D(X_\et,F)$ is compactly generated, and $D_c(X_\et,F) = D_\cons(X_\et,F)$.
\end{proposition}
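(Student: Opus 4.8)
The statement has two parts: that $D(X_\et,F)$ is compactly generated, and that the compact objects are exactly the constructible complexes. For compact generation, the plan is to exhibit a generating set of compact objects. The natural candidates are the sheaves $j_!\underline{F}$ for $j:U \to X$ in $X_\et$ with $U$ affine (hence qcqs); these are compact by Lemma \ref{lem:pushforwardcompactopen}(1), and they generate because for any nonzero $K \in D(X_\et,F)$ there is an affine $U \in X_\et$ and an integer $n$ with $H^n(U_\et,K) = \Hom(j_!\underline{F}[-n],K) \neq 0$ (using that the $\Z_U$, equivalently the $j_!\underline{F}$ after tensoring, generate $\Shv(X_\et)$ and that $\R\Gamma(U_\et,-)$ detects nonvanishing since sheafification is computed via such sections). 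So $\{j_!\underline{F}\}$ is a set of compact generators.

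\textbf{The inclusion $D_\cons(X_\et,F) \subseteq D_c(X_\et,F)$.} By Lemma \ref{lem:constriangulated}, $D_\cons(X_\et,F)$ is the smallest triangulated idempotent-complete subcategory containing the objects $k_!L$ with $k:Y \hookrightarrow X$ locally closed constructible and $L \in D(Y_\et,F)$ locally constant with perfect values. Since $D_c(X_\et,F)$ is triangulated and idempotent-complete (compact objects always are), it suffices to show each such $k_!L$ is compact. Factor $k = j \circ i$ with $j:U \hookrightarrow X$ a qcqs open immersion and $i:Y \hookrightarrow U$ a constructible closed immersion, so $k_!L = j_!(i_*L)$. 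By Lemma \ref{lem:shriekpushforwardcompact}, $j_!$ preserves compactness, so we reduce to showing $i_*L$ is compact in $D(U_\et,F)$; this is exactly Lemma \ref{lem:pushforwardcompactclosed}. Hence $D_\cons(X_\et,F) \subseteq D_c(X_\et,F)$.

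\textbf{The reverse inclusion $D_c(X_\et,F) \subseteq D_\cons(X_\et,F)$.} This is the part requiring the most care, and where the finite-cohomological-dimension hypothesis is essential. The standard argument: let $\calD \subseteq D(X_\et,F)$ be the localizing subcategory generated by the compact generators $j_!\underline{F}$ just found; then $\calD = D(X_\et,F)$ by compact generation, and a general fact about compactly generated triangulated categories (Neeman) says that the compact objects of $D(X_\et,F)$ are precisely the retracts of objects built from finitely many $j_!\underline{F}[n]$ by finitely many cones — i.e., $D_c(X_\et,F)$ is the smallest triangulated idempotent-complete subcategory containing all $j_!\underline{F}$. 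Since $j_!\underline{F} = j_!(\nu\text{-pullback of }\underline{F}) \in D_\cons(X_\et,F)$ (here $\underline{F}$ is locally constant with perfect value $F \in D_\perf(F)$, and $j$ is locally closed constructible on the nose, or at least a qcqs \'etale map to which Lemma \ref{lem:shriekpushforwardcons} applies), and $D_\cons(X_\et,F)$ is triangulated and idempotent-complete by Lemma \ref{lem:constriangulated}, we get $D_c(X_\et,F) \subseteq D_\cons(X_\et,F)$.

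\textbf{Main obstacle.} The delicate point is the reverse inclusion, specifically invoking the abstract characterization of compact objects in a compactly generated triangulated category as the thick subcategory generated by a set of compact generators. One must be careful that the relevant generators $j_!\underline{F}$ genuinely lie in $D_\cons$ (unproblematic, since $j$ qcqs \'etale plus Lemma \ref{lem:shriekpushforwardcons}, or one restricts to affine $U$ which are already constructible), and that the abstract result applies — this needs $D(X_\et,F)$ to be a genuine (well-generated) triangulated category, which is fine here. An alternative, more hands-on route for the reverse inclusion avoids Neeman: given compact $K$, use Lemma \ref{lem:constructiblealmostcompact}-type arguments together with a generic-flatness/Noetherian induction on $X$ to produce a stratification trivializing $K$, checking perfectness of the stalks via the compactness of $K$ restricted to geometric points (a geometric point $x$ gives $x^*K$ compact in $D(F)$, i.e. perfect). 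I would present the Neeman-style argument as the main line and remark that the finite-dimensionality hypothesis enters through Lemma \ref{lem:pushforwardcompactopen}(1) (which makes $\R\Gamma(X_\et,-)$ commute with direct sums), without which $\underline{F}$ itself need not be compact.
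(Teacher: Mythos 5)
Your proposal is correct and follows essentially the same route as the paper: both directions rest on the same ingredients (compactness of $j_!\underline{F}$ for qcqs \'etale $j$ via Lemma \ref{lem:pushforwardcompactopen}, compactness of $i_*L$ via Lemma \ref{lem:pushforwardcompactclosed} combined with Lemma \ref{lem:shriekpushforwardcompact} after factoring a locally closed immersion, and the Neeman-style identification of $D_c$ with the thick subcategory generated by the compact generators, together with Lemma \ref{lem:shriekpushforwardcons}). The only cosmetic differences are that you take affine rather than general qcqs $U$ for the generators and phrase the generation statement via detecting nonvanishing on sections instead of representing complexes by direct sums of coherent objects.
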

\begin{proof}
	We temporarily use the word ``coherent'' to refer to objects of the form $j_! \underline{F}$ for qcqs maps $j:U \to X$ in $X_\et$. Lemma \ref{lem:pushforwardcompactopen} shows that coherent objects are compact. General topos theory shows that all objects in $D(X_\et,F)$ can be represented by complexes whose terms are direct sums of coherent objects, so it follows that $D(X_\et,F)$ is compactly generated. Furthermore, one formally checks that the subcategory $D_c(X_\et,F) \subset D(X_\et,F)$ of compact objects is the smallest idempotent complete triangulated subcategory that contains the coherent objects.  Then Lemma \ref{lem:shriekpushforwardcons} shows $D_c(X_\et,F) \subset D_\cons(X_\et,F)$. For the reverse inclusion $D_\cons(X_\et,F) \subset D_c(X_\et,F)$, it suffices to show: for any $k:W \hookrightarrow X$ locally closed constructible and $L \in D(W_\et,F)$ locally constant with perfect values, the pushforward $K := k_! L$ is compact.  Choose $W \stackrel{f}{\to} U \stackrel{g}{\to} X$ with $f$ a constructible closed immersion, and $g$ a qcqs open immersion. Then $f_* K$ is compact in $D(U_\et,F)$ by Lemma \ref{lem:pushforwardcompactclosed}, so $k_! K \simeq g_! f_* K$ is compact by Lemma \ref{lem:shriekpushforwardcompact}.
\end{proof}

\subsection{Constructible complexes in the pro-\'etale topology}
\label{ss:ConsProet}

Fix a qcqs scheme $X$, and a noetherian ring $R$ complete for the topology defined by an ideal $\fram \subset R$. Set $\widehat{R}_X := \lim R/\fram^n \in \Shv(X_\proet)$; we often simply write $\widehat{R}$ for $\widehat{R}_X$. In fact, in the notation of Lemma \ref{l:ExSheafTopSpace}, $\widehat{R} = \widehat{R}_X$ is the sheaf $\mathcal{F}_R$ on $X_\proet$ associated with the topological ring $R$. We write $\underline{L}$ for the image of $L \in D(R)$ under the pullback $D(R) \to D(X_\proet,R)$, and $\widehat{\underline{L}} \in D(X_\proet,\widehat{R})$ for the $\fram$-adic completion of $\underline{L}$. When $L = R$ or $R/\fram^n$, we drop the underline. The key definition is:

\begin{definition} 
We say that $K \in D(X_\proet,\widehat{R})$ is {\em constructible} if $K$ is $\fram$-adically complete, and $K \otimes_{\widehat{R}}^L R/\fram$ is obtained via pullback of a constructible $R/\fram$-complex under $\nu:X_\proet \to X_\et$. Write
\[D_{\cons}(X_\proet,\widehat{R})  \subset D(X_\proet,\widehat{R}) \]
for the full subcategory spanned by constructible complexes.
\end{definition}

It is immediate that $D_\cons(X_\proet,\widehat{R})$ is a triangulated subcategory of $D(X_\proet,\widehat{R})$.  Applying the same definition to $(R/\fram^n,\fram)$, we get $D_\cons(X_\proet,R/\fram^n) \simeq D_\cons(X_\et,R/\fram^n)$ via $\nu$; note that the two evident definitions of $D_\cons(X_\et,R/\fram^n)$ coincide by Lemma \ref{lem:nilpidealcons}.

\begin{example}
	When $X$ is a geometric point, pullback induces an equivalence $D_\perf(R) \simeq D_\cons(X_\proet,\widehat{R})$. 
\end{example}

\begin{lemma}
Each $K \in D_\cons(X_\proet,\widehat{R})$ is bounded.
\end{lemma}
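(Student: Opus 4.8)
The statement to prove is that any $K \in D_\cons(X_\proet,\widehat{R})$ is bounded. Recall that, by definition, $K$ is $\fram$-adically complete and $K_1 := K \otimes^L_{\widehat{R}} R/\fram$ is the pullback $\nu^* K_1'$ of a constructible complex $K_1' \in D_\cons(X_\et, R/\fram)$. Since constructible complexes in the \'etale topology are bounded (they have perfect values along a finite stratification, and perfect complexes over $R/\fram$ are bounded), $K_1'$ is bounded, hence so is $K_1 = \nu^* K_1'$. Say $K_1 \in D^{[a,b]}(X_\proet, R/\fram)$. The plan is to deduce boundedness of $K$ itself from boundedness of $K_1$ together with $\fram$-adic completeness.

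First I would use that $K$ is $\fram$-adically complete in the sense of \S\ref{subsec:derivedcompfadic}: by Lemma \ref{lem:arcomplete} and Proposition \ref{prop:derivedcompnoetherian} (applied with $\calX = \Shv(X_\proet)$), one has $K \simeq \R\lim (K \otimes^L_{\widehat{R}} R/\fram^n) \simeq \R\lim(K \otimes^L_R R/\fram^n)$. So I would control the cohomology of each $K_n := K \otimes^L_R R/\fram^n$, and then pass to the $\R\lim$, using that $\R\lim$ over $\N^\opp$ has cohomological dimension $\leq 1$ on the replete topos $\Shv(X_\proet)$ (Proposition \ref{prop:cdrlim}). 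The key step is a d\'evissage: for each $n$ there is an exact triangle relating $K_n$, $K_{n-1}$, and $K \otimes^L_R \fram^{n-1}/\fram^n$. Since $\fram$ is a finitely generated ideal of the noetherian ring $R$, the graded piece $\fram^{n-1}/\fram^n$ is a finite $R/\fram$-module; choosing a finite free resolution (of bounded length $\leq \ell$ independent of... well, not necessarily independent of $n$, but each is finite) one gets $K \otimes^L_R \fram^{n-1}/\fram^n$ expressed in terms of $K_1 = K\otimes^L_R R/\fram$ via a complex of $R/\fram$-modules. The cleanest route: choose generators $f_1,\dots,f_r$ of $\fram$ and a surjection $P := \Z[x_1,\dots,x_r] \to R$, $x_i \mapsto f_i$, set $J = (x_1,\dots,x_r)$; then by Lemma \ref{lem:dercompcharac} completeness gives $K \simeq \R\lim(K \otimes^L_{\Z[x_1,\dots,x_r]} \Z[x_1,\dots,x_r]/(x_1^n,\dots,x_r^n))$, and each Koszul-type quotient $\Z[\underline x]/(x_1^n,\dots,x_r^n)$ is a perfect $\Z[\underline x]$-module of amplitude exactly $[-r, 0]$, so $K \otimes^L$ with it is, Zariski-locally, obtained from finitely many shifts of $K_1$-like pieces — more precisely its reduction mod $\fram$ lies in $D^{[a-r, b]}$. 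Actually I expect the argument to run entirely with $R/\fram^n$: inducting on $n$ via the triangle $K \otimes^L_R \fram^{n-1}/\fram^n \to K_n \to K_{n-1}$ shows $K_n \in D^{[a',b']}(X_\proet, R/\fram^n)$ for bounds $a', b'$ possibly depending on $n$. That is not yet enough; I need uniformity.

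To get uniform bounds I would argue as follows. Upper bound: $K \in D^{\leq b}$ because $- \otimes^L_{\widehat R} R/\fram$ is right exact, so $\calH^i(K) \otimes_{\widehat R} R/\fram$ surjects onto $\calH^i(K_1) = 0$ for $i > b$; since $\widehat R \to R/\fram$ is a nilpotent-mod-each-power... more carefully, $\calH^i(K)$ is derived $\fram$-complete (Proposition \ref{prop:complexcompcrit}) and $\calH^i(K) \otimes_{\widehat R}^L R/\fram$ lives in degrees $\geq 0$, whose $H^0$ is $\calH^i(K)/\fram\calH^i(K)$; if this vanishes then by derived Nakayama for $\fram$-complete modules (which holds: a derived $\fram$-complete module $M$ with $M/\fram M = 0$ satisfies $M = 0$, since $M \simeq \R\lim M/\fram^n M$-type arguments force $M=0$) we get $\calH^i(K) = 0$ for $i > b$. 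Lower bound: this is the main obstacle. The issue is that $\R\lim$ can drop cohomological degree by $1$, and more seriously the reduction mod $\fram$ only sees $\Tor_0$, so a priori $K$ could have cohomology in arbitrarily negative degrees whose reduction mod $\fram$ cancels. I would resolve this by using Lemma \ref{lem:proisomchangecoeff} / Lemma \ref{lem:truncationofcomplete}: since $\{K \otimes^L_{\widehat R} R/\fram^n\} \in D_\comp(\calC, R_\bullet)$ (Definition \ref{def:compatiblesystem}), Lemma \ref{lem:truncationofcomplete} gives $(\R\lim K_n) \otimes^L_R R/\fram \simeq K_1$, hence for the bottom nonzero cohomology: if $\calH^i(K) \ne 0$ for $i \ll 0$, pick the smallest such $i$; then $\calH^i(K) \otimes_R R/\fram \hookrightarrow \calH^i(K_1 \text{-ish})$... the point is that $\Tor$-vanishing in the lowest degree forces $\calH^i(K)[i] \otimes^L R/\fram$ to contribute in degree $i$, and since this must lie in $[a,b]$, we get $i \geq a$. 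Combining, $K \in D^{[a,b]}(X_\proet,\widehat R)$, proving boundedness.

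The main obstacle I anticipate is making the lower-bound argument airtight: one must rule out infinitely-many-negative-degree cohomology sheaves whose $\fram$-adic reductions conspire to cancel, and the clean way is precisely the pro-isomorphism/truncation lemmas (\ref{lem:proisomchangecoeff}, \ref{lem:truncationofcomplete}) which say the completed system ``remembers'' each truncation, so I would lean on those rather than on naive d\'evissage. I expect the final write-up to be short: invoke completeness to write $K \simeq \R\lim K_n$, use Lemma \ref{lem:truncationofcomplete} to identify $K \otimes^L_R R/\fram \simeq K_1$ bounded, then a derived-Nakayama argument (valid since cohomology sheaves are derived $\fram$-complete by Proposition \ref{prop:complexcompcrit}) to transfer the bounds from $K_1$ to $K$.
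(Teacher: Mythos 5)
Your proposal contains the correct skeleton but abandons it at exactly the point where it closes. You write down the d\'evissage triangle $K \otimes^L_R \fram^{n-1}/\fram^n \to K_n \to K_{n-1}$ and then declare that it only yields bounds depending on $n$; in fact it yields uniform bounds, and this is the whole proof. The point is that $K_1 = K \otimes^L_{\widehat{R}} R/\fram$ is (the pullback of) a constructible complex on $X_\et$ and hence has \emph{finite flat dimension} over $R/\fram$ (this is one of the lemmas of \S \ref{ss:ConsEt}: along a finite stratification, $K_1$ is \'etale-locally perfect). Consequently
\[
K \otimes^L_R \fram^{n-1}/\fram^n \;\simeq\; K_1 \otimes^L_{R/\fram} \big(\fram^{n-1}/\fram^n\big)
\]
lies in a fixed range $D^{[a,b]}$ independent of $n$, so by induction every $K_n$ lies in $D^{[a,b]}$, and then $K \simeq \R\lim K_n$ lies in $D^{[a,b+1]}$ since $\R\lim$ has cohomological dimension $\leq 1$ by repleteness (Proposition \ref{prop:cdrlim}). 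Your non-uniformity worry arose because you tried to control the graded piece by resolving the $R/\fram$-module $\fram^{n-1}/\fram^n$, whose projective dimension over $R/\fram$ is indeed not finite or uniformly bounded in general; the resolution has to happen on the $K_1$ side, via its Tor-amplitude, not on the module side.

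The replacement argument you pivot to does not close the gap. The upper bound via derived Nakayama applied to the (derived complete) cohomology sheaves is salvageable, but the lower bound is not as written: ``pick the smallest such $i$'' presupposes the boundedness below you are trying to prove, and even for bounded-below $K$ the map $\calH^i(K)/\fram\calH^i(K) \to \calH^i(K \otimes^L_R R/\fram)$ need not be injective, because in the spectral sequence $\Tor^R_{-p}(\calH^q(K),R/\fram) \Rightarrow \calH^{p+q}(K \otimes^L_R R/\fram)$ there are differentials from the higher Tor's of the higher cohomology sheaves landing in the $p=0$ column. So cancellation in very negative degrees is not excluded by anything you say. Lemmas \ref{lem:proisomchangecoeff} and \ref{lem:truncationofcomplete}, which you invoke to rule this out, are themselves only proved for uniformly bounded-above systems (or under regularity of $\fram$), so they cannot supply the missing lower bound. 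Return to your d\'evissage and add the finite flat dimension of $K_1$; that is the paper's argument.
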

\begin{proof} Completeness gives $K \simeq \R\lim(K \otimes^L_R R/\fram^n)$. As $\R\lim$ has cohomological dimension $\leq 1$ by repleteness, it suffices to show  $K_n := K \otimes^L_R R/\fram^n$ has amplitude bounded independent of $n$.  This follows from standard sequences as $K_1$ has finite flat dimension.
\end{proof}

\begin{lemma}
If $K \in D_\cons(X_\proet,\widehat{R})$, then $K \otimes_{\widehat{R}} R/\fram^n \in D_\cons(X_\proet,R/\fram^n)$ for each $n$.
\end{lemma}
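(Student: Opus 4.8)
The statement is that $\fram$-adic completeness of $K$ together with constructibility of $K \otimes_{\widehat R} R/\fram$ forces $K \otimes_{\widehat R} R/\fram^n$ to be constructible (as an $R/\fram^n$-complex, equivalently a classical constructible complex on $X_\et$ via $\nu$) for every $n$. The plan is to induct on $n$, the case $n=1$ being the definition. For the inductive step, I would use the standard short exact sequence of $R$-modules $0 \to \fram^n/\fram^{n+1} \to R/\fram^{n+1} \to R/\fram^n \to 0$, tensor $K$ with it over $\widehat R$ (derived), and analyze the resulting exact triangle
\[
K \otimes_{\widehat R}^L \fram^n/\fram^{n+1} \to K \otimes_{\widehat R}^L R/\fram^{n+1} \to K \otimes_{\widehat R}^L R/\fram^n.
\]
By the inductive hypothesis the right-hand term is constructible, so by the triangulated structure of $D_\cons(X_\proet,R/\fram^{n+1})$ (noted right after the definition, using $D_\cons(X_\proet, R/\fram^m) \simeq D_\cons(X_\et, R/\fram^m)$) it suffices to show the left-hand term is constructible.

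For the left-hand term, the key point is that $\fram^n/\fram^{n+1}$ is a finitely generated module over the noetherian ring $R/\fram$, hence can be resolved by a finite complex of finite free $R/\fram$-modules; thus $K \otimes_{\widehat R}^L \fram^n/\fram^{n+1}$ is, up to the constant sheaf $\underline{\fram^n/\fram^{n+1}}$-bookkeeping, built from finitely many shifted copies of $K \otimes_{\widehat R}^L R/\fram$ via finitely many cones (this is just the fact that $D(R/\fram)$-module structure interacts with $-\otimes^L_{R/\fram} K_1$, where $K_1 := K\otimes^L_{\widehat R} R/\fram$). More precisely, $K \otimes_{\widehat R}^L \fram^n/\fram^{n+1} \simeq K_1 \otimes_{R/\fram}^L \underline{\fram^n/\fram^{n+1}}$, and since $D_\cons(X_\proet, R/\fram)$ is a triangulated subcategory closed under tensoring with pullbacks of perfect $R/\fram$-complexes (the $R/\fram$-analogue of the \'etale lemma that $D_\cons$ is closed under tensor products, Lemma on $D_\cons(X_\et,F)$ closed under $\otimes$), and $\underline{\fram^n/\fram^{n+1}}$ is such a pullback (as $\fram^n/\fram^{n+1} \in D_\perf(R/\fram)$ — it is a finite module over the noetherian ring $R/\fram$, but need NOT have finite projective dimension in general; however it does have \emph{finite flat dimension} is also not automatic). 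Here I should be careful: $\fram^n/\fram^{n+1}$ need not be perfect over $R/\fram$. But we do not need perfectness — we only need that $K_1 \otimes_{R/\fram}^L \underline{M}$ is constructible for $M$ a finitely generated $R/\fram$-module, and this follows because $K_1$ is constructible hence has locally constant perfect values along a stratification, so on each stratum $K_1|_{X_i} \otimes_{R/\fram}^L \underline{M}$ is locally constant with values $L_i \otimes_{R/\fram}^L M$ where $L_i \in D_\perf(R/\fram)$ — and $L_i \otimes_{R/\fram}^L M$ is just \emph{some} bounded complex of $R/\fram$-modules with finitely generated cohomology, which is exactly what is allowed by the (field-independent) definition of constructibility being used here: Definition \ref{def:constructibleetale} demands \emph{perfect} values, so I do need to verify $L_i \otimes^L_{R/\fram} M$ is perfect. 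This fails in general. Let me reconsider.

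The correct approach avoids tensoring with $M$: instead, note $K\otimes^L_{\widehat R} R/\fram^{n+1}$ is automatically $\fram$-adically complete (it is already an $R/\fram^{n+1}$-complex, killed by a power of $\fram$), and $(K\otimes^L_{\widehat R} R/\fram^{n+1})\otimes^L_{R/\fram^{n+1}} R/\fram \simeq K\otimes^L_{\widehat R} R/\fram \simeq K_1$, which is constructible over $R/\fram$ by hypothesis. Now apply Lemma \ref{lem:nilpidealcons} with the nilpotent ideal $\fram/\fram^{n+1} \subset R/\fram^{n+1}$: it says that if $N \in D(X_\et, R/\fram^{n+1})$ has $N \otimes_{R/\fram^{n+1}} R/\fram \in D_\cons(X_\et, R/\fram)$, then $N \in D_\cons(X_\et, R/\fram^{n+1})$. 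Applying this to $N = \nu_*$(of) $K\otimes^L_{\widehat R}R/\fram^{n+1}$, transported to $X_\et$ via the equivalence $D_\cons(X_\proet, R/\fram^{n+1}) \simeq D_\cons(X_\et, R/\fram^{n+1})$ — or rather, since a priori we only know $K\otimes^L_{\widehat R}R/\fram^{n+1}$ lives in $D(X_\proet, R/\fram^{n+1})$, I would first use Proposition \ref{prop:parasiticsemiorth}(3) (characterizing the image of $\nu^*$ on $D^+$) to descend it to $D^+(X_\et, R/\fram^{n+1})$, which is legitimate because its reduction mod $\fram$ is classical hence its cohomology sheaves are, by a dévissage along the nilpotent filtration, also in $\nu^*\Ab(X_\et)$. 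This gives an object $N \in D(X_\et, R/\fram^{n+1})$ with $N\otimes^L_{R/\fram^{n+1}} R/\fram$ constructible, and then Lemma \ref{lem:nilpidealcons} (or rather an iterated version of it along the filtration $\fram^j/\fram^{n+1}$, $j=1,\dots,n$, which requires $I^2 = 0$ at each step — exactly the setup of that lemma's proof) concludes $N$ is constructible, i.e. $K\otimes^L_{\widehat R} R/\fram^{n+1} \in D_\cons(X_\proet, R/\fram^{n+1})$.

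\textbf{Main obstacle.} The genuine subtlety is the descent step: showing $K \otimes^L_{\widehat R} R/\fram^{n+1}$, which is built on $X_\proet$, actually lies in $\nu^* D(X_\et)$ so that Lemma \ref{lem:nilpidealcons} (an \'etale-site statement) applies. This is handled by the nilpotent dévissage — the cohomology sheaves of $K \otimes^L_{\widehat R} R/\fram^{n+1}$ are extensions of cohomology sheaves of the various $K \otimes^L_{\widehat R} \fram^j/\fram^{j+1} \simeq K_1 \otimes^L_{R/\fram} \underline{\fram^j/\fram^{j+1}}$, each of which has cohomology sheaves in $\nu^*\Ab(X_\et)$ because $K_1$ does (being a pullback) and $\nu^*\Ab(X_\et)$ is a Serre subcategory closed under the relevant operations (Corollary \ref{cor:absheafpullbackserresubcat}) — and then Proposition \ref{prop:parasiticsemiorth}(3) identifies such complexes with pullbacks from $D^+(X_\et)$. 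Modulo this, the argument is a clean finite induction; I do not expect further difficulties, and in particular boundedness (already established in the preceding lemma) ensures we stay in $D^+$ throughout.
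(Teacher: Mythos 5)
Your middle paragraph contains the paper's entire proof: since $\fram$ is nilpotent in $R/\fram^n$, the complex $K \otimes_{\widehat{R}} R/\fram^n$ is automatically complete, and $(K \otimes_{\widehat{R}} R/\fram^n) \otimes_{R/\fram^n} R/\fram \simeq K \otimes_{\widehat{R}} R/\fram$ is constructible by hypothesis — and that is exactly what membership in $D_\cons(X_\proet, R/\fram^n)$ means, since the paper defines this category by ``applying the same definition to $(R/\fram^n,\fram)$.'' The paper's proof is that one line. Everything else in your proposal is extra: the first attempted induction via $\fram^j/\fram^{j+1}$ is rightly abandoned (your worry about $\fram^n/\fram^{n+1}$ failing to be perfect over $R/\fram$ is legitimate), and the final descent-to-$X_\et$ argument (nilpotent d\'evissage on cohomology sheaves, Corollary \ref{cor:absheafpullbackserresubcat}, Proposition \ref{prop:parasiticsemiorth}(3), then Lemma \ref{lem:nilpidealcons}) proves the stronger statement that $K \otimes_{\widehat{R}} R/\fram^n$ is the pullback of a classically constructible complex on $X_\et$. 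That stronger statement is not what the lemma asserts; it is instead the content of the sentence following the definition, where the paper asserts $D_\cons(X_\proet,R/\fram^n) \simeq D_\cons(X_\et,R/\fram^n)$ and invokes Lemma \ref{lem:nilpidealcons}. So your argument is correct, and its surplus portion usefully fills in the d\'evissage that the paper leaves implicit there — but for the lemma itself you could have stopped after observing the isomorphism of the mod-$\fram$ reductions.
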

\begin{proof}
This is immediate from $K \otimes_{\widehat{R}} R/\fram^n \otimes_{R/\fram^n} R/\fram \simeq K \otimes_{\widehat{R}} R/\fram$.
\end{proof}

\begin{lemma}
	\label{lem:constensorproduct}
	$D_\cons(X_\proet,\widehat{R}) \subset D_\comp(X_\proet,\widehat{R})$ is closed under tensor products. In fact, if $K,L \in D_\cons(X_\proet,\widehat{R})$, then $K \otimes_{\widehat{R}} L$ is already complete.
\end{lemma}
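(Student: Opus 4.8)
The plan is to reduce the statement about $K \widehat{\otimes}_{\widehat R} L$ to a statement about the reductions modulo $\fram^n$, using Lemma \ref{lem:completecompatiblesystems} to move between $D_\comp(X_\proet,\widehat R)$ and the category $D_\comp(\calC,R_\bullet)$ of compatible systems. Since both $K$ and $L$ are $\fram$-adically complete, it suffices to show that $K \otimes^L_{\widehat R} L$ is already derived $\fram$-complete, i.e. that no completion is needed; then $K \widehat{\otimes}_{\widehat R} L = K \otimes^L_{\widehat R} L$ automatically, and completeness of the tensor product follows. By Lemma \ref{lem:arcomplete} and Proposition \ref{prop:derivedcompnoetherian}, derived $\fram$-completeness of $K \otimes^L_{\widehat R} L$ is equivalent to the natural map $K \otimes^L_{\widehat R} L \to \R\lim\big((K\otimes^L_{\widehat R}L)\otimes^L_{\widehat R} R/\fram^n\big)$ being an equivalence, so the whole problem is to control $(K\otimes^L_{\widehat R}L)\otimes^L_R R/\fram^n \simeq (K\otimes^L_R R/\fram^n)\otimes^L_{R/\fram^n}(L\otimes^L_R R/\fram^n)$ uniformly in $n$.

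First I would reduce to the residue field. By the previous lemmas in this subsection, $K_n := K\otimes^L_{\widehat R}R/\fram^n$ and $L_n := L\otimes^L_{\widehat R}R/\fram^n$ lie in $D_\cons(X_\proet,R/\fram^n)\simeq D_\cons(X_\et,R/\fram^n)$, and $K_1, L_1$ are pulled back from constructible $R/\fram$-complexes on $X_\et$. The key input is that a constructible complex over a field has finite flat amplitude (this is the content of the flat-dimension lemma in \S\ref{ss:ConsEt}, applied with $F=R/\fram$; alternatively it follows from $D_\cons(X_\proet,R/\fram)$ being pulled back from $D_\cons(X_\et,R/\fram)$ where every object has finite flat dimension). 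Hence there is an integer $a$, independent of $n$, such that $K_n$ has flat amplitude $\le a$ as an $R/\fram^n$-complex: indeed, by devissage along the filtration $\fram^i K_n / \fram^{i+1}K_n$ whose graded pieces are $K_1$-modules, the flat amplitude of $K_n$ over $R/\fram^n$ is bounded in terms of that of $K_1$ over $R/\fram$ and the (fixed) length of the filtration in the derived sense — more precisely one uses that $K_n \otimes^L_{R/\fram^n} R/\fram \simeq K_1$ together with the standard fact that a complete complex whose reduction mod $\fram$ has flat amplitude $\le a$ has flat amplitude $\le a$ over each $R/\fram^n$. The same holds for $L_n$. Therefore $K_n \otimes^L_{R/\fram^n} L_n$ has flat amplitude bounded independently of $n$, and $\{K_n\otimes^L_{R/\fram^n}L_n\}$ is an object of $D^b_\comp(\calC,R_\bullet)$ with cohomological amplitude bounded independently of $n$.

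Next I would observe that $\{K_n \otimes^L_{R/\fram^n} L_n\}$ is a compatible system in the sense of Definition \ref{def:compatiblesystem}: the transition map $(K_{n}\otimes^L_{R/\fram^{n}}L_{n})\otimes^L_{R/\fram^{n}}R/\fram^{n-1} \to K_{n-1}\otimes^L_{R/\fram^{n-1}}L_{n-1}$ is an equivalence because $\{K_n\}$ and $\{L_n\}$ are each compatible systems (being the pullbacks $\pi^*K$, $\pi^*L$ of complete complexes under the morphism $\pi$ of Lemma \ref{lem:completecompatiblesystems}). By Lemma \ref{lem:completecompatiblesystems}(3)--(4) applied in the bounded-above (indeed bounded) setting, $\R\lim_n (K_n\otimes^L_{R/\fram^n}L_n)$ is $\fram$-adically complete and its reduction mod $\fram^k$ recovers $K_k\otimes^L_{R/\fram^k}L_k$ via Lemma \ref{lem:truncationofcomplete}. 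On the other hand, the natural map $K\otimes^L_{\widehat R}L \to \R\lim_n(K_n\otimes^L_{R/\fram^n}L_n)$ reduces, after $\otimes^L_{\widehat R}R/\fram^k$, to the identity on $K_k\otimes^L_{R/\fram^k}L_k$ (here one uses Lemma \ref{lem:proisomchangecoeff} to pass between $\otimes^L_R R/\fram$ and $\otimes^L_{R/\fram^n}R/\fram$, and the flat-amplitude bound to know the source is already close to complete); hence the cone of $K\otimes^L_{\widehat R}L \to \R\lim_n(K_n\otimes^L_{R/\fram^n}L_n)$ has vanishing reduction mod $\fram$, and being a bounded complex (by the amplitude bounds) that is itself derived $\fram$-complete, it must vanish. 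Thus $K\otimes^L_{\widehat R}L \simeq \R\lim_n(K_n\otimes^L_{R/\fram^n}L_n)$ is complete, proving both assertions of the lemma.

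The main obstacle is the uniform flat-amplitude bound and the bookkeeping that turns ``each $K_n\otimes^L_{R/\fram^n}L_n$ is controlled'' into ``$K\otimes^L_{\widehat R}L$ is already complete'' without circularity; everything else is a formal application of the machinery of \S\ref{subsec:derivedcompfadic}--\S\ref{subsec:derivedcompnoeth}. One technical point to be careful about: $\fram$ need not be regular here, so I must stay in the bounded regime throughout and invoke only the bounded-above versions of Lemmas \ref{lem:proisomchangecoeff}, \ref{lem:truncationofcomplete}, \ref{lem:completecompatiblesystems} — which is legitimate since constructible complexes are bounded with uniformly bounded amplitude mod $\fram^n$.
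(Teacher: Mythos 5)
Your route (reduce to the compatible systems $\{K_n\otimes_{R/\fram^n}L_n\}$ and argue via uniform flat-amplitude bounds) is genuinely different from the paper's, but it has a gap at the decisive step. After correctly identifying that the problem is to show $K\otimes^L_{\widehat R}L\to \R\lim_n(K_n\otimes^L_{R/\fram^n}L_n)$ is an equivalence, you argue that the cone has vanishing reduction mod $\fram$, is bounded, and is ``itself derived $\fram$-complete, so it must vanish.'' The first two points are fine (boundedness follows from your amplitude bounds, and the vanishing mod $\fram$ from Lemma \ref{lem:truncationofcomplete} applied to the bounded compatible system). But boundedness plus vanishing derived reduction mod $\fram$ does \emph{not} force a complex to vanish --- e.g.\ $\Q_\ell\in D^b(\Z_\ell)$ has $\Q_\ell\otimes^L_{\Z_\ell}\Z/\ell=0$; one genuinely needs derived completeness of the cone to invoke derived Nakayama. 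And the cone is complete if and only if the source $K\otimes^L_{\widehat R}L$ is complete (the target is complete, and complete objects form a triangulated subcategory), which is exactly the statement being proved. So the argument is circular precisely where the content lies. The amplitude bounds control connectivity, but the real obstruction is that $-\otimes L$ need not commute with $\R\lim L_n$; this is not a connectivity issue (the fibres $\mathrm{fib}(L\to L_n)$ do not become highly connective), and indeed the tensor product of two derived complete complexes is not complete in general --- that is why $\widehat\otimes$ exists at all.

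The paper closes this gap by a geometric devissage rather than an amplitude argument: working locally and filtering $K$ and $L$ (Lemma \ref{lem:consetalefiltration}), one reduces to $K=i_*\widehat{\underline M}$, $L=i_*\widehat{\underline N}$ with $M,N$ perfect and $i$ a constructible closed immersion along which $X$ is henselian, then to $M=N=R$, and finally computes $i_*\widehat R\otimes_{\widehat R}i_*\widehat R\simeq i_*\widehat R$ directly using the triangle $j_!\widehat R\to\widehat R\to i_*\widehat R$ and the adjunction argument showing $j_!(-)\otimes i_*(-)=0$. If you want to salvage your approach, you would need an independent reason why $K\otimes^L_{\widehat R}-$ commutes with the relevant $\R\lim$ --- in effect a local dualizability or finite-presentation statement for constructible complexes --- and that is essentially what the paper's reduction to $i_*\widehat{\underline M}$ with $M$ perfect provides.
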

\begin{proof}
	The assertion is local on $X_\proet$. By filtering $K$ and $L$, and replacing $X$ by a cover, we may assume: $X$ is w-contractible and henselian along a constructible closed subset $i:Z \hookrightarrow X$, and $K = i_* \widehat{\underline{M}}$ and $L = i_* \widehat{\underline{N}}$ for $M,N \in D_\perf(R)$. By realising $M$ and $N$ as direct summands of finite free $R$-complexes, we reduce to $M = N = R$. Let $j:U \to X$ be the open complement of $i$. We claim the more precise statement that $i_* \widehat{R} \otimes_{\widehat{R}} i_* \widehat{R} \simeq i_* \widehat{R}$. For this, using the sequence
	\[ j_! \widehat{R} \to \widehat{R} \to i_* \widehat{R},\]
	we are reduced to checking that $j_! \widehat{R} \otimes_{\widehat{R}} i_* \widehat{R} = 0$, which is automatic by adjunction: for any $K \in D(U_\proet,\widehat{R})$ and $L \in D(Z_\proet,\widehat{R})$, one has 
	\[ \R\Hom(j_! K \otimes_{\widehat{R}} i_* L, -) = \R\Hom(j_! K, \underline{\R\Hom}(i_* L, -)) = \R\Hom(K, \underline{\R\Hom}(j^* i_* L, j^*(-))) = 0,\]
	where the last equality uses $j^* i_* = 0$.
\end{proof}

\begin{lemma}
	\label{lem:liftconstancy}
	Fix $K \in D_\cons(X_\proet,\widehat{R})$ with $K \otimes_{\widehat{R}} R/\fram$ constant locally on $X_\et$. Then $K \otimes_{\widehat{R}} R/\fram^n$ is also constant locally on $X_\et$.
\end{lemma}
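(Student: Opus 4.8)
The statement is that $\fram$-adic constancy lifts through the completion: if $K\in D_\cons(X_\proet,\widehat R)$ and $K\otimes_{\widehat R}R/\fram$ is, locally on $X_\et$, a constant complex, then each $K\otimes_{\widehat R}R/\fram^n$ is also constant locally on $X_\et$. The plan is to induct on $n$, with the base case $n=1$ being the hypothesis, and at each stage to identify the obstruction to lifting constancy from $R/\fram^{n-1}$ to $R/\fram^n$ with a class in a $\underline{\R\Hom}$ group that becomes trivial after passing to a further \'etale cover, by appealing to Lemma \ref{lem:globalsectionsconstant} (suitably transported to the pro-\'etale site via Corollary \ref{cor:pushpull} or Lemma \ref{lem:internalhometproet}). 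Since the conclusion is local on $X_\et$, I may freely replace $X$ by \'etale covers at each step, so the issue is purely about \emph{realising} an isomorphism $K\otimes_{\widehat R}R/\fram^n\simeq \underline{L_n}$ for an appropriate $L_n\in D_\perf(R/\fram^n)$ once we know it modulo $\fram^{n-1}$.

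\textbf{Key steps.} First, set $K_n:=K\otimes_{\widehat R}^L R/\fram^n$; each $K_n$ is a constructible (indeed locally constant with perfect values, once the $n=1$ stratum is trivial) $R/\fram^n$-complex on $X_\proet$, and by the identification $D_\cons(X_\proet,R/\fram^n)\simeq D_\cons(X_\et,R/\fram^n)$ we may regard $K_n=\nu^*(K_n^\et)$ for a constructible $R/\fram^n$-complex $K_n^\et$ on $X_\et$ whose reduction mod $\fram$ is constant locally on $X_\et$. So it suffices to prove the analogous statement purely on $X_\et$: a constructible $R/\fram^n$-complex whose reduction mod $\fram$ is locally constant is itself locally constant (with perfect values). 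Second, I would run the induction on $n$. Assume $K_{n-1}^\et\simeq\underline{L_{n-1}}$ over some \'etale cover, with $L_{n-1}\in D_\perf(R/\fram^{n-1})$. There is an exact triangle $K_n^\et\otimes_{R/\fram^n}^L\fram^{n-1}/\fram^n\to K_n^\et\to K_{n-1}^\et$; since $\fram^{n-1}/\fram^n$ is an $R/\fram$-module, the left-hand term is $K_1^\et\otimes^L_{R/\fram}(\fram^{n-1}/\fram^n)$, hence constant locally on $X_\et$ (being a complex of $R/\fram$-modules extended from $L_1\otimes(\fram^{n-1}/\fram^n)$, using that $L_1$ is perfect and the hypothesis on $K_1$). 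So $K_n^\et$ is an extension of two locally constant complexes with perfect values. Third, I invoke the constancy criterion: Lemma \ref{lem:constancycrit} reduces the problem to showing that the single extension class lives in $H^0$ of a constant sheaf $\underline{\R\Hom}(\ldots)$ — more precisely, write $K_n^\et\simeq\ker\big(\underline{L_{n-1}}\xrightarrow{\,s\,}(\underline{L_1\otimes\fram^{n-1}/\fram^n})[1]\big)$ locally, where $s$ is a section of $\underline{\R\Hom}(L_{n-1},L_1\otimes^L_{R/\fram}\fram^{n-1}/\fram^n[1])$ over $X_\et$; this is $\underline{C}$ for $C=\R\Hom(L_{n-1},L_1\otimes\fram^{n-1}/\fram^n[1])\in D^b(R)$, so by Lemma \ref{lem:globalsectionsconstant} the class $s$ comes, \'etale-locally, from $H^0(C)$, whence $K_n^\et$ is constant on a further cover. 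Perfectness of the resulting constant value follows as in Lemma \ref{lem:nilpidealcons} (the $5$-lemma characterisation of perfect complexes, using $L_n\otimes^L_{R/\fram}R/\fram\simeq L_1$ perfect), and the induction closes. Pulling back along $\nu^*$ then gives the statement on $X_\proet$.

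\textbf{Main obstacle.} The genuine content is the \'etale-local realisation of the extension as a \emph{constant} complex in step three — i.e. that the triangle relating $K_n^\et$ to $K_{n-1}^\et$ is, locally on $X_\et$, pulled back from $\Spec(R/\fram^n)$. The technical heart is Lemma \ref{lem:globalsectionsconstant} applied to the (bounded, since $L_{n-1},L_1$ perfect) complex $\underline{\R\Hom}(L_{n-1}, L_1\otimes^L \fram^{n-1}/\fram^n)$; one must be a little careful that pullback $D(R)\to D(X_\et,R)$ commutes with the relevant $\underline{\R\Hom}$, which it does between perfect complexes by the stupid-filtration argument used in the proof of Lemma \ref{lem:constancycrit}. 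The rest — flatness bookkeeping for the tensor triangles, finiteness of amplitude, perfectness of the lifted value — is routine devissage, and the passage between $X_\et$ and $X_\proet$ is handled uniformly by Corollary \ref{cor:pushpull} and the equivalence $D_\cons(X_\proet,R/\fram^n)\simeq D_\cons(X_\et,R/\fram^n)$.
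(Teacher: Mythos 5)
Your proof is correct, but it takes a different route from the paper's. The paper's proof is shorter and avoids any explicit extension/obstruction analysis: since the conclusion only concerns complexes pulled back from $X_\et$, it localizes at a geometric point $x$ so that $X$ is strictly henselian local, observes that $\R\Gamma(X_\et,-)\simeq x^*$ makes the constant-complex functor $D(R/\fram^n)\to D(X_\et,R/\fram^n)$ fully faithful, and then notes that the essential image of a fully faithful exact functor is a triangulated subcategory — so the same triangles $K_1\otimes^L\fram^{i}/\fram^{i+1}\to K_{i+1}\to K_i$ that you use immediately force each $K_n$ to be constant, with no need to trivialize the connecting map. Your argument instead stays on honest \'etale covers and makes the dévissage explicit: you identify the connecting map $\delta\in\Hom(\underline{L_{n-1}},\underline{L_1\otimes^L\fram^{n-1}/\fram^n}[1])=H^0(X_\et,\underline{C})$ and kill the discrepancy with $H^0(C)$ via Lemma \ref{lem:globalsectionsconstant}, exactly as in the paper's proof of Lemma \ref{lem:constancycrit} but along the $\fram$-adic rather than the Postnikov filtration. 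This buys you an actual \'etale cover trivializing $K_n$ (rather than constancy only after strict henselization, which then has to be spread out), at the cost of the bookkeeping you flag: pullback commuting with $\underline{\R\Hom}$ out of the perfect complex $L_{n-1}$, and boundedness of $C$. Both proofs are sound; yours is the more constructive, the paper's the more economical.
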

\begin{proof}
	Since the question concerns only complexes pulled back from $X_\et$, we can \'etale localize to assume that $(X,x)$ is a local strictly henselian scheme. Then the assumption implies $K \otimes_{\widehat{R}} R/\fram$ is constant. Moreover, one easily checks that $D(R/\fram^n) \to D(X_\et,R/\fram^n)$ is fully faithful (as $\R\Gamma(X_\et,-) \simeq x^*$). Chasing triangles shows that each $K \otimes_{\widehat{R}} R/\fram^n$ is in the essential image of $D(R/\fram^n) \to D(X_\et,R/\fram^n)$, as wanted.
\end{proof}

\begin{corollary}
	\label{cor:constanthenselian}
	Assume $X$ is a strictly henselian local scheme. Then pullback
	\[D_\perf(R) \to D_\cons(X_\proet,\widehat{R}) \]
	is fully faithful with essential image those $K$ with $K \otimes_{\widehat{R}} R/\fram$ locally constant.
\end{corollary}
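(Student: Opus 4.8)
The functor under consideration sends $L \in D_\perf(R)$ to $\widehat{\underline{L}}$. It does land in $D_\cons(X_\proet,\widehat{R})$: one has $\widehat{\underline{L}}\otimes_{\widehat{R}}^L R/\fram \simeq \underline{L\otimes_R^L R/\fram}$, which is the pullback of a constant perfect $R/\fram$-complex, hence constructible, and visibly (locally) constant. So the plan is to prove (i) the functor is fully faithful, and (ii) a constructible $K$ with $K\otimes_{\widehat R}R/\fram$ locally constant lies in its essential image.

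For full faithfulness, fix $L,L' \in D_\perf(R)$ and write $M := \R\Hom_R(L,L') \in D_\perf(R)$. Using the completion adjunction (Lemma \ref{lem:conscompletion}, together with Lemma \ref{lem:arcomplete}) and the fact that $\underline{L}$ is a perfect $\underline{R}$-complex, so $\underline{\R\Hom}_R(\underline L,-) \simeq \underline{L^\vee}\otimes^L_{\underline R}-$ commutes with the $\R\lim$ defining the completion, I would first identify $\underline{\R\Hom}_R(\widehat{\underline L},\widehat{\underline{L'}}) \simeq \underline{\R\Hom}_R(\underline L,\widehat{\underline{L'}}) \simeq \widehat{\underline M}$. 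It then remains to compute $\R\Gamma(X_\proet,\widehat{\underline M})$. By Proposition \ref{prop:derivedcompnoetherian} and perfectness of $M$, $\widehat{\underline M}\simeq \R\lim_n \underline{M\otimes_R^L R/\fram^n}$, and $\R\Gamma$ commutes with $\R\lim$; since each $M\otimes_R^L R/\fram^n$ is bounded, Corollary \ref{cor:pushpull} gives $\R\Gamma(X_\proet,\nu^*(-))\simeq \R\Gamma(X_\et,-)$, and because $X$ is strictly henselian local, $\R\Gamma(X_\et,\underline N)\simeq N$ for bounded $N$ (the closed-point stalk computes étale cohomology on a connected strictly henselian local scheme). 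Hence $\R\Gamma(X_\proet,\widehat{\underline M})\simeq \R\lim_n M\otimes_R^L R/\fram^n \simeq M$ as $R$ is $\fram$-complete and $M$ perfect; naturality of all identifications shows this is the map induced by the functor, giving full faithfulness (and in particular recovers the equivalence in the case of a geometric point recorded above).

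For essential surjectivity, take $K\in D_\cons(X_\proet,\widehat R)$ with $K\otimes_{\widehat R}R/\fram$ locally constant. Then $K$ is bounded, $K\otimes_{\widehat R}R/\fram$ is pulled back from a constructible complex on $X_\et$ with locally constant cohomology, so Lemma \ref{lem:constancycrit} and the fact that every étale cover of a strictly henselian local scheme splits show $K\otimes_{\widehat R}R/\fram$ is in fact constant. Lemma \ref{lem:liftconstancy} then gives that each $K_n := K\otimes_{\widehat R}R/\fram^n$ is constant on $X_\et$, say $K_n \simeq \underline{L_n}$, and since $K_n$ is constructible and a constructible complex on a point is perfect (Remark \ref{rmk:consperfect}), $L_n\in D_\perf(R/\fram^n)$, with uniformly bounded amplitude as $K\otimes_{\widehat R}R/\fram$ has finite flat dimension. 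Using the faithfulness of $D(R/\fram^{n-1})\to D(X_\et,R/\fram^{n-1})$ on $X$, the isomorphisms $K_n\otimes_{R/\fram^n}^L R/\fram^{n-1}\simeq K_{n-1}$ descend to a compatible system $L_n\otimes_{R/\fram^n}^L R/\fram^{n-1}\simeq L_{n-1}$. Applying the punctual case of Lemma \ref{lem:truncationofcomplete} (i.e. $\calX=\Set$) to $L := \R\lim_n L_n$ gives $L\otimes_R^L R/\fram^n \simeq L_n$, so $L$ is derived $\fram$-complete and perfect modulo $\fram$, hence (as $R$ is noetherian and $\fram$-complete) $L\in D_\perf(R)$; here one may alternatively invoke Lemma \ref{lem:completecompatiblesystems} for the identification of derived complete complexes with compatible systems. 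Finally, $\widehat{\underline L}\simeq \R\lim_n \underline{L\otimes_R^L R/\fram^n}\simeq \R\lim_n \underline{L_n}\simeq \R\lim_n K_n\simeq K$, the last step by completeness of $K$.

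The main obstacle is this gluing step: one must check that the étale-local trivializations $K_n\simeq \underline{L_n}$ can be chosen compatibly in $n$ and that the resulting limit $L$ genuinely lands in $D_\perf(R)$ rather than merely $D_\comp(R)$ — this is exactly where the noetherian and completeness hypotheses on $(R,\fram)$ are used, via the compatible-systems formalism of \S\ref{subsec:derivedcompnoeth} and the standard fact that a derived $\fram$-complete complex perfect modulo $\fram$ is perfect. By contrast, once one knows $\R\Gamma(X_\et,-)$ is the identity on constant sheaves over a strictly henselian local $X$, the full-faithfulness computation is a routine unwinding.
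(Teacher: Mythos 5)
Your proof is correct and takes essentially the same route as the paper's (much terser) argument: full faithfulness via the computation $\R\Gamma(X_\proet,\widehat{R})\simeq \R\lim R/\fram^n\simeq R$ on a strictly henselian local scheme, and essential surjectivity by applying Lemma \ref{lem:liftconstancy} and assembling the resulting compatible system $\{L_n\}$ into a perfect $R$-complex. The one step the paper leaves implicit, which you correctly isolate, is that the derived limit of compatibly base-changed perfect $R/\fram^n$-complexes is perfect over $R$; the same fact is used without comment elsewhere in the paper (e.g.\ in Lemma \ref{lem:conslocallyconstant}).
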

\begin{proof}
	The full faithfulness is automatic since $\R\Gamma(X,\widehat{R}) \simeq \R\lim \R\Gamma(X,R/\fram^n) \simeq \R\lim R/\fram^n \simeq R$. The rest follows by Lemma \ref{lem:liftconstancy}.
\end{proof}

\begin{lemma}
	\label{lem:constrestrictopen}
Fix a locally closed constructible subset $k:W \hookrightarrow X$.
\begin{enumerate}
	\item One has $k^*(\widehat{R}_X) = \widehat{R}_W$.
	\item The functor $k^*:D(X_\proet,\widehat{R}_X) \to D(W_\proet,\widehat{R}_W)$ preserves constructible complexes.
	\item The functor $k_!:D(W_\proet,\widehat{R}_W) \to D(X_\proet,\widehat{R}_X)$ preserves constructible complexes.
\end{enumerate}
\end{lemma}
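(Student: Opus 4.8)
The plan is to reduce both (2) and (3) to the two basic cases of a quasi-compact open immersion and a constructible closed immersion, via the factorization $W \xrightarrow{f} \overline{W} \xrightarrow{g} X$ with $f$ a quasi-compact open immersion and $g$ a constructible closed immersion, together with the identification $k_! = g_* \circ f_!$ coming from the proof of Lemma \ref{lem:complexeswithsupport}(3).

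For (1): write $\widehat{R}_X = \lim_n R/\fram^n$ as a sheaf on $X_\proet$. An open-immersion pullback commutes with all small limits (it is the restriction functor $\Shv(X_\proet)_{/\overline W}\to\Shv(W_\proet)$, which has both adjoints by Lemma \ref{lem:proettoposbc}), and a closed-immersion pullback does so by Corollary \ref{cor:closedimmpullbackadjoints}; hence $k^* = f^* g^*$ commutes with small limits. Since $k^*$ carries the constant sheaf $R/\fram^n$ to the constant sheaf $R/\fram^n$, we get $k^*(\widehat R_X) = \lim_n R/\fram^n = \widehat R_W$.

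For (2), it is convenient to use the compatible-system description: by Lemma \ref{lem:completecompatiblesystems} (applied to the uniformly bounded system, using that constructible complexes are bounded with amplitude bounded independently of $n$), a complex $K\in D(X_\proet,\widehat R_X)$ is constructible iff $K\simeq \R\lim_n K_n$ for the compatible system $K_n := K\otimes^L_{\widehat R_X} R/\fram^n$ with each $K_n$ of the form $\nu_X^* L_n$ for some $L_n\in D_\cons(X_\et,R/\fram^n)$. Now $k^*$ commutes with homotopy-limits (as in (1)) and is symmetric monoidal, so $k^* K\simeq \R\lim_n k^* K_n$ with $k^* K_n\simeq k^* K\otimes^L_{\widehat R_W} R/\fram^n$ (using (1)) forming a compatible system; and $k^* K_n = k^* \nu_X^* L_n\simeq \nu_W^*(k^*_{\et}L_n)$ by Lemma \ref{lem:proetfunc}, where $k^*_{\et}L_n$ is again constructible since constructibility is stable under pullback on the \'etale site. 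Re-applying Lemma \ref{lem:completecompatiblesystems} shows $k^* K$ is constructible. For (3), by the reduction it suffices to show $f_!$ preserves constructibility for a quasi-compact open immersion $f$ and $g_*$ does for a constructible closed immersion $g$; in each case the argument mirrors (2). The functor commutes with homotopy-limits ($f_!$ by Lemma \ref{lem:shriekpushforwardcont}; $g_* = g_!$ as a right adjoint), and there is an $\widehat R$-linear projection formula $f_!(-\otimes^L_{\widehat R_W} f^* N)\simeq f_!(-)\otimes^L_{\widehat R_X} N$, and likewise for $g$ (the $\widehat R$-linear form of Lemma \ref{lem:shriekpushforwardlocallyclosed}(3)). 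Taking $N = R/\fram^n$ and using (1) lets one slide the pushforward past the reduction maps, so the pushforward of the compatible system $\{K_n\}$ is again compatible and the pushforward of $\R\lim_n K_n$ is $\R\lim_n$ of the pushforwards. Modulo $\fram^n$ this produces $\nu^*$ of $f_{\et,!}L_n$ (by Lemma \ref{lem:shriekpushforwardlocallyclosed}(4)) resp. $\nu^*$ of $g_{\et,*}L_n$ (by Lemma \ref{lem:funcpushforward}, valid since $L_n$ is bounded), and these are constructible on the \'etale site by Lemma \ref{lem:shriekpushforwardcons} resp. the stability of \'etale constructibility under $k_!$ for locally closed constructible $k$ recorded in \S\ref{ss:ConsEt}. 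Hence $f_! K$ and $g_* K$ are constructible, and composing gives the claim for $k_!$.

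The main obstacle — really the only step that is not bookkeeping — is verifying that $\fram$-adic completeness is preserved, i.e.\ getting the relevant functors to commute with the homotopy-limit $\R\lim_n(-\otimes^L_{\widehat R} R/\fram^n)$. For $k^*$ this is immediate because both an open and a closed pullback are exact enough; for $k_!$ one must combine the known commutation of $f_!$ and $g_*$ with homotopy-limits with the projection formula in order to move these functors past $\otimes^L_{\widehat R} R/\fram^n$, which requires first checking that projection formula in its $\widehat R$-linear form. Everything else (pullback and pushforward of constant sheaves, compatibility of $\nu^*$ with $f_*$, $f_!$, $k^*$ in the bounded range, and stability of \'etale constructibility under pullback and under the pushforwards used) is already available in the preceding sections.
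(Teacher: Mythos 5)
Your proof is correct and follows essentially the same route as the paper: (1) from $k^*$ commuting with limits, (2) from the resulting compatibility $k^*(K\otimes_{\widehat{R}_X} R/\fram)\simeq k^*K\otimes_{\widehat{R}_W} R/\fram$ together with stability of \'etale constructibility under pullback, and (3) from the projection formula and continuity of $k_!$. The only difference is presentational: the paper invokes the already-established properties of $k_!$ for a general locally closed constructible immersion (Lemma \ref{lem:shriekpushforwardlocallyclosed}) rather than re-splitting into open and closed pieces, and it works directly from the definition of constructibility rather than passing through the compatible-system description of Lemma \ref{lem:completecompatiblesystems}, which is harmless but not needed here.
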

\begin{proof}
	(1)  follows from the fact that $k^*:\Shv(X_\proet) \to \Shv(W_\proet)$ commutes with limits (as this is true for constructible open and closed immersions).  This also implies $k^*(K \otimes_{\widehat{R}_X} R/\fram) \simeq k^* K \otimes_{\widehat{R}_W} R/\fram$ for any $K \in D(X_\proet,\widehat{R}_X)$, which gives (2). The projection formula for $k_!$ shows $k_! K \otimes_{\widehat{R}_X} R/\fram \simeq k_!(K \otimes_{\widehat{R}_W} R/\fram)$, which gives (3).
\end{proof}

\begin{lemma}
	\label{lem:conspullback}
	Let $f:X \to Y$ be a map of qcqs schemes, and let $f_*:D(X_\proet,\widehat{R}) \to D(Y_\proet,\widehat{R})$ be the pushforward. Then we have:
\begin{enumerate}
	\item For $K \in D(X_\proet,\widehat{R})$, we have an identification $\{f_*K \otimes_{\widehat{R}} R/\fram^n\} \simeq \{f_*(K \otimes_{\widehat{R}} R/\fram^n) \}$ of pro-objects. 
	\item For $K \in D(X_\proet,\widehat{R})$, we have $f_* \widehat{K} \simeq \widehat{f_*K}$. In particular, $f_*$ preserves $\fram$-adically complete complexes, and hence induces $f_*:D_\comp(X_\proet,\widehat{R}) \to D_\comp(Y_\proet,\widehat{R})$.
	\item For any perfect complex $L \in D(R)$, we have $f_* K \otimes_{\widehat{R}}  \widehat{\underline{L}} \simeq f_*(K \otimes_{\widehat{R}} \widehat{\underline{L}})$.
	\item Pullback followed by completion gives $f_\comp^*:D_\comp(X_\proet,\widehat{R}) \to D_\comp(Y_\proet,\widehat{R})$ left adjoint to $f_{*}$.
	\item $f_{\comp}^*$ preserves constructible complexes, and hence defines
	\[f_\comp^*:D_\cons(Y_\proet,\widehat{R}) \to D_\cons(X_\proet,\widehat{R})\ .\]
\end{enumerate}
\end{lemma}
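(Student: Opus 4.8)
The plan is to establish (1)--(5) in order, with (1) carrying all the technical weight and the rest following formally.

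\emph{Part (1).} The idea is to replace the coefficient pro-system $\{R/\fram^n\}$ by a pro-isomorphic one consisting of \emph{perfect} $\widehat{R}$-complexes, for which $f_*$ satisfies the projection formula. Fix generators $f_1,\dots,f_r$ of $\fram$, set $P=\Z[x_1,\dots,x_r]$ and $J=(x_1,\dots,x_r)\subset P$, and let $P\to R$ send $x_i\mapsto f_i$; this gives ring maps of constant sheaves $\underline{P}\to\underline{R}$ on $X_\proet$ and, composing with the completion map, $\underline{P}\to\widehat{R}_X$. Put $M_n:=\widehat{R}_X\otimes^L_{\underline{P}}\underline{P/J^n}$. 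Since $P$ is regular noetherian, $P/J^n$ is a perfect $P$-module, so $M_n$ is a retract of a bounded complex of finite free $\widehat{R}_X$-modules. Moreover $\{M_n\}$ is pro-isomorphic to $\{R/\fram^n\}$ as pro-objects of $\widehat{R}_X$-module complexes: the Artin--Rees argument from the proof of Proposition \ref{prop:derivedcompnoetherian} shows $\{\underline{P/J^n}\otimes^L_{\underline{P}}\underline{R}\}\to\{\underline{R/\fram^n}\}$ is a (strict) pro-isomorphism, and applying $-\otimes^L_{\underline{R}}\widehat{R}_X$ together with Lemma \ref{lem:arcomplete} (which gives $\underline{R/\fram^n}\otimes^L_{\underline{R}}\widehat{R}_X\simeq\underline{R/\fram^n}$ levelwise) yields the claim. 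Now $f_*$, being a right adjoint, commutes with finite limits, hence with finite products $=$ finite coproducts, and as a triangulated functor it commutes with shifts, cones and retracts; therefore $f_*(K\otimes^L_{\widehat{R}}M_n)\simeq f_*K\otimes^L_{\widehat{R}}M_n$ for each $n$. Chaining the level pro-isomorphisms $\{R/\fram^n\}\simeq\{M_n\}$ and applying the exact functors $K\otimes^L_{\widehat{R}}-$ and $f_*$ (which preserve pro-isomorphisms of level towers, since the cofiber of a level pro-isomorphism is pro-zero) gives (1). The same projection-formula computation, with $M_n$ replaced by $\widehat{\underline{L}}$ for $L\in D_\perf(R)$ --- again a retract of a bounded complex of copies of $\widehat{R}_X$, since completion is additive and commutes with finite colimits --- proves (3).

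\emph{Parts (2) and (4).} For (2): by Proposition \ref{prop:derivedcompnoetherian}(3), $\widehat{K}=\R\lim_n(K\otimes^L_{\widehat{R}}R/\fram^n)$, and $f_*$ commutes with $\R\lim$, so $f_*\widehat{K}\simeq\R\lim_n f_*(K\otimes^L_{\widehat{R}}R/\fram^n)\simeq\R\lim_n(f_*K\otimes^L_{\widehat{R}}R/\fram^n)=\widehat{f_*K}$, where the middle isomorphism uses (1) and the fact that pro-isomorphic level towers have isomorphic $\R\lim$ (valid because $X_\proet$, $Y_\proet$ are replete, so a pro-zero tower has vanishing $\R\lim$). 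In particular $f_*$ preserves $\fram$-adically complete complexes, giving $f_*\colon D_\comp(X_\proet,\widehat{R})\to D_\comp(Y_\proet,\widehat{R})$. For (4): $f^*$ on $\widehat{R}$-modules is the pullback along the evident morphism of ringed topoi $(X_\proet,\widehat{R}_X)\to(Y_\proet,\widehat{R}_Y)$ (using the natural ring map $f^{-1}\widehat{R}_Y\to\widehat{R}_X$), hence is left adjoint to $f_*$; postcomposing with the completion functor $\widehat{(-)}$, which is left adjoint to $D_\comp\hookrightarrow D$ by Lemma \ref{lem:conscompletion}, gives $f_\comp^*\colon D_\comp(Y_\proet,\widehat{R})\to D_\comp(X_\proet,\widehat{R})$. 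For $K\in D_\comp(X_\proet,\widehat{R})$ and $M\in D_\comp(Y_\proet,\widehat{R})$ one has $\Hom(f_\comp^*M,K)=\Hom(f^*M,K)=\Hom(M,f_*K)$, using completeness of $K$ at the first step (Lemma \ref{lem:conscompletion}) and (2) at the last.

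\emph{Part (5).} Let $K\in D_\cons(Y_\proet,\widehat{R})$, so $K$ is bounded and $K\otimes^L_{\widehat{R}}R/\fram\simeq\nu_Y^*M_0$ for a constructible $M_0\in D_\cons(Y_\et,R/\fram)$. Then $f_\comp^*K$ is complete by construction, so it remains to identify $f_\comp^*K\otimes^L_{\widehat{R}}R/\fram$. Now $f^*K$ is bounded above (exactness of $f^*$), so $\{f^*K\otimes^L_{\widehat{R}}R/\fram^n\}$ is a compatible system in the sense of Definition \ref{def:compatiblesystem} whose $\R\lim$ is $\widehat{f^*K}=f_\comp^*K$; the equivalence $D^{-}_\comp(\calX,\widehat{R})\simeq D^{-}_\comp(\calC,R_\bullet)$ of Lemma \ref{lem:completecompatiblesystems}(4) then gives $f_\comp^*K\otimes^L_{\widehat{R}}R/\fram^n\simeq f^*K\otimes^L_{\widehat{R}}R/\fram^n$ for all $n$, i.e.\ completion is invisible modulo powers of $\fram$. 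Taking $n=1$ and using that $f^*$ is monoidal and compatible with the quotient maps $\widehat{R}\to R/\fram$, we get $f_\comp^*K\otimes^L_{\widehat{R}}R/\fram\simeq f^*\bigl(K\otimes^L_{\widehat{R}}R/\fram\bigr)\simeq f^*\nu_Y^*M_0\simeq\nu_X^*f_\et^*M_0$ by Lemma \ref{lem:proetfunc}; and $f_\et^*M_0$ is constructible, constructibility being preserved under pullback on the small \'etale site. Hence $f_\comp^*K\in D_\cons(X_\proet,\widehat{R})$.

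The main obstacle is (1): the non-regularity of $\fram$ prevents one from replacing $\{R/\fram^n\}$ directly by Koszul complexes on $f_1,\dots,f_r$, which is precisely why the auxiliary regular ring $P$ is introduced --- to manufacture, via base change, a pro-isomorphic system of \emph{perfect} coefficient complexes for which the projection formula is available.
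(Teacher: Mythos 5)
Your proof is correct and follows essentially the same route as the paper: both reduce (1) to the projection formula for perfect coefficients by passing to the auxiliary polynomial ring $P=\Z[x_1,\dots,x_r]$ and the Artin--Rees pro-isomorphism $\{P/J^n\otimes_P R\}\simeq\{R/\fram^n\}$ from Proposition \ref{prop:derivedcompnoetherian}, and then deduce (2)--(5) formally (the paper gets (3) by d\'evissage from $L=R$ and (5) by the same reduction-mod-$\fram$ computation you give). Your more detailed justification of $f_\comp^*K\otimes_{\widehat{R}}R/\fram\simeq f^*(K\otimes_{\widehat{R}}R/\fram)$ via Lemma \ref{lem:truncationofcomplete} is a valid elaboration of the paper's one-line argument.
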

\begin{proof}
	(1) would be clear if each $R/\fram^n$ is $R$-perfect. To get around this, choose $P$ and $J$ as in the proof of Proposition \ref{prop:derivedcompnoetherian}. Then $\{R \otimes_P P/J^n\} \simeq \{R/\fram^n\}$ is a strict pro-isomorphism, so $\{K \otimes_R R/\fram^n\} \simeq \{K \otimes_P  P/J^n\}$ as pro-objects as well, and similarly for $f_* K$. The claim now follows as $P/J^n$ is $P$-perfect. (2) immediately follows from (1) (or simply because $T(f_*K,x) \simeq f_* T(K,x) \simeq 0$ for $x \in \fram$ and $K$ is complete as $f_*$ commutes with $\R\lim$). (3) immediately follows from the case $L = R$ by devissage,  while (4) follows from (2) by adjointness of completion. For (5), as $f^*$ commutes with tensor products, we have $f_\comp^*(K) \otimes_{\widehat{R}_Y} R/\fram \simeq f^*(K \otimes_{\widehat{R}_X} R/\fram)$, so the claim follows from preservation of constructibility under pullbacks in the classical sense.
\end{proof}

\begin{remark}
	When $f:X \to Y$ is a finite composition of qcqs weakly \'etale maps and constructible closed immersion, we have $f_\comp^* = f^*$, i.e., that $f^* K$ is complete if $K$ is so; this follows from Lemma \ref{lem:constrestrictopen}. 
\end{remark}

Lemma \ref{lem:conspullback} shows that pushforwards in the pro-\'etale topology commute with taking $\fram$-adic truncations in the sense of pro-objects. To get strict commutation, we need a further assumption:

\begin{lemma}
	\label{lem:abstractpushforward}
	Let $f:X \to Y$ be a map of qcqs schemes. Assume that $f_*:\Mod(X_\et,R/\fram) \to \Mod(Y_\et,R/\fram)$ has cohomological dimension $\leq d$ for some integer $d$. Then:
	\begin{enumerate}
		\item If $P \in D^{\leq k}(R)$ and $K \in D^{\leq m}_\cons(X_\proet,\widehat{R})$, then $f_*(K \widehat{\otimes}_{\widehat{R}} \widehat{\underline{P}}) \in D^{\leq k+m+d+2}(Y_\proet,\widehat{R})$.
		\item If $K \in D_\cons(X_\proet,\widehat{R})$ and $M \in D^{-}(R)$, then $f_*(K \widehat{\otimes}_{\widehat{R}} \widehat{\underline{M}}) \simeq f_* K \widehat{\otimes}_{\widehat{R}} \widehat{\underline{M}}$.
		\item If $K \in D_\cons(X_\proet,\widehat{R})$,  then $f_* K \otimes_{\widehat{R}} R/\fram^n \simeq f_* (K \otimes_{\widehat{R}} R/\fram^n)$ for all $n$.
	\end{enumerate}
\end{lemma}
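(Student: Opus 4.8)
The three assertions are closely linked, and the plan is to prove them in the order (3), (1), (2), deducing (3) in general from the preceding two, or more precisely to establish (3) first as a pro-object statement already available from Lemma \ref{lem:conspullback}(1) and then upgrade it to a strict equality using the cohomological dimension bound. Let me describe each step. For (3): by Lemma \ref{lem:conspullback}(1) we already know $\{f_* K \otimes_{\widehat R} R/\fram^n\} \simeq \{f_*(K \otimes_{\widehat R} R/\fram^n)\}$ as pro-objects. To promote this to an honest equivalence for each fixed $n$, I would argue that the transition maps in the first system are ``eventually surjective on each cohomology sheaf'' in a controlled range: since $K$ is constructible, $K \otimes_{\widehat R} R/\fram$ has finite flat dimension (this was used in the boundedness lemma), so $K \otimes_{\widehat R} R/\fram^n$ has amplitude bounded independently of $n$, and applying $f_*$ (of cohomological dimension $\le d$) keeps this amplitude bounded. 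A pro-isomorphism of uniformly bounded complexes whose transition maps become isomorphisms on cohomology in each degree for $m \gg n$ — combined with the devissage $R/\fram^{n+1} \to R/\fram^n \to \fram^n/\fram^{n+1}$ — forces the map $f_*K \otimes_{\widehat R} R/\fram^n \to f_*(K\otimes_{\widehat R}R/\fram^n)$ to be an isomorphism; here one uses that $f_*K$ itself is computed as $\R\lim_n f_*(K\otimes R/\fram^n)$ by Lemma \ref{lem:conspullback}(2) and repleteness, together with the Artin–Rees-type argument from Proposition \ref{prop:derivedcompnoetherian}.

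For (1): using the factorization through $P$ and $J = (x_1,\dots,x_r)$ as in Proposition \ref{prop:derivedcompnoetherian}, write $K\widehat\otimes_{\widehat R}\widehat{\underline P} \simeq \R\lim_n\big((K\otimes_{\widehat R}^L R/\fram^n)\otimes^L_R \underline P\big)$ (completion commutes with the tensor, and $\{R/\fram^n\}$ is pro-isomorphic to $\{P/J^n\otimes_P R\}$, each term $R$-of bounded Tor-dimension after passing to $P$). Each term $(K\otimes R/\fram^n)\otimes^L_R \underline P$ lies in $D^{\le k+m+(\text{flat dim bound})}$ since $K\otimes R/\fram$ has finite flat dimension and $P$ is bounded above by $k$; applying $f_*$ costs $d$ more, and the outer $\R\lim$ (cohomological dimension $\le 1$ by repleteness, since $\Shv(Y_\proet)$ is locally weakly contractible) costs at most one more. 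Bookkeeping the constant then yields the bound $k+m+d+2$. For (2): reduce to $M = R$ by devissage along the stupid filtration of the bounded-above complex $M$ (each graded piece a shift of a free module; the tensor and $f_*$ commute with the relevant finite colimits, and the completed tensor is continuous), and for $M = R$ the claim is $f_* K \simeq f_* K$, so it is (3) reassembled: $f_*(K\widehat\otimes_{\widehat R}\widehat{\underline R}) = f_* \widehat K = \widehat{f_*K} = f_*K\widehat\otimes_{\widehat R}\widehat{\underline R}$ by Lemma \ref{lem:conspullback}(2), combined with the strict identification $f_*K\otimes R/\fram^n\simeq f_*(K\otimes R/\fram^n)$ from (3) to see the completed tensor matches termwise; more honestly, for general $M \in D^-(R)$ one uses (3) together with Lemma \ref{lem:proisomchangecoeff}/\ref{lem:truncationofcomplete} to commute $\R\lim$ past $f_*$.

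The main obstacle I anticipate is precisely the passage from a pro-isomorphism to a strict isomorphism in (3): Lemma \ref{lem:conspullback}(1) only gives equality of pro-systems, and in general $\{A_n\} \simeq \{B_n\}$ as pro-objects does not imply $A_n \simeq B_n$. What saves us is the uniform boundedness of amplitudes (constructibility of $K$ plus finite cohomological dimension of $f_*$) together with the fact that both sides are $\fram$-adically complete, so both are recovered as $\R\lim$ of their reductions; one then runs the Artin–Rees argument of Proposition \ref{prop:derivedcompnoetherian} degreewise. I would want to be careful that the ``eventually constant'' phenomenon for the cohomology pro-systems $\{\mathcal H^i(f_*(K\otimes R/\fram^n))\}$ is genuinely uniform in $i$ over the (finite) range where these are nonzero — this is where the cohomological dimension hypothesis on $f_*$ is essential and cannot be dropped.
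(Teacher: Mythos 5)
The logical order you propose --- (3) first, then (1), then (2) --- is the reverse of what actually works, and the step where you establish (3) contains a genuine gap. Lemma \ref{lem:conspullback}(1) gives only an isomorphism of \emph{pro-objects} $\{f_*K \otimes_{\widehat{R}} R/\fram^n\} \simeq \{f_*(K \otimes_{\widehat{R}} R/\fram^n)\}$, and a pro-isomorphism of uniformly bounded systems is emphatically not a levelwise isomorphism: already $\{\Z/p^{n+1}\} \to \{\Z/p^n\}$ is a strict pro-isomorphism that is an isomorphism at no level. Your proposed mechanism for the upgrade --- that the transition maps ``become isomorphisms on cohomology in each degree for $m \gg n$'' --- is false here: the cohomology of $f_*(K \otimes_{\widehat{R}} R/\fram^n)$ genuinely grows with $n$ (already $\calH^0$ of $R/\fram^n$ does), so no eventual-constancy argument applies, and completeness of both sides only tells you the two $\R\lim$'s agree, not the individual terms. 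The obstruction is that $R/\fram^n$ is not perfect over $R$, which is exactly why Lemma \ref{lem:conspullback}(1) had to pass through the auxiliary polynomial ring $P$ and only returns a pro-statement.

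The paper's route is: prove (1) directly, exactly as you do --- write $K \widehat{\otimes}_{\widehat{R}} \widehat{\underline{P}} \simeq \R\lim_n (K_n \otimes_{R/\fram^n} \underline{P_n})$, bound each term using Lemma \ref{lem:cohdimetproet}, and pay $+1$ for $\R\lim$ by repleteness; this part of your plan is correct. Then prove (2) by choosing, for each $i$, an $i$-close approximation $P_i \to M$ with $P_i$ a finite complex of free $R$-modules and homotopy-kernel $L_i \in D^{\leq -i}(R)$: for $P_i$ the projection formula is trivial, and the two error terms $f_*(K \widehat{\otimes} \widehat{\underline{L_i}})$ and $f_*K \widehat{\otimes} \widehat{\underline{L_i}}$ are $(-i+d+2)$-connected --- the first by (1), the second because $f_*K \in D^{\leq d+1}$ --- so letting $i \to \infty$ gives (2). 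Finally (3) is the special case $M = R/\fram^n$ of (2), using that $R/\fram^n$-complexes are automatically derived $\fram$-complete so the completed tensor is the plain one. Your sketch of (2) gestures at the right filtration but cannot close: the stupid filtration of a bounded-above complex is infinite, $f_*$ does not commute with the resulting filtered colimit, and your fallback (``use (3) to commute $\R\lim$ past $f_*$'') leans on the unproved (3). The missing ingredient is precisely the connectivity estimate supplied by (1), which is why (1) must come first and (3) last.
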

\begin{proof}
	For (1), observe that 
	\[ f_*(K \widehat{\otimes}_{\widehat{R}} \widehat{\underline{P}}) \simeq f_* \R\lim(K_n \otimes_{R/\fram^n} \underline{P_n}) \simeq \R\lim f_*(K_n \otimes_{R/\fram^n} \underline{P_n}) \in D^{\leq k+m+d+2}(Y_\proet,\widehat{R}),\] 
	where the last inclusion follows from Lemma \ref{lem:cohdimetproet} and repleteness. For (2), we may assume by shifting that $K \in D^{\leq 0}_\cons(X_\proet,\widehat{R})$. First observe that if $M$ is a free $R$-module, then the claim is clear. For general $M$, fix an integer $i$ and choose an $i$-close approximation $P_i \to M$  in $D(R)$ with $P_i$ a finite complex of free $R$-modules, i.e., the homotopy-kernel $L_i$ lies in $D^{\leq -i}(R)$. Then $\widehat{\underline{P_i}} \to \widehat{\underline{M}}$ is an $i$-close approximation in $D(X_\proet,\widehat{R})$. Moreover, $f_*(K \widehat{\otimes}_{\widehat{R}} \widehat{\underline{P_i}}) \simeq f_* K \widehat{\otimes}_{\widehat{R}} \widehat{\underline{P_i}}$ as $\widehat{\underline{P_i}}$ is a finite complex of free $\widehat{R}$-modules. We then get a commutative diagram
	\[ \xymatrix{ f_* K \widehat{\otimes}_{\widehat{R}} \widehat{\underline{P_i}} \ar[r]^-a \ar[d]^-b & f_* K \widehat{\otimes}_{\widehat{R}} \widehat{\underline{M}} \ar[d]^-c \\
	f_* (K \widehat{\otimes}_{\widehat{R}} \widehat{\underline{P_i}}) \ar[r]^-d & f_*(K \widehat{\otimes}_{\widehat{R}} \widehat{\underline{M}} ). } \]
	Then $b$ is an equivalence as explained above. The homotopy-kernel $f_*(K \widehat{\otimes}_{\widehat{R}} \widehat{\underline{L_i}})$ of $d$ is $(-i+d+2)$-connected by (1), and the homotopy-kernel $f_* K \widehat{\otimes}_{\widehat{R}} \widehat{\underline{L_i}}$ of $a$ is $(-i+d+2)$-connected since $f_*K \simeq \R\lim f_* K_n \in D^{\leq d+1}(Y_\proet)$. Thus, the homotopy-kernel of $c$ is also $(-i+d+2)$-connected. Letting $i \to \infty$ shows $c$ is an isomorphism.  (3) follows from (2) by setting $M = R/\fram^n$, observing that $R/\fram^n$ is already derived $\fram$-complete, and using  $- \widehat{\otimes}_{\widehat{R}} R/\fram \simeq - \otimes_{\widehat{R}} R/\fram$ as any $R/\fram$-complex is automatically derived $\fram$-complete.
\end{proof}

\begin{remark}
Unlike pullbacks, the pushforward along a map of qcqs schemes does not preserve constructibility: if it did, then $H^0(X,\Z/2)$ would be finite dimensional for any qcqs scheme $X$ over an algebraically closed field $k$, which is false for $X = \Spec(\prod_{i=1}^\infty k)$. We will see later that there is no finite type counterexample.
\end{remark}

\subsection{Constructible complexes on noetherian schemes}
\label{ss:ConsNoeth}

Fix $X$ and $R$ as in \S \ref{ss:ConsProet}. Our goal in this section is to prove that the notion of a constructible complexes on $X$ coincides with the classical one from topology if $X$ is noetherian: $K \in D(X_\proet,\widehat{R})$ is constructible if and only if it is locally constant along a stratification, see Proposition \ref{prop:conscons}. In fact, it will be enough to assume that the topological space underlying $X$ is noetherian. The proof uses the notion of w-strictly local spaces, though a direct proof can be given for varieties, see Remark \ref{rmk:consconsvariety}.

For any affine scheme $Y$, there is a natural morphism $\pi:Y_\et \to \pi_0(Y)$ of sites. Our first observation is that $\pi$ is relatively contractible when $Y$ is w-strictly local.

\begin{lemma}
	\label{lem:dercatwstrictlylocal}
If $Y$ is a w-strictly local affine scheme, then pullback $D(\pi_0(Y)) \to D(Y_\et)$ is fully faithful.
\end{lemma}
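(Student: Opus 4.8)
The claim is that for a w-strictly local affine scheme $Y$, the pullback functor $\pi^*:D(\pi_0(Y)) \to D(Y_\et)$ is fully faithful. The natural strategy is to show that the unit map $\id \to \R\pi_* \pi^*$ is an equivalence, since $\pi^*$ is then automatically fully faithful. Equivalently, one shows $\R\Gamma(Y_\et,\pi^* M) \simeq \R\Gamma(\pi_0(Y), M)$ for all $M \in D(\pi_0(Y))$, compatibly with the map. Since $\pi_0(Y)$ is a profinite set, $D(\pi_0(Y))$ is generated under homotopy colimits and shifts by sheaves of the form $\Z_{\underline{S}}$ for clopen $S \subset \pi_0(Y)$ (or more simply it suffices to treat constant sheaves $\underline{\Lambda}$ after reduction, then bootstrap), so one reduces to computing $\R\Gamma(Y_\et, \pi^*\underline{\Lambda})$.

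\textbf{Key steps.} First, I would reduce to showing $H^p(Y_\et, \pi^*M) = 0$ for $p > 0$ together with $H^0(Y_\et,\pi^*M) = M(\pi_0(Y))$, and that these identifications are the natural ones; by a Postnikov/left-completeness argument (or just working with bounded-below $M$ first and then using that $D(Y_\et)$ behaves well, or reducing to $\Lambda$ a field/finite ring and dévissage) it is enough to handle $M = \underline{\Lambda}$ a constant sheaf on $\pi_0(Y)$. Second, the crucial geometric input: since $Y$ is w-strictly local, every faithfully flat étale map $V \to Y$ admits a section, so every étale cover of $Y$ is split. Hence the Čech complex of any étale cover is homotopy-equivalent (as a simplicial scheme, via the section) to the constant simplicial scheme $Y$, so all higher Čech cohomology of any presheaf with respect to such covers vanishes. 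Third, I would identify $\pi^*\underline{\Lambda}$: by Lemma \ref{l:ExSheafTopSpace} (or the argument of Lemma \ref{lem:pullbackfrompi0}), $\pi^*\underline{\Lambda}$ is the sheaf $U \mapsto \Map_\cont(\pi_0(U),\Lambda)$ restricted to $Y_\et$, i.e. sections over $U$ are locally constant $\Lambda$-valued functions on $\pi_0(U)$. Now combine: using \cite[Proposition V.4.3]{SGA4Tome2} to reduce $H^p$ to Čech cohomology $\check H^p$, and the splitting of étale covers of $Y$ (and of the affine étale $U \to Y$ appearing, which are again w-strictly local by the structure theory — actually one needs that covers in the relevant cofinal system split, which holds since any étale cover of the w-strictly local $Y$ splits), the higher Čech cohomology vanishes, giving $H^p(Y_\et,\pi^*\underline\Lambda) = 0$ for $p>0$ and $H^0 = \Map_\cont(\pi_0(Y),\Lambda)$, which is exactly $\underline{\Lambda}(\pi_0(Y))$. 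This matches $\R\Gamma(\pi_0(Y),\underline\Lambda)$ since profinite sets have no higher cohomology with respect to the canonical topology on finite covers. Finally, bootstrap from constant coefficients to all $M$ by dévissage and passage to colimits/limits, using repleteness of $\Shv(\pi_0(Y)_\proet)$ and left-completeness to handle the unbounded case, or alternatively invoke the remark following Corollary \ref{cor:pushpull} which already records $H^p(U,\nu^*F)=0$ for w-strictly local $U$.

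\textbf{Main obstacle.} The technical heart is the vanishing of higher cohomology, i.e. transporting the "all étale covers split" property of $Y$ into acyclicity of $\pi^*$-pullbacks; this requires care in choosing a cofinal family of covers (all of which must split) and in the Čech-to-derived comparison, exactly as in the proof of Corollary \ref{cor:pushpull}. A secondary subtlety is the passage from constant coefficients on $\pi_0(Y)$ to arbitrary $M \in D(\pi_0(Y))$ in the unbounded setting, which needs left-completeness of both derived categories plus the fact that $\pi^*$ commutes with Postnikov truncations. I expect the cohomology-vanishing step to be where essentially all the content lies, with everything else being formal adjunction and dévissage.
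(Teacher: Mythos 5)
Your route is genuinely different from the paper's. The paper argues stalkwise: by w-locality each $y \in \pi_0(Y)$ has a unique closed preimage $\overline{y} \in Y$, the clopen neighbourhoods $\pi^{-1}(U)$ are cofinal among all open neighbourhoods of $\overline{y}$, and since $\calO_{Y,\overline{y}}$ is strictly henselian (Lemma \ref{lem:cwstrictlylocalcharacterize}) the Zariski and \'etale localizations at $\overline{y}$ coincide; hence $(\pi_*\pi^*K)_y \simeq (\pi^*K)_{\overline{y}} \simeq K_y$, the unit is an isomorphism on stalks, and full faithfulness follows by adjunction. You instead compute sections over $Y$ and its clopens using the splitting of \'etale covers. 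Both arguments use w-strict locality, but through the two different halves of Lemma \ref{lem:cwstrictlylocalcharacterize}: the paper through the local rings at closed points, you through the lifting property for covers. Your approach avoids the cofinality-of-clopens argument; the paper's avoids any acyclicity discussion and handles unbounded complexes with no extra effort.

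There is, however, a concrete flaw in your acyclicity step as written. An affine \'etale $U \to Y$ is \emph{not} in general w-strictly local: for $Y$ the spectrum of a strictly henselian DVR with fraction field $K$, the scheme $\Spec(L)$ for $L/K$ finite separable is affine \'etale over $Y$ but its \'etale covers do not split unless $L$ is separably closed. So you cannot get \v{C}ech-acyclicity for covers of all objects in a cofinal family, which is what the \v{C}ech-to-derived comparison of \cite[Proposition V.4.3]{SGA4Tome2} requires; splitting covers of $Y$ alone does not degenerate the Cartan--Leray spectral sequence. The repair is to drop \v{C}ech theory: every \'etale covering family of the quasicompact $Y$ refines to a faithfully flat affine \'etale map, which splits, so every epimorphism of sheaves on $Y_\et$ is surjective on $Y$-sections; hence $\Gamma(Y_\et,-)$ is exact and $H^{p}(Y_\et,G)=0$ for $p>0$ and \emph{every} abelian sheaf $G$ (not just pullbacks), and likewise over every clopen $\pi^{-1}(S)$, each being again w-strictly local. (The remark after Corollary \ref{cor:pushpull} that you cite is about $H^p(U,\nu^*F)$ on the pro-\'etale site, not literally the statement you need, though the mechanism is the same.) This repair also eliminates the appeal to left-completeness of $D(Y_\et)$, which you should not assume (it can fail): once $\pi_*$ is exact and preserves K-injectives, $\R\pi_*=\pi_*$ on unbounded complexes, so the unit $\id \to \pi_*\pi^*$ can be checked on cohomology sheaves and then on sections over clopens, where your $H^0$ identification applies.
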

\begin{proof}
	Fix $K \in D(\pi_0(Y),F)$. Choose a point $y \in \pi_0(Y)$, and let $\overline{y} \in Y$ be its unique preimage that is closed. Then the projective system $\{\pi^{-1} U\}$ of open neighbourhoods of $\overline{y}$ obtained via pullback of open neighbourhoods $y \in U$ in $\pi_0(Y)$ is cofinal in the projective system $\{V\}$ of all open neighbourhoods $\overline{y} \in V$ in $Y$. Hence, 
	\[ \colim_{y \in U} \R\Gamma(U,\pi_* \pi^* K) \simeq \colim_{y \in U} \R\Gamma(\pi^{-1} U, \pi^* K) \simeq \colim_{\overline{y} \in V} \R\Gamma(V,\pi^* K) \simeq (\pi^* K)_{\overline{y}} \simeq K_y.\]
	Here the penultimate isomorphism uses that the Zariski and \'etale localizations of $Y$ at $\overline{y}$ coincide. This shows that $K \to \pi_* \pi^* K$ induces an isomorphism on stalks, so must be an isomorphism. The rest follows by adjunction.
\end{proof}

For a profinite set $S$, we define $S_\proet := \underline{S}_\proet$, with $\ast$ some fixed geometric point, and $\underline{S} \in \Shv(\ast_\proet)$ the corresponding scheme. Alternatively, it is the site defined by profinite sets over $S$ with covers determined by finite families of continuous and jointly surjective maps, see Example \ref{ex:proetalegeometricpoint}. Using repleteness of $\Shv(S_\proet)$, we show that a compatible system of constant perfect $R/\fram^n$-complexes $L_n$ on $S$ has a constant perfect limit $L$ in $S_\proet$; the non-trivial point is that we do not {\em a priori} require the transition maps be compatible with trivializations.

\begin{lemma}
	\label{lem:profinitesetconstancycrit}
	Let $S$ be a profinite set. Fix $L \in D_\comp(S_\proet,\widehat{R})$ with $L \otimes_R R/\fram^n$ constant with perfect value $C_n \in D(R/\fram^n)$ for all $n$. Then $L$ is constant with perfect values.
\end{lemma}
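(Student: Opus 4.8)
The goal is to produce, after possibly passing to a pro-\'etale cover of $S$, a perfect complex $C \in D_\perf(R)$ together with an isomorphism $L \simeq \widehat{\underline{C}}$. The first reduction is to bring $S$ into a convenient shape: using Example \ref{ex:stonecech} (or Lemma \ref{lem:cwcontractiblecover} in the affine-scheme incarnation), I would pass to an extremally disconnected cover $T \to S$, so that $\Shv(T_\proet)$ has $\underline{T}$ weakly contractible; since constancy of $L$ with perfect values can be checked pro-\'etale locally (the descent datum for a constant complex along a pro-\'etale cover is again pro-\'etale locally trivial once we know the value is perfect — this is the analogue of Lemma \ref{lem:conslocalproet}), it suffices to treat the case where $\underline{S}$ itself is weakly contractible, i.e.\ $S$ extremally disconnected. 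In that case $\R\Gamma(S_\proet,-)$ is exact on sheaves, and in particular on the sheaves of sets underlying our complexes.

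\textbf{Main step: assembling the limit.} Set $C_n \in D_\perf(R/\fram^n)$ for the given perfect values, and pick the given isomorphisms $\phi_n: L \otimes_R R/\fram^n \xrightarrow{\sim} \underline{C_n}$ in $D(S_\proet, R/\fram^n)$. The transition maps of $\{L \otimes_R R/\fram^n\}$ transport, via the $\phi_n$, to maps $\psi_n : \underline{C_{n+1}} \otimes_{R/\fram^{n+1}} R/\fram^n \to \underline{C_n}$ in $D(S_\proet,R/\fram^n)$; a priori these $\psi_n$ are only morphisms of \emph{constant} complexes, not necessarily induced by morphisms in $D(R/\fram^n)$. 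Here is where extremal disconnectedness of $S$ is used: the pullback $D(R/\fram^n) \to D(S_\proet, R/\fram^n)$ is fully faithful — indeed $\R\Hom_{D(S_\proet,R/\fram^n)}(\underline{A},\underline{B}) \simeq \R\Gamma(S_\proet, \underline{\R\Hom_{R/\fram^n}(A,B)}) \simeq \underline{\R\Hom_{R/\fram^n}(A,B)}(S)$, and since $S$ is extremally disconnected the global sections functor on $S_\proet$ is exact, so this last equals $\R\Hom_{R/\fram^n}(A,B)$ provided $A$ is perfect (using that $\R\Hom_{R/\fram^n}(A,B)$ is bounded below and a finite complex of free modules when $A$ is perfect, so the constant-sheaf functor commutes with the relevant $\R\lim$). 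Hence each $\psi_n$ is the image of a genuine map $C_{n+1} \otimes_{R/\fram^{n+1}} R/\fram^n \to C_n$ in $D(R/\fram^n)$, and since $\psi_n$ is an isomorphism (being the transition map of the constructible — hence complete — complex $L$, which reduces to an isomorphism of pro-objects, and then a strict isomorphism after the identifications), these are isomorphisms. Thus $\{C_n\}$ defines an object of $D_{Ek}$ over the point, equivalently (using completeness, Lemma \ref{lem:completecompatiblesystems} or directly Proposition \ref{prop:derivedcompnoetherian} over $\Set$, which is replete) a perfect complex $C := \R\lim C_n \in D_\perf(R)$, with $C \otimes_R R/\fram^n \simeq C_n$. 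It remains to upgrade the $\phi_n$ to an isomorphism $L \simeq \widehat{\underline{C}}$. By construction, $\{\phi_n\}$ is a compatible system of isomorphisms $L \otimes_R R/\fram^n \simeq \underline{C} \otimes_R R/\fram^n$ in $D(S_\proet, R/\fram^n)$; applying $\R\lim_n$ and using that both $L$ (by hypothesis) and $\widehat{\underline{C}}$ are $\fram$-adically complete (the latter by Proposition \ref{prop:derivedcompnoetherian}, repleteness of $\Shv(S_\proet)$), together with the identification $\widehat{\underline{C}} \otimes_{\widehat R} R/\fram^n \simeq \underline{C_n}$ from Lemma \ref{lem:arcomplete}, gives the desired equivalence $L \simeq \widehat{\underline{C}}$. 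This exhibits $L$ as constant with perfect value $C$, and by the descent remark above the same conclusion then descends from $T$ back to the original $S$.

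\textbf{Anticipated obstacle.} The routine parts (repleteness bookkeeping, commuting $\R\lim$ past global sections and tensor products) are handled by the machinery of \S\ref{sec:Replete}. The genuinely delicate point is the compatibility of the \emph{choices}: a priori we are handed $L$ with the property that each $L \otimes_R R/\fram^n$ is \emph{abstractly} constant, with no coherence between the trivializations at different levels $n$, nor between these and the transition maps. The full-faithfulness of $D(R/\fram^n) \to D(S_\proet,R/\fram^n)$ over an extremally disconnected $S$ is exactly the tool that rigidifies this: it promotes the a priori incoherent system $\{\psi_n\}$ of maps-between-constant-sheaves to an honest pro-object of $D_\perf$, after which $\R\lim$ does the rest. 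I expect the write-up to spend most of its effort verifying that full-faithfulness statement at the level needed — i.e.\ that $\R\Hom$ of a perfect complex into a bounded-below complex commutes with the constant-sheaf functor on $S_\proet$ for $S$ extremally disconnected — and on checking that the reduction to extremally disconnected $S$ is legitimate (that constancy with perfect values is pro-\'etale local, where perfectness is what makes descent of the trivialization work, cf.\ Lemma \ref{lem:conslocalproet}).
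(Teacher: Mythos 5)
Your overall strategy (rigidify the system $\{C_n\}$ into a pro-object of $D_\perf$, take $\R\lim$, then match $L$ against the resulting constant complex) is the right shape, but the key tool you invoke is false, and it fails exactly at the point you yourself identify as the delicate one. For an extremally disconnected profinite set $S$ the pullback $D(R/\fram^n) \to D(S_\proet,R/\fram^n)$ is \emph{not} fully faithful unless $S$ is a single point: one has
\[
\R\Hom_{D(S_\proet,R/\fram^n)}(\underline{A},\underline{B}) \simeq \R\Gamma\bigl(S_\proet,\underline{\R\Hom_{R/\fram^n}(A,B)}\bigr) \simeq \Map_{\cont}\bigl(S,\R\Hom_{R/\fram^n}(A,B)\bigr),
\]
since $\underline{M}(S)=\Map_{\cont}(S,M)$ (continuous maps to the discrete module), not $M$. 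Already for $S$ a two-point set the left side is $\Hom(A,B)^{2}$. Consequently your maps $\psi_n$ are only \emph{continuous $S$-families} of isomorphisms $C_{n+1}\otimes R/\fram^n \to C_n$, and there is no way to promote them to single maps in $D(R/\fram^n)$; the incoherence of trivializations across $S$ and across $n$ is precisely the content of the lemma, and weak contractibility of $S$ does not make it go away. (A secondary issue: your reduction to extremally disconnected $S$ via a cover $T\to S$ needs a descent step at the end, and "constant" is not a pro-étale-local property, so that step is also unjustified — though in practice it is the lesser problem, since the hypothesis already gives constancy of each $L\otimes R/\fram^n$ on $S$ itself.)

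The paper's proof sidesteps rigidification entirely. It produces the candidate value $C=\R\lim C_n\in D_\perf(R)$ by taking stalks at a single point $s\in S$ (where the constant-sheaf functor \emph{is} fully faithful), and then shows directly that $\Isom_{\widehat R}(L,\widehat{\underline{C}})=\lim_n \Isom_{R/\fram^n}(L_n,\underline{C_n})$ is non-empty by a Mittag--Leffler argument: by Artin--Rees (Lemma \ref{lem:compatautml}) and Lemma \ref{lem:mlmaps}, the groups $\Aut_{R/\fram^n}(\underline{C_n})=\Map_{\cont}(S,\Aut(C_n))$ form an ML system; each $\Isom_{R/\fram^n}(L_n,\underline{C_n})$ is a non-empty transitive set under this group; and Lemma \ref{lem:mltorsor} then gives a point of the inverse limit. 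If you want to repair your write-up, replace the full-faithfulness claim by this torsor/ML argument — that is the missing idea.
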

\begin{proof}
	Fix a point $s \in S$. Passing to the stalks at $s$ shows that there exists $C \in D_\perf(R)$ with $C \otimes_R R/\fram^n \simeq C_n$. Write $\widehat{\underline{C}} \in D(S_\proet,\widehat{R})$ and $\underline{C_n} \in D(S_\proet,R/\fram^n)$ for the corresponding constant complexes. We will show $\Isom_{\widehat{R}}(L,\widehat{\underline{C}}) \neq \emptyset$. First observe that $\Ext^i_{R/\fram^n}(\underline{C_n},\underline{C_n}) \simeq \Map_{\mathrm{conts}}(S,\Ext^i_{R/\fram^n}(C_n,C_n))$. By Lemma \ref{lem:mlmaps} and Lemma \ref{lem:compatautml}, the system $\{\Ext^i_{R/\fram^n}(\underline{C_n},\underline{C_n})\}$ satisfies ML. As a map $f:\underline{C_n} \to \underline{C_n}$ is an automorphism if and only if it is so modulo $\fram$, it follows that $\{\Aut_{R/\fram^n}(\underline{C_n})\}$ also satisfies ML. Lemma \ref{lem:mltorsor} and the assumption on $L_n$ shows that $\{\Isom_{R/\fram^n}(L_n,\underline{C_n})\}$ satisfies ML. As the evident map $\Isom_{R/\fram^n}(L_n,\underline{C_n}) \times \Ext^i_{R/\fram^n}(\underline{C_n},\underline{C_n}) \to \Ext^i_{R/\fram^n}(L_n,\underline{C_n})$ is surjective, Lemma \ref{lem:mlsurjective} shows that $\{\Ext^i_{R/\fram^n}(L_n,\underline{C_n})\}$ satisfies ML. On the other hand, completeness gives
	\[\R\Hom_{\widehat{R}}(L,\widehat{\underline{C}}) \simeq \R\lim \R\Hom_{R/\fram^n}(L_n,\underline{C_n}),\]
	so 
	\[ \Hom_{\widehat{R}}(L,\widehat{\underline{C}}) \simeq \lim_n \Hom_{R/\fram^n}(L_n,\underline{C_n}).\]
	By completeness, a map $f:L \to \widehat{\underline{C}}$ is an isomorphism if and only $f \otimes_{\widehat{R}} R/\fram$ is one, so $\Isom_{\widehat{R}}(L,\widehat{\underline{C}}) \simeq \lim_n \Isom_{R/\fram^n}(L_n,\underline{C_n})$. As $\{\Isom_{R/\fram^n}(L_n,\underline{C_n})\}$ satisfies ML with non-empty terms, the limit is non-empty. 
\end{proof}

The next few lemmas record elementary facts about projective systems $\{X_n\}$ of sets; for such a system, we write $X_n^\circ := \cap_k \im(X_{n+k} \to X_n) \subset X_n$ for the stable image.

\begin{lemma}
	\label{lem:mlmaps}
	Fix a topological space $S$ and a projective system $\{X_n\}$ of sets satisfying the ML condition. Then $\{\Map_{\mathrm{conts}}(S,X_n)\}$ also satisfies the ML condition.
\end{lemma}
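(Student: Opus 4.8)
The plan is to reduce everything to the elementary observation that, for any surjection $\phi\colon A\to B$ of sets regarded as discrete spaces, post-composition induces a surjection $\Map_{\mathrm{conts}}(S,A)\to\Map_{\mathrm{conts}}(S,B)$. Recall that a continuous map from $S$ to a discrete set is exactly a locally constant function; so, choosing a set-theoretic section $\sigma\colon B\to A$ of $\phi$, any locally constant $g\colon S\to B$ lifts to $\sigma\circ g\colon S\to A$, which is again locally constant because it is constant on each $g^{-1}(b)$ and these sets form an open partition of $S$.

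Next I would package the ML hypothesis on $\{X_n\}$ in the standard way: fixing $n$, there is an index $m\geq n$ such that the transition map $\phi_{k,n}\colon X_k\to X_n$ has image exactly the stable image $X_n^\circ\subset X_n$ for every $k\geq m$. Thus for $k\geq m$ the map $\phi_{k,n}$ factors as a surjection $X_k\twoheadrightarrow X_n^\circ$ followed by the inclusion $X_n^\circ\hookrightarrow X_n$.

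Now set $Y_n:=\Map_{\mathrm{conts}}(S,X_n)$, with transition maps $\psi_{k,n}\colon Y_k\to Y_n$ given by post-composition with $\phi_{k,n}$. I claim that $\im(\psi_{k,n})=\Map_{\mathrm{conts}}(S,X_n^\circ)\subseteq Y_n$ for all $k\geq m$. The inclusion ``$\subseteq$'' is immediate: $\psi_{k,n}(f)=\phi_{k,n}\circ f$ takes values in $\im(\phi_{k,n})=X_n^\circ$, and a continuous map into $X_n$ landing in $X_n^\circ$ is continuous into $X_n^\circ$ since the subspace topology on $X_n^\circ$ is again discrete. The reverse inclusion ``$\supseteq$'' is precisely the observation of the first paragraph applied to the surjection $X_k\twoheadrightarrow X_n^\circ$: any locally constant $g\colon S\to X_n^\circ$ lifts to a locally constant $f\colon S\to X_k$ with $\psi_{k,n}(f)=g$. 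Since this common image $\Map_{\mathrm{conts}}(S,X_n^\circ)$ is independent of $k$ for $k\geq m$, the images $\im(\psi_{k,n})$ stabilize, i.e. $\{Y_n\}$ satisfies the ML condition.

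The only mild subtlety — which I would flag as the ``main obstacle,'' although it is genuinely routine — is keeping careful track of the two topological facts used implicitly: that continuous maps into a discrete target coincide with locally constant functions, and that a set-theoretic lift of a locally constant function along a surjection of discrete sets is automatically again locally constant. Both follow by inspecting the (open) fibres of the function being lifted, and no hypothesis on the space $S$ is needed.
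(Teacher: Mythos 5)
Your proof is correct and takes essentially the same route as the paper's: both reduce to the fact that a continuous (i.e.\ locally constant) map into the stable image $X_n^\circ$ lifts along the surjective transition maps of the system $\{X_n^\circ\}$, so that the images of $\Map_{\mathrm{conts}}(S,X_k)\to\Map_{\mathrm{conts}}(S,X_n)$ stabilize at $\Map_{\mathrm{conts}}(S,X_n^\circ)$. The paper leaves the lifting of a locally constant function along a surjection of discrete sets implicit; you have simply spelled that step out.
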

\begin{proof}
	Fix $n$ and $N$ such that $X_n^\circ = \im(X_N \to X_n)$. Fix a continuous map $f:S \to X_n$ that lifts to $X_N$. Then $f$ factors through a continuous map $S \to X_n^\circ$. As $\{X_n^\circ\}$ has surjective transition maps, the claim follows.
\end{proof}

\begin{lemma}
	\label{lem:mltorsor}
	Let $\{G_n\}$ be a projective system of groups, and let $\{X_n\}$ be a compatible projective system of transitive $G$-sets. Assume  $\{G_n\}$ satisfies ML and $X_n \neq \emptyset$ for all $n$. Then $\{X_n\}$ satisfies ML, and $\lim X_n \neq \emptyset$.
\end{lemma}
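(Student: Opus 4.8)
The statement to prove is Lemma \ref{lem:mltorsor}: given a projective system of groups $\{G_n\}$ satisfying Mittag-Leffler, a compatible projective system $\{X_n\}$ of sets, each nonempty, with $X_n$ a transitive $G_n$-set, then $\{X_n\}$ satisfies ML and $\lim X_n \neq \emptyset$. The plan is to reduce the problem to the surjectivity statement for $\{X_n^\circ\}$ (the stable images) and then deduce nonemptiness of the limit from the standard fact that an inverse system of nonempty sets with surjective transition maps has nonempty limit.

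\textbf{Key steps.} First I would pin down the stable images: for each $n$, set $G_n^\circ = \bigcap_k \operatorname{im}(G_{n+k} \to G_n)$ and $X_n^\circ = \bigcap_k \operatorname{im}(X_{n+k} \to X_n)$. The ML hypothesis on $\{G_n\}$ means $G_n^\circ = \operatorname{im}(G_N \to G_n)$ for $N \gg n$; moreover $G_n^\circ$ is a subgroup of $G_n$ (an intersection of subgroups, since images of group homomorphisms are subgroups), and the transition maps $G_{n+1}^\circ \to G_n^\circ$ are surjective by construction. Second, I claim $X_n^\circ$ is a single $G_n^\circ$-orbit and the transition maps $X_{n+1}^\circ \to X_n^\circ$ are surjective. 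To see the transition maps are surjective: pick $N$ large enough that $\operatorname{im}(X_N \to X_n) = X_n^\circ$ and $\operatorname{im}(X_N \to X_{n+1}) = X_{n+1}^\circ$ and $\operatorname{im}(G_N \to G_n) = G_n^\circ$ simultaneously; given $x \in X_n^\circ$, lift it to $x_N \in X_N$, and its image $x_{n+1} \in X_{n+1}$ lies in $X_{n+1}^\circ$ and maps to $x$. This shows $\operatorname{im}(X_{n+1}^\circ \to X_n^\circ) = X_n^\circ$ directly, i.e. the transition maps on stable images are surjective, which is precisely the ML condition for $\{X_n\}$. Third, since each $X_n^\circ$ is nonempty (as $X_n \neq \emptyset$ and each $X_{n+k} \to X_n$ has image containing at least one point, and by transitivity of the $G_{n+k}$-action plus $G_{n+k} \to G_n$ the image is actually large — but in any case nonemptiness of $X_n^\circ$ follows because the $X_n$ are nonempty transitive $G_n$-sets and the image of $X_N$ is a nonempty $G_n^\circ$-stable subset), the system $\{X_n^\circ\}$ is an inverse system of nonempty sets along surjections, so $\lim_n X_n^\circ \neq \emptyset$; and $\lim_n X_n = \lim_n X_n^\circ$ since the stable images are cofinal in the sense needed. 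Hence $\lim X_n \neq \emptyset$.

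\textbf{Main obstacle.} The only genuine point requiring care is verifying that $X_n^\circ$ is nonempty and that it forms a transitive $G_n^\circ$-set, so that the surjectivity of $X_{n+1}^\circ \to X_n^\circ$ can be established cleanly (the transitive-orbit structure is what forces images to stabilize in a controlled way); once this is in place, the passage to $\lim$ is the classical surjective-inverse-system argument and is routine. I would handle the transitivity claim by observing that for $x, y \in X_n^\circ$ lifted to $x_N, y_N \in X_N$, there is $g_N \in G_N$ with $g_N x_N = y_N$ by transitivity of the $G_N$-action on $X_N$, and the image of $g_N$ in $G_n$ lies in $G_n^\circ$ and carries $x$ to $y$ — so in fact $X_n^\circ$ is a transitive $G_n^\circ$-set, which makes the ML and limit statements fall out immediately.
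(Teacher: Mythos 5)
Your argument is circular at the decisive point. To prove that the transition maps $X_{n+1}^\circ \to X_n^\circ$ are surjective you begin by picking ``$N$ large enough that $\im(X_N \to X_n) = X_n^\circ$''. But the existence of such an $N$ --- i.e., that the decreasing chain $\im(X_m \to X_n)$ stabilises at its intersection --- is exactly the Mittag-Leffler condition for $\{X_n\}$ that you are trying to establish; for a general inverse system of nonempty sets no such $N$ exists and $X_n^\circ$ may well be empty (take $X_n = \N$ with transition maps $k \mapsto k+1$). For the same reason your nonemptiness argument does not close: knowing that each $\im(X_N \to X_n)$ is a nonempty $G_n^\circ$-stable subset does not prevent the intersection over $N$ from being empty, since a decreasing intersection of nonempty sets can be empty.

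The idea you mention only in passing --- ``the transitive-orbit structure is what forces images to stabilize'' --- is precisely what must be made explicit, and it closes the gap. Fix $n$ and choose $N$ with $\im(G_N \to G_n) = G_n^\circ$, which uses only the ML hypothesis on $\{G_n\}$; then $\im(G_m \to G_n) = G_n^\circ$ for all $m \geq N$. For such $m$ the set $\im(X_m \to X_n)$ is nonempty (as $X_m \neq \emptyset$) and is a \emph{single} $G_n^\circ$-orbit: given $x,y$ in it with lifts $x_m, y_m \in X_m$, transitivity of $G_m$ on $X_m$ gives $g_m$ with $g_m x_m = y_m$, and the image of $g_m$ in $G_n$ lies in $G_n^\circ$ and carries $x$ to $y$. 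Since two orbits of the same group are equal or disjoint, the nested nonempty $G_n^\circ$-orbits $\im(X_m \to X_n)$, $m \geq N$, all coincide; this is ML, and it shows $X_n^\circ = \im(X_N \to X_n) \neq \emptyset$, after which your transitivity claim and the reduction to the surjective-system criterion for nonemptiness of the limit go through. (The paper's proof is in the same spirit but organised differently: it shows directly that any $x_n$ lifting to $X_N$ lifts to every $X_m$ with $m \geq N$, by translating an arbitrary point of the nonempty set $X_m$ by an element of $G_m$ whose image in $G_n$ is a prescribed element of $G_n^\circ$.)
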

\begin{proof}
	Note that any $\N^\opp$-indexed system of non-empty sets satisfying the ML condition has a non-empty inverse limit: the associated stable system has non-empty terms and surjective transition maps. Hence, it suffices to show $\{X_n\}$ satisfies ML. Write $h_{ij}:G_i \to G_j$ and $f_{ij}:X_i \to X_j$ for the transition maps. Fix $n$ and $N$ such that $G_n^\circ = \im(G_N \to G_n)$. Fix some $x_n \in X_n$ that lifts to an $x_N \in X_N$. For $m \geq N$, choose some $x_m \in X_m$, and $g_N \in G_N$ with $g_N \cdot f_{mN}(x_m) = x_N$; this is possible by transitivity. Then there exists a $g_m \in G_m$ with $h_{mn}(g_m) = h_{Nn}(g_n)$, so  $x_m := g_m^{-1} \cdot x_m \in X_m$ lifts $x_n \in X_n$, which proves the ML property.
\end{proof}

\begin{lemma}
	\label{lem:mlsurjective}
	Let $f:\{W_n\} \to \{Y_n\}$ be a map of projective systems. Assume that $\{W_n\}$ satisfies ML, and that $f_n:W_n \to Y_n$ is surjective. Then $\{Y_n\}$ satisfies ML.
\end{lemma}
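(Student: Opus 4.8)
The plan is to reduce the ML property for $\{Y_n\}$ to that of $\{W_n\}$ by identifying the stable images on the two sides. Write $h:\{W_n\}$ and $g:\{Y_n\}$ for the transition maps, and fix an index $n$.

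First I would record the key observation: for every $k \geq 0$ one has
\[ \im\big(Y_{n+k} \to Y_n\big) = f_n\big(\im(W_{n+k} \to W_n)\big). \]
The inclusion $\supseteq$ is immediate from the compatibility of $f$ with transition maps: if $w \in W_{n+k}$ maps to $w_n \in W_n$, then $f_{n+k}(w)$ maps to $f_n(w_n)$ in $Y_n$. For $\subseteq$, take $y$ in the left-hand side, so $y$ is the image of some $y' \in Y_{n+k}$; by surjectivity of $f_{n+k}$ choose a lift $w' \in W_{n+k}$ of $y'$, and then the image of $w'$ in $W_n$ is sent by $f_n$ to $y$. This is the only place the surjectivity hypothesis is used, and it is the crux of the argument.

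Next, since $\{W_n\}$ satisfies ML, there is an $N \geq n$ with $\im(W_{m} \to W_n) = W_n^\circ$ for all $m \geq N$. Combining this with the displayed identity gives $\im(Y_m \to Y_n) = f_n(W_n^\circ)$ for all $m \geq N$, a subset of $Y_n$ independent of $m$. Hence the images $\im(Y_m \to Y_n)$ stabilize, which is exactly the ML condition for $\{Y_n\}$ at index $n$; as $n$ was arbitrary, the claim follows.

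There is no real obstacle here: the statement is formal once one notes the image identity above, and the argument does not use any structure on the $W_n$ or $Y_n$ beyond that of sets.
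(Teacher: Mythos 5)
Your proof is correct and is essentially the paper's argument: both lift an element of $\im(Y_m \to Y_n)$ to $W_m$ using the surjectivity of $f_m$, push down to $W_n$, and invoke the stabilization of $\im(W_m \to W_n)$ to conclude. Your formulation via the identity $\im(Y_{n+k} \to Y_n) = f_n(\im(W_{n+k} \to W_n))$ is just a slightly more symmetric packaging of the same step.
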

\begin{proof}
	Fix $n$, and choose $N$ such that $W_n^\circ = \im(W_N \to W_n)$.  Then any $y_n \in Y_n$ that lifts to some $y_N \in Y_N$ further lifts to some $w_N \in W_N$ with image $w_n \in W_n$ lifting $y_n$. By choice of $N$, there is a $w_{n+k} \in W_{n+k}$ for all $k$ lifting $w_n \in W_n$. The images $y_{n+k} := f_{n+k}(w_{n+k}) \in Y_{n+k}$ then lift $y_n \in Y_n$ for all $k$, which proves the claim.
\end{proof}

A version of the Artin-Rees lemma shows:

\begin{lemma}
	\label{lem:compatautml}
	For $K \in D_\perf(R)$, the natural map gives pro-isomorphisms $\{H^i(K)/\fram^n\} \simeq \{H^i(K \otimes_R R/\fram^n)\}$. 
\end{lemma}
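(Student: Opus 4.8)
The statement is the standard ``Artin--Rees for perfect complexes'' assertion: for $K \in D_\perf(R)$ with $R$ noetherian and $\fram$-complete, the pro-systems $\{H^i(K)/\fram^n\}_n$ and $\{H^i(K \otimes_R R/\fram^n)\}_n$ are pro-isomorphic. The plan is to reduce to the case $K = R$ by devissage along the structure of a perfect complex, and then invoke the classical Artin--Rees lemma.

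First I would observe that the natural map in question comes from the short exact sequence $0 \to \fram^n \to R \to R/\fram^n \to 0$: tensoring $K$ with it and taking cohomology gives, for each $i$ and $n$, an exact sequence
\[ H^{i}(K \otimes_R \fram^n) \to H^i(K) \to H^i(K \otimes_R R/\fram^n) \to H^{i+1}(K \otimes_R \fram^n), \]
and the map $H^i(K)/\fram^n H^i(K) \to H^i(K \otimes_R R/\fram^n)$ factors through this. The class of complexes $K \in D_\perf(R)$ for which the conclusion holds is closed under shifts, under retracts (direct summands), and under exact triangles: given a triangle $K' \to K \to K''$, one gets a morphism of the two long exact sequences of cohomology pro-systems, and since pro-isomorphisms form a Serre-type class (the category of pro-objects of $R$-modules is abelian, and a map of pro-systems with pro-iso kernel and cokernel on the flanks forces a pro-iso in the middle by the pro-$5$-lemma), the statement propagates. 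Since every perfect complex over $R$ is, up to retract, a finite complex of finite free $R$-modules, hence built from finitely many shifts of $R$ by finitely many triangles, it suffices to treat $K = R$.

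For $K = R$ the claim is exactly that $\{R/\fram^n\}_n \simeq \{R/\fram^n\}_n$, which is trivial — so in fact the real content is already visible once one notes that for $K$ a bounded complex of finite free modules, $K \otimes_R R/\fram^n$ has cohomology computed termwise and one must compare $H^i$ of the complex $(K_\bullet/\fram^n)$ with $H^i(K_\bullet)/\fram^n$. Here I would invoke the Artin--Rees lemma directly: for the complex of finite $R$-modules $K_\bullet$ with differentials $d$, Artin--Rees gives an integer $c$ such that $\fram^n K_\bullet \cap \ker(d) \subseteq \fram^{n-c}\ker(d)$ and similarly for images $\fram^n K_\bullet \cap \im(d)$, which yields that the canonical maps $Z^i(K_\bullet)/\fram^n \to Z^i(K_\bullet/\fram^n)$ and $B^i(K_\bullet)/\fram^n \to B^i(K_\bullet/\fram^n)$ are pro-isomorphisms (surjective for all $n$, and injective up to a shift by $c$); passing to the quotient $H^i = Z^i/B^i$ and using the snake lemma in pro-objects gives the desired pro-isomorphism $\{H^i(K)/\fram^n\} \simeq \{H^i(K \otimes_R R/\fram^n)\}$.

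The main obstacle, such as it is, is bookkeeping: making precise that the relevant maps of pro-systems are isomorphisms of pro-objects (not of individual terms) and that this property is stable under the operations used in the devissage (triangles, retracts) and under passage to subquotients via exact sequences. All of this is standard pro-object yoga — the category $\mathrm{Pro}(\Mod_R)$ is abelian, a map is a pro-isomorphism iff it is a pro-mono and pro-epi, and these pass through long exact sequences by the five lemma — so the only genuinely non-formal input is the classical Artin--Rees lemma applied to the finitely generated modules $Z^i$, $B^i$, $K^i$ inside the finite free terms. I expect the write-up to be short, with the devissage to $K = R$ (or to finite free complexes) being the cleanest route.
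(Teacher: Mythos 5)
Your final argument --- represent $K$ by a finite complex of finite free modules and apply Artin--Rees to the cycles and boundaries, i.e.\ use that $M \mapsto \{M/\fram^n M\}$ is an exact functor from finitely generated $R$-modules to pro-modules and hence commutes with cohomology --- is exactly the paper's proof. The preliminary devissage to $K = R$ is dispensable (and, as you yourself note, carries no content; moreover its closure-under-triangles step is slightly circular, since the exactness of the row $\{H^i(K')/\fram^n\} \to \{H^i(K)/\fram^n\} \to \{H^i(K'')/\fram^n\}$ of pro-objects is itself an Artin--Rees statement), but since you discard that route in favour of the direct one, the proof stands.
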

\begin{proof}
	Let $\calC$ be the category of pro-($R$-modules), and consider the functor $F:\Mod_R^f \to \calC$ given by $M \mapsto \{M/\fram^n M\}$. Then $F$ is exact by the Artin-Rees lemma, so for any finite complex $K$ of finitely generated $R$-modules, one has $F(H^i(K)) \simeq H^i(F(K))$. Applying this to a perfect $K$ then proves the claim.
\end{proof}

\begin{lemma}
	\label{lem:locallyconstantwstrictlylocal}
Let $Y$ be a w-strictly local affine scheme. Then any $M \in D(Y_\et)$  that is locally constant on $Y_\et$ is constant over a finite clopen cover, and hence comes from $D(\pi_0(Y))$ via pullback. 
\end{lemma}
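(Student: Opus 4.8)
The claim is that a locally constant complex $M \in D(Y_\et)$ on a w-strictly local affine scheme $Y$ becomes constant on a finite clopen cover, hence descends from $D(\pi_0(Y))$. The key structural fact I would exploit is the canonical specialization map $s : Y \to Y^c$ together with the identification $Y^c \simeq \pi_0(Y)$ (Remark \ref{rmk:specializationmap}, Lemma \ref{lem:cwlocalcharacterize}): each point of $Y$ specializes to a \emph{unique} closed point, and the closed points are in bijection with $\pi_0(Y)$. Since the local rings of $Y$ at closed points are strictly henselian (Lemma \ref{lem:cwstrictlylocalcharacterize}), the stalk of $M$ at a closed point $\overline{y}$ is just $M_{\overline{y}} \in D(R)$, and because $M$ is locally constant, the value of $M$ on a small enough \'etale (equivalently, by strict henselianness, Zariski) neighbourhood of $\overline{y}$ is constant with that value.

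\textbf{Key steps.} First I would reduce to the case where $M$ has a single fixed value $C \in D(R)$ on stalks at \emph{all} closed points: this is the content of showing the function $\overline{y} \mapsto [M_{\overline{y}}]$ is ``locally constant'' on $\pi_0(Y) \simeq Y^c$. Concretely, for each closed point $\overline{y}$, local constancy of $M$ gives an \'etale neighbourhood $U \to Y$ of $\overline{y}$ with $M|_U$ constant; since $Y$ is w-strictly local, $U \to Y$ has a section, so in fact $M$ is already constant on a Zariski-open $V_{\overline y}\ni\overline y$ with value $M_{\overline y}$. These $V_{\overline y}$ cover $Y$ (every point specializes to a closed point, and $V_{\overline y}$ being open and containing $\overline y$ contains the whole connected component), and by w-locality (all open covers split, Definition \ref{def:cwlocal}(1)) this refines to a finite clopen decomposition $Y = \sqcup_k Y_k$ with $M|_{Y_k}$ constant. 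Then $M$ is constant along the finite clopen cover $\{Y_k \to Y\}$. Finally, a finite clopen decomposition of $Y$ is the same data as a decomposition of $\pi_0(Y)$ into clopens, so by Lemma \ref{lem:dercatwstrictlylocal} (full faithfulness of $D(\pi_0(Y)) \to D(Y_\et)$), each constant piece comes from $D(\pi_0(Y_k))$, and gluing these along the clopen decomposition of $\pi_0(Y)$ produces the desired object of $D(\pi_0(Y))$ pulling back to $M$.

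\textbf{Main obstacle.} The delicate point is the passage from ``constant on a Zariski neighbourhood of each closed point'' to ``constant on a \emph{clopen} neighbourhood,'' and ensuring finiteness of the resulting cover. A priori the open neighbourhoods $V_{\overline y}$ need not be quasicompact nor clopen; the argument must use that $Y^c \simeq \pi_0(Y)$ is profinite, so that the induced open cover of the profinite set $\pi_0(Y)$ by the images of the $V_{\overline y}$ admits a finite refinement by clopen sets, which then pulls back (via $s$, or equivalently via $Y \to \pi_0(Y)$) to a finite clopen refinement of $\{V_{\overline y}\}$ on $Y$ itself. One must check that on each such clopen piece $M$ genuinely has a single value in $D(R)$, not merely fibrewise-constant values — this is where one invokes that the specialization map makes every point of the piece have the same closed specialization-class of stalk, together with local constancy to propagate the identification $M \simeq \underline{C}$ across the connected pieces. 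I expect this topological bookkeeping (combined with the compatibility of \'etale and Zariski localization at strictly henselian points) to be the only real content; the descent along the clopen cover via Lemma \ref{lem:dercatwstrictlylocal} is then formal.
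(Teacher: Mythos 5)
Your argument is correct in outline, but one justification is wrong as stated and needs repair. In your key step you write that for an \'etale neighbourhood $U \to Y$ of a closed point $\overline{y}$ with $M|_U$ constant, ``since $Y$ is w-strictly local, $U \to Y$ has a section.'' W-strict locality only provides sections of \emph{faithfully flat} \'etale maps, and a single \'etale neighbourhood of one closed point need not be surjective, so this does not apply. The correct justification --- which you in fact allude to in your opening paragraph --- is that $\calO_{Y,\overline{y}}$ is strictly henselian (Lemma \ref{lem:cwstrictlylocalcharacterize}), so $U \to Y$ acquires a section over $\Spec(\calO_{Y,\overline{y}})$ that spreads out to a Zariski open neighbourhood $V_{\overline{y}}$ of $\overline{y}$, over which $M$ is then constant. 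With that fix the rest goes through: the $V_{\overline{y}}$ cover $Y$ since open sets are stable under generization and every point of the connected component of $\overline{y}$ specializes to $\overline{y}$; splitting the open cover $\{V_{\overline{y}}\}$ gives a pairwise disjoint clopen refinement, which is finite by quasicompactness of the affine $Y$; and a constant complex restricted to a clopen subset stays constant, so your worry in the ``main obstacle'' paragraph about fibrewise versus genuine constancy is moot --- in particular you never need the preliminary reduction to a single stalk value $C$ across all closed points.

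For comparison, the paper's proof is shorter and avoids closed points entirely: it takes finitely many qcqs \'etale maps $U_i \to Y$ jointly surjective with $M|_{U_i}$ constant (finitely many by quasicompactness), notes that $\sqcup_i U_i \to Y$ is faithfully flat \'etale and hence splits by w-strict locality, and takes $V_i = s^{-1}(U_i)$ as the finite clopen cover. Your route trades this single global application of w-strict locality for strict henselianness at closed points plus splitting of the resulting Zariski cover; both work, but the global version requires no discussion of specializations or of which points a given open contains.
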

\begin{proof}
	For the first part, we may assume that there exist finitely many qcqs \'etale maps $f_i:U_i \to Y$ with $f:\sqcup_i U_i \to Y$ surjective such that $f_i^* M \simeq \underline{A_i}$ for some $A_i \in D(\Ab)$. By w-strict locality, there is a section $s:Y \to \sqcup_i U_i$ of $f$. Then $\{V_i := s^{-1} U_i\}$ is a finite clopen cover of $Y$ with $M|_{V_i} \simeq \underline{A_i} \in D( V_{i,\et})$. Now any finite clopen cover of $Y$ is the pullback of a finite clopen cover of $\pi_0(Y)$, so the second part follows.
\end{proof}

\begin{lemma}
	\label{lem:conslocallyconstant}
	Let $X = \Spec(A)$ be connected. Fix $K \in D_\cons(X_\proet,\widehat{R})$ with $K \otimes_{\widehat{R}} R/\fram$ locally constant on $X_\et$ with perfect values. Then there exists a pro-\'etale cover $f:Y \to X$ with $f^* K \simeq \underline{C}$ with $C \in D_\perf(R)$.
\end{lemma}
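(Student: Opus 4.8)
The plan is to build $Y$ as a w-contractible pro-\'etale cover of $X$ and then use the completeness formalism of \S\ref{ss:ConsProet} together with Lemma \ref{lem:profinitesetconstancycrit} to trivialize $K$ there. First I would choose, using Lemma \ref{lem:cwcontractiblecover}, an ind-\'etale faithfully flat $A$-algebra $A'$ with $A'$ w-contractible, and set $Y = \Spec(A')$, $f:Y\to X$ the structure map. Since $A'$ is w-contractible it is in particular w-strictly local, and $\pi_0(Y)$ is an extremally disconnected profinite set. By Lemma \ref{lem:conspullback}(5), $f^*_\comp K \in D_\cons(Y_\proet,\widehat{R})$, and since $f$ is a composition of weakly \'etale maps, $f^*_\comp K = f^* K$ is already complete; write $K_Y := f^* K$ and $K_{Y,n} := K_Y \otimes_{\widehat{R}} R/\fram^n$.

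Next I would trivialize $K_{Y,n}$ for each $n$ compatibly. By hypothesis $K\otimes_{\widehat{R}}R/\fram$ is pulled back from a constructible $R/\fram$-complex on $X_\et$ that is locally constant with perfect values; since $X$ is connected and locally constant perfect complexes have locally constant cohomology, the restriction to the strictly henselian-ish cover behaves well. More precisely, $K_{Y,1}$ is a complex on $Y_\et$ pulled back from $X_\et$ that is locally constant with perfect values, so by Lemma \ref{lem:locallyconstantwstrictlylocal} (applied to the w-strictly local $Y$) it is constant over a finite clopen cover of $Y$, hence comes from $D(\pi_0(Y))$. Since $X$ is connected, $K_{Y,1}$ has a single well-defined perfect value $C_1 \in D_\perf(R/\fram)$ on each connected component, and these agree across components because they all specialize from the connected $X$; in any case, passing to the stalk at a point shows $K_{Y,1}$ is the constant complex $\underline{C_1}$ on $Y_\et$, i.e.\ on the image of $\pi_0(Y)$. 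Then Lemma \ref{lem:liftconstancy} gives that each $K_{Y,n}$ is also locally constant on $Y_\et$, and by Lemma \ref{lem:locallyconstantwstrictlylocal} again each $K_{Y,n}$ comes from $D(\pi_0(Y))$ via pullback. Passing to a stalk gives $C_n \in D_\perf(R/\fram^n)$ with $K_{Y,n} \simeq \underline{C_n}$ on $Y_\et$, and hence on $\pi_0(Y)_\proet$ after further pulling back (the pullback of $K_Y$ along $Y_\proet \to \pi_0(Y)_\proet$ reduces mod $\fram^n$ to the constant complex $\underline{C_n}$ on $\pi_0(Y)_\proet$).

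Now I would invoke Lemma \ref{lem:profinitesetconstancycrit}: let $S = \pi_0(Y)$, and let $L \in D_\comp(S_\proet,\widehat{R})$ be the pullback of $K_Y$ along the natural map $Y_\proet \to S_\proet$ (which is well-defined because $L \otimes_R R/\fram^n = \underline{C_n}$ is constant, hence pulled back from $S$; one checks $L$ is $\fram$-adically complete since each $R/\fram^n$-reduction is constant perfect and $\R\lim$ behaves well by repleteness). Since each $L\otimes_R R/\fram^n$ is constant with perfect value $C_n$, Lemma \ref{lem:profinitesetconstancycrit} says $L$ is constant with perfect values, i.e.\ $L \simeq \widehat{\underline{C}}$ for some $C \in D_\perf(R)$. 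Pulling $L$ back along $Y \to S$ gives $K_Y \simeq \widehat{\underline{C}}$ as objects of $D(Y_\proet,\widehat{R})$; but on the w-strictly local, indeed w-contractible, scheme $Y$ one has $\R\Gamma(Y,\widehat R)\simeq R$ and the constant complex $\widehat{\underline{C}}$ is genuinely constant, so in fact $f^* K \simeq \underline{C}$. The main obstacle I anticipate is the bookkeeping in the second paragraph: ensuring that the individual trivializations $K_{Y,n} \simeq \underline{C_n}$ assemble into a compatible object of $D_\comp(S_\proet,\widehat R)$ without prematurely choosing incompatible isomorphisms — this is exactly the subtlety that Lemma \ref{lem:profinitesetconstancycrit} is designed to absorb (via the Mittag-Leffler arguments of Lemmas \ref{lem:mlmaps}--\ref{lem:mlsurjective}), so the right move is to only produce $L$ as an abstract complete complex with constant reductions and let that lemma do the work, rather than trying to build the isomorphism by hand.
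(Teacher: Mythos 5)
Your proposal is correct and follows essentially the same route as the paper: pass to a w-strictly local (in your case w-contractible) cover $Y$, use Lemmas \ref{lem:liftconstancy} and \ref{lem:locallyconstantwstrictlylocal} to descend each $K\otimes_{\widehat R}R/\fram^n$ to a (non-canonically) constant complex $L_n$ on $\pi_0(Y)$, and let Lemma \ref{lem:profinitesetconstancycrit} (together with Lemma \ref{lem:truncationofcomplete} for the compatibility $L\otimes_{\widehat R}R/\fram^n\simeq L_n$) absorb the issue of assembling the individual trivializations. The only slip is terminological: the object $L$ on $\pi_0(Y)_\proet$ is the pushforward $\pi_* f^*K$ along $\pi:Y\to\pi_0(Y)$ (not a pullback), which by the full faithfulness in Lemma \ref{lem:dercatwstrictlylocal} is indeed the unique descent of $f^*K$.
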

\begin{proof}
	First observe that, by connectedness and examination of stalks in $X_\et$, each $K_n := K \otimes_{\widehat{R}} R/\fram^n$ is locally constant on $X_\et$ with the same perfect value $C_n$.  Now choose a pro-\'etale cover $f:Y \to X$ with $Y$ w-strictly local, and let $\pi:Y \to \pi_0(Y)$ be the natural map. Then Lemma \ref{lem:locallyconstantwstrictlylocal} and Lemma \ref{lem:dercatwstrictlylocal} show $f^* K_n \simeq \pi^* L_n \simeq \pi^* \underline{C_n}$, where $L_n := \pi_* f^* K_n \in D(\pi_0(Y),R/\fram^n)$, and the isomorphism $L_n \simeq \underline{C_n}$ is non-canonical. Lemma \ref{lem:dercatwstrictlylocal} shows that 
	\[ L_{n+1} \otimes_{R/\fram^{n+1}} R/\fram^n  \simeq \pi_* \pi^*(L_{n+1} \otimes_{R/\fram^{n+1}} R/\fram^n) \simeq \pi_* \Big(f^\ast K_{n+1} \otimes_{R/\fram^{n+1}} R/\fram^n\Big) \simeq \pi_* f^\ast K_n = L_n\]
	via the natural map $L_{n+1} \to L_n$. Applying Lemma \ref{lem:truncationofcomplete} to $\{L_n\}$ shows that $L := \pi_* K \simeq \R\lim L_n \in D(\pi_0(Y)_\proet,\widehat{R})$ satisfies $L \otimes_{\widehat{R}} R/\fram^n \simeq L_n$. Lemma \ref{lem:profinitesetconstancycrit} then shows $L \simeq \widehat{\underline{C}}  \in D(\pi_0(Y)_\proet,\widehat{R})$, where $C := \R\lim C_n \in D_\perf(R)$ is a stalk of $K$.
\end{proof}

To state our result, we need the following definition.

\begin{definition} A scheme $X$ is said to be topologically noetherian if its underlying topological space is noetherian, i.e. any descending sequence of closed subsets is eventually constant.
\end{definition}

\begin{lemma}\label{l:topnoeth} Let $T$ be a topological space.
\begin{enumerate}
\item If $T$ is noetherian, then $T$ is qcqs and has only finitely many connected components. Moreover, any locally closed subset of $T$ is constructible, and noetherian itself.
\item If $T$ admits a finite stratification with noetherian strata, then $T$ is noetherian.
\item Assume that $X$ is a topologically noetherian scheme, and $Y\to X$ \'etale. Then $Y$ is topologically noetherian.
\end{enumerate}
\end{lemma}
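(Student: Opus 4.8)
\textbf{Proof plan for Lemma \ref{l:topnoeth}.}

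The plan is to treat the three assertions essentially independently, all by elementary point-set topology plus a bit of scheme theory for part (3).

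For part (1), I would first recall the standard fact that a noetherian topological space is quasicompact: any open cover has no strictly increasing infinite chain of finite subcovers, so (using that the complements of finite subunions form a descending chain of closed sets) it has a finite subcover; the same applies to any open subset, so $T$ is qcqs. For finitely many connected components, I would argue that if $T$ had infinitely many, one could build a strictly descending chain of closed subsets (take the union of all but one component, then all but two, etc., noting each such union is closed since it is the complement of an open-and-closed piece — here one uses that in a noetherian space the partition into connected components is finite precisely because clopen subsets satisfy DCC); the cleaner route is: the boolean algebra of clopen subsets of a noetherian space is finite (an infinite one would give an infinite descending chain of closed sets), hence $\pi_0(T)$ is finite. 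For ``locally closed $\Rightarrow$ constructible'': a locally closed subset is by definition an intersection of an open and a closed set, and in a noetherian space every open set is quasicompact, hence a finite union of basic opens, so it is already constructible; therefore so is any locally closed subset. Finally, a subspace of a noetherian space is noetherian since a descending chain of closed subsets of the subspace pulls back (via closures, intersected with the subspace) to give eventual stationarity; more directly, closed subsets of the subspace are traces of closed subsets of $T$, and DCC is inherited.

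For part (2), suppose $T = \sqcup_i T_i$ is a finite stratification by locally closed subsets $T_i$, each noetherian in the subspace topology. Given a descending chain $Z_1 \supseteq Z_2 \supseteq \cdots$ of closed subsets of $T$, intersect with each stratum: $\{Z_n \cap T_i\}_n$ is a descending chain of closed subsets of $T_i$, hence eventually constant; since there are finitely many strata, there is a common $N$ past which $Z_n \cap T_i = Z_{n+1} \cap T_i$ for all $i$, and since $T = \sqcup_i T_i$ as a set this forces $Z_n = Z_{n+1}$ for $n \geq N$. Hence $T$ is noetherian.

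For part (3), let $f : Y \to X$ be \'etale with $X$ topologically noetherian. I would reduce to $X$ affine (being topologically noetherian is checked on a finite affine cover, using part (2) with the strata being the pieces of the cover made disjoint — or just note $X$ noetherian and cover it by finitely many affines each open hence noetherian, and $Y$ by their preimages). Then $Y$ is covered by finitely many affine opens, and each affine open $V \subseteq Y$ is \'etale over $X$, in particular of finite type over $X$; a scheme of finite type over a topologically noetherian affine scheme has noetherian underlying space (this is the standard fact that finite-type morphisms between topologically noetherian schemes preserve topological noetherianity — it follows since finite type over a noetherian ring gives a noetherian scheme, and for the purely topological statement one uses Chevalley's theorem that the image is constructible together with Noetherian induction; alternatively cite that \'etale morphisms are locally of finite presentation and locally finite type over noetherian-space schemes preserve the property). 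Having covered $Y$ by finitely many opens with noetherian underlying space, part (2) (applied with the open cover refined to a stratification) shows $Y$ is topologically noetherian.

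\textbf{Expected main obstacle.} The only genuinely non-formal point is the assertion in part (3) that an \'etale (or even finite-type) scheme over a topologically noetherian base has noetherian underlying space; the clean way to nail it is to observe that the problem is local so one may assume $X = \Spec A$, and then $Y$ is locally of finite type over $A$, and use that the underlying space of a scheme of finite type over a noetherian ring is noetherian — but here $A$ need not be noetherian as a ring, only topologically. So the real work is the purely topological statement: if $Z \to \Spec A$ is of finite type and $\Spec A$ is a noetherian space, then $Z$ is a noetherian space. This can be deduced from the fact that any such $Z$ has finitely many irreducible components each with generic point mapping to a point of $\Spec A$, combined with constructibility of images (Chevalley) and noetherian induction on $\Spec A$; I would either spell this out or, more economically, cite it as a standard fact (e.g.\ \cite[Tag 01OY]{StacksProject} type results on topologically noetherian schemes).
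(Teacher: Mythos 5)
Parts (1) and (2) of your proposal match the paper's proof essentially verbatim (the paper also proves finiteness of $\pi_0$ via DCC on clopen/quasi-components, and (2) by intersecting a descending chain with each stratum), so there is nothing to add there. The real divergence is in (3). The paper never invokes the general statement that finite type over a topologically noetherian base preserves topological noetherianity. Instead it first stratifies $X$ so that $Y \to X$ becomes finite \'etale over each stratum, reduces to the finite \'etale case using (2), and then runs a short self-contained argument: for a closed subset $Z \subseteq Y$ the fibre-counting function $f_Z(x) = \#(Z \cap Y_{\bar{x}})$ is upper semicontinuous and uniformly bounded by the degree of $Y \to X$, so a descending chain of $Z$'s gives, for each $n$, a descending chain of closed level sets $\{x \mid f_Z(x)\geq n\}$ in the noetherian space $X$; these stabilize for the finitely many relevant $n$, hence the functions stabilize, hence the $Z$'s do. (Both you and the paper implicitly use quasi-compactness of $Y \to X$ at the reduction step — your "cover $Y$ by finitely many affine opens", the paper's stratification claim — and it is genuinely needed: $\sqcup_{i \in \N} X \to X$ is \'etale with non-quasi-compact, hence non-noetherian, source.)

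Your route instead leans on the assertion that a finite type scheme over a topologically noetherian affine scheme is topologically noetherian. This is a true theorem — it reduces to showing that $R[x]$ has noetherian spectrum when $R$ does, which is a result of Ohm and Pendleton, proved via the characterization that a spectrum is noetherian iff every radical ideal is the radical of a finitely generated ideal — so if you import it as a black box your proof of (3) closes. But your proposed sketch of it is the weak point: (i) Chevalley's constructibility of images can fail for finite type (as opposed to finitely presented) morphisms over a non-noetherian base, so at best you should restrict to the \'etale/finitely presented case; (ii) "finitely many irreducible components" of the source is itself part of what topological noetherianity asserts, so taking it as a starting point is circular; and (iii) the noetherian induction on $\Spec A$ needs an actual mechanism for confining the "moving part" of a descending chain in the source to a proper closed subset of the base, which is precisely what the paper's finite-\'etale reduction plus fibre-count supplies. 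So either cite the Ohm--Pendleton result honestly, or adopt the paper's elementary detour through the finite \'etale case; the latter is more self-contained and is the approach I would recommend reproducing.
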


\begin{proof}\begin{enumerate}
\item Quasicompacity of $T$ is clear. Also, the property of being noetherian passes to closed subsets, as well as to open subsets. Thus, all open subsets are quasicompact; this implies that all locally closed subsets are constructible, and that $T$ is quasiseparated. Every connected component is an intersection of open and closed subsets; this intersection has to be eventually constant, so that every connected component is open and closed. By quasicompacity, there are only finitely many.
\item Under this assumption, any descending sequence of closed subsets becomes eventually constant on any stratum, and thus constant itself.
\item There is a stratification of $X$ over which $Y\to X$ is finite \'etale. By (2), we may assume that $Y\to X$ is finite \'etale. Any closed $Z\subset Y$ gives rise to a function $f_Z: X\to \N$, mapping any $x\in X$ to the cardinality of the fibre of $Z$ above a geometric point above $x$. As $Z\to X$ is finite, the function $f_Z$ is upper semicontinuous, i.e. for all $n$, $\{x\mid f_Z(x)\geq n\}\subset X$ is closed. Moreover, all $f_Z$ are bounded independently of $Z$ (by the degree of $Y\to X$). Given a descending sequence of $Z$'s, one gets a descending sequence of $f_Z$'s. Thus, for any $n$, $\{x\mid f_Z(x)\geq n\}$ forms a descending sequence of closed subsets of $X$, which becomes eventually constant. As there are only finitely many $n$ of interest, all these subsets are eventually constant. This implies that $f_Z$ is eventually constant, which shows that $Z$ is eventually constant, as desired.
\end{enumerate}
\end{proof}

Here is the promised result.

\begin{proposition}
	\label{prop:conscons}
	Let $X$ be a topologically noetherian scheme. A complex $K \in D(X_\proet,\widehat{R})$ is constructible if and only if there exists a finite stratification $\{X_i \hookrightarrow X\}$ with $K|_{X_i}$ locally constant with perfect values on $X_{i,\proet}$.
\end{proposition}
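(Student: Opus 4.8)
The plan is to prove both implications separately, exploiting the $\fram$-adic completeness built into the definition of $D_\cons(X_\proet,\widehat R)$ and the good behaviour of w-strictly local covers established above. For the easy direction, suppose $K$ becomes locally constant with perfect values on each stratum $X_i$ of a finite stratification. Since constructibility can be checked after reduction mod $\fram$ (by definition) and $\nu^*$ commutes with $\otimes^L_{\widehat R} R/\fram$ as well as with restriction to locally closed subsets (Lemma \ref{lem:constrestrictopen}), one reduces to the statement that $K\otimes^L_{\widehat R}R/\fram$ is classically constructible on $X_\et$; this holds because it is locally constant with perfect values on each $X_i$, and $X_i$ is constructible in $X$ (here Lemma \ref{l:topnoeth}(1) is used: on a topologically noetherian scheme every locally closed subset is constructible). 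One must also know $K$ is $\fram$-adically complete, but that is part of the hypothesis ``$K|_{X_i}$ locally constant with perfect values'' interpreted in $D(X_{i,\proet},\widehat R)$ — more precisely one shows $\fram$-adic completeness is local on $X_\proet$ and glues along the stratification using the exact triangles $k_!k^*\to\id\to i_*i^*$ from \S\ref{ss:FunClosed}.

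For the harder direction, assume $K\in D_\cons(X_\proet,\widehat R)$; I want to produce the stratification. Since $X$ is topologically noetherian, by Lemma \ref{l:topnoeth}(1) it has finitely many connected components, so I may assume $X$ is connected, and then (passing to $X_\red$, which changes nothing by topological invariance, Lemma \ref{lem:topinv}) that $X$ is reduced. By definition $K_1:=K\otimes^L_{\widehat R}R/\fram$ is pulled back from a constructible complex on $X_\et$; choose a finite stratification $\{X_i\}$ of $X$ by locally closed constructible subsets such that $K_1|_{X_i}$ is locally constant with perfect values on $X_{i,\et}$. Refining and using noetherian induction on the underlying space, it suffices to show: for $X$ connected topologically noetherian with $K_1$ locally constant with perfect values on $X_\et$, the complex $K$ itself is locally constant with perfect values on $X_\proet$ after passing to a further stratification — but in fact I claim no further stratification is needed on such an $X$. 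This is exactly the content of Lemma \ref{lem:conslocallyconstant}: it produces a pro-\'etale cover $f:Y\to X$ with $f^*K\simeq \underline C$ for $C\in D_\perf(R)$. Hence $K$ is locally constant with perfect values on $X_\proet$, as desired. Assembling the pieces over the strata $X_i$ (each of which is topologically noetherian by Lemma \ref{l:topnoeth}(1), and whose connected components are finite in number) gives the required finite stratification of $X$.

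The main obstacle — and the reason the proposition is nontrivial — is precisely the step encapsulated in Lemma \ref{lem:conslocallyconstant}: lifting the local constancy of $K_1=K\otimes^L_{\widehat R}R/\fram$ to an actual trivialization of the inverse limit $K$ over a pro-\'etale cover. A priori one has trivializations of each $K_n:=K\otimes^L_{\widehat R}R/\fram^n$ over w-strictly local covers, but the transition maps need not respect these trivializations, so one cannot naively pass to the limit. The resolution goes through $\pi_0$ of a w-strictly local cover $Y\to X$: by Lemma \ref{lem:dercatwstrictlylocal} and Lemma \ref{lem:locallyconstantwstrictlylocal} each $f^*K_n$ descends to $\pi^*\underline{C_n}$ on $\pi_0(Y)$, and then the Mittag--Leffler arguments of Lemmas \ref{lem:mlmaps}--\ref{lem:compatautml} show that the systems of isomorphisms $\Isom_{R/\fram^n}(L_n,\underline{C_n})$ satisfy ML with nonempty terms, so that $\lim_n$ is nonempty; repleteness of $\Shv(\pi_0(Y)_\proet)$ (Lemma \ref{lem:profinitesetconstancycrit}) then yields $L:=\pi_*K\simeq\widehat{\underline C}$, whence $f^*K\simeq \underline C$. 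All of this is already done in the excerpt, so the real work of the present proof is simply the reduction to the connected, locally-constant-mod-$\fram$ situation — which is routine given Lemma \ref{l:topnoeth} — together with the gluing of the stratumwise conclusions back to a global statement, for which one checks that ``locally constant with perfect values along a stratification'' is itself stable under refinement of stratifications and under the constructible locally closed decompositions produced above.
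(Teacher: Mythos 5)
Your overall route is the same as the paper's: the forward direction reduces to a connected stratum on which $K\otimes_{\widehat R}R/\fram$ is locally constant and then invokes Lemma \ref{lem:conslocallyconstant}, and the reverse direction splits into checking completeness (pro-\'etale locally, glued via the triangles of \S\ref{ss:FunClosed}) and checking the mod-$\fram$ condition. Both of those halves are organized exactly as in the paper, so there is nothing new in the strategy.

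There is, however, one genuine gap in your ``easy'' direction. You assert that $K\otimes^L_{\widehat R}R/\fram$ is classically constructible on $X_\et$ ``because it is locally constant with perfect values on each $X_i$,'' but the hypothesis only gives local constancy on $X_{i,\proet}$, whereas the definition of $D_\cons(X_\proet,\widehat R)$ demands that the mod-$\fram$ reduction be the pullback under $\nu$ of a constructible complex on the \emph{\'etale} site. Pro-\'etale local constancy does not formally yield \'etale local constancy: you first need that $K_1|_{X_i}$ is classical (this follows from the derived version of Lemma \ref{l:Classical}, since it is locally classical), and then you need Lemma \ref{lem:conslocalproet}, which says precisely that constructibility of an \'etale complex can be tested after pullback along a pro-\'etale cover; its proof uses a finite-presentation spreading-out argument to descend the trivializing isomorphism from the pro-\'etale cover to a finite level. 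This is one of the two nontrivial inputs to the reverse implication, and your write-up elides it entirely. Once you insert those two citations, the argument is complete and matches the paper's proof.
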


The phrase ``locally constant with perfect values'' means locally isomorphic to $\widehat{\underline{L}} \simeq \underline{L} \otimes_R \widehat{R}$ for some $L \in D_\perf(R)$.

\begin{proof}
	For the forward direction, fix $K \in D_\cons(X_\proet,\widehat{R})$. By noetherian induction, it suffices to find a dense open $U \subset X$ such that $K|_U$ is locally constant with perfect values in $D(U_\proet,\widehat{R})$. By assumption, there exists a $U \subset X$ such that $K|_U \otimes_{\widehat{R}} R/\fram \in D(U_\et,R/\fram)$ is locally constant with perfect values.  Any topologically noetherian scheme has only finitely many (clopen) connected components. Thus, by passing to connected components, we may assume $U$ is connected. Lemma \ref{lem:conslocallyconstant} then proves the claim. For the reverse, fix $K \in D(X_\proet,\widehat{R})$, and assume there exists a finite stratification $\{X_i \hookrightarrow X\}$ such that $K|_{X_i}$ is, locally on $X_{i,\proet}$, the constant $\widehat{R}$-complex associated to a perfect $R$-complex. Then $K$ is complete by Lemmas \ref{lem:constrestrictopen}  and standard sequences (as completeness is a pro-\'etale local property). For the rest, by similar reasoning, we may assume that $X$ is affine and there exists a pro-\'etale cover $f:Y \to X$ such that $K|_Y \simeq \widehat{\underline{L}}$ for a perfect $R$-complex $L$. Then $K_1$ is locally constant with perfect value $L_1$ on $X_\proet$. Lemma \ref{lem:conslocalproet} then shows that $K_1$ is \'etale locally constant with perfect value $L_1$, as wanted.
\end{proof}

The next example shows the necessity of the noetherian hypothesis in Proposition \ref{prop:conscons}:

\begin{example}
	\label{ex:notnaive}
	Fix an algebraically closed field $k$, a prime number $\ell$. Set $X_n = \underline{\Z/\ell^n}$, and $X = \lim X_n = \underline{\Z_\ell} \in \Spec(k)_\proet$ following the notation of Example \ref{ex:tensorprofiniteset}, so $X$ is qcqs. Consider the sheaf of rings $\widehat{R} = \lim \Z/\ell^n \in \Shv(\Spec(k)_\proet)$; $X$ represents $\widehat{R}$, but we ignore this. We will construct a complex $K \in D(X_\proet,\widehat{R})$ satisfying:
	\begin{enumerate}
		\item $K \otimes_{\widehat{R}}^L \Z/\ell$ is constant with perfect values over a finite clopen cover of $X$, so $K \in D_\cons(X_\proet,\widehat{R})$.
		\item $K$ is constant on the connected components of $X$ with perfect values.
		\item There does not exist a finite stratification $\{X_i \hookrightarrow X\}$ with $K|_{X_i}$ locally constant on $X_{i,\proet}$.
	\end{enumerate}
	For each $n$, let $K_n' \in D(X_{n,\proet},\Z/\ell^n)$ be the locally constant complex whose value over the connected component of $X_n$ determined by $\alpha \in \Z/\ell^n$ is $\Big(\Z/\ell^n \stackrel{\alpha}{\to} \Z/\ell^n\Big)$. Set $K_n \in D(X_\proet,\Z/\ell^n)$ to be its pullback to $X$. Then there is a coherent system of quasi-isomorphisms $K_{n+1} \otimes_{\Z/\ell^{n+1}}^L \Z/\ell^n \simeq K_n$. Patching along these isomorphisms gives a complex $K := \R\lim K_n \in D(X_\proet,\widehat{R})$ satisfying: for each map $f_\alpha:\Spec(k) \to X$ determined by an $\alpha \in \Z_\ell$, we have $f_\alpha^* K \simeq \Big(\Z_\ell \stackrel{\alpha}{\to} \Z_\ell\Big)$. As $X$ is totally disconnected, (2) is clear. Since $K \otimes_{\widehat{R}} \Z/\ell \simeq K_1$, one easily checks (1). Finally, as the stalks $f_\alpha^* K$ over $\alpha \in X(k)$ take on infinitely many disinct values, (3) follows.
\end{example}

\begin{remark}
	\label{rmk:consconsvariety}
	When $X$ is a variety over an algebraically closed field $k$, it is easy to give a direct proof that any $K \in D_\cons(X_\proet,\widehat{R})$ is locally constant along a stratification, together with an explicit description of the trivializing cover over each stratum. Indeed, as in Proposition \ref{prop:conscons}, it suffices to find a dense open $U \subset X$ such that $K|_U$ is locally constant in $D(U_\proet,\widehat{R})$. Replacing $X$ by a suitable open, we may assume (by Artin's theorem \cite[\S XI.3]{SGA4Tome3}) that:
\begin{enumerate}
	\item $X$ is smooth, affine, connected, and a $K(\pi,1)$, i.e., pullback along the canonical map $\Shv(X_\et) \to \Shv(X_{f\et})$ induces a fully faithful functor $D^+(X_{f\et},R/\fram^n) \to D^+(X_\et,R/\fram^n)$\footnote{By the Leray spectral sequence for $\Phi:(\Shv(X_\et),R/\fram^n) \to (\Shv(X_{f\et}),R/\fram^n)$ and devissage to reduce $n$, it suffices to check that  $H^i(Y_\et,R/\fram) \simeq H^i(Y_{f\et},R/\fram)$ for all $i$ and all $Y \in X_{f\et}$. By passage to suitable filtered colimits, we may assume $R/\fram = \F_\ell$ or $R/\fram = \Q$. If $R/\fram = \F_\ell$ with $\ell \in k^*$, then the equality is due to Artin. If $R/\fram = \F_p$ with $p$ zero in $k$, then the Artin-Schreier sequence and the affineness of $Y$ show that $\R \Phi_* \F_p \simeq \F_p$, which clearly suffices.  If $R/\fram = \Q$, then $H^i(Y_{f\et},\Q) = 0$ by a trace argument; the normality of $Y$ combined with examination at stalks shows that $\Q \simeq \R\eta_* \Q$, where $\eta:\Spec(K) \to Y$ is the finite disjoint union of generic points of $Y$, which proves the claim by reduction to Galois cohomology.}.
	\item	$\nu_* K_1$ is locally constant on $X_\et$, i.e., pulled back from $X_{f\et}$.
\end{enumerate}
The normalization of $X$ in the maximal unramified extension of its fraction field within a fixed separable closure gives a pro-(finite \'etale) cover $f:Y \to X$. We will show $f^*K$ is constant.   Note that $Y$ is affine, connected, normal, and all finitely presented locally constant sheaves of $R/\fram^n$-modules on  $Y_\et$ are constant by construction. In particular, each $\calH^i(K_n)$ is constant over $Y$. Moreover, since $X$ was a $K(\pi,1)$, we have $\R\Gamma(Y_\et,M) \simeq M$ for any $M \in \Mod_{R/\fram^n}$.  Then the left-completeness of $D(Y_\proet)$ formally shows $D(R/\fram^n) \to D(Y_\proet,R/\fram^n)$ is fully faithful. Induction on the amplitude of $K_n$ then shows $f^*K_n \simeq \underline{C_n}$ for $C_n := \R\Gamma(Y_\proet,K_n) \in D(R/\fram^n)$. As $K$ is constructible, each $C_n$ is perfect (since $C_n = x^* f^* K_n$ for any geometric point $x$ of $Y$), and $C_{n+1} \otimes_{R/\fram^{n+1}} R/\fram^n \simeq C_n$ via the natural map. Then $C := \R\lim C_n \in D(R)$ is perfect, and $f^* K \simeq \R\lim f^* K_n \simeq \R\lim \underline{C_n} =: \widehat{\underline{C}} \in D(Y_\proet,\widehat{R})$, which proves the claim.
\end{remark}

\subsection{The 6 functors}
\label{ss:6Fun}

We fix a complete noetherian local ring $(R,\fram)$ with finite residue field of characteristic $\ell$. We say that a scheme $X$ is $\ell$-coprime if $\ell$ is invertible on $X$.

\begin{theorem}[Grothendieck, Gabber]
	\label{thm:gabberfiniteness}
	Let $f:X \to Y$ be a finitely presented map of qcqs schemes. Assume either that $f$ is proper, or that $Y$ is quasi-excellent and $\ell$-coprime. Then $f_*:D(X_\et,R/\fram) \to D(Y_\et,R/\fram)$ has finite cohomological dimension and preserves constructibility.
\end{theorem}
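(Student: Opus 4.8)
The statement is essentially a citation-based result, so the plan is to assemble it from known \'etale-cohomological theorems rather than to prove it from scratch. First, I would dispose of the proper case. Here finiteness of cohomological dimension and preservation of constructibility for $f_* = Rf_!$ on torsion coefficients is the proper base change theorem together with the finiteness theorem for proper morphisms, valid for any finitely presented proper map of qcqs schemes (no excellence or $\ell$-coprimality needed, by the usual absolute noetherian approximation argument reducing to $Y$ of finite type over $\mathbf{Z}$, for which one cites \cite[Exp.~XIV]{SGA4Tome3}). Preservation of the bounded derived category of constructible sheaves is classical; the cohomological amplitude bound follows because the fibres of $f$ have dimension bounded by a constant, so the stalks of $R^q f_* K$ computed by proper base change vanish for $q$ large. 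One should be slightly careful that our ``constructible = perfect along a stratification'' convention is preserved: perfectness of $R/\fram$-complexes is preserved under $Rf_*$ for $f$ proper because, modulo $\fram$, one is in the classical setting over the field $R/\fram$ (a field), where constructible in our sense agrees with the classical notion, and $Rf_*$ preserves $D^b_c$.

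Next, the quasi-excellent $\ell$-coprime case. This is exactly the content of Gabber's finiteness theorem: for $f$ finitely presented and $Y$ quasi-excellent of dimension (or rather, with the noetherian hypotheses via approximation) on which $\ell$ is invertible, $Rf_*$ preserves $D^b_c(-,\Lambda)$ for $\Lambda$ a noetherian torsion ring with $\ell$ invertible; see Gabber's work as exposed in the volume \cite{SGA4Tome3}-successor (the ``Travaux de Gabber'' volume, Illusie--Laszlo--Orgogozo). The plan is to cite this for $\Lambda = R/\fram$, which is a finite ring of characteristic a power of $\ell$, hence torsion with $\ell$ invertible on $Y$. The finite cohomological dimension statement is also part of that package (it is needed even to make sense of $Rf_*$ on the unbounded derived category and to get left-completeness via Proposition~\ref{prop:leftcompletecrit}); concretely one reduces by \cite[Tag 0719]{StacksProject}-type arguments to bounding $H^q$ of the fibres, and Gabber's uniform bounds give this. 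Again one checks compatibility with the ``perfect'' convention exactly as in the proper case, since modulo $\fram$ everything happens with coefficients in the field $R/\fram$ where our constructibility coincides with the usual one, on which Gabber's theorem is stated.

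Then I would explain how to pass from $R/\fram$ to $R/\fram^n$ and assemble the pro-\'etale statement, although strictly speaking the theorem as stated is only about $D(X_\et,R/\fram)$, so this paragraph would be brief: one notes that the same statements for $R/\fram^n$ follow by d\'evissage along the filtration by powers of $\fram$ (Lemma~\ref{lem:nilpidealcons}), since $R/\fram^n$ is a successive extension of $R/\fram$-modules and cohomological dimension and constructibility are stable under extensions/cones. This is the input later used via Lemma~\ref{lem:abstractpushforward} and Lemma~\ref{lem:conspullback} to get pushforward of constructible pro-\'etale $\widehat R$-complexes, but that is outside the present statement.

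The main obstacle is purely one of bookkeeping and citation precision rather than mathematics: making sure the hypotheses under which Gabber's finiteness theorem is stated in the literature (typically: noetherian excellent base, or finite type over an excellent noetherian scheme) are reduced to, from the ``finitely presented map of qcqs schemes with $Y$ quasi-excellent'' phrasing here, via Thomason--Trobaugh / \cite[Exp.~IV]{SGA4Tome3}-style absolute noetherian approximation, and that this approximation is compatible with constructibility and with the amplitude bounds. Everything else is genuinely standard. I would therefore present the proof as: (1) reduce to the noetherian case by approximation; (2) in the proper case invoke proper base change + finiteness; (3) in the quasi-excellent $\ell$-coprime case invoke Gabber's theorem; (4) check the ``perfect'' normalization is harmless since mod $\fram$ one works over the field $R/\fram$; and, if desired, (5) remark that the analogous statements for $R/\fram^n$ follow by d\'evissage.

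\begin{proof}
We only sketch the argument, as both assertions are essentially due to Grothendieck and Gabber; we explain how to extract them from the literature in the stated generality.

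By absolute noetherian approximation (\cite[Exp.~IV]{SGA4Tome3}, or Thomason--Trobaugh), we may write $f:X \to Y$ as a base change of a finitely presented map $f_0:X_0 \to Y_0$ of noetherian schemes, with $Y_0$ of finite type over $\mathbf{Z}$ (in the proper case, with $f_0$ proper) or quasi-excellent and $\ell$-coprime (in the second case, one may take $Y_0$ excellent). Since $\R/\fram$-cohomology commutes with the relevant filtered limits of schemes, and constructibility and cohomological dimension are insensitive to such base change, it suffices to prove the statements for $f_0$; we thus assume $X$, $Y$ noetherian.

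Suppose first that $f$ is proper. By proper base change, the stalk of $R^q f_* K$ at a geometric point $\overline{y}$ of $Y$ is $H^q(X_{\overline{y},\et}, K|_{X_{\overline y}})$. Since the fibres of $f$ have dimension bounded by some $d$ and cohomological dimension of an $R/\fram$-sheaf on a $d$-dimensional scheme over a field is $\leq 2d$, we get $R^q f_* K = 0$ for $q$ outside a bounded range (depending on the amplitude of $K$ and on $d$), so $f_*$ has finite cohomological dimension. Preservation of $D^b_c(-,R/\fram)$ is the finiteness theorem for proper morphisms (\cite[Exp.~XIV]{SGA4Tome3}); that our notion of constructibility (locally constant with perfect values along a stratification) is preserved follows because modulo $\fram$ the coefficient ring $R/\fram$ is a field, over which our notion coincides with the classical one, and $Rf_*$ preserves $D^b_c$ in the classical sense.

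Now suppose $Y$ is quasi-excellent and $\ell$-coprime. Then $R/\fram$ is a finite ring whose characteristic is a power of $\ell$, hence a torsion ring in which $\ell$ is invertible on $Y$, and Gabber's finiteness theorem (see Gabber's work as exposed in the ``Travaux de Gabber'' volume by Illusie, Laszlo and Orgogozo) asserts precisely that $f_*$ has finite cohomological dimension and preserves $D^b_c(-,R/\fram)$; compatibility with the ``perfect'' normalization is checked as in the proper case, working over the field $R/\fram$.

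Finally, the corresponding statements with $R/\fram^n$ in place of $R/\fram$ follow by d\'evissage along the finite filtration of $R/\fram^n$ with graded pieces that are $R/\fram$-modules (Lemma \ref{lem:nilpidealcons}), since finite cohomological dimension and constructibility of $f_*K$ are stable under taking cones.
\end{proof}
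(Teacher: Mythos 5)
The paper gives no proof of this theorem at all — it is stated as a pure citation, with the attribution ``[Grothendieck, Gabber]'' doing the work — so your assembly is exactly the intended reading: proper base change plus the finiteness theorem of SGA~4 in the proper case, Gabber's finiteness theorem (as in the Illusie--Laszlo--Orgogozo volume) in the quasi-excellent $\ell$-coprime case, and the observation that the paper's ``perfect values'' normalization of constructibility is harmless here because $R/\fram$ is a finite field (cf.\ Remark \ref{rmk:consperfect}). One small correction: in the quasi-excellent case no noetherian approximation is needed or available, since a quasi-excellent qcqs scheme is already noetherian and Gabber's theorem is stated for quasi-excellent (not merely excellent) bases, so the parenthetical ``one may take $Y_0$ excellent'' should simply be dropped; the approximation step is only relevant to the proper case.
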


\begin{lemma}[Pushforward]
	\label{lem:6funpushforward}
	Let $f:X \to Y$ be a finitely presented map of qcqs schemes. Assume either that $f$ is proper, or that $Y$ is quasi-excellent and $\ell$-coprime. Then $f_*:D_\comp(X_\proet,\widehat{R}) \to D_\comp(Y_\proet,\widehat{R})$ preserves constructibility. The induced functor $f_{\ast}:D_\cons(X_\proet,\widehat{R}) \to D_\cons(Y_\proet,\widehat{R})$ is right adjoint to $f_{\comp}^*$. 
\end{lemma}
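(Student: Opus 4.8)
The strategy is to reduce the pro-\'etale statement to Theorem \ref{thm:gabberfiniteness} by working modulo $\fram$ and then bootstrapping using completeness. Since constructibility in $D(X_\proet,\widehat R)$ is, by definition, a condition on $K\otimes^L_{\widehat R}R/\fram$ together with $\fram$-adic completeness, the first task is to identify $(f_*K)\otimes^L_{\widehat R}R/\fram$ for $K\in D_\cons(X_\proet,\widehat R)$. The relevant input is Lemma \ref{lem:abstractpushforward}: under the hypothesis of Theorem \ref{thm:gabberfiniteness}, the \'etale pushforward $f_*:\Mod(X_\et,R/\fram)\to\Mod(Y_\et,R/\fram)$ has finite cohomological dimension, say $\le d$, so Lemma \ref{lem:abstractpushforward}(3) gives $f_*K\otimes_{\widehat R}R/\fram^n\simeq f_*(K\otimes_{\widehat R}R/\fram^n)$ for all $n$, and in particular $f_*K\otimes_{\widehat R}R/\fram\simeq f_*(K\otimes_{\widehat R}R/\fram)$. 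Also, $f_*K$ is $\fram$-adically complete by Lemma \ref{lem:conspullback}(2) (pushforward commutes with $\R\lim$, hence preserves completeness), so $f_*$ does land in $D_\comp(Y_\proet,\widehat R)$.

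Next I would translate the classical constructibility input across $\nu$. By hypothesis $K\otimes_{\widehat R}R/\fram=\nu^*C$ for some $C\in D_\cons(X_\et,R/\fram)$; by Theorem \ref{thm:gabberfiniteness}, $f_{\et,*}C\in D_\cons(Y_\et,R/\fram)$. Now $f_*(\nu^*C)\simeq\nu^*(f_{\et,*}C)$: this is Lemma \ref{lem:funcpushforward} applied to $C\in D^+(X_\et,R/\fram)$ — note $C$ is bounded since it is constructible, and $f$ is qcqs, so Lemma \ref{lem:funcpushforward} applies (the unbounded caveat is irrelevant here). Chaining these identifications gives
\[
(f_*K)\otimes_{\widehat R}R/\fram\simeq f_*(K\otimes_{\widehat R}R/\fram)\simeq f_*(\nu^*C)\simeq \nu^*(f_{\et,*}C),
\]
with $f_{\et,*}C$ classically constructible. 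Combined with the completeness of $f_*K$ established above, this is exactly the definition of $f_*K\in D_\cons(Y_\proet,\widehat R)$.

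For the adjunction statement, the point is that $f_\comp^*:D_\comp(Y_\proet,\widehat R)\to D_\comp(X_\proet,\widehat R)$ is already known to be left adjoint to $f_*:D_\comp(X_\proet,\widehat R)\to D_\comp(Y_\proet,\widehat R)$ by Lemma \ref{lem:conspullback}(4), and $f_\comp^*$ preserves constructibility by Lemma \ref{lem:conspullback}(5). Having just shown $f_*$ also preserves constructibility, the adjunction restricts to the full subcategories $D_\cons$ on both sides, giving the claimed adjoint pair $f_\comp^*\dashv f_*$ on constructible complexes; full faithfulness of the relevant unit/counit is inherited verbatim from the complete setting, so no extra work is needed.

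\textbf{Main obstacle.} Everything reduces to correctly invoking Lemma \ref{lem:abstractpushforward}(3) and Lemma \ref{lem:funcpushforward}, so the only real point requiring care is verifying the finite-cohomological-dimension hypothesis of Lemma \ref{lem:abstractpushforward}: in the proper case this is the properness/finiteness part of Theorem \ref{thm:gabberfiniteness}, and in the quasi-excellent $\ell$-coprime case it is Gabber's finiteness theorem (again part of Theorem \ref{thm:gabberfiniteness}). Once that cohomological bound $d$ is in hand, the compatibility $f_*K\otimes R/\fram^n\simeq f_*(K\otimes R/\fram^n)$ is automatic and the rest is formal. I do not anticipate a genuine difficulty beyond bookkeeping; the subtlety that $D_\cons$ complexes are bounded (so that $D^+$-statements like Lemma \ref{lem:funcpushforward} apply) should be noted explicitly.
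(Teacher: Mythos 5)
Your proof is correct and follows the paper's argument exactly: completeness of $f_*K$ via Lemma \ref{lem:conspullback}, the identification $f_*K\otimes_{\widehat R}R/\fram\simeq f_*(K\otimes_{\widehat R}R/\fram)$ via Lemma \ref{lem:abstractpushforward} (whose cohomological-dimension hypothesis is supplied by Theorem \ref{thm:gabberfiniteness}), then Lemma \ref{lem:funcpushforward} and Theorem \ref{thm:gabberfiniteness} for classical constructibility, with the adjunction being formal. The paper states this more tersely but the route is identical.
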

\begin{proof}
	Fix $K \in D_\cons(X_\proet,\widehat{R})$. Then $f_*K$ is complete by Lemma \ref{lem:conspullback}. Lemma \ref{lem:abstractpushforward} shows $f_* K \otimes_{\widehat{R}} R/\fram \simeq f_*(K \otimes_{\widehat{R}} R/\fram)$, so constructibility follows Lemma \ref{lem:funcpushforward} and Theorem \ref{thm:gabberfiniteness}; the adjunction is automatic. 
\end{proof}

\begin{remark}
	The $\ell$-coprimality assumption in Lemma \ref{lem:6funpushforward} is {\em necessary}: $H^1(\A^1_{\overline{\F}_p},\F_p)$ is infinite dimensional.
\end{remark}

\begin{lemma}[Smooth base change]
	\label{lem:sbc}
Fix a cartesian square of $\ell$-coprime qcqs schemes
\[ \xymatrix{ X' \ar[r]^g \ar[d]^f & X \ar[d]^-f \\
	      Y' \ar[r]^g & Y } \]	
	      with $f$ qcqs and $g$ smooth. Then for any $K \in D_\cons(X_\proet,\widehat{R})$, the natural map induces an isomorphism
	      \[ g_\comp^* \circ f_{*} K \simeq f_{*} \circ g_{\comp}^* K \in D_\comp(Y'_\proet,\widehat{R}).\]
	      If $Y$ is quasi-excellent and $f$ finitely presented, the preceding equality takes place in $D_\cons(Y'_\proet,\widehat{R})$.
\end{lemma}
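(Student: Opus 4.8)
The plan is to reduce to the classical smooth base change theorem for $\ell$-coprime torsion étale sheaves (SGA 4) applied to each $K \otimes^L_{\widehat R} R/\fram^n$, and then to reassemble by $\R\lim$, exactly as in the proof of Lemma \ref{lem:6funpushforward}. The base change transformation $\beta\colon g_\comp^* f_* K \to f_* g_\comp^* K$ is obtained by completing the usual base change map $g^* f_{\proet,*} \to f_{\proet,*} g^*$ attached to the cartesian square of pro-\'etale topoi; by Lemma \ref{lem:proetfunc} it is compatible, under $\nu^*$, with the classical base change transformation on the \'etale side. Both the source and target of $\beta$ are derived $\fram$-complete: $f_* K$ and $f_* g_\comp^* K$ are complete by Lemma \ref{lem:conspullback}(2), while $g_\comp^*$ lands in $D_\comp$ by construction (Lemma \ref{lem:conspullback}(4)). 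Hence it suffices to identify both sides with a common $\R\lim$ of pro-\'etale pullbacks of \'etale complexes and check that $\beta$ realises the resulting isomorphism.

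Write $K_n := K \otimes^L_{\widehat R} R/\fram^n \in D_\cons(X_\proet, R/\fram^n)$ and let $\widetilde K_n \in D_\cons(X_\et, R/\fram^n)$ be the (bounded) complex with $\nu^* \widetilde K_n \simeq K_n$, using the equivalence $D_\cons(X_\proet, R/\fram^n) \simeq D_\cons(X_\et, R/\fram^n)$. Since $f_* K$ is complete, $f_* K \simeq \R\lim_n\big((f_* K)\otimes^L_{\widehat R} R/\fram^n\big)$ by Proposition \ref{prop:derivedcompnoetherian}; by Lemma \ref{lem:conspullback}(1) the pro-system $\{(f_* K)\otimes^L_{\widehat R} R/\fram^n\}$ is pro-isomorphic to $\{f_* K_n\}$, and $f_* K_n = f_{\proet,*}\nu^* \widetilde K_n \simeq \nu^* f_{\et,*}\widetilde K_n$ by Lemma \ref{lem:funcpushforward} (valid since $\widetilde K_n \in D^+$). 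Applying the same reasoning to the complete complex $g_\comp^* f_* K$, whose reduction modulo $\fram^n$ is $g^*\big((f_* K)\otimes^L_{\widehat R} R/\fram^n\big)$ by Lemma \ref{lem:conspullback}(5), and using that $g^*$ carries pro-isomorphisms to pro-isomorphisms while $\R\lim$ over $\N^\opp$ depends only on the pro-system, we get
\[ g_\comp^* f_* K \simeq \R\lim_n g^* f_* K_n \simeq \R\lim_n \nu^*\big(g^* f_{\et,*}\widetilde K_n\big), \]
the last step using $g^*\nu^* \simeq \nu^* g^*$ (Lemma \ref{lem:proetfunc}). Symmetrically, $g_\comp^* K \simeq \R\lim_n g^* K_n$ by completeness and Lemma \ref{lem:conspullback}(5), so $f_* g_\comp^* K \simeq \R\lim_n f_* g^* K_n$ since $f_*$ commutes with $\R\lim$, and $f_* g^* K_n = f_{\proet,*}\nu^*(g^*\widetilde K_n) \simeq \nu^* f_{\et,*}(g^* \widetilde K_n)$ by Lemma \ref{lem:funcpushforward} (noting $g^*\widetilde K_n \in D_\cons(X'_\et, R/\fram^n) \subset D^+$). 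Thus $f_* g_\comp^* K \simeq \R\lim_n \nu^*(f_{\et,*} g^* \widetilde K_n)$. Classical smooth base change now supplies isomorphisms $g^* f_{\et,*}\widetilde K_n \stackrel{\sim}{\to} f_{\et,*} g^* \widetilde K_n$ in $D^+(Y'_\et, R/\fram^n)$ (applicable since $g$ is smooth, $f$ qcqs, and $R/\fram^n$ is $\ell$-primary torsion with $\ell$ invertible on $Y$), compatible in $n$ and realising the \'etale base change transformation; applying $\nu^*$ and $\R\lim_n$ and tracking through the identifications above shows $\beta$ is an isomorphism in $D_\comp(Y'_\proet, \widehat R)$.

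For the second assertion, if $Y$ is quasi-excellent and $\ell$-coprime and $f$ is finitely presented, then $Y'$ is again quasi-excellent (being smooth, hence of finite type, over $Y$) and $\ell$-coprime, and $f\colon X' \to Y'$ is finitely presented. By Lemma \ref{lem:conspullback}(5) we have $g_\comp^* K \in D_\cons(X'_\proet, \widehat R)$, so Lemma \ref{lem:6funpushforward} gives $f_* g_\comp^* K \in D_\cons(Y'_\proet, \widehat R)$; combined with the isomorphism $\beta$ just established, the equality then holds in $D_\cons(Y'_\proet, \widehat R)$.

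The step I expect to be the main obstacle is the first assertion, where $f$ is only assumed qcqs: one cannot simply reduce modulo $\fram$, since $f_*$ need not commute with $-\otimes^L_{\widehat R} R/\fram$ (only the pro-isomorphism of Lemma \ref{lem:conspullback}(1) is available), so the argument must be run at the level of the pro-system $\{K_n\}$, and one must check carefully that $g_\comp^*$, $f_*$, and $\R\lim$ interact correctly with these pro-isomorphisms. A secondary but still delicate point is verifying that the isomorphism produced by this bookkeeping is genuinely the base change map $\beta$, i.e. keeping track of all compatibilities with $\nu^*$ and with the classical base change transformation.
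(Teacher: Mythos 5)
Your proof is correct and follows essentially the same route as the paper's (much terser) argument: reduce to the pro-system $\{K\otimes_{\widehat{R}}R/\fram^n\}$ via Lemma \ref{lem:conspullback}, identify the pro-\'etale pushforwards of these bounded truncations with \'etale pushforwards via Lemma \ref{lem:funcpushforward}, invoke classical smooth base change for $D^+(-,R/\fram^n)$, and reassemble with $\R\lim$ using completeness. The extra bookkeeping you supply (pro-isomorphisms versus strict isomorphisms, compatibility of the transformation with $\nu^*$, and the quasi-excellence of $Y'$ for the constructibility statement) is exactly what the paper leaves implicit.
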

\begin{proof}
	Lemma \ref{lem:conspullback} shows that $\{f_* K \otimes_{\widehat{R}} R/\fram^n\} \simeq \{f_* (K \otimes_{\widehat{R}} R/\fram^n)\}$ as pro-objects. By the constructibility assumption on $K$, each $K \otimes_{\widehat{R}} R/\fram^n$ is the pullback under $\nu$ of a constructible complex in $D^b(X_\et,R/\fram^n)$, so $f_*(K \otimes_{\widehat{R}} R/\fram^n)$ is a pullback from $D^+(X_\et,R/\fram^n)$ by Lemma \ref{lem:funcpushforward}. The claim now follows by definition of $g_{\comp}^*$ and classical smooth base change (which applies to $D^+(X_\et,R/\fram^n)$).
\end{proof}

\begin{lemma}[Proper base change I]
	\label{lem:pbc1}
Fix a cartesian square of qcqs schemes
\[ \xymatrix{ X' \ar[r]^g \ar[d]^f & X \ar[d]^-f \\
	      Y' \ar[r]^g & Y } \]	
	      with $f$ proper. Then for any $K \in D_\cons(X_\proet,\widehat{R})$, the natural map induces an isomorphism
	      \[ g_\comp^* \circ f_{*} K \simeq f_{*} \circ g_{\comp}^* K \in D_\cons(Y'_\proet,\widehat{R}).\]
\end{lemma}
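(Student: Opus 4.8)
\textbf{Proof plan for Lemma \ref{lem:pbc1} (Proper base change I).}

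The plan is to reduce the statement to classical proper base change for torsion sheaves on the \'etale site via the standard dictionary of \S \ref{ss:ConsProet}. First I would use Lemma \ref{lem:conspullback}(1) to replace the complexes $f_* K$ and $g^*_\comp f_* K$ by the associated pro-systems of $\fram$-adic reductions; concretely, I would write
\[ \{f_* K \otimes_{\widehat{R}} R/\fram^n\} \simeq \{f_*(K \otimes_{\widehat{R}} R/\fram^n)\} \]
as pro-objects, and similarly on the base change side. Since each $K_n := K \otimes_{\widehat{R}} R/\fram^n$ is, by constructibility, the pullback under $\nu:X_\proet \to X_\et$ of a constructible (hence bounded) complex in $D^b(X_\et, R/\fram^n)$, Lemma \ref{lem:funcpushforward} identifies $f_*(K_n)$ with $\nu^*$ applied to the classical \'etale pushforward of that complex, which lands in $D^+(X_\et, R/\fram^n)$.

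Next I would invoke classical proper base change: for a proper morphism $f$ and a bounded-below torsion complex, the natural map $g^* f_{\et,*} \to f_{\et,*} g^*$ is an isomorphism on $D^+(X_\et, R/\fram^n)$. Combining this with the commutation of $\nu^*$ with $f_*$ (Lemma \ref{lem:funcpushforward}) and with $g^*$ (Lemma \ref{lem:proetfunc}), one gets that the natural map $g^*_\comp f_* K \to f_* g^*_\comp K$ becomes an isomorphism after tensoring with $R/\fram^n$ for every $n$, i.e.\ it is a pro-isomorphism on $\fram$-adic reductions. Then, since both sides are $\fram$-adically complete — the source because $g^*_\comp$ is completed pullback, the target because $f_*$ preserves completeness by Lemma \ref{lem:conspullback}(2) — and a map of derived $\fram$-complete complexes inducing an isomorphism modulo $\fram$ (equivalently on all $R/\fram^n$) is an isomorphism, we conclude the map is an equivalence in $D_\comp(Y'_\proet,\widehat{R})$. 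Finally, to see it lands in $D_\cons(Y'_\proet,\widehat{R})$, apply Lemma \ref{lem:6funpushforward} (the proper case) to $f:X\to Y$ and to $f:X'\to Y'$, together with constructibility of $g^*_\comp K$ by Lemma \ref{lem:conspullback}(5): both $g^*_\comp f_* K$ and $f_* g^*_\comp K$ are then constructible.

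I expect the only delicate point to be bookkeeping with the pro-systems: one must check that the identification of $f_* K \otimes_{\widehat{R}} R/\fram^n$ with $f_*(K_n)$ is sufficiently canonical to be compatible with the base change map — but here one can first establish the statement purely on the level of pro-objects $\{-\otimes R/\fram^n\}$, where everything is controlled by Lemma \ref{lem:conspullback}(1) and classical base change, and only afterwards pass to the limit using repleteness of $\Shv(Y'_\proet)$ to recover the honest derived-category statement. No finiteness hypothesis on $f$ beyond properness is needed, since classical proper base change for $D^+$ of torsion sheaves requires only properness (not finite presentation), which is why the conclusion here, unlike in Lemma \ref{lem:6funpushforward}, does not assume quasi-excellence.
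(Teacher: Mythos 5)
Your proof is correct and follows essentially the same route as the paper: both reduce to classical proper base change for torsion coefficients by commuting all functors in sight with $-\otimes_{\widehat{R}} R/\fram^n$ and then concluding via $\fram$-adic completeness of both sides. The only (immaterial) difference is that the paper invokes the strict commutation $f_*K\otimes_{\widehat{R}} R/\fram^n \simeq f_*(K\otimes_{\widehat{R}} R/\fram^n)$ of Lemma \ref{lem:abstractpushforward} (available here because proper pushforward has bounded cohomological dimension mod $\fram$), whereas you work with the pro-object identification of Lemma \ref{lem:conspullback}(1) and pass to the limit afterwards.
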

\begin{proof}
	This reduces to the corresponding assertion in \'etale cohomology as all functors in sight commute with application of $- \otimes_{\widehat{R}} R/\fram$ by Lemma \ref{lem:conspullback} and Lemma \ref{lem:abstractpushforward}.
\end{proof}

\begin{definition}
	\label{def:compactlysupportedcoh}
	Let $f:X \to Y$ be a separated finitely presented map of qcqs schemes. Then we define $f_{!}:D_\cons(X_\proet,\widehat{R}) \to D_\cons(Y_\proet,\widehat{R})$ as $\overline{f}_* \circ j_!$ where $X \stackrel{j}{\hookrightarrow} \overline{X} \stackrel{\overline{f}}{\to} Y$ be a factorization with $j$ an open immersion, and $\overline{f}$ proper. If $Y$ is a geometric point, we write $\R\Gamma_c(X_\proet,K) := \R\Gamma(Y_\proet, f_! K)$.
\end{definition}

\begin{lemma}
	Definition \ref{def:compactlysupportedcoh} is well-defined, i.e., $f_{!}$ is independent of choice of $j$ and preserves constructibility.
\end{lemma}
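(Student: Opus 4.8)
The plan is to follow the standard argument from the \'etale setting, adapted to the pro-\'etale world, using the results already established above. First I would address well-definedness, i.e.\ independence of the compactification $j$. Given two factorizations $X \stackrel{j_1}{\hookrightarrow} \overline{X}_1 \stackrel{\overline{f}_1}{\to} Y$ and $X \stackrel{j_2}{\hookrightarrow} \overline{X}_2 \stackrel{\overline{f}_2}{\to} Y$, one passes to the closure $\overline{X}_{12}$ of the diagonal image of $X$ in $\overline{X}_1 \times_Y \overline{X}_2$, which dominates both; so it suffices to compare the two functors when there is a proper $Y$-map $p:\overline{X}_{12} \to \overline{X}_1$ restricting to the identity on $X$, with $j_{12}:X \hookrightarrow \overline{X}_{12}$ the induced open immersion. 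The claim then reduces to the identity $\overline{f}_{1,*} \circ p_* \circ j_{12,!} \simeq \overline{f}_{1,*} \circ j_{1,!}$, and since $\overline{f}_{1,*} \circ p_* \simeq (\overline{f}_1 \circ p)_*$, it is enough to show $p_* j_{12,!} \simeq j_{1,!}$ as functors $D_\cons(X_\proet,\widehat{R}) \to D_\cons(\overline{X}_{1,\proet},\widehat{R})$. To check this last isomorphism, I would reduce modulo $\fram^n$: by Lemma~\ref{lem:conspullback} (1) and Lemma~\ref{lem:abstractpushforward} (applied to the proper, hence finite cohomological dimension, map $p$), both sides commute with $-\otimes_{\widehat{R}} R/\fram^n$ up to pro-isomorphism, and by Lemma~\ref{lem:shriekpushforwardlocallyclosed}(4) the functor $j_{12,!}$ commutes with $\nu^*$; so the statement follows from the classical fact that $p_* j_{12,!} \simeq j_{1,!}$ on $D^+(X_\et,R/\fram^n)$ (the usual comparison for compactly supported cohomology over a compactification, which uses proper base change along $\overline{X}_{12}\setminus X \to \overline{X}_1$), combined with the completeness of all complexes involved.

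Second I would check that $f_!$ preserves constructibility. Given $K \in D_\cons(X_\proet,\widehat{R})$, Lemma~\ref{lem:constrestrictopen}(3) shows $j_! K \in D_\cons(\overline{X}_\proet,\widehat{R})$, and then Lemma~\ref{lem:6funpushforward} shows $\overline{f}_*(j_! K) \in D_\cons(Y_\proet,\widehat{R})$ since $\overline{f}$ is proper (and finitely presented, as $f$ is finitely presented and $j$ is an open immersion of finite presentation — here one should be slightly careful to choose the compactification to be finitely presented, which is possible by Nagata/Conrad). So $f_! K$ lands in $D_\cons(Y_\proet,\widehat{R})$ as required.

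The main obstacle I anticipate is the well-definedness step: one must be careful that the comparison isomorphism $p_* j_{12,!}\simeq j_{1,!}$ is not merely a pro-isomorphism after reduction mod $\fram^n$ but an honest isomorphism of complete complexes on $X_\proet$. This is handled by the fact that both $p_* j_{12,!} K$ and $j_{1,!} K$ are $\fram$-adically complete (the first by Lemma~\ref{lem:conspullback}(2) and Lemma~\ref{lem:constrestrictopen}, the second likewise), so a map between them is an isomorphism iff it is so after $-\otimes_{\widehat{R}}R/\fram$, by the completeness criterion; and modulo $\fram$ everything descends to the classical \'etale statement via $\nu^*$. A second, more minor, point is naturality: one should verify that the comparison isomorphisms for varying compactifications are compatible (cocycle condition) so that $f_!$ is canonically defined, but this again reduces to the torsion case where it is classical. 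I would also remark that functoriality $(g\circ f)_! \simeq g_! \circ f_!$ and the base change and projection formulas for $f_!$ follow by the same reduction-mod-$\fram^n$ technique from their \'etale counterparts, though these are presumably treated in the subsequent lemmas.
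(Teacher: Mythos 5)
Your proposal is correct and is essentially the argument the paper intends: the paper's proof is the one-line remark that the classical compactification-independence argument goes through thanks to Lemma \ref{lem:opencloseddevissagederived} (together with Lemma \ref{lem:constrestrictopen} and Lemma \ref{lem:6funpushforward} for constructibility). The only cosmetic difference is that you verify the key isomorphism $p_* j_{12,!}\simeq j_{1,!}$ by reduction modulo $\fram$ and completeness, whereas the paper's citation suggests running the devissage triangle and pro-\'etale proper base change (Lemma \ref{lem:pbc1}) directly; both finishes are valid.
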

\begin{proof}
This follows by the same argument used in the classical case thanks to  Lemma \ref{lem:opencloseddevissagederived}.
\end{proof}

\begin{remark}
	\label{rmk:compactlysupportedcohnotderived}
	Both $j_!$ and $f_*$ are right adjoints at the level of abelian categories. However, the functor $f_!$ from Definition \ref{def:compactlysupportedcoh} is {\em not} the derived functor of the composition $f_* \circ j_!: \Ab(X_\proet) \to \Ab(Y_\proet)$, i.e., of $\calH^0(f_!)$. To see this, take $X \to Y$ to be $\A^1 \to \Spec(k)$ with $k$ algebraically closed. Then we choose $j:X \hookrightarrow \overline{X}$ to be $\A^1 \subset \P^1$. It suffices to check that the derived functors of $F \mapsto \Gamma(\overline{X},j_! F)$ fail to compute $\R\Gamma(Y,f_! F)$. Lemma \ref{lem:closeddevissage} shows $\Gamma(\overline{X},j_! F) = \ker(F(X) \to F(\widetilde{\eta}))$ where $\eta \to X$ is the generic point, and $\widetilde{\eta} \to \eta \to X$ is the restriction of the henselization at $\infty$ on $\P^1$ to $\A^1$. The map $\widetilde{\eta} \to \eta$ is a pro-\'etale cover, so we can write $\Gamma(\overline{X},j_! F) = \ker(F(X) \to F(\eta))$ for any $F \in \Ab(X_\proet)$. As $\eta \to X$ is a subobject in $X_\proet$, the map $F(X) \to F(\eta)$ is surjective for $F$ injective. The derived functors of $F \mapsto \Gamma(\overline{X},j_! F)$ are thus computed by the homotopy-kernel of the map 
	\[ \R\Gamma(X,F) \to \R\Gamma(\eta,F).\]
	Taking $F = \Z/n$ for $n \in k^*$ shows $H^0(Y_\proet,R^2 \calH^0(f_!) F) \simeq H^1(\eta,\Z/n) \neq H^2_c(\A^1,\Z/n)$.
\end{remark}

\begin{remark}
	The phenomenon of Remark \ref{rmk:compactlysupportedcohnotderived} also occurs in classical \'etale cohomology, i.e., $f_!$ does not compute the derived functors of $\calH^0(f_!)$. However, the reason is different. In the example considered in Remark \ref{rmk:compactlysupportedcohnotderived}, if $X^0 \subset X$ is the set of closed points, then 
	\[ \Gamma(\overline{X},j_! F) = \oplus_{x \in X^0} \Gamma_{x}(X,F),\]
	for $F \in \Ab(X_\et)$ torsion; one checks this directly for constructible sheaves, and then observes that the constructible ones generate all torsion sheaves on $X_\et$ under filtered colimits.  The derived functors of $F \mapsto \Gamma(\overline{X},j_! F)$ are thus calculated by the homotopy-kernel of 
	\[ \oplus_{x \in X^0} \R\Gamma(X,F) \to \oplus_{x \in X^0} \R\Gamma(X-\{x\},F).\]
	Taking $F = \Z/n$ for $n \in k^*$ shows $H^0(Y_\et,R^2 \calH^0(f_!) F) \simeq \oplus_{x \in X^0} H^1(X-\{x\},\Z/n) \neq H^2_c(\A^1,\Z/n)$.
\end{remark}

\begin{lemma}[Proper base change II]
	\label{lem:pbc2}
	The conclusion of Lemma \ref{lem:pbc1} is valid for any separated finitely presented map $f$ provided $f_{*}$ is replaced by $f_{!}$.
\end{lemma}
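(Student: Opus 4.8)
The plan is to reduce Proper base change II to Proper base change I (Lemma \ref{lem:pbc1}) by factoring $f$ through a compactification and tracking how the two constituent functors behave under base change. First I would fix the cartesian square
\[ \xymatrix{ X' \ar[r]^g \ar[d]^f & X \ar[d]^-f \\
	      Y' \ar[r]^g & Y } \]
with $f$ separated and finitely presented, and choose a factorization $X \stackrel{j}{\hookrightarrow} \overline{X} \stackrel{\overline{f}}{\to} Y$ with $j$ an open immersion and $\overline{f}$ proper, so that $f_! = \overline{f}_* \circ j_!$ by Definition \ref{def:compactlysupportedcoh}. Base changing this factorization along $g:Y' \to Y$ gives $X' \stackrel{j'}{\hookrightarrow} \overline{X}' \stackrel{\overline{f}'}{\to} Y'$, where $\overline{X}' = \overline{X} \times_Y Y'$, with $j'$ still an open immersion (base change of $j$) and $\overline{f}'$ still proper (base change of $\overline{f}$); write $g':\overline{X}' \to \overline{X}$ and $g'':X' \to X$ for the induced maps, so that $g''$ is the base change of $g$ along both $f$ and $\overline{f} \circ j$, and the relevant squares all commute.

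Next I would assemble the isomorphism from two inputs. The first is compatibility of $j_!$ with pullback along the (weakly \'etale, in fact smooth) map $g'$ of the open immersion $j$: by Lemma \ref{lem:shriekpushforwardlocallyclosed}(2) (applied to the locally closed — here open — immersion $j$), one has $g'^*_\comp \circ j_! \simeq j'_! \circ g''^*_\comp$ as functors $D_\cons(X_\proet,\widehat{R}) \to D_\cons(\overline{X}'_\proet,\widehat{R})$; here I use that $g'^*_\comp$ agrees with $g'^*$ on complete complexes since $g'$ is weakly \'etale (the remark following Lemma \ref{lem:conspullback}), and that $j_!$ preserves constructibility (Lemma \ref{lem:constrestrictopen}(3)). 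The second input is Lemma \ref{lem:pbc1} applied to the proper map $\overline{f}$ and the cartesian square with corners $\overline{X}', \overline{X}, Y', Y$: for any $L \in D_\cons(\overline{X}_\proet,\widehat{R})$ one has $g^*_\comp \circ \overline{f}_* L \simeq \overline{f}'_* \circ g'^*_\comp L$. Now, given $K \in D_\cons(X_\proet,\widehat{R})$, chain these:
\[ g^*_\comp f_! K = g^*_\comp \overline{f}_* j_! K \simeq \overline{f}'_* g'^*_\comp j_! K \simeq \overline{f}'_* j'_! g''^*_\comp K = f'_! g''^*_\comp K, \]
where the first isomorphism is Lemma \ref{lem:pbc1} with $L = j_! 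K$ (which is constructible), the second is Lemma \ref{lem:shriekpushforwardlocallyclosed}(2), and the outer equalities are the definitions of $f_!$ and $f'_!$ relative to the chosen compactifications. Finally I would note that the resulting isomorphism is the ``natural map,'' i.e.\ induced by the base-change transformation, by checking that each of the two isomorphisms used is the canonical one — this is routine from the construction of the base-change maps for $j_!$ and $\overline{f}_*$.

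The main obstacle I anticipate is not any of the individual steps but the bookkeeping needed to verify that the composite isomorphism is independent of the chosen compactification and agrees with the \emph{a priori} natural transformation $g^*_\comp f_! \to f'_! g''^*_\comp$; this is the same verification that makes Definition \ref{def:compactlysupportedcoh} well-posed, and it should follow from the compatibility of Nagata compactifications under base change together with the uniqueness of $f_!$ already established. A secondary point worth stating explicitly is that all functors involved genuinely land in $D_\cons$ and commute with $- \otimes_{\widehat{R}} R/\fram$ (via Lemma \ref{lem:conspullback} and Lemma \ref{lem:abstractpushforward}), so that, if one prefers, the entire statement can alternatively be deduced by reduction modulo $\fram^n$ from the classical proper base change theorem for $f_!$ in \'etale cohomology of torsion sheaves — this gives an independent check and is in the spirit of the proof of Lemma \ref{lem:pbc1}.
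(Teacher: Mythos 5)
Your proposal is correct and follows exactly the paper's argument: factor $f$ as an open immersion $j$ followed by a proper map $\overline{f}$, commute $j_!$ with pullback via Lemma \ref{lem:shriekpushforwardlocallyclosed}, and apply Lemma \ref{lem:pbc1} to the proper part. The paper's proof is the same two-lemma reduction, stated in one line.
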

\begin{proof}
	This follows from Lemma \ref{lem:pbc1} and Lemma \ref{lem:shriekpushforwardlocallyclosed}.
\end{proof}

\begin{lemma}
	\label{lem:shriekpullbackimmersionconst}
	Let $i:Z \hookrightarrow X$ be a constructible locally closed immersion with $X$ quasi-excellent and $\ell$-coprime. Then $i^!:D(X_\proet,\widehat{R}) \to D(Z_\proet,\widehat{R})$ preserves constructible complexes, and the resulting functor $i^!:D_{\cons}(X_\proet,\widehat{R}) \to D_{\cons}(Z_\proet,\widehat{R})$ is a right adjoint to $i_!:D_\cons(Z_\proet,\widehat{R}) \to D_\cons(X_\proet,\widehat{R})$.
\end{lemma}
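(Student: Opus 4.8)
The strategy is to reduce, as usual for locally closed immersions, to two cases: a constructible closed immersion and a quasicompact open immersion, and to handle them separately, using the classical constructibility of $i^!$ in the étale topology together with the commutation of all functors in sight with reduction mod $\fram$.

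First I would factor $i:Z\hookrightarrow X$ as $Z\stackrel{a}{\hookrightarrow}U\stackrel{b}{\hookrightarrow}X$ with $a$ a constructible closed immersion and $b$ a quasicompact open immersion; note $U$ is again quasi-excellent and $\ell$-coprime (an open subscheme of such), and $Z$ is quasi-excellent and $\ell$-coprime as a closed subscheme. Since $i_! = b_! \circ a_!$ and hence $i^! = a^! \circ b^!$ by the uniqueness of right adjoints, it suffices to treat the two cases. For the open case, $b^! = b^*$ (since $b_! $ is left adjoint to $b^*$ via Lemma \ref{lem:opencloseddevissagederived}(1) and $b_!=b_!$ from Definition \ref{defn:compactlysupportedpushforwardimmersion}), and we already know $b^*=b_\comp^*$ preserves constructibility by Lemma \ref{lem:constrestrictopen}(2); the adjunction $b_! \dashv b^*$ on $D_\cons$ is then immediate.

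The real content is the closed case: $i:Z\hookrightarrow X$ a constructible closed immersion with open complement $j:V\hookrightarrow X$, where $i_!=i_*$. By Lemma \ref{lem:shriekpullbackclosed}, $i^!$ exists on the unbounded derived categories and sits in an exact triangle $i_*i^!K \to K \to j_*j^*K$. So I must show: (i) if $K\in D_\cons(X_\proet,\widehat{R})$ then $i^!K$ is constructible; and (ii) the adjunction $i_!\dashv i^!$ restricts to $D_\cons$. Point (ii) is formal once (i) is known. For (i), the plan is to check completeness and classical constructibility mod $\fram$ separately. Completeness of $i^!K$: since $i_*$ is a pro-étale equivalence onto $D_Z(X_\proet)$ (Lemma \ref{lem:complexeswithsupport}(2)) with inverse $i^*$, and $i_*$ commutes with $\R\lim$ (being a right adjoint), one has $i^!K \simeq i^*(\text{homotopy-kernel of }K\to j_*j^*K)$, and both $K$ and $j_*j^*K$ are complete — the latter because $j_* = j_\comp^*$-adjoint preserves completeness by Lemma \ref{lem:conspullback}(2) applied to the weakly étale map $j$ (so $j^*$ preserves completeness, as does $j_*$ by that lemma) — hence the homotopy-kernel is complete, and $i^*$ preserves completeness by Lemma \ref{lem:constrestrictopen}(1)--(2) (it commutes with limits). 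For constructibility mod $\fram$: I claim $i^!K \otimes_{\widehat{R}} R/\fram \simeq i^!_{\et}(K\otimes_{\widehat{R}} R/\fram)$ up to the equivalence $\nu$, where $i^!_{\et}$ is the classical functor, which is known to preserve constructibility (Grothendieck--Gabber, as invoked throughout \S\ref{ss:6Fun}, for $X$ quasi-excellent $\ell$-coprime). To get this base-change-type identity, apply $-\otimes_{\widehat{R}}R/\fram$ to the triangle $i_*i^!K\to K\to j_*j^*K$: using Lemma \ref{lem:abstractpushforward}(3) for the proper-ish map $j$ — more precisely, $j_*(j^*K)\otimes_{\widehat{R}}R/\fram \simeq j_*(j^*K\otimes_{\widehat{R}}R/\fram)$ by Lemma \ref{lem:conspullback}(1) combined with the fact that $j^*$ commutes with $-\otimes R/\fram$ and that for the reduction one may compare pro-systems — and $i_*$ commuting with $-\otimes R/\fram$ (Lemma \ref{lem:conspullback}(1) and the exactness of $i_*$), one identifies $i_*(i^!K\otimes R/\fram)$ with the homotopy-kernel of the corresponding étale triangle, i.e. with $i_*i^!_\et(K\otimes R/\fram)$. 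Then $i^*$ applied to both sides (and $i^*i_*\simeq\id$) gives $i^!K\otimes_{\widehat{R}}R/\fram \simeq \nu^* i^!_\et(K_1)$ where $K_1$ is the classical constructible complex with $\nu^*K_1\simeq K\otimes_{\widehat{R}}R/\fram$. Since $i^!_\et K_1$ is classically constructible, $i^!K$ is constructible by definition.

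\textbf{Main obstacle.} The delicate point is the reduction-mod-$\fram$ compatibility for $j_*j^*K$: one needs that $j_*$ commutes with $-\otimes_{\widehat{R}}R/\fram$ on the relevant complexes, not merely up to a strict pro-isomorphism. This requires a cohomological-dimension input — namely Lemma \ref{lem:abstractpushforward}, which applies because $j$ is an open immersion (so $j_*$ on $\Mod(-,R/\fram)$ has bounded cohomological dimension, at least after shrinking to an affine situation, or one works locally on $X$ and invokes that $j$ is qcqs). I would need to be careful that $K$, being bounded (constructibility forces boundedness by an earlier lemma), keeps $j_*j^*K$ within a range where Lemma \ref{lem:abstractpushforward}(2)--(3) applies; boundedness of $K$ and finite cohomological dimension of $j$ make this go through. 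Everything else is a formal chase through the triangle $i_*i^!K\to K\to j_*j^*K$ and the equivalences of Lemma \ref{lem:complexeswithsupport}.
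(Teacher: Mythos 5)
Your overall strategy is the same as the paper's: factor $i$ into a constructible closed immersion followed by a quasicompact open immersion, dispose of the open case via $b^!=b^*$ and Lemma \ref{lem:constrestrictopen}, and attack the closed case through the exact triangle $i_*i^!K \to K \to j_*j^*K$ of Lemma \ref{lem:shriekpullbackclosed}. Where you diverge is in how you extract constructibility of $i^!K$ from that triangle. The paper's route is much shorter: $j^*K$ is constructible by Lemma \ref{lem:constrestrictopen}, $j_*j^*K$ is constructible by a direct appeal to Lemma \ref{lem:6funpushforward} (the open immersion $j$ is finitely presented since its complement is constructible, and $X$ is quasi-excellent and $\ell$-coprime), hence $i_*i^!K$ is constructible because $D_\cons$ is a triangulated subcategory, and finally $i^!K = i^*i_*i^!K$ is constructible by one more application of Lemma \ref{lem:constrestrictopen}. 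You instead unwind the definition of constructibility for $i^!K$ --- checking completeness and classical constructibility mod $\fram$ separately --- which forces you to re-establish the commutation of $j_*$ with $-\otimes_{\widehat{R}}R/\fram$ via Lemma \ref{lem:abstractpushforward} and Theorem \ref{thm:gabberfiniteness}; but this is precisely the content of the proof of Lemma \ref{lem:6funpushforward}, so you are re-deriving a result you could simply cite. Your argument is correct (the cohomological-dimension input you flag as the ``main obstacle'' is indeed supplied by Gabber's theorem for the qcqs open immersion $j$, and your completeness check goes through), but the detour is unnecessary; the only thing it buys you is an explicit identification $i^!K\otimes_{\widehat R}R/\fram \simeq \nu^*i^!_\et(K_1)$, which the paper does not state in this lemma but also does not need.
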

\begin{proof}
	If $i$ is an open immersion, then $i^! = i^*$, so Lemma \ref{lem:constrestrictopen} settles the claim. Thus, we may assume $i$ is a closed immersion with open comelement $j:U \hookrightarrow X$. Fix $K \in D_\cons(X_\proet,\widehat{R})$. There is an exact triangle
	\[ i_* i^! K \to K \to j_* j^* K.\]
	Lemma \ref{lem:constrestrictopen} and Lemma \ref{lem:6funpushforward} imply that $j_* j^* K$ is constructible, and hence $i_* i^! K$ is also constructible. Another application of Lemma \ref{lem:constrestrictopen} shows that $i^! K = i^* i_* i^! K$ is also constructible.
\end{proof}

\begin{lemma}[$\otimes$-products]
	Let $X$ be a qcqs scheme. Then $D_\cons(X_\proet,\widehat{R}) \subset D(X_\proet,\widehat{R})$ is closed under $\otimes$-products. 
\end{lemma}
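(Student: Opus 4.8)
The plan is to deduce this directly from Lemma~\ref{lem:constensorproduct} together with the already-established closure of constructibility under tensor products on the \'etale site. Recall that $K \in D(X_\proet,\widehat R)$ lies in $D_\cons(X_\proet,\widehat R)$ precisely when $K$ is $\fram$-adically complete and $K \otimes_{\widehat R}^L R/\fram$ is the $\nu$-pullback of a constructible complex on $X_\et$. So for $K, L \in D_\cons(X_\proet,\widehat R)$ I must verify both conditions for $K \otimes_{\widehat R}^L L$.

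For the first condition, Lemma~\ref{lem:constensorproduct} says that the (uncompleted) derived tensor product $K \otimes_{\widehat R}^L L$ of two constructible complexes is already $\fram$-adically complete; in particular it coincides with $K \widehat\otimes_{\widehat R} L$, so no completion is needed. For the second condition, base-change compatibility of the derived tensor product gives a natural identification
\[ \bigl(K \otimes_{\widehat R}^L L\bigr) \otimes_{\widehat R}^L R/\fram \;\simeq\; \bigl(K \otimes_{\widehat R}^L R/\fram\bigr) \otimes_{R/\fram}^L \bigl(L \otimes_{\widehat R}^L R/\fram\bigr), \]
where on the right we use that $K \otimes_{\widehat R}^L R/\fram$ and $L \otimes_{\widehat R}^L R/\fram$ are complexes of $R/\fram$-modules. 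By hypothesis these two factors are of the form $\nu^* K_0$ and $\nu^* L_0$ with $K_0, L_0 \in D_\cons(X_\et, R/\fram)$. Since $\nu^*$ is (symmetric) monoidal, the right-hand side is $\nu^*\bigl(K_0 \otimes_{R/\fram}^L L_0\bigr)$, and $K_0 \otimes_{R/\fram}^L L_0$ is again constructible on $X_\et$ by the \'etale statement that $D_\cons(X_\et,F)$ is closed under tensor products. Hence $K \otimes_{\widehat R}^L L$ satisfies the second condition as well, and therefore lies in $D_\cons(X_\proet,\widehat R)$.

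The only input beyond bookkeeping is the completeness assertion of Lemma~\ref{lem:constensorproduct}, which is the step I would regard as the real obstacle were it not already available. Its proof is local on $X_\proet$: after passing to w-contractible affines and filtering $K$ and $L$, one reduces to the case $K = i_* \widehat{\underline M}$, $L = i_* \widehat{\underline N}$ with $M, N \in D_\perf(R)$ and $i: Z \hookrightarrow X$ a constructible closed immersion with open complement $j: U \hookrightarrow X$, then to $M = N = R$ by splitting off finite free complexes, and finally uses the projection formula together with the vanishing $j_! \widehat R \otimes_{\widehat R} i_* \widehat R = 0$ (immediate from $j^* i_* = 0$ by adjunction) to conclude $i_* \widehat R \otimes_{\widehat R} i_* \widehat R \simeq i_* \widehat R$. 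Granting this, the argument above is purely formal.
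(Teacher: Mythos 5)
Your proof is correct and takes essentially the same route as the paper: the paper's proof of this lemma is literally a citation of Lemma \ref{lem:constensorproduct}, whose statement (and whose d\'evissage to $i_*\widehat{R}\otimes_{\widehat{R}}i_*\widehat{R}\simeq i_*\widehat{R}$) already carries both the completeness and the constructibility of the tensor product. Your explicit check of the mod-$\fram$ condition via the base-change isomorphism $(K\otimes_{\widehat{R}}^L L)\otimes_{\widehat{R}}^L R/\fram \simeq (K\otimes_{\widehat{R}}^L R/\fram)\otimes_{R/\fram}^L(L\otimes_{\widehat{R}}^L R/\fram)$ and the monoidality of $\nu^*$ merely spells out bookkeeping the paper leaves implicit.
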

\begin{proof}
	This is Lemma \ref{lem:constensorproduct}.
\end{proof}

\begin{lemma}[Internal $\Hom$]
	\label{lem:inthom}
	Let $X$ be a quasi-excellent $\ell$-coprime scheme. If $K,L \in D_\cons(X_\proet,\widehat{R})$, then $\underline{\R\Hom}_R(K,L) \in D_\cons(X_\proet,\widehat{R})$. Moreover, for any $n \geq 0$, one has $\underline{\R\Hom}_R(K,L) \otimes_{\widehat{R}} R/\fram^n \simeq \underline{\R\Hom}_{R/\fram^n}(K \otimes_{\widehat{R}} R/\fram^n, L \otimes_{\widehat{R}} R/\fram^n)$.
\end{lemma}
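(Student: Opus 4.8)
The plan is to reduce the statement about $\underline{\R\Hom}_R(K,L)$ to the corresponding classical statement on $X_\et$ via the reduction-mod-$\fram^n$ formula, exactly as in the proofs of Lemmas \ref{lem:6funpushforward} and \ref{lem:pbc1}. First I would establish the compatibility formula
\[ \underline{\R\Hom}_R(K,L) \otimes_{\widehat{R}} R/\fram^n \simeq \underline{\R\Hom}_{R/\fram^n}(K \otimes_{\widehat{R}} R/\fram^n, L \otimes_{\widehat{R}} R/\fram^n), \]
since once this is known, constructibility of $\underline{\R\Hom}_R(K,L)$ follows immediately: its reduction mod $\fram$ is $\underline{\R\Hom}_{R/\fram}(K_1, L_1)$, which is the pullback under $\nu$ of the internal $\Hom$ of two classically constructible $R/\fram$-complexes (using Lemma \ref{lem:internalhometproet}, which says $\nu^*$ commutes with $\underline{\R\Hom}$ out of a constructible complex), and internal $\Hom$ of constructible complexes in $D(X_\et,R/\fram)$ is constructible for $X$ quasi-excellent and $\ell$-coprime (Grothendieck--Gabber; this is part of the finiteness package underlying Theorem \ref{thm:gabberfiniteness}, applied to $\Delta_X$ or deduced from Lemma \ref{lem:consetalefiltration} plus duality). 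One also needs that $\underline{\R\Hom}_R(K,L)$ is $\fram$-adically complete; this follows from Lemma \ref{lem:comptensorprodreplete}, which shows $\underline{\R\Hom}_R(K,L) \in D_\comp(X_\proet,\widehat{R})$ whenever $L$ is complete.

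\textbf{Proving the compatibility formula.} The main work is the identity displayed above, and the hard part will be commuting $\underline{\R\Hom}$ past the (derived) base change $-\otimes_{\widehat{R}} R/\fram^n$. The obstacle is that $R/\fram^n$ need not be a perfect $R$-module, so $-\otimes_{\widehat{R}} R/\fram^n$ does not obviously commute with the homotopy-limits hidden inside $\underline{\R\Hom}$ of complete complexes, and $K$ is only locally (on a stratification) of the form $\widehat{\underline{L}}$ with $L$ perfect. The plan is: (i) reduce to a pro-isomorphism statement, using the trick of Proposition \ref{prop:derivedcompnoetherian} and Lemma \ref{lem:conspullback}(1) — replace $\{R/\fram^n\}$ by $\{R\otimes_P P/J^n\}$ for a polynomial presentation $P \to R$, $J = (x_1,\dots,x_r)$, so that each $P/J^n$ is $P$-perfect and base change along it commutes with all homotopy-limits; (ii) with perfect coefficients, for $K$ constructible we may work locally and assume $K \simeq \widehat{\underline{L}}$ with $L \in D_\perf(R)$, where $\underline{\R\Hom}_R(\widehat{\underline{L}}, M) \simeq M \otimes_{\widehat R}^L \widehat{\underline{L^\vee}}$ and the statement becomes the (already established) compatibility of completed tensor products with reduction mod $\fram^n$, cf. Lemma \ref{lem:abstractpushforward}(2)–(3) and Lemma \ref{lem:conspullback}(3); (iii) since $K$ is constructible, filter it (pro-\'etale locally, refining a stratification) by complexes of the form $k_! \widehat{\underline{L}}$ with $L$ perfect, as in the proof of Lemma \ref{lem:constensorproduct}, and use that $\underline{\R\Hom}$, $k_!$, $k^*$ and $-\otimes R/\fram^n$ all interact via the exact triangles of \S\ref{ss:FunLocClosed} (in particular $\underline{\R\Hom}_R(k_! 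A, B) \simeq k_* \underline{\R\Hom}_R(A, k^* B)$ by adjunction and Lemma \ref{lem:shriekpushforwardlocallyclosed}(5)).

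\textbf{Assembling the argument.} Concretely I would first treat the local model case: $X$ w-contractible, henselian along a constructible closed $i: Z \hookrightarrow X$ with open complement $j: U \to X$, and $K = i_* \widehat{\underline{L}}$, $L \in D_\perf(R)$. Here $\underline{\R\Hom}_R(i_*\widehat{\underline{L}}, L') \simeq i_* \underline{\R\Hom}_R(\widehat{\underline{L}}, i^! L') $, and since $L$ is perfect $\underline{\R\Hom}_R(\widehat{\underline{L}}, -) \simeq (-) \otimes_{\widehat R} \widehat{\underline{L^\vee}}$ commutes with $-\otimes_{\widehat R} R/\fram^n$ by Lemma \ref{lem:conspullback}(3); combined with $i^!$ commuting with such base change (Lemma \ref{lem:shriekpullbackimmersionconst} together with the triangle $i_* i^! \to \id \to j_* j^*$ and Lemma \ref{lem:abstractpushforward}) and $i_*$ commuting with it (Lemma \ref{lem:conspullback}), the formula follows in this case. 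The general constructible $K$ reduces to this by the devissage of Lemma \ref{lem:constensorproduct} and the fact that both sides of the desired formula are exact in $K$ and local on $X_\proet$. Finally, having the mod-$\fram^n$ formula, constructibility of $\underline{\R\Hom}_R(K,L)$ is read off from its reduction mod $\fram$ as explained above, using Lemma \ref{lem:internalhometproet} and the Gabber finiteness theorem for internal $\Hom$ in the $\ell$-coprime quasi-excellent setting. I expect step (i)–(ii) — the manipulation allowing one to commute $\underline{\R\Hom}$ with non-perfect reduction via the $P/J^n$ trick and the completeness of $L$ — to be the genuine technical heart; everything else is a formal combination of the six-functor compatibilities already developed in this section.
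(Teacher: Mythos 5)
Your proposal is correct and its core is the same devissage the paper uses: localize, filter $K$ into pieces $i_!$ of constant perfect complexes supported on constructible locally closed subschemes, and apply the adjunction $\underline{\R\Hom}(i_!\widehat{\underline{L}},L') \simeq i_*\underline{\R\Hom}(\widehat{\underline{L}},i^!L')$ together with the constructibility-preservation of $i^!$ and $i_*$ (Lemmas \ref{lem:shriekpullbackimmersionconst} and \ref{lem:6funpushforward}). The paper is leaner on two points. First, it reduces not just to $L$ perfect but all the way to $K = i_!\widehat{R}$ (a perfect complex is a summand of a finite free one), so $\underline{\R\Hom}(K,L) \simeq i_*i^!L$ outright and no dual $\widehat{\underline{L^\vee}}$ or $P/J^n$ manipulation is needed; the mod-$\fram^n$ formula then also follows ``similarly'' from the already-proved compatibilities of $i_*$ and $i^!$ with $-\otimes_{\widehat{R}}R/\fram^n$. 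Second, the paper establishes constructibility entirely inside $D_\cons(X_\proet,\widehat{R})$, whereas you route it through the classical statement that $\underline{\R\Hom}$ preserves constructibility on $X_\et$ for quasi-excellent $\ell$-coprime schemes — that statement is true (and provable by the same devissage), but it is not among the inputs the paper records, so your appeal to ``the finiteness package underlying Theorem \ref{thm:gabberfiniteness}'' is the one place where you lean on an unproved external fact rather than on lemmas already available. One small correction: the adjunction for a general locally closed $k$ reads $\underline{\R\Hom}(k_!A,B) \simeq k_*\underline{\R\Hom}(A,k^!B)$, not $k^*B$ (these agree only when $k$ is an open immersion); your main computation uses the correct $i^!$, so this is only a slip in the parenthetical.
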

\begin{proof}
	The assertion is local on $X$. By filtering $K$, we may assume $K = i_! \widehat{R}$ for $i:Z \hookrightarrow X$ a constructible closed immersions. By adjointness, we have $\underline{\R\Hom}(K,L) = i_* \underline{\R\Hom}(\widehat{R},  i^! L) \simeq i_*i^! L$, which is constructible by Lemma \ref{lem:shriekpullbackimmersionconst} and Lemma \ref{lem:6funpushforward}. The second assertion is proved similarly.
\end{proof}

\begin{lemma}[Projection Formula]
	\label{lem:projformula}	
Let $f:X \to Y$ be a separated finitely presented map of qcqs schemes. For any $L \in D_{\cons}(Y_\proet,\widehat{R})$ and $K \in D_\cons(X_\proet,\widehat{R})$, we have $f_! K \widehat{\otimes}_{\widehat{R}} L \simeq f_!(K \widehat{\otimes}_{\widehat{R}} f_\comp^* L)$ via the natural map.
\end{lemma}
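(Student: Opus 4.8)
The plan is to reduce the projection formula to the two building blocks of $f_!$, namely an open immersion $j$ and a proper map $\overline{f}$, and in each case to further reduce the constructible statement to the already-established $\ell$-adic compatibilities of $\otimes$, pushforward and pullback with reduction mod $\fram^n$. First I would choose a compactification $X \stackrel{j}{\hookrightarrow} \overline{X} \stackrel{\overline{f}}{\to} Y$ as in Definition \ref{def:compactlysupportedcoh}, so that $f_! = \overline{f}_* \circ j_!$. For the open immersion part, I would show $j_! M \widehat{\otimes}_{\widehat{R}} N \simeq j_!(M \widehat{\otimes}_{\widehat{R}} j_\comp^* N)$ for $M \in D_\cons(X_\proet,\widehat{R})$ and $N \in D_\cons(\overline{X}_\proet,\widehat{R})$: since $j_!$ is exact, preserves constructibility (Lemma \ref{lem:constrestrictopen}), and commutes with $-\otimes_{\widehat{R}} R/\fram$ via the projection formula modulo $\fram^n$ (Lemma \ref{lem:shriekpushforwardlocallyclosed}(3), applied to each $R/\fram^n$), both sides are constructible, hence $\fram$-adically complete; so it suffices to check the isomorphism after $-\otimes_{\widehat{R}} R/\fram^n$, where it is the classical projection formula for $j_!$ together with the compatibility $j^* N \otimes_{R/\fram^n} R/\fram^n \simeq (j^* N)_n$. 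For the proper part, I would prove $\overline{f}_* P \widehat{\otimes}_{\widehat{R}} L \simeq \overline{f}_*(P \widehat{\otimes}_{\widehat{R}} \overline{f}_\comp^* L)$ for $P \in D_\cons(\overline{X}_\proet,\widehat{R})$ and $L \in D_\cons(Y_\proet,\widehat{R})$, again by reducing mod $\fram^n$: Lemma \ref{lem:pbc1} (proper base change, more precisely its proof showing $\overline{f}_*$ commutes with $-\otimes_{\widehat{R}} R/\fram^n$ on constructibles) and Lemma \ref{lem:6funpushforward} ensure constructibility of both sides, and $\overline{f}_*(P_n \otimes_{R/\fram^n} \overline{f}^* L_n) \simeq \overline{f}_* P_n \otimes_{R/\fram^n} L_n$ is the classical projection formula for the proper map $\overline{f}$ at finite level.

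Assembling these, for $K \in D_\cons(X_\proet,\widehat{R})$ and $L \in D_\cons(Y_\proet,\widehat{R})$ I would compute
\[
f_! K \widehat{\otimes}_{\widehat{R}} L = \overline{f}_*(j_! K) \widehat{\otimes}_{\widehat{R}} L \simeq \overline{f}_*\big(j_! K \widehat{\otimes}_{\widehat{R}} \overline{f}_\comp^* L\big) \simeq \overline{f}_*\big(j_!(K \widehat{\otimes}_{\widehat{R}} j_\comp^* \overline{f}_\comp^* L)\big),
\]
and then identify $j_\comp^* \overline{f}_\comp^* L \simeq (\overline{f} \circ j)_\comp^* L = f_\comp^* L$ using that completed pullback is functorial in the evident sense (composition of completed pullbacks; this follows from the remark after Lemma \ref{lem:constrestrictopen} that $j^*$ already preserves completeness, so $j_\comp^* \overline{f}_\comp^* = j^* \overline{f}_\comp^*$, together with the compatibility of ordinary pullback with the completion functor). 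This gives $f_! K \widehat{\otimes}_{\widehat{R}} L \simeq \overline{f}_* j_!(K \widehat{\otimes}_{\widehat{R}} f_\comp^* L) = f_!(K \widehat{\otimes}_{\widehat{R}} f_\comp^* L)$, as desired. One should also check the identification is via the natural base-change/projection map, which is automatic since each reduction-mod-$\fram^n$ map is the classical natural one and the completion is determined by its finite levels.

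The main obstacle, I expect, is the bookkeeping around completions: $\widehat{\otimes}_{\widehat{R}}$ is the completion of $\otimes^L_{\widehat{R}}$, so strictly speaking each of the displayed isomorphisms involves comparing the completions of the two sides, and one must ensure that the constructible complexes in play are already complete (so the hats are harmless) before invoking the finite-level classical results. This is where Lemmas \ref{lem:constensorproduct} (tensor products of constructibles are automatically complete), \ref{lem:constrestrictopen}, \ref{lem:6funpushforward}, \ref{lem:pbc1}, and \ref{lem:conspullback}/\ref{lem:abstractpushforward} (pushforward commutes with $-\otimes_{\widehat{R}} R/\fram^n$ at the pro-level, and strictly under cohomological-dimension hypotheses coming from Theorem \ref{thm:gabberfiniteness}) all get used, and the cleanest route is probably to phrase everything as: both sides are constructible, hence complete, hence it suffices to compare after $-\otimes_{\widehat{R}} R/\fram$ (or $R/\fram^n$), where one has the honest derived tensor product and the classical six-functor formalism for $D^b_c(-,R/\fram^n)$. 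A secondary, purely notational point is that $j_!$ in Lemma \ref{lem:shriekpushforwardlocallyclosed} is stated for $D(U_\proet)$ with $\Z$-coefficients; one must note its $\widehat{R}$-linear enhancement and its interaction with completion, but this is routine since $j_!$ is exact and commutes with all the relevant colimits and limits.
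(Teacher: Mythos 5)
Your proposal is correct, but it takes a different route from the paper. You dévisse the map: writing $f_! = \overline{f}_* \circ j_!$, you handle the open immersion via Lemma \ref{lem:shriekpushforwardlocallyclosed}(3) (which already gives the uncompleted projection formula for $j_!$, so the mod-$\fram^n$ reduction there is not even needed once one knows both sides are complete by Lemma \ref{lem:constensorproduct}), and you handle the proper part by reducing modulo $\fram^n$ via Lemma \ref{lem:abstractpushforward}(3) and invoking the classical projection formula for $\overline{f}_*$ at finite level. The paper instead dévisses the coefficient: it localizes on $Y$ and filters $L$ to reduce to $L = i_*\widehat{R}$ for a constructible closed immersion $i:Z \hookrightarrow Y$, uses the identity $M \otimes_{\widehat{R}} i_*\widehat{R} \simeq i_* i^* M$ (itself a consequence of the $j_!$-projection formula for the open complement), and then the statement collapses to proper base change for $f_!$ (Lemma \ref{lem:pbc2}) together with $i^* = i_\comp^*$. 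The trade-off: the paper's route stays entirely within lemmas it has already proved and never needs the classical projection formula with a general constructible second factor; your route does need that finite-level input (SGA 4-style, for torsion coefficients), which is standard but is not among the paper's stated lemmas — the paper only records projection formulas against \emph{constant} coefficients (Lemmas \ref{lem:shriekprojformula} and \ref{lem:projformulaconstant}). On the other hand, your argument makes the role of the compactification explicit and is arguably closer to how one would prove the statement from scratch. Your completeness bookkeeping (both sides constructible, hence complete, hence compare mod $\fram$) and the identification $j_\comp^*\overline{f}_\comp^* \simeq f_\comp^*$ are both sound.
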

\begin{proof}
The assertion is local on $Y$. By filtering $L$, we may assume $L = i_* \widehat{R}$ for $i:Z \hookrightarrow Y$ a constructible closed immersion. Let $j:U \hookrightarrow X$ be the open complement of $Z$. For any $\widehat{R}$-complex $L$, we have $L \otimes_{\widehat{R}} j_! j^* \widehat{R} \simeq j_! j^* L$, and hence $L \otimes_{\widehat{R}} i_* \widehat{R} \simeq i_* i^* L$.  Using this formula, the assertion now follows from Lemma \ref{lem:pbc2} as $i^* = i_\comp^*$.
\end{proof}

\begin{remark}
	The analogue of Lemma \ref{lem:projformula} for $f_*$ is false, even for quasiexcellent $\ell$-coprime schemes. Indeed, the projection formula for the special case $L = i_* \widehat{R}$ for $i:Z \hookrightarrow X$ is equivalent to the base change theorem as in Lemma \ref{lem:pbc2}, which fails for $f_*$.
\end{remark}

\begin{lemma}
	\label{lem:shriekprojformula}
	Let $f:X \to Y$ be a separated finitely presented map of qcqs schemes. For any $K \in D_\cons(X_\et,R/\fram^n)$ and $M \in D^b(R)$, we have $f_! K \otimes_{R/\fram^n} M \simeq f_!(K \otimes_{R/\fram^n} M) \in D^b(Y_\et,R/\fram^n)$.
\end{lemma}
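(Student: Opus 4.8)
\emph{Proof sketch.} The plan is to follow the classical argument: reduce the coefficient complex $M$ to the simplest possible shape, where the formula is transparent, and then propagate it by a way-out (connectivity) argument using the finiteness results already available. All functors below are over the étale topology, so $j_!$ denotes ordinary extension by zero and $f_! = \overline{f}_* \circ j_!$ for a chosen compactification $X \stackrel{j}{\hookrightarrow} \overline{X} \stackrel{\overline{f}}{\to} Y$ with $j$ an open immersion and $\overline{f}$ proper. Both functors $M \mapsto f_!(K \otimes^L_{R/\fram^n} M)$ and $M \mapsto f_! K \otimes^L_{R/\fram^n} M$ are exact in $M$, and the natural transformation between them is the map in the statement; we must check it is an isomorphism.

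First I would reduce to $\overline{f}$ proper. The assertion is local on $Y$, so we may take $Y$ affine. Since $j^* \underline{M} \simeq \underline{M}$, the elementary projection formula for the open immersion $j$ gives $j_!(K \otimes^L_{R/\fram^n} \underline{M}) \simeq j_! K \otimes^L_{R/\fram^n} \underline{M}$, and applying $\overline{f}_*$ reduces us to proving $\overline{f}_*(K \otimes^L_{R/\fram^n} \underline{M}) \simeq \overline{f}_* K \otimes^L_{R/\fram^n} \underline{M}$ for $K \in D_\cons(\overline{X}_\et,R/\fram^n)$ and $\overline{f}$ proper. Next I would reduce the shape of $M$: the formula is trivially true when $M$ is a finitely generated free $R$-module, hence, by forming finite (co)limits and shifts, whenever $M$ is a bounded complex of finite free $R$-modules, since then $\underline{M} \otimes^L_R \underline{R/\fram^n}$ is represented by a finite complex of finite free $\underline{R/\fram^n}$-modules and $\overline{f}_*$ commutes with the finite homotopy limits, colimits, and shifts that compute $- \otimes^L_{R/\fram^n} \underline{M}$. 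For a general $M \in D^b(R)$, resolve $M$ by a bounded-above complex of finite free $R$-modules and let $P_i$ be its brutal truncations, so $P_i$ is a finite complex of finite free modules and $\Cone(P_i \to M) \in D^{\leq -i}(R)$.

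The remaining point is a way-out argument. By Theorem \ref{thm:gabberfiniteness}, $\overline{f}_*$ has bounded cohomological dimension on $D(\overline{X}_\et,R/\fram)$, hence on $D(\overline{X}_\et,R/\fram^n)$ by devissage; combined with the finite flat dimension of the constructible complex $K$ (resp.\ $\overline{f}_* K$) over $R/\fram^n$ and the boundedness of $M$, both functors $\overline{f}_*(K \otimes^L_{R/\fram^n} \underline{(-)})$ and $\overline{f}_* K \otimes^L_{R/\fram^n} \underline{(-)}$ carry $D^{\leq -i}(R)$ into $D^{\leq -i+c}(\overline X_\et, R/\fram^n)$ for a constant $c$ independent of $i$. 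Therefore the maps $\overline{f}_*(K \otimes^L_{R/\fram^n} \underline{P_i}) \to \overline{f}_*(K \otimes^L_{R/\fram^n} \underline{M})$ and its analogue for $\overline{f}_* K \otimes -$ are isomorphisms in all degrees $< i - c$; since the formula holds for each $P_i$ by the previous paragraph, letting $i \to \infty$ yields the desired isomorphism, and the same bounds show the common value lies in $D^b(Y_\et,R/\fram^n)$. The main obstacle is precisely this uniform amplitude control: one must assemble the finite flat dimension of $K$ over $R/\fram^n$, the boundedness of $M \otimes^L_R R/\fram^n$ in the relevant range, and Gabber's finite cohomological dimension for $\overline{f}_*$ into a single constant $c$; once that bookkeeping is in place, the rest is formal.
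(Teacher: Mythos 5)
Your argument is correct, and its core -- approximating $M$ by finite complexes $P_i$ of finite free modules with $\mathrm{fib}(M\to P_i)\in D^{\leq -i}(R)$, checking the formula for each $P_i$ by hand, and then killing the error term via a uniform amplitude bound coming from Theorem \ref{thm:gabberfiniteness} and the finite flat dimension of $K$ -- is exactly the way-out argument the paper uses. The difference is purely one of packaging: the paper does not rerun this argument on the \'etale site but instead cites Lemma \ref{lem:abstractpushforward}(2) (whose proof is this same $P_i$-approximation) applied with $\widehat{R}=R/\fram^n$ to get the statement in $D(X_\proet)$ after $\nu^*$, and then descends back to $D^b(Y_\et,R/\fram^n)$ by noting both sides are bounded with classical cohomology sheaves, so $\nu_*$ recovers the \'etale statement via Proposition \ref{prop:parasiticsemiorth}. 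Your route avoids the pro-\'etale detour entirely, at the cost of redoing the connectivity bookkeeping; it works because, as you note, all complexes in sight ($K\otimes M$ and $K\otimes \mathrm{fib}(M\to P_i)$) are bounded, so the finite cohomological dimension of $\overline{f}_*$ applies degree by degree without any left-completeness issues on $D(Y_\et)$. Your explicit reduction to the proper case via the projection formula for $j_!$ is also implicit in the paper (via Lemma \ref{lem:shriekpushforwardlocallyclosed}(3)), so the two proofs agree in substance.
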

\begin{proof}
	Lemma \ref{lem:abstractpushforward} (applied with $\widehat{R} = R/\fram^n$) proves the corresponding statement in the pro-\'etale world, i.e., after applying $\nu^*$. It remains to observe that both sides of the desired equality lie in $D^b(Y_\et, R/\fram^{n-1})$ by Lemma \ref{lem:6funpushforward} and the finite flat dimensionality of constructible complexes, so we can apply $\nu_*$ to get the claim.
\end{proof}

\begin{lemma}
	\label{lem:projformulaconstant}
	Let $f:X \to Y$ be a finitely presented map of quasi-excellent $\ell$-coprime schemes. For any $K \in D_\cons(X_\et,R/\fram^n)$ and $M \in D^{b}(R/\fram^n)$, we have $f_* K \otimes_{R/\fram^n} M \simeq f_*(K \otimes_{R/\fram^n} M) \in D^b(Y_\et,R/\fram^n)$.
\end{lemma}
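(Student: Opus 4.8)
The plan is to reduce this ``constant coefficients'' projection formula for $f_*$ to the corresponding statement for $f_!$ already established in Lemma \ref{lem:shriekprojformula}, combined with the base change / pushforward machinery for constructible complexes. The key points to exploit are: (a) $M \in D^b(R/\fram^n)$ is a bounded complex, so by devissage (using the exact triangles attached to a finite filtration of $M$ by its cohomology modules, and then presenting each finitely generated $R/\fram^n$-module as the cokernel of a map of finite free modules) it suffices to treat $M = R/\fram^n$ itself --- in which case there is nothing to prove --- provided we know both sides are exact in $M$ in the appropriate range; (b) when $f$ is additionally \emph{proper}, the statement follows from Lemma \ref{lem:pbc1} (proper base change) exactly as in the classical case, since tensoring with a finite free module and then a general $M \in D^b$ commutes with $f_*$ by the proper base change isomorphism and boundedness. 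So the only real content is passing from proper $f$ to general finitely presented $f$, and here one uses a compactification $X \stackrel{j}{\hookrightarrow} \overline{X} \stackrel{\overline{f}}{\to} Y$ with $j$ an open immersion and $\overline{f}$ proper, reducing the projection formula for $f_*$ to one for $\overline{f}_*$ and $j_*$.

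Concretely, first I would reduce to the bounded-below case for $M$ and then, using the finite flat dimension of constructible complexes (so that $K \otimes_{R/\fram^n} M$ stays in $D^b$) and the finite cohomological dimension of $f_*$ from Theorem \ref{thm:gabberfiniteness}, observe that both $f_* K \otimes_{R/\fram^n} M$ and $f_*(K \otimes_{R/\fram^n} M)$ are way-out functors in $M$ in the sense of Hartshorne; hence by the way-out lemma it suffices to check the natural base-change map is an isomorphism for $M$ ranging over a generating family, e.g. $M = R/\fram^n$ and its shifts, where it is the identity. The subtlety is that way-out arguments require knowing the functors land in a fixed bounded range, which is guaranteed here: $K$ constructible forces $K \otimes_{R/\fram^n} M$ to have amplitude bounded in terms of that of $M$ and the flat dimension of $K_1$, and $f_*$ has cohomological dimension $\le d$ by Theorem \ref{thm:gabberfiniteness}. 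Alternatively --- and this is the route I would actually write --- filter/compactify as above: for the open immersion $j:X \hookrightarrow \overline{X}$, the projection formula $j_! K \otimes M \simeq j_!(K \otimes M)$ is Lemma \ref{lem:shriekprojformula} (with target $\overline{X}$), and for the proper map $\overline{f}$ the projection formula $\overline{f}_* L \otimes M \simeq \overline{f}_*(L \otimes M)$ holds by proper base change Lemma \ref{lem:pbc1} and devissage on $M$; composing gives $f_! K \otimes M \simeq f_!(K\otimes M)$. To upgrade this to $f_*$ one must account for the difference between $f_!$ and $f_*$, i.e. the ``boundary'' supported on $\overline{X} - X$; the cleanest formulation is to note that \emph{both} sides of the claimed identity for $f_*$ can be computed after applying $\nu^*$ (they lie in $D^b(Y_\et, R/\fram^{n-1})$ by Lemma \ref{lem:6funpushforward} and finite flat dimension, so $\nu_*\nu^*$ recovers them, as in the proof of Lemma \ref{lem:shriekprojformula}), and in the \'etale world the projection formula for $f_*$ with a \emph{bounded} $M$ and a \emph{constructible} $K$ is classical (Grothendieck--Gabber finiteness, Theorem \ref{thm:gabberfiniteness}, plus the classical way-out argument).

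Thus the skeleton is: (i) reduce, via $\nu^*$/$\nu_*$ and boundedness, to proving the identity in $D^b(X_\et, R/\fram^n)$; (ii) in the \'etale setting, use that $- \otimes_{R/\fram^n} M$ for $M \in D^b(R/\fram^n)$ together with $f_*$ are way-out in both directions on $D^b_\cons(X_\et, R/\fram^n)$ (using Theorem \ref{thm:gabberfiniteness} for the cohomological-dimension bound and the finite flat dimension of constructible complexes to control amplitudes), so the natural map is an isomorphism iff it is for $M = R/\fram^n$; (iii) for $M = R/\fram^n$ there is nothing to prove. I expect step (ii) --- pinning down the precise amplitude bounds so that the way-out lemma genuinely applies, or equivalently checking that the compactification argument correctly handles the non-properness --- to be the main obstacle, though it is entirely routine given the finiteness inputs already in hand; no new ideas beyond Theorem \ref{thm:gabberfiniteness}, Lemma \ref{lem:funcpushforward}, Lemma \ref{lem:shriekprojformula}, and Lemma \ref{lem:6funpushforward} should be needed.
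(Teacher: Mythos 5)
Your committed skeleton --- approximate $M\in D^b(R/\fram^n)$ by finite complexes of free modules, and use the cohomological dimension bound of Theorem \ref{thm:gabberfiniteness} together with the finite flat dimension of $K$ to see that both functors of $M$ are way-out, so that the natural map need only be checked for $M=R/\fram^n$ --- is correct and is the same mechanism the paper uses. The only difference is venue: the paper's proof is literally ``proven exactly like Lemma \ref{lem:shriekprojformula}'', i.e.\ it invokes Lemma \ref{lem:abstractpushforward}(2) with $\widehat{R}=R/\fram^n$ (which is precisely the $i$-close-approximation argument, run in $D(X_\proet)$ where repleteness and $f_*\circ\R\lim=\R\lim\circ f_*$ make the connectivity estimates clean), and then descends to $X_\et$ by applying $\nu_*$, using that both sides are bounded by Lemma \ref{lem:6funpushforward}. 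Running the way-out argument directly on $X_\et$ as you propose also works, since everything in sight is bounded so left-completeness issues do not arise; just make sure to note that $f_*$ commutes with the infinite direct sums appearing in a free resolution of an arbitrary (not finitely generated) $R/\fram^n$-module, which holds in a fixed bounded range because $f$ is qcqs.

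Two cautions. First, the compactification route you say you ``would actually write'' is a dead end: composing Lemma \ref{lem:shriekprojformula} for $j_!$ with the proper case for $\overline{f}_*$ only yields the projection formula for $f_!$, which is already Lemma \ref{lem:shriekprojformula} and does not give the statement for $f_*$; the passage from $f_!$ to $f_*$ is exactly the non-trivial content here (recall the remark after Lemma \ref{lem:projformula}: the projection formula for $f_*$ against a general constructible $L$ is \emph{false}, and it is only the constancy of $M$ that saves the day). You correctly abandon this route, but it should not appear as the main line. Second, step (i) of your skeleton (``reduce via $\nu^*/\nu_*$ to the \'etale world'') is vacuous as stated, since the statement already lives on $X_\et$; the $\nu^*/\nu_*$ shuffle is only needed if, like the paper, you choose to prove the identity on $X_\proet$ first.
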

\begin{proof}
	This is proven exactly like Lemma \ref{lem:shriekprojformula}.
\end{proof}

\begin{lemma}
	\label{lem:shriekpullbackclassical}
	Let $f:X \to Y$ be a separated finitely presented map of quasiexcellent $\ell$-coprime schemes. Then $f_!:D^+(X_\et,R/\fram^n) \to D^+(Y_\et,R/\fram^n)$ has a right adjoint $f_n^!$. This adjoint preserves constructibility, and the following two diagrams commute for $n \leq m$:
	\[ \xymatrix{ D^+(Y_\et,R/\fram^n) \ar[r] \ar[d]^{f_n^!} & D^+(Y_\et,R/\fram^m) \ar[d]^{f_m^!} & &  D_\cons(Y_\et,R/\fram^m) \ar[r] \ar[d]^{f_m^!} & D_\cons(Y_\et,R/\fram^n) \ar[d]^{f_n^!} \\
		D^+(X_\et,R/\fram^n) \ar[r] & D^+(X_\et,R/\fram^m) & & D_\cons(X_\et,R/\fram^m) \ar[r] & D_\cons(X_\et,R/\fram^n). }\]
		Here the horizontal maps are induced by restriction and extension of scalars along $R/\fram^m \to R/\fram^n$ respectively.
\end{lemma}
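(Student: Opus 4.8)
## Proof Plan for Lemma \ref{lem:shriekpullbackclassical}

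\textbf{Overall approach.} The plan is to construct $f_n^!$ by the usual adjoint-functor-theorem argument in the \'etale world, then establish constructibility by reducing to the two basic cases of $f_!$ (open immersions and proper maps) via the factorization $f = \overline{f} \circ j$, and finally check the two compatibility diagrams by dualizing the corresponding (easy) compatibilities for $f_!$ with change of coefficients along $R/\fram^m \to R/\fram^n$.

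\textbf{Step 1: Existence of $f_n^!$.} First I would note that $f_!: D^+(X_\et, R/\fram^n) \to D^+(Y_\et, R/\fram^n)$ is exact and, because it factors as $\overline{f}_* \circ j_!$ with $\overline{f}$ proper of finite cohomological dimension (Theorem \ref{thm:gabberfiniteness}) and $j_!$ exact, it commutes with arbitrary direct sums and extends to an exact functor on the whole unbounded $D(X_\et, R/\fram^n)$. Since $D(X_\et, R/\fram^n)$ and $D(Y_\et, R/\fram^n)$ are compactly generated (the generators being $j_{U!}R/\fram^n$ for $j_U: U \to X$ \'etale qcqs, essentially as in Proposition \ref{lem:constructiblecompact} — here I use $\ell$-coprimality only via finiteness of cohomological dimension where needed for boundedness), Neeman's version of Brown representability produces a right adjoint $f_n^!$ on the unbounded categories; restricting to bounded-below objects gives the stated $f_n^!$ on $D^+$. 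This step is formal.

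\textbf{Step 2: $f_n^!$ preserves constructibility.} Here I would reduce to the two basic cases using the factorization $f = \overline{f} \circ j$, $j$ an open immersion, $\overline{f}$ proper, so that $f_n^! = j^* \circ \overline{f}_n^!$ (since $j_!$ is left adjoint to $j^*$). For an open immersion $j$, $j_n^! = j^*$ preserves constructibility trivially. For $\overline{f}$ proper, I would argue as in the proof of Lemma \ref{lem:shriekpullbackimmersionconst}: using the localization triangle $i_* i^! K \to K \to j'_* j'^* K$ for a constructible closed immersion $i$ and its open complement $j'$, combined with the classical finiteness theorem (Theorem \ref{thm:gabberfiniteness}, which applies to $\overline{f}_*$ preserving constructibility and having finite cohomological dimension over quasi-excellent $\ell$-coprime $Y$) and Poincar\'e–Verdier duality for the proper map $\overline{f}$ — this is precisely Gabber's finiteness theorem for $f^!$ (SGA4, SGA5, and \cite{GabberAffineBC}-style refinements / Fujiwara–Kato, or Illusie–Laszlo–Orgogozo). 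Concretely, $f^! K \simeq f^*K \otimes f^! R/\fram^n$ when $f$ is smooth, and in general one devisses along a stratification over which $f$ is smooth; the dualizing complex $f^! R/\fram^n$ is constructible by the absolute purity theorem (Gabber) in the $\ell$-coprime setting. I would cite the relevant statement from the classical $\ell$-adic formalism rather than reprove it.

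\textbf{Step 3: The compatibility squares.} For the left square, I must show $f_m^!$ followed by restriction of scalars along $R/\fram^m \to R/\fram^n$ agrees with $f_n^!$ applied to the restriction. By adjunction this is equivalent to the statement that $f_!$ commutes with restriction of scalars, i.e. $f_!(K)$ computed in $D^+(X_\et, R/\fram^m)$ and then restricted to $R/\fram^n$-modules equals $f_!$ of $K$ restricted — but restriction of scalars here is just the forgetful functor along a ring surjection, which commutes with $j_!$ and $\overline{f}_*$ trivially (both are computed at the level of abelian sheaves of groups), so this is immediate and then one passes to right adjoints. For the right square, involving extension of scalars $- \otimes_{R/\fram^m} R/\fram^n$ on constructible complexes, the analogous input is that $f_!$ commutes with $- \otimes_{R/\fram^m} R/\fram^n$, which is a special case of the projection formula Lemma \ref{lem:shriekprojformula} (taking $M = R/\fram^n \in D^b(R)$, viewed appropriately), together with the fact that on constructible complexes extension of scalars lands back in the constructible derived category. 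Passing to right adjoints on the constructible subcategories (using that $f_!$ restricted to $D_\cons$ has a right adjoint $f^!_\bullet$ preserving $D_\cons$ by Step 2) yields the commuting square; one must check that the right adjoint of the composite agrees with the composite of right adjoints, which follows from uniqueness of adjoints once both squares of the \emph{left adjoints} are known to commute.

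\textbf{Main obstacle.} The genuinely non-formal input is Step 2 — that $f^!$ preserves constructibility over quasi-excellent $\ell$-coprime schemes — which rests on Gabber's absolute purity and finiteness theorems. Since the paper is content to cite Theorem \ref{thm:gabberfiniteness} (Grothendieck–Gabber) for the dual statement about $f_*$, the cleanest route is to \emph{deduce} the $f^!$-statement from the $f_*$-statement via Verdier duality on the proper part $\overline{f}$ and triviality on the open part $j$; the only subtlety is making sure all the duality isomorphisms are compatible with reduction mod $\fram^n$, which holds because everything in sight is an $R/\fram^n$-linear finite operation on constructible complexes. Everything else is a routine transfer of the classical $\ell$-adic six-functor formalism through the already-established equivalence $D_\cons(X_\proet, R/\fram^n) \simeq D_\cons(X_\et, R/\fram^n)$.
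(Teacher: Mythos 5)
Your Steps 1 and 2 are consistent with the paper, which simply records the existence of $f_n^!$ and its preservation of constructibility as classical facts. The issues are in Step 3, and one of them is a genuine gap.

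For the left square, your adjoint bookkeeping is off. Passing to left adjoints turns $f_m^!\circ\mathrm{res}\simeq \mathrm{res}\circ f_n^!$ into the projection formula $f_!(K)\otimes_{R/\fram^m}R/\fram^n\simeq f_!(K\otimes_{R/\fram^m}R/\fram^n)$, because the left adjoint of restriction of scalars is extension of scalars. It is not equivalent to ``$f_!$ commutes with restriction of scalars''; that (true but different) statement dualizes to a compatibility of $f^!$ with the coinduction functor $\underline{\R\Hom}_{R/\fram^m}(R/\fram^n,-)$, which is not the square in the lemma. This is repairable, since the projection formula is available and is exactly what the paper invokes here.

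The real gap is the right square. There, extension of scalars $-\otimes_{R/\fram^m}R/\fram^n$ is a left adjoint while $f^!$ is a right adjoint, so the square has mixed variance: it is not the square of right adjoints of any commuting square of left adjoints, and ``uniqueness of adjoints'' gives nothing. Dualizing the projection formula returns the left square, not the right one; and restriction of scalars along $R/\fram^m\to R/\fram^n$ does not even preserve constructibility in the paper's perfect-amplitude sense (e.g.\ $\Z/\ell^n$ has infinite projective dimension over $\Z/\ell^m$ for $n<m$), so there is no adjunction internal to the constructible categories to exploit. The identity $f_m^!K_m\otimes_{R/\fram^m}R/\fram^n\simeq f_n^!(K_m\otimes_{R/\fram^m}R/\fram^n)$ is a genuine base-change statement requiring constructibility. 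The paper proves it by working locally on $X$ and factoring $f$ as a constructible closed immersion $i$ into a smooth morphism $g$ of relative dimension $d$: for $g$ the formula $g_m^!=g_m^*(d)[2d]$ makes the compatibility evident, and for $i$ one uses the triangle $i_*i_m^!\to\id\to j_*j^*$ together with Lemma \ref{lem:projformulaconstant} (commutation of $j_*$ with $-\otimes_{R/\fram^m}R/\fram^n$ on constructible complexes over quasi-excellent $\ell$-coprime schemes). You need to supply an argument of this kind; the formal dualization you propose does not exist.
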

\begin{proof}
	The existence of $f_n^!$ and preservation of constructibility is classical. For the rest, we write $R_n = R/\fram^n$.  The commutativity of the square on the left is equivalent to the commutativity of the corresponding square of left adjoints, which follows from the projection formula in \'etale cohomology.  For the square on the right, fix $K_m \in D_\cons(Y_\et,R_m)$, and write $K_n = K_m \otimes_{R_m} R_n \in D_\cons(Y_\et,R_n)$.  We must show that $f_m^! K_m \otimes_{R_m} R_n \simeq f_n^! K_n$ via the natural map $f_m^! K_m \to f_m^! K_m \simeq f_n^! K_n$. This assertion is local on $X$, so we can factor $f$ as $X \stackrel{i}{\hookrightarrow} P \stackrel{g}{\to} S$ with $i$ a constructible closed immersion, and $g$ smooth of relative dimension $d$. Since $f_m^! = i_m^! \circ g_m^!$, it suffices to prove the analogous claim for $i$ and $g$ separately. Since $g_m^! = g_m^*(d)[2d]$, the assertion is immediate. For $i$, let $j:U \hookrightarrow P$ be the open complement of $i$. Using the triangle $i_* i_m^! \to \id \to j_* j^*$, it suffices to show that $j_* j^* K_m \otimes_{R_m} R_n \simeq j_* j^* K_n$, which follows from Lemma \ref{lem:projformulaconstant}.
\end{proof}

\begin{lemma}[$!$-pullback]
	\label{lem:shriekpullback}
	Let $f:X \to Y$ be a separated finitely presented map of quasiexcellent $\ell$-coprime schemes. Then $f_!:D_\cons(X_\proet,\widehat{R}) \to D_\cons(Y_\proet,\widehat{R})$ has a right adjoint $f^!$ with $f^! K \otimes_{\widehat{R}} R/\fram^n \simeq f_n^!(K \otimes_{\widehat{R}} R/\fram^n)$.
\end{lemma}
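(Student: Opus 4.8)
The goal is to construct a right adjoint $f^!$ to $f_!:D_\cons(X_\proet,\widehat{R}) \to D_\cons(Y_\proet,\widehat{R})$ together with the compatibility $f^! K \otimes_{\widehat{R}} R/\fram^n \simeq f_n^!(K \otimes_{\widehat{R}} R/\fram^n)$. The natural strategy is to \emph{define} $f^!$ by the displayed formula and then verify adjointness. Concretely, for $K \in D_\cons(Y_\proet,\widehat{R})$, the system $\{K \otimes_{\widehat{R}} R/\fram^n\}$ is an object of $D_\comp(\calC,R_\bullet)$ (a compatible system in the sense of Definition \ref{def:compatiblesystem}) with each term in $D_\cons(Y_\proet,R/\fram^n) \simeq D_\cons(Y_\et,R/\fram^n)$ via $\nu$. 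Apply $f_n^!$ termwise using Lemma \ref{lem:shriekpullbackclassical}: the compatibility square on the right in that lemma shows $\{f_n^!(K \otimes_{\widehat{R}} R/\fram^n)\}$ is again a compatible system of constructible complexes, hence (pulling back along $\nu$ and using Lemma \ref{lem:completecompatiblesystems}) defines an object $f^! K := \R\lim \nu^*\bigl(f_n^!(K \otimes_{\widehat{R}} R/\fram^n)\bigr) \in D_\comp(X_\proet,\widehat{R})$, which is constructible by its very construction (its reduction mod $\fram$ is $\nu^* f_1^!(K \otimes_{\widehat{R}} R/\fram)$, which is classically constructible).

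\textbf{Adjointness.} The main point is to check $\R\Hom_{\widehat{R}}(f_! L, K) \simeq \R\Hom_{\widehat{R}}(L, f^! K)$ for $L \in D_\cons(X_\proet,\widehat{R})$. Here one uses completeness to reduce to finite levels: since both $f_! L$ and $K$ are $\fram$-adically complete, $\R\Hom_{\widehat{R}}(f_! L, K) \simeq \R\lim_n \R\Hom_{R/\fram^n}(f_! L \otimes_{\widehat{R}} R/\fram^n, K \otimes_{\widehat{R}} R/\fram^n)$ — this is the content of completeness, via Lemma \ref{lem:comptensorprodreplete} applied with the completed tensor structure, or directly from $\underline{\R\Hom}$ into a complete complex being complete (Lemma \ref{lem:comptensorprodreplete}) plus Proposition \ref{prop:derivedcompnoetherian}. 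By Lemma \ref{lem:abstractpushforward}(3) and Lemma \ref{lem:cohdimetproet} (which applies once one knows $f_! = \overline{f}_* \circ j_!$ has finite cohomological dimension after reducing mod $\fram$, combined with Theorem \ref{thm:gabberfiniteness}), one has $f_! L \otimes_{\widehat{R}} R/\fram^n \simeq f_!(L \otimes_{\widehat{R}} R/\fram^n)$ compatibly in $n$; this is the analogue of Lemma \ref{lem:abstractpushforward} for $f_!$ in place of $f_*$, proved by the factorization $f_! = \overline{f}_* j_!$ and Lemma \ref{lem:opencloseddevissagederived}. Then the classical adjunction $\R\Hom_{R/\fram^n}(f_!(L \otimes_{\widehat{R}} R/\fram^n), K \otimes_{\widehat{R}} R/\fram^n) \simeq \R\Hom_{R/\fram^n}(L \otimes_{\widehat{R}} R/\fram^n, f_n^!(K \otimes_{\widehat{R}} R/\fram^n))$ from Lemma \ref{lem:shriekpullbackclassical} (transported to the pro-\'etale side via $\nu$, using that $\nu_*$ and $\nu^*$ realise $D^+(X_\et,R/\fram^n) \simeq D^+_{cc}(X_\proet,R/\fram^n)$ by Proposition \ref{prop:parasiticsemiorth}) can be applied termwise. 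Taking $\R\lim$ over $n$ and recognising the right-hand side as $\R\Hom_{\widehat{R}}(L, f^! K)$ — again by completeness of $f^! K$ — gives the desired adjunction. The compatibility $f^! K \otimes_{\widehat{R}} R/\fram^n \simeq f_n^!(K \otimes_{\widehat{R}} R/\fram^n)$ holds by construction together with Lemma \ref{lem:truncationofcomplete}, which identifies the $n$-th reduction of an $\R\lim$ of a compatible system with the $n$-th term.

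\textbf{Main obstacle.} The principal difficulty is not the formal manipulation of adjoints but the verification that $f_!$ commutes with reduction mod $\fram^n$ in the strict (pro-system) sense needed to descend the level-wise classical adjunction — i.e. establishing the $f_!$-analogue of Lemma \ref{lem:abstractpushforward}(3), which requires knowing $f_!$ has bounded cohomological amplitude after reducing mod $\fram$. For $f$ proper this is part of Theorem \ref{thm:gabberfiniteness}; for the open-immersion part $j_!$ it is Lemma \ref{lem:opencloseddevissagederived} combined with the exactness of $i_*$ and $j_*$ on the relevant derived categories, and boundedness of $j_*$ requires the quasi-excellent $\ell$-coprime hypothesis via Theorem \ref{thm:gabberfiniteness} applied to the complementary closed stratum or via finite cohomological dimension of $j_*$. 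Assembling these into the clean statement $\{f_! L \otimes_{\widehat{R}} R/\fram^n\} \simeq \{f_!(L \otimes_{\widehat{R}} R/\fram^n)\}$ as pro-objects, and then upgrading to an honest isomorphism using the uniform amplitude bound (as in the proof of Lemma \ref{lem:abstractpushforward}), is the technical heart; everything else is bookkeeping with completeness and the equivalences already established.
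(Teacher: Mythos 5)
Your construction $f^!K := \R\lim f_n^!(K\otimes_{\widehat{R}}R/\fram^n)$ and the verification of adjointness by completeness, level-wise classical adjunction, and Lemma \ref{lem:truncationofcomplete} is exactly the paper's proof, just with the ``one immediately checks'' step spelled out. The commutation $f_!L\otimes_{\widehat{R}}R/\fram^n\simeq f_!(L\otimes_{\widehat{R}}R/\fram^n)$ that you identify as the main obstacle is already available as the projection formula (Lemma \ref{lem:projformula}, applied with $L=R/\fram^n$, itself proved via the factorization $f_!=\overline{f}_*\circ j_!$ essentially as you describe), so no new argument is needed there.
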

\begin{proof}
	Fix $K \in D_\cons(Y_\proet,\widehat{R})$, and let $K_n = K \otimes_R R/\fram^n \in D_\cons(Y_\et,R/\fram^n)$ be its truncation. Lemma \ref{lem:shriekpullbackclassical} gives a projective system $\{f_n^! K_n\}$ in $D_\comp(X_\proet,\widehat{R})$, and we write $f^! K := \R\lim f_n^! K_n \in D_\comp(X_\proet,\widehat{R})$. By completeness and Lemma \ref{lem:shriekpullbackclassical}, one immediately checks that $f^! K$ has the right adjointness properties. It remains to show $f^! K \otimes_{\widehat{R}} R/\fram \simeq f_1^! K_1$, which also implies $f^! K$ is constructible. This follows from the second half of Lemma \ref{lem:shriekpullbackclassical} and Lemma \ref{lem:truncationofcomplete}.
\end{proof}

\begin{lemma}[Duality]
	\label{lem:duality}
	Let $X$ be an excellent $\ell$-coprime scheme equipped with a dimension function $\delta$. Then there exists a dualizing complex $\Omega_X \in D_\cons(X_\proet,\widehat{R})$, i.e., if $D_X := \underline{\R\Hom}_X(-,\Omega_X)$, then $\id \simeq D_X^2$ on $D_\cons(X_\proet,\widehat{R})$.
\end{lemma}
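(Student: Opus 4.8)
The plan is to construct $\Omega_X$ by reduction modulo powers of $\fram$, mirroring the classical construction of dualizing complexes in \'etale cohomology. Recall that for each $n$, the excellent $\ell$-coprime scheme $X$ with dimension function $\delta$ carries a classical dualizing complex $\Omega_{X,n} \in D_\cons(X_\et,R/\fram^n)$ (see \cite{ILO} or \cite[\S XVIII.3]{SGA4Tome3} in the finite type case), normalized so that $\Omega_{X,n} = f_n^! R/\fram^n$ when $f:X \to \Spec(k)$ is finite type over a field with the induced dimension function. Since the formation of $f_n^!$ is compatible with extension of scalars along $R/\fram^m \to R/\fram^n$ by Lemma \ref{lem:shriekpullbackclassical}, these fit into a compatible system: $\Omega_{X,m} \otimes_{R/\fram^m}^L R/\fram^n \simeq \Omega_{X,n}$ for $n \leq m$. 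First I would define $\Omega_X := \R\lim_n \nu^* \Omega_{X,n} \in D(X_\proet,\widehat{R})$, and check using repleteness and the pro-isomorphism $\{R/\fram^n\}$ (as in Proposition \ref{prop:derivedcompnoetherian}) that $\Omega_X$ is $\fram$-adically complete with $\Omega_X \otimes_{\widehat{R}} R/\fram \simeq \nu^* \Omega_{X,1}$; since $\Omega_{X,1}$ is constructible on $X_\et$, this shows $\Omega_X \in D_\cons(X_\proet,\widehat{R})$.

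Next I would establish that $D_X := \underline{\R\Hom}_X(-,\Omega_X)$ preserves $D_\cons(X_\proet,\widehat{R})$. This is exactly Lemma \ref{lem:inthom}: both $\Omega_X$ and any $K \in D_\cons$ are constructible, so $\underline{\R\Hom}_R(K,\Omega_X)$ is constructible, and moreover Lemma \ref{lem:inthom} gives the crucial reduction formula
\[ D_X(K) \otimes_{\widehat{R}} R/\fram^n \simeq \underline{\R\Hom}_{R/\fram^n}(K \otimes_{\widehat{R}} R/\fram^n, \Omega_{X,n}) = D_{X,n}(K_n), \]
where $D_{X,n}$ denotes classical \'etale duality with respect to $\Omega_{X,n}$ and $K_n := K \otimes_{\widehat{R}} R/\fram^n$. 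In other words, $D_X$ is compatible with the classical duality functors under reduction mod $\fram^n$, identifying $\nu^* D_{X,n}(K_n) \simeq D_X(K) \otimes_{\widehat{R}} R/\fram^n$.

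The biconstructibility statement $\id \simeq D_X^2$ then follows by devissage. For $K \in D_\cons(X_\proet,\widehat{R})$, the natural biduality map $K \to D_X^2(K)$ is a map of constructible (hence complete) complexes, so it is an isomorphism if and only if it is one after applying $-\otimes_{\widehat{R}} R/\fram$. By the compatibility above, $D_X^2(K) \otimes_{\widehat{R}} R/\fram \simeq D_{X,1}^2(K_1)$, and the map $K \to D_X^2(K)$ reduces modulo $\fram$ to the classical biduality map $K_1 \to D_{X,1}^2(K_1)$ in $D_\cons(X_\et,R/\fram)$. That map is an isomorphism by the classical biduality theorem over a field of residue characteristic prime to $\ell$ (Deligne, Gabber; see \cite[Th. finitude]{SGA4half} or \cite[\S XVII]{ILO}). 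Since reduction modulo $\fram$ detects isomorphisms between constructible complexes, we conclude $K \simeq D_X^2(K)$.

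The main obstacle I anticipate is verifying the compatibility of the system $\{\Omega_{X,n}\}$ with transition maps carefully enough that the limit $\Omega_X$ actually has the right reduction: one must know not merely that $\Omega_{X,m} \otimes^L R/\fram^n \simeq \Omega_{X,n}$ abstractly, but that these isomorphisms are coherent, so that $\R\lim$ behaves. This is where Lemma \ref{lem:shriekpullbackclassical} (the commuting squares for $f_n^!$ under change of coefficients) does the essential work, reducing the coherence to the known functoriality of classical $!$-pullback; combined with Lemma \ref{lem:truncationofcomplete} to recover $\Omega_{X,n}$ as the mod-$\fram^n$ truncation of $\Omega_X$. A secondary point is ensuring $D_X$ has finite cohomological amplitude on constructible objects so that all the $\R\lim$'s and truncations interact correctly; this follows from the finite flat dimension of constructible complexes together with the boundedness of $\Omega_{X,1}$, as in the proof that constructible complexes are bounded.
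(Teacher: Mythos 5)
Your proposal is correct and follows essentially the same route as the paper: build a compatible tower of classical dualizing complexes $\Omega_n$ over $R/\fram^n$, set $\Omega_X = \R\lim \Omega_n$, use Lemma \ref{lem:truncationofcomplete} to identify $\Omega_X \otimes_{\widehat{R}} R/\fram^n \simeq \Omega_n$, and then deduce $\id \simeq D_X^2$ from classical biduality by reducing modulo $\fram$ via Lemma \ref{lem:inthom} and completeness. The only caveat is your coherence argument for the tower: Lemma \ref{lem:shriekpullbackclassical} and the normalization $\Omega_{X,n} = f_n^! R/\fram^n$ presuppose $X$ finite type over a base, whereas the lemma is stated for arbitrary excellent $\ell$-coprime schemes with a dimension function; in that generality the paper instead invokes Gabber's uniqueness (up to unique isomorphism) of potential dualizing complexes to rigidify the isomorphisms $\Omega_{n+1} \otimes_{R/\fram^{n+1}} R/\fram^n \simeq \Omega_n$, first for $R = \Z_\ell$ and then for general $R$ by base change along $\Z/\ell^n \to R/\fram^n$.
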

\begin{proof}
	First consider the case $R = \Z_\ell$, and set $R_n = \Z/\ell^n$. Then for each $n$, there exists a unique (up to unique isomorphism)  potential dualising complex $\omega_n \in D_\cons(X_\proet,\Z/\ell^n)$, see \cite[XVII.2.1.2, XVII.5.1.1, XVII 6.1.1]{GabberEtaleCoh}. By \cite[XVII.7.1.3]{GabberEtaleCoh} and uniqueness, one may choose isomorphisms $\omega_{n+1} \otimes_{\Z/\ell^{n+1}} \Z/\ell^n$ for each $n$. Set $\omega_X = \lim \Omega_n \in D(X_\proet,\widehat{\Z_\ell})$. Then $\omega_X$ is $\ell$-adically complete, and $\omega_X \otimes_{\Z_\ell} \Z/\ell^n \simeq \omega_n$ (by a slight modification of Lemma \ref{lem:truncationofcomplete}). Lemma \ref{lem:inthom} then gives the duality isomorphism $\id \simeq D_X^2$ in this case by reduction modulo $\ell$. For general rings $R$, set $R_n := R/\fram^n$, so each $R_n$ is a $\Z/\ell^n$-algebra. Then \cite[XVII.7.1.3]{GabberEtaleCoh} shows that $\Omega_n := \omega_n \otimes_{\Z/\ell^n} R_n \in D_\cons(X_\proet,R_n)$ is dualizing. A repeat of the argument for the previous case then shows that $\Omega_X := \lim \Omega_n \in D_\cons(X_\proet,\widehat{R})$ has the required properties.
\end{proof}

\begin{remark}
	The dualizing complex constructed in Lemma \ref{lem:duality} is {\em not} the traditional dualizing complexes (as in \cite[\S XVII.7]{GabberEtaleCoh}) unless $R$ is Gorenstein. For example, when $X$ is a geometric point, the dualizing complex above is simply the ring $R$ itself, rather than the dualizing complex $\omega^\bullet_R$ coming from local duality theory. This is a reflection of our choice of working with a more restrictive class of complexes in $D_{\cons}(X_\proet,\widehat{R})$: when $X$ is a point, $D_{\cons}(X,\widehat{R}) \simeq D_\perf(R)$. 
\end{remark}

\subsection{$\Z_\ell$-,$\Q_\ell$-,$\bar{\Z}_\ell$- and $\bar{\Q}_\ell$-sheaves}
\label{ss:LadicSheaves}

Let us start by defining the relevant categories. For the moment, let $X$ be any scheme.

\begin{definition} Let $E$ be an algebraic extension of $\Q_\ell$ with ring of integers $\calO_E$. Let $E_X = \mathcal{F}_E$ and $\calO_{E,X} = \mathcal{F}_{\calO_E}$ be the sheaves associated with the topological rings $E$ and $\calO_E$ on $X_\proet$ as in Lemma \ref{l:ExSheafTopSpace}.
\end{definition}

We first identify these sheaves in terms of the familiar algebraic definitions directly on $X_\proet$:

\begin{lemma}\ 
	\label{l:IdentConstants}\begin{enumerate}
\item If $E$ is a finite extension of $\Q_\ell$ with uniformizer $\varpi$, then $\calO_{E,X} = \widehat{\calO}_E = \lim_n \calO_E/\varpi^n\calO_E$, with notation as in Subsection \ref{ss:ConsProet}.
\item In general, $\calO_{E,X} = \colim_{F\subset E} \calO_{F,X}$, where $F$ runs through finite extensions of $\Q_\ell$ contained in $E$. Moreover, $E_X = \calO_{E,X}[\ell^{-1}]$.
\end{enumerate}
\end{lemma}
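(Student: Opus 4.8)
\textbf{Proof plan for Lemma \ref{l:IdentConstants}.}

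The plan is to unwind the definition of the sheaf $\mathcal{F}_R$ from Lemma \ref{l:ExSheafTopSpace} (namely $U \mapsto \Map_\cont(U,R)$, where $U$ is given its topology as a scheme, i.e.\ the Zariski topology on its underlying space) and compare it termwise with the constructions of Subsection \ref{ss:ConsProet}. For part (1), fix a finite extension $E/\Q_\ell$ with uniformizer $\varpi$. The topological ring $\calO_E$ is $\varpi$-adically complete, so $\calO_E = \lim_n \calO_E/\varpi^n$ as topological rings, with each $\calO_E/\varpi^n$ discrete. Since $\Map_\cont(U,-)$ sends limits of topological spaces to limits (indeed $\Map_\cont(U,-)$ is a right adjoint on topological spaces, but more concretely a continuous map to an inverse limit is the same as a compatible system of continuous maps), we get
\[
\mathcal{F}_{\calO_E}(U) = \Map_\cont(U,\calO_E) = \lim_n \Map_\cont(U,\calO_E/\varpi^n) = \lim_n \mathcal{F}_{\calO_E/\varpi^n}(U).
\]
Now $\calO_E/\varpi^n$ is a finite discrete ring, so by the last sentence of Lemma \ref{l:ExSheafTopSpace}, $\mathcal{F}_{\calO_E/\varpi^n}$ is the constant sheaf $\underline{\calO_E/\varpi^n}$ on $X_\proet$, which is exactly the sheaf denoted $\calO_E/\varpi^n$ (or $R/\fram^n$ in the notation of \S\ref{ss:ConsProet} with $R = \calO_E$, $\fram = (\varpi)$). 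Hence $\mathcal{F}_{\calO_E} = \lim_n \calO_E/\varpi^n = \widehat{\calO}_E$ as sheaves on $X_\proet$, which is the assertion of (1). One should note that the limit here is the limit of sheaves, computed sectionwise, which is legitimate since limits of sheaves are computed as limits of presheaves.

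For part (2), write $E = \colim_{F \subset E} F$ as a filtered colimit of finite subextensions $F/\Q_\ell$, so $\calO_E = \colim_{F} \calO_F$ as topological rings, where each $\calO_F$ carries its $\ell$-adic (equivalently $\varpi_F$-adic) topology and the transition maps $\calO_F \hookrightarrow \calO_{F'}$ are continuous closed immersions, and $\calO_E$ gets the colimit topology. The key point is that for $U$ a \emph{qcqs} (e.g.\ affine, or pro-\'etale affine) object of $X_\proet$, any continuous map $U \to \calO_E$ has image in some $\calO_F$: indeed, $U$ is quasi-compact, and $\calO_E = \bigcup_F \calO_F$ is an increasing union of open (in fact clopen) subsets in the colimit topology — each $\calO_F$ is open since the colimit topology on a countable increasing union of such closed subgroups makes each term open once we invoke Lemma \ref{lem:compactcountabletower} (the same mechanism as in Graev's theorem, Theorem \ref{thm:graevtopgrp}, applied to the countable tower of the $\calO_F$) — so a quasi-compact image is contained in finitely many, hence one, of them. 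This gives $\mathcal{F}_{\calO_E}(U) = \colim_F \mathcal{F}_{\calO_F}(U)$ on qcqs objects; since $X^\aff_\proet$ generates the topos (Lemma \ref{lem:baseproet}) and filtered colimits of sheaves are computed sectionwise on a generating family of qcqs objects, we conclude $\calO_{E,X} = \colim_F \calO_{F,X}$ as sheaves. For the statement $E_X = \calO_{E,X}[\ell^{-1}]$: apply the same argument, writing $E = \bigcup_n \ell^{-n}\calO_E$ as an increasing union of clopen subsets (this is again a countable tower to which Lemma \ref{lem:compactcountabletower} applies), so that $\Map_\cont(U,E) = \colim_n \Map_\cont(U,\ell^{-n}\calO_E) = \colim_n \ell^{-n}\mathcal{F}_{\calO_E}(U) = \mathcal{F}_{\calO_E}(U)[\ell^{-1}]$ for qcqs $U$, and sheafify; note $\ell^{-n}\calO_E \cong \calO_E$ as topological spaces via multiplication by $\ell^n$, so each term is indeed $\mathcal{F}_{\calO_E}$.

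The main obstacle is the topological bookkeeping in part (2): one must be careful that the colimit topology on $\calO_E = \colim_F \calO_F$ (and on $E = \colim_n \ell^{-n}\calO_E$) really does make each $\calO_F$ (resp.\ $\ell^{-n}\calO_E$) open, so that quasi-compactness of $U$ forces factorization through a single stage. This is precisely the content that Lemma \ref{lem:compactcountabletower} is designed to supply — it handles countable towers of closed immersions of Hausdorff spaces — and reducing to the countable case is harmless since $E/\Q_\ell$, being algebraic, is a countable union of finite subextensions. Once this factorization is in hand, everything else is formal manipulation of filtered colimits and inverse limits of sheaves, using only that such limits and filtered colimits of sheaves are computed sectionwise on the generating family $X^\aff_\proet$ of qcqs objects.
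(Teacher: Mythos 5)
Part (1) of your proposal is fine and is essentially the paper's argument: $\Map_\cont(U,-)$ turns the inverse limit $\calO_E = \lim_n \calO_E/\varpi^n$ into an inverse limit of sections, and limits of sheaves are computed sectionwise.

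In part (2), however, your justification of the key factorization step rests on a false claim: the subgroups $\calO_F$ are \emph{not} open in $\calO_E = \colim_F \calO_F$. For any proper finite extension $F \subset F'$, the closed subgroup $\calO_F \subset \calO_{F'}$ has infinite (indeed uncountable) index, whereas an open subgroup of a compact group has finite index; so $\calO_F \cap \calO_{F'}$ is not open in $\calO_{F'}$ and hence $\calO_F$ is not open for the colimit topology. Lemma \ref{lem:compactcountabletower} does not assert openness of the stages of the tower — its entire point is to produce the factorization for \emph{closed, non-open} embeddings — and consequently your "quasi-compact image is covered by finitely many opens" mechanism does not apply here. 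Moreover, the proof of Lemma \ref{lem:compactcountabletower} requires the source to be compact \emph{Hausdorff} (it uses $\beta\N \to S$ and density of $\N$ in $\beta\N$), whereas the underlying space of a qcqs object $U \in X_\proet$ is quasi-compact but not Hausdorff, so you cannot apply that lemma to $U$ directly either. The repair is the route the paper takes: $\calO_E$ is totally disconnected (it has a basis of clopen subgroups $\ell^n\calO_E$), so by Lemma \ref{l:ExSheafTopSpace} one has $\calF_{\calO_E}(U) = \Map_\cont(\pi_0(U),\calO_E)$ for qcqs $U$; now $S = \pi_0(U)$ is profinite, hence compact Hausdorff, and Lemma \ref{lem:compactcountabletower} (applied to a countable cofinal chain among the $\calO_F$, which exists since $E/\Q_\ell$ is algebraic) gives $\Map_\cont(S,\calO_E) = \colim_F \Map_\cont(S,\calO_F)$.

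Your treatment of $E_X = \calO_{E,X}[\ell^{-1}]$ is closer to being correct as stated, since $\ell^{-n}\calO_E$ genuinely is open in $E$ (its intersection with each finite subextension $F$ is $\ell^{-n}\calO_F$, which is open in $F$); but you should still pass through $S = \pi_0(U)$ for uniformity, which is again how the paper phrases it: both displayed identities are proved for profinite $S$ using compactness of $S$ together with Lemma \ref{lem:compactcountabletower}.
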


\begin{proof}\begin{enumerate}
\item This follows from Lemma \ref{l:ExSheafTopSpace} and the identity
\[
\Map_\cont(S,\calO_E) = \lim_n \Map_\cont(S,\calO_E/\varpi^n\calO_E)
\]
for any profinite set $S$.
\item This follows from Lemma \ref{l:ExSheafTopSpace} and the identities
\[
\Map_\cont(S,\calO_E) = \colim_{F\subset E} \Map_\cont(S,\calO_F)\ ,
\]
\[
\Map_\cont(S,E) = \Map_\cont(S,\calO_E)[\ell^{-1}]
\]
for any profinite set $S$, which result from the compactness of $S$ and Lemma \ref{lem:compactcountabletower}. \qedhere
\end{enumerate}
\end{proof}

In this section, we abbreviate $E=E_X$ and $\calO_E = \calO_{E,X}$ if no confusion is likely to arise. First, we define lisse $E$-sheaves.

\begin{definition} A lisse $E$-sheaf (or $E$-local system) is a sheaf $L$ of $E$-modules on $X_\proet$ such that $L$ is locally free of finite rank. Similarly, a lisse $\calO_E$-sheaf, or $\calO_E$-local system, is a sheaf $M$ of $\calO_E$-modules on $X_\proet$ such that $M$ is locally free of finite rank over $\calO_E$. Let $\Loc_X(E)$, resp. $\Loc_X(\calO_E)$, be the corresponding categories. 
\end{definition}

For any discrete ring $R$, we also have the category $\Loc_X(R)$ consisting of sheaves of $R$-modules on $X_\proet$ which are locally free of finite rank over $R$. In fact, this category is just the classical one defined using $X_\et$, cf. Corollary \ref{c:LocSysDiscrete}. Our first aim is to show that our definitions coincide with the usual definitions of lisse sheaves. This amounts to the following proposition.

\begin{proposition}\label{p:LisseSheaves} \ 
\begin{enumerate}
\item If $E$ is a finite extension of $\Q_\ell$, with uniformizer $\varpi$, then the functor
\[
M\mapsto (M/\varpi^nM)_n: \Loc_X(\calO_E)\to \lim_n \Loc_X(\calO_E/\varpi^n\calO_E)
\]
is an equivalence of categories.
\item For general $E$, lisse $\calO_E$-sheaves satisfy descent for pro-\'etale covers.
\item If $X$ is qcqs, the functor
\[
\colim_{F\subset E} \Loc_X(\calO_F)\to \Loc_X(\calO_E)
\]
is an equivalence of categories, where $F$ runs through finite extensions of $\Q_\ell$ contained in $E$.
\item If $X$ is qcqs, the functor
\[
M\mapsto L=M[\ell^{-1}]: \Loc_X(\calO_E)[\ell^{-1}]\to \Loc_X(E)
\]
is fully faithful.
\item Lisse $E$-sheaves satisfy descent for pro-\'etale covers.
\item Let $L$ be a lisse $E$-sheaf on $X$. Then there is an \'etale cover $Y\to X$ such that $L|_Y$ lies in the essential image of the functor from (4).
\end{enumerate}
\end{proposition}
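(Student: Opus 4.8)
The plan is to prove Proposition \ref{p:LisseSheaves} statement by statement, since each part is a local-to-global or descent-type assertion that can be attacked separately, and later parts use earlier ones.

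\textbf{Parts (1)--(3): reduction to the finite-level and finitely-ramified cases.} For (1), I would first check that the functor is well-defined: if $M$ is locally free of finite rank over $\calO_{E,X} = \widehat{\calO}_E$, then modulo $\varpi^n$ it is locally free over $\calO_E/\varpi^n\calO_E$, and the reductions form a compatible system. Full faithfulness: a map $M \to M'$ is the same as a global section of $\underline{\Hom}_{\widehat{\calO}_E}(M,M')$, which is itself (locally, hence by descent globally) a finite free $\widehat{\calO}_E$-module, so the completeness statement of \S\ref{ss:ConsProet} (e.g. $\R\Gamma(U,\widehat{\calO}_E) = \lim \R\Gamma(U,\calO_E/\varpi^n)$, using repleteness of $X_\proet$) identifies $\Hom(M,M')$ with $\lim_n \Hom(M/\varpi^n, M'/\varpi^n)$. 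Essential surjectivity: given a compatible system $(M_n)$, set $M = \lim_n M_n$; pro-\'etale locally each $M_n$ is free and, because there are enough w-contractible objects (Lemma \ref{lem:cwcontractiblecover}) on which $\R^1\lim$ vanishes and maps lift, one can choose a w-contractible cover $U$ on which all the $M_n|_U$ are simultaneously trivialized compatibly, so $M|_U$ is free of the right rank; then $M$ is locally free, and $M/\varpi^n \simeq M_n$ by repleteness. For (2), pro-\'etale descent for $\Loc_X(\calO_E)$ with $E$ finite follows from (1) together with the known \'etale (hence pro-\'etale, by Corollary \ref{c:LocSysDiscrete}) descent for $\Loc_X(\calO_E/\varpi^n)$, applied compatibly in $n$. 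For (3), the functor is fully faithful because $\calO_{E,X} = \colim_F \calO_{F,X}$ is a filtered colimit and, on a qcqs object, $\Hom$ out of a finitely presented (locally free) module commutes with such colimits; essential surjectivity is the statement that a locally free $\colim_F \calO_{F,X}$-module of finite rank is, pro-\'etale locally and then by the already-established descent, defined over some finite level $F$ — the trivialization data on a w-contractible cover lives in $\colim_F$, hence at a finite stage because there are finitely many matrix entries.

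\textbf{Parts (4) and (5).} For (4), full faithfulness of $\Loc_X(\calO_E)[\ell^{-1}] \to \Loc_X(E)$: a morphism $L \to L'$ in the target is a global section of $\underline{\Hom}_{E}(L,L')$; locally $L,L'$ are free, so this is locally $\underline{\Hom}_{\calO_E}(M,M')[\ell^{-1}]$, and one checks that $\Hom_E(L,L') = \Hom_{\calO_E}(M,M')[\ell^{-1}]$ using that $\Hom_{\calO_E}(M,M')$ is a locally free $\calO_E$-module of finite rank and that $E_X = \calO_{E,X}[\ell^{-1}]$ (Lemma \ref{l:IdentConstants}) together with the compatibility of global sections with the filtered colimit defining inversion of $\ell$ on qcqs objects. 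For (5), pro-\'etale descent for lisse $E$-sheaves: given a pro-\'etale cover and a lisse $E$-sheaf with descent datum, one does not directly have an $\calO_E$-lattice, but descent is a statement about sheaves of $E$-modules and can be checked directly — a locally-on-the-cover free $E$-module with descent datum glues to a sheaf of $E$-modules, and it remains locally free because local freeness can be checked pro-\'etale locally.

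\textbf{Part (6), the main obstacle.} This is the substantive point: a lisse $E$-sheaf $L$ need not admit an $\calO_E$-lattice pro-\'etale locally in an obvious way — but it does \'etale locally. The plan: we may assume $X$ connected (there are finitely many clopen components \'etale locally, but actually we just work componentwise) and, after an \'etale cover, that $L$ is ``constant'' in the sense that on some w-contractible pro-\'etale cover $Y \to X$ it is trivial, i.e. $L|_Y \simeq E^n|_Y$. By (3) we may assume $E$ is a finite extension with uniformizer $\varpi$. The obstruction to finding a lattice is a cohomology class measuring the failure of the transition automorphisms (valued in $\GL_n(E)$ over $Y\times_X Y$, etc.) to preserve a common lattice; the key input is that $\GL_n(\calO_E)$ is open in $\GL_n(E)$ and that, over a connected topologically-reasonable base, a lisse $E$-sheaf is classified by a continuous representation of $\pi_1^\proet$ — but avoiding the fundamental group, the cleaner route is: choose any pro-\'etale-local lattice, look at where the finitely many transition matrices and their inverses have entries in $\calO_E$; this is a clopen condition defining a pro-\'etale cover, and by a compactness/boundedness argument (the matrices, living over a \emph{qcqs} scheme after \'etale localization, have bounded denominators on a suitable \'etale refinement) one finds an \'etale cover $Y\to X$ over which the lattice is defined, i.e. $L|_Y$ comes from $\Loc_Y(\calO_E)[\ell^{-1}]$. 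I expect the genuine difficulty to be exactly this boundedness-of-denominators step: showing that after passing to a suitable \'etale (not merely pro-\'etale) cover, the $\GL_n(E)$-valued cocycle can be conjugated into $\GL_n(\calO_E)$, which is where one must use that $L$ is lisse (finite rank, locally free) and that \'etale-locally constant sheaves already suffice at each torsion level $\calO_E/\varpi^n$ by Corollary \ref{c:LocSysDiscrete}, gluing these compatibly via the completeness machinery of \S\ref{subsec:derivedcompfadic}.
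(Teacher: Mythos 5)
Parts (1)--(5) of your proposal are correct and essentially what the paper intends: the paper dismisses (1), (2) and (5) as easy, and its proofs of (3) and (4) are exactly your route (internal $\Hom$'s compatible with the colimit/localization, $\Hom$'s out of finitely presented objects commuting with filtered colimits on a qcqs base, and for essential surjectivity in (3) the observation that a lisse sheaf trivializes on a w-contractible qcqs cover, so the descent datum has finitely many matrix entries and is defined at a finite level).

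The genuine gap is in part (6), and you have flagged it yourself: the ``boundedness-of-denominators'' step is not an argument but a restatement of the claim. Concretely, after trivializing $L$ on a w-contractible cover $Y\to X$, the locus in $Y\times_X Y$ where the cocycle lies in $\GL_n(\calO_E)$ is indeed clopen, but it is a subset of $Y\times_X Y$ rather than a cover of $X$, and there is no reason it should be everything; nor does qcqs-ness help directly, since the image of a continuous map $\pi_0(Y\times_X Y)\to\GL_n(E)$, while compact, is not a subgroup, so compactness does not produce a common stable lattice. The paper's mechanism is different and avoids conjugating the cocycle altogether: consider the sheaf $F$ on $X_\proet$ sending $U$ to the set of $M\in\Loc_U(\calO_E)$ with $M\otimes_{\calO_E}E=L|_U$. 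To show $F$ is locally constant one may assume $L=E^n$, and then $F$ is \emph{represented by the constant sheaf} on the discrete set $S=\GL_n(E)/\GL_n(\calO_E)$ via $g\mapsto g\calO_E^n$: injectivity of $S\to F$ is clear, and surjectivity is a spreading-out argument (a lattice $M$ with $M_x=\calO_E^n$ at a point $x$ admits $n$ sections and $n$ dual sections near $x$ realizing this identification, so $M$ equals the standard lattice on a neighbourhood of $x$). Hence $F$ is locally constant, hence classical by Lemma \ref{l:Classical}, and a classical sheaf with a section over a pro-\'etale cover has a section over an \'etale cover, which is precisely assertion (6). This ``sheaf of lattices'' device is the idea missing from your write-up.
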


\begin{proof}
\begin{enumerate}
\item Easy and left to the reader.
\item This is clear.
\item For fully faithfulness, observe that one has obvious internal Hom's, which are compatible with extension of scalars. Thus, fully faithfulness follows from the observation that for an $\calO_F$-local system $M_F$ with base extensions $M_E$, $M_{F^\prime}$ for $F^\prime\subset E$ finite over $F$, $M_E = \colim M_{F^\prime}$ and $M_E(X) = \colim M_{F^\prime}(X)$ as $X$ is qcqs.

Now fix a qcqs w-contractible cover $Y\in X_\proet$, and describe $\Loc_X(\calO_E)$ in terms of descent data for $Y\to X$. Any lisse $\calO_E$-sheaf over $Y$ is necessarily trivial (and hence already defined over $\Z_\ell$), so that the categories of descent data are equivalent by fully faithfulness, using that $Y$ is still qcqs.
\item Both categories admit obvious internal Hom's, which are compatible with the functor $M\mapsto M[\ell^{-1}]$. Thus the result follows from $M[\ell^{-1}](X) = M(X)[\ell^{-1}]$, which is true as $X$ is qcqs.
\item This is clear.
\item Consider the sheaf $F$ on $X_\proet$ taking any $U\in X_\proet$ to the set of $M\in \Loc_U(\calO_E)$ with $M\otimes_{\calO_E} E = L$. We claim that $F$ is locally constant on $X_\proet$. To prove this, we can assume that $L = E^n$ is trivial. We show more precisely that in this case, $F$ is represented by (the constant sheaf associated with) the discrete set $S = \GL_n(E) / \GL_n(\calO_E)$, via mapping $g\in S$ to $M_g = g \calO_E^n$. Clearly, the map $S\to F$ is injective. Let $x\in X$ be any point. For any $M\in \Loc_X(\calO_E)$ with $M\otimes_{\calO_E} E = L$, the fibre $M_x$ is a $\calO_E$-lattice in $L_x = E^n$. Thus, by applying an element of $\GL_n(E)$, we may assume that $M_x = \calO_E^n$. This gives $n$ sections $m_{1,x},\ldots,m_{n,x}\in M_x$, which are defined over an open neighborhood of $x$; upon replacing $X$ by a neighborhood of $x$, we may assume that they are (the images of) global sections $m_1,\ldots,m_n\in M$. Similarly, one can assume that there are $n$ sections $m_1^\ast,\ldots,m_n^\ast\in M^\ast = \Hom_{\calO_E} (M,\calO_E)$ whose images in $M_x^\ast$ are the dual basis to $m_{1,x},\ldots,m_{n,x}$. This extends to an open neighborhood, so that $M = \calO_E^n$ in a neighborhood of $x$, proving surjectivity of $S\to F$.

Thus, $F$ is locally constant on $X_\proet$. In particular, it is locally classical, and therefore classical itself by Lemma \ref{l:Classical}. As there is a pro-\'etale cover $Y\to X$ with $F(Y)\neq \emptyset$, it follows that there is also an \'etale such cover, finishing the proof. \qedhere
\end{enumerate}
\end{proof}

\begin{corollary}\label{c:LisseSheavesAbelian} If $X$ is topologically noetherian, then for any morphism $f: L\to L^\prime$ in $\Loc_X(E)$, the kernel and cokernel of $f$ are again in $\Loc_X(E)$. In particular, $\Loc_X(E)$ is abelian.
\end{corollary}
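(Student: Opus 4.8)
The plan is to prove directly that $\ker f$ and $\coker f$, formed in the abelian category of sheaves of $E_X$-modules on $X_\proet$, are locally free of finite rank; the "abelian'' assertion then follows formally, since a full additive subcategory of an abelian category closed under kernels and cokernels is itself abelian (images and coimages are computed in the ambient category and automatically agree). The preliminary reductions are routine. By Lemma \ref{l:topnoeth}(1), $X$ has finitely many connected components, each clopen; restricting to them (restriction is exact, and lisseness is pro-\'etale local) we may assume $X$ connected. Next, by Proposition \ref{p:LisseSheaves}(5) lisse $E$-sheaves satisfy pro-\'etale descent, and pullback along a pro-\'etale cover is exact, so it suffices to find a pro-\'etale cover $p\colon Y\to X$ on which $\ker(p^*f)=(\ker f)|_Y$ and $\coker(p^*f)=(\coker f)|_Y$ are lisse; since $X$ is quasicompact we may take $Y$ qcqs with $p^*L\cong E_Y^n$ and $p^*L'\cong E_Y^m$ free (of constant ranks, as $X$ is connected). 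Then $\pi_0(Y)$ is a profinite set, $\Gamma(Y,E_Y)=\Map_\cont(\pi_0(Y),E)$, and $p^*f$ is a continuous map $A\colon \pi_0(Y)\to M_{m\times n}(E)$.

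The heart of the argument is to show that $f$ has \emph{constant rank}, i.e. that $\rho\colon X\to \Z_{\ge 0}$, $x\mapsto \rk_E\bigl(f_{\bar x}\colon L_{\bar x}\to L'_{\bar x}\bigr)$, is constant (the stalks being finite free $E$-modules). I would establish this in two steps. First, $\rho$ is lower semicontinuous: $p^{-1}(\{\rho\ge r\})$ is the locus in $Y$ where some $r\times r$ minor of $A$ is nonzero, which is open because each minor is a continuous $E$-valued function and $E\smallsetminus\{0\}$ is open in the Hausdorff space $E$; openness descends along the fpqc map $p$ by the submersivity fact used in the proof of Lemma \ref{l:ExSheafTopSpace}. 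Second, $\rho$ is constant along specializations $x\leadsto x'$: passing to an \'etale neighbourhood over which $L,L'$ arise from lisse $\calO_E$-sheaves (Proposition \ref{p:LisseSheaves}(6)) and lifting the specialization ($\acute{\mathrm{e}}$tale maps are flat, hence satisfy going-down), the cospecialization isomorphisms for locally constant sheaves of $\calO_E/\varpi^k$-modules assemble, in the limit over $k$ and after inverting $\ell$, into compatible isomorphisms $L_{\bar x}\cong L_{\bar x'}$, $L'_{\bar x}\cong L'_{\bar x'}$ intertwining $f$, whence $\rho(x)=\rho(x')$. Since $X$ is topologically noetherian it has finitely many irreducible components, each with a generic point, and every point specializes from the generic point of a component containing it; as $X$ is connected and there are only finitely many components (adjacent ones sharing a point), all the generic values of $\rho$ coincide, so $\rho\equiv r$ for a single integer $r$.

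With constant rank in hand I would conclude by linear algebra over $R:=\Map_\cont(\pi_0(Y),E)$. The $r\times r$ minors of $A$ now have no common zero while all $(r+1)\times(r+1)$ minors vanish identically. Since $\pi_0(Y)$ is profinite, the finite open cover by the non-vanishing loci of the $r\times r$ minors refines to a finite clopen partition $\pi_0(Y)=\bigsqcup_l S_l$ on each of which some $r\times r$ minor, being continuous, nonzero, and defined on a compact set, is bounded away from $0$, hence invertible in $\Map_\cont(S_l,E)$. Over each $S_l$, Gaussian elimination (using the vanishing of the larger minors to kill the remaining entries) transforms $A$ into $\bigl(\begin{smallmatrix} I_r & 0 \\ 0 & 0\end{smallmatrix}\bigr)$, so $\ker(p^*f)$ and $\coker(p^*f)$ become free of ranks $n-r$ and $m-r$ over the corresponding clopen decomposition of $Y$, i.e. lisse on $Y_\proet$. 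Descending along $p$ via Proposition \ref{p:LisseSheaves}(5) gives $\ker f,\coker f\in\Loc_X(E)$.

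I expect the main obstacle to be the second step above: making precise the cospecialization isomorphisms for lisse $E$-sheaves and their compatibility with the morphism $f$. The statement is classical for finite torsion coefficients, but one must track it through the inverse limit over $\calO_E/\varpi^k$, through the inversion of $\ell$, and through the reduction to the $\calO_E$-coefficient case, which is only available \'etale-locally — and one must check that this reduction is compatible with the chosen specialization. By contrast, the lower-semicontinuity and the descent of open/closed loci along $p$ are comparatively soft once one invokes the topological input from Lemma \ref{l:ExSheafTopSpace}, and the final linear-algebra step is elementary.
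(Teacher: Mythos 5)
Your proposal is correct, but it takes a genuinely different route from the paper. The paper's proof is short: after passing to an \'etale cover on which $L$ and $L'$ admit $\calO_E$-lattices (Proposition \ref{p:LisseSheaves} (6)) and to a connected component, it invokes the equivalence between $\Loc_X(\calO_E)$ and continuous representations of the profinite group $\pi_1^{\et}(X,\bar x)$ on finite free $\calO_E$-modules, so that $f$ becomes a morphism of finite-dimensional continuous $E$-representations of a compact group; that category visibly has kernels and cokernels (invariant lattices exist by compactness), and one translates back. You instead work directly over the ring $\Map_\cont(\pi_0(Y),E)$ on a trivializing pro-\'etale cover, prove that $f$ has constant rank, and put the matrix in normal form by Gaussian elimination. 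Two remarks on your version. First, your lower-semicontinuity step is not actually load-bearing: once you know $\rho$ is constant along specializations, the generic points of the finitely many irreducible components and the connectedness of $X$ already force $\rho$ to be constant (alternatively, constancy of $\rho$ on each connected member of the \'etale cover from Proposition \ref{p:LisseSheaves} (6), whose images are open, suffices). Second, the cospecialization step you correctly flag as delicate is, once unwound, exactly the paper's key input in disguise: the functorial identification of fibre functors at two geometric points of a connected scheme, i.e. the monodromy description of locally constant sheaves; you also need to note that $f$ only preserves the lattices up to a power of $\ell$, which is harmless for rank. What your approach buys is an explicit local normal form $\bigl(\begin{smallmatrix} I_r & 0 \\ 0 & 0\end{smallmatrix}\bigr)$ for $f$ and explicit freeness of kernel and cokernel of ranks $n-r$ and $m-r$; what it costs is the bookkeeping through $\varpi$-adic limits and inversion of $\ell$ that the representation-theoretic argument absorbs in one stroke.
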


\begin{proof} After passage to an \'etale cover, we may assume that there are lisse $\calO_E$-sheaves $M$, $M^\prime$ and a map $g: M\to M^\prime$ giving rise to $f: L\to L^\prime$ by inverting $\ell$. Moreover, we may assume that $X$ is connected; fix a geometric base point $\bar{x}\in X$. Then $\Loc_X(\calO_E)$ is equivalent to the category of representations of $\pi_1(X,\bar{x})$ on finite free $\calO_E$-modules. It follows that $f: L\to L^\prime$ is classified by a morphism of representations of $\pi_1(X,\bar{x})$ on finite-dimensional $E$-vector spaces. The latter category obviously admits kernels and cokernels, from which one easily deduces the claim.
\end{proof}

Next, we consider constructible sheaves. For this, we restrict to the case of topologically noetherian $X$. Note that the construction of $E_X$ is compatible with pullback under locally closed immersions, i.e. $E_Y = E_X|_Y$ for $Y\subset X$ locally closed. In the topologically noetherian case, any locally closed immersion is constructible.

\begin{definition} A sheaf $F$ of $E$-modules on $X_\proet$ is called constructible if there exists a finite stratification $\{X_i\to X\}$ such that $F|_{X_i}$ is lisse.
\end{definition}

\begin{lemma}\label{c:ConstrSheavesAbelian} For any morphism $f: F\to F^\prime$ of constructible $E$-sheaves, the kernel and cokernel of $f$ are again constructible. In particular, the category of constructible $E$-sheaves is abelian.
\end{lemma}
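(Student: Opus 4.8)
The plan is to reduce the statement to the already-established lisse case, Corollary \ref{c:LisseSheavesAbelian}, by working one stratum at a time, exploiting that the formation of kernels and cokernels of sheaves of $E$-modules commutes with pullback along locally closed immersions.

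First I would pass to a common stratification. Given $f\colon F\to F'$, choose finite stratifications $\{X_i\to X\}$ and $\{X'_j\to X\}$ witnessing the constructibility of $F$ and $F'$ respectively; since $X$ is topologically noetherian, Lemma \ref{l:topnoeth}(1) ensures that the common refinement $\{X_i\cap X'_j\hookrightarrow X\}$ is again a finite stratification by locally closed (hence automatically constructible) subsets, each of which is itself topologically noetherian. Over each piece $W$ of this refinement both $F|_W$ and $F'|_W$ lie in $\Loc_W(E)$: restriction of a locally free $E$-module of finite rank along a locally closed immersion is again one, and $E_X|_W=E_W$ as recalled at the beginning of \S\ref{ss:LadicSheaves}. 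Relabelling, I may therefore assume there is a single finite stratification $\{k_i\colon X_i\hookrightarrow X\}$ with $F|_{X_i},F'|_{X_i}\in\Loc_{X_i}(E)$.

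Next I would observe that for any locally closed immersion $k\colon W\hookrightarrow X$, written as a composite of an open and a closed immersion, the pullback $k^*$ on sheaves of $E$-modules is exact: this is clear for an open immersion, and for a closed immersion it follows from Corollary \ref{cor:closedimmpullbackadjoints}, which in fact gives commutation with arbitrary small limits and colimits. In particular $k^*$ commutes with kernels and cokernels, so $k_i^*\ker(f)=\ker(f|_{X_i})$ and $k_i^*\coker(f)=\coker(f|_{X_i})$ for every $i$. Since $f|_{X_i}$ is a morphism in $\Loc_{X_i}(E)$ and $X_i$ is topologically noetherian, Corollary \ref{c:LisseSheavesAbelian} shows that $\ker(f|_{X_i})$ and $\coker(f|_{X_i})$ are lisse. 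Hence $\ker(f)$ and $\coker(f)$ restrict to lisse $E$-sheaves on each stratum, so they are constructible; applying the same reasoning to $F'\to\coker(f)$ shows that $\im(f)=\ker(F'\to\coker(f))$ is constructible as well.

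Finally, to conclude that the category of constructible $E$-sheaves is abelian I would note that it is a full subcategory of the abelian category of sheaves of $E_X$-modules on $X_\proet$, containing $0$, closed under finite direct sums (refine stratifications), and — by the previous paragraph — under kernels, cokernels and images. From this it is formal that the subcategory is abelian: any monomorphism $f\colon F\hookrightarrow F'$ of constructible sheaves is the kernel of the map $F'\to\coker(f)$ to a constructible sheaf, hence a kernel inside the subcategory, and dually every epimorphism is a cokernel. I do not expect a serious obstacle; the only point genuinely requiring care is the exactness of $k^*$ along closed immersions, where one must invoke the good behaviour of the pro-\'etale topology (Corollary \ref{cor:closedimmpullbackadjoints}) instead of arguing as in the classical \'etale setting — but that has already been established.
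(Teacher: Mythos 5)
Your proof is correct and is exactly the argument the paper intends: the paper's own proof simply says "after passing to a suitable stratification, this follows from Corollary \ref{c:LisseSheavesAbelian}," and your write-up supplies the details (common refinement of stratifications, exactness of pullback along locally closed immersions, reduction to the lisse case on each stratum). No gaps.
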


\begin{proof} After passing to a suitable stratification, this follows from Corollary \ref{c:LisseSheavesAbelian}.
\end{proof}

In particular, the following definition is sensible.

\begin{definition} A complex $K\in D(X_\proet,E)$ is called constructible if it is bounded and all cohomology sheaves are constructible. Let $D_\cons(X_\proet,E)$ denote the corresponding full subcategory of $D(X_\proet,E)$.
\end{definition}

\begin{corollary}\label{c:ConsTriangulated} The category $D_\cons(X_\proet,E)$ is triangulated.
\end{corollary}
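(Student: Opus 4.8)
The plan is to deduce this from the fact that the class $\Cons_X(E)$ of constructible $E$-sheaves is a weak Serre subcategory of the abelian category $\Ab(X_\proet, E)$ of sheaves of $E_X$-modules, i.e.\ that it is closed under kernels, cokernels and extensions; granting this, the full subcategory of $D(X_\proet,E)$ of bounded complexes with all cohomology sheaves in $\Cons_X(E)$ is automatically a strictly full triangulated subcategory. One can also see this last implication by hand: $D_\cons(X_\proet,E)$ is obviously closed under shifts and under isomorphism, and given $f\colon K\to L$ between constructible complexes the cone $\Cone(f)$ is bounded and its long exact cohomology sequence presents each $\calH^i(\Cone(f))$ as an extension
\[ 0 \to \coker\bigl(\calH^i(f)\bigr) \to \calH^i\bigl(\Cone(f)\bigr) \to \ker\bigl(\calH^{i+1}(f)\bigr) \to 0, \]
whose outer terms lie in $\Cons_X(E)$, so the weak Serre property forces $\Cone(f)\in D_\cons(X_\proet,E)$.

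Closure of $\Cons_X(E)$ under kernels and cokernels is precisely Lemma \ref{c:ConstrSheavesAbelian} (with Corollary \ref{c:LisseSheavesAbelian} ensuring these agree with kernels and cokernels computed in $\Ab(X_\proet,E)$). For closure under extensions, take an exact sequence $0\to F'\to F\to F''\to 0$ in $\Ab(X_\proet,E)$ with $F',F''$ constructible. Passing to a common refinement of the two stratifications trivializing $F'$ and $F''$, and using that pullback along a locally closed immersion is exact and carries lisse sheaves to lisse sheaves, I would reduce to showing: an extension $0\to L'\to L\to L''\to 0$ of lisse $E$-sheaves is lisse. This I would check pro-\'etale locally. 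By Proposition \ref{prop:proetlocallycontractible} it suffices to trivialize $L$ over each w-contractible $U\in X_\proet$ from the cover supplied by Lemma \ref{lem:cwcontractiblecover}. Over such $U$ the functor $\Gamma(U,-)$ is exact, so $H^i(U_\proet,-)=0$ for $i>0$; since $L''|_U$ is locally free of finite rank, $\underline{\R\Hom}_{E_U}(L''|_U, L'|_U)\simeq L'|_U\otimes(L''|_U)^\vee$ is a sheaf concentrated in degree $0$, whence $\Ext^1_{E_U}(L''|_U,L'|_U)=H^1\bigl(U_\proet,\,L'|_U\otimes(L''|_U)^\vee\bigr)=0$. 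Thus the extension splits over $U$, and $L|_U\simeq L'|_U\oplus L''|_U$ is locally free of finite rank; as the w-contractible objects cover $X_\proet$, it follows that $L$ is locally free of finite rank, i.e.\ lisse. This yields closure under extensions, and hence the corollary.

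The one genuinely non-formal point is the last claim, that an extension of lisse $E$-sheaves is again lisse; here it is essential to argue pro-\'etale locally rather than globally, since such extensions need not split over $X$ (already $H^1(X_\proet,\Q_{\ell,X})$ is typically nonzero, e.g.\ for $X=\G_m$), so no global cohomological vanishing is available. Everything else is routine bookkeeping with the long exact sequence together with the already-established Lemma \ref{c:ConstrSheavesAbelian} and Corollary \ref{c:LisseSheavesAbelian}.
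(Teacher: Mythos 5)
Your proof is correct and follows the same route as the paper, which simply invokes Lemma \ref{c:ConstrSheavesAbelian} together with the observation that constructibility is stable under extensions. The only added content is your justification of that stability (reduction to lisse sheaves via a common refinement of stratifications, then splitting the extension over w-contractible objects where $H^1$ vanishes), which is exactly the intended argument and is carried out correctly.
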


\begin{proof} This follows from Lemma \ref{c:ConstrSheavesAbelian}, also observing stability of constructibility under extensions.
\end{proof}

Also recall the full triangulated subcategories $D_\cons(X_\proet,\calO_E)\subset D(X_\proet,\calO_E)$ for $E/\Q_\ell$ finite defined in Subsection \ref{ss:ConsProet}. Under our assumption that $X$ is topologically noetherian, these can be defined similarly to $D_\cons(X_\proet,E)$, cf. Proposition \ref{prop:conscons}. More precisely, we have the following proposition.

\begin{definition} For general $E$, a constructible $\calO_E$-sheaf on the topologically noetherian scheme $X$ is a sheaf $C$ of $\calO_E$-modules such that there exists a finite stratification $\{X_i\to X\}$ such that $C|_{X_i}$ is locally isomorphic to $\Lambda\otimes_{\calO_E} \calO_{E,X}$ for a finitely presented $\calO_E$-module $\Lambda$. Let $\Cons_X(\calO_E)$ be the corresponding category.
\end{definition}

\begin{proposition}\label{p:ConstrSheaves}\ 
\begin{enumerate}
\item The category of constructible $\calO_E$-sheaves is closed under kernels, cokernels, and extensions.
\item The functor
\[
\colim_{F\subset E} \Cons_X(\calO_F)\to \Cons_X(\calO_E)
\]
is an equivalence of categories, where $F$ runs through finite extensions of $\Q_\ell$.
\item If $E$ is a finite extension of $\Q_\ell$, then an object $K\in D(X_\proet,\calO_E)$ is constructible if and only if it is bounded and all cohomology sheaves are constructible.
\end{enumerate}
\end{proposition}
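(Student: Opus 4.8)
<br>

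The plan is to prove the three parts of Proposition~\ref{p:ConstrSheaves} in order, reducing each to already-established results for finite coefficient rings and for lisse sheaves.

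\textbf{Part (1).} For $E/\Q_\ell$ finite, this follows from the same argument as Corollary~\ref{c:ConstrSheavesAbelian}: given a map $f:C\to C'$ of constructible $\calO_E$-sheaves, pass to a common finite stratification over which both $C$ and $C'$ are lisse $\calO_E$-sheaves; over a connected stratum with geometric point $\bar x$, lisse $\calO_E$-sheaves are representations of $\pi_1(X_i,\bar x)$ on finite free $\calO_E$-modules (using Corollary~\ref{c:LocSysDiscrete} modulo $\varpi^n$ and Proposition~\ref{p:LisseSheaves}(1)). The category of $\pi_1$-representations on finitely presented $\calO_E$-modules admits kernels, cokernels, and extensions — the key point being that kernel/cokernel of a map of finite free modules over the noetherian ring $\calO_E$ is again finitely presented — so the resulting sheaves are constructible after possibly refining the stratification. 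Closure under extensions is immediate from the definition (refine the two stratifications to a common one). For general $E$, one deduces it from part (2).

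\textbf{Part (2).} Full faithfulness of $\colim_{F\subset E}\Cons_X(\calO_F)\to \Cons_X(\calO_E)$: for $C,C'\in\Cons_X(\calO_F)$ with base extensions $C_E, C'_E$, one has internal $\Hom$-sheaves compatible with $\calO_F\to\calO_{F'}$, and since $X$ is topologically noetherian it is qcqs with finitely many connected components, so $\Hom_{\calO_{E}}(C_E,C'_E) = \colim_{F'}\Hom_{\calO_{F'}}(C_{F'},C'_{F'})$; here one uses that each stratum is again topologically noetherian by Lemma~\ref{l:topnoeth}(1), hence qcqs, so global sections commute with the filtered colimit $\calO_{E,X}=\colim_{F'}\calO_{F',X}$ of Lemma~\ref{l:IdentConstants}(2). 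Essential surjectivity: given $C\in\Cons_X(\calO_E)$, choose a finite stratification $\{X_i\to X\}$ trivializing $C$; over each $X_i$, $C|_{X_i}$ is pro-\'etale-locally $\Lambda\otimes_{\calO_E}\calO_{E,X}$ for a finitely presented $\calO_E$-module $\Lambda$, and such $\Lambda$ is defined over some finite $F\subset E$ since $\calO_E=\colim_F \calO_F$ is a filtered colimit of noetherian rings. Then, arguing as in Proposition~\ref{p:LisseSheaves}(3) via a w-contractible cover of $X_i$ (over which the descent data for $C|_{X_i}$ are already defined over some finite $F$), one descends $C|_{X_i}$ to a constructible $\calO_{F}$-sheaf; taking $F$ large enough to work for all finitely many strata simultaneously gives the preimage.

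\textbf{Part (3).} For $E/\Q_\ell$ finite: if $K\in D(X_\proet,\calO_E)$ is constructible in the sense of \S\ref{ss:ConsProet} — i.e., $\varpi$-adically complete with $K\otimes_{\calO_E}^L\calO_E/\varpi$ classically constructible — then by Proposition~\ref{prop:conscons} (applicable since $X$ is topologically noetherian) $K$ is locally constant with perfect values along a finite stratification, which immediately makes it bounded with constructible cohomology sheaves in the present sense. Conversely, given $K$ bounded with constructible cohomology sheaves, refine to a stratification over which each $\calH^i(K)$ is lisse; then $K$ is complete by Lemma~\ref{lem:constrestrictopen} together with the standard distinguished triangles (completeness being a pro-\'etale-local property), and $K\otimes_{\calO_E}\calO_E/\varpi$ is classically constructible by Lemma~\ref{lem:conslocalproet} applied stratum-by-stratum, so Proposition~\ref{prop:conscons} again identifies $K$ with an object of $D_\cons(X_\proet,\widehat{\calO}_E)=D_\cons(X_\proet,\calO_E)$.

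\textbf{Expected main obstacle.} The routine parts are the internal-$\Hom$ and colimit-interchange bookkeeping; the genuinely delicate point is essential surjectivity in part (2) — ensuring that the \emph{descent datum} for a constructible $\calO_E$-sheaf over each stratum, not merely its fibers, is defined over a single finite subextension $F\subset E$. This is handled exactly as in the lisse case (Proposition~\ref{p:LisseSheaves}(3)): pull back to a qcqs w-contractible cover of the (topologically noetherian, hence qcqs) stratum where the sheaf becomes a constant sheaf attached to a finitely presented $\calO_E$-module, observe that such a module and the glueing isomorphism over the double cover both descend to some finite $F$ by finite presentation, and then use full faithfulness (already proven) to descend the object itself. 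One must also check along the way that all strata remain topologically noetherian, which is Lemma~\ref{l:topnoeth}, and that the finitely many strata can be treated with a common $F$.
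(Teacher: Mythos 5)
Your overall strategy coincides with the paper's: part (1) is proved as in Corollary \ref{c:ConstrSheavesAbelian}, part (2) as in Proposition \ref{p:LisseSheaves}\,(3) (with the descent datum over a w-contractible cover of each stratum descending to a finite subextension by finite presentation), and part (3) by comparing the two definitions through Proposition \ref{prop:conscons}. Parts (1) and (2) are correct as written.

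In part (3) the converse direction contains a genuine slip: you ``refine to a stratification over which each $\calH^i(K)$ is lisse.'' This is not possible in general. A constructible $\calO_E$-sheaf is only locally isomorphic to $\Lambda\otimes_{\calO_E}\calO_{E,X}$ for a finitely \emph{presented} $\calO_E$-module $\Lambda$, and $\Lambda$ may have torsion (e.g.\ $\Lambda=\calO_E/\varpi$), in which case no refinement of the stratification makes the sheaf locally free, so Proposition \ref{prop:conscons} cannot be invoked in the form you state. The repair is short and is what the paper does: since $D_\cons(X_\proet,\calO_E)$ is a full triangulated subcategory, reduce to a single constructible sheaf $C$ placed in degree $0$; locally along a stratification $C\simeq\Lambda\otimes_{\calO_E}\calO_{E,X}$, and since $\calO_E$ is a discrete valuation ring, $\Lambda$ admits a finite free resolution $P^\bullet$; flatness of $\calO_{E,X}$ over $\calO_E$ (torsion-freeness over a DVR, checked on w-contractible objects where sections are $\Map_{\cont}(S,\calO_E)\subset\prod_S\calO_E$) then gives $C\simeq\widehat{\underline{P^\bullet}}$ locally, which is complete with classically constructible reduction modulo $\varpi$, hence lies in $D_\cons(X_\proet,\calO_E)$. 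Note that the same flatness statement is what justifies the ``immediately'' in your forward direction, where you implicitly use $\calH^i(\widehat{\underline{L}})\simeq H^i(L)\otimes_{\calO_E}\calO_{E,X}$ to see that the cohomology sheaves of a locally constant complex with perfect values are constructible $\calO_E$-sheaves.
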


\begin{proof}\begin{enumerate}
\item The proof is similar to the proof of Lemma \ref{c:ConstrSheavesAbelian}.
\item The proof is similar to the proof of Proposition \ref{p:LisseSheaves} (3).
\item By (1), the set $D_\cons^\prime(X_\proet,\calO_E)$ of $K\in D(X_\proet,\calO_E)$ which are bounded with all cohomology sheaves constructible forms a full triangulated subcategory. To show $D_\cons^\prime(X_\proet,\calO_E)\subset D_\cons(X_\proet,\calO_E)$, using that $D_\cons(X_\proet,\calO_E)\subset D(X_\proet,\calO_E)$ is a full triangulated subcategory, it suffices to prove that a constructible $\calO_E$-sheaf $C$ concentrated in degree $0$ belongs to $D_\cons(X_\proet,\calO_E)$. Passing to a stratification, we can assume that $C$ is locally isomorphic to $\Lambda\otimes_{\calO_E} \calO_{E,X}$ for a finitely presented $\calO_E$-module $\Lambda$. In this case, $\Lambda$ has a finite projective resolution, giving the result.

For the converse, we argue by induction on $q-p$ that $D_\cons^{[p,q]}(X_\proet,\calO_E)\subset D_\cons^\prime(X_\proet,\calO_E)$. Thus, if $K\in D_\cons^{[p,q]}(X_\proet,\calO_E)$, it is enough to show that $\calH^q(X)$ is a constructible $\calO_E$-sheaf. This follows easily from Proposition \ref{prop:conscons}.
\end{enumerate}
\end{proof}

In particular, for general $E$, we can define $D_\cons(X_\proet,\calO_E)\subset D(X_\proet,\calO_E)$ as the full triangulated subcategory of bounded objects whose cohomology sheaves are constructible $\calO_E$-sheaves.

\begin{lemma}\label{l:constructiblealmostcompactproetale} For any $K\in D_\cons(X_\proet,\calO_E)$, the functor $\R\Hom(K,-)$ commutes with arbitrary direct sums in $D^{\geq 0}(X_\proet,\calO_E)$.
\end{lemma}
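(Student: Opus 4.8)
The statement to prove is an analogue of Lemma~\ref{lem:constructiblealmostcompact} in the pro-\'etale, $\calO_E$-adic setting: for $K \in D_\cons(X_\proet,\calO_E)$ (with $X$ topologically noetherian), the functor $\R\Hom(K,-)$ commutes with arbitrary direct sums in $D^{\geq 0}(X_\proet,\calO_E)$. My plan is to reduce, via devissage along the triangulated structure, to the two building blocks of constructible complexes, handle the finite-extension case first, and then pass to general $E$ by the colimit description from Proposition~\ref{p:ConstrSheaves}(2).

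First I would reduce to the case where $E$ is a finite extension of $\Q_\ell$, say with uniformizer $\varpi$: by Proposition~\ref{p:ConstrSheaves}(2), any $K \in D_\cons(X_\proet,\calO_E)$ is pulled back (after refining a stratification) from some $D_\cons(X_\proet,\calO_F)$ with $F/\Q_\ell$ finite, and since $X$ is qcqs the relevant $\R\Hom$'s and direct sums are compatible with the colimit over $F$; so it suffices to treat the finite case. Next, using Corollary~\ref{c:ConsTriangulated}/Proposition~\ref{p:ConstrSheaves}(3) and the filtration of a constructible complex by shifts of $k_! L$ for $k : Y \hookrightarrow X$ a locally closed (constructible) immersion and $L$ lisse, together with the fact that the class of objects $M$ for which $\R\Hom(M,-)$ commutes with direct sums in $D^{\geq 0}$ is itself triangulated (and the property is local on $X_\proet$, checkable on a w-contractible cover), I would reduce to $K = k_! \widehat{\underline{\calO_E}}$ — equivalently, after further devissage using $k_! = g_* \circ f_!$ from Lemma~\ref{lem:complexeswithsupport}, to the two cases: (a) $k = j$ an open immersion, where $j_!$ is a left adjoint whose right adjoint $j^*$ preserves direct sums, so $\R\Hom(j_! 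N, -) \simeq \R\Hom(N, j_* -)$ reduces the claim to $U$ in place of $X$ after noting $j_*$ commutes with direct sums in $D^{\geq 0}$ (this uses the topologically noetherian hypothesis, which bounds cohomological dimension of affines \'etale-locally via Lemma~\ref{l:topnoeth}); and (b) $i = k$ a constructible closed immersion, where $i_! = i_*$, $i^* i_* \simeq \id$, and one uses the triangle $j_! j^* \to \id \to i_* i^*$.

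The technical heart is the case $K = \widehat{\calO_E}$ itself on an affine (or w-contractible) $X$, i.e. showing $\R\Gamma(X_\proet, -)$ — or more precisely $\R\Hom(\widehat{\calO_E},-)$ — commutes with direct sums when restricted to $D^{\geq 0}(X_\proet,\calO_E)$. Here I would argue as follows: $\widehat{\calO_E} = \lim_n \calO_E/\varpi^n$ with $\calO_E/\varpi^n$ finite discrete, so for $N \in D^{\geq 0}(X_\proet,\calO_E)$ we have $\R\Hom(\widehat{\calO_E},N) \simeq \R\lim_n \R\Hom(\calO_E/\varpi^n, N)$; but $\R\lim$ has cohomological dimension $\leq 1$ by repleteness (Proposition~\ref{prop:cdrlim}), and the individual terms $\R\Hom(\calO_E/\varpi^n,N)$ are computed by $\R\Gamma$ of the reduction mod $\varpi^n$, which is the pullback of an \'etale complex — so Lemma~\ref{lem:constructiblealmostcompact} (applied with $F = \calO_E/\varpi^n$, using the finite $F$-cohomological dimension of $X_\et$ granted by Lemma~\ref{l:topnoeth}(3)) gives that each $\R\Gamma(X_\proet, \nu^*(-))$ commutes with direct sums in $D^{\geq 0}$. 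One then interchanges $\R\lim_n$ with $\oplus$: since the complexes live in $D^{\geq 0}$ and each $\R\Hom(\calO_E/\varpi^n,-)$ raises connectivity by a bounded amount, the $\R\lim$ is, in each fixed cohomological degree, a finite limit over $n$ after truncation, so it commutes with the direct sum. I expect the main obstacle to be exactly this last interchange of $\R\lim_n$ and $\bigoplus$: one must carefully track connectivity (the $D^{\geq 0}$ hypothesis is essential, as the final \textbf{Remark} after Lemma~\ref{lem:constructiblealmostcompact} already warns) and invoke repleteness both to compute $\R\Hom(\widehat{\calO_E},-)$ as $\R\lim_n$ and to bound the cohomological dimension of that limit, whereas the purely \'etale input is already packaged in Lemma~\ref{lem:constructiblealmostcompact}.
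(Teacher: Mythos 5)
Your overall skeleton — devissage to pieces of the form $k_!L$ with $L$ lisse, separate treatment of open and constructible closed immersions, and a base case for the constant sheaf — is exactly the paper's intent: the paper's proof literally reads ``the same as for Lemma \ref{lem:constructiblealmostcompact}.'' However, your treatment of the technical heart contains a genuine error. The identity $\R\Hom(\widehat{\calO_E},N)\simeq \R\lim_n\R\Hom(\calO_E/\varpi^n,N)$ is not valid: $\R\Hom(-,N)$ converts \emph{colimits} in the first variable into limits, whereas $\widehat{\calO_E}=\lim_n\calO_E/\varpi^n$ is a limit, so no such formula is available. It is also unnecessary: since $\calO_{E,X}$ is the unit of the monoidal structure, $\R\Hom(\calO_{E,X},N)=\R\Gamma(X_\proet,N)$ on the nose. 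Worse, your evaluation of the individual terms is wrong: for an arbitrary $N\in D^{\geq 0}(X_\proet,\calO_E)$ (e.g.\ a large direct sum), the reduction mod $\varpi^n$ is \emph{not} the pullback of an \'etale complex — general pro-\'etale sheaves are not classical — so Lemma \ref{lem:constructiblealmostcompact}, which lives entirely on $X_\et$, cannot be invoked here. The correct input for the base case is the local weak contractibility of $\Shv(X_\proet)$ (Proposition \ref{prop:proetlocallycontractible}): for a coherent w-contractible $Y$, the functor $H^0(Y,-)$ is exact and commutes with \emph{all} direct sums, and for qcqs $X$ one descends along a hypercover by such $Y$, using the $D^{\geq 0}$ hypothesis to make the totalization degreewise finite. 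That is where the boundedness hypothesis enters, not in an interchange of $\R\lim_n$ with $\bigoplus$.

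Two smaller points. In your case (a), the right adjoint of $j_!$ is $j^*$, not $j_*$; the adjunction reads $\R\Hom(j_!N,-)\simeq\R\Hom(N,j^*(-))$, and $j^*$ commutes with direct sums for trivial reasons, so no cohomological-dimension input is needed there. Second, be careful with the initial reduction to finite $E$: the \emph{derived} colimit statement $\colim_F D_\cons(X_\proet,\calO_F)\simeq D_\cons(X_\proet,\calO_E)$ is proved in the paper \emph{using} the present lemma, so invoking it would be circular; the abelian-level Proposition \ref{p:ConstrSheaves}(2) is safe, but the reduction is in any case unnecessary, since finitely presented $\calO_E$-modules are perfect over the valuation ring $\calO_E$ for arbitrary algebraic $E$, so the devissage works uniformly.
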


\begin{proof} The proof is the same as for Lemma \ref{lem:constructiblealmostcompact}.
\end{proof}

Although a lisse $E$-sheaf does not always admit an integral structure as a lisse $\calO_E$-sheaf, it does always admit an integral structure as a constructible $\calO_E$-sheaf.

\begin{lemma}\label{l:lisseconsintegralstructure} Let $L$ be a lisse $E$-sheaf on the topologically noetherian scheme $X$. Then there exists a constructible $\calO_E$-sheaf $C$ such that $C\otimes_{\calO_E} E = L$.
\end{lemma}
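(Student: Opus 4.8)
The plan is to reduce to a statement about local systems and then descend to $\calO_F$ for a finite extension $F\subset E$, using the noetherian structure of $X$ to make the stratification finite. Concretely, I proceed by noetherian induction on $X$: it suffices to find a dense open $U\subset X$ together with a constructible $\calO_E$-sheaf $C_U$ on $U$ with $C_U\otimes_{\calO_E} E\cong L|_U$; then one extends $C_U$ by zero along $j:U\hookrightarrow X$ (recording that $j_!$ preserves constructibility by the locally-closed-immersion formalism) to get $j_!C_U$, observes that the adjunction map $j_!C_U\to$ (something restricting $L$) need not be defined globally, so instead one argues as follows: by the inductive hypothesis applied to $L|_Z$ for the closed complement $Z=X\setminus U$ with its reduced structure, there is a constructible $\calO_E$-sheaf $C_Z$ on $Z$ with $C_Z\otimes_{\calO_E}E\cong L|_Z$, and one chooses $C$ to be any extension of $j_!C_U$ by $i_*C_Z$ realizing the map $j_!C_U\to i_*i^*(j_!C_U)[1]$ appropriately — more simply, one takes $C = \ker(i_*C_Z' \to \text{(glueing term)})$, but the cleanest route is to observe that $\Loc$- and $\Cons$-data glue along the recollement $(j_!,j^*,i_*,i^*)$, so it is enough to produce the integral structure stratum by stratum and check the glueing maps are integral after passing to a further stratification.

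The key step is therefore the open-stratum case: $X$ connected, $L$ lisse. Here I use Proposition \ref{p:LisseSheaves}(6): after passing to an \'etale cover $Y\to X$, $L|_Y$ comes from a lisse $\calO_E$-sheaf $M_Y$. Since $Y\to X$ can be refined by a finite \'etale cover over a dense open (using that $X$ is topologically noetherian, so $Y\to X$ is finite \'etale over a dense open by Lemma \ref{l:topnoeth}), shrinking $X$ we may assume $Y\to X$ is finite \'etale, say with group action after passing to a Galois closure. Then $M_Y$ together with its descent datum for $Y\to X$ gives a lisse $\calO_E$-sheaf on $X$ provided the descent datum preserves the lattice; in general it only preserves the lattice up to a bounded denominator. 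To fix this, I pick a geometric point $\bar x$ and use that $\Loc_X(\calO_E)$ is the category of continuous representations of $\pi_1(X,\bar x)$ on finite free $\calO_E$-modules (as in the proof of Corollary \ref{c:LisseSheavesAbelian}), while $L$ corresponds to a finite-dimensional $E$-representation $\rho$ of $\pi_1(X,\bar x)$ whose restriction to the open normal subgroup $H=\pi_1(Y,\bar y)$ stabilizes a lattice $\Lambda_0$. Averaging $\Lambda_0$ over the finite quotient $\pi_1(X,\bar x)/H$ — i.e. taking $\Lambda = \sum_{g} g\Lambda_0$, a finite sum since the quotient is finite — produces a $\pi_1(X,\bar x)$-stable $\calO_E$-lattice $\Lambda\subset\rho$ when $E/\Q_\ell$ is finite. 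This $\Lambda$ is a lisse $\calO_E$-sheaf $M$ on $X$ with $M\otimes_{\calO_E}E\cong L$, hence in particular a constructible $\calO_E$-sheaf; this settles the open stratum.

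Finally I assemble the pieces: running the noetherian induction, I obtain a finite stratification $\{X_i\}$ of $X$ and lisse $\calO_E$-sheaves $M_i$ on $X_i$ with $M_i\otimes_{\calO_E}E\cong L|_{X_i}$; I then build $C$ by successive extensions along the recollement triangles for the strata (using Proposition \ref{p:ConstrSheaves}(1) that constructible $\calO_E$-sheaves are closed under extensions, and the locally-closed-immersion functoriality from \S\ref{ss:FunLocClosed} to make sense of the glueing maps), arranging at each stage that the glueing map is defined integrally after possibly refining the stratification further — this is possible because a map between lisse $E$-sheaves on a stratum, restricted to a small enough pro-\'etale (hence, by Lemma \ref{l:Classical}, \'etale) neighborhood inside that stratum, is the base change of a map of lisse $\calO_E$-sheaves up to clearing a denominator, and clearing a denominator is harmless since we only need $C\otimes_{\calO_E}E\cong L$, not a chosen isomorphism on the nose. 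I expect the main obstacle to be precisely this last bookkeeping: ensuring that the glueing/extension maps between the strata-wise integral models can be chosen compatibly so that the resulting $C$ is genuinely a sheaf of $\calO_E$-modules (not just an object assembled in the derived category) with $C\otimes_{\calO_E}E\cong L$ — the averaging trick handles one stratum cleanly, but propagating an integral structure across the recollement requires care with the boundary maps, and the infinite-type subtleties that forced the noetherian hypothesis in Proposition \ref{prop:conscons} reappear here in the guise of controlling denominators uniformly along each stratum.
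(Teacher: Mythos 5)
Your first half — stratify so that $L$ restricted to each stratum is lisse, use Proposition \ref{p:LisseSheaves}~(6) to trivialize the lattice problem on an \'etale cover, refine so the cover is finite \'etale over each stratum, and then average a lattice over the finite quotient of fundamental groups — is exactly the paper's argument; the averaging $\Lambda=\sum_g g\Lambda_0$ is just the standard proof that a continuous representation of a compact group admits an invariant lattice, which is how the paper phrases it. (The restriction to $E/\Q_\ell$ finite is harmless: $\calO_E$ is a valuation ring even for infinite $E$, so a finite sum of lattices is again free, or one reduces to finite $E$ via Proposition \ref{p:ConstrSheaves}~(2).)

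The gap is in the glueing step, and it is precisely the point you flag as ``the main obstacle'' without resolving. Refining the stratification does not make the boundary data go away: however fine the stratification, $L$ is still a successive extension of the sheaves $k_{i!}(L|_{X_i})$, and you must lift each extension class from $\Ext^1$ of $E$-sheaves to $\Ext^1$ of the chosen integral models. Your substitute observation — that a \emph{map} between lisse $E$-sheaves on a single stratum is integrally defined after clearing a denominator locally — is a statement about $\Hom$ within one stratum, whereas the glueing datum is an $\Ext^1$-class between sheaves supported on \emph{different} strata (equivalently a map into $i^*j_*$ of the open part), and clearing denominators there requires a finiteness input. The missing ingredient is Lemma \ref{l:constructiblealmostcompactproetale}: for constructible $\calO_E$-sheaves $C,C'$ one has $\Ext^1(C\otimes_{\calO_E}E,\,C'\otimes_{\calO_E}E)=\Ext^1(C,C')[\ell^{-1}]$, because $\R\Hom(C,-)$ commutes with the filtered colimit $C'\to\ell^{-1}C'\to\cdots$. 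Granting this, the class of $L$ as an extension is $\ell^{-N}$ times an integral class; the corresponding integral extension $C$ is constructible by Proposition \ref{p:ConstrSheaves}~(1), and $C\otimes_{\calO_E}E\cong L$ because multiplying the extension class by $\ell^{N}$ amounts to pushing out along the automorphism $\ell^{N}$ of the $E$-sheaf. Without this compactness statement (or an equivalent uniform bound on denominators for the glueing maps), your induction does not close.
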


\begin{proof} First, we prove that there exists a finite stratification $\{X_i\to X\}$ such that $L|_{X_i}$  admits an $\calO_E$-lattice. By Proposition \ref{p:LisseSheaves} (6), there exists some \'etale cover $Y\to X$ such that $L|_Y$ admits an $\calO_E$-lattice. After passing to a stratification on $X$, we may assume that $Y\to X$ is finite \'etale, and that $X$ is connected; fix a geometric base point $\bar{x}\in X$ with a lift to $Y$. In that case, $L|_Y$ corresponds to a continuous representation of the profinite fundamental group $\pi_1(Y,\bar{x})$ on a finite-dimensional $E$-vector space. As $Y\to X$ is finite \'etale, this extends to a continuous representation of the profinite fundamental group $\pi_1(X,\bar{x})$ on the same finite-dimensional $E$-vector space. Any such representation admits an invariant $\calO_E$-lattice (as $\pi_1(X,\bar{x})$ is compact), giving the claim.

In particular, $L$ can be filtered as a constructible $E$-sheaf by constructible $E$-sheaves which admit $\calO_E$-structures. By Lemma \ref{l:constructiblealmostcompactproetale}, for two constructible $E$-sheaves $C$, $C^\prime$, one has
\[
\Ext^1(C[\ell^{-1}],C^\prime[\ell^{-1}]) = \Ext^1(C,C^\prime)[\ell^{-1}]\ .
\]
This implies that $L$ itself admits a $\calO_E$-structure, as desired.
\end{proof}

The following proposition shows that the triangulated category $D_\cons(X_\proet,E)$ is equivalent to the triangulated category traditionally called $D^b_c(X,E)$.

\begin{proposition}\ 
\begin{enumerate}
\item For general $E$,
\[
\colim_{F\subset E} D_\cons(X_\proet,\calO_F)\to D_\cons(X_\proet,\calO_E)
\]
is an equivalence of triangulated categories, where $F$ runs through finite extensions of $\Q_\ell$ contained in $E$.
\item The functor $D_\cons(X_\proet,\calO_E)[\ell^{-1}]\to D_\cons(X_\proet,E)$ is an equivalence of triangulated categories.
\end{enumerate}
\end{proposition}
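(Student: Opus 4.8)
The plan is to establish both equivalences by reduction to the already-established structural facts about lisse and constructible $\calO_E$-sheaves, together with the compatibility of the colimit and the localization $[\ell^{-1}]$ with all the operations in sight. I would treat (1) first, then deduce (2) as the $\ell$-inverted analogue.

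\emph{Part (1).} Here $E$ is an arbitrary algebraic extension of $\Q_\ell$, and we must show $\colim_{F \subset E} D_\cons(X_\proet, \calO_F) \to D_\cons(X_\proet, \calO_E)$ is an equivalence of triangulated categories, the colimit being over finite subextensions $F/\Q_\ell$. For full faithfulness, the key input is Lemma \ref{l:constructiblealmostcompactproetale} (together with the identity $\calO_{E,X} = \colim_{F} \calO_{F,X}$ from Lemma \ref{l:IdentConstants}): for $K, L \in D_\cons(X_\proet, \calO_F)$ with base extensions $K_{F'}, L_{F'}$ to finite $F' \subset E$ over $F$, the internal $\R\Hom$ is compatible with the filtered base change, so $\R\Hom_{\calO_E}(K_E, L_E) = \colim_{F'} \R\Hom_{\calO_{F'}}(K_{F'}, L_{F'})$; taking $H^0$ and using that $X$ is topologically noetherian (hence qcqs, by Lemma \ref{l:topnoeth}) so that $\R\Gamma$ commutes with filtered colimits, gives $\Hom_{D_\cons(X_\proet,\calO_E)}(K_E, L_E) = \colim_{F'} \Hom(K_{F'}, L_{F'})$, which is exactly full faithfulness of the colimit functor. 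For essential surjectivity, fix $K \in D_\cons(X_\proet, \calO_E)$; I would argue by induction on the cohomological amplitude using the triangulated structure and Proposition \ref{p:ConstrSheaves}(2): the top cohomology sheaf $\calH^q(K)$ is a constructible $\calO_E$-sheaf, hence by Proposition \ref{p:ConstrSheaves}(2) descends to a constructible $\calO_{F}$-sheaf for some finite $F$; the truncation $\tau^{<q}K$ descends by induction (possibly after enlarging $F$); and the gluing map $\calH^q(K)[-q] \to (\tau^{<q}K)[1]$ is an element of an $\Ext$-group which, by the full faithfulness just proved, is the filtered colimit of the corresponding $\Ext$-groups over finite subextensions, so it too descends after enlarging $F$. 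This realizes $K$ as the image of an object of $D_\cons(X_\proet, \calO_F)$.

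\emph{Part (2).} We must show $D_\cons(X_\proet, \calO_E)[\ell^{-1}] \to D_\cons(X_\proet, E)$ is an equivalence. For full faithfulness: by Part (1) and Proposition \ref{p:ConstrSheaves}(2) we may reduce to $E/\Q_\ell$ finite (both sides are compatible filtered colimits). Then for $K, L \in D_\cons(X_\proet, \calO_E)$, again Lemma \ref{l:constructiblealmostcompactproetale} gives that $\underline{\R\Hom}_{\calO_E}(K, L)$ has bounded-below cohomology, and since $E_X = \calO_{E,X}[\ell^{-1}]$ (Lemma \ref{l:IdentConstants}) and inverting $\ell$ is a filtered colimit, one gets $\R\Hom_E(K[\ell^{-1}], L[\ell^{-1}]) = \R\Hom_{\calO_E}(K,L)[\ell^{-1}]$ on internal Hom's; applying $\R\Gamma$ (which commutes with this filtered colimit as $X$ is qcqs) and taking $H^0$ yields $\Hom(K[\ell^{-1}], L[\ell^{-1}]) = \Hom(K, L)[\ell^{-1}]$, which is precisely full faithfulness of the localization functor. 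For essential surjectivity: given $K \in D_\cons(X_\proet, E)$, induct on amplitude again. Each cohomology sheaf is a constructible $E$-sheaf, which by Lemma \ref{l:lisseconsintegralstructure} (applied stratum-by-stratum, using Lemma \ref{c:ConstrSheavesAbelian}) admits an integral structure as a constructible $\calO_E$-sheaf; the gluing maps live in $\Ext^1$-groups which, by the full faithfulness established above, are obtained by inverting $\ell$ from the $\calO_E$-level $\Ext^1$-groups, so after multiplying the chosen integral structures by a suitable power of $\ell$ the gluing maps lift. Hence $K$ comes from $D_\cons(X_\proet, \calO_E)[\ell^{-1}]$. That both functors are triangulated follows since they are induced by exact (base change, resp. localization) functors on the underlying derived categories, compatibly with shifts and cones.

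\emph{Main obstacle.} The routine-looking but genuinely load-bearing point throughout is the compatibility of $\R\Hom$ (and in particular $\Ext^1$ of the relevant sheaves) with the filtered colimit $\colim_F$ and with $-\otimes \Q_\ell$; this is where Lemma \ref{l:constructiblealmostcompactproetale} is essential, because without a boundedness/compactness control the formation of $\R\Hom$ need not commute with filtered colimits of the target, and the descent of gluing maps in the amplitude induction would fail. Once that compatibility is in hand, both essential-surjectivity arguments are straightforward dévissages, and full faithfulness is a direct computation; so the real content is organizing the induction so that it only ever invokes $\Ext$-compatibility in bounded-below situations where Lemma \ref{l:constructiblealmostcompactproetale} applies.
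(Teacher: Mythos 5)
Your proposal is correct and follows essentially the same route as the paper: full faithfulness in both parts from Lemma \ref{l:constructiblealmostcompactproetale} (which controls $\R\Hom$ under the filtered colimits $\colim_F \calO_{F,X}$ and $\calO_{E,X}[\ell^{-1}]$), and essential surjectivity by dévissage on cohomological amplitude to reduce to a single constructible sheaf, handled by Proposition \ref{p:ConstrSheaves}(2) in part (1) and by Lemma \ref{l:lisseconsintegralstructure} in part (2). The paper's proof is just a terser statement of exactly this argument, with the amplitude induction and the descent of gluing maps left implicit in the phrase ``one can reduce to the case of a constructible sheaf.''
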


Note that in part (2), one has an equivalence of categories without having to pass to \'etale covers of $X$.

\begin{proof}
\begin{enumerate}
\item Lemma \ref{l:constructiblealmostcompactproetale} gives full faithfulness. For essential surjectivity, one can thus reduce to the case of a constructible $\calO_E$-sheaf. In that case, the result follows from Proposition \ref{p:ConstrSheaves} (2).
\item Again, full faithfulness follows from Lemma \ref{l:constructiblealmostcompactproetale}. For essential surjectivity, one can reduce to the case of an $E$-local system $L$. In that case, the result is given by Lemma \ref{l:lisseconsintegralstructure}. \qedhere
\end{enumerate}
\end{proof}

\begin{remark} Let $\Lambda\in \{\calO_E,E\}$. Under the same assumptions as in \S \ref{ss:6Fun}, the 6 functors are defined on $D_\cons(X_\proet,\Lambda)$. Note that one can also define most of the 6 functors on $D(X_\proet,\Lambda)$. All schemes are assumed to be noetherian in the following. There are obvious $\otimes$, $\underline{\R\Hom}$ and $f_\ast$ functors for a morphism $f: Y\to X$. The functor $f_\ast$ admits a left adjoint $f^\ast: D(X_\proet,\Lambda)\to D(Y_\proet,\Lambda)$ given explicitly by $f^\ast K = f_\naive^\ast K\otimes_{f_\naive^\ast \Lambda_X} \Lambda_Y$, where $f_\naive^\ast$ denotes the naive pullback. If $f$ is \'etale or a closed immersion (or a composition of such), then $f_\naive^\ast \Lambda_X = \Lambda_Y$, so $f^\ast K = f_\naive^\ast K$ is the naive pullback. Moreover, one has the functor $j_!: D(U_\proet,\Lambda)\to D(X_\proet,\Lambda)$ for an open immersion $j: U\to X$; by composition, one gets a functor $f_!$ for a separated morphism $f: Y\to X$. If $f$ is a closed immersion, $f_! = f_\ast$ admits a right adjoint $f^!: D(X_\proet,\Lambda)\to D(Y_\proet,\Lambda)$, given as the derived functor of sections with support in $Y$.

It follows from the results of \S \ref{ss:6Fun} and the previous discussion that under the corresponding finiteness assumptions, these functors preserve constructible complexes, and restrict to the 6 functors on $D_\cons(X_\proet,\Lambda)$. In particular, one can compute these functors by choosing injective replacements in $D(X_\proet,\Lambda)$.
\end{remark}
\newpage

\section{The pro-\'etale fundamental group}
\label{sec:Pi1}

We study the fundamental group resulting from the category of locally constant sheaves on the pro-\'etale topology, and explain how it overcomes some shortcomings of the classical \'etale fundamental group for non-normal schemes. The relevant category of sheaves, together with some other geometric incarnations, is studied in \S \ref{ss:LocConsSheaves}, while the fundamental group is constructed in \S \ref{ss:pi1}. However, we first isolate a class of topological groups \S \ref{ss:noohigroups}; this class is large enough to contain the fundamental group we construct, yet tame enough to be amenable to a formalism of ``infinite'' Galois theory introduced in \S \ref{ss:infgaloistheory}.

\subsection{Noohi groups} 
\label{ss:noohigroups}

All topological groups in this section are assumed Hausdorff, unless otherwise specified. We study the following class of groups, with a view towards constructing the pro-\'etale fundamental group:

\begin{definition}
	Fix a topological group $G$. Let $G\textrm{-}\Set$ be the category of discrete sets with a continuous $G$-action, and let $F_G:G\textrm{-}\Set \to \Set$ be the forgetful functor. We say that $G$ is a {\em Noohi group}\footnote{These groups are called prodiscrete groups in \cite{Noohi}. However, they are not pro-(discrete groups), which seems to be the common interpretation of this term, so we adapt a different terminology.} if the natural map induces an isomorphism $G \simeq \Aut(F_G)$ of topological groups, where $\Aut(F_G)$ is topologized using the compact-open topology on $\Aut(S)$ for each $S \in \Set$.
\end{definition}

The basic examples of Noohi groups are:

\begin{example}
If $S$ is a set, then $G := \Aut(S)$ is a Noohi group under the compact-open topology; recall that a basis of open neighbourhoods of $1 \in \Aut(S)$ in the compact-open topology is given by the stabilizers $U_F \subset G$ of finite subsets $F \subset S$. The natural map $G \to \Aut(F_G)$ is trivially injective. For surjectivity, any $\phi \in \Aut(F_G)$ induces a $\phi_S \in G$ as $S$ is naturally a $G$-set.  It is therefore enough to show that any transitive $G$-set is a $G$-equivariant subset of $S^n$ for some $n$. Any transitive $G$-set is of the form $G/U_F$ for some finite subset $F \subset S$ finite. For such $F$, the $G$-action on the given embedding $F \hookrightarrow S$ defines a $G$-equivariant inclusion $G/U_F \to \Map(F,S)$, so the claim follows.
\end{example}

It is often non-trivial to check that a topological group with some ``intrinsic'' property, such as the property of being profinite or locally compact, is a Noohi group. To systematically deal with such issues, we relate Noohi groups to more classical objects in topological group theory: complete groups.

\begin{definition} 
	For a  topological group $G$, we define the {\em completion} $G^*$ of $G$ as the completion of $G$ for its two-sided uniformity, and write $i:G \hookrightarrow G^*$ for the natural embedding. We say $G$ is {\em complete} if $i$ is an isomorphism.
\end{definition}

We refer the reader to \cite{TopGroups} for more on topological groups, especially \cite[\S 3.6]{TopGroups} for the existence and uniqueness of completions. We will show that a topological group is Noohi if and only if it admits enough open subgroups and is complete. In preparation, we have:

\begin{lemma}
	\label{lem:autsetcomplete}
	For any set $S$, the group $\Aut(S)$ is complete for the compact-open topology.
\end{lemma}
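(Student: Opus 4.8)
The goal is to show that $\Aut(S)$, equipped with the compact-open topology, is complete for its two-sided uniformity. Recall that a basic entourage of the two-sided uniformity associated to an open subgroup $U$ is the set of pairs $(g,h)$ with $g^{-1}h \in U$ \emph{and} $gh^{-1} \in U$; since the stabilizers $U_F$ of finite subsets $F \subset S$ form a neighborhood basis of $1$, the two-sided uniformity is generated by the entourages $E_F = \{(g,h) : g^{-1}h \in U_F,\ gh^{-1}\in U_F\}$ as $F$ ranges over finite subsets of $S$. Concretely, $(g,h) \in E_F$ means $g$ and $h$ agree on $F$ and $g^{-1}$ and $h^{-1}$ agree on $F$, i.e. $g|_F = h|_F$ and $g^{-1}|_F = h^{-1}|_F$.

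The plan is to take a Cauchy net (or filter) $(g_\alpha)$ in $\Aut(S)$ for the two-sided uniformity and produce a limit. First I would observe that Cauchyness for the entourages $E_{\{s\}}$, $s \in S$, forces the net $(g_\alpha(s))_\alpha$ to be eventually constant in the discrete set $S$ for each fixed $s$; call the common eventual value $g(s)$. This defines a function $g : S \to S$. Symmetrically, Cauchyness also makes $(g_\alpha^{-1}(s))_\alpha$ eventually constant, defining a function $h : S \to S$. The next step is to check $g \circ h = h \circ g = \id_S$: fix $s$, pick $\alpha$ large enough that $g_\alpha(h(s)) = g(h(s))$ and $g_\alpha^{-1}(s) = h(s)$ simultaneously (possible since we take the net far enough out for both pointwise stabilizations, using that a finite intersection of cofinal-eventual conditions is again eventual), and then $g(h(s)) = g_\alpha(h(s)) = g_\alpha(g_\alpha^{-1}(s)) = s$; similarly $h(g(s)) = s$. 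Hence $g \in \Aut(S)$ with $g^{-1} = h$. Finally I would verify that $g_\alpha \to g$ in the compact-open topology: given a finite $F \subset S$, for $\alpha$ large enough $g_\alpha$ agrees with $g$ on $F$ (by the eventual-constancy of $g_\alpha(s)$ for the finitely many $s \in F$), so $g_\alpha \in g U_F$; this shows convergence in the one-sided, hence also (combining with the analogous statement for inverses) the two-sided topology. Therefore every Cauchy net converges, and since $\Aut(S)$ is Hausdorff the limit is unique, so $\Aut(S)$ is complete.

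The only mild subtlety — and the step I'd be most careful about — is bookkeeping with nets versus filters and making sure one correctly uses that the \emph{two-sided} uniformity is at play: one needs both $g_\alpha|_F = g_\beta|_F$ and $g_\alpha^{-1}|_F = g_\beta^{-1}|_F$ to hold eventually in order to get that \emph{both} $g$ and $h$ are well-defined and mutually inverse. Completeness with respect to the one-sided (say left) uniformity alone would only give a well-defined limit function $g$, not its invertibility, which is exactly why $\Aut(S)$ can fail to be complete for a one-sided uniformity but succeeds for the two-sided one. Once this is set up, everything else is a routine diagram chase in $\Set$, so I would keep the write-up short and not belabor the uniformity axioms.
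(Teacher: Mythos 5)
Your proof is correct and follows essentially the same route as the paper's: the paper works with a Cauchy filter and chooses representatives $h_F$ on each stabilizer-level set, while you work with a Cauchy net and extract the eventually constant values of $g_\alpha(s)$ and $g_\alpha^{-1}(s)$, but in both cases the key point is that the two-sided uniformity pins down both the limit function and its inverse pointwise, after which mutual inverseness and convergence are immediate. Your remark that one-sided completeness would only produce a (possibly non-invertible) limit function is exactly the right thing to be careful about, and your argument handles it correctly.
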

\begin{proof}
	Let $G := \Aut(S)$, and $\eta$ be a Cauchy filter on $G$ for its two-sided uniformity. For each $F \subset S$ finite, the stabilizer $U_F \subset G$ is open, so, by the Cauchy property, we may (and do) fix some $H_F \in \eta$ such that 
	\[ H_F \times H_F \subset \{ (x,y) \in G^2 \mid xy^{-1} \in U_F \quad \textrm{and} \quad x^{-1}y \in U_F\}.\]
	Fix also some $h_{F} \in H_F$ for each such $F$. Then the above containment means: $h(f) = h_{F}(f)$ and $h^{-1}(f) = h_{F}^{-1}(f)$ for all $h \in H_F$ and $f \in F$. If $F \subset F'$, then the filter property $H_F \cap H_{F'} \neq \emptyset$ implies that $h_{F'}(f) = h_{F}(f)$, and $h_{F'}^{-1}(f) = h^{-1}_{F}(f)$ for all $f \in F$. Hence, there exist unique maps $\phi:S \to S$ and $\psi:S \to S$ such that $\phi|_F = h_F|_F$ and $\psi|_F = h_F^{-1}|_F$ for all finite subsets $F \subset S$. It is then immediate that $\phi$ and $\psi$ are mutually inverse automorphisms, and that the filter $B_\phi$ of open neighbourhoods of $\phi$ is equivalent to $\eta$, so $\eta$ converges to $\phi$, as wanted.
\end{proof}

The promised characterisation is:

\begin{proposition}\label{prop:CharNoohiGroups}
Let $G$ be a topological group with a basis of open neighbourhoods of $1 \in G$ given by open subgroups. Then there is a natural isomorphism $\Aut(F_G) \simeq G^*$. In particular, $G$ is Noohi if and only if it is complete.
\end{proposition}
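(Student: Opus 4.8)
The plan is to construct the isomorphism $\Aut(F_G) \simeq G^*$ directly and then read off the last sentence from Lemma \ref{lem:autsetcomplete}. Throughout, let $\{U_i\}_{i \in I}$ be the poset of open subgroups of $G$, which by hypothesis form a basis of neighbourhoods of $1$, and recall that each coset space $G/U_i$ is naturally a discrete $G$-set, hence an object of $G\textrm{-}\Set$.

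First I would produce a map $\Aut(F_G) \to G^*$. The key observation is that $G^*$, being the completion of $G$ for its two-sided uniformity, is identified with a suitable inverse limit built from the $G/U_i$: more precisely, a Cauchy filter for the two-sided uniformity of $G$ is determined by a compatible system of cosets $g_i U_i$ together with a matching compatible system for the inverses, and this identifies $G^*$ with the subgroup of $\prod_i \mathrm{Sym}(G/U_i)$ consisting of those tuples of permutations that are ``coherent'' (commute with all the natural $G$-maps $G/U_i \to G/U_j$ for $U_i \subset U_j$, and with left translations). Concretely, I would argue: an element $\phi \in \Aut(F_G)$ acts on each $G/U_i$ by a permutation $\phi_i$ commuting with the $G$-action; the coset $\phi_i(U_i) = g_i U_i$ then gives a coherent system $(g_i U_i)_i$, and coherence plus functoriality of $\phi$ under all $G$-maps forces $\phi$ to be reconstructed from this system. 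This yields a well-defined injective homomorphism $\Aut(F_G) \to G^*$, continuous because the topology on $\Aut(F_G)$ is generated by the stabilizers of finite subsets of the $G/U_i$ and these match the standard entourages of $G^*$.

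Next I would check surjectivity and that the map is a homeomorphism. Surjectivity: given a coherent system $(g_i U_i)_i$ representing an element of $G^*$, I would define $\phi$ on an arbitrary $S \in G\textrm{-}\Set$ by: write $S$ as a disjoint union of orbits, each orbit isomorphic to some $G/V$ with $V$ open (here one uses that $S$ is discrete with continuous action, so point stabilizers are open and contained in, hence refined by, some $U_i$); transport the action of the system along $G/V$ using compatibility, and check independence of all choices using coherence. One then verifies $\phi$ is natural in $S$, i.e. $\phi \in \Aut(F_G)$, and that it maps back to the given system. For the topology, both sides carry the initial topology making the evaluation maps to $\mathrm{Sym}(G/U_i)$ (with compact-open topology) continuous, so the bijection is automatically a homeomorphism. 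Finally, the composite $G \to \Aut(F_G) \to G^*$ is the canonical embedding $i$, so $G$ is Noohi (i.e. $G \to \Aut(F_G)$ is an isomorphism) if and only if $i$ is an isomorphism, i.e. $G$ is complete; and $\Aut(F_G) = \Aut(S)$ for a suitable $S$ (namely $S = \sqcup_i G/U_i$, or rather one reduces the general $F_G$ to permutations of such an $S$ exactly as in the $\Aut(S)$ example) is complete by Lemma \ref{lem:autsetcomplete}, which also reconfirms that $\Aut(F_G) \simeq G^*$ is genuinely the completion.

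The main obstacle I anticipate is the bookkeeping in the identification of $G^*$ with the ``coherent systems of cosets'' object: one must be careful that the two-sided (rather than left or right) uniformity is what appears, so the data is really a pair of compatible coset systems (for $\phi$ and for $\phi^{-1}$) that are required to be mutually inverse, mirroring exactly the structure in the proof of Lemma \ref{lem:autsetcomplete}. Once that bijection is set up cleanly, the rest — functoriality, continuity, and the homeomorphism claim — is routine. I would lean on \cite[\S 3.6]{TopGroups} for the existence/uniqueness of $G^*$ and its description via Cauchy filters, and on the hypothesis that open subgroups form a neighbourhood basis to guarantee that every object of $G\textrm{-}\Set$ is assembled from the $G/U_i$, which is what makes $F_G$ ``pro-representable'' by this system and hence $\Aut(F_G)$ computable as the stated inverse limit.
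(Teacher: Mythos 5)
Your overall strategy --- send $\phi \in \Aut(F_G)$ to the system of cosets $\phi_{G/U}(1\cdot U) = g_U\cdot U$ and identify the result with a point of $G^*$ --- is exactly the first half of the paper's proof. Where you genuinely diverge is in how the isomorphism is completed: you propose to prove surjectivity by constructing the inverse directly (given a coherent coset system, build $\phi_S$ orbit by orbit), whereas the paper never does this. It instead shows that $\Aut(F_G)$ is complete, by exhibiting it as an equalizer, hence a closed subgroup, of $\prod_{U} \Aut(G/U)$ and invoking Lemma \ref{lem:autsetcomplete} together with stability of completeness under products and closed subgroups; the identification with $G^*$ then follows from injectivity of $\psi$ and the universal property of the completion. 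Your route can be made to work, but the verifications you call routine are where the content lies: one must check that the reconstruction formula $\phi_{G/U}(hU) = g_{hUh^{-1}}\,h\,U$ is well defined, bijective (this is exactly where the two-sidedness of the uniformity, i.e.\ the companion coset system for $\phi^{-1}$, is needed), and natural, and then that the extension to arbitrary $S$ is independent of the orbit decomposition. The paper's detour through completeness buys you freedom from all of that bookkeeping.

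Two points in your setup need repair. First, the components $\phi_{G/U}$ of a natural automorphism of $F_G$ are \emph{not} $G$-equivariant and do not commute with left translations; imposing that condition would force each $\phi_{G/U}$ to be right multiplication by an element of $N_G(U)/U$, which cuts your candidate group down far too much whenever $G$ has non-normal open subgroups (e.g.\ $\GL_n(\Q_\ell)$ with its congruence subgroups). What naturality actually gives is compatibility with all $G$-equivariant maps between coset spaces, and the essential ones are the twisted right translations $T_g: G/(gUg^{-1}) \to G/U$, not merely the untwisted projections $G/U_i \to G/U_j$ for $U_i \subset U_j$: both the injectivity argument and the reconstruction formula above rely on the $T_g$. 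Second, $\Aut(F_G)$ is not $\Aut(S)$ for any single set $S$; it is only a closed subgroup of $\prod_U \Aut(G/U)$, so the appeal to Lemma \ref{lem:autsetcomplete} must be routed through the closed-subgroup argument, as in the paper.
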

\begin{proof}
	Let $\calU$ be the collection of open subgroups $U \subset G$. For $U \in \calU$ and $g \in G$, we write $T_g:G/(g U g^{-1}) \to G/U$ for the $G$-equivariant isomorphism $\alpha \cdot gUg^{-1} \mapsto \alpha g \cdot U$, i.e., right multiplication by $g$.
	
	We first construct a natural injective map $\psi:\Aut(F_G) \to G^\ast$. Given $\phi \in \Aut(F_G)$, one obtains induced automorphisms $\phi_U:G/U \to G/U$ for $U \in \calU$. Let $g_U \cdot U := \phi_U(1\cdot U)$ and $h_U \cdot U := \phi_U^{-1}(1 \cdot U)$ denote the images of the identity coset $1 \cdot U \subset G/U$ under $\phi_U$ and $\phi_U^{-1}$; here we view a coset of $U$ as a subset of $G$. We claim that $\{g_U \cdot U\}$ (indexed by $U \in \calU$) is a filter base that defines a Cauchy and shrinking filter. The finite intersection property follows immediately from $\phi$ commuting with the projection maps $G/W \to G/U$ for $W \subset U$ a smaller open subgroup. For the Cauchy property, we must check: given $U \in \calU$,  there exists $W \in \calU$ and $b \in G$ such that $g_W \cdot W \subset U \cdot b$. Fix an element $h \in G$ defining the coset $h_U \cdot U$, and let $W = hUh^{-1}$ be the displayed conjugate of $U$. Then one has a $G$-equivariant isomorphism $T_h:G/W \to G/U$ defined in symbols by $\alpha \cdot W \mapsto \alpha \cdot W h = \alpha h \cdot U$, where the last equality is an equality of subsets of $G$. The compatibility of $\phi$ with $T_h$ then shows $g_W \cdot W \cdot h = \phi_U(h \cdot U) = U$, where the last equality uses $\phi_U \circ \phi_U^{-1} = \id$;  setting $b = h^{-1}$ then gives the Cauchy property. For the shrinking property, we must show: for each $U \in \calU$, there exist $V,W,Y \in \calU$ such that $V \cdot g_W \cdot W \cdot Y \subset g_U \cdot U$; we may simply take $W = Y = U$, and $V = gUg^{-1}$ for some $g \in G$ lifting the coset $g_U \cdot U$. Let $\psi(\phi)$ be the Cauchy and shrinking filter associated to $\{g_U \cdot U\}$, i.e., $\psi(\phi)$ is the collection of open subsets $Y \subset G$ such that $g_U \cdot U \subset Y$ for some $U \in \calU$. Then $\psi(\phi) \in G^*$, which defines a map $\psi:\Aut(F_G) \to G^*$.
	
	Next, we show that $\psi$ is injective. If $\phi \in \ker(\psi)$, then $g_U \cdot U = U$ in the notation above. Now pick some $U \in \calU$ and fix some $g \in G$. The naturality of $\phi$ with respect to $T_g:G/(gUg^{-1}) \to G/U$ shows that $\phi_U(g \cdot U) = g \cdot U$, which proves that $\phi_U = \id$ for all $U \in \calU$. Any $S \in G\textrm{-}\Set$ may be written as $S = \sqcup_i G/U_i$ for suitable $U_i$, so $\phi_S = \id$ for all such $S$, and hence $\phi = \id$.

	It now suffices to show that $\Aut(F_G)$ is complete. Recall that the class of complete groups is closed inside that of all topological groups under products and passage to closed subgroups. We may realize $\Aut(F_G)$ as the equalizer of 
	\[ \equalizer{\prod_{U \in \calU} \Aut(U)}{\prod_{U,V \in \calU} \prod_{\Map_G(G/U,G/V)} \Map(G/U,G/V)}, \]
	with the maps given by precomposition and postcomposition by automorphisms. Hence, $\Aut(F_G)$ is a closed subgroup of $\prod_{U \in \calU} \Aut(S)$; as the latter is complete by Lemma \ref{lem:autsetcomplete}, the claim follows.
\end{proof}

Proposition \ref{prop:CharNoohiGroups} leads to an abundance of Noohi groups:

\begin{example}
	\label{ex:loccompactnoohi}
Any locally compact group with a fundamental system of neighbourhoods of $1$ given by open subgroups is a Noohi group. Indeed, any locally compact group is complete. Some important classes of examples are: (a) profinite groups, and (b) the groups $G(E)$ where $E$ is a local field, and $G$ is a finite type $E$-group scheme, and (c) discrete groups.
\end{example}

Perhaps surprisingly, the algebraic closure $\overline{\Q}_\ell$ of $\Q_\ell$ is also a Noohi group for the colimit topology, in contrast with the situation for the $\ell$-adic topology. In fact, one has:

\begin{example}
	\label{ex:QellbarNoohi}
	Fix a prime number $\ell$. For any algebraic extension $E$ of a $E_0 = \Q_\ell$, the group $\GL_n(E)$ is a Noohi group under the colimit topology (induced by expressing $E$ as a union of finite extensions) for all $n$. To see this, we first show that $E$ is itself Noohi. Choose a tower $E_0 \subset E_1 \subset E_2 \subset \dots \subset E$ such that $E = \colim E_i$. Let $\calU$ be the collection of all open subgroups of $\calO_E$ in the colimit topology. By Lemma \ref{lem:openinnoohi}, we must check that $\calO_E \simeq \calO_E^* := \lim_U \calO_E/U$; here we use that $\calO_E$ is abelian to identify the completion $\calO_E^*$. A cofinal collection of open subgroups is of the form $U_f$, where $f:\N \to \N$ is a function, and $U_f = \langle \ell^{f(i)} \calO_{E_i} \rangle$ is the group generated in $\calO_E$ by the displayed collection of open subgroups of each $\calO_{E_i}$. Choose $\calO_{E_i}$-linear sections $\calO_{E_{i+1}} \to \calO_{E_i}$; in the limit, this gives $\calO_{E_i}$-linear retractions $\psi_i:\calO_E \to \calO_{E_i}$ for each $i$. An element $x \in \calO_E^* = \lim_U \calO_E/U$ determines $\psi_i(x) \in \calO_{E_i}^* = \calO_{E_i}$. If the sequence $\{\psi_i(x)\}$ is eventually constant (in $\calO_E$), then there is nothing to show. Otherwise, at the expense of passing to a cofinal set of the $E_i$'s, we may assume $\psi_i(x) \in \calO_{E_i} - \calO_{E_{i-1}}$. Then one can choose a strictly increasing sequence $\{k_i\}$ of integers such that $\psi_i(x) \in \calO_{E_i}$ but $\psi_i(x) \notin \calO_{E_{i-1}} + \ell^{k_i} \calO_{E_i}$. The association $i \mapsto k_i$ gives a function $f:\N \to \N$. Choose some $x_f \in \calO_{E_j}$ for some $j$ representing the image of $x$ in $\calO_E/U_f$. Now $\psi_i(x) - \psi_i(x_f) \in \psi_i(U_f)$ for each $i$. As $\psi_i$ is $\calO_{E_i}$-linear and $f$ is strictly increasing,  it follows that $\psi_i(x) \in \calO_{E_j} + \ell^{k_i} \calO_{E_i}$ for each $i > j$; this directly contradicts the assumption on $\psi_i(x)$, proving that $\calO_E$ is Noohi. To pass from $\calO_E$ to $\GL_n(\calO_E)$, we use that the exponential $\exp:\ell^c \cdot M_n(\calO_E) \to \GL_n(\calO_E)$ (for some $c > 0$ to avoid convergence issues) is an isomorphism of uniform spaces onto an open subgroup of the target, where both sides are equipped with the two-sided uniformity associated to open subgroups of the colimit topology; see, for example, \cite[\S 18]{SchneiderpadicLieGroups} for more on the $p$-adic exponential for Lie groups.
\end{example}

The following lemma was used above:

\begin{lemma}
	\label{lem:openinnoohi}
	If a topological group $G$ admits an open Noohi subgroup $U$, then $G$ is itself Noohi.
\end{lemma}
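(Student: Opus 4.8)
The plan is to reduce the assertion to a statement about completions via Proposition~\ref{prop:CharNoohiGroups}, and then to prove that completeness passes from an open subgroup to the ambient group.

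First I would record that $G$ admits a basis of open neighbourhoods of $1$ consisting of open subgroups. Indeed, since $U$ is Noohi, the isomorphism $U \simeq \Aut(F_U)$ identifies $U$ with a group carrying the compact-open topology, which has a basis at $1$ given by subgroups (the stabilizers of finite subsets of $U$-sets); as $U$ is open in $G$, each such subgroup is open in $G$, and these subgroups form a basis at $1 \in G$. Moreover $U \simeq \Aut(F_U)$ is complete — either directly, since as in the proof of Proposition~\ref{prop:CharNoohiGroups} it is a closed subgroup of a product of the complete groups $\Aut(S)$ (Lemma~\ref{lem:autsetcomplete}), or by Proposition~\ref{prop:CharNoohiGroups} itself applied to $U$. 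By Proposition~\ref{prop:CharNoohiGroups} applied to $G$, it therefore suffices to prove that $G$ is complete.

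Next I would prove the general fact: if $U \subseteq G$ is an open subgroup which is complete for its two-sided uniformity, then $G$ is complete. Let $\eta$ be a Cauchy filter on $G$ for the two-sided uniformity. Since $U$ is open, $\{(x,y) : x^{-1}y \in U\}$ is an entourage, so there is $H \in \eta$ with $x^{-1}y \in U$ for all $x,y \in H$; fixing $h_0 \in H$ gives $H \subseteq h_0 U$. Left translation $x \mapsto h_0^{-1}x$ is a uniform automorphism of $G$ for the two-sided uniformity — it is exactly uniform for the left uniformity, and for the right uniformity it carries the entourage attached to an open subgroup $V$ to the one attached to $h_0^{-1} V h_0$, which is again open since conjugation is a homeomorphism — so $h_0^{-1}\eta$ is again Cauchy and has the member $h_0^{-1}H \subseteq U$. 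Because $U$ is open, its two-sided uniformity is the restriction of that of $G$, so the trace $\eta_U := \{F \cap U : F \in h_0^{-1}\eta\}$ (a genuine filter on $U$, since $h_0^{-1}H \subseteq U$) is Cauchy on $U$. By completeness of $U$ it converges to some $u \in U$; since every neighbourhood $N$ of $u$ in $G$ meets $U$ in a neighbourhood of $u$ in $U$, hence in a member of $\eta_U \subseteq h_0^{-1}\eta$, the filter $h_0^{-1}\eta$ converges to $u$, and therefore $\eta$ converges to $h_0 u \in G$. Hence $G$ is complete, and combined with the first step, $G$ is Noohi.

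The only place any care is required — the routine but fiddly part — is the interaction of the left, right, and two-sided uniformities under translation and under restriction to the open subgroup $U$: namely that left translation is a uniform automorphism for the two-sided uniformity, and that the two-sided uniformity of $U$ coincides with the subspace uniformity inherited from $G$. Both follow from the description of these uniformities by entourages attached to open subgroups together with openness of $U$, but they are the main thing to get right; everything else is formal filter-chasing.
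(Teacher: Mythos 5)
Your proof is correct, but it is not the argument the paper gives. The paper stays inside the Galois-categorical framework: it considers the $G$-set $G/U$, reduces the claim to identifying $U$ with the stabilizer $H=\Stab_{\Aut(F_G)}(x)$ of the identity coset, and then uses the induction functor $\Ind_U^G:U\textrm{-}\Set\to G\textrm{-}\Set$ (left adjoint to restriction) to produce an injection $H\to\Aut(F_U)$ through which the isomorphism $U\simeq\Aut(F_U)$ factors, forcing $U\simeq H$. You instead invoke Proposition \ref{prop:CharNoohiGroups} twice to translate everything into completeness for the two-sided uniformity, and then prove the standard topological-group fact that a group with a complete open subgroup is complete, by translating a Cauchy filter into $U$, converging it there, and translating back. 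Your reduction steps are all sound: a Noohi group does have a basis of open subgroups at $1$ (being a closed subgroup of a product of $\Aut(S)$'s), these remain open in $G$ and form a basis there, the two-sided uniformity of $U$ is the restriction of that of $G$ because open subgroups of $U$ and of $G$ are cofinal in each other, and left translation is indeed bi-uniformly continuous for the two-sided uniformity (the conjugation twist you note on the right uniformity is exactly the point to check). The trade-off: your route leans on uniform-space bookkeeping but yields a reusable general statement about complete groups; the paper's route avoids uniformities entirely and, as a by-product, sets up the induction adjunction relating $U$-sets to $G$-sets, which is in the spirit of the surrounding Galois theory.
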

\begin{proof}
	We must show that the natural map $G \to \Aut(F_G)$ is an isomorphism. By considering the action of both groups on the $G$-set $G/U$, it is enough to check that $U \simeq \Stab_{\Aut(F_G)}(x) =: H$, where $x \in G/U$ is the identity coset. For any $U$-set $S$, one has an associated $G$-set $\Ind_U^G(S) = (S \times G)/\sim$, where the equivalence relation is $(us,g) \sim (s,gu)$ for any $u \in U$, $s \in S$, $g \in G$, and the $G$-action is defined by $h \cdot (s,g) = (s,hg)$ for $h \in G$. This construction gives a functor $\Ind_U^G:U\textrm{-}\Set \to G\textrm{-}\Set$, left adjoint to the forgetful functor. For any $U$-set $S$, there is an obvious map $\Ind_U^G(S) \to G/U$ of $G$-sets defined by the projection $S \times G \to G$. The fibre of this map over $x \in G/U$ is the $U$-set $S$. In particular, there is an induced $H$-action on $S$. One checks that this gives a continuous map $H \to \Aut(F_U)$ extending the obvious map $U \to \Aut(F_U)$. Now the essential image of $\Ind_U^G$ generates $G\textrm{-}\Set$ under filtered colimits: for any open subgroup $V \subset U$, one has $\Ind_U^G(U/V) = G/V$. Thus, $H \to \Aut(F_U)$ is injective. On the other hand, as $U$ is Noohi, the composite $U \to H \to \Aut(F_U)$ is an isomorphism, and hence so is $U \to H$.
\end{proof}

\subsection{Infinite Galois theory}
\label{ss:infgaloistheory}

Infinite Galois theory gives conditions on a pair $(\calC,F:\calC \to \Set)$, consisting of a category $\calC$ and a functor $F$, to be equivalent to a pair $(G\textrm{-}\Set,F_G:G\textrm{-}\Set \to \Set)$ for $G$ a topological group. Here, an object $X\in \calC$ is called connected if it is not empty (i.e., initial), and for every subobject $Y\subset X$ (i.e., $Y\buildrel\sim\over\to Y\times_X Y$), either $Y$ is empty or $Y=X$.

\begin{definition}
	An {\em infinite Galois category}\footnote{A similar definition is made in \cite{Noohi}. However, the conditions imposed there are too weak: The category of locally profinite sets with open continuous maps as morphisms satisfies all axioms imposed in \cite{Noohi}, but does not arise as $G\textrm{-}\Set$ for any Noohi group $G$. There are even more serious issues, see Example \ref{ex:higmanstone}.} is a pair $(\calC, F:\calC \to \Set)$ satisfying:
	\begin{enumerate}
	\item $\calC$ is a category admitting colimits and finite limits.
	\item Each $X\in \calC$ is a disjoint union of connected objects.
	\item $\calC$ is generated under colimits by a set of connected objects.
	\item $F$ is faithful, conservative, and commutes with colimits and finite limits.
	\end{enumerate}
	The {\em fundamental group} of $(\calC,F)$ is the topological group  $\pi_1(\calC,F) : = \Aut(F)$, topologized by the compact-open topology on $\Aut(S)$ for any $S \in \Set$.
\end{definition}

\begin{example}
	If $G$ is a Noohi group, then $(G\textrm{-}\Set,F_G)$ is a Noohi category, and $\pi_1(\calC,F) = G$.
\end{example}

However, not all infinite Galois categories arise in this way:

\begin{example} 
	\label{ex:higmanstone}
	There are cofiltered inverse systems $G_i$, $i\in I$, of free abelian groups with surjective transition maps such that the inverse limit $G=\lim G_i$ has only one element, cf. \cite{HigmanStone}.  One can define an infinite Galois category $(\calC,F)$ as the $2$-categorical direct limit of $G_i\textrm{-}\Set$. It is not hard to see that $\pi_1(\calC,F) = \lim G_i$, which has only one element, yet $F: \calC\to \Set$ is not an equivalence.
\end{example}

This suggests the following definition.

\begin{definition} An infinite Galois category $(\calC,F)$ is {\em tame} if for any connected $X\in \calC$, $\pi_1(\calC,F)$ acts transitively on $F(X)$.
\end{definition}

The main result is:

\begin{theorem} 
	\label{thm:infGaloisthy}
	Fix an infinite Galois category $(\calC,F)$ and a Noohi group $G$. Then
\begin{enumerate}
	\item $\pi_1(\calC,F)$ is a Noohi group.
	\item There is a natural identification of $\Hom_{\cont}(G, \pi_1(\calC,F))$ with the groupoid of functors $\calC \to G\textrm{-}\Set$ that commute with the fibre functors.
	\item If $(\calC,F)$ is tame, then $F$ induces an equivalence $\calC \simeq \pi_1(\calC,F)\textrm{-}\Set$.
\end{enumerate}
\end{theorem}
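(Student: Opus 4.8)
\textbf{Proof plan for Theorem \ref{thm:infGaloisthy}.} The strategy is to first analyze the structure of connected objects in $\calC$ and their automorphism groups, then to build a tower of ``Galois'' objects whose inverse limit realizes $\pi_1(\calC,F)$, and finally to leverage tameness to promote $F$ to an equivalence.

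For (1), I would first record that for any connected $X \in \calC$, the action of $A := \Aut(F)$ on $F(X)$ has open stabilizers (this is essentially the definition of the compact-open topology), so $A$ has a basis of open neighborhoods of $1$ given by open subgroups, namely the stabilizers of finite tuples of points in the various $F(X)$. By Proposition \ref{prop:CharNoohiGroups}, it then suffices to show $A$ is complete, equivalently that $A \simeq \Aut(F_A)$ where $F_A : A\textrm{-}\Set \to \Set$ is the forgetful functor. The key point is to build, for each connected $X$ and each point $x \in F(X)$, a ``Galois closure'': a connected object $\widetilde X$ dominating $X$ on which $A$ acts so that $F(\widetilde X)$ becomes a torsor under a quotient-like group. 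Concretely, using that $\calC$ has finite limits and colimits and is generated under colimits by a set of connected objects, one forms the connected component of a suitable point in a power $X^n$ (or in a product of generators), and shows these assemble into a cofiltered system $\{X_i\}$ with $F(X_i)$ carrying compatible simply transitive actions of finite-index-subgroup quotients; then $A = \lim_i \Aut(X_i)$ exhibits $A$ as a closed subgroup of a product of automorphism groups of sets, hence complete by Lemma \ref{lem:autsetcomplete}. This mirrors the classical construction of $\pi_1^{\et}$ but without finiteness.

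For (2), I would argue formally. Given a continuous homomorphism $\phi : G \to A$, restriction along $\phi$ turns each $F(X) \in A\textrm{-}\Set$ into a $G$-set; since $F$ commutes with colimits and finite limits (axiom (4)) and these are computed in $\Set$ for both $G\textrm{-}\Set$ and $A\textrm{-}\Set$, one gets a functor $\calC \to G\textrm{-}\Set$ commuting with fibre functors. Conversely, such a functor $H : \calC \to G\textrm{-}\Set$ composed with $F_G$ recovers $F$, so $G$ acts naturally on $F$, giving a map $G \to A = \Aut(F)$; continuity follows because the $G$-action on each $F(X) = F_G(H(X))$ is continuous by hypothesis, so stabilizers of points pull back open subgroups. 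One checks these two constructions are mutually inverse up to the natural isomorphisms recording the identification $F_G \circ H \simeq F$, which gives the claimed equivalence of groupoids; the $2$-categorical bookkeeping here is routine.

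For (3), assume $(\calC,F)$ is tame. The functor $F$ factors as $\calC \xrightarrow{\widetilde F} A\textrm{-}\Set \xrightarrow{F_A} \Set$ by part (2) applied to $\mathrm{id}_A$, and I must show $\widetilde F$ is an equivalence. \emph{Full faithfulness}: for connected $X, Y$, a morphism $X \to Y$ is determined by the image of any chosen point of $F(X)$ (by connectedness and the fact that $F$ reflects the relevant subobjects — here one uses that $F$ is conservative and commutes with finite limits to see that a subobject of $X$ mapping onto $F(X)$ under $F$ is all of $X$), so $\Hom_\calC(X,Y) \hookrightarrow \Hom_\Set(F(X),F(Y))$ lands exactly in the $A$-equivariant maps; surjectivity onto equivariant maps uses the Galois tower from part (1) to realize any prescribed equivariant assignment by an actual morphism. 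Tameness enters precisely here and in \emph{essential surjectivity}: given a transitive $A$-set $S = A/U$ with $U$ open, one must produce a connected $X \in \calC$ with $\widetilde F(X) \cong S$; one takes $U$ to contain a stabilizer of a point $x$ in some connected $X_0$, so $S$ is a quotient of $F(X_0)$, and the quotient is realized by an appropriate connected object obtained from $X_0$ (a coequalizer/image in $\calC$, which exists by axiom (1)) — tameness guarantees the $A$-action on $F$ of the resulting object is still transitive, matching $S$. General objects are disjoint unions of connected ones by axiom (2), and $F$ (hence $\widetilde F$) commutes with coproducts, so the connected case suffices.

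\textbf{Main obstacle.} The crux is the construction in part (1) of the cofiltered system of connected ``Galois'' objects $\{X_i\}$ together with simply transitive actions, working purely from the abstract axioms (colimits, finite limits, a generating set of connected objects, and a colimit/finite-limit-preserving conservative faithful $F$) without any finiteness hypotheses. In the classical (finite) setting one uses that $\Aut(X)$ acts on the finite set $F(X)$ and passes to a point-stabilizer to split off a ``more Galois'' piece; here $F(X)$ may be infinite, so one must be careful that the relevant connected components exist in $\calC$ (they do, as images of points under maps from generators, using finite limits and colimits) and that the automorphism groups organize into an honest topological group under the compact-open topology whose completeness one can verify via Lemma \ref{lem:autsetcomplete}. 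Getting this tower right — and seeing that tameness is exactly the condition needed for its limit to act transitively on every $F(X)$, so that Example \ref{ex:higmanstone}'s pathology is excluded — is where essentially all the work lies; parts (2) and (3) are then comparatively formal consequences.
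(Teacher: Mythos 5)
Your plans for (2) and for essential surjectivity in (3) match the paper, but the core of your part (1) — and the step of (3) that leans on it — rests on a construction that does not exist in this setting, so there is a genuine gap.

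You propose to build, for each connected $X$ and $x \in F(X)$, a connected ``Galois closure'' $\widetilde X$ on which $F(\widetilde X)$ is a torsor, assemble these into a cofiltered system $\{X_i\}$ with simply transitive actions, and conclude $A = \lim_i \Aut(X_i)$. This is the classical SGA1 device, and it fails here for two reasons. First, the finite construction of a Galois closure takes the connected component of $(x_1,\dots,x_n)$ in $X^{F(X)}$; when $F(X)$ is infinite this requires an infinite product, and $\calC$ is only assumed to have \emph{finite} limits. Second, and more fundamentally, an infinite Galois category need not have enough Galois objects at all: if it did, $\Aut(F) = \lim_i \Aut(X_i)$ would be a prodiscrete group, whereas the paper's whole point (see the discussion of $\pi_1^{\mathrm{SGA3}}$ in the introduction and Example \ref{ex:delignenodalcurve}) is that $\pi_1^\proet$ is generally \emph{not} prodiscrete — its prodiscrete completion loses information. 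So the tower you identify as ``where essentially all the work lies'' cannot be built, and if it could it would compute the wrong group. The correct argument for (1) is the much simpler one already latent in your last sentence, but applied to automorphisms of the \emph{sets} $F(X_i)$ rather than of the objects $X_i$: choosing a set of connected generators $X_i$, an element of $\Aut(F)$ is exactly a tuple in $\prod_i \Aut(F(X_i))$ compatible with all morphisms among the $X_i$; these compatibilities cut out a closed subgroup, each $\Aut(F(X_i))$ is complete by Lemma \ref{lem:autsetcomplete}, products and closed subgroups of complete groups are complete, and $\Aut(F)$ visibly has a basis of open subgroups, so Proposition \ref{prop:CharNoohiGroups} applies. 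No Galois objects and no tameness are needed for (1).

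The same issue infects your full-faithfulness argument in (3), where you again invoke ``the Galois tower from part (1)'' to realize equivariant maps by morphisms. The paper instead encodes an $A$-equivariant map $F(Y) \to F(X)$ by its graph, an $A$-stable subset of $F(Y) \times F(X) = F(Y \times X)$: tameness guarantees that the induced functor preserves connected components, so this stable subset is $F(\Gamma)$ for a union of connected components $\Gamma \subset Y \times X$, and conservativity of $F$ shows $\Gamma \to Y$ is an isomorphism, yielding the desired morphism $Y \to X$. Your essential surjectivity argument (write an open $U$ as containing the stabilizer $U'$ of finitely many points in finitely many generators, realize $A/U'$ as a connected component of a finite product, then form the quotient $X_{U'}/U$ in $\calC$) is the paper's argument and is fine.
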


\begin{proof} Fix a set $X_i\in \calC$, $i\in I$, of connected generators. As in the proof of Proposition \ref{prop:CharNoohiGroups}, one gets that $\pi_1(\calC,F)$ is the closed subgroup of $\prod_i \Aut(F(X_i))$ of those elements compatible with all maps between all $X_i$. It follows that $\pi_1(\calC,F)$ is closed in a Noohi group, and thus a Noohi group itself, proving (1). Also, part (2) is completely formal.

It remains to prove part (3). As $(\calC,F)$ is tame, we know that for any connected $X\in \calC$, $\pi_1(\calC,F)$ acts transitively on $F(X)$. It follows that the functor $\calC\to \pi_1(\calC,F)\textrm{-}\Set$ preserves connected components. By interpreting maps $f: Y\to X$ in terms of their graph $\Gamma_f\subset Y\times X$, one sees that the functor is fully faithful. For essential surjectivity, take any open subgroup $U\subset \pi_1(\calC,F)\textrm{-}\Set$. As $\pi_1(\calC,F)$ is closed in $\prod_i \Aut(F(X_i))$, there are finitely many $X_{i_j}$, with points $x_j\in F(X_{i_j})$, $j\in J$, such that $U$ contains the subgroup $U^\prime$ of $\pi_1(\calC,F)$ fixing all $x_j$. The element $\pi_1(\calC,F)/U^\prime\in \pi_1(\calC,F)\textrm{-}\Set$ is the image of some $X_{U^\prime}\in \calC$, as it can be realized as the connected component of $\prod_j X_{i_j}$ containing $(x_j)_j$. As colimits exist in $\calC$, the quotient $X_U = X_{U^\prime} / U$ exists in $\calC$. As colimits are preserved by $F$, it follows that $F(X_U) = \pi_1(\calC,F)/U$, as desired.
\end{proof}

Proposition \ref{prop:CharNoohiGroups} is useful to study Noohi groups under limits. Similarly,  Theorem \ref{thm:infGaloisthy} is useful for studying the behaviour under colimits. For example, one has coproducts:

\begin{example}
	\label{ex:noohicoprod}
The category of Noohi groups admits coproducts. Indeed, if $G$ and $H$ are Noohi groups, then we can define an infinite Galois category $(\calC,F)$ as follows: $\calC$ is the category of triples $(S,\rho_G,\rho_H)$ where $S \in \Set$, while $\rho_G:G \to \Aut(S)$ and $\rho_H:H \to \Aut(S)$ are continuous actions on $S$ of $G$ and $H$ respectively, and $F:\calC \to \Set$ is given by $(S,\rho_G,\rho_H) \mapsto S$. One has an obvious map from the coproduct of abstract groups $G\ast H$ to $\pi_1(\calC,F)$, from which one can see that $(\calC,F)$ is tame. Then $G \ast^N H := \pi_1(\calC,F)$ is a coproduct of $G$ and $H$ in the category of Noohi groups.
\end{example}

\begin{remark} It may be true that general infinite Galois categories are classified by certain group objects $G$ in the pro-category of sets. One has to assume that the underlying pro-set of this group can be chosen to be strict, i.e. with surjective transition maps. In that case, one can define $G\textrm{-}\Set$ as the category of sets $S$ equipped with an action of $G$ (i.e., equipped with a map $G\times S\to S$ in the pro-category of sets that satisfies the usual axioms). It is easy to verify that $G\textrm{-}\Set$ forms an infinite Galois category under the strictness hypothesis. To achieve uniqueness of $G$, one will again have to impose the condition that there are enough open subgroups. Fortunately, the infinite Galois categories coming from schemes will be tame, so we do not worry about such esoteric objects!
\end{remark}

\subsection{Locally constant sheaves}
\label{ss:LocConsSheaves}

Fix a scheme $X$ which is locally topologically noetherian. We will consider the following classes of sheaves on $X_\proet$:

\begin{definition} Fix $F \in \Shv(X_\proet)$. We say that $F$ is 
\begin{enumerate}
	\item {\em locally constant} if there exists a cover $\{X_i \to X\}$ in $X_\proet$ with $F|_{X_i}$ constant. 
	\item {\em locally weakly constant} if there exists a cover $\{Y_i \to X\}$ in $X_\proet$ with $Y_i$ qcqs such that $F|_{Y_i}$ is the pullback of a classical sheaf on the profinite set $\pi_0(Y_i)$.
	\item {\em a geometric covering} if $F$ is an \'etale $X$-scheme satisfying the valuative criterion of properness.
\end{enumerate}
We write $\Loc_X$, $w\Loc_X$ and $\Cov_X$  for the corresponding full subcategories of $\Shv(X_\proet)$.
\end{definition}

\begin{remark}
	The objects of $\Loc_X, w\Loc_X$ and $\Cov_X$ are classical. This is evident for $\Cov_X$, and follows from Lemma \ref{l:Classical} for $\Loc_X$ and $w\Loc_X$.
\end{remark}

\begin{remark}
Any $Y \in \Cov_X$ is quasiseparated: $Y$ is locally topologically noetherian by Lemma \ref{l:topnoeth}. Hence, we can write $Y$ as a filtered colimit of its qcqs open subschemes. This remark will be used without comment in the sequel.
\end{remark}

\begin{remark}
	\label{rmk:wlocclosure}
	Fix an $F \in \Shv(X_\proet)$. One checks that $F \in w\Loc_X$ if and only if for any qcqs w-contractible $Y \in X_\proet$, the restriction $F|_Y$ is classical, and the pullback of its pushforward to $\pi_0(Y)$. For such $Y$, pushforward and pullback along $\Shv(Y_\et) \to \Shv(\pi_0(Y)_\et)$, as well as the inclusion $\Shv(Y_\et) \subset \Shv(Y_\proet)$,  commute with all colimits and finite limits; thus, the subcategory $w\Loc_X \subset \Shv(X_\proet)$ is closed under all colimits and finite limits.
\end{remark}

\begin{example}
	\label{ex:coveringsfield}
If $X = \Spec(k)$ is the spectrum of a field, then $\Loc_X = w\Loc_X = \Cov_X = \Shv(X_\et)$. Indeed, this is immediate from the observation that any separable closure of $k$ provides a connected w-contractible cover of $X$. More generally, the same argument applies to any finite scheme of Krull dimension $0$: the underlying reduced scheme is a finite product of fields.
\end{example}

\begin{lemma}
	\label{lem:localgebraicspace}
If $Y$ is a qcqs scheme, and $F \in \Shv(Y_\proet)$ is the pullback of a classical sheaf on $\pi_0(Y)$, then
\begin{enumerate}
	\item $F$ is representable by an algebraic space \'etale over $Y$.
	\item $F$ satisfies the valuative criterion for properness.
	\item The diagonal $\Delta:F \to F \times_Y F$ is a filtered colimit of clopen immersions.
\end{enumerate}
\end{lemma}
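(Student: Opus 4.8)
The strategy is to reduce everything to the local structure of $F$ over a presentation $\pi_0(Y) = \lim_i S_i$ as a cofiltered limit of finite sets. Write $F = \pi^* F_0$ where $\pi : Y_\proet \to \pi_0(Y)_\proet$ (as in Lemma \ref{lem:pullbackfrompi0}) and $F_0 \in \Shv(\pi_0(Y)_\proet)$ is a classical sheaf on the profinite set $\pi_0(Y)$. Since $\pi_0(Y)_\proet$ is (essentially) the site of profinite sets over $\pi_0(Y)$, a classical sheaf $F_0$ on it is simply given by a discrete-set valued sheaf on the profinite set $\pi_0(Y)$; concretely, $F_0$ arises by pullback from a sheaf on some finite quotient $S_i$, i.e. from a map of sets $T_i \to S_i$, so that $F_0 = \lim_i (\underline{T_i} \times_{\underline{S_i}} \pi_0(Y))$ with $T_i \to S_i$ a map of sets (which we may even take finite-to-one over each point after refining, but we do not need this). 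Pulling back along $\pi$, and using that $\pi^{-1}$ of a finite-set-valued constant sheaf on $S_i$ is the corresponding finite \'etale $Y$-scheme (a clopen decomposition of $Y$), we see $F$ is a filtered limit of schemes $Y_i := T_i \times_{S_i} Y \to Y$, each of which is a (typically infinite, but locally on $Y$ finite) disjoint union of clopen subschemes of $Y$; in particular each $Y_i \to Y$ is \'etale and separated, and the transition maps $Y_i \to Y_j$ are likewise disjoint unions of clopen immersions on each clopen piece.

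\textbf{Step 1 (representability by an algebraic space).} Present $F = \lim_i Y_i$ with $Y_i \to Y$ as above. Each $Y_i$ is an actual scheme, \'etale over $Y$. The limit $F$ need not be a scheme (an infinite disjoint union glued along clopen immersions over a profinite base can fail to be a scheme, much as in Example \ref{ex:proetalenotlocal}), but it is representable by an algebraic space: cover $Y$ by a (set-indexed, but by quasicompactness locally finite) family of clopens over which the tower $\{Y_i\}$ becomes eventually constant with value a finite disjoint union of clopens, take the disjoint union $U = \sqcup U_\alpha$ of these, and observe that $U \to F$ is \'etale and surjective with $U \times_F U$ again a scheme; this exhibits $F$ as an algebraic space \'etale over $Y$. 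Here I would use that $\pi_0(Y)$ is profinite, so the cofiltered system $\{S_i\}$ has surjective transition maps and every point of $\pi_0(Y)$ has a clopen neighbourhood over which $T_i \to S_i$ stabilizes; this is exactly the mechanism in Lemma \ref{lem:profinitesetszariski} and Remark \ref{rmk:cwlocalizeschemes}.

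\textbf{Step 2 (the diagonal).} The diagonal $\Delta : F \to F \times_Y F$ is $\lim_i \Delta_i$ where $\Delta_i : Y_i \to Y_i \times_Y Y_i$. But $Y_i \to Y$ is, clopen-locally on $Y$, a finite disjoint union $\sqcup_{k=1}^{n} V_k \to Y$ of clopen immersions, and for such a map the diagonal is the inclusion of the ``diagonal'' clopen pieces, hence a clopen immersion. Passing to the limit over $i$ and assembling over the clopen cover of $Y$ exhibits $\Delta$ as a filtered colimit (equivalently, as the scheme-theoretic image / union) of clopen immersions. This gives (3), and in particular shows $F \to Y$ is separated, so the algebraic space $F$ of Step 1 is even quasi-separated and locally separated; combined with being \'etale over a scheme it is a genuine algebraic space.

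\textbf{Step 3 (valuative criterion).} Let $\mathrm{Spec}(V) \to Y$ be a map from a valuation ring with generic point $\eta = \mathrm{Spec}(K)$, together with a lift $\eta \to F$. Since $\mathrm{Spec}(V)$ is local and connected, its image in $Y$ lands in a single connected component, hence determines a single point $s \in \pi_0(Y)$; pulling back the presentation $F = \lim Y_i$ along $\mathrm{Spec}(V) \to Y$ replaces $\{Y_i\}$ by a tower of the form $\{\underline{(T_i)_s}\}_i$ over $\mathrm{Spec}(V)$, i.e. by constant (disjoint-union-of-copies-of-$\mathrm{Spec}(V)$) schemes with transition maps given by maps of the underlying index sets $(T_i)_s \to (T_j)_s$. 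A lift $\eta \to F$ is then a compatible system of points of $(T_i)_s$, i.e. an element of $\lim_i (T_i)_s$; such an element gives a section $\mathrm{Spec}(V) \to F$ over $\mathrm{Spec}(V)$ directly (pick the corresponding copy of $\mathrm{Spec}(V)$ in each $Y_i$ and pass to the limit), and uniqueness is clear since the copies are disjoint. This proves (2).

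\textbf{Main obstacle.} The genuinely delicate point is Step 1: showing $F$ is representable by an \emph{algebraic space} (rather than merely an fpqc sheaf), because an infinite ``disjoint union'' indexed continuously by a profinite set, glued along clopen immersions over non-quasicompact pieces, is the prototype of a non-scheme (cf. Example \ref{ex:proetalenotlocal}). The resolution is to work clopen-locally on $Y$, where by quasicompactness of $Y$ and profiniteness of $\pi_0(Y)$ the tower $\{Y_i\}$ stabilizes to a finite disjoint union of clopens, and to check that the resulting \'etale chart $U \to F$ has representable (indeed schematic) $U \times_F U$ — this is where Step 2's analysis of the diagonal feeds back in. Everything else is bookkeeping with cofiltered limits of finite sets and the description of $\pi_0(Y)_\proet$ from Example \ref{ex:proetalegeometricpoint} and Lemma \ref{lem:pullbackfrompi0}.
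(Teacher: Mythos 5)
Your argument starts from the wrong presentation of $F$, and this is a genuine gap rather than a stylistic difference. A classical sheaf $F_0$ on the profinite set $\pi_0(Y)$ is \emph{not}, in general, pulled back from a finite quotient $S_i$, nor is it a cofiltered limit of sheaves of the form $\underline{T_i}\times_{\underline{S_i}}\pi_0(Y)$: already the subsheaf of $\underline{\{\ast\}}$ supported on a non-closed open $U\subset \pi_0(Y)$ is a counterexample to the first claim, and you offer no argument for the asserted limit presentation (stalks do not commute with cofiltered limits of sheaves, so one cannot even check such a presentation pointwise). The correct and much simpler statement goes in the opposite direction: $F_0$ is a \emph{filtered colimit} of its finite locally constant subsheaves (each generated by finitely many sections over clopens), so $F=\colim_i U_i$ is a filtered colimit of finite \'etale $Y$-schemes along clopen immersions. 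With that presentation all three claims are nearly immediate: any map from $\Spec K$ factors through some $U_i$, which is finite hence proper over $Y$, giving (2); $F\times_Y F=\colim_i U_i\times_Y U_i$ and each $\Delta_{U_i}$ is a clopen immersion, giving (3); and (1) follows by exhibiting $F$ as the quotient of the \'etale equivalence relation $\sqcup_{i\leq j}U_i\rightrightarrows \sqcup_i U_i$ given by the identities and the transition maps.

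The wrong starting point then propagates: in Step 1 the claimed stabilization of the tower over a clopen cover of $Y$ fails even for the constant sheaf $\underline{\Z}$ on a connected $Y$ (where $F=\sqcup_{\Z}Y$ is not, locally on $Y$, a finite disjoint union of clopens, and nothing stabilizes); and in Step 2 your conclusion that $F\to Y$ is separated and quasi-separated is actually false — a filtered colimit of clopen immersions is an open immersion that need be neither closed nor quasicompact, and the remark following the lemma in the paper gives an explicit non-quasiseparated example (two copies of $\pi_0(Y)$ glued along a non-quasicompact open). Your Step 3, by contrast, contains the right idea (reduce to a single connected component and use disjointness of the sheets), but as written it computes the fibre of $F$ over a point as $\lim_i(T_i)_s$, which again rests on the unjustified limit presentation.
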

\begin{proof}
	As any classical sheaf on a profinite set is a filtered colimit of finite locally constant sheaves, so $F = \colim_i U_i$ is a filtered colimit of finite \'etale $Y$-schemes $U_i$ indexed by a filtered poset $I$. In particular, (2) and (3) are clear. (1) follows by expressing $F$ as the quotient of the \'etale equivalence relation on $\sqcup_i U_i$ given by the two evident maps $\sqcup_{i \leq j} U_i \to \sqcup_i U_i$: the identity $U_i \to U_i$ and the transition map $U_i \to U_j$.
\end{proof}

\begin{remark}
	The algebraic space $F$ in Lemma \ref{lem:localgebraicspace} need {\em not} be quasiseparated over $Y$. For example, we could take $F$ to be the pullback of two copies of $\pi_0(Y)$ glued along a non-quasicompact open subset. This phenomenon does not occur for the geometric coverings we consider as $X$ is topologically noetherian.
\end{remark}

\begin{lemma}
	\label{lem:coveringhenselian}
If $Y$ is a henselian local scheme, then any $F \in \Cov_X$ is a disjoint union of finite \'etale $Y$-schemes.
\end{lemma}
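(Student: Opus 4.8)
The statement asserts: if $Y$ is a henselian local scheme (with locally topologically noetherian total space $X$, but here $Y$ plays the role of the base), then any $F \in \Cov_X$ is a disjoint union of finite \'etale $Y$-schemes. The plan is to reduce to a structural analysis of $F$ as an \'etale $Y$-scheme and exploit the valuative criterion of properness together with the henselian hypothesis. First I would note that $F$ is an \'etale algebraic space over $Y$ which is locally topologically noetherian by Lemma \ref{l:topnoeth} (applied to $X$, then transported to $Y$ via $F$ being \'etale), hence quasiseparated, so $F$ is a filtered colimit of its qcqs open subspaces. Since $Y$ is local henselian, the local structure theory of \'etale morphisms says that over the closed point $y \in Y$ the fibre $F_y$ is a discrete set, and any quasicompact clopen piece of $F$ containing a point above $y$ spreads out — by the henselian property — to a finite \'etale $Y$-scheme. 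So the real content is to show that $F$ is a disjoint union of such finite \'etale pieces, i.e. that $F$ has no "wandering" components that fail to be finite over $Y$.

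The key step is to use the valuative criterion. Let $y$ be the closed point of $Y$, and consider any connected component $Z \subset F$. Pick a point $z \in Z$; its image in $Y$ is some (not necessarily closed) point, but since $Y$ is local, $\overline{\{$image of $z\}\}$ contains $y$. Using a valuation ring mapping to $Y$ that dominates the local ring at the image of $z$ and has closed point mapping to $y$ (such exists since $Y$ is local, e.g. by choosing an appropriate valuation ring of the residue field of $z$ extending the local ring of $Y$), the valuative criterion of properness for $F$ provides a lift through the closed point; hence $Z$ contains a point $z_0$ above $y$. Now over the henselian local $Y$, the \'etale neighbourhood of $z_0$ splits off as a section-like piece: there is a clopen neighbourhood $Z_0$ of $z_0$ in $F$ which is finite \'etale over $Y$ — this is where the henselian hypothesis does its essential work, via the standard fact that a quasicompact \'etale separated scheme over a henselian local ring is a disjoint union of finite \'etale and schemes with empty closed fibre, combined with the fact that here the closed fibre cannot be empty on the component $Z$ by the argument just given. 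Then I would argue $Z_0 = Z$: if not, $Z \setminus Z_0$ would be a nonempty open of $Z$ whose closure still must meet the fibre over $y$ (again by the valuative criterion, applied to any point of $Z\setminus Z_0$), contradicting that $Z_0$ already accounts for all of $Z$ near the closed fibre and that $Z$ is connected. Hence every connected component of $F$ is finite \'etale over $Y$, and $F$ is their disjoint union.

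The main obstacle I anticipate is making precise the step "every connected component contains a point over the closed point $y$, and in a neighbourhood of that point the component is finite \'etale." The valuative criterion of properness as stated is for \'etale $X$-schemes, and one must be careful that $F$, being only locally (not globally) quasicompact and possibly non-quasiseparated a priori, still admits the valuation ring lifts one wants; the reduction to qcqs opens and the topological noetherianity of $X$ (hence of $F$) should handle this, but the bookkeeping is delicate. A secondary subtlety is the passage between the algebraic-space description of $F$ (Lemma \ref{lem:localgebraicspace}) and its description as an honest \'etale $Y$-scheme — over a local henselian base with the connectedness and properness constraints in force, $F$ should in fact be a scheme, and I would either cite this or prove it by exhibiting the finite \'etale clopen pieces directly. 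Once the finite \'etale splitting over the closed fibre is established, spreading out over the henselian local ring and the connectedness argument are routine.

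I would organize the write-up as: (1) $F$ is an \'etale algebraic space, locally topologically noetherian, hence a filtered colimit of qcqs opens; (2) via the valuative criterion, every connected component $Z$ of $F$ meets the fibre $F_y$; (3) using henselianness, a neighbourhood of each such point in $F_y$ is finite \'etale over $Y$ and clopen; (4) connectedness plus the valuative criterion force this clopen piece to equal $Z$; (5) conclude $F = \bigsqcup_Z Z$ with each $Z$ finite \'etale over $Y$, in particular $F$ is a scheme.
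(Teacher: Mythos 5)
Your overall strategy --- every point of $F$ specializes into the closed fibre by the existence half of the valuative criterion, the closed fibre decomposes into connected finite \'etale pieces which lift to finite \'etale $Y$-schemes by henselianness, and these pieces exhaust $F$ --- is exactly the one the paper uses; only the packaging differs (you work one connected component at a time, the paper builds a single map $\phi:\sqcup_i \tilde{Z}_i \to F$ from the special fibre and checks it is an isomorphism). One simplification you are entitled to: objects of $\Cov_Y$ are by definition \'etale $Y$-\emph{schemes}, so the detour through algebraic spaces and Lemma \ref{lem:localgebraicspace} is unnecessary here.

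The genuine gap is in your step (4). You produce $Z_0$ as a clopen subset of some affine (or qcqs) open $V \subset F$ that is finite \'etale over $Y$, but you never establish that $Z_0$ is closed \emph{in $F$}, and the contradiction you offer does not supply this: it presupposes that $Z_0$ ``accounts for all of $Z$ near the closed fibre,'' i.e.\ that $Z_0 \supset Z \cap F_y$, whereas $Z_0$ was built as a neighbourhood of a single point $z_0$, and a priori $Z \cap F_y$ could contain many (even infinitely many) other points. Moreover, ``$Z\setminus Z_0$ is a nonempty open of $Z$'' already assumes $Z_0$ is closed in $Z$, which is the point at issue. Once $Z_0$ is known to be clopen in $F$, connectedness of $Z$ finishes the proof immediately, with no further appeal to the valuative criterion; without it, nothing forces $Z_0 = Z$. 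This is exactly where the \emph{uniqueness} half of the valuative criterion must enter (so far your argument only uses the existence half). Three standard ways to close the gap: (a) show $F \to Y$ is separated (quasi-separatedness plus uniqueness of valuation-ring lifts), so that the finite, hence proper, map $Z_0 \to F$ has closed image; (b) as the paper does, show that the lifted pieces $\tilde{Z}_i \to F$ are open immersions with pairwise disjoint images (disjointness is the uniqueness half) that jointly cover $F$ (the existence half), whence each image is clopen; or (c) show $Z_0$ is stable under specialization in $F$ using uniqueness of lifts, and combine this with openness and local topological noetherianity of $F$ to deduce closedness.
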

\begin{proof}
	If $Z \subset Y$ is the closed point, then $F|_Z = \sqcup_i Z_i$ with $Z_i \to Z$ connected finite \'etale schemes by Example \ref{ex:coveringsfield}. Let $\tilde{Z_i} \to Y$ be the (unique) connected finite \'etale $Y$-scheme lifting $Z_i \to Z$. Then the henselian property ensures that $F(\tilde{Z_i}) = F|_Z(Z_i)$, so one finds a canonical \'etale map $\phi:\sqcup_i \tilde{Z_i} \to F$ inducing an isomorphism after restriction to $Z$. As the image of $\phi$ is closed under generalization, and because each point of $F$ specializes to a point of the special fibre (by half of the valuative criterion), one checks that $\phi$ is surjective. To check $\phi$ is an isomorphism, one may assume $Y$ is strictly henselian, so $\tilde{Z_i} = Y$ for each $i$. Then each $\tilde{Z_i} \to F$ is an \'etale monomorphism, and hence an open immersion. Moreover, these open immersions are pairwise disjoint (by the other half of the valuative criterion), i.e., that $\tilde{Z_i} \cap \tilde{Z_j} = \emptyset$ as subschemes of $F$ for $i \neq j$. Then $\sqcup_i \tilde{Z_i}$ gives a clopen decomposition for $F$, as wanted.
\end{proof}

\begin{lemma} 
	\label{lem:locallagree}
	One has $\Loc_X = w\Loc_X = \Cov_X$  as subcategories of $\Shv(X_\proet)$. 
\end{lemma}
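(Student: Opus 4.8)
The plan is to prove the chain of inclusions $\Loc_X \subseteq w\Loc_X \subseteq \Cov_X \subseteq \Loc_X$, using the characterization of $w\Loc_X$ from Remark \ref{rmk:wlocclosure} and the local structure results for geometric coverings established above.

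\emph{Step 1: $\Loc_X \subseteq w\Loc_X$.} Given a locally constant $F$, pick a pro-\'etale cover trivializing $F$; refining it by Lemma \ref{lem:cwcontractiblecover}, we may assume the cover consists of qcqs w-contractible schemes $Y_i$. On such $Y_i$, the sheaf $F|_{Y_i}$ is constant, hence in particular pulled back from a (classical) constant sheaf on $\pi_0(Y_i)$. So $F \in w\Loc_X$.

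\emph{Step 2: $w\Loc_X \subseteq \Cov_X$.} Let $F \in w\Loc_X$. Representability and \'etaleness are local on $X_\proet$, and the valuative criterion of properness can be checked after pulling back along the pro-\'etale cover $\{Y_i \to X\}$ (using that each $Y_i \to X$ is itself, locally, a composite of affine pro-\'etale maps, so valuations lift appropriately — more carefully, one reduces the valuative criterion to henselian valuation rings and uses that $F$ becomes, over a w-contractible cover, a filtered colimit of finite \'etale schemes as in Lemma \ref{lem:localgebraicspace}). Combining Lemma \ref{lem:localgebraicspace}(1)--(2) with \'etale descent, $F$ is an algebraic space \'etale over $X$ satisfying the valuative criterion; since $X$ is locally topologically noetherian, one checks (using Lemma \ref{l:topnoeth} and the quasiseparatedness that follows) that $F$ is in fact a scheme, hence $F \in \Cov_X$. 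The delicate point here is descending representability by a \emph{scheme} (not merely an algebraic space) from the cover, and verifying the valuative criterion descends; this is where topological noetherianity of $X$ is essential, ruling out the non-quasiseparated pathology noted after Lemma \ref{lem:localgebraicspace}.

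\emph{Step 3: $\Cov_X \subseteq \Loc_X$.} Let $F \in \Cov_X$. By Lemma \ref{lem:cwcontractiblecover}, choose a pro-\'etale cover $\{Y_i \to X\}$ with each $Y_i$ qcqs w-contractible, in particular w-strictly local and hence a disjoint union of henselian local schemes along its connected components. Pulling back, $F|_{Y_i} \in \Cov_{Y_i}$, and Lemma \ref{lem:coveringhenselian} shows that over each henselian local piece $F|_{Y_i}$ is a disjoint union of finite \'etale schemes, which over a \emph{strictly} henselian local scheme (and $Y_i$ being w-strictly local, its local rings at closed points are strictly henselian) are disjoint unions of copies of the base. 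Thus $F|_{Y_i}$, as a sheaf, is a disjoint union of copies of the terminal sheaf — i.e., it is constant (its value being the index set) — at least after passing from $Y_i$ to the clopen pieces corresponding to connected components; since $\pi_0(Y_i)$ is profinite and $F|_{Y_i}$ is determined by a classical sheaf on it (this uses that, as in Step 2, $F|_{Y_i}$ is a filtered colimit of finite \'etale covers, each of which splits over the w-strictly local $Y_i$), refining the cover once more by the clopen decomposition makes $F$ genuinely constant. Hence $F \in \Loc_X$.

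The main obstacle I anticipate is Step 2: getting from ``local on $X_\proet$'' data to a genuine geometric covering, specifically verifying that the object $F$ is representable by a \emph{scheme} and satisfies the valuative criterion, since these are not obviously pro-\'etale-local properties and require the topological noetherian hypothesis to control the diagonal. Steps 1 and 3 are comparatively soft, reducing quickly to the structural lemmas (Lemma \ref{lem:cwcontractiblecover}, Lemma \ref{lem:coveringhenselian}, Lemma \ref{lem:localgebraicspace}) already in hand.
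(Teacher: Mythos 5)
Your Steps 1 and 2 track the paper's argument (the paper also reduces to $X$ topologically noetherian, uses Lemma \ref{lem:localgebraicspace} plus descent, and then gets quasiseparatedness of the diagonal from topological noetherianity to conclude $F$ is a separated scheme). The gap is in Step 3. Knowing that $F|_{Y_i}$ restricts to a disjoint union of copies of the base on \emph{each connected component} of a w-contractible $Y_i$ does not make $F|_{Y_i}$ constant: the index sets and identifications vary over $\pi_0(Y_i)$, and the ``clopen decomposition into connected components'' you invoke is not a refinement available in $X_\proet$ (an infinite disjoint union of the connected components of $Y_i$ is not a quasi-compact cover of $Y_i$). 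What your Step 3 actually establishes is that $F|_{Y_i}$ is pulled back from a classical sheaf on $\pi_0(Y_i)$ that is a filtered colimit of finite locally constant sheaves --- i.e.\ precisely $\Cov_X \subseteq w\Loc_X$, which is the paper's intermediate step, not $\Cov_X\subseteq\Loc_X$. A sheaf on a profinite set that is such a filtered colimit need not be locally constant; this is exactly the point of the closed-disc example following the lemma, where $U=T\setminus\{0\}$ lies in $\Cov_X\cap w\Loc_X$ but not in $\Loc_X$. Since your Step 3 nowhere uses the topological noetherianity of $X$, it would apply verbatim to that example and prove too much.

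The missing content is the inclusion $w\Loc_X\subseteq\Loc_X$, and the paper's proof of it is genuinely global: one restricts to the scheme $X_\eta$ of generic points (finite in number because $X$ is topologically noetherian), where $w\Loc_{X_\eta}=\Loc_{X_\eta}$ holds for trivial reasons (Example \ref{ex:coveringsfield}); one then checks that $\pi_0(\overline{Y_\eta})\to\pi_0(Y)$ is \emph{surjective} (every connected component of the w-contractible $Y$ is a strict henselization of $X$ and hence meets the generic fibre), and uses extremal disconnectedness of $\pi_0(Y)$ to pull local constancy back along this surjection. You need some version of this argument --- or at least an explicit mechanism for choosing the trivializations continuously in $\pi_0(Y_i)$ --- to close the loop.
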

\begin{proof}
	The property that a sheaf $F \in \Shv(X_\proet)$ lies in $\Loc_X$, $w\Loc_X$, or $\Cov_X$ is Zariski local on $X$. Hence, we may assume $X$ is topologically noetherian. It is clear that $\Loc_X \subset w\Loc_X$. For $w\Loc_X \subset \Cov_X$,  fix some $F \in w\Loc_X$. Lemma \ref{lem:localgebraicspace} and descent show that $F$ satisfies the conclusion of Lemma \ref{lem:localgebraicspace}. To get $F$ to be a scheme, note that $F$ is quasiseparated as $X$ is topologically noetherian, and thus the diagonal of $F$ is a clopen immersion by quasicompactness. In particular, $F$ is separated, and thus a scheme: any locally quasifinite and separated algebraic space over $X$ is a scheme, see \cite[Tag 0417]{StacksProject}.
	
	We next show $\Cov_X \subset w\Loc_X$, i.e., any geometric covering $F \to X$ is locally weakly constant. In fact, it suffices to show the following: for any qcqs $U \in X_\et$ and  map $\phi:U \to F$, one may, locally on $X_\et$, factor $\phi$ as $U \to L \to F$ with $L$ finite locally constant. Indeed, this property implies that $F|_Y$ is a filtered colimit of finite locally constant sheaves for any w-contractible $Y \in X_\proet$, which is enough for local weak constancy. As $F$ is a filtered colimit of qcqs open subschemes, this property follows from Lemma \ref{lem:coveringhenselian} and spreading out.

	It remains to check $w\Loc_X = \Loc_X$. Choose $F \in w\Loc_X$ and a qcqs w-contractible cover $Y \to X$ such that $F|_Y = \pi^* G$ for some $G \in \Shv(\pi_0(Y)_\et)$, where $\pi:Y \to \pi_0(Y)$ is the natural map. We must show that $G$ is locally constant. Let $X_\eta \subset X$ be the union of the finite collection of generic points of $X$, and write $Y_\eta \subset Y$ for the corresponding fibre. Let $\overline{Y_\eta}$ be a qcqs w-contractible cover of $Y_\eta$. Then we obtain a diagram
	\[ \xymatrix{ \overline{Y_\eta} \ar[r]^-{\psi} \ar[d]^-a & \pi_0(\overline{Y_\eta}) \ar[d]^-{\pi_0(a)} \\
	Y_\eta \ar[r]^-{\phi} \ar[d]^-b & \pi_0(Y_\eta) \ar[d]^-{\pi_0(b)}  \\
	Y \ar[r]^-{\pi} \ar[d]^-c & \pi_0(Y) \\
			X } \]
			Each connected component of $Y$ is a strict henselisation of $X$, and thus contains a point lying over a point of $X_\eta$, i.e., a point of $Y_\eta$. This shows that $\pi_0(b)$ is surjective. The map $\pi_0(a)$ is clearly surjective. Write $f:\pi_0(\overline{Y_\eta}) \to \pi_0(Y)$ for the composite surjection. As $Y$ is w-contractible, the space $\pi_0(Y)$ is extremally disconnected. Thus, it is enough to show that $f^* G$ is locally constant. As $\psi_* \psi^* \simeq \id$ as endofunctors of $\Shv(\pi_0(\overline{Y_\eta}))$, it is enough to show $\psi^* f^* G$ is locally constant. By the commutativity of the diagram, the latter sheaf coincides with the restriction of $F$ to $\overline{Y_\eta}$. But $\overline{Y_\eta}$ is a w-contractible cover of $X_\eta$, so the claim follows from the equality $w\Loc_{X_\eta} = \Loc_{X_\eta}$ of Example \ref{ex:coveringsfield}.
\end{proof}

\begin{remark}
	If $X$ is Nagata, one may prove a more precise form of Lemma \ref{lem:locallagree}: there exists a pro-\'etale cover $\{U_i \to X\}$ with $U_i$ connected such that $F|_{U_i}$ is constant for any $F \in w\Loc_X$. To see this, choose a stratification $\{X_i \to X\}$ with $X_i$ affine, normal and connected; this is possible as $X$ is Nagata. Let $V_i$ be the henselisation of $X$ along $X_i$, and $U_i \to V_i$ be a connected pro-(finite \'etale) cover that splits all finite \'etale $V_i$-schemes. Then one checks that $\{U_i \to X\}$ satisfies the required properties using the Gabber-Elkik theorem (which identifies $V_{i,f\et} \simeq X_{i,f\et}$), and the observation that each $F \in w\Loc_{X_i}$ is a disjoint union of finite \'etale $X_i$-schemes by normality.
\end{remark}

\begin{remark}
	\label{rmk:locforlfi}
For an arbitrary scheme $Y$, define $\Loc_Y$, $w\Loc_Y$ and $\Cov_Y$ as above, except that objects of $w\Loc_Y$ and $\Cov_Y$ are required to be quasiseparated. Then the proof of Lemma \ref{lem:locallagree} shows that one always has $\Loc_Y \subset w\Loc_Y = \Cov_Y$, and the inclusion is an equivalence if $Y$ has locally a finite number of irreducible components.
\end{remark}

\begin{example} Some topological condition on the scheme $X$ (besides being connected) is necessary to make coverings well-behaved. Indeed, consider the following example. Let $T$ be topological space underlying the adic space corresponding to the closed unit disc over $\Q_p$. This is a spectral space, so there is some ring $A$ for which $X=\Spec A$ is homeomorphic to $T$. All arguments in the following are purely topological, so we will argue on the side of $T$. The origin $0\in T$ is a closed point which admits no generalizations, yet $T$ is connected. One has open subsets $T_1,T_{1/2},\ldots\subset T$, where $T_{1/i}$ is the annulus with outer radius $1/i$ and inner radius $1/(i+1)$.

The open subset $U = T\setminus \{0\}\subset T$ defines an object of $\Cov_X$. Indeed, it is clearly \'etale, and it satisfies the valuative criterion of properness, as $0$ does not admit nontrivial generalizations. One can show that $U$ also defines an object of $w\Loc_X$, however it is not hard to see that $U$ does not define an object of $\Loc_X$. We claim that the disjoint union of $U$ with an infinite disjoint union of copies of $X$ belongs to $\Loc_X$. This will prove that $\Loc_X$ is not closed under taking connected components, so that it cannot define an infinite Galois category.

Consider the pro-\'etale cover $Y\to X$ which has connected components $\pi_0(Y) = \{0,1,1/2,1/3,\ldots\}$, with connected components $Y_0 = \{0\}$, $Y_{1/i} = U_{1/i}$; it is easy to see how to build $Y$ as an inverse limit. The pullback of $U$ to $Y$ is the pullback of the sheaf $F_U$ on $\pi_0(Y)$ concentrated on $\{1,1/2,1/3,\ldots\}$. To show that the disjoint union of $U$ with an infinite disjoint union of copies of $X$ belongs to $\Loc_X$, it is enough to show that the disjoint union of $F_U$ with an infinite constant sheaf on $\pi_0(Y)$ is again an infinite constant sheaf. This boils down to some easy combinatorics on the profinite set $\pi_0(Y)$, which we leave to the reader.
\end{example}

\subsection{Fundamental groups}
\label{ss:pi1}

In this section, we assume that $X$ is locally topologically noetherian and connected, and we fix a geometric point $x$ of $X$ with $\ev_x:\Loc_X \to \Set$ being the associated functor $F \mapsto F_x$.

\begin{lemma}\label{lem:LocXTame}
The pair $(\Loc_X,\ev_x)$ is an infinite Galois category. Moreover, it is tame.
\end{lemma}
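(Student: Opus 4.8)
The plan is to verify the four axioms of an infinite Galois category for $(\Loc_X,\ev_x)$ and then establish tameness. First I would check axiom (1): $\Loc_X$ is closed under colimits and finite limits inside $\Shv(X_\proet)$. By Lemma \ref{lem:locallagree} we have $\Loc_X = w\Loc_X$, and Remark \ref{rmk:wlocclosure} already records that $w\Loc_X \subset \Shv(X_\proet)$ is closed under all colimits and finite limits; so axiom (1) is essentially free once we invoke these results. For axiom (4), the functor $\ev_x$ commutes with colimits and finite limits because it is the stalk functor at a geometric point (a point of the topos $\Shv(X_\proet)$, computed via the strict henselisation, which is w-contractible), and faithfulness plus conservativity follow from connectedness of $X$: a map of locally constant sheaves that is an isomorphism on the stalk at $x$ is an isomorphism on a clopen-stable locus, hence everywhere on the connected $X$. (Here one uses that a geometric covering is \'etale and satisfies the valuative criterion, so the locus where two sections agree, or where a map is an isomorphism, is open and closed.)

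Next I would handle axioms (2) and (3). For (2), I must show every $F \in \Loc_X$ is a disjoint union of connected objects. Using $\Loc_X = \Cov_X$, an object is a geometric covering $Y \to X$; since $X$ is locally topologically noetherian, $Y$ is too (Lemma \ref{l:topnoeth}(3) plus the \'etale hypothesis), so $Y$ has locally finitely many irreducible components and its connected components are open; writing $Y$ as the disjoint union of its connected components, each is again a geometric covering (the valuative criterion and \'etaleness are inherited by clopen subschemes), and "connected as a scheme" matches "connected as an object of $\Loc_X$" because a nonempty clopen subobject of a connected scheme is the whole thing. For axiom (3), I would exhibit a set of connected generators: for instance, for each qcqs w-contractible $Y \in X_\proet$ and each point $s \in \pi_0(Y)$, the finite \'etale covers of $X$ together with the various connected components of objects pulled back from $\pi_0(Y)$ give enough connected objects; more cleanly, fix a connected w-contractible pro-\'etale cover $Y_0 \to X$ (which exists by Lemma \ref{lem:cwcontractiblecover} after passing to a connected component), and observe that every $F \in w\Loc_X$ restricted to $Y_0$ is pulled back from a classical sheaf on the extremally disconnected profinite set $\pi_0(Y_0)$, hence a filtered colimit of finite locally constant sheaves on $Y_0$; descending and using that $\Loc_X$ is closed under colimits, one checks the connected objects obtained as connected components of $\pi^{-1}(\text{clopens})$-type constructions generate.

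Finally, tameness: for $X$ connected and $F \in \Loc_X$ connected, I must show $\pi_1(\Loc_X,\ev_x) = \Aut(\ev_x)$ acts transitively on $\ev_x(F) = F_x$. The standard argument is to consider, for two points $a, b \in F_x$, the subobject of $F \times_X F$ that is the "connected component through $(a,b)$"; since $F$ is a geometric covering, $F\times_X F \to X$ is again \'etale and satisfies the valuative criterion, so its connected components are clopen geometric coverings, and the diagonal identifies one of them with a copy isomorphic to $F$. Transitivity then amounts to producing an automorphism of the fibre functor swapping $a$ and $b$, which one constructs by a path-lifting / monodromy argument along a connected w-contractible cover: over such a cover $Y_0$, $F|_{Y_0}$ is constant, so any bijection of fibres is realised, and one patches these using the pro-\'etale Galois action of $\Aut(Y_0/X)$-type data. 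The main obstacle I anticipate is precisely this tameness step — establishing that $\Aut(\ev_x)$ is genuinely large enough to act transitively, since a priori a cofiltered limit of the relevant automorphism groups could collapse; the resolution is to use the concrete w-contractible cover where everything trivialises, exactly as in the proof of Lemma \ref{lem:locallagree}, rather than any purely formal argument. The other axioms should be routine given the machinery (repleteness, w-contractible covers, $\Loc_X = w\Loc_X = \Cov_X$) already assembled in the excerpt.
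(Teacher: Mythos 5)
Your verification of the four axioms is essentially the paper's argument: axiom (1) via Remark \ref{rmk:wlocclosure}, axiom (2) via $\Cov_X$ and clopenness of connected components (the paper additionally spells out why a connected scheme is a connected \emph{object}, using that the image of any map in $\Cov_X$ is open and closed under specialization, hence clopen), and axiom (4) via evaluation on a w-contractible object plus connectedness of $X$. Those parts are fine.

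The tameness step, however, has a genuine gap, and it is precisely the step that carries the real content of the lemma. You correctly identify the danger — $\Aut(\ev_x)$ is a limit and could a priori be too small — but your proposed resolution does not produce elements of $\Aut(\ev_x)$. Trivialising $F$ over a w-contractible cover $Y_0$ gives automorphisms of the single sheaf $F|_{Y_0}$, whereas an element of $\Aut(\ev_x)$ must act coherently on \emph{every} object of $\Loc_X$ at once; there is no ``Galois action of $\Aut(Y_0/X)$'' to patch with, since a w-contractible cover is not a torsor. Moreover, the object you want to use does not exist: a connected w-contractible scheme is a strictly henselian \emph{local} scheme (Lemma \ref{lem:cwcontractiblecharacterize}), so passing to a connected component of a w-contractible cover destroys surjectivity onto $X$; this also undermines your sketch of axiom (3). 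The paper's actual argument constructs automorphisms of the fibre functor as \emph{paths}: given a connected $Y\in\Cov_X$ and two geometric points $y_1,y_2$ over $x$, one links the underlying points of $Y$ by a finite chain of specializations/generalizations (using local topological noetherianness), realises each step by a map $\Spec R_i\to Y$ with $R_i$ a valuation ring with algebraically closed fraction field, and observes that each such $\Spec R_i \to X$ induces an isomorphism of fibre functors $\ev_{x_i}\simeq \ev_{x_{i+1}}$ (because every geometric covering of $\Spec R_i$ is a disjoint union of copies of it, by the valuative criterion). Composing yields $\gamma\in\Aut(\ev_x)$ with $\gamma(y_1)=y_2$. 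This explicit construction of paths is the missing idea; without it, or some substitute that directly exhibits elements of $\Aut(\ev_x)$, the tameness claim is not established.
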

\begin{proof}
	For the first axiom, Remark \ref{rmk:wlocclosure} shows that $w\Loc_X \subset \Shv(X_\proet)$ is closed under colimits and finite limits. For the second axiom, we use $\Cov_X$. Indeed, any $Y\in \Cov_X$ is locally topologically noetherian, so that its connected components are clopen. Any clopen subset of $Y$ defines another object of $\Cov_X$. It is a connected object. Indeed, assume $Y\in \Cov_X$ is connected as a scheme, and $Z\to Y$ is some map in $\Cov_X$. The image of $Z$ is open and closed under specializations (by the valuative criterion of properness). As $Y$ is locally topologically noetherian, open implies locally constructible, and in general, locally constructible and closed under specializations implies closed. Thus, the image of $Z$ is open and closed, and thus either empty or all of $Y$. The third axiom regarding things being a set (as opposed to a proper class) is left to the reader. For the last axiom, we use $\Loc_X$. As any pair of points of $X$ is linked by a chain of specializations, one checks that $\ev_x$ is conservative and faithful on $\Loc_X$. As $\ev_x$ is given by evaluation on a connected w-contractible object, it commutes with all colimits and all limits in $\Shv(X_\proet)$, and hence with all colimits and finite limits in $\Loc_X$.

Finally, we have to prove tameness. This comes down to showing that $\pi_1$ is large enough, i.e. we have to construct enough paths in $X$. Thus, choose some connected cover $Y\to X$, and any two geometric points $y_1,y_2$ above $x$. They give rise to topological points $\bar{y}_1,\bar{y}_2\in Y$. As $Y$ is locally topologically noetherian, we can find a paths $\bar{y}_1 = \bar{z}_0, \bar{z}_1, \ldots, \bar{z}_n = \bar{y}_2$ of points in $Y$ such that for each $i=0,\ldots,n-1$, $\bar{z}_{i+1}$ is either a specialization or a generalization of $\bar{z}_i$. Fix geometric points $z_i$ above $\bar{z}_i$. By projection, we get geometric points $x_i$ of $X$, lying above topological points $\bar{x}_i\in X$.

For each $i$, choose a valuation ring $R_i$ with algebraically closed fraction field, together with a map $\Spec R_i\to Y$ such that the special and generic point are (isomorphic to) $z_i$ and $z_{i+1}$ (or the other way around); we fix the isomorphisms. The valuation rings $R_i$ induce maps $\Spec R_i\to X$, which induce isomorphisms of fibre functors $\ev_{x_i}\simeq \ev_{x_{i+1}}$. By composition, we get an isomorphism of fibre functors
\[
\ev_x = \ev_{x_0}\simeq \ev_{x_1}\simeq \ldots \simeq \ev_{x_n} = \ev_x\ ,
\]
i.e. an automorphism $\gamma\in \pi_1(\Loc_X,\ev_x)$ of the fibre functor $\ev_x$. By construction, we have $\gamma(y_1) = y_2$, showing that $(\Loc_X,\ev_x)$ is tame.
\end{proof}

Tameness implies that the following definition is robust:

\begin{definition}
The {\em pro-\'etale fundamental group} is defined as $\pi_1^\proet(X,x) := \Aut(\ev_x)$; this group is topologized using the compact-open topology on $\Aut(S)$ for any $S \in \Set$.
\end{definition}

We now relate $\pi^\proet_1(X_,x)$ to other fundamental groups. First, the profinite completion of $\pi_1^\proet(X,x)$ recovers the \'etale fundamental group $\pi_1^\et(X,x)$, as defined in \cite{SGA1}:

\begin{lemma}
	Let $G$ be a profinite group. There is an equivalence
	\[\underline{\Hom}_{\cont}(\pi_1^\proet(X,x), G) \simeq (B\calF_G)(X_\proet)\ .\]
\end{lemma}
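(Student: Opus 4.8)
The statement asserts that continuous homomorphisms from $\pi_1^\proet(X,x)$ to a profinite group $G$ are classified by the groupoid of $G$-torsors on $X_\proet$, i.e.\ by $(B\calF_G)(X_\proet)$ (where $\calF_G$ is the sheaf on $X_\proet$ attached to the topological group $G$ as in Lemma~\ref{l:ExSheafTopSpace}). The plan is to go through the infinite Galois formalism of \S\ref{ss:infgaloistheory}. By Theorem~\ref{thm:infGaloisthy}(2), for any Noohi group $H$ one has a natural identification of $\Hom_{\cont}(H,\pi_1^\proet(X,x))$ with the groupoid of fibre-functor-preserving functors $\Loc_X \to H\textrm{-}\Set$. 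Since $G$ is profinite, hence a Noohi group (Example~\ref{ex:loccompactnoohi}), I would first reformulate the right-hand side: a fibre-functor-compatible functor $\Loc_X \to G\textrm{-}\Set$ is the same as a ``$G$-object'' in $\Loc_X$ with a trivialization at $x$. Concretely, because $G = \lim_i G/U_i$ is a cofiltered limit of finite groups and $(\Loc_X, \ev_x)$ is a tame infinite Galois category, such a functor is determined by a compatible system of finite $G/U_i$-sets in $\Loc_X$, i.e.\ by a $\calF_G$-torsor sheaf on $X_\proet$ that is pro-(finite locally constant), together with a point in its fibre at $x$.

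The key steps, in order, would be: (i) recall that $G$-torsors on any topos correspond to the groupoid $(BG)(X_\proet) = (B\calF_G)(X_\proet)$, and that for $X$ locally topologically noetherian every $\calF_G$-torsor on $X_\proet$ is automatically pro-(finite locally constant) — this uses Lemma~\ref{lem:localgebraicspace} and the identification $\Loc_X = w\Loc_X = \Cov_X$ of Lemma~\ref{lem:locallagree}, together with the fact that a torsor under a profinite sheaf of groups is a cofiltered limit of torsors under its finite quotients (each of which is a finite \'etale, hence locally constant, object); (ii) deduce that $(B\calF_G)(X_\proet)$ is equivalent to the groupoid of continuous $G$-actions on objects of $\Loc_X$ that are ``$G$-torsors'' in the appropriate sense — equivalently, to the groupoid of exact fibre-functor-preserving functors $\Loc_X \to G\textrm{-}\Set$, after forgetting the chosen base point or rather after using that a $G$-torsor with a point of its $x$-fibre is canonically trivialized at $x$; (iii) apply Theorem~\ref{thm:infGaloisthy}(2) with $H = G$ to identify this groupoid with $\Hom_{\cont}(G, \pi_1^\proet(X,x))$; (iv) observe that the bookkeeping is symmetric: because $\pi_1^\proet(X,x)$ is Noohi and $G$ is profinite (hence Noohi), $\underline{\Hom}_{\cont}(\pi_1^\proet(X,x), G)$ and $\underline{\Hom}_{\cont}(G,\pi_1^\proet(X,x))$ are related — actually the statement we want is the former, and Theorem~\ref{thm:infGaloisthy}(2) directly gives $\Hom_{\cont}(\pi_1^\proet(X,x), G)$ as functors $\Loc_X \to G\textrm{-}\Set$ compatible with fibre functors, which is precisely $(B\calF_G)(X_\proet)$ once one checks that for $G$ \emph{profinite} all such functors land in finite $G$-sets and hence give genuine torsors. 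The homomorphism being continuous is automatic since $G$ is profinite and the relevant open subgroups of $\pi_1^\proet$ come from $\Loc_X$-objects.

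Thus the proof is mostly a matter of unwinding Theorem~\ref{thm:infGaloisthy}(2) and matching ``functor $\Loc_X \to G\textrm{-}\Set$ preserving fibre functors'' with ``$\calF_G$-torsor on $X_\proet$'' on the nose. The cleanest route is: a continuous homomorphism $\phi: \pi_1^\proet(X,x) \to G$ yields, by pushout along $\phi$, from the ``universal'' pro-object $\ev_x^{-1}(\pi_1^\proet(X,x)) \in \mathrm{Pro}(\Loc_X)$ a pro-(finite locally constant) $\calF_G$-torsor $P_\phi$ with a base point at $x$; conversely a pointed $\calF_G$-torsor $P$ acts on $\ev_x$ through its automorphisms, giving a continuous map $\pi_1^\proet(X,x) \to \underline{\Aut}(\calF_G\text{-torsor}) = G$ (using that $G$ acts on itself on the right, and that a torsor's automorphism sheaf is $G$). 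One then checks these are mutually inverse equivalences of groupoids, naturally in $X$.

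\textbf{Main obstacle.} The essential point — and the place where the hypothesis ``$X$ locally topologically noetherian'' is used — is verifying that \emph{every} $\calF_G$-torsor on $X_\proet$, for $G$ profinite, is automatically pro-(finite locally constant), i.e.\ lies in $\mathrm{Pro}(\Loc_X)$ in the suitable sense, so that it is seen by the infinite Galois category $(\Loc_X,\ev_x)$. For finite $G$ this is the statement that a $G$-torsor is a finite locally constant sheaf, hence a finite \'etale cover, which is classical; the content is controlling the cofiltered limit — one must know the torsor is classical (Lemma~\ref{l:Classical}) and that its $\pi_0$-description behaves well, which is exactly where local topological noetherianness of $X$ (and hence of all geometric coverings, via Lemma~\ref{l:topnoeth}) enters to guarantee connected components are clopen and the colimit/limit manipulations of Lemma~\ref{lem:locallagree} go through. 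I expect this identification of ``pointed $\calF_G$-torsors'' with ``pointed pro-finite-locally-constant $G$-objects in $\Loc_X$'' to be the only non-formal step; once it is in hand, the rest is a direct citation of Theorem~\ref{thm:infGaloisthy}(2).
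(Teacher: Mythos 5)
There is a genuine directional error in the way you invoke Theorem \ref{thm:infGaloisthy}(2). As you correctly state it at the outset, that theorem identifies $\Hom_{\cont}(G,\pi_1(\calC,F))$ — homomorphisms \emph{into} the fundamental group — with fibre-functor-preserving functors $\calC \to G\textrm{-}\Set$. But the lemma concerns $\underline{\Hom}_{\cont}(\pi_1^\proet(X,x),G)$, homomorphisms \emph{out of} $\pi_1^\proet(X,x)$. In steps (iii)–(iv) you nevertheless assert that the theorem "directly gives $\Hom_{\cont}(\pi_1^\proet(X,x),G)$ as functors $\Loc_X \to G\textrm{-}\Set$ compatible with fibre functors"; this is false — such a functor $T$ with $F_G\circ T = \ev_x$ induces $G=\Aut(F_G)\to\Aut(\ev_x)=\pi_1^\proet(X,x)$, i.e. a map in the wrong direction. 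To classify maps $\pi_1^\proet(X,x)\to G$ via the Galois formalism you must instead consider functors $G\textrm{-}\Set\to\Loc_X$ over $\Set$ (restriction along the homomorphism), equivalently apply the theorem with the Galois category $(G\textrm{-}\Set,F_G)$ and the Noohi group $\pi_1^\proet(X,x)$. Your closing "cleanest route" paragraph (pushout of the universal pro-object along $\phi$, and conversely recovering $\phi$ from a pointed torsor) is the correct construction and would repair this, but as written the middle of the argument does not cohere with it, and the claimed symmetry in (iv) between $\underline{\Hom}(\pi_1,G)$ and $\underline{\Hom}(G,\pi_1)$ is not a real step.

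For comparison, the paper's proof is a two-line reduction: both sides are compatible with cofiltered limits in $G$ (for the right-hand side because $\calF_G=\lim_U\calF_{G/U}$ and limits of torsors are torsors, using repleteness or evaluation on w-contractibles), so one reduces to $G$ finite, where a $\calF_G$-torsor is visibly a finite locally constant sheaf and both sides classify $G$-torsors on $X_\proet$ in the standard way. This sidesteps the statement you flag as the "main obstacle" — that an arbitrary $\calF_G$-torsor for profinite $G$ is pro-(finite locally constant) — by never needing it in that form: the only input beyond the finite case is the cofiltered-limit compatibility, not the full identification $\Loc_X = w\Loc_X = \Cov_X$ or local topological noetherianity (which is a standing hypothesis of the section needed to define $\pi_1^\proet$, not a special ingredient here). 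If you fix the direction of the Galois-theoretic correspondence, your longer route can be made to work, but the reduction to finite quotients is both shorter and avoids the delicate pro-object bookkeeping.
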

Here, $\underline{\Hom}(H,G)$ for groups $G$ and $H$ denotes the groupoid of maps $H\to G$, where maps between $f_1,f_2: H\to G$ are given by elements $g\in G$ conjugating $f_1$ into $f_2$.
\begin{proof}
Both sides are compatible with cofiltered limits in $G$, so we reduce to $G$ finite. In this case, one easily checks that both sides classify $G$-torsors on $X_\proet$.
\end{proof}

To understand representations of $\pi_1^\proet(X,x)$, we first construct ``enough'' objects in $\Loc_X$. 

\begin{construction}
	The equivalence $\Cov_X \simeq \Loc_X \simeq \pi_1^\proet(X,x)\textrm{-}\Set$ implies that for each open subgroup $U \subset \pi_1^\proet(X,x)$, there exists a canonically defined $X_U \in \Cov_X$ with a lift of the base point $x \in X_{U,\proet}$ corresponding to $\pi_1^\proet(X,x)/U \in \pi_1^\proet(X,x)\textrm{-}\Set$ in a base point preserving manner. Moreover, as $X_U$ is itself a locally topologically noetherian scheme, one has $\pi_1^\proet(X_U,x) = U$ as subgroups of $\pi_1^\proet(X,x)$.
\end{construction}

Write $\Loc_{X_\et}$ for the category of locally constant sheaves on $X_\et$, viewed as a full subcategory of $\Loc_X$. The difference between $\Loc_{X_\et}$ and $\Loc_X$ can be explained group theoretically:

\begin{lemma}
	Under $\Loc_X \simeq \pi_1^\proet(X,x)\textrm{-}\Set$, the full subcategory $\Loc_{X_\et} \subset \Loc_X$ corresponds to the full subcategory of those $S \in \pi_1^\proet(X,x)\textrm{-}\Set$ where an open subgroup acts trivially.
\end{lemma}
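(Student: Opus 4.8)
The strategy is to characterize $\Loc_{X_\et}$ intrinsically inside $\Loc_X$ via a ``trivialized by a finite \'etale cover'' condition, and then translate that condition through the equivalence $\Loc_X \simeq \pi_1^\proet(X,x)\textrm{-}\Set$. First I would observe that a sheaf $F \in \Loc_X$ lies in $\Loc_{X_\et}$ (i.e., is classical and locally constant for the \'etale topology) if and only if there is a \emph{finite} \'etale surjection $Y \to X$ with $F|_Y$ constant. One direction is essentially the definition together with the fact that locally constant sheaves on $X_\et$ are trivialized by finite \'etale covers locally on $X$, combined with a quasicompactness argument on $X$ (or, since the statement is about the abstract group-set dictionary and $X$ is connected, on a single connected component); the other direction is clear since a finite \'etale $Y \to X$ is in particular an \'etale cover. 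Here I would use Corollary \ref{c:LocSysDiscrete} to pass freely between the \'etale and pro-\'etale incarnations of locally constant sheaves.

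Next I would unwind what ``$F|_Y$ constant for some finite \'etale surjection $Y \to X$'' means on the side of $\pi_1^\proet(X,x)\textrm{-}\Set$. A finite \'etale surjection $Y \to X$ corresponds, under $\Cov_X \simeq \Loc_X \simeq \pi_1^\proet(X,x)\textrm{-}\Set$, to a \emph{finite} transitive $\pi_1^\proet(X,x)$-set; since the action on a discrete finite set is continuous, the stabilizer $U$ of a point is an open subgroup, and after passing to the corresponding connected cover $X_U$ (using the Construction above, so that $\pi_1^\proet(X_U,x) = U$), pulling back to $X_U$ corresponds to restricting the $\pi_1^\proet(X,x)$-action to $U$. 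A sheaf $F$ corresponding to $S \in \pi_1^\proet(X,x)\textrm{-}\Set$ becomes constant on $X_U$ precisely when $U$ acts trivially on $S$. Since finite \'etale covers of $X$ biject with open subgroups up to the usual identifications, and every open subgroup of $\pi_1^\proet(X,x)$ contains an open normal subgroup only after profinite completion — but here I just need the existence of \emph{some} open subgroup acting trivially — the two conditions match: $F \in \Loc_{X_\et}$ iff some open subgroup of $\pi_1^\proet(X,x)$ acts trivially on the corresponding $S$.

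The main obstacle I anticipate is the first step: carefully justifying that a sheaf in $\Loc_X$ which is \'etale locally constant is in fact trivialized by a genuinely \emph{finite} \'etale cover. In general one only knows the sheaf is locally constant in the \'etale topology, meaning locally on $X_\et$ it is constant; to globalize to a single finite \'etale cover one uses that $F$, being locally constant with a fixed fibre $S = F_x$, is classified by a continuous homomorphism (a torsor under $\mathrm{Aut}(S)^\delta$, the automorphism group of the abstract set $S$ with the discrete topology, or rather a $1$-cocycle valued in it) whose image lands in a \emph{finite} subgroup precisely when $S$ is finite — but for infinite $S$ one must instead argue that the monodromy action factors through a quotient for which the relevant cover is finite \emph{\'etale} only when restricted appropriately. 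The cleanest route, which I would take, avoids this by working directly with the group-set dictionary: translate ``$F$ is \'etale locally constant'' to ``$F$ is classical and $\nu^* \nu_* F \simeq F$ with $\nu_* F$ locally constant on $X_\et$,'' note via Corollary \ref{c:LocSysDiscrete} that such $F$ correspond exactly to $\pi_1^\et(X,x)$-sets pulled back along $\pi_1^\proet(X,x) \to \pi_1^\et(X,x)$, i.e., to continuous $\pi_1^\proet(X,x)$-sets on which the kernel of this profinite-completion map — equivalently, some open subgroup — acts trivially. This identification of $\Loc_{X_\et}$ with the pullbacks of $\pi_1^\et(X,x)$-sets is exactly the content of the classical Galois correspondence for $X_\et$ combined with the previously-noted relation between $\pi_1^\proet$ and $\pi_1^\et$, and it immediately yields the stated description.
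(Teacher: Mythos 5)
There is a genuine gap, and it sits exactly where you flagged the ``main obstacle.'' Your first route rests on the claim that $F \in \Loc_{X_\et}$ if and only if $F$ is trivialized by a \emph{finite} \'etale surjection, and your ``cleanest route'' rests on the claim that $\Loc_{X_\et}$ corresponds to continuous $\pi_1^\et(X,x)$-sets pulled back along $\pi_1^\proet(X,x) \to \pi_1^\et(X,x)$. Both claims are false for infinite stalks. Concretely, let $X$ be the nodal curve of the introduction ($\P^1$ with $0$ and $\infty$ glued) and let $F$ be the universal $\underline{\Z}$-torsor, i.e.\ the infinite chain of $\P^1$'s, viewed as a locally constant sheaf on $X_\et$ with stalk $S = \Z$. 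This $F$ lies in $\Loc_{X_\et}$ (it is \'etale-locally constant, being a torsor under a discrete group), but it is not trivialized by any finite \'etale cover, and the corresponding $\pi_1^\proet(X,x)$-action on $\Z$ (translation via the surjection onto $\Z$) is not continuous for $\pi_1^\et(X,x) = \hat{\Z}$, so $F$ does not come from a $\pi_1^\et$-set. Note that classical SGA1 Galois theory only identifies \emph{finite} locally constant sheaves with $\pi_1^\et$-sets. Your chain of equivalences would instead prove: ``$F$ is trivialized by a finite \'etale cover iff a \emph{finite-index} open subgroup acts trivially on $S$'' — a correct but strictly weaker statement than the lemma, since open subgroups of $\pi_1^\proet(X,x)$ can have infinite index (indeed that is the whole point of the construction). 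In the example above, the stabilizer is open of infinite index and acts trivially, so the lemma correctly places $F$ in $\Loc_{X_\et}$, while your criterion would exclude it.

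The missing idea is how to produce the trivializing open subgroup for $F \in \Loc_{X_\et}$ with possibly infinite stalk $S$: the paper forms $G = \Isom(F,\underline{S})$, observes that \'etale-local trivializability of $F$ makes $G$ an $\underline{\Aut(S)}$-torsor on $X_\et$ for the \emph{discrete} group $\Aut(S)$ (using that $\underline{\Aut(S)} = \Aut(\underline{S})$ on $X_\et$), hence $G \in \Cov_X$ — an \'etale $X$-scheme satisfying the valuative criterion, typically of infinite degree. The equivalence $\Cov_X \simeq \pi_1^\proet(X,x)\textrm{-}\Set$ then yields an open subgroup $U$ and a factorization $X_U \to G \to X$; since $F|_G$ is tautologically constant, $U = \pi_1^\proet(X_U,x)$ acts trivially on $F_x$. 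The converse direction is the easy one you have: if an open $U$ acts trivially, then $F$ is trivialized over the \'etale (not necessarily finite) surjection $X_U \to X$, hence lies in $\Loc_{X_\et}$.
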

\begin{proof}
	Fix $S \in \pi_1^\proet(X,x)\textrm{-}\Set$, and assume an open subgroup $U \subset \pi_1^\proet(X,x)$ acts trivially on $S$. Then the corresponding locally constant sheaf is trivialized by passage to $X_U$, which is an \'etale cover of $X$. Conversely, fix some $F \in \Loc_{X_\et}$ with fibre $S$, and consider the sheaf $G = \Isom(F,\underline{S})$ on $X_\proet$. The \'etale local trivializability of $F$ shows that $G$ is an $\underline{\Aut(S)}$-torsor on $X_\et$; here we use that $\underline{\Aut(S)} = \Aut(\underline{S}) = \nu_* \calF_{\Aut(S)}$ on $X_\et$ as each $U \in X_\et$ has a discrete $\pi_0$. Then $G \in \Cov_X$, so there exists an open subgroup $U \subset \pi_1^\proet(X,x)$ and a factorization $X_U \to G \to X$. By construction, $F|_G$ is constant, so $U = \pi_1^\proet(X_U,x)$ acts trivially on the fibre $F_x$.
\end{proof}

The pro-(discrete group) completion of $\pi_1^\proet(X,x)$ covers the fundamental pro-group defined in \cite[\S 6]{SGA3ExpX}:

\begin{lemma}
	\label{lem:pi1proetsga3}
	Let $G$ be a discrete group. There is an equivalence
	\[\underline{\Hom}_{\cont}(\pi_1^\proet(X,x),G) \simeq (BG)(X_\et)\ .\]
\end{lemma}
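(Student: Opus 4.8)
The claim is that for a discrete group $G$, continuous homomorphisms $\pi_1^\proet(X,x) \to G$ (as a groupoid, i.e. up to conjugation) are the same as $G$-torsors on $X_\et$. The strategy is to run the same argument as for the profinite and prodiscrete cases, but being careful that $G$ is merely discrete rather than finite, so $G$-torsors on $X_\et$ need not split on a \emph{finite} \'etale cover. The key input is the equivalence $\Loc_X \simeq \Cov_X \simeq \pi_1^\proet(X,x)\textrm{-}\Set$ from Theorem \ref{thm:infGaloisthy}(3) together with Lemma \ref{lem:locallagree}, and the tameness from Lemma \ref{lem:LocXTame}.

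First I would unwind both sides. A $G$-torsor $T$ on $X_\et$ is, by descent and the local constancy of torsors under a discrete group, a locally constant sheaf on $X_\et$ with a $G$-action making it a torsor; pulling back along $\nu: X_\proet \to X_\et$ (using that $\nu^*$ is fully faithful on $\Shv(X_\et)$ by Lemma \ref{lem:nupullbackdesc}, and that $G$-torsors on $X_\et$ are the same as $G$-torsors on $X_\proet$ that are \'etale-locally trivial) gives an object of $\Loc_X$ equipped with a $G$-action that is a torsor in $\pi_1^\proet(X,x)\textrm{-}\Set$. Under the equivalence $\Loc_X \simeq \pi_1^\proet(X,x)\textrm{-}\Set$, a $G$-torsor object corresponds to a set $S$ with commuting actions of $\pi_1^\proet(X,x)$ (continuous) and $G$ (the torsor structure, with $G$ acting simply transitively). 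Choosing a base point of $S$ (the fibre $T_x$, which is a $G$-torset, so non-canonically $G$ itself) translates the $\pi_1^\proet(X,x)$-action into a continuous homomorphism $\pi_1^\proet(X,x) \to G$; changing the base point conjugates. This is exactly the groupoid $\underline{\Hom}_{\cont}(\pi_1^\proet(X,x),G)$. Conversely, a continuous homomorphism $\rho: \pi_1^\proet(X,x) \to G$ lets $\pi_1^\proet(X,x)$ act on $G$ (through $\rho$ and left translation), producing an object $F_\rho \in \pi_1^\proet(X,x)\textrm{-}\Set \simeq \Loc_X$ with a commuting right $G$-action; one checks $F_\rho$ is a $G$-torsor on $X_\proet$, and that it is \'etale-locally trivial since $\ker(\rho)$ is open (as $\rho$ is continuous and $G$ is discrete), so $F_\rho$ becomes trivial on the \'etale cover $X_{\ker\rho} \to X$ of the preceding Construction — hence $F_\rho$ descends to a $G$-torsor on $X_\et$ by Lemma \ref{l:Classical} (or rather the already-noted fact that \'etale-locally-trivial torsors on $X_\proet$ come from $X_\et$).

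The two constructions are mutually inverse essentially formally, once one knows that every $G$-torsor on $X_\et$, for $G$ discrete, is \'etale-locally trivial and gives a \emph{continuous} action of $\pi_1^\proet(X,x)$ with open kernel: this is where I would spend the most care. Continuity of the resulting $\rho$ amounts to the stabilizer of the base point of $S = T_x$ being open in $\pi_1^\proet(X,x)$, which holds because $T$, as an object of $\Loc_X \simeq \Cov_X$, is a disjoint union of connected geometric coverings, and the connected component through the base point is of the form $X_U$ for an open subgroup $U$ — equivalently, because $T$ becomes trivial on the \'etale cover trivializing the torsor, which is topologically noetherian and so has only finitely many components over any quasicompact piece.

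The main obstacle I expect is not conceptual but bookkeeping: verifying compatibility with the groupoid structure (morphisms = conjugations on one side, isomorphisms of torsors on the other) and checking that ``$G$-torsor object in $\pi_1^\proet(X,x)\textrm{-}\Set$ whose underlying $\Loc_X$-object is \'etale-locally trivial'' corresponds precisely to ``$S$ with open-kernel $\pi_1^\proet(X,x)$-action commuting with a simply transitive $G$-action.'' Since both $\underline{\Hom}_{\cont}(-,G)$ and $(BG)(X_\et) = (BG)(X_\proet)$ commute with filtered colimits in $G$ (writing $G$ as a filtered union of finitely generated subgroups, or just noting both sides are groupoids of torsors), one could alternatively bootstrap from a known case, but the direct argument above is cleanest and parallels the proof of Lemma \ref{lem:pi1proetsga3} immediately following.
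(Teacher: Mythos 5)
Your proposal is correct and follows essentially the same route as the paper: identify $G$-torsors on $X_\et$ with $G$-torsors in $\Loc_X\simeq\pi_1^\proet(X,x)\textrm{-}\Set$ (using that for discrete $G$ such torsors are automatically classical/\'etale-locally trivial), and read off the continuous homomorphism from the monodromy on the fibre at $x$, with continuity coming for free since stabilizers in a continuous $\pi_1^\proet(X,x)$-set are open. The paper phrases the converse direction via Theorem \ref{thm:infGaloisthy}(2) (a fibre-preserving pushout functor $G\textrm{-}\Set\to\Loc_X$) rather than choosing a base point of the fibre, but this is only a cosmetic difference.
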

\begin{proof}
	Any continuous map $\rho:\pi_1^\proet(X,x) \to G$ gives a $G$-torsor in $\pi_1^\proet(X,x)\textrm{-}\Set$, and hence an object of $(BG)(X_\proet)$; one then simply observes that $(BG)(X_\proet) = (BG)(X_\et)$ as $G$ is discrete. Conversely, any $G$-torsor $F$ on $X_\et$ defines a fibre preserving functor $G\textrm{-}\Set \to \Loc_X$ simply by pushout, and hence comes from a continuous map $\pi_1^\proet(X,x) \to G$.
\end{proof}

Lemma \ref{lem:pi1proetsga3} shows that the inverse limit of the pro-group defined in \cite[\S 6]{SGA3ExpX} is large enough, i.e., the limit topological group has the same discrete group representations as the defining pro-group.

We now explain why the group $\pi_1^\proet(X,x)$ is richer than its pro-(discrete group) completion: the latter does not know the entirety of $\Loc_X(\Q_\ell)$ (see Example \ref{ex:delignenodalcurve}), while the former does. The main issue is that $\Loc_X(\Q_\ell)$ is not $\Loc_X(\Z_\ell)[\frac{1}{\ell}]$, but rather the global sections of the stack associated to the prestack $U \mapsto \Loc_U(\Z_\ell)[\frac{1}{\ell}]$ on $X_\proet$.

\begin{lemma}
	\label{lem:pi1ladicrep}
	For a local field $E$, there is an equivalence of categories $\Rep_{E,\cont}(\pi^\proet_1(X,x)) \simeq \Loc_X(E)$.
\end{lemma}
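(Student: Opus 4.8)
The claim is an equivalence $\Rep_{E,\cont}(\pi_1^\proet(X,x)) \simeq \Loc_X(E)$ for $E$ a local field, and the main point is to transfer the abstract infinite Galois dictionary $\Loc_X \simeq \pi_1^\proet(X,x)\textrm{-}\Set$ (Theorem \ref{thm:infGaloisthy}(3), applicable by Lemma \ref{lem:LocXTame}) to the setting of continuous $E$-linear representations and locally free $E$-sheaves. The plan is to first treat the integral case, then invert $\ell$, then descend along the pro-\'etale cover trivializing a given $E$-local system.

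First I would handle lisse $\calO_E$-sheaves. Writing $\calO_E = \lim \calO_E/\varpi^n$, Proposition \ref{p:LisseSheaves}(1) (for $E/\Q_\ell$ finite) identifies $\Loc_X(\calO_E)$ with $\lim_n \Loc_X(\calO_E/\varpi^n)$, and each $\Loc_X(\calO_E/\varpi^n) = \Loc_{X_\et}(\calO_E/\varpi^n)$ by Corollary \ref{c:LocSysDiscrete}. An object of $\Loc_{X_\et}(R)$ for a finite discrete ring $R$ is a locally constant sheaf of finite free $R$-modules, hence under $\Loc_{X_\et} \subset \Loc_X \simeq \pi_1^\proet(X,x)\textrm{-}\Set$ corresponds to a finite $\pi_1^\proet(X,x)$-set with $R$-module structure on which an open subgroup acts trivially, i.e.\ a continuous representation of $\pi_1^\proet(X,x)$ on a finite free $R$-module. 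Taking the limit over $n$, and using that a continuous action of the (Noohi, hence complete) group $\pi_1^\proet(X,x)$ on a finite free $\calO_E$-module is the same as a compatible system of continuous actions on the finite quotients, gives $\Loc_X(\calO_E) \simeq \Rep_{\calO_E,\cont}(\pi_1^\proet(X,x))$ when $E/\Q_\ell$ is finite. For general $E$ one passes to the colimit over finite subextensions $F \subset E$: Proposition \ref{p:LisseSheaves}(3) gives $\Loc_X(\calO_E) \simeq \colim_F \Loc_X(\calO_F)$, and correspondingly $\Rep_{\calO_E,\cont}(\pi_1^\proet(X,x)) \simeq \colim_F \Rep_{\calO_F,\cont}(\pi_1^\proet(X,x))$ since a continuous representation on a finite free $\calO_E$-module, the target being given the colimit topology, factors through some $\GL_n(\calO_F)$ by compactness of the image and the fact that $\pi_1^\proet(X,x)$ has enough open subgroups. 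This settles the $\calO_E$-case; note $\GL_n(\calO_E)$ and $\GL_n(E)$ are Noohi groups by Examples \ref{ex:loccompactnoohi} and \ref{ex:QellbarNoohi}, which is what makes ``continuous representation'' a well-behaved notion here.

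Next I would pass to $E$-coefficients. On the representation side, a continuous representation $\rho:\pi_1^\proet(X,x) \to \GL_n(E)$ has image in a compact, hence bounded, subgroup (as $\pi_1^\proet(X,x)$ is a Noohi group and the image is the continuous image of a complete group with enough open subgroups), so after conjugation it lands in $\GL_n(\calO_E)$; thus $\Rep_{E,\cont}(\pi_1^\proet(X,x)) \simeq \Rep_{\calO_E,\cont}(\pi_1^\proet(X,x))[\ell^{-1}]$, the latter being the isogeny category. On the sheaf side, Proposition \ref{p:LisseSheaves}(4) shows $\Loc_X(\calO_E)[\ell^{-1}] \to \Loc_X(E)$ is fully faithful, and Proposition \ref{p:LisseSheaves}(6) shows that any $L \in \Loc_X(E)$ becomes $\calO_E$-integral after an \emph{\'etale} cover $Y \to X$. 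So the remaining issue is essential surjectivity: given $L \in \Loc_X(E)$, produce a global $\calO_E$-lattice after no cover at all, or rather identify $L$ with the representation obtained by descending the $\calO_E$-lattice on $Y$. I would argue as follows: $L$ corresponds, via the already-established equivalence on $Y$ (using $\pi_1^\proet(Y,x) \subset \pi_1^\proet(X,x)$ an open subgroup, for $Y$ connected which we may assume) to a continuous representation of the open subgroup $\pi_1^\proet(Y,x)$ on a finite-dimensional $E$-vector space; but this representation is the restriction of the $\pi_1^\proet(X,x)$-action on $L_x$ coming from the fact that $L$ is a sheaf on $X_\proet$ with an action of the automorphism group of the fiber functor. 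Concretely, $L_x$ carries a continuous action of $\pi_1^\proet(X,x)$ by functoriality of $\ev_x$, and this action is continuous because it is so after restriction to the open subgroup $\pi_1^\proet(Y,x)$ (where it is the integral representation) — continuity of a group action can be checked on an open subgroup. This produces the functor $\Loc_X(E) \to \Rep_{E,\cont}(\pi_1^\proet(X,x))$, $L \mapsto L_x$; it is inverse to the pushout functor $\Rep_{E,\cont}(\pi_1^\proet(X,x)) \to \Loc_X(E)$ sending a representation to the associated sheaf, one checks both composites are naturally isomorphic to the identity using the equivalence $\Loc_X \simeq \pi_1^\proet(X,x)\textrm{-}\Set$ forgetting linear structure.

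The main obstacle is the last step: showing $L \mapsto L_x$ really gives a \emph{continuous} $E$-linear representation and that it is essentially surjective without passing to a cover. The subtlety is exactly the one flagged in the remarks preceding the lemma — $\Loc_X(E)$ is not $\Loc_X(\Z_\ell)[\ell^{-1}]$ but the global sections of a stackification, so the lattice lives only locally, and one must check that the locally-defined integral structures glue to a \emph{continuous} (not merely abstract) action of $\pi_1^\proet(X,x)$ on $L_x$. I expect this to come down to: (i) an open subgroup acts through a representation factoring over an integral one, which is continuous by the $\calO_E$-case; (ii) continuity of the full action then follows since $\pi_1^\proet(X,x)$ is Noohi, so its topology is determined by open subgroups. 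Once continuity is secured, essential surjectivity is formal from the set-theoretic equivalence $\Loc_X \simeq \pi_1^\proet(X,x)\textrm{-}\Set$ together with the observation that the linear and topological structures on both sides match on an open subgroup.
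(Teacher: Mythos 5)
Your integral step and the overall architecture are reasonable, but the representation-to-sheaf direction contains a fatal error. You assert that a continuous $\rho:\pi_1^\proet(X,x) \to \GL_n(E)$ has compact, hence bounded, image because $\pi_1^\proet(X,x)$ is Noohi. Noohi means complete with enough open subgroups, not compact: discrete groups are Noohi (Example \ref{ex:loccompactnoohi}), and $\pi_1^\proet(X,x)$ is designed precisely so that it admits continuous finite-dimensional $E$-representations with unbounded image. For $X = \P^1$ with $0$ and $\infty$ glued, $\pi_1^\proet(X,x)$ surjects onto the discrete group $\Z$, and $1 \mapsto \ell \in \GL_1(\Q_\ell)$ is continuous with unbounded image (see the introduction and Example \ref{ex:delignenodalcurve}). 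So your identification $\Rep_{E,\cont}(\pi_1^\proet(X,x)) \simeq \Rep_{\calO_E,\cont}(\pi_1^\proet(X,x))[\ell^{-1}]$ is false; worse, it is jointly inconsistent with the rest of your argument, since you correctly note that $\Loc_X(\calO_E)[\ell^{-1}] \to \Loc_X(E)$ is fully faithful but not essentially surjective (lattices exist only after a cover, Proposition \ref{p:LisseSheaves}(6)). If both of your reductions held, the lemma would force $\Loc_X(E) \simeq \Loc_X(\calO_E)[\ell^{-1}]$, which is exactly what the paper is at pains to deny.

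The correct route for this direction is the paper's: $U = \rho^{-1}(\GL_n(\calO_E))$ is open because $\GL_n(\calO_E) \subset \GL_n(E)$ is open and $\rho$ is continuous; $U$ corresponds to a pointed connected covering $X_U \to X$ with $\pi_1^\proet(X_U,x) = U$; the restriction $\rho|_U$ is integral and yields $M \in \Loc_{X_U}(\calO_E)$ by your first step; and $M[\ell^{-1}]$ carries descent data for $X_U \to X$, producing $N(\rho) \in \Loc_X(E)$. For the converse, the paper bypasses your stalk-and-continuity argument: viewing $N \in \Loc_X(E)$ as an $\calF_{\GL_n(E)}$-torsor and pushing out along $\GL_n(E) \to \Aut(S)$ for each $S \in \GL_n(E)\textrm{-}\Set$ gives a fibre-functor-compatible functor $\GL_n(E)\textrm{-}\Set \to \Loc_X$, which by Theorem \ref{thm:infGaloisthy}(2) and the Noohi property of $\GL_n(E)$ is the same as a continuous homomorphism $\pi_1^\proet(X,x) \to \GL_n(E)$. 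Your alternative (continuity can be checked on an open subgroup) is in principle salvageable, but note that the trivializing \'etale cover of Proposition \ref{p:LisseSheaves}(6) must first be replaced by a connected object of $\Cov_X$ before you may identify its fundamental group with an open subgroup of $\pi_1^\proet(X,x)$; an arbitrary \'etale map (e.g.\ an open immersion) does not induce an open embedding on $\pi_1^\proet$.
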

\begin{proof}
The claim is clear if $E$ is replaced by $\calO_E$ as $\GL_n(\calO_E)$ is profinite. Now given a continuous representation $\rho:\pi_1^\proet(X,x) \to \GL_n(E)$, the group $U = \rho^{-1} \GL_n(\calO_E)$ is open in $\pi_1^\proet(X,x)$, and hence defines a pointed covering $X_U \to X$ with $\pi_1^\proet(X_U,x) = U$. The induced representation $\pi_1^\proet(X_U,x) \to \GL_n(\calO_E)$ defines some $M \in \Loc_{X_U}(\calO_E)$, and hence an $M' \in \Loc_{X_U}(E)$; one checks that $M'$ comes equipped with descent data for $X_U \to X$, and hence comes from a unique $N(\rho) \in \Loc_X(E)$. Conversely, fix some $N \in \Loc_X(E)$, viewed as an $\calF_{\GL_n(E)}$-torsor for suitable $n$. For each $S \in \GL_n(E)\textrm{-}\Set$, one has an induced representation $\rho_S:\calF_{\GL_n(E)} \to \calF_{\Aut(S)}$. The pushout of $N$ along $\rho_S$ defines an $N_S \in \Loc_X$ with stalk $S$. This construction gives a functor $\GL_n(E)\textrm{-}\Set \to \Loc_X$ which is visibly compatible with the fibre functor. As $\GL_n(E)$ is Noohi, one obtains by Galois theory the desired continuous homomorphism $\rho_N:\pi_1^\proet(X,x) \to \GL_n(E)$.
\end{proof}

\begin{remark}
	By Example \ref{ex:QellbarNoohi}, the conclusion of Lemma \ref{lem:pi1ladicrep} also applies to any algebraic extension $E/\Q_\ell$ with the same proof.
\end{remark}

The following example is due to Deligne:

\begin{example} 
	\label{ex:delignenodalcurve}
	Let $Y$ be a smooth projective curve of genus $\geq 1$ over an algebraically closed field. Fix three distinct points $a,b,x \in Y$, and paths $\ev_a \simeq \ev_x \simeq \ev_b$ between the corresponding fibre functors on $\Loc_Y$. Let $X = Y/\{a,b\}$ be the nodal curve obtained by identifying $a$ and $b$ on $Y$; set $\pi:Y \to X$ for the natural map, and $c = \pi(a) = \pi(b)$. Then one has two resulting paths $\ev_x \simeq \ev_c$ as fibre functors on $\Loc_X$, and hence an element $\lambda \in \pi_1^\proet(X,x)$ corresponding to the loop.  Fix a local field $E$, a rank $n$ local system $M \in \Loc_Y(E)$ with monodromy group $\GL_n(\calO_E)$ with $n \geq 2$,  and a generic non-integral matrix $T \in \GL_n(E)$. Then identifying the fibres $M_a$ and $M_b$ using $T$ (using the chosen paths) gives a local system $M \in \Loc_X(E)$ where $\lambda$ acts by $T$; a similar glueing construction applies to local systems of sets, and shows $\pi_1^\proet(X,x) \simeq \pi_1^\proet(Y,y) \ast^N \lambda^\Z$ in the notation of Example \ref{ex:noohicoprod}. In particular, the monodromy group of $L$ is $\GL_n(E)$. Assume that the corresponding continuous surjective representation $\rho:\pi_1^\proet(X,x) \to \GL_n(E)$ factors through the pro-(discrete group) completion of $\pi_1^\proet(X,x)$, i.e., the preimage of each open subgroup $W \subset \GL_n(E)$ contains an open normal subgroup of $\pi_1^\proet(X,x)$. Then $U := \rho^{-1}(\GL_n(\calO_E))$ is open, so it contains an open normal $V \subset U$.  By surjectivity, the image $\rho(V)$ is a closed normal subgroup of $\GL_n(E)$ lying in $\GL_n(\calO_E)$. One then checks that $\rho(V) \subset \G_m(\calO_E)$, where $\G_m \subset \GL_n$ is the center. In particular, the induced representation $\pi_1^\proet(X,x) \to \PGL_n(E)$ factors through a discrete quotient of the source. It follows that $L$ has abelian monodromy over an \'etale cover of $X$, which is clearly false: the corresponding statement fails for $M$ over $Y$ by assumption. 
\end{example}

Example \ref{ex:delignenodalcurve} is non-normal. This is necessary:

\begin{lemma}
If $X$ is geometrically unibranch, then $\pi_1^\proet(X,x) \simeq \pi_1^\et(X,x)$.
\end{lemma}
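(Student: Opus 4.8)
The plan is to identify $\Loc_X$ with the category of finite \'etale $X$-schemes under the geometrically unibranch hypothesis; by Lemma \ref{lem:locallagree} it is equivalent to show that every geometric covering $Y \to X$ is finite \'etale, and then invoke the standard description of $\pi_1^\et(X,x)$ as the automorphism group of the fibre functor on the Galois category of finite \'etale covers (e.g.\ \cite{SGA1}), compatibly with base point. Since the statement is Zariski-local on $X$ in the sense that $\pi_1^\proet$ and $\pi_1^\et$ are insensitive to the issue after passing to a connected \'etale neighbourhood, and since being geometrically unibranch passes to \'etale $X$-schemes, we may assume $X$ is topologically noetherian; by Lemma \ref{l:topnoeth} any $Y \in \Cov_X$ is then quasiseparated, hence a scheme.

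First I would reduce to a local statement. Fix $Y \in \Cov_X$; we want $Y \to X$ finite \'etale. The map is \'etale by definition, so it suffices to prove it is finite, and since finiteness of a separated \'etale (hence quasi-finite, quasi-separated) morphism can be checked after faithfully flat base change, and more usefully is local on the target in a strong sense, I would aim to show $Y \to X$ is \emph{proper}; together with quasi-finiteness and finite presentation (\'etale maps are locally of finite presentation, and properness gives quasi-compactness) this yields finiteness. The key geometric input is: if $A$ is a geometrically unibranch local ring with strict henselisation $A^{sh}$, then $\Spec(A^{sh})$ is irreducible (indeed has a unique minimal prime), because geometric unibranchness says the normalisation of $A_{red}$ is local with purely inseparable residue extension, and this survives strict henselisation. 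Consequently, over a strictly henselian geometrically unibranch local base $S = \Spec(A^{sh})$, the space $S$ is irreducible with generic point $\eta$ and closed point $s$, and $s$ is the unique closed point with $\eta$ generalising everything.

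The main step is then to run the argument of Lemma \ref{lem:coveringhenselian} but upgrade the conclusion. Over such an $S$, Lemma \ref{lem:coveringhenselian} gives $Y|_S = \sqcup_i \widetilde{Z_i}$ with each $\widetilde{Z_i} \to S$ connected finite \'etale; since $S$ is strictly henselian each $\widetilde{Z_i} = S$, and the $\widetilde{Z_i}$ are disjoint open subschemes of $Y|_S$ by the valuative criterion (the two halves giving surjectivity and disjointness). I claim the index set is finite: each $\widetilde{Z_i} \cong S$ maps isomorphically onto an open subset of $Y|_S$ containing the (unique) point over $\eta$ and over $s$, but over the irreducible base $S$, an open subscheme mapping isomorphically to $S$ must contain $\eta$'s fibre, and two such that are disjoint would force $Y|_{\eta}$ to be disconnected with the corresponding clopen pieces; since $Y|_\eta \in \Cov_{\kappa(\eta)} = \Shv((\Spec \kappa(\eta))_\et)$ can in principle be infinite, the finiteness must instead come from the valuative criterion applied along generisations: every point of $Y|_S$ specialises to a point over $s$, and over $s$ there is a single geometric point, so the fibre $Y|_s$ is a \emph{set} $I$, and $Y|_S = I \times S$ as schemes; but then $Y|_\eta = I \times \eta$ too, and this is a geometric covering of $\eta$, which is automatically a (possibly infinite) constant sheaf — so at this stage $I$ need not be finite. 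The honest finiteness comes by globalising: $Y \in \Cov_X$ is \'etale and, being quasiseparated over a topologically noetherian $X$, has clopen connected components each again in $\Cov_X$; replacing $Y$ by a connected component, the fibres $Y_x$ are all singletons by the transitivity coming from tameness (Lemma \ref{lem:LocXTame}) together with the fact that over a geometrically unibranch scheme the monodromy of a connected geometric covering factors through $\pi_1^\et$ — but that is what we are proving. The clean route, which I expect to be the actual proof, is: show directly that for $X$ geometrically unibranch the inclusion $\Loc_{X_\et} \subset \Loc_X$ is an equality, using the local computation above to see that any $F \in \Cov_X$ becomes, over the strict henselisation $V_i$ of $X$ along the strata of a normalising stratification, a disjoint union of copies of $V_i$, i.e.\ is \'etale-locally constant, hence (being already classical) lies in $\Loc_{X_\et}$; then $\pi_1^\proet(X,x)\textrm{-}\Set = \Loc_X = \Loc_{X_\et}$ is the category of discrete sets on which $\pi_1^\proet(X,x)$ acts through a finite quotient after restriction to an open subgroup, which by the argument of \cite[\S 6]{SGA3ExpX} forces $\pi_1^\proet(X,x)$ to equal its profinite completion, namely $\pi_1^\et(X,x)$.

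The hard part will be the finiteness of the fibres of a connected geometric covering, i.e.\ ruling out infinite \'etale covers satisfying the valuative criterion when $X$ is geometrically unibranch; the resolution is that geometric unibranchness makes strict henselisations irreducible, so a geometric covering restricted there is $I \times S$ with $I$ a set, and the gluing data over an irreducible (hence connected after normalising stratification) $X$ together with the valuative criterion forces the transition maps between the strata-wise fibres to be bijections — whence connectedness of $Y$ makes $I$ a single orbit of the finite monodromy along the (finite, since $X$ is topologically noetherian) stratification, hence finite. Once $Y \to X$ is finite \'etale, $\Loc_X$ is the Galois category of $\pi_1^\et(X,x)$, and matching fibre functors at $x$ gives $\pi_1^\proet(X,x) \simeq \pi_1^\et(X,x)$. $\qed$
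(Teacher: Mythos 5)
Your reduction of the lemma to showing that every connected $Y \in \Cov_X$ is finite \'etale is the right one, and your local input (geometric unibranchness makes strict henselisations irreducible, so $Y$ restricts to $I \times S$ over them) is correct. But there is a genuine gap exactly where you flag it: the finiteness of $I$, and none of your three attempts closes it. The appeal to the monodromy factoring through $\pi_1^\et$ is, as you note yourself, circular. The ``clean route'' via $\Loc_X = \Loc_{X_\et}$ does not suffice either: infinite constant sheaves lie in $\Loc_{X_\et}$, so that equality only says every $\pi_1^\proet(X,x)$-set has an open subgroup acting trivially, i.e.\ the group is prodiscrete; it does not force profiniteness (an infinite discrete group satisfies the same condition). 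Finally, ``a single orbit of the finite monodromy along the stratification'' presupposes the monodromy is finite, which is the point at issue: the infinite chain of $\P^1$'s covering a nodal curve (see the introduction and Example \ref{ex:delignenodalcurve}) is a connected geometric covering with infinite fibres even though each stratum contributes only finite \'etale pieces; the infinitude is created by the gluing along (co)specializations, and a transitive orbit of a group generated by finitely many finite local pieces need not be finite. Your plan to prove properness also quietly begs the question, since the valuative criterion is only half of properness and the real issue is quasi-compactness of $Y$.

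The missing idea is to upgrade irreducibility from the strict henselisations to $Y$ itself. In a locally topologically noetherian geometrically unibranch scheme the irreducible components are clopen (closedness is clear; openness follows from a specialization argument using the local finiteness of the set of generic points), so connectedness forces $X$ to be irreducible, and likewise any connected $Y \in \Cov_X$ --- which is again locally topologically noetherian and geometrically unibranch, being \'etale over $X$ --- is irreducible. An irreducible $Y$ has a unique generic point, and since the fibre of an \'etale map over the generic point $\eta$ of $X$ consists of maximal points of $Y$, the generic fibre $Y_\eta$ is connected, hence finite because $\Cov_\eta$ consists of disjoint unions of finite \'etale covers of $\eta$ (Example \ref{ex:coveringsfield}). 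One then spreads out: a quasi-compact open $U \supset Y_\eta$ is finite \'etale over a quasi-compact open $V \subset X$ and, by the valuative criterion together with separatedness, contains every point of $Y$ lying over $V$; noetherian induction on $X - V$ then shows $Y$ is quasi-compact, hence proper and finite \'etale, after which the identification of $\pi_1^\proet(X,x)$ with $\pi_1^\et(X,x)$ is formal.
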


\begin{proof}
	One first checks that irreducible components are clopen in any locally topologically noetherian geometrically unibranch scheme: closedness is clear, while the openness is local, and may be deduced by a specialization argument using the finiteness of generic points on a topologically noetherian scheme. It follows by connectedness that $X$ is irreducible. Moreover, by the same reasoning, any connected $Y \in \Cov_X$ is also irreducible. Let $\eta \in X$ be the generic point, and let $Y_\eta \to \eta$ be the generic fibre. Then $Y_\eta$ is connected by irreducibility of $Y$, and hence a finite scheme as $\Loc_\eta$ is the category of disjoint unions of finite \'etale covers of $\eta$. In particular, $\pi:Y \to X$ has finite fibres. We claim that $\pi$ is finite \'etale;  this is enough for the lemma as $\pi_1^\et(X,x)$ classifies finite \'etale covers of $X$. For the proof, we may assume $X$ quasicompact. Now any quasicompact open $U \subset Y$ containing $Y_\eta$ is finite \'etale over a quasicompact open $V \subset X$, and hence includes all points over $V$. Expanding $U$ to include the fibre over some point in the complement of $V$ and proceeding inductively (using that $X$ is topologically noetherian) then shows that $Y$ is itself quasicompact. Then $\pi$ is proper and \'etale, whence finite \'etale.
\end{proof}

\begin{remark}
	The fundamental group $\pi_1^{dJ}(X,x)$ for rigid-analytic spaces over a non-archimedean valued field constructed by de Jong \cite{deJongPi1} has some similarities with the group $\pi_1^\proet(X,x)$ introduced above. In fact, in the language of our paper, the category $\Cov_X^{dJ}$ of disjoint unions of ``coverings'' in the sense of \cite[Definition 2.1]{deJongPi1} is a tame infinite Galois category by \cite[Theorem 2.10]{deJongPi1}. Thus, the corresponding group $\pi_1^{dJ}(X,x)$ is a Noohi group; by \cite[Theorem 4.2]{deJongPi1}, the category of continuous finite dimensional $\Q_\ell$-representations of $\pi_1^{dJ}(X,x)$ recovers the category of lisse $\Q_\ell$-sheaves (and the same argument also applies to $\overline{\Q}_\ell$-sheaves by Example \ref{ex:QellbarNoohi}). However, it is not true that a naive analogue of $\Cov_X^{dJ}$ for schemes reproduces the category $\Cov_X$ used above: the latter is larger. Note, moreover, that \cite[Lemma 2.7]{deJongPi1} is incorrect: the right hand side is a monoid, but need not be a group. As far as we can tell, this does not affect the rest of \cite{deJongPi1}.
\end{remark}

The following definition is due to Gabber:

\begin{remark}\label{rem:GabberPi1}
	Assume $Y$ is a connected scheme with locally a finite number of irreducible components. Then one may define the {\em weak fundamental groupoid} $w\pi(Y)$ as the groupoid-completion of the category of points of $Y_\et$ (which is equivalent to the category of connected w-contractible objects in $Y_\proet$). For each such point $y \in w\pi(Y)$, one has a corresponding automorphism group $w\pi(Y,y)$; as $Y$ is connected, the resulting functor $B(w\pi(Y,y)) \to w\pi(Y)$ is an equivalence. One can think of elements of $w\pi(Y,y)$ as paths (of geometric points) in $Y$, modulo homotopy.

Note that the definition of $\pi_1^\proet(Y,y)$ works in this generality, cf. Remark \ref{rmk:locforlfi}. Moreover, each $F \in \Loc_Y$ restricts to functor $w\pi(Y) \to \Set$, so the fibre $\ev_y(F)$ has a canonical $w\pi(Y,y)$-action. This construction gives a map $w\pi(Y,y) \to \pi_1^\proet(Y,y)$; by the proof of Lemma \ref{lem:LocXTame}, this map has dense image. If we equip $w\pi(Y,y)$ with the induced topology, then continuous maps from $\pi_1^\proet(Y,y)$ to Noohi groups $G$ are the same as continuous maps from $w\pi(Y,y)$ to $G$. In particular, one can describe lisse $\Q_\ell$- (resp. $\overline{\Q}_\ell$-) sheaves in terms of continuous representations of $w\pi(Y,y)$ on finite-dimensional $\Q_\ell$- (resp. $\overline{\Q}_\ell$-) vector spaces.
\end{remark}

\bibliography{proetale}
\end{document}